\numberwithin{equation}{section}
\theoremstyle{definition}
\newtheorem{thm}{Theorem}[section]
\newtheorem{lem}[thm]{Lemma}
\newtheorem{defn}[thm]{Definition}
\newtheorem{rem}[thm]{Remark}
\newtheorem{prop}[thm]{Proposition}
\newtheorem{ques}[thm]{Question}
\newtheorem{coro}[thm]{Corollary}
\newtheorem{ex}[thm]{Example}
\newtheorem{conj}[thm]{Conjecture}
\newtheorem{conv}[thm]{Convention}
\newtheorem*{defn*}{Definition}
\def \C {\mathbb{C}}
\def \E {\mathbb{E}}
\def \F {\mathbb{F}}
\def \N {\mathbb{N}}
\def \O {\mathcal{O}}
\def \P {\bold{P}}
\def \Q {\mathbb{Q}}
\def \R {\mathbb{R}}
\def \T {\mathbb{T}}
\def \V {\mathbb{F}_{p}^{d}}
\def \X {\mathcal{X}}
\def \Z {\mathbb{Z}}
\def \+ {\hat{+}}
\def \- {\hat{-}}
\def \d {\delta}
\def \e {\epsilon}
\def \h {\bold{h}}
\def \l {\lambda}
\def \m {\bold{m}}
\def \n {\bold{n}}
\def \w {\bold{w}}
\def \x {\bold{x}}
\def \y {\bold{y}}
\def \CP {\{\text{pure, nice, independent, consistent}\}}
\def \Gow {\Box}
\def \lin {\text{lin}}
\def \Lip {\text{Lip}}
\def \nlin {\text{nlin}}
\def \poly {\text{poly}}
\def \pp {\perp_{M}}
\def \rank {\text{rank}}
\def \sp {\text{span}}
\title[Spherical higher order Fourier analysis over finite fields I]{Spherical higher order Fourier analysis over finite fields I: equidistribution for nilsequences}
\author{Wenbo Sun}
\address[Wenbo Sun]{Department of Mathematics, Virginia Tech, 225 Stanger Street, Blacksburg, VA, 24061, USA}
\email{swenbo@vt.edu}
\thanks{The author was partially supported by the NSF Grant DMS-2247331}
\subjclass[2020]{11T99, 37A99}
\begin{document}

\maketitle

\begin{abstract}
   This paper is the first part of the series \emph{Spherical higher order Fourier analysis over finite fields}, aiming to develop the higher order Fourier analysis method along spheres over finite fields, and to solve the geometric Ramsey conjecture in the finite field setting.
   
   In this paper, we prove a quantitative equidistribution theorem for polynomial sequences in a nilmanifold, where the average is taken along spheres instead of cubes. To be more precise, let $\Omega\subseteq \mathbb{Z}^{d}$ be the preimage of a sphere $\mathbb{F}_{p}^{d}$ under the natural embedding from $\mathbb{Z}^{d}$ to $\mathbb{F}_{p}^{d}$. We showed that if a rational polynomial sequence $(g(n)\Gamma)_{n\in\Omega}$   %which is $p$-periodic along $\Omega$ 
   is not equidistributed on a nilmanifold $G/\Gamma$, then there exists a nontrivial horizontal character $\eta$ of $G/\Gamma$ such that $\eta\circ g \mod \mathbb{Z}$ vanishes on $\Omega$. 
   We also prove quantitative equidistribution and factorization theorems for more general sets arising from quadratic forms defined over $\mathbb{F}_{p}^{d}$.   These results will serve as   fundamental tools in later parts of the series to proof the spherical Gowers inverse theorem and the geometric Ramsey conjecture.
\end{abstract}	

%{\color{red} Table of content: (\ref{1:linktotable})}

\tableofcontents

\section{Introduction}
\subsection{Spherical higher order Fourier analysis}
Originated from 
\cite{Fur77,Gow01,HK05}, higher order Fourier analysis is a theory that uses modern analytic methods to study problems in combinatorics. The area of higher order Fourier analysis grew rapidly in the last decade, and it was quickly seen to have much in common with areas of theoretical computer science related to polynomiality testing, as well as the asymptotics for various linear patterns in the prime numbers \cite{GT08,GT10}.

The theory of higher order Fourier analysis is especially powerful in the study of Szemer\'edi type theorems concerning averages of linear forms of the form
$$\frac{1}{p^{d}}\sum_{n\in \V}f_{1}(L_{1}(n))\cdot\dots\cdot f_{k}(L_{k}(n)),$$
where   $f_{1},\dots,f_{k}\colon \F_{p}\to\C$ are bounded functions, and $L_{1},\dots,L_{k}\colon \V\to \F_{p}$ are linear transformations. 
However, nowadays very little is known for averages of linear forms along spheres:
\begin{equation}\label{savg}
\frac{1}{\vert\Omega\vert}\sum_{n\in \Omega}f_{1}(L_{1}(n))\cdot\dots\cdot f_{k}(L_{k}(n)),
\end{equation}
where $\Omega\subseteq \V$ is set of zeros of a quadratic polynomial (for example, one can take $\Omega$ to be the set of $n\in\V$ with $n\cdot n=r$ for some fixed radius $r\in\F_{p}$). This pattern pertains to many important questions in combinatorics, including the geometric Ramsey conjecture (we postpone the details of this topic to \cite{SunD}).  

In this paper as well as in the series \cite{SunB,SunC,SunD}, we develop the theory of higher order Fourier analysis in the ``spherical setting", which enables us to study patterns of the form (\ref{savg}) and to provide an affirmative answer to the geometric Ramsey conjecture in the finite field setting.

\subsection{Equidistribution for spherical averages on nilmanifolds}
This paper is the first part of the series \emph{Spherical higher order Fourier analysis over finite fields} \cite{SunB,SunC,SunD}.
The purpose of this paper is to provide equidistribution theorems we need for the study of spherical higher order Fourier analysis. In the last decade, it has come to be appreciated that nilmanifolds and nilsequences play a fundamental role in combinatorial number theory, ergodic theory and additive combinatorics (see for example \cite{GT08,GT10,GTZ11,GTZ12,HK05}). 
We begin with recalling some standard terminologies in this area:

\begin{defn*}[Filtered group]
	Let $G$ be a group. An \emph{$\N$-filtration} on $G$ is a collection $G_{\N}=(G_{i})_{i\in \N}$ of subgroups of $G$ indexed by $\N$   such that  the following holds:
	\begin{enumerate}[(i)]
		\item for all $i,j\in \N$ with $i\leq j$, we have that $G_{i}\supseteq G_{j}$;
		\item  for all $i,j\in \N$, we have $[G_{i},G_{j}]\subseteq G_{i+j}$.
	\end{enumerate}	
	For $s\in \N$, we say that $G$ is an \emph{($\N$-filtered) nilpotent group} of \emph{degree} at most $s$ (or of \emph{degree} $\leq s$) with respect to some $\N$-filtration  $(G_{i})_{i\in \N}$ if $G_{i}$ is trivial whenever $i>s$. \end{defn*}

\begin{defn*}[Nilmanifold]
	Let $\Gamma$ be a discrete and cocompact subgroup of a connected, simply-connected nilpotent Lie group $G$ with filtration $G_{\N}=(G_{i})_{i\in \N}$ such that $\Gamma_{i}:=\Gamma\cap G_{i}$ is a cocompact subgroup of $G_{i}$ for all $i\in \N$.
	Then we say that $G/\Gamma$ is an \emph{($\N$-filtered) nilmanifold}, and we use $(G/\Gamma)_{\N}$ to denote the collection $(G_{i}/\Gamma_{i})_{i\in \N}$ (which is called the \emph{$\N$-filtration} of $G/\Gamma$). We say that $G/\Gamma$ has degree $\leq s$  with respect to $(G/\Gamma)_{\N}$ if $G$ has degree $\leq s$  with respect to $G_{\N}$.
\end{defn*}

\begin{defn*}[Polynomial sequences]
	Let $d,\in\N_{+}$ and $G$ be a connected simply-connected nilpotent Lie group and  $(G_{i})_{i\in\N}$ be an $\N$-filtration of $G$. 
	Denote $\Delta_{h}g(n):=g(n+h)g(n)^{-1}$
 for all $n, h\in G$.
	A map $g\colon \Z^{d}\to G$ is an \emph{($\N$-filtered) polynomial sequence} if
		$$\Delta_{h_{m}}\dots \Delta_{h_{1}} g(n)\in G_{m}$$
		 for all $m\in\N$ and $n,h_{1},\dots,h_{m}\in \Z^{d}$.	
    The set of all $\N$-filtered polynomial sequences is denoted by $\poly(\Z^{d}\to G_{\N})$.
  \end{defn*}

  A fundamental question in higher order Fourier analysis is to study the equidistribution property of polynomial sequences.  
  Let $\Omega$ be a non-empty subset of $\Z^{d}$ and $G/\Gamma$ be an $\N$-filtered nilmanifold. A sequence $\O\colon\Omega\to G/\Gamma$ is \emph{$\d$-equidistributed} on $G/\Gamma$ if for all Lipschitz function $F\colon G/\Gamma\to\C$ (with respect to the metric $d_{G/\Gamma}$ to be defined in Section \ref{1:s:pp3}), we have that
 	$$\limsup_{N\to\infty}\Bigl\vert\frac{1}{\vert \Omega\cap [N]^{d}\vert}\sum_{n\in\Omega\cap [N]^{d}}F(\O(n))-\int_{G/\Gamma}F\,dm_{G/\Gamma}\Bigr\vert\leq \d\Vert F\Vert_{\Lip},$$
	where  $m_{G/\Gamma}$ is the Haar measure of $G/\Gamma$ and the \emph{Lipschitz norm} is defined as 
	$$\Vert F\Vert_{\Lip}:=\sup_{x\in G/\Gamma}\vert F(x)\vert+\sup_{x,y\in G/\Gamma, x\neq y}\frac{\vert F(x)-F(y)\vert}{d_{G/\Gamma}(x,y)}.$$
	    We say that $\O$ is \emph{$\d$-totally equidistributed} on $G/\Gamma$ if for all $r\in\N_{+}$ with $r<\d^{-1}$ and $m\in\Z^{d}$, the sequence  $(\O(n))_{n\in\Omega\cap (r\Z^{d}+m)}$ is $\d$-equidistributed on $G/\Gamma$.
  
  It is known that the equidistribution property of polynomial sequences is  connected to the \emph{horizontal characters} of $G/\Gamma$, i.e.  group homomorphisms $\eta\colon G\to \R$ with $\eta(\Gamma)\subseteq \Z$.
  It was proved by Green and Tao \cite{GT12b,GT14} that for any $\delta>0$ and any polynomial  sequence $g\in\poly(\Z^{d}\to G_{\N})$, either $(g(n)\Gamma)_{n\in\Z^{d}}$ is $\d$-equidistributed on $G/\Gamma$ or there exists a nontrivial horizontal character $\eta$ of $G/\Gamma$ whose complexity is comparable with $G/\Gamma$ (we will define complexity formally in Section \ref{1:s:pp3}) such that $\eta\circ g$ is a slowly varying function. This is a quantitative generalization of a result of Leibman \cite{Lei05}.

   In this paper, we study the quantitative equidistribution property for sequences of the form $(g(n)\Gamma)_{n\in\Omega}$, where $\Omega=\{n\in\Z^{d}\colon n\cdot n\equiv r\mod p\Z\}$ for some $r\in\Z$ (see \cite{KSS18,Sun18b} for the study of related averages).
    The following is the main result of this paper (we refer the readers to  Section \ref{1:s:pp3} for definitions):  
  
   \begin{thm}[Equidistribution for polynomial sequence along spheres]\label{1:sLei00}
  	Let $0<\d<1/2, C>0, d\in\N_{+},s\in\N$ with $d\geq s+13$ and $p\gg_{C,d} \d^{-O_{C,d}(1)}$ be a prime. %\footnote{The restriction $p\gg_{C,d} \d^{-O_{C,d}(1)}$ is in fact not necessary. See Remark \ref{1:dropp}.} Denote $\Omega=\{n\in\V\colon n\cdot n=r\}$ for some $r\in\F_{p}$.\footnote{It follows from Lemma \ref{1:counting} that $\Omega$ is non-empty.}	
	Denote $\Omega=\{n\in\Z^{d}\colon n\cdot n\equiv r\mod p\Z\}$ for some $r\in\Z$.
	   Let $G/\Gamma$ be an $s$-step $\N$-filtered nilmanifold of complexity at most $C$ and $g\in\poly(\Z^{d}\to G_{\N})$ be a rational polynomial  sequence. 
	 Then either  $(g(n)\Gamma)_{n\in \Omega}$ is   $\d$-equidistributed on $G/\Gamma$, or
  	there exists a nontrivial horizontal character $\eta$ of complexity at most $O_{\d,d}(1)$ such that $\eta\circ g \mod \Z$  is a constant on $\Omega$.
  \end{thm}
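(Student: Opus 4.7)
The plan is to reduce spherical non-equidistribution to the cube equidistribution theorem of Green--Tao (Theorem \ref{1:Lei0}) via Fourier analysis on $\V$ that decomposes the sphere indicator $1_\Omega$ into a linear combination of quadratic characters, followed by an augmentation of the nilmanifold that absorbs each such character into a polynomial sequence.

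First I would reduce non-equidistribution of $(g\circ\tau(n)\Gamma)_{n\in\Omega}$ to a nontrivial bias of $e(\eta\circ g\circ\tau)$ on $\Omega$ for some nontrivial horizontal character $\eta$ of $G/\Gamma$ of complexity $O_{C}(\d^{-O_{C}(1)})$; this is the standard Fourier expansion of a Lipschitz test function on the nilmanifold and is insensitive to the shape of $\Omega$. Setting $F=\eta\circ g\circ\tau$, the goal becomes showing that $F \mod \Z$ is constant on $\Omega$.

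Next I would Fourier-decompose $1_\Omega$ via $1_\Omega(n)=p^{-1}\sum_{t\in\F_{p}}e_{p}(t(n\cdot n-r))$. Pigeonhole in $t$ produces a $t_{0}\in\F_{p}$ with
\[
\Bigl\vert\sum_{n\in\V}e_{p}(t_{0}\,n\cdot n)\,e(F(n))\Bigr\vert\ \geq\ \d^{O_{C}(1)}\vert\Omega\vert.
\]
If $t_{0}=0$ the conclusion follows directly from Theorem \ref{1:Lei0}: a bias of $e(F)$ on the whole of $\V$ yields a horizontal character of $G/\Gamma$ with $\eta'\circ g\circ\tau \mod \Z$ constant on $\V$, hence on $\Omega$. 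If $t_{0}\neq 0$, I would augment $G/\Gamma$ by a factor $\R/\Z$ placed in degree $2$ of the filtration, and lift $g$ to the polynomial sequence $\tilde g(n)=(g(n),\,t_{0}(n\cdot n-r)/p)$. Then $\tilde g$ is a polynomial sequence of degree $\max(s,2)$ on the augmented nilmanifold, and the horizontal character $\tilde\eta(x,y)=\eta(x)+y$ captures the combined phase $e_{p}(t_{0}\,n\cdot n)e(F(n))$ up to a constant. Applying cube equidistribution to $\tilde g$ on $\{0,\dots,p-1\}^{d}$ yields a horizontal character $\tilde\eta'$ of the augmented nilmanifold that is constant on the integer cube; restricting to $\Omega$, where the torus component of $\tilde g\circ\tau$ vanishes mod $\Z$ because $n\cdot n\equiv r\pmod p$, gives a horizontal character of $G/\Gamma$ with the required property.

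The main obstacle is a bias deficit: the normalized bias on $\V$ extracted by pigeonhole above is only $\d^{O(1)}\vert\Omega\vert/\vert\V\vert\asymp\d^{O(1)}/p$, far too small to feed into Theorem \ref{1:Lei0} while keeping the output complexity independent of $p$. Overcoming this requires exploiting the specific Gaussian structure of the quadratic weight $e_{p}(t_{0}\,n\cdot n)$, whose unweighted exponential sum has magnitude $p^{d/2}$ rather than the generic size $O(1)$. The expected route is a Cauchy--Schwarz / van der Corput iteration that at each step differentiates the polynomial $F$ in a direction $h$ ranging over $\V$ and invokes Weil-type bounds for the resulting averaged Gauss sums, trading a constant power of $\d$ for a factor of $p$ from quadratic cancellation at each step. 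The dimension threshold $d\geq s+13$ is exactly what is needed to accommodate enough iterations (scaling with $s$) to recoup the $1/p$ deficit. Organizing this iteration so that the final structural information descends to a horizontal character of $G/\Gamma$ of complexity $O_{\d,d}(1)$, independently of the input complexity $C$, is the technical heart of the argument.
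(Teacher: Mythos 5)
There is a genuine gap, and you have located it yourself without closing it: the bias deficit after Fourier-expanding $1_{\Omega}$. Writing $1_{\Omega}(n)=\frac{1}{p}\sum_{t\in\F_{p}}\exp\bigl(t(\tau(n)\cdot\tau(n)-\tau(r))/p\bigr)$ and pigeonholing in $t$ leaves you with a correlation of size $\d^{O(1)}\vert\Omega\vert\approx\d^{O(1)}p^{d-1}$ over the full cube of $p^{d}$ points, i.e.\ normalized bias $\d^{O(1)}/p$. Theorem \ref{1:Lei0} (or the Green--Tao cube theorem on $[p]^{d}$) applied at bias $\epsilon$ only produces a character of complexity $\epsilon^{-O(1)}$, so feeding it $\epsilon\sim\d^{O(1)}/p$ gives complexity polynomial in $p$, which is useless for the statement. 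Your proposed repair --- a Cauchy--Schwarz/van der Corput iteration that ``trades a power of $\d$ for a factor of $p$ from Gauss-sum cancellation at each step'' --- is not an argument but a restatement of the difficulty, and it fails in exactly the case the theorem is designed to detect: if $\eta\circ g\circ\tau(n)\equiv -t_{0}(\tau(n)\cdot\tau(n)-\tau(r))/p + c \mod \Z$ (the example $g(n)=(n\cdot n)/p$ from the paper's remark), then the weighted sum $\exp(t_{0}(\tau(n)\cdot\tau(n)-\tau(r))/p)\exp(\eta\circ g\circ\tau(n))$ is identically a constant of modulus one and exhibits no cancellation whatsoever, so no iteration recovers the lost factor of $p$ on the cube; the correct conclusion is constancy on $\Omega$ only, which your cube-based scheme cannot certify with $p$-independent complexity. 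The claim that $d\geq s+13$ is ``exactly'' what such an iteration needs is unsupported. A second, independent gap: the hypothesis is only partial $p$-periodicity of $g$ on $\Omega$, so the augmented sequence $\tilde g(n)=(g(n),t_{0}(n\cdot n-\tau(r))/p)$ need not be $p$-periodic on $\Z^{d}$ and does not descend to $\V$; Theorem \ref{1:Lei0} requires sequences in $\poly_{p}(\V\to G_{\N}\vert\Gamma)$, and converting a smoothness-norm conclusion on $[p]^{d}$ into exact constancy mod $\Z$ uses precisely the periodicity you do not have. The paper's proof avoids both problems by never leaving the sphere: it Cauchy--Schwarzes along $\Omega$ itself, passes to the slices $V(M)^{h}$ (which are again quadric sets of one lower rank), inducts on the step $s$, and the real work is solving the resulting polynomial equations on Gowers sets of $V(M)$ for partially $p$-periodic data (Propositions \ref{1:packforce0} and \ref{1:att30}), combined with the Green--Tao vertical-character and $G^{\square}$ machinery.
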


  In fact, we will prove a stronger version of Theorem \ref{1:sLei00}, which is Theorem \ref{1:sLei} in this paper.   
  \begin{rem}
  Note that in Theorem \ref{1:sLei00}, it is not possible to require $\eta\circ g\mod \Z$ to be constant on the entire space $\Z^{d}$. For example, let $G/\Gamma=\R/\Z$,  $g(n)=(n\cdot n)/p$ for all $n\in\Z^{d}$.  
  Since $g(n)\equiv r/p \mod \Z$ for all $n\in\Omega$, it is not hard to see that $(g(n)\Gamma)_{n\in \Omega}$ is not  $\d$-equidistributed on $G/\Gamma$. On the other hand, since any nontrivial  horizontal character of $G/\Gamma$ is of the form $\eta(x):=kx$ for some $k\in\Z\backslash\{0\}$, we have that $\eta\circ g(n) \mod \Z=k(n\cdot n)/p \mod \Z$ can not be a constant.
  \end{rem}
  
  It is worth noting that in order for Theorem \ref{1:sLei00} to be useful in \cite{SunC,SunD}, we must prove this result for rational polynomials instead of just for $p$-periodic ones. See Remark \ref{1:r:wwer2} for more details.
 
  Let $\tau\colon \V\to\{0,\dots,p-1\}^{d}$ denote the natural embedding of $\V$ in $\Z^{d}$.
    By taking $g$ to be a  polynomial from $\Z^{d}$ to $\Z/p$, $G/\Gamma=\R/\Z$ and $F(x)=\exp(x):=e^{2\pi i x}$, we have the following immediate corollary for exponential sums along spheres, which is of independent interest in number theory.
    
  \begin{coro}[Weyl's equisidtribution theorem along spheres over finite field]\label{1:sweyl}
  	Let $0<\d<1/2, d\in\N_{+},s\in\N$ with $d\geq s+13$ and $p$ be a prime.   Let $g\colon\Z^{d}\to\Z$ be an integer valued polynomial of degree $s$. Denote $\Omega=\{n\in\V\colon n\cdot n=r\}$ for some $r\in\F_{p}$.
	 If $p\gg_{d} \d^{-O_{d}(1)}$, then either $$\Bigl\vert\frac{1}{\vert\Omega\vert}\sum_{n\in\Omega}\exp(g\circ\tau(n)/p)\Bigr\vert<\d$$ or
  	  $g\circ\tau(n)/p \mod \Z$  is a constant on $\Omega$.\footnote{By Corollary \ref{1:noloop2}, we may further conclude that $g(n)=(n\cdot n -\tau(r))g_{1}(n)+pg_{2}(n)$ for some integer valued polynomials $g_{1}$ and $g_{2}$.}
  \end{coro}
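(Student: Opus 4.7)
The plan is to apply Theorem \ref{1:sLei00} directly to the one-dimensional nilmanifold $\R/\Z$ with the test function $F(x) = \exp(x) = e^{2\pi i x}$. First I would equip $G = \R$ with the $\N$-filtration $G_i = \R$ for $0 \leq i \leq s$ and $G_i = \{0\}$ for $i > s$, so that $\R/\Z$ becomes an $s$-step $\N$-filtered nilmanifold of complexity $O(1)$. Next I would set $\tilde{g}\colon \Z^d \to \R$ by $\tilde{g}(n) := g(n)/p$; since $\deg g \leq s$, we have $\tilde{g} \in \poly(\Z^d \to G_\N)$. To check partial $p$-periodicity on $\Omega$, I would invoke the standard fact that an integer-valued polynomial of degree $\leq s$ satisfies $g(n+pm) \equiv g(n) \pmod{p}$ whenever $p > s$; this holds by the hypothesis on $p$, so $\tilde{g}(n+pm) - \tilde{g}(n) \in \Z$ for all $m,n$, which is in fact stronger than partial $p$-periodicity on $\Omega$.

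Then I would apply Theorem \ref{1:sLei00} with threshold $\d' := c\d$ for a small absolute constant $c$ (which may be absorbed into the assumption $p \gg_d \d^{-O_d(1)}$). In the equidistribution alternative, using $\int_{\R/\Z} F\,dm_{\R/\Z} = 0$ and $\Vert F \Vert_{\Lip} = O(1)$, the definition of $\d'$-equidistribution immediately yields
$$\Bigl|\frac{1}{|\Omega|}\sum_{n \in \Omega}\exp(g\circ\tau(n)/p)\Bigr| \leq \d' \Vert F \Vert_{\Lip} < \d,$$
which is the first alternative of the corollary.

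In the non-equidistribution alternative, the theorem produces a nontrivial horizontal character $\eta$ of $\R/\Z$ of complexity at most $K = O_{\d,d}(1)$ such that $\eta \circ \tilde{g} \circ \tau \mod \Z$ is constant on $\Omega$. Every nontrivial horizontal character of $\R/\Z$ has the form $\eta(x) = kx$ with $k \in \Z\setminus\{0\}$ and $|k| \leq K$; since $p \gg_d \d^{-O_d(1)}$ forces $p > K$, we have $\gcd(k,p)=1$. The constancy of $kg\circ\tau(n)/p \mod \Z$ on $\Omega$ therefore yields $k(g\circ\tau(n_1) - g\circ\tau(n_2)) \equiv 0 \pmod p$, and inverting $k$ modulo $p$ gives $g\circ\tau(n_1) \equiv g\circ\tau(n_2) \pmod p$ for all $n_1,n_2 \in \Omega$, i.e.\ $g\circ\tau(n)/p \mod \Z$ is constant on $\Omega$. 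The only step requiring attention beyond a direct specialization of Theorem \ref{1:sLei00} is stripping off the coefficient $k$ via its invertibility modulo $p$; I anticipate no serious obstacle.
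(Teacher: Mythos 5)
Your proposal is correct and follows essentially the same route the paper intends: Corollary \ref{1:sweyl} is obtained by specializing Theorem \ref{1:sLei00} to $G/\Gamma=\R/\Z$ with the filtration of degree $s$, the polynomial $g/p$, and $F=\exp$, then using that every nontrivial horizontal character of $\R/\Z$ is $x\mapsto kx$ with $0<\vert k\vert\ll_{d}\d^{-O_{d}(1)}<p$, so $k$ can be inverted mod $p$. The ``standard fact'' you invoke for the ($p$-)periodicity of $g/p$ is exactly Lemma \ref{1:pp2} of the paper, so no new ingredient is needed there.
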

   
   Motivated by \cite{CS14,GT12b,GT14},
   it is natural to ask whether Theorem \ref{1:sLei00} implies a factorization theorem, i.e. whether one can decompose $g$ as the product of a constant $\e$, a polynomial sequence $g'$ being equidistributed on a subnilmanifold of $G/\Gamma$, and a polynomial sequence $\gamma$ having strong periodic properties. This is indeed the case (we refer the readers to  Section \ref{1:s:pp3} for definitions):

  \begin{thm}[Strong factorization property along spheres]\label{1:facf30}
  		Let $d\in\N_{+},s\in\N$ with $d\geq s+13$, $C>0$, $\mathcal{F}\colon\R_{+}\to\R_{+}$ be a growth function,\footnote{A \emph{growth function} $\mathcal{F}\colon\R_{+}\to\R_{+}$ is a non-decreasing function with $\mathcal{F}(n)\geq n$ for all $n\in\R_{+}$.} and $p\gg_{C,d,\mathcal{F}} 1$ be a prime. Denote $\Omega=\{n\in\Z^{d}\colon n\cdot n\equiv r \mod p\Z\}$ for some $r\in\Z$ and let $n_{\ast}\in\Omega$.
  		Let $G/\Gamma$ be an $s$-step $\N$-filtered nilmanifold of  complexity at most $C$, and let $g\in \poly(\Z^{d}\to G_{\N})$ be a rational polynomial sequence on $\Omega$ .	
  	There 		
  		exist some $C\leq C'\leq O_{C,d,\mathcal{F}}(1)$, 
  		 a proper subgroup $G'$ of $G$ which is $C'$-rational relative to $\mathcal{X}$, and
  	 a factorization $$g(n)=\e g'(n)\gamma(n)  \text{ for all }  n\in \Z^{d}$$ 
	  such that 
	%  \begin{itemize}
	%      \item $\e\in G$ is a constant;
	%      \item $g'\in \poly(\Z^{d}\to G'_{\N})$ is a partially $p$-periodic polynomial sequence on $\Omega$ with $g'(\bold{0})=id_{G}$ such that $(g'\circ\tau(n)\Gamma)_{n\in\Omega}$ is $\mathcal{F}(C')^{-1}$-equidistributed on $G'/\Gamma'$;
	%      \item $\gamma\in\poly(\Z^{d}\to G_{\N})$ is a polynomial sequence with $\gamma(\bold{0})=id_{G}$ such that $\gamma\circ\tau(\Omega)\subseteq \Gamma$.
	%  \end{itemize}	
	  $\e\in G$ is of complexity $O_{C'}(1)$,
  		$g'\in \poly(\Z^{d}\to G'_{\N})$ is  rational  %with $g'(\bold{0})=id_{G}$ 
		and $(g'(n)\Gamma)_{n\in\Omega}$  is $\mathcal{F}(C')^{-1}$-totally equidistributed on $G'/\Gamma'$, where $\Gamma':=G'\cap \Gamma$, and that $\gamma\in\poly(\Z^{d}\to G_{\N})$ is 
		partially $r$-rational with base point $n_{\ast}$ and partially $r$-periodic on $\iota^{-1}(\Omega)$
		  for some $r\in\N_{+}$ with $r\leq O_{C,d,s}(C')$.	
  \end{thm}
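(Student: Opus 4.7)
The plan is to deduce the factorization from Theorem \ref{1:sLei00} by an iterative procedure modeled on the Green--Tao factorization argument \cite{GT12b}, adapted to the spherical setting and to partially $p$-periodic sequences. The idea is to either conclude that $g$ itself is already equidistributed (so the trivial factorization $g=id_{G}\cdot g\cdot id_{G}$ works with $G'=G$), or extract a nontrivial horizontal character $\eta$ and use it to push $g$ into a proper rational subgroup of $G$, then repeat.

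First I would set up the iteration scheme. Define a sequence of growth functions $\mathcal{F}_{0},\mathcal{F}_{1},\dots$ (depending on $\mathcal{F},C,d$) in the usual way so that $\mathcal{F}_{j}$ is tailored to control the complexity encountered after $j$ iterations. Starting from $(G^{(0)},g^{(0)})=(G,g)$, at stage $j$ I apply Theorem \ref{1:sLei00} to $g^{(j)}$ viewed as a polynomial sequence into $G^{(j)}/\Gamma^{(j)}$ with the induced filtration, using the equidistribution parameter $\mathcal{F}_{j}(C^{(j)})^{-1}$ in place of $\d$. If $(g^{(j)}\circ\tau(n)\Gamma^{(j)})_{n\in\Omega}$ is $\mathcal{F}_{j}(C^{(j)})^{-1}$-equidistributed, the process terminates and I declare $G'=G^{(j)}$, $g'=g^{(j)}$, pushing all accumulated boundary terms into $\e$ and $\gamma$. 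Otherwise Theorem \ref{1:sLei00} supplies a nontrivial horizontal character $\eta_{j}$ of complexity $O_{\mathcal{F}_{j}(C^{(j)}),d}(1)$ such that $\eta_{j}\circ g^{(j)}\circ\tau \mod\Z$ is constant on $\Omega$.

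Next I would perform the descent step. Given $\eta_{j}$ with $\eta_{j}\circ g^{(j)}\circ\tau\equiv c_{j}\pmod{\Z}$ on $\Omega$, I construct a rational subgroup $G^{(j+1)}\subseteq G^{(j)}$ (essentially built from $\ker\eta_{j}$ together with the previously accumulated kernels) and write $g^{(j)}(n) = \e_{j}\,g^{(j+1)}(n)\,\gamma_{j}(n)$, where $\e_{j}\in G^{(j)}$ is a constant chosen to absorb $c_{j}$, $g^{(j+1)}\in\poly(\Z^{d}\to G^{(j+1)}_{\N})$ with $g^{(j+1)}(\z)=id_{G}$, and $\gamma_{j}\in\poly(\Z^{d}\to G^{(j)}_{\N})$ satisfies $\gamma_{j}\circ\tau(\Omega)\subseteq\Gamma$. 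This is exactly analogous to the Green--Tao descent except that the periodicity condition on $\gamma_{j}$ is replaced by the condition that $\gamma_{j}\circ\tau$ lands in $\Gamma$ on $\Omega$; the latter follows because $\eta_{j}\circ g^{(j)}\circ\tau-c_{j}\in\Z$ on $\Omega$ allows me to realize the ``$\eta_{j}$-coordinate'' of $g^{(j)}$ on $\Omega$ by an element of $\Gamma$. Crucially, I must verify that $g^{(j+1)}$ remains partially $p$-periodic on $\Omega$; this uses that both $\e_{j}$ and $\gamma_{j}$ are arranged so that their product with $g^{(j+1)}$ reproduces the partial $p$-periodicity of $g^{(j)}$, and that the subgroup $G^{(j+1)}$ is $p$-rational. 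Since $\dim G^{(j+1)}<\dim G^{(j)}$, the process terminates in at most $\dim G\leq C$ steps; collecting all $\e_{j}$ into a single constant $\e$ and all $\gamma_{j}$'s into a single $\gamma$ (using the Mal'cev-coordinate calculus on $G$) produces the desired factorization, with $C'$ bounded by the final iterate of the growth-function composition.

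The main obstacle will be the descent step for partially $p$-periodic sequences. For genuinely $p$-periodic sequences, the analogous decomposition is routine because one can separate the ``$\Z$-valued integer part'' of $\eta_{j}\circ g^{(j)}\circ\tau$ globally. Here $\eta_{j}\circ g^{(j)}\circ\tau\mod\Z$ is only guaranteed constant on $\Omega$, not on all of $\V$, so I need to construct $\gamma_{j}$ as an \emph{honest} polynomial on $\Z^{d}$ (not just on $\Omega$) whose value on $\tau(\Omega)$ lies in $\Gamma$ --- precisely the sort of object whose existence is controlled by the ``$(n\cdot n-\tau(r))/p$''-phenomenon highlighted in the footnote after Theorem \ref{1:sLei00} and by Corollary \ref{1:noloop2}. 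I anticipate that establishing a quantitative lifting lemma that produces such $\gamma_{j}$ with polynomially bounded complexity, while preserving the filtration and the partial $p$-periodicity of the remaining factor, is the technical heart of the proof. Once this lifting is in place, the iteration closes up in the standard way and yields the stated bounds on $C'$.
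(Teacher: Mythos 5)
Your overall route is the same as the paper's: Theorem \ref{1:facf30} is obtained there as a special case of Theorem \ref{1:facf3}, proved by iterating a single weak factorization step (Proposition \ref{1:facf2}) in which the spherical equidistribution theorem (in the form of the Leibman dichotomy, Theorem \ref{1:rLei}, i.e.\ Theorem \ref{1:sLei}) either terminates the process or produces a nontrivial horizontal character $\eta$ of bounded complexity; the subgroup at each stage is just the identity component of $\ker\eta$ inside the current group, the iteration stops after at most $O_{C}(1)$ steps because the total dimension drops, and the constants and $\Gamma$-valued sequences are collected exactly as you indicate. Your iteration scheme, growth-function bookkeeping, and termination argument all match the paper.

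The genuine gap is the descent step itself, which you explicitly defer (``I anticipate that establishing a quantitative lifting lemma ... is the technical heart of the proof''). That lemma is not a routine adaptation of the $p$-periodic case, and it is precisely the content of Proposition \ref{1:facf2}: one cannot simply ``realize the $\eta$-coordinate of $g$ on $\Omega$ by an element of $\Gamma$,'' because $\eta\circ g\circ\tau\bmod\Z$ is only constant on $\Omega$ and the nonconstant Taylor coefficients of $\eta\circ\tilde g$ are rationals with denominators that are powers of $p$ (Lemma \ref{1:qmm}, Lemma \ref{1:goodcoordinates}), so a naive ``integer part'' splitting is unavailable. The paper's construction, which your proposal would need to supply, runs as follows: after normalizing $g(\bold{0})=id_{G}$, write $\eta(x)=k\cdot\psi(x)$, pick $u\in\Z^{m}$ supported on the top nonvanishing block of $k$ with $k\cdot u=Q=O_{C,\d,d}(1)$, choose $Q^{\ast}$ with $Q^{\ast}Q\equiv 1\bmod p^{s}$, and set $\gamma_{0}(n)=\prod_{i}\exp(Q^{\ast}u_{i}\,\eta(\tilde g(n))X_{i})$; then $\eta\circ(\tilde g\gamma_{0}^{-1})=(1-Q^{\ast}Q)\,\eta\circ\tilde g$ is integer valued by Lemma \ref{1:goodcoordinates}, and Proposition \ref{1:normalizegamma} replaces it by $\eta\circ\gamma'$ for an honest $\Gamma$-valued polynomial sequence $\gamma'$, so that $\gamma=\gamma_{0}\gamma'$ lies in $\poly(\Omega\to G_{\N}\vert\Gamma)$ and $g'=\tilde g\gamma^{-1}$ satisfies $\eta\circ g'\equiv 0$, hence lands in $G'$; the partial $p$-periodicity of $g'$ is then checked directly from $g'(n+pm)^{-1}g'(n)=\gamma(n+pm)\tilde g(n+pm)^{-1}\tilde g(n)\gamma(n)^{-1}\in\Gamma$ on $\tau(\Omega)+p\Z^{d}$ (together with Lemma \ref{1:grg}). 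Until you prove this lifting step — with the mod-$p^{s}$ inverse trick and the normalization via Proposition \ref{1:normalizegamma}, rather than an appeal to Corollary \ref{1:noloop2} — the proposal is an accurate outline of the paper's argument but not a proof of the theorem.
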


    In fact, we will prove a stronger version of Theorem \ref{1:facf30}, which is Theorem \ref{1:facf3r} in this paper.
 
% \begin{rem}\label{1:r:wwer}
 %We caution the readers that we do not know whether Theorem \ref{1:facf30} holds in the $p$-periodic setting, i.e. if in addition $g$ is $p$-periodic, then we do not know whether one can require further that $g'$ is also $p$-periodic (see Remark \ref{1:wpsp} for more discussions). 
%   \end{rem}  

 It is natural to ask whether one can extend Theorems \ref{1:sLei00} and \ref{1:facf30} to more general sets $\Omega$. In fact, in the series \cite{SunB,SunC,SunD}, we will frequent work with those $\Omega$ which is the set of solutions of a system of quadratic equations (whereas the set $\Omega$ defined in Theorems \ref{1:sLei00} and \ref{1:facf30} is the set of solution for a single quadratic equation). In this paper, we provide such generalizations of Theorems \ref{1:sLei00} and \ref{1:facf30} in Theorems \ref{1:veryr} and \ref{1:facf3r} respectively (see also Theorem \ref{1:facf3} for a variation of Theorem \ref{1:facf3r}).

      \subsection{Organization of the paper} We first provide the background material for polynomials, nilmanifolds and quadratic forms in Sections \ref{1:s:pp2} (as well as Appendix \ref{1:s:AppA}), \ref{1:s:pp3} and \ref{1:s:pp5} respectively. Then in Section \ref{1:s:45},    
      we state the main equidistribution result of the paper (i.e. Theorem \ref{1:sLei}, which is a generalization of Theorem \ref{1:sLei00}) and explain the outline of its proof.  
      
      Sections  \ref{1:s4},  \ref{1:s5} and \ref{1:s:pp9} are devoted to the proof of Theorem \ref{1:sLei}. 
      In Section \ref{1:s4}, we provide the solutions to some relevant algebraic equations for   $p$-periodic polynomials.
      Among other 
      results, we solve a polynomial equation of the form (\ref{1:t43t}), which plays a central role in the proof of Theorem \ref{1:sLei00} for $p$-periodic polynomial sequences. Then in Section \ref{1:s5}, we extend all the results in  Section \ref{1:s4} to the non-periodic setting.  Finally, in Section \ref{1:s:pp9}, we combine the results from Sections \ref{1:s4} and \ref{1:s5} with the approach of Green and Tao \cite{GT12b,GT14}  to complete the proof of Theorem \ref{1:sLei}. We refer the readers to Section \ref{1:s:ooer} for a more detailed explanation of this method.

      For the study in \cite{SunC,SunD}, we need to extend Theorem \ref{1:sLei00} to a more general setting. In Section \ref{1:s:mset}, we defined a large family of sets arising from quadratic equations (call the \emph{$M$-sets}), and provide some irreducible properties for such sets. In Section \ref{1:ss91}, we extend Theorem \ref{1:sLei00} to a large class of $M$-sets (Theorem \ref{1:veryr}). For convenience, we introduce a concept called the \emph{Leibman dichotomy} which characterizes the phenomenon  described by Theorem \ref{1:sLei00}.
      In Sections \ref{1:ss92}, we provide two factorization theorems (Theorems \ref{1:facf3} and  \ref{1:facf3r}) for large class of $M$-sets, which will play important roles in \cite{SunC,SunD}.
      
      Unfortunately the proofs of many results in Sections \ref{1:s:mset} and \ref{1:ss91} are rather involved. In order for the readers to follow the ideas of the paper more easily, we postpone the proofs of some very technical (but also very important) results in Appendices \ref{1:s:AppB}, \ref{1:s:dec}, \ref{1:s:AppC}, and \ref{1:s:AppD}.
      
    In Section \ref{1:s:oq}, we collect some interesting open questions.

     \

\textbf{Acknowledgements.} We thank James Leng for helpful discussions.
 
  \subsection{Definitions and notations}\label{1:ssdn}
  
  \begin{conv}\label{titi} 
  Throughout this paper, we use
   $\tau\colon\F_{p}\to \{0,\dots,p-1\}$ to denote the natural bijective embedding, and use $\iota$ to denote the map from $\Z$ (or $\Z_{K}$ for any $K$ divisible by $p$) to $\F_{p}$ given by $\iota(n):=\tau^{-1}(n \mod p\Z)$.
	We also use 
	$\tau$ to denote the map from $\F_{p}^{k}$ to $\Z^{k}$ (or $\Z_{K}^{k}$) given by $\tau(x_{1},\dots,x_{k}):=(\tau(x_{1}),\dots,\tau(x_{k}))$,
	and
	$\iota$ to denote the map from $\Z^{k}$ (or $\Z_{K}^{k}$) to $\F_{p}^{k}$ given by $\iota(x_{1},\dots,x_{k}):=(\iota(x_{1}),\dots,$ $\iota(x_{k}))$. 
	When there is no confusion, we will not state the domain and range of $\tau$ and $\iota$ explicitly.
	
	We may also extend the domain of $\iota$ to all the rational numbers of the form $x/y$ with $(x,y)=1, x\in\Z, y\in\Z\backslash p\Z$ by setting $\iota(x/y):=\iota(xy^{\ast})$, where $y^{\ast}$ is any integer with $yy^{\ast}\equiv 1 \mod p\Z$.
  \end{conv}
  
  Below are the notations we use in this paper:

\begin{itemize}
	\item Let $\N,\N_{+},\Z,\Q,\R,\R+,\C$ denote the set of non-negative integers, positive integers, integers, rational numbers, real numbers, positive real numbers, and complex numbers, respectively. Denote $\T:=\R/\Z$. Let $\F_{p}$ denote the finite field with $p$ elements. Let $\Z_{K}$ denote the cyclic group with $K$ elements.
		\item Throughout this paper, $d$ is a fixed positive integer and $p$ is a prime number.
		\item Throughout this paper, unless otherwise stated, all vectors are assumed to be horizontal vectors.
		\item Let $\mathcal{C}$ be a collection of parameters and $A,B,c\in\R$. We write $A\gg_{\mathcal{C}} B$ if $\vert  A\vert\geq K\vert B\vert$ and $A=O_{\mathcal{C}}(B)$ if $\vert A\vert\leq K\vert B\vert$ for some $K>0$ depending only on the parameters in $\mathcal{C}$.
%		We write $A=o_{c\to\infty;\mathcal{C}}(B)$ if for any $\e>0$, there exists $K>0$ depending only on $\e$ and  the parameters in $\mathcal{C}$ such that $\vert A\vert\leq \e\vert B\vert$ for all $c\geq K$. 
In the above definitions, we allow the set $\mathcal{C}$ to be empty. In this case $K$ will be a universal constant.
\item Let $[N]$ denote the set $\{0,\dots,N-1\}$.
	\item
	For $i=(i_{1},\dots,i_{k})\in\Z^{k}$, denote   $\vert i\vert:=\vert i_{1}\vert+\dots+\vert i_{k}\vert$.
	 For $n=(n_{1},\dots,n_{k})\in\Z^{k}$ and $i=(i_{1},\dots,i_{k})\in\N^{k}$, denote $n^{i}:=n_{1}^{i_{1}}\dots n_{k}^{i_{k}}$  and $i!:=i_{1}!\dots i_{k}!$. 
For $n=(n_{1},\dots,n_{k})\in\N^{k}$ and $i=(i_{1},\dots,i_{k})\in\N^{k}$, denote $\binom{n}{i}:=\binom{n_{1}}{i_{1}}\dots \binom{n_{k}}{i_{k}}$.	
	\item For any set $F$, let $F[x_{1},\dots,x_{k}]$ denote the set of all polynomials in the variables $x_{1},\dots,x_{k}$ whose coefficients are from $F$. 
    \item For $x\in\R$, let $\lfloor x\rfloor$ denote the largest integer which is not larger than $x$, and $\lceil x\rceil$ denote the smallest integer which is not smaller than $x$. Denote $\{x\}:=x-\lfloor x\rfloor$.
%		\item For   a set $X$ and a function $f\colon X\to\mathbb{C}$, let $\overline{f}\colon X\to\mathbb{C}$ denote the function obtained by taking the conjugate of  $f$.
	\item Let  $X$ be a finite set and $f\colon X\to\C$ be a function. Denote $\E_{x\in X}f(x):=\frac{1}{\vert X\vert}\sum_{x\in X}f(x)$, the average of $f$ on $X$.
	\item We say that a set $\Omega\subseteq \Z^{k}$ is \emph{$Q$-periodic} if $\Omega=\Omega+Q\Z^{k}$. 
	\item If $\Omega\subseteq \Z^{k}$ is  a $Q$-periodic set and $f\colon \Z^{k}\to\C$ is such that $f(n)=f(n+Qm)$ for all $m,n\in\Z^{k}$ with $n,n+Qm\in\Omega$. Then we denote $\E_{x\in\Omega}f(x):=\E_{x\in\Omega\cap [Q]^{k}}f(x)$.
	\item For $F=\Z^{k}$ or $\F_{p}^{k}$, and $x=(x_{1},\dots,x_{k}), y=(y_{1},\dots,y_{k})\in F$, let $x\cdot y\in \Z$ or $\F_{p}$ denote the dot product given by
	$x\cdot y:=x_{1}y_{1}+\dots+x_{k}y_{k}.$
	\item Let $\exp\colon \R\to\C$ denote the function $\exp(x):=e^{2\pi i x}$.
	\item If $G$ is a connected, simply connected Lie group, then we use  $\log G$ to denote its Lie algebra. Let $\exp\colon \log G\to G$ be the exponential map, and $\log\colon G\to \log G$ be the logarithm map. For $t\in\R$ and $g\in G$, denote $g^{t}:=\exp(t\log g)$. 
	\item If $f\colon H\to G$ is a function from an abelian group $H=(H,+)$ to some group $(G,\cdot)$, denote $\Delta_{h} f(n):=f(n+h)\cdot f(n)^{-1}$ for all $n,h\in H$.
%	\item If $f\colon \Z^{k}\to \C$ or $f\colon \F_{p}^{k}\to\F_{p}$ is a polynomial, then for all $1\leq i\leq k$, let $\partial_{i}f$ denote the formal partial derivative of $f$ in the $i$-th variable.
	\item We write affine subspaces of $\V$ as $V+c$, where $V$ is a subspace of $\V$ passing through $\bold{0}$, and $c\in\V$.
 \end{itemize}	
 
  Let $D,D'\in\N_{+}$ and $C>0$. 
Here are some basic notions of complexities:

	\begin{itemize}
		\item \textbf{Real and complex numbers:} a  number $r\in\R$ is of \emph{complexity} at most $C$ if $r=a/b$ for some $a,b\in\Z$ with $-C\leq a,b\leq C$. If $r\notin \Q$, then we say that the complexity of $r$ is infinity. A complex number is of \emph{complexity} at most $C$ if both its real and imaginary parts are of complexity at most $C$.
	   \item \textbf{Vectors and matrices:} a vector or matrix is of \emph{complexity} at most $C$ if all of its entries are of  complexity at most $C$.
	   \item \textbf{Subspaces:} a subspace of $\R^{D}$ is of \emph{complexity} at most $C$ if it is the null space of a matrix of complexity at most $C$.
	  \item \textbf{Linear transformations:} let $L\colon\C^{D}\to\C^{D'}$ be a linear transformation. Then $L$ is associated with an $D\times D'$ matrix $A$ in $\C$. We say that $L$ is of \emph{complexity} at most $C$ if  $A$ is of complexity at most $C$.
	  \item \textbf{Lipschitz functions:} a Lipschitz function is of \emph{complexity} at most $C$ if its Lipschitz norm is at most $C$.
	\end{itemize}

\section{Background material for polynomials}\label{1:s:pp2}

\begin{defn}[Polynomials in finite field]
	Let $\poly(\V\to\F^{d'}_{p})$ be the collection of all functions $f\colon \V\to\F_{p}$ of the form
	\begin{equation}\label{1:p1}
	f(n_{1},\dots,n_{d})=\sum_{0\leq a_{1},\dots,a_{d}\leq p-1, a_{i}\in\N}C_{a_{1},\dots,a_{d}}n^{a_{1}}_{1}\dots n^{a_{d-1}}_{d}
	\end{equation}	
	for some $C_{a_{1},\dots,a_{d}}\in \F_{p}^{d'}$. 
	Let $f\in\poly(\V\to\F^{d'}_{p})$ be a function which is not constant zero.
	The \emph{degree of $f$} (denoted as $\deg(f)$) is the largest $r\in\N$ such that $C_{a_{1},\dots,a_{d}}\neq \bold{0}$ for some $a_{1}+\dots+a_{d}=r$. We say that $f$ is \emph{homogeneous of degree $r$} if $C_{a_{1},\dots,a_{d}}\neq \bold{0}$ implies that $a_{1}+\dots+a_{d}=r$.	 We say that $f$ is a \emph{linear transformation} if $f$ is homogeneous of degree 1.
\end{defn}	

\begin{conv}\label{1:c00}
       For convenience, the \emph{degree} of the constant zero function is allowed to take any integer value. For example, 0 can be regarded as a homogeneous polynomial of degree 10, as a linear transformation, or as a polynomial of degree -1. Here we also adopt the convention that the  only  polynomial of negative degree is 0.  
\end{conv}

In this paper, we will work with polynomials which satisfy various types of periodicity properties. To this end, we introduce the following notations:

 \begin{defn}[Partially $p$-periodic polynomials]\label{1:polygeneral}
     For any subset $R$ of $\R$,
     let $\poly(\Z^{d}\to R)$  be the collection of all polynomials in $\Z^{d}$ taking values in $R$. For any prime $p$, subset $\Omega$ of $\Z^{d}$, and sets  $R'\subseteq R\subseteq \R$, let $\poly(\Omega\to R\vert R')$ denote the set of all $f\in\poly(\Z^{d}\to R)$ such that $f(n)\in R$ for all $n\in\Z^{d}$ and that $f(n)\in R'$ for all $n\in\Omega$.
     Let $\poly_{Q}(\Omega\to R\vert R')$ denote the set of all $f\in\poly(\Z^{d}\to R)$ such that $f(n)\in R$ for all $n\in\Z^{d}$ and that $f(n+Qm)-f(n)\in R'$ for all $m,n\in\Z^{d}$ with $n,n+Qm\in\Omega$.  
     
     A polynomial $f$ in $\poly_{p}(\Omega\to \R\vert\Z)$ is called a \emph{partially $p$-periodic polynomial on $\Omega$}.
 \end{defn}

    	We use some examples to illustrate the meanings of these notions:

    		\begin{ex}
    			The set
    			$\poly(\Z^{d}\to R\vert R')$ is simply the set of all $R'$-valued polynomials. In other words, $\poly(\Z^{d}\to R\vert R')=\poly(\Z^{d}\to R')$. 
    		\end{ex}

	\begin{ex}
    		Let $M\colon\Z^{d}\to\Z/p$ be a quadratic form, i.e. $M(n)=((nA)\cdot n+v\cdot n+b)/p$ for all $n\in\Z^{d}$ for some $d\times d$ integer matrix $A$, $v\in\Z^{d}$ and $b\in\Z$.
	Let	$V_{p}(M)$ denote the set of $n\in \Z^{k}$ such that $M(n+pm)\in \Z$ for all $m\in\Z^{k}$ (which is the same as the set of $n\in \Z^{k}$ such that $M(n)\in \Z$).	For  any set $F$, $\poly(V_{p}(M)\to F\vert \Z)$ is the set of all $F$-valued polynomials $f\colon\Z^{d}\to F$ such that $f(n)\in \Z$ for all $n\in\Z^{d}$ with $M(n)\in \Z$, and $\poly_{p}(V_{p}(M)\to F\vert\Z)$ is the set of all $F$-valued polynomials $f\colon\Z^{d}\to \Z/p$ such that $f(n+pm)-f(n)\in \Z$ for all $n\in\Z^{d}$ with $M(n)\in \Z$. 
    	\end{ex}
  		
	   	\begin{ex}	
    		A polynomial $f\in \poly(\Z^{d}\to \Z/p)$ belongs to $\poly_{p}(\Z^{d}\to \Z/p\vert\Z)$ if $f$ is \emph{$p$-periodic}, meaning that $f(n+pm)-f(n)\in \Z$ for all $m,n\in\Z^{d}$. 
        	\end{ex}		
		
		Clearly every polynomial with $\Z/p$-coefficients belongs to $\poly_{p}(\Z^{d}\to \Z/p\vert\Z)$, and $\poly_{p}(\Z^{d}\to \Z/p\vert\Z)$ is contained in $\poly(\Z^{d}\to \Z/p)$.
    	    On the other hand, $\poly(\Z^{d}\to \Z/p)$ is not contained in $\poly_{p}(\Z^{d}\to \Z/p\vert\Z)$
    	     (consider for example $f(n)=n^{p}/p^{2}, d=1$).
    		However, the following lemma shows that polynomials in $\poly(\Z^{d}\to \Z/p)$ with degree less than $p$ always belong to $\poly_{p}(\Z^{d}\to \Z/p\vert\Z)$.
		
	\begin{lem}\label{1:pp2}
	Every polynomial $f\in \poly(\Z^{d}\to \Z/p)$ of degree smaller than $p$  belongs to $f\in \poly_{p}(\Z^{d}\to \Z/p\vert\Z)$.
	\end{lem}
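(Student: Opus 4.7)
The plan is to reduce to an integer-valued polynomial and expand in the multi-index binomial basis. Setting $g(n):=pf(n)$, the hypothesis $f\in\poly(\Z^{d}\to\Z/p)$ forces $g$ to be an integer-valued polynomial of the same degree $r<p$ as $f$, and the desired conclusion $f(n+pm)-f(n)\in\Z$ is equivalent to $g(n+pm)\equiv g(n)\pmod{p}$ for all $n,m\in\Z^{d}$.

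First I would invoke the classical fact that any integer-valued polynomial $g\colon\Z^{d}\to\Z$ admits a unique expansion
\[
g(n)=\sum_{a\in\N^{d}} d_{a}\binom{n_{1}}{a_{1}}\cdots\binom{n_{d}}{a_{d}}
\]
with integer coefficients $d_{a}\in\Z$; this is immediate from the discrete Taylor formula $d_{a}=\Delta^{a}g(\bold{0})$ together with the fact that $\Z$ is closed under integer linear combinations. Because $\deg g=r<p$, only multi-indices with $|a|\leq r<p$ occur, so in particular each component satisfies $a_{i}<p$. The problem therefore reduces to the single-variable divisibility claim: for every integer $k<p$ and all $N,M\in\Z$,
\[
\binom{N+pM}{k}-\binom{N}{k}\equiv 0\pmod{p}.
\]
I would prove this by writing $\binom{N+pM}{k}=\tfrac{1}{k!}\prod_{j=0}^{k-1}\bigl((N-j)+pM\bigr)$ and expanding the product: the term in which no $pM$ factor is chosen reproduces $k!\binom{N}{k}$ and cancels, while every other term carries at least one factor of $p$. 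Since $\gcd(k!,p)=1$ when $k<p$, dividing by $k!$ preserves divisibility by $p$, proving the claim. A telescoping identity $\prod_{i}A_{i}-\prod_{i}B_{i}=\sum_{i}(A_{i}-B_{i})\prod_{j<i}A_{j}\prod_{j>i}B_{j}$ promotes this single-variable fact to $\binom{n}{a}-\binom{n+pm}{a}\equiv 0\pmod{p}$ for the multi-index binomials, and the $\Z$-linear combination $g(n)=\sum_{a}d_{a}\binom{n}{a}$ then yields $g(n+pm)\equiv g(n)\pmod{p}$ as required.

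There is no serious obstacle; the one conceptual point is that the hypothesis $\deg f<p$ is essential and enters exactly to keep $\gcd(k!,p)=1$. Without it the lemma fails, as illustrated by $f(n)=\binom{n}{p}/p\in\poly(\Z\to\Z/p)$: Vandermonde gives $\binom{n+p}{p}-\binom{n}{p}=\sum_{k=1}^{p}\binom{p}{k}\binom{n}{p-k}\equiv\binom{p}{p}=1\pmod{p}$ since $\binom{p}{k}\equiv 0\pmod{p}$ for $1\leq k\leq p-1$, so this $f$ is not partially $p$-periodic.
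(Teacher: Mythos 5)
Your proof is correct, and it takes a genuinely different (though equally elementary) route from the paper's. The paper argues by multivariate interpolation: since $\deg f<p$, one can write $f(n)=\sum_{|i|\le s}\frac{a_{i}}{Qp}n^{i}$ with $p\nmid Q$; the shift $n\mapsto n+pm$ changes each monomial by a multiple of $p$, so $f(n+pm)-f(n)\in\Z/Q$, and since $f$ is $\Z/p$-valued the difference also lies in $\Z/p$, whence it lies in $(\Z/Q)\cap(\Z/p)=\Z$. You instead pass to $g=pf$, expand $g$ in the multi-index binomial basis with integer coefficients $\Delta^{a}g(\bold{0})$, and reduce to the congruence $\binom{N+pM}{k}\equiv\binom{N}{k}\pmod p$ for $k<p$, which you prove by the product expansion and the coprimality of $k!$ with $p$. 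Both arguments hinge on the same arithmetic point -- that the factorial denominators forced by interpolation are units mod $p$ precisely when the degree is below $p$ -- but your version trades the paper's lattice-intersection step $(\Z/Q)\cap(\Z/p)=\Z$ for an explicit binomial congruence, and as a byproduct yields the slightly sharper statement $pf(n+pm)\equiv pf(n)\pmod{p}$ with a clean integral expansion of $pf$. The paper's version has the advantage of matching the bookkeeping style it reuses later (e.g.\ in Lemma \ref{1:ivie} and in the $p$-expansion trick, where intersections of the form $(\Z/Q)\cap(\Z/p^{s})$ recur). Your closing counterexample $\binom{n}{p}/p$ correctly shows the degree hypothesis is needed, paralleling the paper's example $n^{p}/p^{2}$.
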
	
	\begin{proof}
	Let $f\in \poly(\Z^{d}\to \Z/p)$ with $\deg(f)<p$. Since $p$ is a prime, by the multivariate polynomial
  interpolation, we may write $f(n)=\sum_{i\in\N^{d},\vert i\vert\leq s} \frac{a_{i}}{Qp}n^{i}$ for some $a_{i}\in\Z$ and $Q\in\Z, p\nmid Q$. Then for all $m,n\in\Z^{d}$, $f(n+pm)-f(n)\in\Z/Q$. On the other hand, since $f$ takes values in $\Z/p$, we have that $f(n+pm)-f(n)\in(\Z/Q)\cap(\Z/p)=\Z$, and thus $f\in \poly_{p}(\Z^{d}\to \Z/p\vert\Z)$.
	\end{proof}

    We provide another simple lemma for $p$-periodic polynomials which we will use in later sections.

  \begin{lem}\label{1:ivie}
  	Let $d\in\N_{+}$, $p$ be a prime, and $f\in\poly(\Z^{d}\to \Z)$ be an integer valued polynomial of degree smaller than $p$. There exist integer valued polynomial $f_{1}\in\poly(\Z^{d}\to \Z)$ and integer coefficient polynomial $f_{2}\in\poly(\Z^{d}\to \Z)$ both having degrees at most $\deg(f)$ such that $\frac{1}{p}f=f_{1}+\frac{1}{p}f_{2}$.
  \end{lem}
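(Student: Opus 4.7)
The plan is to decompose $f$ in the $\Z$-basis of integer-valued polynomials on $\Z^{d}$ formed by products of binomial coefficients, and then to peel off the factor $1/p$ on each basis element via B\'ezout, using the hypothesis $\deg(f)<p$ to secure the necessary coprimality. Concretely, I would first appeal to the standard fact that $f$ admits a finite expansion
$$f(n_{1},\dots,n_{d}) = \sum_{i\in\N^{d},\,\vert i\vert\leq \deg(f)} a_{i}\binom{n_{1}}{i_{1}}\cdots\binom{n_{d}}{i_{d}}, \qquad a_{i}\in\Z.$$
For each multi-index $i=(i_{1},\dots,i_{d})$ occurring in the sum, every $i_{j}$ is at most $\deg(f)<p$, so $I_{i}:=i_{1}!\cdots i_{d}!$ is a product of positive integers strictly smaller than $p$ and is therefore coprime to $p$; B\'ezout then supplies integers $u_{i},v_{i}$ with $u_{i}I_{i}+v_{i}p=1$.

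Writing $P_{i}(n):=\prod_{j=1}^{d}n_{j}(n_{j}-1)\cdots(n_{j}-i_{j}+1)\in\Z[n_{1},\dots,n_{d}]$, so that $\binom{n_{1}}{i_{1}}\cdots\binom{n_{d}}{i_{d}} = P_{i}(n)/I_{i}$, dividing the B\'ezout identity by $pI_{i}$ and multiplying by $P_{i}(n)$ gives the per-basis-element decomposition
$$\frac{1}{p}\binom{n_{1}}{i_{1}}\cdots\binom{n_{d}}{i_{d}} = \frac{u_{i}P_{i}(n)}{p} + v_{i}\binom{n_{1}}{i_{1}}\cdots\binom{n_{d}}{i_{d}}.$$
I would then set
$$f_{1}(n):=\sum_{\vert i\vert\leq \deg(f)} a_{i}v_{i}\binom{n_{1}}{i_{1}}\cdots\binom{n_{d}}{i_{d}},\qquad f_{2}(n):=\sum_{\vert i\vert\leq \deg(f)} a_{i}u_{i}P_{i}(n),$$
and read off $f/p=f_{1}+f_{2}/p$; by construction $f_{1}$ is a $\Z$-linear combination of integer-valued polynomials, hence itself integer-valued, $f_{2}$ has integer coefficients, and both polynomials have degree at most $\deg(f)$, as required.

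I do not anticipate any substantive obstacle here: the entire content of the statement is the combination of the binomial-coefficient $\Z$-basis for integer-valued polynomials with the coprimality $\gcd(I_{i},p)=1$ that the hypothesis $\deg(f)<p$ supplies for free.
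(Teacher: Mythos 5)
Your proof is correct. It rests on the same arithmetic engine as the paper's argument — the hypothesis $\deg(f)<p$ guarantees that the denominators arising from $f$ are coprime to $p$, so they can be inverted modulo $p$ — but the execution is genuinely different. The paper expands $f$ in the monomial basis via interpolation, writing $f(n)=\sum_{i}\frac{a_{i}}{Q}n^{i}$ with a single common denominator $Q$ prime to $p$; it then takes one inverse $Q^{\ast}Q\equiv 1 \bmod p\Z$, defines $f_{2}(n):=\sum_{i}Q^{\ast}a_{i}n^{i}$, and proves integrality of $f_{1}:=\frac{1}{p}(f-f_{2})$ by observing that its values lie in $(\Z/Q)\cap(\Z/p)=\Z$. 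You instead expand $f$ in the binomial-product $\Z$-basis of integer-valued polynomials and split each basis element separately via the B\'ezout identity $u_{i}I_{i}+v_{i}p=1$ with $I_{i}=i_{1}!\cdots i_{d}!$. Your route buys a slightly cleaner finish: integrality of $f_{1}$ is immediate because it is a $\Z$-linear combination of binomial products, so no lattice-intersection step is needed; the price is invoking the (standard, but slightly heavier) structure theorem that integer-valued polynomials on $\Z^{d}$ are exactly the $\Z$-span of $\prod_{j}\binom{n_{j}}{i_{j}}$, with coefficients supported on $\vert i\vert\leq\deg(f)$, whereas the paper only needs the cruder interpolation fact that $f$ has rational coefficients with denominator prime to $p$. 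Both arguments are complete and elementary; there is no gap in yours.
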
	
  \begin{proof}
  	Denote $s:=\deg(f)<p$.
  	By the multivariate polynomial interpolation, we may write
  	$$f(n)=\sum_{i\in\N^{d},\vert i\vert\leq s}\frac{a_{i}}{Q}n^{i}$$
  	for some $a_{i}\in\Z$ and $Q\in\Z, p\nmid Q$. Let $Q^{\ast}\in\Z$ be such that $Q^{\ast}Q\equiv 1 \mod p\Z$ and set 
  	$$f_{2}(n):=\sum_{i\in\N^{d},\vert i\vert\leq s}Q^{\ast}a_{i}n^{i}.$$
  	Then $f_{2}$ is an integer coefficient polynomial of degree at most $s$. Note that  for all $n\in\Z^{d}$,
  	$$\frac{1}{p}(f(n)-f_{2}(n))=\sum_{i\in\N^{d},\vert i\vert\leq s}\frac{a_{i}}{Q}\frac{1-Q^{\ast}Q}{p}n^{i},$$
  	the left hand side of which belongs to $\Z/p$ and the right hand side of which belongs to $\Z/Q$. So $f_{1}(n):=\frac{1}{p}(f(n)-f_{2}(n))$ takes values in $(\Z/Q)\cap (\Z/p)=\Z$.
  \end{proof}

There is a natural correspondence between polynomials taking values in $\F_{p}$ and polynomials  taking values in $\Z/p$.   Let $F\in\poly(\V\to\F_{p}^{d'})$ and $f\in \poly(\Z^{d}\to (\Z/p)^{d'})$ be polynomials of degree at most $s$ for some $s<p$. If  $F=\iota\circ pf\circ\tau$, then we say that $F$ is \emph{induced} by $f$ and $f$ is a \emph{lifting} of $F$.\footnote{Note that since $pf\circ\tau$ takes values in $\Z^{d'}$, the coefficients of $pf\circ\tau$ belongs to $\Z^{d'}/s!$, and thus $\iota\circ pf\circ\tau$ is well defined by Convention \ref{titi}.}
     We say that $f$ is a  \emph{regular lifting} of $F$ if in addition $f$ has the same degree as $F$ and $f$ has $\{0,\frac{1}{p},\dots,\frac{p-1}{p}\}$-coefficients. We refer the readers to Appendix \ref{1:s:AppA} for the basic properties of liftings.

\section{Background material for nilmanifolds}\label{1:s:pp3}

\subsection{Nilmanifolds, filtrations and polynomial sequences}

We start with some basic definitions on nilmanifolds.
Let $G$ be a group and $g,h\in G$. Denote $[g,h]:=g^{-1}h^{-1}gh$. For subgroups $H,H'$ of $G$, let $[H,H']$ denote the group generated by $[h,h']$ for all $h\in H$ and $h'\in H'$.

\begin{defn}[Filtered group]
	Let $G$ be a group. An \emph{$\N$-filtration} on $G$ is a collection $G_{\N}=(G_{i})_{i\in \N}$ of subgroups of $G$ indexed by $\N$  with $G_{0}=G$ such that  the following holds:
	\begin{enumerate}[(i)]
		\item for all $i,j\in \N$ with $i\leq j$, we have that $G_{i}\supseteq G_{j}$;
		\item  for all $i,j\in \N$, we have $[G_{i},G_{j}]\subseteq G_{i+j}$.
	\end{enumerate}	
	For $s\in \N$, we say that $G$ is an \emph{($\N$-filtered) nilpotent group} of \emph{degree} at most $s$ (or of \emph{degree} $\leq s$) with respect to some $\N$-filtration  $(G_{i})_{i\in \N}$ if $G_{i}$ is trivial whenever $i>s$.  	\end{defn}

\begin{ex}
The \emph{lower central series} of a group $G$ is the $\N$-filtration $(G^{i})_{i\in\N}$ given by $G^{0}:=G$ and $G^{i+1}=[G^{i},G]$ for all $i\in\N$. We say that $G$ is a \emph{nilpotent group} of step (or degree) at most $s$ (with respect to the lower central series) if $G^{i}$ is trivial for all $i\geq s+1$.
\end{ex}

\begin{defn}[Nilmanifold]
	Let $\Gamma$ be a discrete and cocompact subgroup of a connected, simply-connected nilpotent Lie group $G$ with filtration $G_{\N}=(G_{i})_{i\in \N}$ such that $\Gamma_{i}:=\Gamma\cap G_{i}$ is a cocompact subgroup of $G_{i}$ for all $i\in \N$.\footnote{In some papers, such $\Gamma_{i}$ is called a \emph{rational} subgroup of $G$.}
	Then we say that $G/\Gamma$ is an \emph{($\N$-filtered) nilmanifold}, and we use $(G/\Gamma)_{\N}$ to denote the collection $(G_{i}/\Gamma_{i})_{i\in \N}$ (which is called the \emph{$\N$-filtration} of $G/\Gamma$). We say that $G/\Gamma$ has degree $\leq s$  with respect to $(G/\Gamma)_{\N}$ if $G$ has degree $\leq s$  with respect to $G_{\N}$.\footnote{Unlike the convention in literature, for our convenience, we do not require $G_{0}=G$ to be the same as $G_{1}$.}
\end{defn}

 We also need to define some special types of nilmanifolds.

\begin{defn}[Sub-nilmanifold]\label{1:id1}
	Let $G/\Gamma$ be an $\N$-filtered nilmanifold of degree $\leq s$ with filtration $G_{\N}$ and $H$ be a rational subgroup of $G$. Then $H/(H\cap \Gamma)$ is also  an $\N$-filtered nilmanifold of degree $\leq s$ with the filtration $H_{\N}$ given by $H_{i}:=G_{i}\cap H$ for all $i\in \N$ (see Example 6.14 of \cite{GTZ12}). We say that  $H/(H\cap \Gamma)$ is a \emph{sub-nilmanifold} of $G/\Gamma$, $H_{\N}$ (or $(H/(H\cap \Gamma))_{\N}$) is the filtration \emph{induced by} $G_{\N}$ (or $(G/\Gamma)_{\N}$).
\end{defn}	

\begin{defn}[Quotient nilmanifold]\label{1:id2}
	Let $G/\Gamma$ be an $\N$-filtered nilmanifold of degree $\leq s$ with filtration $G_{\N}$ and $H$ be a normal subgroup of $G$. Then $(G/H)/(\Gamma/(\Gamma\cap H))$ is also  an $\N$-filtered nilmanifold of most $\leq s$ with the filtration $(G/H)_{\N}$ given by $(G/H)_{i}:=G_{i}/(H\cap G_{i})$ for all $i\in \N$. We say that  $(G/H)/(\Gamma/(\Gamma\cap H))$ is the \emph{quotient nilmanifold} of $G/\Gamma$ by $H$ and that $(G/H)_{\N}$ is the filtration \emph{induced by} $G_{\N}$.
\end{defn}	

\begin{defn}[Product nilmanifold]\label{1:id4}
	Let $G/\Gamma$ and  $G'/\Gamma'$ be  $\N$-filtered nilmanifolds of degree $\leq s$ with filtration $G_{\N}$ and $G'_{\N}$. Then $G\times G'/\Gamma\times\Gamma'$ is also  an $\N$-filtered nilmanifold of most $\subseteq J$ with the filtration $(G\times G'/\Gamma\times\Gamma')_{\N}$ given by $(G\times G'/\Gamma\times\Gamma')_{i}:=G_{i}\times G'_{i}/\Gamma_{i}\times \Gamma'_{i}$ for all $i\in \N$. We say that  $G\times G'/\Gamma\times\Gamma'$ is the \emph{product nilmanifold} of $G/\Gamma$ and $G'/\Gamma'$ and that $(G\times G'/\Gamma\times\Gamma')_{\N}$ is the filtration \emph{induced by} $G_{\N}$ and $G'_{\N}$.
\end{defn}	

We remark that although we used the same terminology ``induce" in Definitions \ref{1:id1}, \ref{1:id2} and \ref{1:id4}, this will not cause any confusion in the paper as the meaning of ``induce" will be clear from the context.

\begin{defn}[Filtered homomorphism]	
	An \emph{($\N$-filtered) homomorphism} $\phi\colon G/\Gamma\to G'/\Gamma'$ between two $\N$-filtered nilmanifolds is a group homomorphism $\phi\colon G\to G'$  which maps $\Gamma$ to $\Gamma'$ and maps $G_{i}$ to $G'_{i}$ for all $i\in \N$.
\end{defn}

Every nilmanifold has an explicit algebraic description by using the Mal'cev basis:

\begin{defn} [Mal'cev basis]\label{1:Mal}
	Let $s\in\N_{+}$, $G/\Gamma$ be a  nilmanifold  of step at most $s$ with the $\N$-filtration  $(G_{i})_{i\in\N}$. Let $\dim(G)=m$ and $\dim(G_{i})=m_{i}$ for all $0\leq i\leq s$. A basis $\mathcal{X}:=\{X_{1},\dots,X_{m}\}$ for the Lie algebra $\log G$ of $G$ (over $\mathbb{R}$) is a \emph{Mal'cev basis} for $G/\Gamma$ adapted to the filtration $G_{\N}$ if
	\begin{itemize}
		\item for all $0\leq j\leq m-1$, $\log H_{j}:=\text{Span}_{\mathbb{R}}\{\xi_{j+1},\dots,\xi_{m}\}$ is a Lie algebra ideal of $\log G$ and so $H_{j}:=\exp(\log H_{j})$  is a normal Lie subgroup of $G;$
		\item $G_{i}=H_{m-m_{i}}$ for all $0\leq i\leq s$;
		\item the map $\psi^{-1}\colon \mathbb{R}^{m}\to G$ given by
		\begin{equation}\nonumber
		\psi^{-1}(t_{1},\dots,t_{m})=\exp(t_{1}X_{1})\dots\exp(t_{m}X_{m})
		\end{equation}	
		is a bijection;
		\item $\Gamma=\psi^{-1}(\Z^{m})$.
	\end{itemize}	
	We call $\psi$ the  \emph{Mal'cev coordinate map} with respect to the Mal'cev basis $\mathcal{X}$.
	If $g=\psi^{-1}(t_{1},\dots,t_{m})$, we say that $(t_{1},\dots,t_{m})$ are the \emph{Mal'cev coordinates} of $g$ with respect to $\mathcal{X}$. 
	
	We say that the Mal'cev basis $\mathcal{X}$  is \emph{$C$-rational} (or of \emph{complexity} at most $C$)  if all the structure constants $c_{i,j,k}$ in the relations
	   	$$[X_{i},X_{j}]=\sum_{k}c_{i,j,k}X_{k}$$
	   	are rational with complexity at most $C$.
\end{defn}

For any $h\in G$, there is a unique way to write $h$ as
  $h=\{h\}[h]$
such that $\psi(\{h\})\in [0,1)^{m}$ and $[h]\in\Gamma$. We adopt this notation throughout this paper.

It is known that for every filtration $G_{\bullet}$ which is rational for $\Gamma$, there exists a Mal'cev basis adapted to it. See for example the discussion on pages 11--12 of \cite{GT12b}.

We use the following quantities to describe the complexities of the objected defined above.

 \begin{defn}[Notions of complexities for nilmanifolds]
Let $G/\Gamma$ be a nilmanifold with an $\N$-filtration $G_{\N}$ and a Mal'cev basis $\mathcal{X}=\{X_{1},\dots,X_{D}\}$ adapted to it. 
We say that  $G/\Gamma$ is of \emph{complexity} at most $C$ if the  Mal'cev basis $\mathcal{X}$ is $C$-rational and $\dim(G)\leq C$. 

 An element $g\in G$ is of \emph{complexity} at most $C$ (with respect to the Mal'cev coordinate map $\psi\colon G/\Gamma\to\R^{m}$) if $\psi(g)\in [-C,C]^{m}$.

Let $G'/\Gamma'$ be a nilmanifold  endowed with the Mal'cev basis  $\mathcal{X}'=\{X'_{1},\dots,X'_{D'}\}$ respectively. Let $\phi\colon G/\Gamma\to G'/\Gamma'$ be a filtered homomorphism, we say that $\phi$ is of  \emph{complexity} at most $C$ if the map $X_{i}\to\sum_{j}a_{i,j}X'_{j}$ induced by $\phi$ is such that all $a_{i,j}$ are of complexity at most $C$.

 Let $G'\subseteq G$  be a closed connected subgroup. We say that  $G'$ is \emph{$C$-rational} (or of \emph{complexity} at most $C$) relative to $\mathcal{X}$ if the Lie algebra $\log G$ has a basis consisting of linear combinations $\sum_{i}a_{i}X_{i}$ such that $a_{i}$ are rational numbers of complexity at most $C$.
\end{defn}

\begin{conv}
 In the rest of the paper, all nilmanifolds are assumed to have a fixed filtration, Mal'cev basis and a smooth Riemannian metric induced by the Mal'cev basis. Therefore, we will simply say that a nilmanifold, Lipschitz function, sub-nilmanifold etc. is of complexity $C$ without mentioning the reference filtration and Mal'cev basis.
\end{conv}

\begin{defn}[Polynomial sequences]
	Let $k\in\N_{+}$ and $G$ be a connected simply-connected nilpotent Lie group. Let    $(G_{i})_{i\in\N}$ be an $\N$-filtration of $G$. A map $g\colon \Z^{k}\to G$ is a \emph{($\N$-filtered) %$d$-integral 
	polynomial sequence} if
		$$\Delta_{h_{m}}\dots \Delta_{h_{1}} g(n)\in G_{m}$$
		 for all $m\in\N$ and $n,h_{1},\dots,h_{m}\in \Z^{k}$.\footnote{Recall that $\Delta_{h}g(n):=g(n+h)g(n)^{-1}$
 for all $n, h\in H$.}	
	
    The set of all $\N$-filtered polynomial sequences is denoted by $\poly(\Z^{k}\to G_{\N})$.
 %   
%    If $(G_{i})_{i\in\N}$ is a degree filtration of $G$, then we let $\poly(H^{k}\to G_{\N})$ denote the set $\poly(H^{k}\to G'_{\N^{k}})$, where $G'_{\N^{k}}$ is the degree-rank filtration of $G$ given by $G'_{(i_{1},\dots,i_{k'})}:=G_{i_{1}+\dots+i_{k'}}$ for all $i_{1},\dots,i_{k'}\in\N$.
  \end{defn}

By Corollary B.4 of \cite{GTZ12}, $\poly(\Z^{k}\to G_{\N})$ is a  group with respect to the pointwise multiplicative operation.
We refer the readers to Appendix B of \cite{GTZ12} for more properties for   polynomial sequences.

\subsection{Different notions of  polynomial sequences}

%In this paper, we will work frequently with polynomial sequences with certain periodicity properties.

  We start with some conventional definitions for polynomial sequences.
	
   \begin{defn}[null, rational and periodic polynomial sequences]\label{1:defr}
       Let $k,s\in\N_{+}$, $G/\Gamma$ be a nilmanifold of step at most $s$, %with $\psi\colon G\to\R^{m}$ being its Mal'cev coordinate map, 
       and   $g\in\poly(\Z^{k}\to G_{\N})$. We say that $g$ is 
       \begin{itemize}
          \item \emph{null} if $g(n)\in\Gamma$ for all $n\in\Z^{k}$;
          \item \emph{rational} if %each coordinate of 
          $g(Qn)\in\Gamma$ %is a  polynomial with coefficients in $\Z/Q$ for some $Q\in\N_{+}$, 
          for all $n\in\Z^{k}$ for some $Q\in\N_{+}$, in which case we also say that $g$ is \emph{$Q$-rational};
          \item \emph{periodic} if $g(n+Qm)^{-1}g(n)\in\Gamma$ for all $m,n\in\Z^{k}$ for some $Q\in\N_{+}$, in which case we also say that $g$ is \emph{$Q$-periodic}.
       \end{itemize}
      \end{defn}

 %   We remark our definition of $Q$-rationality differs slightly from the one in \cite{GT12b}, what one requires that   $\psi(g(Qn))\in\Z^{m}$ for all $n\in\Z^{k}$. However, these two definitions are equivalent in the sense that any $Q$-rational sequence in the sense of  \cite{GT12b} is $(s!)^{k}Q$-rational in the sense of Definition \ref{1:defr}.
    We refer the readers to Appendix A of \cite{GT12b} for properties of rational/periodic polynomial sequences.
 
 In this paper, since we will work with polynomial sequences which are null, rational or periodic on certain subsets of $\Z^{k}$, we introduce the following notations:

\begin{defn}[Partially null, rational and periodic polynomial sequences]
	Let  $k,Q\in\N_{+}$, $G/\Gamma$ be an $\N$-filtered nilmanifold,   $\Omega$ be a subset of $\Z^{k}$, and $n_{\ast}\in\Omega$. %if $I=\N$ or $\DR$, and let $\Omega$ be a subset of $(\Z^{k})^{k'}$ if $I=\N^{k'}$.
	 \begin{itemize}
          \item We use $\poly(\Omega\to G_{\N}\vert\Gamma)$ to denote the set of all $g\in \poly(\Z^{k}\to G_{\N})$ which is \emph{partially null on $\Omega$,} meaning that  $g(n)\in \Gamma$ for all $n\in\Omega$.
           \item We use $\poly_{\approx Q,n_{\ast}}(\Omega\to G_{\N}\vert\Gamma)$ to denote the set of all $g\in \poly(\Z^{k}\to G_{\N})$ which is \emph{partially $Q$-rational on $\Omega$ with base point $n_{\ast}$} meaning that $g(n_{\ast}+Qm)\in \Gamma$ for all $m\in\Z^{k}$ with $n_{\ast}+Qm\in\Omega$.
          \item We use $\poly_{Q}(\Omega\to G_{\N}\vert\Gamma)$ to denote the set of all $g\in \poly(\Z^{k}\to G_{\N})$ which is \emph{partially $Q$-periodic on $\Omega$,} meaning that $g(n+Qm)^{-1}g(n)\in \Gamma$ for all $m,n\in\Z^{k}$ with $n,n+Qm\in\Omega$.
       \end{itemize}
 \end{defn}	
 
 %
 
   % \begin{defn}[Rational polynomial sequences]
     %  Let $d,Q,s\in\N_{+}$ and $G/\Gamma$ be a nilmanifold of step at most $s$, %with $\psi\colon G\to\R^{m}$ being its Mal'cev coordinate map, and 
      % $\Omega$ be a subset of $\Z^{d}$ and $m\in\Omega$. We say that a rational polynomial sequence $g\in\poly(\Z^{d}\to G_{\N})$ is \emph{$(Q,\Omega,m)$-rational} if $g(m+Qn)\in\Gamma$ for all $n\in\Z^{d}$ with $m+Qn\in \Omega$.
      % We say that  $g$ is \emph{$(Q,\Omega)$-periodic} if $g(m+Qn)^{-1}g(m)\in\Gamma$ for all $m,n\in \Z^{d}$ with $m+Qn, m\in\Omega$.
     %% We say that $g$ is \emph{$Q$-power rational} if it is $Q^{r}$-rational for some $r\in\N$.
    %\end{defn}

%As an example, $g$ is $Q$-rational {\color  In the G-T sense} if $g$ is $(Q,\Z^{d},\bold{0})$-rational.

%As a special case, a $Q$-rational sequence in the sense of Definition \ref{1:defr} is $(Q,\Z^{d},\bold{0})$-rational, and a $Q$-periodic sequence in the sense of Definition \ref{1:defr} is $(Q,\Z^{d})$-periodic. As another example, for any subset $\Omega$ of $\V$,  $\poly_{p}(\iota^{-1}(\Omega)\to G_{\N}\vert\Gamma)$ is the set of all $(p,\iota^{-1}(\Omega))$-periodic sequences in $\poly(\Z^{d}\to G_{\N})$.

%Here are some examples of partially $p$-periodic polynomials.

 \begin{ex}
 	The set
 	$\poly(\Z^{k}\to G_{\N}\vert \Gamma)$ is simply the set of all $\Gamma$-valued (i.e. null) polynomials. In other words, $\poly(\Z^{k}\to \Gamma_{\N})=\poly(\Z^{k}\to G_{\N}\vert \Gamma)$.  The set $\poly_{\approx Q,\bold{0}}(\Z^{k}\to G_{\N}\vert \Gamma)$  is the set of $Q$-rational polynomial sequences in $\poly(\Z^{k}\to G_{\N})$. The set $\poly_{p}(\Z^{k}\to G_{\N}\vert \Gamma)$  is the set of $p$-periodic polynomial sequences in $\poly(\Z^{k}\to G_{\N})$.
 \end{ex}
 
 \begin{ex}
 Let $(\R/\Z)_{\N}$ be the nilmanifold  with the filtration given by $G_{0}=\dots=G_{s}=\R$ and $G_{i}=\{id_{G}\}$ for $i>s$. Then $\poly(\Omega\to \R_{\N}\vert\Z)$ is the collection of all polynomials in $\poly(\Omega\to \R\vert\Z)$ (defined in Definition \ref{1:polygeneral}) of degree at most $s$, and $\poly_{p}(\Omega\to \R_{\N}\vert\Z)$ is the collection of all polynomials in $\poly_{p}(\Omega\to \R\vert\Z)$ (defined in Definition \ref{1:polygeneral}) of degree at most $s$.
 \end{ex}

% \begin{ex}
 %	We say that a map $g\colon\Z^{k}\to G$ is \emph{$p$-periodic} (with respect to $\Gamma$) if $g(n+pm)^{-1}g(n)\in\Gamma$ for all $m,n\in\Z^{k}$.
% 	The set $\poly_{p}(\Z^{k}\to G_{\N}\vert \Gamma)$  is the set of $p$-periodic polynomial sequences in $\poly(\Z^{k}\to G_{\N})$.
%Similarly,
%     $\poly_{p}(\F_{p}^{k}\to G_{\N}\vert\Gamma)$ is the collection of all the $p$-periodic sequences in $\poly(\Z^{k}\to G_{\N})$ composed with $\tau$. 
%      \end{ex}

 \begin{ex}\label{1:notagroup2}  
 By Corollary B.4 of \cite{GTZ12},   
  it is not hard to see that $\poly(\Omega\to G_{\N}\vert \Gamma)$ and $\poly_{\approx Q,n_{\ast}}(\Z^{k}\to G_{\N}\vert \Gamma)$ are groups.
       However, we caution the readers that $\poly_{p}(\Z^{k}\to G_{\N}\vert \Gamma)$ is not necessarily a group. Let $k=1$ and $G=(\R^{3},\ast)$ be the Heisenberg group with the group action given by $$(x,y,z)\ast(x',y',z'):=(x+x',y+y',z+z'+xy')$$
       with the lower central series as its $\N$-filtration, meaning that $G_{0}=G_{1}=G$, $G_{2}=\{0\}\times\{0\}\times\R$ and $G_{s}=\{\bold{0}\}$ for $s\geq 3$.
       Indeed, let $d=k=1$, $\Gamma=\Z^{3}$, $g_{1},g_{2}\in \Gamma$ and $f_{i}(n)=g_{i}^{n/p}$ for all $n\in\Z$ and $1,2$. Then $f_{1},f_{2}\in \poly_{p}(\Z\to G_{\N}\vert \Gamma)$. However,
       $$f_{2}(n+p)^{-1}f_{1}(n+p)^{-1}f_{1}(n)f_{2}(n)=g_{2}^{-1-\frac{n}{p}}g_{1}^{-1}g_{2}^{\frac{n}{p}}=g_{2}^{-1}[g_{2}^{\frac{n}{p}},g_{1}]g_{1}^{-1}=[g_{2},g_{1}]^{\frac{n}{p}}g_{2}^{-1}g_{1}^{-1}.$$
       In general, $[g_{2},g_{1}]^{\frac{n}{p}}$ does not always belong to $\Gamma$ if we choose $g_{1}$ and $g_{2}$ properly. This means that $f_{1}f_{2}$ does not need to belong to $\poly_{p}(\Z\to G_{\N}\vert \Gamma)$. 
  \end{ex}

Although 	$\poly_{p}(\Omega\to G_{\N}\vert \Gamma)$ is not closed under multiplication, we have the following:

\begin{lem}\label{1:grg}
	Let  $k,Q\in\N_{+}$, $G/\Gamma$ be an $\N$-filtered nilmanifold,  and  $\Omega$ be a  subset of $\Z^{k}$. For all $f\in\poly_{Q}(\Omega\to G_{\N}\vert \Gamma)$ and $g\in\poly(\Omega\to G_{\N}\vert \Gamma)$, we have that $fg\in \poly_{Q}(\Omega\to G_{\N}\vert \Gamma)$.
\end{lem}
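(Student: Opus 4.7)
The plan is a short direct verification using the definitions and the fact (cited from Corollary B.4 of \cite{GTZ12}) that $\poly(\Z^{k}\to G_{\N})$ is a group under pointwise multiplication, so $fg$ is already known to lie in $\poly(\Z^{k}\to G_{\N})$. What remains is the partial $p$-periodicity condition: for every $n\in\Omega+p\Z^{k}$ and every $m\in\Z^{k}$, I must check $(fg)(n+pm)^{-1}(fg)(n)\in\Gamma$.

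The key computational step is to write
$$
(fg)(n+pm)^{-1}(fg)(n) = g(n+pm)^{-1}\bigl(f(n+pm)^{-1}f(n)\bigr)g(n),
$$
and then use the two hypotheses in the right asymmetric way. Since $f\in\poly_{p}(\Omega\to G_{\N}\vert \Gamma)$, the middle factor $\gamma:=f(n+pm)^{-1}f(n)$ belongs to $\Gamma$. Since $g\in\poly(\Omega\to G_{\N}\vert\Gamma)$ and both $n$ and $n+pm$ lie in $\Omega+p\Z^{k}$, we have $g(n),g(n+pm)\in\Gamma$. Therefore the whole expression $g(n+pm)^{-1}\gamma g(n)$ is a product of three elements of $\Gamma$, hence lies in $\Gamma$, giving the claim.

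There is no substantive obstacle: the only thing to be careful about is the order of the factors in the expansion above, which matters because $G$ is nonabelian. In particular, Example \ref{1:notagroup2} shows that one cannot expect a symmetric closure under multiplication on $\poly_{p}(\Omega\to G_{\N}\vert\Gamma)\times\poly_{p}(\Omega\to G_{\N}\vert\Gamma)$; the present lemma works precisely because the second factor $g$ is assumed to be $\Gamma$-valued on $\Omega+p\Z^{k}$, which is exactly what allows the conjugation of $\gamma\in\Gamma$ by $g(n)$ (and similarly on the left by $g(n+pm)^{-1}$) to stay inside $\Gamma$. I would include one sentence to remind the reader of this contrast with Example \ref{1:notagroup2} to motivate why only one-sided periodicity suffices.
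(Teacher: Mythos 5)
Your proposal is correct and is essentially identical to the paper's own proof: both first invoke Corollary B.4 of \cite{GTZ12} to get $fg\in\poly(\Z^{k}\to G_{\N})$, then use the same decomposition $(fg)(n+pm)^{-1}(fg)(n)=g(n+pm)^{-1}\bigl(f(n+pm)^{-1}f(n)\bigr)g(n)$ and the same placement of the two hypotheses to conclude membership in $\Gamma$. Nothing further is needed.
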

\begin{proof}
By Corollary B.4 of \cite{GTZ12}, $\poly(\Omega\to G_{\N})$ is a group. So $fg\in \poly(\Omega\to G_{\N})$. So it suffices to show that for all $m,n\in\Z^{k}$ with $n,n+Qm\in\Omega$, we have that 
$$(f(n+Qm)g(n+Qm))^{-1}(f(n)g(n))=g(n+Qm)^{-1}(f(n+Qm)^{-1}f(n))g(n)$$
belongs to $\Gamma$. Since $f\in\poly_{Q}(\Omega\to G_{\N}\vert \Gamma)$, we have that $f(n+Qm)^{-1}f(n)\in\Gamma$. Since $g\in\poly(\Omega\to G_{\N}\vert \Gamma)$, we have that $g(n+Qm)^{-1},g(n)\in\Gamma$. We are done.
\end{proof}

  %  \subsection{Rational polynomial sequence}

    The following lemma allows one to convert a partially periodic sequence to a rational one.

     \begin{lem}\label{1:oehfr}
    Let $k,Q,s\in\N_{+}$,  $\Omega\subseteq \Z^{k}$ be a non-empty $Q$-periodic set,  $G/\Gamma$ be a nilmanifold of step at most $s$, and $g\in\poly_{Q}(\Omega\to G_{\N}\vert\Gamma)$. 
    Then for any  $n_{\ast}\in\Omega$, the map $g\in\poly(\Z^{k}\to G_{\N})$ given by 
    $$g'(n):=\{g(n_{\ast})\}^{-1}g(n)[g(n_{\ast})]^{-1}$$
     is a $(s!)^{k}Q^{s}$-rational map in $\poly_{Q}(\Omega\to G_{\N}\vert\Gamma)$. 
 \end{lem}
 \begin{proof}
  It is easy to check that $g'\in\poly_{Q}(\Omega\to G_{\N}\vert\Gamma)$. For all $m\in\Z^{k}$, we have that $g'(n_{\ast}+Qm)^{-1}g'(n_{\ast})\in\Gamma$. Since $g'(n_{\ast})=id_{G}$, we have that  $g'(n_{\ast}+Qm)\in\Gamma$ for all $m\in\Z^{k}$. 
 %It is easy to check that $g'\in\poly_{Q}(\Omega\to G_{\N}\vert\Gamma)$ and $g'(n_{\ast})=id_{G}$. So for all $m\in\Z^{k}$, we have that $g'(n_{\ast}+Qm)\in\Gamma$ and so
  % $\psi(g'(n_{\ast}+Qm))\in \Z^{m}$, where
  Let $\psi\colon G\to \Z^{m}$ be the Mal'cev coordinate map. By interpolation, this implies that all the coefficients of  $\psi(g'(n_{\ast}+Q\cdot))$ are in $\Z^{k}/(s!)^{k}$, and so all the coefficients of  $\psi\circ g'$ are in $\Z^{k}/(s!)^{k}Q^{s}$, which implies that $g'$ is $(s!)^{k}Q^{s}$-rational.
 \end{proof}

%\begin{prop}\label{1:prop4rp}
 %   Let $d,s\in\N_{+}$ and $G/\Gamma$ be a nilmanifold of step at most $s$.  Let $g\in\poly(\Z^{d}\to G_{\N})$ be rational.
 %   \begin{enumerate}[(i)]
 %       \item For any horizontal character $\eta\colon G\to\R$ of $G/\Gamma$, $\eta\circ g$ takes values in $\Z/Q$.
 %       \item For any rational  $g'\in\poly(\Z^{d}\to G_{\N})$, the map $gg'$ is rational.
  %      \item For any $a\in\N_{+}$, $b\in \Z^{d}$, the map $f(n):=g(an+b)$ is rational.
  %  \end{enumerate}
%\end{prop}	

In this paper, we also need the following proposition for partially rational polynomial sequences.

\begin{prop}\label{1:oehf}
  %  Let $d,Q,s\in\N_{+}$, $p\gg_{d,Q,s} 1$ be a prime, $M\colon\Z^{d}\to\Z/p$ be a quadratic form of $p$-rank at least 3, $m\in V_{p}(M)$, $G/\Gamma$ be a nilmanifold of step at most $s$, and $g\in\poly(\Z^{d}\to G_{\N})$ be $(Q,V_{p}(M),m)$-rational. Then there exists $Q'\in\N_{+}$ with $Q'\leq O_{d,Q,s}(1)$ such that  $g$ is $(Q',V_{p}(M))$-periodic. 
      Let $k,Q,s\in\N_{+}$, $p\gg_{k,Q,s} 1$ be a prime, $\Omega\subseteq \Z^{k}$ be a non-empty $p$-periodic set, $n_{\ast}\in\Omega$, $G/\Gamma$ be a nilmanifold of step at most $s$ and complexity at most $Q$, and $g\in\poly_{\approx Q,n_{\ast}}(\Omega\to G_{\N}\vert\Gamma)$. Let $\psi\colon G\to\R^{m}$ be the Mal'cev coordinate map. Then there exists $Q'\in\N_{+}$ with $Q'\ll_{k,s} Q^{O_{k,s}(1)}$ such that  $g$ belongs to $\poly_{Q'}(\Omega\to G_{\N}\vert\Gamma)$. 
 %     \begin{enumerate}[(i)]
 %        \item  $\psi(g(n))\in \Z/Q'$ for all $n\in\Omega$; 
  %       \item $\psi(\{g(n)\})\in \Z/Q'$ for all $n\in\Omega$; 
  %       \item   $g$ belongs to $\poly_{Q'}(\Omega\to G_{\N}\vert\Gamma)$. 
 %     \end{enumerate}  
    \end{prop}
    
    In order to prove Proposition \ref{1:oehf}, we need the following result:
    
    \begin{prop}\label{1:polyGT}
Let $k,s\in\N_{+}$, $\d>0$ and $P\in\poly(\Z^{k}\to\Q)$ be a rational polynomial of degree at most $s$. Suppose that  $P$ is $K$-periodic for some $K\in\N_{+}$ and suppose that there exists a  subset $W$ of $[K]^{k}$ of cardinality at least $\d K^{k}$ such that $P(W)\subseteq \Z$. Then the coefficients of $P$ belong to $\Z/r$ for some $r\in\N_{+}$ with  $r\ll_{k,s}\d^{-O_{k,s}(1)}$.
\end{prop}
\begin{proof}
Let 
 $F\colon \T\to [0,1]$ be a Lipschitz function supported on $(-\d/10,\d/10)$ with $F(0)=1$ and with Lipschitz norm at most $100\d^{-1}$. 
    Then $\E_{n\in\Z^{k}} F(P(n) \mod \Z)\geq \d$, and $\int_{\T}F\,dm\leq \d/2$ (where $m$ is the Lebesgue measure on $\T$). So 
    $$\frac{\vert\E_{n\in\Z^{k}} F(P(n) \mod \Z)-\int_{\T}F\,dm\vert}{\Vert F\Vert_{\Lip}}\geq \d^{2}/200$$
    and thus the sequence $(P(n) \mod\Z)_{n\in \Z^{k}}$ is not $\d^{2}/200$-equidistributed in $\T$. By Theorem 8.6 of \cite{GT12b} (see also Theorem \ref{1:Lei0}), there exists $k\in\Z\backslash\{0\}$ with  $0<\vert k\vert\ll_{k,s}\d^{-O_{k,s}(1)}$ such that $kP(n)\equiv a\mod \Z$ for some $a\in\R$ for all $n\in \Z^{k}$. Since $W$ is non-empty, we may take $a=0$. So by interpolation, the coefficients of $P$ belong to $\Z/r$ for some $r\in\N_{+}$ with  $r\ll_{k,s}\d^{-O_{k,s}(1)}$.
\end{proof}
    
\begin{proof}[Proof of Proposition \ref{1:oehf}]

 Let $\psi=(\psi_{1},\dots,\psi_{m})\colon G\to\R^{m}$ be the Mal'cev coordinate map of $\psi$. Then each $\psi_{i}(g(n))$ is a rational polynomial of degree at most $s$.  
 For convenience denote $f_{i}:=\psi_{i}\circ g$.
 %Fix any $1\leq i\leq m$ and assume that $g$ is $pQK$-rational for some $K\in\N_{+}$.
  Since $p\gg_{Q} 1$, for any $x\in\Omega$, there exists $v_{x},u_{x}\in\Z^{k}$ such that $n_{\ast}+Qv_{x}+pu_{x}=x$. 
 Then for all $y\in\Z^{k}$, we have that $n_{\ast}+Qv_{x}+pQy=n_{\ast}+Q(v_{x}+py)\in \Omega$ and thus
   $f_{i}(n_{\ast}+Qv_{x}+pQy)\in\Z$. By interpolation, all  the coefficients of the polynomial $f_{i}(n_{\ast}+Qv_{x}+pQ\cdot)$ belong to $\Z/(s!)^{k}$ and thus all  the coefficients of the polynomial $f_{i}(n_{\ast}+Qv_{x}+p\cdot)$ belong to $\Z/Q^{s}(s!)^{k}$. So we have that
  \begin{equation}\label{1:wp9gu0}
\begin{split}
    f_{i}(x)\in\Z/Q^{s}(s!)^{k} \text{ for all } x\in\Omega.
\end{split}
\end{equation}

   % So $f_{i}(x)=f_{i}(n_{\ast}+Qv_{x}+pu_{x})\in \Z/Q^{s}(s!)^{k}$ for all $x\in\Omega$. This completes the proof of Part (i).
 
 %Since $\psi(\{g(n)\})=\psi(g(n)[g(n)]^{-1})$, Part (ii) follows from   Lemma A.3 of \cite{GT12b} and Part (i).

 For all $n',n\in \Z^{k}$ and $q\in\N_{+}$, by Lemma A.3 of \cite{GT12b}, for all $1\leq i\leq m$, it is not hard to see that we may write $\psi_{i}(g(n'+qn)^{-1}g(n'))$ in the form
\begin{equation}\nonumber
\begin{split}
 \psi_{i}(g(n'+qn)^{-1}g(n'))=P_{i}(\bold{f}(n'))
 +\sum_{i'=1}^{i}(f_{i'}(n'+qn)-f_{i'}(n'))P_{i,i'}(\bold{f}(n'+qn),\bold{f}(n')),
\end{split}
\end{equation}
where $\bold{f}(n):=(f_{1}(n),\dots,f_{i}(n))$ and $P_{i}, P_{i,i'}$ are some polynomials of degree $O_{k,s}(1)$ with coefficients in $\Z/q_{1}$ for some $q_{1}\in\N_{+}$ with $q_{1}\ll_{k,s} Q^{O_{k,s}(1)}$.
 Taking $n=\bold{0}$, we see that $P_{i}(\bold{f}(\cdot))\equiv 0$. So
 \begin{equation}\label{1:mcalfor}
\begin{split}
 \psi_{i}(g(n'+qn)^{-1}g(n'))=\sum_{i'=1}^{i}(f_{i'}(n'+qn)-f_{i'}(n'))P_{i,i'}(\bold{f}(n'+qn),\bold{f}(n')).
\end{split}
\end{equation}

 Since $g$ is partially $Q$-rational with base point $n_{\ast}$ on $\Omega$,
   we have that $f_{i}(n_{\ast}+Qn)\in\Z$ whenever $n_{\ast}+Qn\in\Omega$.
Since $\Omega$ is $p$-periodic, we have that  $f_{i}(n_{\ast}+pQn)\in\Z$ for all $n\in\Z^{k}$ since $\Omega$ is $p$-periodic. Since $f_{i}$ is rational and thus is periodic,
 by Proposition \ref{1:polyGT}, the coefficients of the polynomial $f_{i}(n_{\ast}+pQ\cdot)$ belong to $\Z/rp^{O_{d,s}(1)}$ for some $r\ll_{d,s} Q^{O_{d,s}(1)}$. 
% for all $n\in\Z^{k}$. By Proposition \ref{1:polyGT}, the coefficients of the polynomial $f_{i}(n_{\ast}+pQ\cdot)$ belong to $\Z/rp^{O_{d,s}(1)}$ for some $r\ll_{d,s} Q^{O_{d,s}(1)}$. 
 So by interpolation, all  the coefficients of the polynomial $f_{i}$ belong to $\Z/Q^{s}rp^{O_{k,s}(1)}$.
 
 Moreover, for any $x,y\in\Z^{k}$ and $Q'\in\N_{+}$, since all  the coefficients of the polynomial $f_{i}$ belong to $\Z/Q^{s}rp^{O_{k,s}(1)}$, we have that $f_{i}(x+Q'Q^{s}ry)-f_{i}(x)\in Q'\Z/p^{O_{k,s}(1)}$. If in addition $x,x+Q'Q^{s}ry\in \Omega$, then it follows from (\ref{1:wp9gu0}) that 
 $$f_{i}(x+Q'Q^{s}ry)-f_{i}(x)\in (\Z/Q^{s}(s!)^{k})\cap(Q'\Z/p^{O_{k,s}(1)})=Q'\Z.$$
 Therefore, it follows from (\ref{1:mcalfor}) that there exists some $Q'\in\N_{+}$ with $Q'\ll_{k,s} Q^{O_{k,s}(1)}$ such that 
 $\psi_{i}(g(n'+Q'Q^{s}rn)^{-1}g(n'))\in\Z$ for all $n,n'\in\Z^{k}$ with $n',n'+Q'Q^{s}rn\in \Omega$. This means that $g$ is partially $Q'Q^{s}r$-periodic on $\Omega$. This completes the proof.
 \end{proof}

 Since $\Z^{k}$ is $p$-periodic, as a consequence of Proposition \ref{1:oehf}, we recover Lemma A.12 of \cite{GT12b}:
 
   \begin{coro}[Lemma A.12 of \cite{GT12b}]\label{1:r222p}
        Let $k,s\in\N_{+}$ and $G/\Gamma$ be a nilmanifold of step at most $s$. Then every rational polynomial sequence in $\poly(\Z^{k}\to G_{\N})$ is periodic.
    \end{coro}

    \subsection{Polynomial sequences over finite fields}
    
    One can also extend the domain of polynomial sequences from $\Z^{k}$ to $\F_{p}^{k}$.

	\begin{defn}[Polynomial sequences in $\F_{p}^{k}$]
	Let  $k,Q\in\N_{+}$, $G/\Gamma$ be an $\N$-filtered nilmanifold, and $p$ be a prime. Let
 $\poly(\F_{p}^{k}\to G_{\N})$ denote the set of all functions of the form $f\circ\tau$ for some $f\in \poly(\Z^{k}\to G_{\N})$.
 For a subset $\Omega$ of $\F_{p}^{k}$, let
 $\poly_{p}(\Omega\to G_{\N}\vert\Gamma)$ denote the set of all functions of the form $f\circ\tau$ for some $f\in \poly_{p}(\iota^{-1}(\Omega)\to G_{\N}\vert\Gamma).$ We define $\poly(\Omega\to G_{\N}\vert\Gamma)$  similarly. 
  We call functions in $\poly_{p}(\Omega\to G_{\N}\vert\Gamma)$ \emph{partially $p$-periodic polynomial sequences on $\Omega$.}
 \end{defn}	
 
 \begin{ex}\label{1:notagroup}    
 By Corollary B.4 of \cite{GTZ12}, it is not hard to show that $\poly(\F_{p}^{k}\to G_{\N})$ is a group. However, unlike the conclusion of  Corollary B.5 of \cite{GTZ12}, $\poly(\F_{p}^{k}\to G_{\N})$ is not closed under translations and transformations.  To see this, let $k=1$ and $G/\Gamma=\R/\Z$ be the torus with $G_{1}=G$ and $G_{2}=\{id_{G}\}$. Let $f(n)=n/p$ for all $n\in\Z$. Then $g:=f\circ\tau\in \poly(\F_{p}\to G_{\N}\vert \Gamma)$. However, $g':=g(\cdot +1)$ does not belong to $\poly(\F_{p}\to G_{\N}\vert \Gamma)$. 
       
       To see this, suppose on the contrary that $g'\in \poly(\F_{p}\to G_{\N}\vert \Gamma)$. Then we may write $g'=f'\circ \tau$ for some $f'\in\poly(\Z\to G_{\N}\vert \Gamma)$.
       By the construction of the filtration $G_{\N}$, we must have $f'(n)=an+b$ for some $a,b\in\R$ for all $n\in\Z$.
        Then 
       \begin{equation}\label{1:ffgg}
       a\tau(n)+b=f'\circ\tau(n)=g'(n)=g(n+1)=f\circ\tau(n+1)=\tau(n+1)/p
       \end{equation}
       for all $n\in\F_{p}$. Setting $n=0$ and 1 in (\ref{1:ffgg}), we deduce that $a=b=1/p$. Setting $n=p-1$ in (\ref{1:ffgg}), we have that $1=a(p-1)+b=0$, a contradiction.
       \end{ex}

  As is explained in Example \ref{1:notagroup}, partially $p$-periodic polynomial sequences are not closed under translations and transformations. However, the following proposition asserts that partially $p$-periodic  are indeed closed under translations and transformations modulo $\Gamma$.

       \begin{prop}\label{1:BB}
       	Let $k\in\N_{+}$,  $G/\Gamma$ be an $\N$-filtered nilmanifold,  $p$ be a prime, $\Omega$ be a subset of $\F_{p}^{k}$, and $g\in\poly_{p}(\Omega\to G_{\N}\vert \Gamma)$. 
       	\begin{enumerate}[(i)]
       		\item For any $h\in\F_{p}^{k}$, there exists $g'\in\poly_{p}((\Omega-h)\to G_{\N}\vert \Gamma)$   such that $g(n+h)\Gamma=g'(n)\Gamma$ for all $n\in\Omega-h$.
       		\item For any $k'\in\N_{+}$ and  linear transformation $L\colon \F_{p}^{k'}\to \F_{p}^{k}$, there exists  $g''\in\poly_{p}(L^{-1}(\Omega)\to G_{\N}\vert \Gamma)$ such that   $g(L(n))\Gamma=g''(n)\Gamma$ for all $n\in L^{-1}(\Omega)$.	
       	\end{enumerate}	
\end{prop}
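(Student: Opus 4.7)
The plan is to reduce both parts to an auxiliary computation inside the space of honest polynomial sequences $\poly(\Z^k\to G_{\N})$, where we know we may freely translate and linearly substitute (Corollary B.4/B.5 of \cite{GTZ12}). By definition, we may write $g=f\circ\tau$ for some $f\in\poly_{p}(\tau(\Omega)\to G_{\N}\vert\Gamma)$, i.e.\ $f\in\poly(\Z^{k}\to G_{\N})$ with $f(n+pm)^{-1}f(n)\in\Gamma$ for all $n\in\tau(\Omega)+p\Z^{k}$, $m\in\Z^{k}$. The task is to produce a new polynomial $f'\in\poly(\Z^{k'}\to G_{\N})$ satisfying the corresponding partial periodicity condition and agreeing with the desired sequence modulo $\Gamma$ after composing with $\tau$.

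For part (i), define $f'(n):=f(n+\tau(h))$; this is a polynomial sequence because translation preserves $\poly(\Z^{k}\to G_{\N})$. To check $f'\in\poly_{p}(\tau(\Omega-h)\to G_{\N}\vert\Gamma)$, let $n\in\tau(\Omega-h)+p\Z^{k}$ and $m\in\Z^{k}$; I claim $n+\tau(h)\in\tau(\Omega)+p\Z^{k}$. Indeed, writing $n=\tau(x)+pm'$ with $x\in\Omega-h$, we have $\tau(x)+\tau(h)\equiv x+h\equiv \tau(x+h)\bmod p$, and $x+h\in\Omega$; hence $n+\tau(h)\in\tau(\Omega)+p\Z^{k}$. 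The partial periodicity of $f$ then gives $f'(n+pm)^{-1}f'(n)\in\Gamma$. Finally, for $n\in\Omega-h$, the same congruence shows $\tau(n)+\tau(h)\equiv\tau(n+h)\bmod p$ with $\tau(n+h)\in\tau(\Omega)$, so by partial periodicity $f'(\tau(n))\Gamma=f(\tau(n)+\tau(h))\Gamma=f(\tau(n+h))\Gamma=g(n+h)\Gamma$. Thus $g':=f'\circ\tau$ works.

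Part (ii) is entirely parallel. Represent $L$ by a matrix with entries in $\{0,\dots,p-1\}$ to obtain a lift $\tilde L\colon\Z^{k'}\to\Z^{k}$ satisfying $\iota\circ\tilde L=L\circ\iota$; equivalently, $\tilde L\tau(x)\equiv\tau(Lx)\bmod p$. Define $f''(n):=f(\tilde L n)$, which lies in $\poly(\Z^{k'}\to G_{\N})$ because composing a polynomial sequence with a linear map gives another polynomial sequence. For $n\in\tau(L^{-1}(\Omega))+p\Z^{k'}$, write $n=\tau(x)+pm'$ with $x\in L^{-1}(\Omega)$; then $\tilde L n=\tilde L\tau(x)+p\tilde Lm'\equiv\tau(Lx)\bmod p$, so $\tilde Ln\in\tau(\Omega)+p\Z^{k}$, and $f''(n+pm)^{-1}f''(n)=f(\tilde Ln+p\tilde Lm)^{-1}f(\tilde Ln)\in\Gamma$ by the partial periodicity of $f$. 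The equality $g''(n)\Gamma=g(L(n))\Gamma$ on $L^{-1}(\Omega)$ is obtained as before: $\tilde L\tau(n)$ and $\tau(L(n))$ differ by an element of $p\Z^{k}$, and $\tau(L(n))\in\tau(\Omega)$, so the two values of $f$ are equal modulo $\Gamma$.

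The only subtlety — and the reason the naive formulas in Example \ref{1:notagroup} fail — is that $\tau$ is not a group homomorphism, so $\tau(n+h)$ and $\tau(Ln)$ differ from $\tau(n)+\tau(h)$ and $\tilde L\tau(n)$ by integer multiples of $p$. Handling this discrepancy is exactly what the partial periodicity of $f$ is designed for, which is why the passage from $g$ to $g'$ or $g''$ costs nothing modulo $\Gamma$. This will be the main (and essentially only) point to verify carefully.
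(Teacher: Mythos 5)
Your proof is correct and follows essentially the same route as the paper: lift to $\Z^{k}$, translate or substitute the integral lift there, and use the partial $p$-periodicity of $f$ to absorb the discrepancy between $\tau(n+h)$ and $\tau(n)+\tau(h)$ (resp.\ $\tilde L\tau(n)$ and $\tau(Ln)$), which is exactly the paper's argument. The only cosmetic difference is in part (ii), where you verify the periodicity in one step by noting $\tilde Ln\in\tau(\Omega)+p\Z^{k}$, whereas the paper splits the same quantity into three $\Gamma$-valued factors; both are fine.
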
	
       \begin{proof}
 Assume that $g=f\circ \tau$ for some $f\in\poly_{p}(\iota^{-1}(\Omega)\to G_{I}\vert \Gamma)$.
 We first prove Part (i). 
       Let $f'\colon \Z^{k}\to G$ be the function given by $f':=f(\cdot+\tau(h))$. 
       Since $f$ belongs to $\poly(\Z^{k}\to G_{\N})$, so does $f'$ by Corollary B.5 of \cite{GTZ12} (or by definition). 
        Note that $\tau(n+h)\equiv\tau(n)+\tau(h) \mod p\Z$.
       For all $n\in\iota^{-1}(\Omega-h)=\iota^{-1}(\Omega)-\tau(h)$ and $m\in\Z^{k}$, since $n+\tau(h),n+\tau(h)+pm\in\iota^{-1}(\Omega)$, we have that
       $f'(n+pm)^{-1}f'(n)=f(n+pm+\tau(h))^{-1}f(n+\tau(h))\in\Gamma.$ So $f'$ belongs to $\poly_{p}(\iota^{-1}(\Omega-h)\to G_{\N}\vert \Gamma)$ and $g':=f'\circ\tau$ belongs to  $\poly_{p}((\Omega-h)\to G_{\N}\vert \Gamma)$.   
       On the other hand, for all $n+h\in \Omega$, since $\tau(n+h), \tau(n)+\tau(h)\in \iota^{-1}(\Omega)$, we have that
       $$g(n+h)=f(\tau(n+h))\equiv f(\tau(n)+\tau(h))=f'\circ\tau(n)=g'(n) \mod \Gamma.$$ 
       
       We now prove part (ii).
       It is not hard to see that there exists a  linear transformation $L'\colon \Z^{k'}\to\Z^{k}$ such that $L'\circ\tau(n)\equiv\tau\circ L(n) \mod p\Z^{k}$ for all $n\in\Z^{k'}$. 
       Let $f''\colon\Z^{k'}\to G$ be the function given by $f''=f\circ L'$. Since        $f$ belongs to $\poly(\Z^{k}\to G_{\N})$, it is not hard to see that $f''$ belongs to $\poly(\Z^{k'}\to G_{\N})$ by Corollary B.5 of \cite{GTZ12} (or by definition).
      For all $n\in\tau(L^{-1}(\Omega))$ and $m\in\Z^{k'}$, we may write $n=\tau(n')$ for some $n'\in L^{-1}(\Omega)$. So
      \begin{equation}\label{1:ooss}
      \begin{split}
    &\quad  f''(n+pm)^{-1}f''(n)=f(L'(\tau(n'))+L'(pm))^{-1}f(L'(\tau(n')))
    \\&=\Bigl(f(L'(\tau(n'))+L'(pm))^{-1}f(\tau(L(n'))+L'(pm))\Bigr)\\&\qquad\cdot\Bigl(f(\tau(L(n'))+L'(pm))^{-1}f(\tau(L(n')))\Bigr)\cdot\Bigl(f(\tau(L(n')))^{-1}f(L'(\tau(n')))\Bigr).
      \end{split}
      \end{equation}
      Note that $L'(\tau(n'))\equiv\tau(L(n')) \mod p\Z^{k}$, $L'(pm)=pL'(m)\in p\Z^{k}$ and $\tau(L(n'))\in\iota^{-1}(\Omega)$. It follows from the fact $f\in\poly(\Z^{k}\to G_{I}\vert \Gamma)$ that (\ref{1:ooss}) belongs to $\Gamma$.
       Since $\iota^{-1}(L^{-1}(\Omega))=\tau(L^{-1}(\Omega))+p\Z^{k'}$, we have that 
         $f''$ belongs to $\poly_{p}(\iota^{-1}(L^{-1}(\Omega))\to G_{\N}\vert \Gamma)$ and $g'':=f''\circ\tau$ belongs to  $\poly_{p}(L^{-1}(\Omega)\to G_{\N}\vert \Gamma)$. 
     On the other hand, for all $n\in L^{-1}(\Omega)$, since $\tau\circ L(n), L'\circ\tau(n)\in \iota^{-1}(\Omega)$, we have that
       $$g\circ L(n)=f\circ\tau\circ L(n)\equiv f\circ L'\circ\tau(n)=f''\circ\tau(n)=g''(n) \mod \Gamma.$$ 
       \end{proof}

 % We also have a partial converse of Proposition \ref{1:goodcoordinates}, whose proof is straightforward and so we omit the details. 

%     Let  $\Omega$ be a subset of $\Z^{k}$, and $\Omega'$ be a subset of $\Z^{k}$ or $\F_{p}^{k}$.	
%Here is a list of sets of polynomial  sequences we use in this paper: 
%\begin{itemize}
 %   \item $\poly(\V\to\F_{p})$: the group of polynomials in $\F_{p}$;
 %   \item $\poly(\Z^{k}\to R)$: the group of $R$-valued polynomials in $\Z^{k}$;
   % \item $\poly(\Omega\to R\vert R')$: the group of $R$-valued polynomials in $\Z^{k}$ taking $R'$-values in $\Omega$;
    %\item $\poly_{Q}(\Omega\to R\vert \Z)$: the group of $R$-valued polynomials in $\Z^{k}$ which are partially $Q$-periodic on $\Omega$;
    %\item $\poly(\Omega'\to G_{\N})$: the sets of polynomial sequences on $G/\Gamma$;
    %\item $\poly(\Omega'\to G_{\N}\vert \Gamma)$: the sets of polynomial sequences on $G/\Gamma$ vanishing on $\Omega'$;
 %   \item $\poly_{Q}(\Omega'\to G_{\N}\vert \Gamma)$: the sets of polynomial sequences on $G/\Gamma$ which are partially $Q$-periodic on $\Omega'$.
%\end{itemize}

\subsection{The Baker-Campbell-Hausdorff formula}	

The material of this section comes from Appendix C of \cite{GT10b}. We write it down for completeness.

Let $G$ be a group, $t\in\N_{+}$ and $g_{1},\dots,g_{t}\in G$. The \emph{iterated commutator} of $g_{1}$ is defined to be $g_{1}$ itself. Iteratively, we define an \emph{iterated commutator} of $g_{1},\dots,g_{t}$ to be an element of the form $[w,w']$, where $w$ is an iterated commutator of $g_{i_{1}},\dots,g_{i_{r}}$, $w'$ is an iterated commutator of $g_{i'_{1}},\dots,g_{i'_{r'}}$ for some $1\leq r,r'\leq t-1$ with $r+r'=t$ and $\{i_{1},\dots,i_{r}\}\cup \{i'_{1},\dots,i'_{r'}\}=\{1,\dots,t\}$.

Similarly, let $X_{1},\dots,X_{t}$ be elements of a Lie algebra. The \emph{iterated Lie bracket} of $X_{1}$ is defined to be $X_{1}$ itself. Iteratively, we define an \emph{iterated Lie bracket} of $X_{1},\dots,X_{t}$ to be an element of the form $[w,w']$, where $w$ is an iterated Lie bracket of $X_{i_{1}},\dots,X_{i_{r}}$, $w'$ is an iterated Lie bracket of $X_{i'_{1}},\dots,X_{i'_{r'}}$ for some $1\leq r,r'\leq t-1$ with $r+r'=t$ and $\{i_{1},\dots,i_{r}\}\cup \{i'_{1},\dots,i'_{r'}\}=\{1,\dots,t\}$.

Let $G$ be a connected and simply connected nilpotent Lie group. The \emph{Baker-Campbell-Hausdorff formula} asserts that for all $X_{1},X_{2}\in\log G$, we have
$$\exp(X_{1})\exp(X_{2})=\exp\Bigl(X_{1}+X_{2}+\frac{1}{2}[X_{1},X_{2}]+\prod_{\alpha}c_{\alpha}X_{\alpha}\Bigr),$$	
where $\alpha$ is a finite set of labels, $c_{\alpha}$ are real constants, and $X_{\alpha}$ are iterated Lie brackets with $k_{1,\alpha}$ copies of $X_{1}$  and $k_{2,\alpha}$ copies of $X_{2}$ for some $k_{1,\alpha},k_{2,\alpha}\geq 1$ and $k_{1,\alpha}+k_{2,\alpha}\geq 3$. One may use this formula to show that for all $g_{1},g_{2}\in G$ and $x\in \R$, we have that
\begin{equation}\nonumber\label{1:C1}
(g_{1}g_{2})^{x}=g_{1}^{x}g_{2}^{x}\prod_{\alpha}g_{\alpha}^{Q_{\alpha}(x)},
\end{equation}	
where $\alpha$ is a finite set of labels,   $g_{\alpha}$ are iterated commutators with $k_{1,\alpha}$ copies of $g_{1}$  and $k_{2,\alpha}$ copies of $X_{2}$ for some $k_{1,\alpha},k_{2,\alpha}\geq 1$, and $Q_{\alpha}\colon\R\to\R$ are polynomials of degrees at most $k_{1,\alpha}+k_{2,\alpha}$ without constant terms.

Similarly, one can show that for any $g_{1},g_{2}\in G$ and $x_{1},x_{2}\in \R$, we have that	
\begin{equation}\nonumber%\label{1:C2}
[g_{1}^{x_{1}},g_{2}^{x_{2}}]=[g_{1},g_{2}]^{x_{1}x_{2}}\prod_{\alpha}g_{\alpha}^{P_{\alpha}(x_{1},x_{2})},
\end{equation}	
where $\alpha$ is a finite set of labels,   $g_{\alpha}$ are iterated commutators with $k_{1,\alpha}$ copies of $g_{1}$  and $g_{2,\alpha}$ copies of $X_{2}$ for some $k_{1,\alpha},k_{2,\alpha}\geq 1$, $k_{1,\alpha}+k_{2,\alpha}\geq 3$, and $P_{\alpha}\colon\R^{2}\to\R$ are polynomials of degrees at most $k_{1,\alpha}$ in $x_{1}$ and at most $k_{2,\alpha}$ in $x_{2}$ which vanishes when $x_{1}x_{2}=0$.
	
\subsection{Type-I horizontal torus and character}

Horizontal torus and character are important terminologies which characterize the equidistribution properties of nilsequences. In literature, there are at least three different types of horizontal  toruses and characters. In this paper as well as  the series of work \cite{SunB,SunC,SunD}, we refer to them as  \emph{type-I}, \emph{type-II} and \emph{type-III} respectively. In this paper, we use the type-I horizontal torus and character defined below:\footnote{We will define the type-II horizontal torus and character in \cite{SunC}, and the type-III horizontal torus and character in \cite{SunD}.}

\begin{defn}[Type-I horizontal torus and character]
Let $G/\Gamma$ be a nilmanifold endowed with a Mal'cev basis $\mathcal{X}$.  The \emph{type-I horizontal torus} of $G/\Gamma$ is $G/[G,G]\Gamma$.
	A \emph{type-I horizontal character} is a continuous homomorphism $\eta\colon G\to \R$ such that $\eta(\Gamma)\subseteq \Z$.
	When written in the coordinates relative to $\mathcal{X}$, we may write $\eta(g)=k\cdot \psi(g)$ for some unique $k=(k_{1},\dots,k_{m})\in\Z^{m}$, where $\psi\colon G\to \R^{m}$ is the coordinate map with respect to the Mal'cev basis  $\mathcal{X}$. We call the quantity $\Vert\eta\Vert:=\vert k\vert=\vert k_{1}\vert+\dots+\vert k_{m}\vert$ the \emph{complexity} of $\eta$ (with respect to $\mathcal{X}$). 
\end{defn}

It is not hard to see that any type-I horizontal character mod $\Z$ vanishes on $[G,G]\Gamma$ and thus descent to a continuous homomorphism between the type-I horizontal torus $G/[G,G]\Gamma$ and $\R/\Z$. Moreover, $\eta \mod \Z$ is a well defined map from $G/\Gamma$ to $\R/\Z$. 

Type-I horizontal torus and character are used to characterize whether a nisequence is equidistributed on a nilmanifold \cite{GT12b,Lei05}.  We provide some properties for later uses.
The proof of the following lemma is similar to that of Lemma 6.7 of \cite{GT12b}, Lemma B.9 of  \cite{GTZ12} and Lemma 2.8 of \cite{CS14}. We omit the details. 
\begin{lem}\label{1:B.9}
	Let $k\in\N_{+}$, $s\in\N$ 
	 and $G$ be an $\N$-filtered group of degree at most $s$. A function $g\colon \Z^{k}\to G$ belongs to $\poly(\Z^{k}\to G_{\N})$ if and only if 
	 for all $i\in\N^{k}$ with $\vert i\vert\leq s$,
	 there exists  $X_{i}\in \log(G_{\vert i\vert})$  such that 
	$$g(n)=\prod_{i\in\N^{k}, \vert i\vert\leq s}\exp\Bigl(\binom{n}{i}X_{i}\Bigr).$$
	Moreover, if $g\in\poly(\Z^{k}\to G_{\N})$, then the choice of $X_{i}$ are unique.
	
	Furthermore, if $G'$ is a subgroup of $G$ and $g$ takes values in $G'$, then $X_{i}\in \log(G')$ for all $i\in\N^{k}$. 
\end{lem}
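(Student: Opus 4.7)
The plan is to prove both directions separately. The ``if'' direction will be a quick consequence of Corollary B.4 of \cite{GTZ12}, while the ``only if'' direction will be established by induction on the nilpotency step $s$; uniqueness and the subgroup statement will then follow from the construction.

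For the ``if'' direction, I observe that for any fixed $i\in\N^{k}$ with $|i|\leq s$ and $X_i \in \log G_{|i|}$, the sequence $f_i(n) := \exp(\binom{n}{i} X_i)$ lies in $\poly(\Z^k \to G_\N)$. Indeed, since $X_i$ commutes with itself, $\Delta_h f_i(n) = \exp\bigl((\Delta_h \binom{\cdot}{i})(n) X_i\bigr)$; iterating $m$ times shows that this quantity lies in $\{\exp(cX_i) : c \in \R\} \subseteq G_{|i|} \subseteq G_m$ when $m \leq |i|$, and equals $id_G$ when $m > |i|$ since $\binom{n}{i}$ has degree $|i|$. Corollary B.4 of \cite{GTZ12} then gives $g = \prod_{|i|\leq s} f_i \in \poly(\Z^k \to G_\N)$.

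For the ``only if'' direction, I would induct on $s$. When $s=0$, $g$ is constant and we set $X_{\bold{0}} := \log g(\bold{0})$. For the inductive step, let $\pi\colon G \to G/G_s$ be the quotient map. Since $G_s$ is central (being the top of the filtration), $G/G_s$ is a nilpotent Lie group of degree $\leq s-1$ under the induced filtration, and $\pi\circ g$ is a polynomial sequence there. By the inductive hypothesis there exist $\bar{X}_i \in \log(G_{|i|}/G_s)$ for $|i|\leq s-1$ such that $\pi\circ g(n) = \prod_{|i|\leq s-1}\exp(\binom{n}{i}\bar{X}_i)$. Lift each $\bar{X}_i$ to some $X_i \in \log G_{|i|}$ and set $g_0(n) := \prod_{|i|\leq s-1}\exp(\binom{n}{i}X_i)$, so that $f(n) := g(n)g_0(n)^{-1}$ is a polynomial sequence taking values in the abelian group $G_s$. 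Since $G$ has degree $\leq s$, the $(s+1)$-fold finite difference of $f$ vanishes, so in log coordinates on $G_s$ one has $\log f(n) = \sum_{|i|\leq s}\binom{n}{i}Y_i$ with $Y_i \in \log G_s$. Merging $f(n)g_0(n)$ into the required normal form via the Baker--Campbell--Hausdorff formula then yields the desired representation of $g$.

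For uniqueness, if $\prod_{|i|\leq s}\exp(\binom{n}{i}X_i) = id_G$ identically, projecting to $G/G_s$ annihilates the $|i|=s$ factors and the inductive hypothesis forces $X_i \in \log G_s$ for $|i|\leq s-1$; since $G_s$ is abelian, the remaining identity reduces to a classical multivariate finite-difference identity in $\log G_s$, from which $X_i = 0$ for all $i$. The subgroup statement then follows by applying the existence result to $g$ viewed as a polynomial sequence in $\poly(\Z^k \to G'_{\N})$ with the induced filtration $G'_i := G' \cap G_i$, producing coefficients in $\log G'_{|i|} \subseteq \log G'$; uniqueness inside $G$ identifies these with the original $X_i$. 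The main obstacle will be the Baker--Campbell--Hausdorff bookkeeping in the inductive step: rearranging $f(n)g_0(n)$ into a single product of the form $\prod_{|i|\leq s}\exp(\binom{n}{i}Z_i)$ produces commutator corrections, and one must verify using the filtration property $[G_i,G_j]\subseteq G_{i+j}$ that each correction lies at a level deep enough to be absorbed into the $X_j$'s at the correct positions $|j|$.
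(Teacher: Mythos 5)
Your overall scheme is sound, and it is essentially the standard argument: the paper in fact omits its own proof, saying only that it is similar to Lemma B.9 of \cite{GTZ12} and Lemma 2.8 of \cite{CS14}, and those proofs run along the same inductive lines you propose (peel off the top of the filtration, expand the $G_{s}$-valued remainder by the classical Newton/finite-difference formula, and recombine). The ``if'' direction via Corollary B.4 of \cite{GTZ12} and the ``furthermore'' statement via the induced filtration $G'_{i}=G'\cap G_{i}$ plus uniqueness in $G$ are both fine.

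Two points in your sketch need repair. First, your uniqueness argument only treats the case $\prod_{i}\exp\bigl(\binom{n}{i}X_{i}\bigr)=id_{G}$; in a noncommutative group this does not by itself give uniqueness, since from $\prod_{i}\exp\bigl(\binom{n}{i}X_{i}\bigr)=\prod_{i}\exp\bigl(\binom{n}{i}X'_{i}\bigr)$ you cannot simply divide term by term to land in the identity case. You must either redo the induction for two representations (project to $G/G_{s}$ to get $X'_{i}-X_{i}\in\log G_{s}$ for $\vert i\vert\leq s-1$, then use the same commutation/absorption step as in existence to peel the $G_{s}$-valued discrepancies into a single product equal to $id_{G}$, and only then invoke the finite-difference identity), or avoid the issue entirely by recovering the coefficients recursively from the values of $g$ at lattice points in a triangular order, which is closer to how \cite{GTZ12} argues and yields uniqueness for free. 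Second, your claim that $G_{s}$ is central is not justified under this paper's convention: the filtration here has $G_{0}=G$ but is not required to satisfy $G_{0}=G_{1}$, so the axioms only give $[G_{1},G_{s}]=\{id_{G}\}$ and $[G_{0},G_{s}]\subseteq G_{s}$, i.e.\ normality rather than centrality. This does not sink the argument — the quotient $G/G_{s}$ only needs normality, the $G_{s}$-valued factors commute with every $\exp\bigl(\binom{n}{j}X_{j}\bigr)$ with $\vert j\vert\geq 1$ because $[G_{s},G_{\vert j\vert}]\subseteq G_{s+\vert j\vert}=\{id_{G}\}$, and the lone constant factor $\exp(X_{\bold{0}})$ is handled by conjugation, since $\mathrm{Ad}$ preserves $\log G_{s}$ and acts linearly, hence preserves the form $\binom{n}{i}\,(\text{element of }\log G_{s})$ — but you should state the absorption this way rather than leaning on centrality, and this same bookkeeping is exactly what is needed in the uniqueness step above.
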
	

An equivalent way of saying that a function $g\colon \Z^{k}\to G$ belongs to $\poly(\Z^{k}\to G_{\N})$ is that 
\begin{equation}\label{1:tl}
g(n)=\prod_{i\in\N^{k}, \vert i\vert\leq s}g_{i}^{\binom{n}{i}}
\end{equation}
for some $g_{i}\in G_{\vert i\vert}$ (with any fixed order in $\N^{k}$).
We call $g_{i}$ the \emph{($i$-th) type-I Taylor  coefficient} of $g$, and (\ref{1:tl}) the \emph{type-I Taylor expansion} of $g$.

The following lemma is obvious and we omit the proof:

\begin{lem}\label{1:projectionpp}
	Let $d,s\in\N_{+}$, $p$ be a prime, $\Omega\subseteq \Z^{d}$, and $\eta$ be a type-I horizontal character on some $\N$-filtered   degree $\leq s$ nilmanifold $G/\Gamma$. If $g\in\poly_{p}(\Omega\to G_{\N}\vert\Gamma)$, then $\eta\circ g\in \poly_{p}(\Omega\to \R\vert\Z)$ and is of degree at most $s$.
\end{lem}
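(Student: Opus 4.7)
The plan is to combine the type-I Taylor expansion of $g$ (Lemma \ref{1:B.9}) with the fact that a continuous homomorphism to $\R$ is abelian and intertwines real powers with scalar multiplication. First, since $g\in\poly(\Z^{d}\to G_{\N})$ and $G_{\N}$ has degree $\leq s$, Lemma \ref{1:B.9} lets me write
$$g(n)=\prod_{i\in\N^{d},\,0\leq\vert i\vert\leq s} g_{i}^{\binom{n}{i}}$$
for some fixed order of indices, with $g_{i}\in G_{\vert i\vert}$. Because $\eta\colon G\to\R$ is a continuous group homomorphism into an abelian group, $\eta$ is additive on products and, since $G$ is a connected simply connected nilpotent Lie group, satisfies $\eta(g^{x})=x\,\eta(g)$ for every $g\in G$ and $x\in\R$ (indeed $g^{x}=\exp(x\log g)$ and $\eta\circ\exp$ is $\R$-linear on $\log G$). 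Applying this to the Taylor expansion gives
$$\eta\circ g(n)=\sum_{i\in\N^{d},\,0\leq\vert i\vert\leq s} \eta(g_{i})\binom{n}{i},$$
which is manifestly a real-valued polynomial in $n\in\Z^{d}$ of degree at most $s$, since each $\binom{n}{i}$ has degree $\vert i\vert\leq s$. In particular $\eta\circ g\in\poly(\Z^{d}\to\R)$ of degree $\leq s$.

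It remains to verify the partial $p$-periodicity on $\Omega$. Fix $n\in\Omega+p\Z^{d}$ and $m\in\Z^{d}$. By hypothesis $g(n+pm)^{-1}g(n)\in\Gamma$, and $\eta(\Gamma)\subseteq\Z$ by definition of a type-I horizontal character. Since $\eta$ is a homomorphism,
$$\eta\circ g(n+pm)-\eta\circ g(n)=-\eta\bigl(g(n+pm)^{-1}g(n)\bigr)\in\Z.$$
This is exactly the defining condition for $\eta\circ g\in\poly_{p}(\Omega\to\R\vert\Z)$, completing the proof. There is no real obstacle here; the statement is essentially a bookkeeping observation that the additive and $\R$-homogeneous behaviour of $\eta$ converts multiplicative relations in $G$ (Taylor expansion, partial $p$-periodicity modulo $\Gamma$) into the corresponding additive relations in $\R$ (polynomial of degree $\le s$, partial $p$-periodicity modulo $\Z$).
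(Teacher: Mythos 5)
Your proof is correct: the Taylor-expansion argument for polynomiality of degree $\leq s$ and the homomorphism computation $\eta\circ g(n+pm)-\eta\circ g(n)=-\eta\bigl(g(n+pm)^{-1}g(n)\bigr)\in\Z$ for the partial $p$-periodicity are both valid, and this is precisely the routine verification the paper has in mind when it declares the lemma obvious and omits the proof.
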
	
%\begin{proof}
%For all $n\in\Omega$ and $m\in\Z^{d}$, since $g(n+pm)^{-1}g(n)\in\Gamma$, we have that $\eta(g(n))-\eta(g(n+pm))=\eta(g(n+pm)^{-1}g(n))\in\Z$. So $\eta\circ g\in \poly_{p}(\Omega\to \R\vert\Z)$.
%\end{proof}

We can say more about the coefficients of $\eta\circ g$ for some partially $p$-periodic polynomial sequence $g$ on $\Omega$.

\begin{lem}\label{1:goodcoordinates}
Let $d,s\in\N_{+}$, $p\gg_{d,s} 1$ be a prime, $\Omega$ be a non-empty $p$-periodic subset of $\Z^{d}$, and $\eta$ be a type-I horizontal character on some $\N$-filtered degree $\leq s$ nilmanifold $G/\Gamma$.
Then for all $g\in\poly_{p}(\Omega\to G_{\N}\vert\Gamma)$, $p^{s}\eta\circ g(n)$  takes values in $\Z+C$ for some $C\in\R$.
\end{lem}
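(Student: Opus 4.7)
The plan is to reduce the statement to a condition on the Taylor coefficients of $f := \eta \circ g$. By Lemma \ref{1:projectionpp}, $f$ lies in $\poly_{p}(\Omega \to \R \vert \Z)$ and has degree at most $s$, so I may write uniquely
$$f(n) = \sum_{i \in \N^d,\, \vert i \vert \leq s} \alpha_i \binom{n}{i}, \qquad \alpha_i \in \R.$$
The statement then amounts to showing $p^s \alpha_i \in \Z$ for every $i$ with $\vert i \vert \geq 1$: once established, the choice $C := p^s \alpha_{\bold{0}} = p^s f(\bold{0})$ yields $p^s f(n) - C = \sum_{\vert i \vert \geq 1}(p^s \alpha_i) \binom{n}{i} \in \Z$ for every $n \in \Z^d$.

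First I would fix any $n_0 \in \Omega$ (which exists since $\Omega$ is non-empty) and form the auxiliary polynomial $h(m) := f(n_0 + pm)$ on $\Z^d$. Partial $p$-periodicity of $f$ gives $h(m) - h(\bold{0}) \in \Z$ for all $m \in \Z^d$, so expanding $h$ in the Newton basis yields coefficients $\gamma_j \in \Z$ for every $\vert j \vert \geq 1$. Substituting the Taylor expansion of $f$ and applying Vandermonde's identity $\binom{n_0 + pm}{i} = \sum_{k \leq i} \binom{n_0}{i-k} \binom{pm}{k}$ coordinatewise transforms this into
$$\gamma_j = \sum_{i \geq j,\, \vert i \vert \leq s} \alpha_i\, c_{i,j}, \qquad c_{i,j} := \sum_{j \leq k \leq i} \binom{n_0}{i-k}\, d_{k,j},$$
where $d_{k,j}$ denotes the coefficient of $\binom{m}{j}$ in the expansion of $\binom{pm}{k}$ in the Newton basis.

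The crucial input, and the main obstacle, is a $p$-divisibility estimate: for $p > s$ and $\bold{0} \neq j \leq k$, $p^{\vert j \vert}$ divides $d_{k,j}$ in $\Z$, with the extremal equality $d_{j,j} = p^{\vert j \vert}$. In one variable, the generating-function identity $(1+x)^{pm} = (1 + y)^m$ with $y = (1+x)^p - 1 = \sum_{l=1}^{p} \binom{p}{l} x^l$ identifies $d_{k,j}$ as the coefficient of $x^k$ in $y^j$; since $p \mid \binom{p}{l}$ for every $1 \leq l \leq p - 1$, this coefficient is divisible by $p^j$ whenever $p > k$. The multivariate version follows by taking products over the coordinates, and consequently $p^{\vert j \vert} \mid c_{i,j}$ for every $\bold{0} \neq j \leq i$.

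With this divisibility in hand, the proof is completed by descending induction on $\vert j \vert$. The base case $\vert j \vert = s$ forces $\gamma_j = \alpha_j p^s$, so $p^s \alpha_j = \gamma_j \in \Z$. For the inductive step, rewriting $\gamma_j = \alpha_j p^{\vert j \vert} + \sum_{i > j,\, \vert i \vert \leq s} \alpha_i c_{i,j}$ and multiplying by $p^{s - \vert j \vert}$ gives
$$p^s \alpha_j = p^{s - \vert j \vert} \gamma_j - \sum_{i > j,\, \vert i \vert \leq s} (p^s \alpha_i)\, \frac{c_{i,j}}{p^{\vert j \vert}},$$
and every term on the right is an integer: $p^{s - \vert j \vert}\gamma_j$ because $\gamma_j \in \Z$, each $p^s \alpha_i$ by the induction hypothesis, and each $c_{i,j}/p^{\vert j \vert}$ by the divisibility just established. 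Hence $p^s \alpha_j \in \Z$, closing the induction and finishing the proof.
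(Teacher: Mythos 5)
Your proof is correct. It shares the paper's opening reduction --- invoke Lemma \ref{1:projectionpp} to place $\eta\circ g$ in $\poly_{p}(\Omega\to\R\vert\Z)$ with degree at most $s$, fix a single $n_{0}\in\Omega$, and exploit periodicity only along $n_{0}+p\Z^{d}$ --- but from there the mechanism is genuinely different. The paper asserts, ``by interpolation'', that $Q\,\eta\circ g-C=p^{-s}f$ for some integer-valued polynomial $f$ and some $Q\in\N$ coprime to $p$, and then removes the spurious factor $Q$ by a congruence argument: for any $n$ it picks $m$ with $n_{0}+pm\equiv n \pmod{s!Q\Z^{d}}$, uses that an integer-valued polynomial of degree $\le s$ is constant mod $Q$ on such residue classes, and combines this with the periodicity to get $Q\mid f(n)-f(n_{0})$, absorbing the rest into the constant. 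You instead work directly in the binomial basis: the integrality of the Newton coefficients of $m\mapsto \eta\circ g(n_{0}+pm)$, combined with the divisibility $p^{\vert j\vert}\mid d_{k,j}$ (coming from $p\mid\binom{p}{l}$ for $1\le l\le p-1$, legitimate here because every relevant coordinate satisfies $k_{t}\le s<p$) and the exact leading value $d_{j,j}=p^{\vert j\vert}$, drives a descending induction on $\vert j\vert$ that gives $p^{s}\alpha_{j}\in\Z$ outright. What your route buys is that the paper's terse ``it is not hard to see'' interpolation step is made fully explicit, and the auxiliary denominator $Q$ together with the mod-$s!Q$ cleanup never appears, so the sharp $p$-power denominator comes out in one pass; what the paper's route buys is brevity, since it is content with a coarse interpolation bound and disposes of the prime-to-$p$ part by a soft congruence trick. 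Both arguments need nothing beyond $p>s$, consistent with the hypothesis $p\gg_{d,s}1$.
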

\begin{proof}
	Since $g\in\poly_{p}(\Omega\to G_{\N}\vert\Gamma)$, we have that $\eta\circ g\in \poly_{p}(\Omega\to \R\vert\Z)$ and is of degree at most $s$ by Lemma \ref{1:projectionpp}. 
Since $\Omega$ is $p$-periodic, for all $n\in\Omega$ and $m\in \Z^{d}$, we have that 
 	$$\eta\circ g(n+pm)-\eta\circ g(n)\in\Z.$$ 
 	Since $\Omega$ is non-empty, by interpolation, it is not hard to see  that
	we may write $$Q\eta\circ g(m)-C=\frac{1}{p^{s}}f(m)$$
	for some $C\in\R$, $Q\in \N$ with $p\nmid Q$, and some integer valued polynomial $f$. It remains to show that $Q\vert f(n)$ for all $n\in\Z^{d}$.

		Fix any $n_{0}\in \Omega$. 
		For any $n\in\Z^{d}$, there exists $m\in\Z^{d}$ such that $n_{0}+pm\equiv n \mod s!Q\Z^{d}$. 
		Since $f$ is integer valued, we have that
		$f(n)\equiv f(n_{0}+pm) \mod Q\Z$. On the other hand, we have $$\eta\circ(n_{0}+pm)-\eta\circ(n_{0})=(f(n_{0}+pm)-f(n_{0}))/Qp^{s}\in\Z.$$ So $f(n)\equiv f(n_{0}+pm)\equiv f(n_{0}) \mod Q\Z$ for all $n\in\Z^{d}$. Replacing $C$ by $C+\frac{1}{p^{s}}f(n_{0})$ if necessary, we have that $Q\vert f(n)$ for all $n\in\Z^{d}$.  
		This finishes the proof.
\end{proof}

 We end this section with a technical lemma to be used in later sections.

 \begin{prop}\label{1:normalizegamma}
 	Let $d,s\in\N_{+}$, $p$ be a prime, and  $\Omega$ be a non-empty $p$-periodic subset of $\Z^{d}$.
 	Let $G/\Gamma$ be a degree $\leq s$  nilmanifold of complexity at most $C$ with a filtration $G_{\N}$ and a Mal'cev basis $\mathcal{X}$. Let $\eta$ be a type-I horizontal character of $G$ of complexity at most $C$ and $g\in\poly_{p}(\Omega\to G_{\N}\vert\Gamma)$. Suppose that $\eta\circ g$ is the sum of an integer valued polynomial and a polynomial of degree $s'$ for some $0\leq s'\leq s$. If   $p\gg_{C,s} 1$, then there exists $\gamma\in\poly(\Z^{d}\to G_{\N}\vert\Gamma)$  
	with $\gamma(\bold{0})=id_{G}$ such that $\eta\circ g -\eta\circ \gamma$ is a polynomial of degree at most $s'$.
 \end{prop}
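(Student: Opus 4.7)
My plan is to construct $\gamma$ directly via its type-I Taylor expansion. By Lemma \ref{1:B.9}, write $g(n) = \prod_{|i| \leq s} g_i^{\binom{n}{i}}$ with $g_i \in G_{|i|}$, so that $\eta \circ g(n) = \sum_i \eta(g_i) \binom{n}{i}$ since $\eta$ is a homomorphism into the abelian group $\R$. Expanding the integer-valued polynomial $f$ in the binomial basis as $f(n) = \sum_i a_i \binom{n}{i}$ with $a_i \in \Z$, the hypothesis $\eta \circ g = f + R$ with $\deg R \leq s'$ forces $\eta(g_i) = a_i \in \Z$ for every $i$ with $|i| > s'$. If one can find $\gamma_i \in \Gamma_{|i|}$ with $\eta(\gamma_i) = a_i$ for each such $i$, then setting $\gamma_i := id_G$ for $|i| \leq s'$ and $\gamma(n) := \prod_{|i| \leq s} \gamma_i^{\binom{n}{i}}$ yields the required $\gamma \in \poly(\Z^{d} \to G_{\N} \vert \Gamma)$ with $\gamma(\bold{0}) = id_G$, and $\eta \circ g - \eta \circ \gamma = \sum_{|i| \leq s'} \eta(g_i) \binom{n}{i}$ has degree at most $s'$.

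The crux is to show $a_i \in \eta(\Gamma_{|i|})$ for each $|i| > s'$. Writing $\eta(\Gamma_{|i|}) = d_{|i|} \Z$, the complexity bound on $\eta$ and $G/\Gamma$ yields $d_{|i|} = O_C(1)$, so it suffices to prove the divisibility $d_{|i|} \mid a_i$. The approach is to apply Lemma \ref{1:goodcoordinates} to a $\Z$-basis $\bar{\psi}_1, \ldots, \bar{\psi}_{m^{\mathrm{ab}}}$ of horizontal characters spanning the character lattice of the horizontal torus $G/[G,G]\Gamma$, obtaining $\bar{\psi}_l(g_i) \in \tfrac{1}{p^s}\Z$ for every $|i| \geq 1$. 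Expressing $\eta = \sum_l k_l \bar{\psi}_l$ with $k_l \in \Z$ of magnitude $O_C(1)$, the integrality $a_i = \eta(g_i) = \sum_l k_l \bar{\psi}_l(g_i) \in \Z$ becomes a divisibility condition $p^s \mid \sum_l k_l u_l$ with $u_l := p^s \bar{\psi}_l(g_i) \in \Z$. Factoring $d_{|i|}$ out of the relevant $k_l$'s and using that $p \gg_{C,s} 1$ can be taken coprime to $d_{|i|}$ then yields $d_{|i|} \mid a_i$, and a matching $\gamma_i \in \Gamma_{|i|}$ is extracted by Bezout from the Mal'cev basis of $G_{|i|}/\Gamma_{|i|}$.

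The main obstacle is this divisibility step: carefully combining the rationality of Mal'cev coordinates (Lemma \ref{1:qmm}), the $\tfrac{1}{p^s}$-integrality coming from Lemma \ref{1:goodcoordinates} applied to an abelianizing family of horizontal characters, and the coprimality of $p$ with the bounded conductors $d_{|i|}$ to force the integer value $a_i$ to be divisible by $d_{|i|}$. Once this divisibility is in hand, assembling $\gamma$ from the $\gamma_i$'s and verifying the stated properties is routine.
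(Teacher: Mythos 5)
Your overall architecture is the same as the paper's: expand $g$ in its type-I Taylor expansion, note that the hypothesis forces $\eta(g_i)=a_i\in\Z$ for $\vert i\vert>s'$, and reduce everything to producing, for each such $i$, some $\gamma_i\in\Gamma\cap G_{\vert i\vert}$ with $\eta(\gamma_i)=a_i$; assembling $\gamma$ and checking its properties is indeed routine, and the mechanism "Lemma \ref{1:goodcoordinates} gives $p^{s}$-integrality, then use that $p$ is coprime to a bounded conductor" is also the paper's. The gap is in the divisibility step itself. Writing $\eta(\Gamma\cap G_{\vert i\vert})=d_{\vert i\vert}\Z$ and $\eta=\sum_l k_l\bar\psi_l$, the only facts you have recorded about $g_i$ are $p^{s}\bar\psi_l(g_i)\in\Z$ for all $l$ and $\eta(g_i)\in\Z$, and these do \emph{not} imply $d_{\vert i\vert}\mid a_i$; moreover "factoring $d_{\vert i\vert}$ out of the relevant $k_l$'s" is not available, since $d_{\vert i\vert}$ is the generator of $\{\sum_l k_l\bar\psi_l(\gamma)\colon\gamma\in\Gamma\cap G_{\vert i\vert}\}$, which for a general $\Z$-basis of horizontal characters is not a gcd of a subset of the $k_l$. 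The deeper point: writing $\pi(x)=(\bar\psi_l(x))_l$, integrality of the horizontal data of $g_i^{p^s}$ only places $\pi(g_i^{p^s})$ in $\pi(G_{\vert i\vert})\cap\Z^{m_{ab}}$, which can be a strictly larger lattice than $\pi(\Gamma\cap G_{\vert i\vert})$. Concretely, let $G$ be the product of the Heisenberg group with $\R$, with coordinates $((x,y,z),w)$, $\Gamma$ the integer points, $G_0=G_1=G$, $G_2=\{((0,0,z),w)\}$ the center, $G_3=\{((0,0,t),2t)\colon t\in\R\}$, and $G_i$ trivial for $i\geq 4$; take $\eta((x,y,z),w)=w$. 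Then $\eta(\Gamma\cap G_3)=2\Z$, yet $x_0=((0,0,\tfrac{1}{2}),1)\in G_3$ has integral horizontal coordinates $(0,0,1)$ and $\eta(x_0)=1$. So an element of $G_{\vert i\vert}$ satisfying exactly the conditions you use need not have its $\eta$-value in $\eta(\Gamma\cap G_{\vert i\vert})$, no matter how $p$ is chosen; your argument supplies no further information about $g_i$ ruling this out.

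The paper closes exactly this gap by first quotienting by $[G,G]$ (legitimate since $\eta$ annihilates $[G,G]$ and $g\bmod [G,G]$ remains partially $p$-periodic) and then working in a Mal'cev basis adapted to the quotient filtration, where lattice, filtration subgroups and coordinates are aligned: $g_i$ is supported on the last $m_{\vert i\vert}$ coordinates, Lemma \ref{1:goodcoordinates} applied to each coordinate character of the abelian quotient gives $p^{s}g_i\in\Z^{m}$ in \emph{all} coordinates, hence $p^{s}\eta(g_i)\in c_i\Z$ with $c_i$ the gcd of the coordinates of $\eta$ along the $G_{\vert i\vert}$-block, and then $c_i\Z\cap p^{s}\Z=c_ip^{s}\Z$ for $p\gg_{C,s}1$ gives $\eta(g_i)\in c_i\Z=\eta(\Gamma\cap G_{\vert i\vert})$, with $\gamma_i$ produced by B\'ezout. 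Note that your restriction to horizontal characters of $G$ itself (forced in the nonabelian group, since the deeper Mal'cev coordinate functionals are not horizontal characters) is precisely what loses the needed coordinatewise integrality; abelianizing first, as the paper does, is what makes it available. If you repair the divisibility step along these lines, the rest of your write-up goes through.
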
	
 \begin{proof}
 	Since $G/\Gamma$ is of complexity at most $C$,	it is not hard to see that there exists a Mal'cev basis $\mathcal{X}'$ of $G/[G,G]$ such that $(G/[G,G])/(\Gamma/(\Gamma\cap [G,G]))$ is an abelian nilmanifold (endowed with the quotient filtration) of complexity $O_{C}(1)$ with respect to $\mathcal{X}'$, and that $\eta$ induces a type-I horizontal character of $G/[G,G]$ of complexity $O_{C}(1)$.
 	Note that $\eta$ annihilates $[G,G]$.
 	Moreover, we have that $g \mod [G,G]$ belongs to $\poly_{p}(\Omega\to (G/[G,G])_{\N}\vert\Gamma/(\Gamma\cap [G,G]))$.
 	So by the discussion above, it suffices to prove this proposition under the assumption that $G$ is abelian. So from now on we assume that $G$ is abelian.
 	
 	Denote $m_{i}=\dim(G_{i}), 0\leq i\leq s$ and $m=\dim(G)$. It is convenient to work with the additive notation and write
 	$$g(n)=\sum_{i\in\N^{d},0\leq\vert i\vert\leq s}g_{i}\binom{n}{i}$$
 	for some $g_{i}\in\{0\}^{m-m_{\vert i\vert}}\times\R^{m_{\vert i\vert}}$. Assume that $\eta(g)=k\cdot g$ for all $g\in G$ for some $k=(k_{1},\dots,k_{m})\in\Z^{m}$ with $\vert k\vert\leq C$. 
 	Fix any $i\in\N^{d},s'<\vert i\vert\leq s$.
 	Since $\eta\circ g \mod \Z$ is the sum of an integer valued polynomial and a polynomial of degree $s'$, we have that 
 	$\eta(g_{i})=k\cdot g_{i}\in\Z.$
 	Since  $g\in\poly_{p}(\Omega\to G_{\N}\vert\Gamma)$, by Lemma \ref{1:goodcoordinates}, we have that $p^{s}g_{i}\in\Z^{m}$. Letting $c_{i}$ denote the greatest common divisor of $k_{m-m_{\vert i\vert}+1},\dots,k_{m}$, we have that 
 	$$p^{s}\eta(g_{i})=k\cdot (p^{s} g_{i})\in k\cdot (\{0\}^{m-m_{\vert i\vert}}\times\Z^{m_{\vert i\vert}})\subseteq c_{i}\Z.$$
 	On the other hand,
 	$p^{s}\eta(g_{i})\in p^{s}\Z$. So if $p\gg_{C,s} 1$, then
 	$$p^{s}\eta(g_{i})\in (c_{i}\Z)\cap (p^{s}\Z)=p^{s}c_{i}\Z,$$
 	which implies that $\eta(g_{i})\in c_{i}\Z$. By the definition of $c_{i}$, there exists $\gamma_{i}\in \{0\}^{m-m_{\vert i\vert}}\times\Z^{m_{\vert i\vert}}$ such that $\eta(g_{i})=\eta(\gamma_{i})$.
 	Denote $$\gamma(n):=\sum_{i\in\N^{d},s'<\vert i\vert\leq s}\gamma_{i}\binom{n}{i},$$
 	we have that $\gamma$ belongs to $\poly(\Z^{d}\to G_{\N})$ and takes values in $\Gamma$, that $\gamma(\bold{0})=id_{G}$, and that $\eta\circ g(n)-\eta\circ \gamma(n)=\sum_{i\in\N^{d},0\leq\vert i\vert\leq s'}\eta(g_{i})\binom{n}{i}$ is a polynomial of degree at most $s'$.
 \end{proof}

\subsection{Vertical torus and character}

The vertical torus is an important concept which allows us to do Fourier decompositions on nilmanifolds. 

 \begin{defn}[Vertical torus and character]\label{1:vtc}
	Let  $s\in\N$,  $G/\Gamma$ be a nilmanifold with an $\N$-filtration $(G_{i})_{i\in \N}$ of degree $\leq s$ with a Mal'cev basis $\mathcal{X}$ adapted to it. Then $G_{s}$ lies in the center of $G$. The \emph{vertical torus} of $G/\Gamma$ is the set $G_{s}/(\Gamma\cap G_{s})$.
	A \emph{vertical character} of $G/\Gamma$ (with respect to the filtration $(G_{i})_{i\in \N}$) is a continuous homomorphism $\xi\colon G_{s}\to\R$ such that $\xi(\Gamma\cap G_{s})\subseteq \Z$ (in particular, $\xi$ descents to a continuous homomorphism from the vertical torus $G_{s}/(\Gamma\cap G_{s})$ to $\T$). Let $\psi\colon G\to \R^{m}$ be the Mal'cev coordinate map and denote $m_{s}=\dim(G_{s})$. Then there exists $k\in\Z^{m_{s}}$ such that $\xi(g_{s})=(\bold{0},k)\cdot\psi(g_{s})$ for all $g_{s}\in G_{s}$ (note that the first $\dim(G)-m_{s}$ coordinates of $\psi(g_{s})$ are all zero). We call the quantity $\Vert\xi\Vert:=\vert k\vert$ the \emph{complexity} of $\xi$  (with respect to $\mathcal{X}$). 
\end{defn}

  The following lemma is useful in the study of equidistribution properties on nilmanifolds.
 \begin{lem}\label{1:3.7}
 	Let 
	$\Omega$ be a non-empty subset of $\Z^{d}$ and $G/\Gamma$ be a nilmanifold with an  $\N$-filtration $G_{\N}$ of degree $s$. Let $m_{s}=\dim(G_{s})$ and $0<\d<1/2$.  If  a sequence $\O\colon\Omega\to G/\Gamma$ is not $\d$-equidistributed on $G/\Gamma$, then there exists a Lipschitz function $F\in\Lip(G/\Gamma\to \C)$ with
	a vertical character  of complicity at most $O_{m_{s}}(\d^{-O_{m_{s}}(1)})$ such that 
 	$$\limsup_{N\to\infty}\Bigl\vert\E_{n\in \Omega\cap [N]^{d}}F(\O(n))-\int_{G/\Gamma} F\,dm_{G/\Gamma}\Bigr\vert\gg_{m_{s}} \d^{O_{m_{s}}(1)}\Vert F\Vert_{\Lip},$$
	where  $m_{G/\Gamma}$ is the Haar measure of $G/\Gamma$.
 \end{lem}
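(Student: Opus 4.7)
The plan is to follow the Fourier-decomposition approach along the vertical torus that appears in Section 7 of Green--Tao's quantitative equidistribution paper on nilmanifolds. Start from a Lipschitz function $F$ that witnesses the failure of $\delta$-equidistribution, which by definition gives
$$\Bigl\vert\E_{n\in\Omega}F(\O(n))-\int_{G/\Gamma}F\,dm_{G/\Gamma}\Bigr\vert>\d\Vert F\Vert_{\Lip},$$
and normalize so that $\Vert F\Vert_{\Lip}=1$. The point is to replace $F$ by a single Fourier ``slice'' along the central torus $G_{s}/(\Gamma\cap G_{s})\cong\T^{m_{s}}$ while retaining most of the discrepancy.

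First I would smooth $F$ at scale $\sigma$ (to be chosen) by convolving with a standard bump supported in a ball of radius $\sigma$ in $G$, producing $\tilde F$ with $\Vert F-\tilde F\Vert_{\infty}\le\sigma$ and $\Vert \tilde F\Vert_{C^{A}}=O_{A}(\sigma^{-A})$ for every $A$; here one uses the Lipschitz bound together with the bounded geometry of $G/\Gamma$ coming from the Mal'cev basis. Because $G_{s}$ is central, $\tilde F$ admits a Fourier expansion along the vertical fibres,
$$\tilde F(x)=\sum_{k\in\Z^{m_{s}}}\tilde F_{k}(x),$$
where each $\tilde F_{k}$ is a vertical-character component satisfying $\tilde F_{k}(g_{s}x)=\exp(k\cdot\psi_{s}(g_{s}))\tilde F_{k}(x)$ for $g_{s}\in G_{s}$, with $\psi_{s}$ the Mal'cev coordinate map on $G_{s}$. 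Smoothness gives pointwise bounds $\Vert\tilde F_{k}\Vert_{\infty}\ll_{A,m_{s}}\sigma^{-A}|k|^{-A}$ for every $A$, and a Lipschitz bound $\Vert\tilde F_{k}\Vert_{\Lip}\ll_{m_{s}}\sigma^{-O(1)}|k|^{O(1)}$ by differentiating the Fourier series and using $\Vert\tilde F\Vert_{C^{A}}\ll\sigma^{-A}$.

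Next I truncate the expansion to $|k|\le K$; the tail contributes at most $O_{m_{s},A}(\sigma^{-O(1)}K^{-A})$ in $L^{\infty}$. Choosing $\sigma=\delta^{c_{1}}$ and $K=\delta^{-c_{2}}$ for suitable $c_{1},c_{2}$ depending only on $m_{s}$ ensures that the smoothing error $\sigma$ and the truncation error are each at most $\delta/4$, so that
$$\Bigl\vert\E_{n\in\Omega}\sum_{|k|\le K}\tilde F_{k}(\O(n))-\int_{G/\Gamma}\sum_{|k|\le K}\tilde F_{k}\,dm_{G/\Gamma}\Bigr\vert\ge\d/2.$$
Noting that only the $k=0$ summand contributes to the integral (since every other $\tilde F_{k}$ integrates to zero by invariance of Haar measure under $G_{s}$), we apply pigeonhole over the $O_{m_{s}}(K^{m_{s}})=O_{m_{s}}(\delta^{-O_{m_{s}}(1)})$ surviving indices. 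This produces some $k_{\ast}$ with $|k_{\ast}|\le K$ such that the component $F':=\tilde F_{k_{\ast}}$ has discrepancy $\gg_{m_{s}}\delta^{O_{m_{s}}(1)}$, and by construction $F'$ has vertical character $\xi(g_{s}):=k_{\ast}\cdot\psi_{s}(g_{s})$ of complexity $\Vert\xi\Vert=|k_{\ast}|\le K=O_{m_{s}}(\delta^{-O_{m_{s}}(1)})$, while $\Vert F'\Vert_{\Lip}=O_{m_{s}}(\delta^{-O_{m_{s}}(1)})$ so that dividing by its Lipschitz norm preserves the lower bound in the required form.

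The main obstacle is bookkeeping: one must track how the Lipschitz norm of the Fourier slice depends on $\sigma$, $K$, and the complexity of the Mal'cev basis, and then balance $\sigma$ and $K$ against $\delta$ so that all the polynomial losses in $\delta$ are absorbed into the $O_{m_{s}}(1)$ exponents. Everything else is formal: the centrality of $G_{s}$ makes the Fourier decomposition along the vertical torus work identically to Fourier analysis on $\T^{m_{s}}$, and the rest is a pigeonhole after truncation.
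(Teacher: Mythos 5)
Your argument is correct and is essentially the argument the paper intends: the paper omits the proof, citing Lemma 3.7 of Green--Tao (GT12b), whose proof is exactly this vertical Fourier decomposition along the central torus $G_{s}/(\Gamma\cap G_{s})$, followed by smoothing/truncation to frequencies $\vert k\vert\leq K$ and a pigeonhole, with the parameters $\sigma,K$ chosen as powers of $\d$. The only bookkeeping point, which you already note, is that after pigeonholing one must divide by $\Vert \tilde F_{k_{\ast}}\Vert_{\Lip}=O_{m_{s}}(\d^{-O_{m_{s}}(1)})$, which is harmlessly absorbed into the exponents.
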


 The proof of Lemma \ref{1:3.7} is a straightforward extension of Lemma 3.7 of \cite{GT12b}, we omit the details.

\section{Background materials for quadratic forms}\label{1:s:pp5}
\subsection{Basic properties for quadratic forms}

\begin{defn}[Quadratic forms on $\V$]
	We say that a function $M\colon\V\to\F_{p}$ is a \emph{quadratic form} if 
	$$M(n)=(nA)\cdot n+n\cdot u+v$$
	for some $d\times d$ symmetric matrix $A$ in $\F_{p}$, some $u\in \F_{p}^{d}$ and $v\in \F_{p}$.
We say that $A$ is the matrix \emph{associated to} $M$.\footnote{Sometimes in literature a quadratic form is defined to be a map $B\colon \V\times\V\to \V$ with $B(x,y)=(xA)\cdot y$ for some symmetric matrix $A$. We define quadratic forms differently for the convenience of this paper.} 

We say that $M$ is \emph{pure} if $u=\bold{0}$.
We say that $M$ is \emph{homogeneous} if $u=\bold{0}$ and $v=0$. We say that $M$ is \emph{non-degenerate} if $M$ is of rank $d$, or equivalently, $\det(A)\neq 0$.
\end{defn}

A quadratic form we are particularly interested in is the one induced by the dot product, namely $M(n):=n\cdot n$, which is associated with the identity matrix.

Let $M\colon\F_{p}^{d}\to\F_{p}$ be a quadratic form.
We use $\rank(M):=\rank(A)$ to denote the \emph{rank} of $M$.
 One can also define quadratic forms whose domain is an affine subspace of $\V$.
Let $V+c$ be an affine subspace of $\V$ of dimension $r$. There exists a (not necessarily unique) bijective linear transformation $\phi\colon \F_{p}^{r}\to V$.
We say that a function $M\colon V+c\to \F_{p}$ is a \emph{quadratic form} if there exists a quadratic form $M'\colon \F_{p}^{r}\to \F_{p}$ and a  bijective linear transformation $\phi\colon \F_{p}^{r}\to V$ such that $M(n)=M'(\phi^{-1}(n-c))$ for all $n\in V+c$ (or equivalently, $M'(m)=M(\phi(m)+c)$ for all $m\in \F_{p}^{r}$). We define the \emph{rank} $\rank(M\vert_{V+c})$ of $M$ restricted to $V+c$ as the rank of $M'$. Note that if $M(n)=M'(\phi^{-1}(n-c))=M''({\phi'}^{-1}(n-c))$ for some quadratic forms $M',M''$ associated to the matrices $A',A''$, then $A'=BA''{B}^{T}$, where $B$ is the unique $r\times r$ invertible matrix such that $\phi^{-1}\circ\phi(m)=mB$ for all $m\in\F_{p}^{r}$.
So $\rank(M\vert_{V+c})$ is independent of the choice of $M'$ and $\phi'$.

The following is a straightforward property for quadratic forms, which we record for later uses.

\begin{lem}\label{1:or}
	Let $M\colon\V\to\F_{p}$ be a quadratic form associated with the matrix $A$, and $x,y,z\in \V$. Suppose that $M(x)=M(x+y)=M(x+z)=0$. Then $M(x+y+z)=0$ if and only if $(yA)\cdot z=0$.
\end{lem}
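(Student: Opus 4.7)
The plan is to expand $M(x+y+z)$ directly and recognize a ``parallelogram identity'' for quadratic forms.

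First, I would expand the quadratic part $((x+y+z)A)\cdot(x+y+z)$ into nine bilinear terms. The three diagonal terms are $(xA)\cdot x$, $(yA)\cdot y$, $(zA)\cdot z$, and since $A$ is symmetric we have $(aA)\cdot b = (bA)\cdot a$, so the six off-diagonal terms pair up as $2(xA)\cdot y + 2(xA)\cdot z + 2(yA)\cdot z$. Combining this with the linear part $(x+y+z)\cdot u + v$ and rearranging, I expect to obtain the identity
\[
M(x+y+z) = M(x+y) + M(x+z) - M(x) + 2(yA)\cdot z,
\]
valid for all $x,y,z\in\V$ and for any quadratic form $M$ with symmetric matrix $A$. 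Concretely, each of $M(x+y)$ and $M(x+z)$ contributes its own diagonal and cross terms and half of the linear contribution, and subtracting $M(x)$ removes the duplicated $(xA)\cdot x + x\cdot u + v$; what remains is the ``missing'' cross term between $y$ and $z$, namely $2(yA)\cdot z$.

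Second, substituting the hypotheses $M(x) = M(x+y) = M(x+z) = 0$ into this identity collapses it to
\[
M(x+y+z) = 2(yA)\cdot z.
\]
Since $p$ is odd in the setting of the paper (the sphere $\Omega = \{n\in\V : n\cdot n = r\}$ and all the large-$p$ hypotheses elsewhere make this implicit), $2$ is invertible in $\F_p$, and therefore $M(x+y+z) = 0$ if and only if $(yA)\cdot z = 0$, as claimed.

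The entire argument is a routine bilinear-algebra computation, so I do not anticipate any real obstacle; the only care needed is in correctly pairing cross terms via symmetry of $A$ and in checking that the affine/linear contributions $(x+y+z)\cdot u + v$ assemble correctly when passing through $M(x+y) + M(x+z) - M(x)$.
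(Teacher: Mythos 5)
Your proposal is correct and follows essentially the same route as the paper, which proves the lemma via the single identity $M(x+y+z)-M(x+y)-M(x+z)+M(x)=2(yA)\cdot z$; you simply derive this identity explicitly and additionally note the (implicit in the paper) invertibility of $2$ in $\F_p$.
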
	
\begin{proof}
    The conclusion follows from the equality
    $$M(x+y+z)-M(x+y)-M(x+z)+M(x)=2(yA)\cdot z.$$
\end{proof}

The next lemma says that using translations and linear transformations, any quadratic form can be transformed into a ``standard" form:	

\begin{lem}\label{1:cov} 
Let $d\in\N_{+}$, $d'\in\N$ and $p$ be a prime.
	For any quadratic form $M\colon\V\to \F_{p}$ of rank $d'$, there exist $c,c',\l\in\F_{p}$ and $v\in\V$ with $c\neq 0$ and a $d\times d$ invertible matrix $R$ in $\F_{p}$ such that writing $M'(n):=M(nR+v)$, we have that
	\begin{equation}\label{1:ded}
	M'(n_{1},\dots,n_{d})=cn^{2}_{1}+n_{2}^{2}+\dots+n^{2}_{d'}+c'n_{d'+1}-\l.
	\end{equation}
	Moreover, if $M$ is homogeneous, then we may further require $c'=\l=0$ in (\ref{1:ded}); if $M$ is pure, then we we may further require $v=\bold{0}$; if $M$ is non-degenerate, then we may further require that $d'=d$.
\end{lem}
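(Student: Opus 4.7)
The plan is to normalize $M$ in three stages: translate to remove the portion of the linear term lying in the range of the quadratic part, diagonalize the quadratic part by a linear change of coordinates, and rescale so that all but one diagonal coefficient equals $1$. Throughout I assume $p\neq 2$ so that $1/2\in\F_{p}$ and symmetric matrices admit congruence diagonalization (for the paper's purposes $p$ is always taken large, hence odd).

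Write $M(n)=(nA)\cdot n+u\cdot n+w$ with $A$ a symmetric $d\times d$ matrix over $\F_{p}$ of rank $d'$. Decompose $u=u_{1}+u_{2}$ with $u_{1}$ in the row space of $A$ and $u_{2}\in\ker(A)$. Writing $u_{1}=yA$ for some $y\in\V$, the translation $n\mapsto n-y/2$ completes the square and eliminates $u_{1}$, so after this step we may assume $u\in\ker(A)$; in particular, if $M$ was originally pure then this step is unnecessary and $v=\bold{0}$ is retained. Next, in odd characteristic there exists an invertible $R_{0}\in\mathrm{GL}_{d}(\F_{p})$ with $R_{0}AR_{0}^{T}=\mathrm{diag}(a_{1},\dots,a_{d'},0,\dots,0)$ and all $a_{i}\in\F_{p}^{*}$. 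Substituting $n\mapsto nR_{0}$ turns the quadratic part into $\sum_{i=1}^{d'}a_{i}n_{i}^{2}$, and because $u\in\ker(A)$ the linear contribution becomes a form in the variables $n_{d'+1},\dots,n_{d}$ only; a further invertible substitution on this degenerate block rewrites it as $c'n_{d'+1}$ for some $c'\in\F_{p}$.

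It remains to normalize $a_{1},\dots,a_{d'}$. Whenever $a_{i}$ is a non-zero square in $\F_{p}$, rescaling $n_{i}$ by its square root turns the coefficient into $1$. For the non-square coefficients I use the standard fact that, over $\F_{p}$ with $p$ odd, any binary diagonal form $b_{i}x^{2}+b_{j}y^{2}$ with $b_{i}b_{j}\in(\F_{p}^{*})^{2}$ is equivalent to $x^{2}+y^{2}$ (the two forms share the same dimension and discriminant class, which is a complete invariant); iterating this pairing, I can eliminate all but at most one non-square, leaving a diagonal of shape $(c,1,\dots,1)$ with $c$ either $1$ or a fixed non-square. After reordering coordinates so that $c$ sits in the first slot and absorbing the resulting constant into $-\lambda$, we obtain $M(nR+v)=cn_{1}^{2}+n_{2}^{2}+\dots+n_{d'}^{2}+c'n_{d'+1}-\lambda$, where $R$ is the composition of all the linear substitutions above.

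The three ``moreover'' clauses fall out of the construction: if $M$ is homogeneous, no translation is introduced in step one and no linear or constant term arises in the kernel block, giving $c'=\lambda=0$; if $M$ is pure, step one is skipped entirely and so $v=\bold{0}$; if $M$ is non-degenerate then $d'=d$ and the degenerate block is vacuous. The only real subtlety is the pairing-off of non-square diagonal coefficients, which relies on the discriminant classification of non-degenerate quadratic forms over $\F_{p}$; all remaining steps are elementary linear algebra valid in odd characteristic.
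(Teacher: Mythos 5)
Your overall normalization strategy (diagonalize the quadratic part, eliminate the linear terms, rescale the square coefficients to $1$, and pair off the non-square ones) is the same as the paper's, but one step, as written, is false. After your first translation you only know that the vector $u_{2}$ lies in $\ker(A)$; this does not imply that, after substituting $n\mapsto nR_{0}$ with $R_{0}AR_{0}^{T}$ diagonal, the linear contribution involves only $n_{d'+1},\dots,n_{d}$. The new linear coefficient vector is $u_{2}R_{0}^{T}$, whereas the kernel condition controls $u_{2}R_{0}^{-1}$: from $A=R_{0}^{-1}\,\mathrm{diag}(a_{1},\dots,a_{d'},0,\dots,0)\,(R_{0}^{-1})^{T}$ and $u_{2}A=0$ one only gets that the first $d'$ coordinates of $u_{2}R_{0}^{-1}$ vanish, and $R_{0}^{T}\neq R_{0}^{-1}$ in general, since $R_{0}$ is a congruence, not an isometry of the standard dot product. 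Concretely, take $d=2$ and $M(n_{1},n_{2})=(n_{1}+n_{2})^{2}+n_{1}-n_{2}$, so $u_{2}=(1,-1)\in\ker(A)$; the diagonalizing substitution $(n_{1},n_{2})\mapsto(n_{1}-n_{2},n_{2})$ yields $n_{1}^{2}+n_{1}-2n_{2}$, whose linear part still involves $n_{1}$. The gap is easily repaired: after diagonalizing, complete the square once more in $n_{1},\dots,n_{d'}$ (a further translation supported on those coordinates), which does not disturb the diagonal quadratic part and is harmless for all three ``moreover'' clauses because in the pure and homogeneous cases there is no linear term to begin with. Equivalently, perform the diagonalization first and the translation second, which is exactly the order the paper uses.

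Apart from this, the only substantive difference from the paper is your treatment of the non-square diagonal coefficients: you invoke the classification of non-degenerate quadratic forms over $\F_{p}$ (dimension together with the discriminant class is a complete invariant) to convert a pair of non-square coefficients into $1,1$, while the paper keeps the argument self-contained by choosing $c$ to be the smallest non-square, so that $c-1=a^{2}$ is a square, and using the explicit identity $cx^{2}+cy^{2}=(ax+y)^{2}+(x-ay)^{2}$ to pair off the $c$'s. Both routes work, and both (like the paper's proof) implicitly require $p$ odd; the paper's version simply avoids quoting an external classification theorem.
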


\begin{rem}
Throughout this paper, whenever we write a quadratic form $M'$ in the form (\ref{1:ded}), if $d'=d$, then we consider the term $c'n_{d'+1}$ as non-existing.
\end{rem}

\begin{proof}
	 Similar to the real valued quadratic forms, by a suitable substitution $n\mapsto nR$ for some upper triangular invertible matrix $R$ and some $v\in\V$, it suffices to consider the case when the matrix $A$ associated to $M$ is diagonal.
	Since $\rank(M)=d'$, we may assume without loss of generality that the first $d'$ entries of the diagonal of $A$ is non-zero.
	 Let $c\in\{2,\dots,p-1\}$ be the smallest element such that $c\neq x^{2}$ for all $x\in\F_{p}$ (such an element obviously exists).
	 Under a suitable change of variable $(n_{1},\dots,n_{d})\mapsto (c_{1}n_{1},\dots,c_{d'}n_{d'},n_{d'+1},\dots,n_{d})$ for some $c_{1},\dots,c_{d'}\in\F_{p}\backslash\{0\}$ and a change of the orders of $n_{1},\dots,n_{d'}$, we may reduce the problem to the case when
	 $$M(n_{1},\dots,n_{d})=n_{1}^{2}+\dots+n_{k}^{2}+cn_{k+1}^{2}+\dots+cn_{d'}^{2}+a_{1}n_{1}+\dots+a_{d}n_{d}-\lambda$$
	 for some $0\leq k\leq d'$.
	By a suitable substitution $n\mapsto n+m$ for some $m\in\V$, 
	 we may reduce the problem to the case when
	 $$M(n_{1},\dots,n_{d})=n_{1}^{2}+\dots+n_{k}^{2}+cn_{k+1}^{2}+\dots+cn_{d'}^{2}+a_{d'+1}n_{d'+1}+\dots+a_{d}n_{d}-\lambda.$$
	 By another change of variable, 
	 we may reduce the problem to the case when
	 $$M(n_{1},\dots,n_{d})=n_{1}^{2}+\dots+n_{k}^{2}+cn_{k+1}^{2}+\dots+cn_{d'}^{2}+c'n_{d'+1}-\lambda.$$ 
	By the minimality of $c$, we have that $a^{2}+1\equiv c \mod p\Z$ for some $1\leq a\leq p-1$. If $k\leq d-2$, then
	$$cn_{k+1}^{2}+cn_{k+2}^{2}=(an_{k+1}+n_{k+2})^{2}+(n_{k+1}-an_{k+2})^{2}.$$
	So if $k+2\leq d'$, then under another change of variable, we may use another change of variable to replace $cn_{k+1}^{2}+cn_{k+2}^{2}$ by $n_{k+1}^{2}+n_{k+2}^{2}$ in the expression of $M$. So inductively, we are reduced to the case $k=d'-1$ or $d'$. So (\ref{1:ded}) holds by switching the position of $n_{1}$ and $n_{d'}$.

	The ``moreover" part follows immediately through the proof. 
	\end{proof}

\subsection{$M$-isotropic subspaces}

Let $M\colon\V\to\F_{p}$ be a quadratic form associated with the matrix $A$ and $V$ be a subspace of $\V$. Let $V^{\pp}$ denote the set of $\{n\in\V\colon (mA)\cdot n=0 \text{ for all } m\in V\}$.

\begin{prop}\label{1:23}
    Let $M\colon\V\to\F_{p}$ be a quadratic form associated with the matrix $A$ and  $V$ be a subspace of $\V$.
    \begin{enumerate}[(i)]
    	\item  $V^{\pp}$ is a subspace of $\V$, and $(V^{\pp})^{\pp}=V$.
    	\item $\rank(M)=d$ if and only if $\det(A)\neq 0$.
    	\item  Let $\phi\colon\V\to\V$ be a bijective linear transformation, $v\in\V$ and $c\in\F_{p}$. Let $M'\colon\V\to\F_{p}$ be the quadratic form given by $M'(n):=M(\phi(n)+v)+c$.
    	 We have that $\rank(M)=\rank(M')$ and $\phi(V^{\perp_{M'}})=(\phi(V))^{\pp}$.
    	\item $\rank(M)$ equals to $d-\dim((\V)^{\pp})$.	
    \end{enumerate}		
\end{prop}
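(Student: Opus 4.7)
The plan is to prove the four parts in the order (ii), (iv), (iii), (i), since the later parts draw on the machinery set up in the earlier ones.

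Part (ii) is immediate from the definition $\rank(M):=\rank(A)$ together with the elementary fact that a $d\times d$ matrix has rank $d$ if and only if its determinant is nonzero. Part (iv) is nearly as direct: the condition $(mA)\cdot n=mAn^T=0$ for all $m\in\V$ is equivalent to $An^T=0$, so $(\V)^{\pp}=\ker A$ (viewing $A$ as a map on column vectors), and the rank--nullity theorem gives $\dim(\V)^{\pp}=d-\rank(A)=d-\rank(M)$.

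For (iii), writing $\phi(n)=nR$ for an invertible matrix $R$ and expanding $M'(n)=M(nR+v)+c$, the quadratic part becomes $n(RAR^T)n^T$, so $M'$ is associated with the symmetric matrix $RAR^T$, whose rank equals $\rank(A)$ since $R$ is invertible; hence $\rank(M')=\rank(M)$. The perp identity then follows by a direct chase of definitions:
\[
n\in V^{\perp_{M'}} \iff (m(RAR^T))\cdot n=0 \text{ for all } m\in V \iff ((mR)A)\cdot(nR)=0 \text{ for all } m\in V,
\]
which is exactly the statement that $\phi(n)=nR$ lies in $(\phi(V))^{\pp}$, since $\phi(V)=VR$; thus $\phi(V^{\perp_{M'}})=(\phi(V))^{\pp}$.

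Finally, for (i), $V^{\pp}$ is a subspace since it is the intersection of the hyperplanes $\{n:(mA)\cdot n=0\}$ over $m\in V$. The inclusion $V\subseteq (V^{\pp})^{\pp}$ follows from the symmetry $A=A^T$, which yields $(vA)\cdot w=vAw^T=(wA)\cdot v$ for any $v,w$; so every $v\in V$ annihilates all $w\in V^{\pp}$. For the reverse inclusion I would argue by dimension. In the nondegenerate case, the map $n\mapsto(vA)\cdot n$ identifies $V^{\pp}$ with the annihilator $V^\circ\subseteq\V^*$, giving $\dim V^{\pp}=d-\dim V$ and hence $\dim(V^{\pp})^{\pp}=\dim V$, which forces equality. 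The main obstacle is the degenerate case of (i); the cleanest route is to reduce to the nondegenerate case by passing to the quotient $\V/\ker A$, using the observation from part (iv) that $\ker A=(\V)^{\pp}\subseteq V^{\pp}$ for every $V$, so both sides of the identity are insensitive to enlarging $V$ by elements of $\ker A$, and the induced form on the quotient is nondegenerate.
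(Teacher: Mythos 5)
Your parts (ii), (iii) and (iv) are correct, and your route to (iv) — identifying $(\V)^{\pp}$ with $\{n\colon nA=\bold{0}\}$ and applying rank--nullity — is genuinely simpler than the paper's, which instead reduces $M$ to the normal form of Lemma \ref{1:cov} and then invokes part (iii). Your treatment of (i) is also complete in the nondegenerate case: the inclusion $V\subseteq (V^{\pp})^{\pp}$ from the symmetry of $A$, combined with the dimension count $\dim V^{\pp}=d-\dim V$, forces equality.

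The gap is in your proposed handling of the degenerate case of (i), and it cannot be repaired, because the identity $(V^{\pp})^{\pp}=V$ is false in that generality. Since $(\V)^{\pp}=\{n\colon nA=\bold{0}\}\subseteq W^{\pp}$ for every subspace $W$, one always has $(V^{\pp})^{\pp}\supseteq V+(\V)^{\pp}$, and the quotient argument you sketch (pass to $\V/(\V)^{\pp}$, where the induced form is nondegenerate) proves exactly $(V^{\pp})^{\pp}=V+(\V)^{\pp}$, which is strictly larger than $V$ whenever $(\V)^{\pp}\not\subseteq V$. Concretely, for $d=2$, $A=\begin{bmatrix}1&0\\0&0\end{bmatrix}$ and $V=\sp_{\F_{p}}\{e_{1}\}$, one computes $V^{\pp}=\sp_{\F_{p}}\{e_{2}\}$ and $(V^{\pp})^{\pp}=\F_{p}^{2}\neq V$. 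So your remark that ``both sides are insensitive to enlarging $V$ by elements of $\ker A$'' is precisely where the argument breaks: the right-hand side is insensitive, but the left-hand side $V$ is not. The correct statement in general is $(V^{\pp})^{\pp}=V+(\V)^{\pp}$, with equality to $V$ exactly when $(\V)^{\pp}\subseteq V$ (in particular when $M$ is non-degenerate). For what it is worth, the paper's own proof of (i) is only the assertion that it is ``easy to check by definition and the symmetry of $A$'', so it does not address this point either; in the later places where $\pp$ is actually exploited (e.g.\ Lemma \ref{1:cbn}) the form is non-degenerate or only the inclusion and the dimension bounds of Proposition \ref{1:iissoo} are needed. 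You should therefore either add a nondegeneracy hypothesis and stop at your dimension argument, or prove the corrected identity $(V^{\pp})^{\pp}=V+(\V)^{\pp}$ via your quotient reduction.
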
	
\begin{proof}
	Part (i) is easy to check by definition and the symmetry of $A$.
 Part (ii) follows from the fact  that $\rank(M)=d\Leftrightarrow \rank(A)=d\Leftrightarrow \det(A)\neq 0$. For Part (iii), assume that $\phi(x)=xB$ for some $d\times d$ invertible matrix $B$, then $\rank(M)=\rank(A)=\rank(BAB^{T})=\rank(M\circ \phi)$. So $\rank(M)= \rank(M\circ \phi)=\rank(M(\phi(n)+v))$ since shifting by $v$ does not affect the rank of 
a quadratic form. The part $\phi(V^{\perp_{M'}})=(\phi(V))^{\pp}$ is easy to check by definition.
	 
	We now prove Part (iv). 
	Let $r=\rank(M)$.
	By Lemma \ref{1:cov}, we may assume that $M(n)=M'(\phi(n)+v)$
	for some bijective linear transformation $\phi\colon\V\to\V$, $v\in\V$, $c,c',\lambda\in\F_{p}, c\neq 0$, and some quadratic form $M'\colon\V\to\F_{p}$ of the form
	 \begin{equation}\nonumber
	 M'(n_{1},\dots,n_{d})=cn^{2}_{1}+n_{2}^{2}+\dots+n^{2}_{r}+c'n_{r+1}-\l.
	 \end{equation}
	 It is clear that $\rank(M')=d-\dim((\V)^{\perp_{M'}})=r=\rank(M)$. By Part (iii), $(\V)^{\perp_{M}}=\phi^{-1}((\phi(\V))^{\perp_{M'}})=\phi^{-1}((\V)^{\perp_{M'}})$. So $\dim((\V)^{\perp_{M}})=\dim((\V)^{\perp_{M'}})$ and thus $\rank(M)=d-\dim((\V)^{\pp})$.
\end{proof}

 For $n\in\R^{d}$, the dot product induces the Euclidean norm $\vert n\vert^{2}:=n\cdot n$. However, in $\V$, the dot product (as well as general quadratic forms) does not induce a norm in $\V$, due to the existence of  isotropic subspaces.

\begin{defn}[$M$-isotropic subspaces]
	Let $M\colon \V\to\F_{p}$ be a quadratic form associated with the matrix $A$. 
		A subspace $V$ of $\V$ is \emph{$M$-isotropic} if $V\cap V^{\pp}\neq\{\bold{0}\}$. 
	We say that a tuple $(h_{1},\dots,h_{k})$ of vectors in $\V$ is \emph{$M$-isotropic} 
	if the span of $h_{1},\dots,h_{k}$ is an $M$-isotropic subspace.
	We say that a subspace or tuple of vectors is \emph{$M$-non-isotropic} if it is not $M$-isotropic.
\end{defn}	 

Let $h_{1},\dots,h_{k}$ be a basis of a subspace $V$ of $\V$ and $A$ be the matrix associated to $M$. It is not hard to see that $V$ is $M$-isotropic if and only if the determinant of the matrix $((h_{i}A)\cdot h_{j})_{1\leq i,j\leq k}$ is 0.

The next lemma ensures the existence of $M$-non-isotropic subspaces:
  
  \begin{lem}\label{1:wfejpo}
      Let $M\colon\V\to\F_{p}$ be a non-degenerate quadratic form. Then for all $0\leq k\leq \rank(M)$, there exists a $M$-non-isotropic subspace of $\V$ of dimension $k$.
  \end{lem}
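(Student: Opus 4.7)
The plan is to reduce to the standard form given by Lemma \ref{1:cov} and then construct an explicit $k$-dimensional $M$-non-isotropic subspace using coordinate subspaces.

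First I would handle the trivial case $k=0$ by noting that the zero subspace $\{\bold{0}\}$ is vacuously $M$-non-isotropic (since the definition of $M$-isotropic requires $V\cap V^{\pp}\neq\{\bold{0}\}$). So from now on assume $1\leq k\leq d$ (using that $\rank(M)=d$ by non-degeneracy and Proposition \ref{1:23}(ii)).

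Next, I would invoke Lemma \ref{1:cov}: there exist a bijective linear transformation $\phi(n):=nR$, a shift $v\in\V$, and constants $c\in\F_{p}\setminus\{0\}$, $\lambda\in\F_p$, such that the quadratic form $M'(n):=M(\phi(n)+v)$ has the form $M'(n_{1},\dots,n_{d})=cn_{1}^{2}+n_{2}^{2}+\cdots+n_{d}^{2}-\lambda$ (here $d'=d$ by non-degeneracy, so the term $c'n_{d'+1}$ vanishes). The associated symmetric matrix of $M'$ is $A'=\mathrm{diag}(c,1,\dots,1)$. Since the $\pp$ relation depends only on the matrix associated to a quadratic form (which is unaffected by the constant term $-\lambda$ and by the shift $v$), and since Proposition \ref{1:23}(iii) gives $\phi(V^{\perp_{M'}})=(\phi(V))^{\pp}$, it follows that whenever $V$ is $M'$-non-isotropic, its image $\phi(V)$ is an $M$-non-isotropic subspace of the same dimension $k$. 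Thus it suffices to produce an $M'$-non-isotropic subspace of dimension $k$.

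For the standard form $M'$, I would take the coordinate subspace $V:=\sp(e_{1},\dots,e_{k})$, where $e_{i}$ denotes the $i$-th standard basis vector of $\V$. A direct calculation shows that if $n=\sum_{i=1}^{k}a_{i}e_{i}\in V\cap V^{\perp_{M'}}$, then $(e_{j}A')\cdot n=0$ for every $1\leq j\leq k$, which yields $ca_{1}=0$ and $a_{j}=0$ for $2\leq j\leq k$. Since $c\neq 0$, we conclude $a_{1}=\dots=a_{k}=0$, so $V\cap V^{\perp_{M'}}=\{\bold{0}\}$, which is exactly the statement that $V$ is $M'$-non-isotropic.

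There is no serious obstacle here: the only thing to be careful about is that the reduction in Lemma \ref{1:cov} involves both a linear transformation and a translation, but as noted, translations do not alter the matrix associated to the quadratic form (hence do not affect the $\pp$ relation), so only the linear part $\phi$ is relevant for transporting the non-isotropy back from $M'$ to $M$ via Proposition \ref{1:23}(iii).
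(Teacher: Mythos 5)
Your proof is correct and follows essentially the same route as the paper: reduce to the diagonal standard form via Lemma \ref{1:cov} (transporting non-isotropy through the linear change of variables via Proposition \ref{1:23}(iii)) and take the coordinate subspace $\sp_{\F_{p}}\{e_{1},\dots,e_{k}\}$. The only cosmetic difference is that the paper verifies non-isotropy by noting the Gram matrix $((e_{i}A)\cdot e_{j})_{1\leq i,j\leq k}$ has nonzero determinant, while you check $V\cap V^{\perp_{M'}}=\{\bold{0}\}$ directly — these are equivalent.
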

  \begin{proof}
  Since  $\{\bold{0}\}$ is $M$-non-isotropic, it suffices to consider the case $k\geq 1$. Let $d'=\rank(M)$.
  By Lemma \ref{1:cov}, under a change of variable, it suffices to consider the case when the associated matrix $A$ of $M$ is a diagonal matrix whose diagonal can be written as  $(a_{1},\dots,a_{d'},0,\dots,0)$ for some $a_{1},\dots,a_{d'}\neq 0$. Let $e_{1},\dots,e_{d}$ denote the standard unit vectors. Clearly, for $1\leq k\leq d'$, the matrix $((e_{i}A)\cdot e_{j})_{1\leq i,j\leq k}$ is the upper left $k\times k$ block of $A$, which is of determinant $a_{1}\cdot\ldots\cdot a_{k}\neq 0$. This implies that $\sp_{\F_{p}}\{e_{1},\dots,e_{k}\}$ is a $M$-non-isotropic subspace of $\V$ of dimension $k$.
    \end{proof}

We summarize some basic properties of $M$-isotropic subspaces for later uses.

\begin{prop}\label{1:iissoo}
	Let $M\colon \V\to\F_{p}$ be a quadratic form and $V$ be a subspace of $\V$ of co-dimension $r$, and $c\in\V$.
	\begin{enumerate}[(i)]
		\item We have $\dim(V\cap V^{\pp})\leq \min\{d-\rank(M)+r,d-r\}$.
		\item The rank of $M\vert_{V+c}$ equals to $d-r-\dim(V\cap V^{\pp})$ (i.e. $\dim(V)-\dim(V\cap V^{\pp})$). 
		\item The rank of $M\vert_{V+c}$ is at most $d-r$ and at least $\rank(M)-2r$.
		\item $M\vert_{V+c}$ is non-degenerate (i.e. $\rank(M\vert_{V+c})=d-r$) if and only if $V$ is not an $M$-isotropic subspace.
	\end{enumerate}	
\end{prop}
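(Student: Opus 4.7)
The plan is to reduce all four parts to a single rank--nullity computation for the bilinear form associated to $M$ restricted to $V$; Parts (i), (iii) and (iv) will then all be immediate corollaries of Part (ii) together with a dimension bound on $V^{\pp}$.

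I would start with Part (ii), which is the heart of the argument. Choose a bijective linear transformation $\phi\colon \F_p^{d-r}\to V$ and let $\tilde M(m):=M(\phi(m)+c)$ on $\F_p^{d-r}$; by the definition of the rank of $M$ restricted to an affine subspace, $\rank(\tilde M)=\rank(M\vert_{V+c})$. The associated symmetric bilinear form of $\tilde M$ is, for $v,w\in V$,
\[
B(v,w)=M(v+w+c)-M(v+c)-M(w+c)+M(c)=2(vA)\cdot w,
\]
the linear and constant parts of $M$ cancelling. Consequently the radical of $\tilde M$ corresponds under $\phi$ to $\{v\in V:(vA)\cdot w=0\text{ for all }w\in V\}=V\cap V^{\pp}$, and by the standard identity $\rank=\dim-\dim(\text{radical})$ we get $\rank(M\vert_{V+c})=\dim(V)-\dim(V\cap V^{\pp})=(d-r)-\dim(V\cap V^{\pp})$.

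Next, for Part (i), I would bound $\dim(V\cap V^{\pp})$ in two ways. Trivially $\dim(V\cap V^{\pp})\le\dim(V)=d-r$. For the other bound, observe that $V^{\pp}=(VA)^{\perp}$ in the ordinary dot-product sense, so $\dim(V^{\pp})=d-\dim(VA)$. Using $\dim(VA)\ge\dim(V)-\dim(\ker A)$ combined with Proposition \ref{1:23}(iv) (which identifies $\ker A=(\V)^{\pp}$ and gives $\dim(\ker A)=d-\rank(M)$), we obtain $\dim(VA)\ge\rank(M)-r$ and hence $\dim(V\cap V^{\pp})\le\dim(V^{\pp})\le d-\rank(M)+r$. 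Taking the minimum of the two bounds gives Part (i).

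Parts (iii) and (iv) then drop out mechanically. From Part (ii), $\rank(M\vert_{V+c})\le d-r$, with equality iff $\dim(V\cap V^{\pp})=0$ iff $V$ is not $M$-isotropic --- this is simultaneously the upper bound in (iii) and the entirety of (iv). Combining Part (ii) with the estimate $\dim(V\cap V^{\pp})\le d-\rank(M)+r$ from Part (i) gives $\rank(M\vert_{V+c})\ge(d-r)-(d-\rank(M)+r)=\rank(M)-2r$, which is the lower bound in (iii).

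The only real content is Part (ii); once the bilinear form identification $B(v,w)=2(vA)\cdot w$ on $V$ is made, the rest is routine linear algebra. A minor subtlety is the factor of $2$, which requires $p$ odd, but this is compatible with the paper's standing large-$p$ hypothesis.
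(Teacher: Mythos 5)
Your proposal is correct and follows essentially the same route as the paper: Part (ii) by pulling $M$ back along a parametrization $\phi$ of $V$ and identifying the radical (kernel of the associated matrix $BAB^{T}$) with $V\cap V^{\pp}$, Part (i) by the rank inequality $\dim(VA)\geq \dim(V)-\dim(\ker A)=\rank(M)-r$ (the paper phrases this as $\rank(BA)\geq\rank(B)+\rank(A)-d$), and Parts (iii)--(iv) as immediate corollaries. The only cosmetic differences are the order (ii) before (i) and the use of the polarization identity $2(vA)\cdot w$ instead of writing out $BAB^{T}$, together with your correct remark that $p$ odd is needed and is covered by the paper's standing hypotheses.
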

 \begin{proof}
 	Part (i).
 	Let $v_{1},\dots,v_{d-r}$ be a basis of $V$. Let $B=\begin{bmatrix}
 	v_{1}\\
 	\dots\\
 	v_{d-r}
 	\end{bmatrix}$. The dimension of $\dim(V^{\pp})$ equals to $d$ minus the rank of the matrix $BA$. Since $\rank(BA)\geq \rank(B)+\rank(A)-d=\rank(M)-r$, we have that $\dim(V^{\pp})\leq d-\rank(M)+r$. 
 	So Part (i) holds because $\dim(V\cap V^{\pp})\leq \min\{\dim(V),\dim(V^{\pp})\}$.
 	
 	Part (ii). Let $\phi\colon\F_{p}^{d-r}\to V$ be a bijective linear transformation and denote $M'(m):=M(\phi(m)+c)$. Then the rank of $M\vert_{V+c}$ equals to the rank of $M'$. 
 	Assume that $\phi(m)=mB$ for some $(d-r)\times d$ matrix $B$ of rank $d-r$. Then the matrix associated to $M'$ is $BAB^{T}$.
 	Note that for all $m\in \F_{p}^{r}$,
 	\begin{equation}\nonumber
 	\begin{split}
 	& m\in (\F_{p}^{r})^{\perp_{M'}}
 	\Leftrightarrow ((mB)A)\cdot (nB)=0 \text{ for all } n\in\F_{p}^{r}
 	\\&\Leftrightarrow ((mB)A)\cdot v=0 \text{ for all } v\in V
 	\Leftrightarrow mB\in V\cap V^{\pp}.
 	\end{split}
 	\end{equation}
 	By Proposition \ref{1:23},  
 	$$\rank(M')=d-r-\dim(\F_{p}^{r})^{\perp_{M'}}=d-r-\dim(V\cap V^{\pp}).$$
 	This proofs Part (ii).
 	
 	Parts (iii) and (iv) follows directly from Parts (i) and (ii).
 \end{proof}

 An annoying issue when working with subspaces of $\V$ is that  the restriction of a non-degenerate quadratic form $M$ to an $M$-isotropic subspace $V$ of $\V$ is not necessarily non-degenerate. Fortunately, with the help of Proposition \ref{1:iissoo} (iii), we have that the nullity of $M$  does not increase too much when restricted to $V$, provided that the co-dimension of $V$ is small.
 
 The following is a consequence of Proposition \ref{1:iissoo}, which will be used in \cite{SunB}.

 \begin{lem}\label{1:cbn}
 	Let $M\colon \V\to\F_{p}$ be a non-degenerate quadratic form, $V$ be a subspace of $\V$ of dimension $r$, and $V'$ be a subspace of $\V$ of  dimension $r'$. Suppose that $\rank(M\vert_{V^{\pp}})=d-r$. Then $\rank(M\vert_{(V+V')^{\pp}})\geq d-r-2r'$.
 \end{lem}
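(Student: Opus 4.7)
The plan is to combine Proposition \ref{1:iissoo}(ii) with a direct dimension count for $(V+V')^{\pp} \cap (V+V')$, using the hypothesis on $\rank(M\vert_{V^{\pp}})$ to secure a direct-sum decomposition $\V = V \oplus V^{\pp}$.

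First I would set $W := (V+V')^{\pp}$, and note the two elementary identities $W = V^{\pp} \cap (V')^{\pp}$ and $W^{\pp} = V+V'$ (using Proposition \ref{1:23}(i)). Since $W$ has co-dimension $\dim(V+V') \leq r+r'$ in $\V$, Proposition \ref{1:iissoo}(ii) gives
\begin{equation*}
\rank(M\vert_{W}) = \dim(W) - \dim(W \cap W^{\pp}) \geq (d-r-r') - \dim\bigl((V+V')^{\pp}\cap (V+V')\bigr).
\end{equation*}
So the lemma reduces to showing $\dim\bigl((V+V')^{\pp}\cap (V+V')\bigr)\leq r'$.

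Next I would exploit the hypothesis $\rank(M\vert_{V^{\pp}})=d-r$. Applying Proposition \ref{1:iissoo}(ii) to $V^{\pp}$ (which has co-dimension $r$, since $M$ is non-degenerate forces $\dim(V^{\pp})=d-r$ by Proposition \ref{1:23}), one gets $\dim(V^{\pp}\cap V)=0$. Since the dimensions of $V$ and $V^{\pp}$ add to $d$, this yields the direct-sum decomposition $\V = V\oplus V^{\pp}$.

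The key step will be the dimension bound on $(V+V')^{\pp}\cap (V+V')$. Since $W\subseteq V^{\pp}$, any element of $W\cap W^{\pp}$ lies in $V^{\pp}\cap (V+V')$, so it suffices to show $\dim\bigl(V^{\pp}\cap (V+V')\bigr)\leq r'$. For this I would use the projection $q\colon\V\to\V/V$; by the direct-sum decomposition, $q$ restricts to an injection on $V^{\pp}$. Then
\begin{equation*}
\dim\bigl(V^{\pp}\cap(V+V')\bigr) = \dim\bigl(q(V^{\pp}\cap(V+V'))\bigr) \leq \dim\bigl(q(V+V')\bigr) = \dim(q(V')) \leq r'.
\end{equation*}
Combining this with the inequality above yields $\rank(M\vert_{(V+V')^{\pp}})\geq d-r-2r'$, as required. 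The only mild obstacle is remembering to invoke the non-degeneracy hypothesis both to get $\dim(V^{\pp})=d-r$ and, jointly with the rank hypothesis, to force $V\cap V^{\pp}=\{\bold{0}\}$; everything else is bookkeeping with the identities between $\pp$ and intersections/sums.
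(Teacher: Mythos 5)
Your proposal is correct and follows essentially the same route as the paper: use $\rank(M\vert_{V^{\pp}})=d-r$ together with Proposition \ref{1:iissoo}(ii) to get $V\cap V^{\pp}=\{\bold{0}\}$, apply Proposition \ref{1:iissoo}(ii) to $(V+V')^{\pp}$ (of dimension $\geq d-r-r'$ by non-degeneracy), and then reduce to the bound $\dim\bigl((V+V')\cap V^{\pp}\bigr)\leq r'$. The only difference is cosmetic: you establish that last bound via injectivity of the quotient map $\V\to\V/V$ on $V^{\pp}$, while the paper argues that any $r'+1$ vectors of $(V+V')\cap V^{\pp}$ are linearly dependent; the two arguments are equivalent.
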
	
 \begin{proof}
 	Since $V^{\pp}$ is of co-dimension $r$,
 	by Proposition \ref{1:iissoo} (ii), $\rank(M\vert_{V^{\pp}})=d-r$ implies that $\dim(V\cap V^{\pp})=0$.
	Since $\dim(V+V')\leq r+r'$ and $M$ is non-degenerate, we have that $\dim((V+V')^{\pp})\geq d-r-r'$.
	 By Proposition \ref{1:iissoo} (ii),
 	$$\rank(M\vert_{(V+V')^{\pp}})\geq d-r-r'-\dim((V+V')\cap(V+V')^{\pp})\geq d-r-r'-\dim((V+V')\cap V^{\pp}).$$
	
So it remains to show that $\dim((V+V')\cap V^{\pp})\leq r'$. To see this, note that for any $v_{1},\dots,v_{r'+1}\in (V+V')\cap V^{\pp}$, there exist $c_{1},\dots,c_{r'+1}$	 not all equal to 0 such that $c_{1}v_{1}+\dots+c_{r'+1}v_{r'+1}\in V$. Since  $c_{1}v_{1}+\dots+c_{r'+1}v_{r'+1}\in V^{\pp}$ and $V\cap V^{\pp}=\{\bold{0}\}$, we have that $c_{1}v_{1}+\dots+c_{r'+1}v_{r'+1}=\bold{0}$. This means that $\dim((V+V')\cap V^{\pp})\leq r'$ are we are done.
  \end{proof}

\subsection{Counting the zeros of quadratic forms}

 	For a polynomial $P\in\poly(\F_{p}^{k}\to\F_{p})$, let $V(P)$ denote the set of $n\in\F_{p}^{k}$ such that $P(n)=0$. 
	For a family of polynomials $\mathcal{J}=\{P_{1},\dots,P_{k}\}\subseteq\poly(\F_{p}^{k}\to\F_{p})$, denote $V(\mathcal{J}):=\cap_{i=1}^{k}V(P_{i})$.
The purpose of this section is to provide estimates on the cardinality of $V(\mathcal{J})$  for families of polynomials 
arising from quadratic forms.

We start with some basic estimates. 
The proof of the following lemma is similar to Lemma 2.4 of \cite{GT14}. We omit the details.

\begin{lem}\label{1:ns}
	Let $P\in\poly(\V\to\F_{p})$ be  of degree at most $r$.
	Then $\vert V(P)\vert\leq O_{d,r}(p^{d-1})$ unless $P\equiv 0$.
\end{lem}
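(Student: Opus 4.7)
The plan is to prove this by induction on the number of variables $d$, which is the standard Schwartz--Zippel approach. The key point is that every polynomial $P\in\poly(\V\to\F_{p})$ has a unique canonical representation as in (\ref{1:p1}) with each exponent $a_{i}\in\{0,\dots,p-1\}$, so ``non-zero as a function'' and ``non-zero in canonical form'' coincide, and moreover the usual single-variable fact applies: a non-zero polynomial in $\F_{p}[x]$ of degree $\leq \min(r,p-1)$ has at most $r$ zeros in $\F_{p}$.

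For the base case $d=1$, a non-zero $P\in\poly(\F_{p}\to\F_{p})$ of degree at most $r$ has at most $r$ zeros, so $\vert V(P)\vert\leq r=O_{r}(1)$ as required.

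For the inductive step, I would write
$$P(n_{1},\dots,n_{d})=\sum_{k=0}^{\min(r,p-1)} n_{d}^{k}\,Q_{k}(n_{1},\dots,n_{d-1}),$$
where each $Q_{k}\in\poly(\F_{p}^{d-1}\to\F_{p})$ has degree at most $r-k\leq r$, and by uniqueness of the canonical representation, at least one $Q_{k}$ is non-zero since $P\not\equiv 0$. Let $k^{\ast}$ be the largest such index and set $S:=\{(n_{1},\dots,n_{d-1})\in\F_{p}^{d-1}:Q_{k}(n_{1},\dots,n_{d-1})=0 \text{ for all } k\}$, so $S\subseteq V(Q_{k^{\ast}})$. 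For $(n_{1},\dots,n_{d-1})\notin S$, the restricted polynomial $n_{d}\mapsto P(n_{1},\dots,n_{d-1},n_{d})$ is a non-zero univariate polynomial of degree $\leq r$ and thus vanishes for at most $r$ values of $n_{d}$, while for $(n_{1},\dots,n_{d-1})\in S$ the contribution is at most $p$. This yields
$$\vert V(P)\vert\leq \vert S\vert\cdot p+(p^{d-1}-\vert S\vert)\cdot r\leq \vert V(Q_{k^{\ast}})\vert\cdot p+r\cdot p^{d-1}.$$
Applying the inductive hypothesis to $Q_{k^{\ast}}$ gives $\vert V(Q_{k^{\ast}})\vert\leq O_{d-1,r}(p^{d-2})$, so $\vert V(P)\vert\leq O_{d,r}(p^{d-1})$, closing the induction (with an explicit constant of the form $C(d,r)=dr$).

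There is no real obstacle here; the main thing to verify carefully is that the canonical form (\ref{1:p1}) makes the ``non-zero polynomial'' hypothesis meaningful as a function on $\V$ (needed both to conclude $Q_{k^{\ast}}\not\equiv 0$ and to bound univariate root counts), so that the induction step does not degenerate when $r\geq p$. The stated bound $O_{d,r}(p^{d-1})$ has its implicit constant depending on both $d$ and $r$, which is exactly what the induction produces.
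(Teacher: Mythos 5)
Your proof is correct: it is the standard Schwartz--Zippel induction on the number of variables, using that the canonical representation with exponents in $\{0,\dots,p-1\}$ makes ``nonzero polynomial'' equivalent to ``nonzero function'', and it yields the bound $\vert V(P)\vert\leq r\,p^{d-1}$ (so $O_{d,r}(p^{d-1})$, indeed with constant at most $dr$ by your accounting). This is essentially the same argument as the one the paper omits by reference to Lemma 2.4 of \cite{GT14}, so there is nothing further to reconcile.
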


As an application of Lemma \ref{1:ns}, we have

\begin{lem}\label{1:iiddpp}
	Let $d,k\in\N_{+}$, $p$ be a prime, and $M\colon\V\to\F_{p}$ be a non-degenerate quadratic form.
	\begin{enumerate}[(i)]
		\item The number of tuples $(h_{1},\dots,h_{k})\in (\V)^{k}$ such that $h_{1},\dots,h_{k}$ are linearly  dependent is at most $kp^{(d+1)(k-1)}$.
		\item The number of $M$-isotropic tuples $(h_{1},\dots,h_{k})\in (\V)^{k}$ is at most $O_{d,k}(p^{kd-1})$.
	\end{enumerate}	
\end{lem}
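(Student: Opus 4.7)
The plan is to handle (i) by direct overcounting and to deduce (ii) by combining (i) with a polynomial-vanishing argument.

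For part (i), I would observe that a tuple $(h_1,\dots,h_k)\in(\V)^{k}$ is linearly dependent iff some $h_i$ lies in the $\F_p$-span of the other $k-1$ vectors. Taking a union bound over the choice of $i$, I would freely choose the remaining $k-1$ vectors in at most $p^{d(k-1)}$ ways and then pick $h_i$ from their span, which has dimension at most $k-1$ and thus at most $p^{k-1}$ elements. Multiplying these bounds and summing over the $k$ choices of $i$ yields $kp^{d(k-1)}\cdot p^{k-1}=kp^{(d+1)(k-1)}$.

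For part (ii), I would split the $M$-isotropic tuples into linearly dependent and linearly independent ones. The dependent ones are bounded by part (i), which gives $O_{d,k}(p^{kd-1})$ in the relevant range $k\le d$ (since $(d+1)(k-1)\le kd-1$ is equivalent to $k\le d$). For the independent tuples, I would use the characterization recorded just after the definition of $M$-isotropy: a linearly independent tuple spans an $M$-isotropic subspace iff the Gram-type determinant
\[P(h_1,\dots,h_k):=\det\bigl(((h_iA)\cdot h_j)_{1\le i,j\le k}\bigr)\]
vanishes. Viewing $P$ as a polynomial $\V^{k}\cong\F_p^{kd}\to\F_p$ of degree $2k$, I would invoke Lemma \ref{1:ns} (with $kd$ playing the role of $d$ and $r=2k$) to obtain $\vert V(P)\vert\le O_{d,k}(p^{kd-1})$, \emph{provided} $P\not\equiv 0$. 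Adding this to the bound from (i) yields the stated estimate.

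The only substantive step, and hence the main (though minor) obstacle, is to verify that $P$ is not the zero polynomial. For this I would invoke Lemma \ref{1:wfejpo}: since $M$ is non-degenerate one has $\rank(M)=d$, so an $M$-non-isotropic subspace of any given dimension $k\le d$ exists. Any basis of such a subspace gives a tuple $(h_1,\dots,h_k)$ with $P(h_1,\dots,h_k)\neq 0$, certifying $P\not\equiv 0$ and closing the argument.
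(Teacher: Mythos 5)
Your proof is correct and follows essentially the same route as the paper: part (i) is the same union-bound count giving $kp^{(d+1)(k-1)}$, and part (ii) rests, exactly as in the paper, on the vanishing of the Gram determinant $\det\bigl(((h_{i}A)\cdot h_{j})_{1\leq i,j\leq k}\bigr)$ combined with Lemma \ref{1:ns}. The only differences are minor and in your favour: the paper bounds all $M$-isotropic tuples at once by the zero set of this determinant (only the implication ``isotropic $\Rightarrow$ determinant vanishes'' is needed, so no split into dependent and independent tuples is required), and it merely asserts that the determinant is not the zero polynomial, whereas you justify this via Lemma \ref{1:wfejpo}; note that this non-vanishing, and hence both your argument and the paper's, implicitly requires $k\le d$, the range in which the lemma is actually used.
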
	
\begin{proof} 
	Part (i). If $h_{1},\dots,h_{k}$ are linearly dependent, then there exists $1\leq i\leq k$ such that $h_{i}=\sum_{1\leq j\leq k, j\neq i}c_{j}h_{j}$ for some $c_{j}\in\F_{p}$. For each $j$, there are $p$ choices of $c_{j}$ and $p^{d}$ choices of $h_{j}$. Since there are $k$ choices of $i$, in total there are at most $k\cdot (p\cdot p^{d})^{k-1}=kp^{(d+1)(k-1)}$
possibilities.

	Part (ii). Note that $h_{1},\dots,h_{k}$ are $M$-isotropic if and only if 
	there exist $c_{1},\dots,c_{k}\in\F_{p}$ not all equal to zero such that $(h_{i}A)\cdot(c_{1}h_{1}+\dots+c_{k}h_{k})=0$ for all $1\leq i\leq k$, which is equivalent of saying that 
	the determinant of the matrix $((h_{i}A)\cdot h_{j})_{1\leq i,j\leq k}$ is zero. By viewing $((h_{i}A)\cdot h_{j})_{1\leq i,j\leq k}$ as a polynomial in $(h_{1},\dots,h_{k})$ (which is certainly not constant zero), we get the conclusion from Lemma \ref{1:ns}.
\end{proof}	

A very useful application of Lemma \ref{1:iiddpp} is as follows. Whenever we have a collections $X$ of $k$-tuples $(h_{1},\dots,h_{k})\in(\V)^{k}$ of positive density $\d>0$, if the dimension $d$ is sufficiently large compared with $k$, and if $p$ is sufficiently large compared with $\d,d,k$, then by passing to a subset of $X$, we may further require all the tuples in $X$ to be linearly independent/$M$-non-isotropic with $X$ still having a positive density.

The following lemma was proved in \cite{IR07}. We provide its details for completeness.

\begin{lem}\label{1:counting}
	Let $d\in\N_{+}$ and $p$ be a prime number. Let  $M\colon\V\to\F_{p}$ be a  quadratic form of rank $r$. Suppose that $r\geq 3$.  
	Then $$\vert V(M)\vert=p^{d-1}(1+O(p^{-\frac{r-2}{2}})),$$ and for all $\xi=(\xi_{1},\dots,\xi_{d})\in\Z^{d}$,
	we have that 
	\begin{equation}\label{1:c0}
	\begin{split}
	\E_{n\in V(M)}\exp\Bigl(\frac{\xi}{p}\cdot n\Bigr)=\bold{1}_{\xi=\bold{0}}+O(p^{-\frac{r-2}{2}}).
	\end{split}
	\end{equation}
	 Here we slightly abuse the notation in $\frac{\xi}{p}\cdot n$ by identifying $n\in\V$ with its embedding $\tau(n)\in\Z^{d}$.
\end{lem}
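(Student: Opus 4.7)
The plan is to apply standard Fourier orthogonality on $\F_p$, reduce the resulting sums to a normal form via Lemma~\ref{1:cov}, and then bound them using $|G(a)|=\sqrt{p}$, where $G(a):=\sum_{x\in\F_p}\exp(ax^2/p)$ for $a\in\F_p\setminus\{0\}$. Using the identity $\bold{1}_{y=0} = p^{-1}\sum_{t\in\F_p}\exp(ty/p)$, I first rewrite
$$|V(M)| = \frac{1}{p}\sum_{t\in\F_p}\sum_{n\in\V}\exp\Bigl(\frac{tM(n)}{p}\Bigr), \qquad \sum_{n\in V(M)}\exp\Bigl(\frac{\xi\cdot n}{p}\Bigr) = \frac{1}{p}\sum_{t\in\F_p}\sum_{n\in\V}\exp\Bigl(\frac{tM(n)+\xi\cdot n}{p}\Bigr).$$
The $t=0$ contributions are $p^{d-1}$ and $p^{d-1}\bold{1}_{\xi=\bold{0}}$ respectively, giving the claimed main terms.

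For each fixed $t\in\F_p\setminus\{0\}$, the map $n\mapsto tM(n)+\xi\cdot n$ is a quadratic form on $\V$ whose associated symmetric matrix is $tA$ (with $A$ the matrix of $M$), hence has rank $r$. Applying Lemma~\ref{1:cov}, an invertible affine substitution $n = mR+v$ (a bijection of $\V$, so the sum over $n$ is unchanged) puts this form into the normal shape $cm_1^2+m_2^2+\dots+m_r^2+c'm_{r+1}-\lambda$ (with the $c'm_{r+1}$ term absent when $r=d$), where $c\neq 0$ and $c',\lambda\in\F_p$ depend on $t,\xi,M$. The inner sum then factors as
$$S(t) = \exp(-\lambda/p)\,G(c)\,G(1)^{r-1}\,\Bigl(\sum_{m_{r+1}\in\F_p}\exp(c'm_{r+1}/p)\Bigr)\,p^{d-r-1}$$
(with the last two factors absent when $r=d$). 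Using $|G(a)|=\sqrt{p}$ for $a\neq 0$ and $\sum_{m_{r+1}}\exp(c'm_{r+1}/p) = p\bold{1}_{c'=0}$, I obtain $|S(t)|\leq p^{d-r/2}$ in every case (and $S(t)=0$ if $r<d$ and $c'\neq 0$). Summing over the $p-1$ nonzero values of $t$ and dividing by $p$ yields total error $O(p^{d-r/2})$.

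Combining, $\bigl||V(M)|-p^{d-1}\bigr| = O(p^{d-r/2}) = p^{d-1}\cdot O(p^{-(r-2)/2})$, which is the first assertion. Dividing the exponential-sum identity by $|V(M)|$ and using $(1+O(\epsilon))^{-1} = 1+O(\epsilon)$ with $\epsilon = O(p^{-(r-2)/2})$ yields the second. The main technical input is the standard computation $|G(a)|^2 = \sum_{x,y\in\F_p}\exp(a(x^2-y^2)/p) = \sum_{u,v\in\F_p}\exp(auv/p) = p$ via $u=x+y$, $v=x-y$ (valid for odd $p$, which is the regime of Lemma~\ref{1:cov}). I expect no serious obstacle; the only bookkeeping to watch is verifying that the rank is preserved under the perturbation $tM+\xi\cdot n$, but this is automatic since the symmetric part is unchanged, so the rank-$r$ hypothesis propagates through Lemma~\ref{1:cov} uniformly in $t$ and $\xi$.
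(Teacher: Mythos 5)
Your proof is correct and follows essentially the same route as the paper: additive-character orthogonality to detect $M(n)=0$, reduction to the normal form of Lemma \ref{1:cov}, factorization into quadratic Gauss sums bounded by $\sqrt{p}$, and division by $\vert V(M)\vert$ at the end. The only cosmetic differences are that you diagonalize the combined form $tM(n)+\xi\cdot n$ for each $t\neq 0$ (the paper diagonalizes $M$ once and absorbs $\xi$ by completing the square, bounding the linear factors trivially by $p$) and that you verify $\vert G(a)\vert=\sqrt{p}$ directly rather than citing the Gauss sum evaluation; neither changes the substance.
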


\begin{proof}%[Proof of Lemma \ref{1:counting}]
	By Lemma \ref{1:cov}, it is easy to see we only need to prove this lemma for the case when 
	$$M(n_{1},\dots,n_{d})=a_{1}n^{2}_{1}+\dots+a_{r}n^{2}_{r}+a_{r+1}n_{r+1}+\dots+a_{d}n_{d}-\l$$
	for some $a_{1},\dots,a_{r}\in\F_{p}\backslash\{0\}, a_{r+1},\dots,a_{d}\in\F_{p}$ and $\l\in\F_{p}$.\footnote{Here we do not need to use the full strength of Lemma \ref{1:cov}.}
		Note that
	\begin{equation}\nonumber
	\begin{split}
	&\quad\sum_{n\in V(M)}\exp\Bigl(\frac{\xi}{p}\cdot n\Bigr)=\sum_{n\in \V}\E_{j\in\F_{p}}\exp\Bigl(\frac{\xi}{p}\cdot n+\frac{j}{p}M(n)\Bigr)
	\\&=p^{d-1}\bold{1}_{\xi=\bold{0}}+\frac{1}{p}\sum_{j\in\F_{p}\backslash\{0\}}\exp(-j\lambda)\sum_{n\in \V}\exp\Bigl(\frac{\xi}{p}\cdot n+\frac{j}{p}\sum_{i=1}^{r}a_{i}n^{2}_{i}+\frac{j}{p}\sum_{i=r+1}^{d}a_{i}n_{i}\Bigr).
	\end{split}
	\end{equation}
	By a change of coordinate, we have that 
	\begin{equation}\nonumber
	\begin{split}
	&\quad\Bigl\vert\frac{1}{p}\sum_{j\in\F_{p}\backslash\{0\}}\exp(-j\lambda)\sum_{n\in \V}\exp\Bigl(\frac{\xi}{p}\cdot n+\frac{j}{p}\sum_{i=1}^{r}a_{i}n^{2}_{i}+\frac{j}{p}\sum_{i=r+1}^{d}a_{i}n_{i}\Bigr)\Bigr\vert
	\\&\leq \frac{1}{p}\sum_{j\in\F_{p}\backslash\{0\}}\Bigl\vert\sum_{n\in \V}\exp\Bigl(\frac{\xi}{p}\cdot n+\frac{j}{p}\sum_{i=1}^{r}a_{i}n^{2}_{i}+\frac{j}{p}\sum_{i=r+1}^{d}a_{i}n_{i}\Bigr)\Bigr\vert
	\\&=\frac{1}{p}\sum_{j\in\F_{p}\backslash\{0\}}\Bigl\vert\sum_{n\in \F_{p}^{d}}\exp\Bigl(\frac{j}{p}\sum_{i=1}^{r}a_{i}n^{2}_{i}+\frac{j}{p}\sum_{i=r+1}^{d}a_{i}n_{i}\Bigr)\Bigr\vert
	\\&=\frac{1}{p}\sum_{j\in\F_{p}\backslash\{0\}}\prod_{i=1}^{r}\Bigl\vert\sum_{n\in \F_{p}}\exp(ja_{i}n^{2}/p)\Bigr\vert\cdot\prod_{i=r+1}^{d}\Bigl\vert\sum_{n\in \F_{p}}\exp(ja_{i}n/p)\Bigr\vert
	\\&\leq \frac{1}{p}\sum_{j\in\F_{p}\backslash\{0\}}p^{d-r}\prod_{i=1}^{r}\Bigl\vert\sum_{n\in \F_{p}}\exp(ja_{i}n^{2}/p)\Bigr\vert 
	\end{split}
	\end{equation}	
	By the Gauss sum equality, for all $j\in\F_{p}$,
	$$\sum_{n\in\F_{p}}\exp\Bigl(\frac{j}{p}\cdot n^{2}\Bigr)=Q\sqrt{p}\eta(j),$$
	where $Q=\pm 1$ or $\pm i$ is a constant depending on $p$, and $\eta$ is the Legendre symbol of $\F_{p}\backslash\{0\}$.
	So
	\begin{equation}\label{1:c1}
	\begin{split}
	\sum_{n\in V(M)}\exp\Bigl(\frac{\xi}{p}\cdot n\Bigr)= p^{d-1}\bold{1}_{\xi=\bold{0}}+O(p^{d-\frac{r}{2}}).
	\end{split}
	\end{equation}
	Setting $\xi=\bold{0}$, we have that $\vert V(M)\vert=p^{d-1}(1+O(p^{-\frac{r-2}{2}}))$. Dividing both sides of (\ref{1:c1}) by $\vert V(M)\vert$, we get (\ref{1:c0}).
\end{proof}

	We remark that Lemma \ref{1:counting} fails if $r\leq 2$. For example, if $M(n_{1},n_{2})=n_{1}^{2}+an_{2}^{2}$ for some $a\in\F_{p}$ such that $x^{2}\neq -a$ for all  $x\in\F_{p}$, then $V(M)$ is an empty set.

The following is an application of Lemma \ref{1:counting}.
We say that $t$ is a \emph{quadratic root} of $\F_{p}$ if there exists $n\in\F_{p}$ such that $n^{2}=t$. It is clear that if $p>2$, then the number of quadratic root of  $\F_{p}$ is $\frac{p+1}{2}$. We have

\begin{lem}\label{1:sqrcount}
	Let $d,r\in\N_{+}$, $p$ be a prime number and $M\colon\V\to \F_{p}$ be a   quadratic form of rank $r$. Then the number of $n\in\V$ such that $M(n)$ is a quadratic root is $\frac{1}{2}p^{d}(1+O(p^{-\frac{r-2}{2}}))$ if $r\geq 3$, and $\frac{1}{2}p^{d}(1+O(p^{-\frac{1}{2}}))$ for $r=2$.
\end{lem}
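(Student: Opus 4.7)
The plan is to partition the set being counted into fibers of $M$ over the quadratic roots of $\F_p$. Explicitly,
$$\{n \in \V : M(n) \text{ is a quadratic root}\} = \bigsqcup_{t} V(M - t),$$
where $t$ ranges over the $\tfrac{p+1}{2}$ quadratic roots in $\F_p$, and each $M - t$ has the same associated matrix as $M$, hence is of rank $r$. For $r \geq 3$, I would apply Lemma \ref{1:counting} to each summand to obtain $|V(M - t)| = p^{d-1}(1 + O(p^{-(r-2)/2}))$, then sum over $t$; using $\tfrac{p+1}{2} \cdot p^{d-1} = \tfrac{1}{2}p^d(1 + O(p^{-1}))$, the claimed estimate follows.

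For $r = 2$, Lemma \ref{1:counting} is useless since its error term matches the main term. Instead, I would use the quadratic Legendre character $\chi$ of $\F_p$ (with the convention $\chi(0) = 0$) and the identity
$$\bold{1}_{x \text{ is a quadratic root}} = \tfrac{1}{2}(1 + \chi(x)) + \tfrac{1}{2}\bold{1}_{x = 0},$$
which one checks against the three cases $x = 0$, $x$ a nonzero square, and $x$ a nonsquare. Summing over $n \in \V$ yields
$$\#\{n \in \V : M(n) \text{ is a quadratic root}\} = \tfrac{1}{2}p^d + \tfrac{1}{2}\sum_{n \in \V}\chi(M(n)) + \tfrac{1}{2}|V(M)|.$$
By Lemma \ref{1:ns} the last term is $O(p^{d-1})$. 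To bound the character sum, I would invoke Lemma \ref{1:cov} to perform a bijective affine change of variable (which preserves the sum) reducing $M$ to the standard form $cn_1^2 + n_2^2 + c' n_3 - \lambda$. If $c' \neq 0$, summing first in $n_3$ annihilates the sum. Otherwise (including the case $d = 2$), the sum collapses to $p^{d-2}\sum_{n_1, n_2 \in \F_p}\chi(cn_1^2 + n_2^2 - \lambda)$, which a direct computation --- splitting on whether $-c$ is a quadratic residue mod $p$ and using that $cn_1^2 + n_2^2$ takes each nonzero value of $\F_p$ either $p-1$ or $p+1$ times --- bounds by $O(p)$. Combining the three contributions gives the $r = 2$ estimate.

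The main obstacle is the $r = 2$ case, where Lemma \ref{1:counting} provides no cancellation and a separate quadratic-character-sum argument is necessary. The case split on whether $-c$ is a quadratic residue is the only delicate point, but is routine given the standard form supplied by Lemma \ref{1:cov}.
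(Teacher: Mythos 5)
Your proof is correct. For $r\geq 3$ you do exactly what the paper does: decompose the set as $\bigsqcup_{t}V(M-t)$ over the $\frac{p+1}{2}$ quadratic roots $t$, apply Lemma \ref{1:counting} to each fiber (noting $M-t$ has the same associated matrix, hence the same rank), and sum; both your argument and the paper's incur the same relative error $O(p^{-1})$ from $\frac{p+1}{2}$ versus $\frac{p}{2}$, so this part is identical in substance. For $r=2$, however, you take a genuinely different route. The paper again counts fiber by fiber, citing Theorem 5A of Schmidt's book to get $p(1+O(p^{-1/2}))$ solutions of $M(n)=s_i$ in two variables, and then sums over the quadratic roots. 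You instead use the pointwise identity $\mathbf{1}_{x\text{ is a quadratic root}}=\tfrac12(1+\chi(x))+\tfrac12\mathbf{1}_{x=0}$ for the Legendre character $\chi$ (with $\chi(0)=0$), which reduces the count to $\tfrac12 p^{d}$ plus $\tfrac12\sum_{n}\chi(M(n))$ plus $\tfrac12\vert V(M)\vert$; the last term is $O(p^{d-1})$ by Lemma \ref{1:ns}, and the complete character sum is handled by the affine change of variables from Lemma \ref{1:cov}: it vanishes when the linear term $c'n_{3}$ is present, and otherwise collapses to $p^{d-2}$ times a binary sum $\sum_{n_{1},n_{2}}\chi(cn_{1}^{2}+n_{2}^{2}-\lambda)$, which the standard representation counts $N(a)=p-\chi(-c)$ for $a\neq 0$ bound by $O(p)$. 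What your approach buys is self-containedness (no appeal to the external reference) and in fact a sharper error, $\tfrac12 p^{d}(1+O(p^{-1}))$ in the $r=2$ case, which more than suffices for the stated $O(p^{-1/2})$ bound; what the paper's route buys is brevity, since the two-variable count is simply quoted.
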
	 
\begin{proof}
	
	Let $S=\{s_{1},\dots,s_{(p+1)/2}\}$ be the set of quadratic roots of $\F_{p}$. 
	We first consider the case $r\geq 3$.
	By Lemma \ref{1:counting}, the number of $n\in\V$ such that $M(n)\in S$ is
	$$\sum_{i=1}^{(p+1)/2}\vert V(M-s_{i})\vert=(p+1/2)\cdot p^{d-1}(1+O(p^{-\frac{r-2}{2}}))=\frac{1}{2}p^{d}(1+O(p^{-\frac{1}{2}})).$$ 
	
	We now consider the case $r=2$. By Theorem 5A on page 52 of \cite{Sch76},  the number of $n\in\F_{p}^{2}$ such that $M(n)=s_{i}$ is $p(1+O(p^{-1/2}))$. So the total number of  $n\in\V$ such that $M(n)$ is a quadratic root is  $\frac{1}{2}p(p+1)(1+O(p^{-1/2}))$. We are done.
\end{proof}	

We provide some generalizations of Lemma \ref{1:counting}. 
As an immediate corollary of Lemma \ref{1:counting}, we have the following estimate on the intersection of $V(M)$ with an affine subspace of $\V$.

\begin{coro}\label{1:counting01}
	Let $d\in\N_{+},r\in\N$ and $p$ be a prime number. Let  $M\colon\V\to\F_{p}$ be a   quadratic form   and $V+c$ be an affine subspace of $\V$ of co-dimension $r$.  
	\begin{enumerate}[(i)]
	\item 	If $s:=\rank(M\vert_{V+c})\geq 3$, then
	$$\vert V(M)\cap (V+c)\vert=p^{d-r-1}(1+O(p^{-\frac{s-2}{2}})).$$	
	\item If $\rank(M)-2r\geq 3$, then $$\vert V(M)\cap (V+c)\vert=p^{d-r-1}(1+O(p^{-\frac{1}{2}})).$$	
	\end{enumerate}
\end{coro}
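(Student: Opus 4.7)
The plan is to reduce Corollary \ref{1:counting01} directly to Lemma \ref{1:counting} by parametrizing the affine subspace $V+c$ by $\F_p^{d-r}$ and transporting $M$ to an honest quadratic form on $\F_p^{d-r}$.

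For Part (i), I would fix any bijective linear map $\phi\colon\F_p^{d-r}\to V$ and define $M'(m):=M(\phi(m)+c)$ on $\F_p^{d-r}$. Since $m\mapsto\phi(m)+c$ is a bijection between $\F_p^{d-r}$ and $V+c$, we have $|V(M)\cap(V+c)|=|V(M')|$. By the very definition of $\rank(M\vert_{V+c})$ given in Section \ref{1:s:pp5} (and the observation there that the rank is independent of the choice of $\phi$), $M'$ has rank exactly $s\geq 3$. Applying Lemma \ref{1:counting} to $M'$ in ambient dimension $d-r$ then produces
$$|V(M')|=p^{(d-r)-1}\bigl(1+O(p^{-(s-2)/2})\bigr),$$
which is precisely the claimed formula.

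For Part (ii), Proposition \ref{1:iissoo}(iii) gives $s=\rank(M\vert_{V+c})\geq \rank(M)-2r\geq 3$, so Part (i) applies. Since $s\geq 3$, the error term $p^{-(s-2)/2}$ is at most $p^{-1/2}$, which yields the uniform bound claimed in Part (ii).

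I do not anticipate any genuine obstacle; this is a routine repackaging of Lemma \ref{1:counting} combined with the rank-restriction machinery already established. The only step worth being careful about is making sure that the ambient-dimension parameter in Lemma \ref{1:counting} gets replaced by $d-r$ when the lemma is invoked for $M'$ on $\F_p^{d-r}$, which is exactly what produces the factor $p^{d-r-1}$ instead of $p^{d-1}$, and that the implicit constants in $O(p^{-(s-2)/2})$ depend only on $d-r$ (hence on $d$), which they do.
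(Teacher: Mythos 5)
Your proof is correct and follows essentially the same route as the paper: parametrize $V+c$ by a bijective linear map $\phi\colon\F_p^{d-r}\to V$, set $M'(m)=M(\phi(m)+c)$, apply Lemma \ref{1:counting} to $M'$ in dimension $d-r$, and deduce Part (ii) from Part (i) via Proposition \ref{1:iissoo} (iii). No gaps.
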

\begin{proof}
	We start with Part (i).
	Let $\phi\colon \F_{p}^{d-r}\to V$ be a bijective linear transformation and denote $M'(m):=M(\phi(m)+c)$ for all $m\in \F_{p}^{d-r}$.
	Then $\rank(M')\geq 3$.
	 Note that $n\in V(M)\cap (V+c)$ if and only if $n=\phi(m)+c$ for some $m\in\F_{p}^{d-r}$ such that $M'(m)=0$. So by Lemma \ref{1:counting}, 
	$$\vert V(M)\cap (V+c)\vert=\vert V(M')\vert=p^{d-r-1}(1+O(p^{-\frac{s-2}{2}})).$$

	If $\rank(M)-2r\geq 3$, then $\rank(M\vert_{V+c})\geq 3$ by Proposition \ref{1:iissoo} (iii). So Part (ii) follows from Part (i).
\end{proof}

The following sets are studied extensively in this paper. 

\begin{defn} \label{1:vh}
Let $r\in\N_{+}$, $h_{1},\dots,h_{r}\in \V$ and $M\colon\V\to\F_{p}$ be a quadratic form. Denote $$V(M)^{h_{1},\dots,h_{r}}:=\cap_{i=1}^{r}(V(M(\cdot+h_{i}))\cap V(M).$$
\end{defn}

Lemma \ref{1:counting} can also be used to study the cardinality of $V(M)^{h_{1},\dots,h_{r}}$.
	
\begin{coro}\label{1:counting02}
	Let $d,r\in\N_{+}$ and $p$ be a prime number. Let  $M\colon\V\to\F_{p}$ be a non-degenerate   quadratic form  and $h_{1},\dots,h_{r}$ be linearly independent vectors.  
		If $d-2r\geq 3$, then $$\vert V(M)^{h_{1},\dots,h_{r}}\vert=p^{d-r-1}(1+O(p^{-\frac{1}{2}})).$$	
\end{coro}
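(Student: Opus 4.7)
The plan is to reduce the intersection $V(M)^{h_1,\dots,h_r}$ to the intersection of $V(M)$ with an affine subspace of codimension $r$, and then apply Corollary \ref{1:counting01}(ii).

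Write $M(n) = (nA)\cdot n + n\cdot u + v$ with $A$ the symmetric matrix associated to $M$. A direct expansion using the symmetry of $A$ yields
\begin{equation*}
M(n+h_i) - M(n) = 2(h_i A)\cdot n + h_i\cdot u + (h_i A)\cdot h_i.
\end{equation*}
Thus, assuming $p>2$ (which is the operative regime in this paper), the conditions $M(n) = 0$ and $M(n+h_i) = 0$ for $i=1,\dots,r$ are equivalent to $M(n) = 0$ together with the $r$ affine-linear conditions
\begin{equation*}
(h_i A)\cdot n = -\tfrac{1}{2}\bigl(h_i\cdot u + (h_i A)\cdot h_i\bigr), \qquad i = 1,\dots,r.
\end{equation*}

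Since $M$ is non-degenerate, $A$ is invertible by Proposition \ref{1:23}(ii), so the vectors $h_1 A,\dots,h_r A$ are linearly independent in $\V$ because $h_1,\dots,h_r$ are. Consequently the above $r$ linear equations are linearly independent and cut out a non-empty affine subspace $V+c$ of $\V$ of codimension exactly $r$. Under this identification one has
\begin{equation*}
V(M)^{h_1,\dots,h_r} = V(M)\cap (V+c).
\end{equation*}

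The hypothesis $d - 2r \geq 3$ is precisely $\rank(M) - 2r \geq 3$, so Corollary \ref{1:counting01}(ii) applies and gives
\begin{equation*}
|V(M)\cap (V+c)| = p^{d-r-1}\bigl(1 + O(p^{-1/2})\bigr),
\end{equation*}
which is the desired estimate. The only nontrivial point is the linear-independence of the $h_i A$, which is immediate from the invertibility of $A$; this is the step that crucially uses the non-degeneracy of $M$ and the linear independence of the $h_i$.
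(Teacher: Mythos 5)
Your proof is correct and follows essentially the same route as the paper: identify $V(M)^{h_1,\dots,h_r}$ as $V(M)\cap(V+c)$ for an affine subspace $V+c$ of codimension $r$ (using the linear independence of the $h_iA$, which comes from non-degeneracy of $M$), and then invoke Corollary \ref{1:counting01}(ii). You have merely spelled out the "it is not hard to see" step that the paper leaves implicit.
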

\begin{proof}
Since  $h_{1},\dots,h_{r}$ are linearly independent and $M$ is non-degenerate, it is not hard to see that $V(M)^{h_{1},\dots,h_{r}}$ is the intersection of $V(M)$ with an affine subspace $V+c$ of $\V$ of co-dimension $r$.  
The conclusion follows from Corollary \ref{1:counting01}.
\end{proof}

We also need to study the following sets in this paper:

\begin{defn}[Gowers set]\label{defgow}
Let $\Omega$ be a subset of $\V$ and $s\in\N$.  Let 
$\Gow_{s}(\Omega)$ denote the set of $(n,h_{1},\dots,h_{s})\in(\V)^{s+1}$ such that $n+\e_{1}h_{1}+\dots+\e_{s}h_{s}\in\Omega$ for all $(\e_{1},\dots,\e_{s})\in\{0,1\}^{s}$.
Here we allow $s$ to be 0, in which case $\Gow_{0}(\Omega)=\Omega$.
We say that $\Gow_{s}(\Omega)$ is the \emph{$s$-th Gowers set} of $\Omega$.
\end{defn}

It is worth notion that the Gowers set will also be used for the study of the spherical Gowers norms in \cite{SunC}.
We have the following description for Gowers sets.

\begin{lem}\label{1:changeh}
	Let $s\in\N$, $M\colon\V\to\F_{p}$ be a quadratic form associated with the matrix $A$, and $V+c$ be an affine subspace of $\V$. For $n,h_{1},\dots,h_{s}\in\V$, we have that $(n,h_{1},\dots,h_{s})\in \Gow_{s}(V(M)\cap (V+c))$ if and only if 
	\begin{itemize}
		\item $n\in V+c$, $h_{1},\dots,h_{s}\in V$;
		\item $n\in V(M)^{h_{1},\dots,h_{s}}$;
		\item $(h_{i}A)\cdot h_{j}=0$ for all $1\leq i,j\leq s, i\neq j$.
	\end{itemize}	
	
	In particular, let $\phi\colon\F_{p}^{r}\to V$ be any bijective linear transformation, $M'(m):=M(\phi(m)+c)$. We have that $(n,h_{1},\dots,h_{s})\in \Gow_{s}(V(M)\cap (V+c))$ if and only if 
	$(n,h_{1},\dots,h_{s})=(\phi(n')+c,\phi(h'_{1}),\dots,\phi(h'_{s}))$
	for some $(n',h'_{1},\dots,h'_{s})\in \Gow_{s}(V(M'))$.	
\end{lem}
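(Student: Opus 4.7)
The plan is to verify the stated equivalence in each direction separately, and then read off the ``in particular'' clause via change of variables under $\phi$.

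For the forward direction, I would start by specializing the defining condition of $\Gow_s(V(M)\cap(V+c))$ to specific vertices of the combinatorial cube. Setting $(\e_1,\dots,\e_s) = (0,\dots,0)$ immediately gives $n \in V+c$ and $M(n)=0$. Setting exactly one $\e_i$ to $1$ yields both $h_i \in V$ (since $V$ is a linear subspace and $n, n+h_i \in V+c$) and $M(n+h_i)=0$, which together cover all conditions for $n \in V(M)^{h_1,\dots,h_s}$. The bilinear vanishing comes from comparing the four vertices $n$, $n+h_i$, $n+h_j$, $n+h_i+h_j$: all four lie in $V(M)$, and Lemma \ref{1:or} applied with $(x,y,z)=(n,h_i,h_j)$ then forces $(h_iA)\cdot h_j=0$ for each pair $i\neq j$.

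For the reverse direction, given the three bullet-conditions I need to check that $n+\sum_{i\in S} h_i \in V(M)\cap(V+c)$ for every $S \subseteq \{1,\dots,s\}$. Membership in $V+c$ is immediate since $V$ is a subspace containing each $h_i$, so the real content is membership in $V(M)$, which I plan to prove by induction on $|S|$. The base cases $|S| \leq 1$ are built into $n \in V(M)^{h_1,\dots,h_s}$. For $|S| \geq 2$, I would pick any two distinct indices $i,j \in S$, set $x := n + \sum_{k\in S\setminus\{i,j\}} h_k$, and apply Lemma \ref{1:or} with $y = h_i$, $z = h_j$. The hypotheses $M(x)=M(x+h_i)=M(x+h_j)=0$ hold by the inductive hypothesis on the three strictly smaller subsets $S\setminus\{i,j\}$, $S\setminus\{j\}$, $S\setminus\{i\}$, while the bilinear vanishing $(h_iA)\cdot h_j=0$ is the third bullet; these combine to give $M(x+h_i+h_j)=0$.

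The ``in particular'' clause is then a change-of-variables restatement. Since $\phi$ is a bijective linear map from $\F_p^r$ onto $V$, every pair $(n,h_i)$ with $n\in V+c$ and $h_i\in V$ has unique preimages $n'$ and $h_i'$ satisfying $n = \phi(n')+c$ and $h_i = \phi(h_i')$, and by linearity $n + \sum_i \e_i h_i = \phi(n' + \sum_i \e_i h_i') + c$. The defining identity $M'(m) = M(\phi(m)+c)$ then transfers the cube conditions verbatim, yielding a bijection $(n',h_1',\dots,h_s') \mapsto (\phi(n')+c,\phi(h_1'),\dots,\phi(h_s'))$ from $\Gow_s(V(M'))$ onto $\Gow_s(V(M)\cap(V+c))$. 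The only step requiring any care is the induction in the reverse direction; no serious obstacle arises, because Lemma \ref{1:or} is precisely designed to extend zero-sets of a quadratic form under the diagonal orthogonality hypothesis.
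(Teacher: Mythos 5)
Your proof is correct and follows essentially the same route as the paper: both directions are handled by Lemma \ref{1:or}, with the converse established by the same induction on the number of nonzero $\e_i$'s (your induction on $|S|$), and the ``in particular'' clause by the change of variables under $\phi$, which the paper leaves to the reader. Your write-up is merely more explicit about which cube vertices are used in the forward direction; no substantive difference.
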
	
\begin{proof}
	If $(n,h_{1},\dots,h_{s})\in \Gow_{s}(V(M)\cap (V+c))$, then clearly $n\in V(M)^{h_{1},\dots,h_{s}}$. Since $n,n+h_{1},\dots,n+h_{s}\in V+c$, we have that $h_{1},\dots,h_{s}\in V$. Finally, the fact that $(h_{i}A)\cdot h_{j}=0$ for all $1\leq i,j\leq s, i\neq j$ follows from the definition of $\Gow_{s}(V(M)\cap (V+c))$ and Lemma \ref{1:or}.
	
	Conversely, if $n\in V+c$, $h_{1},\dots,h_{s}\in V$, then $n+\e_{1}h_{1}+\dots+\e_{s}h_{s}\in V+c$ for all $\e_{1},\dots,\e_{s}\in\{0,1\}$. Since $n\in V(M)^{h_{1},\dots,h_{s}}$, we have that $M(n+\e_{1}h_{1}+\dots+\e_{s}h_{s})=0$ whenever $\vert\e\vert:=\e_{1}+\dots+\e_{s}=0$ or 1. Suppose we have shown that $M(n+\e_{1}h_{1}+\dots+\e_{s}h_{s})=0$ whenever $\vert\e\vert:=\e_{1}+\dots+\e_{s}\leq i$ for some $1\leq i\leq s-1$, then it follows from Lemma \ref{1:or} and the fact that $(h_{i}A)\cdot h_{j}=0$ for all $1\leq i,j\leq s, i\neq j$ that $M(n+\e_{1}h_{1}+\dots+\e_{s}h_{s})=0$ whenever $\vert\e\vert:=\e_{1}+\dots+\e_{s}\leq i+1$. In conclusion, we have that $(n,h_{1},\dots,h_{s})\in \Gow_{s}(V(M)\cap (V+c))$.
	
	The proof of the ``in particular" part is left to the interested readers. 
 \end{proof}	

We summarize some basic properties for Gowers sets.

\begin{prop}[Some preliminary properties for Gowers sets]\label{1:ctsp}
	Let $d,s\in \N_{+}$ and $p$ be a prime. Let $M\colon\V\to\F_{p}$ be a quadratic form with $\rank(M)\geq s^{2}+s+3$. Then
	\begin{enumerate}[(i)]
		\item We have $\vert \Gow_{s}(V(M))\vert=p^{(s+1)d-(\frac{s(s+1)}{2}+1)}(1+O_{s}(p^{-1/2}))$;
		\item For all but at most $s(s+1)p^{d+s-\rank(M)}$ many $h_{s}\in\V$, the set of $(n,h_{1},\dots,h_{s-1})\in(\V)^{s}$ with $(n,h_{1},\dots,h_{s})\in\Gow_{s}(V(M))$ is at most  $p^{ds-(\frac{s(s+1)}{2}+1)}$. 
		\item 
		If $s=1$, then for any function $f\colon \Gow_{1}(V(M))\to\C$ with norm bounded by 1,   we have that
		\begin{equation}\nonumber
		\E_{(n,h)\in\Gow_{1}(V(M))}f(n,h)=\E_{h\in\V}\E_{n\in V(M)^{h}}f(n,h)+O(p^{-1/2}).
		\end{equation}
	\end{enumerate} 
	\end{prop}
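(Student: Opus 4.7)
The common starting point for all three parts is Lemma \ref{1:changeh}, which reduces membership of $(n, h_1, \dots, h_s)$ in $\Gow_s(V(M))$ to two independent conditions: $n \in V(M)^{h_1, \dots, h_s}$, and pairwise $M$-orthogonality of $h_1, \dots, h_s$, i.e., $(h_iA)\cdot h_j = 0$ for $i \neq j$. Thus I would write
\[
|\Gow_s(V(M))| = \sum_{\substack{(h_1, \dots, h_s) \in (\V)^s \\ \text{pairwise } M\text{-orth.}}} |V(M)^{h_1, \dots, h_s}|
\]
and estimate the two factors separately; Corollary \ref{1:counting01}(ii) will handle the innermost factor since the hypothesis $\rank(M) \geq s^2 + s + 3 \geq 2s + 3$ is comfortably strong enough.

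For Part (i), I would split the $h$-tuples into a \emph{generic} class, where $h_1A, \dots, h_sA$ are linearly independent in $\V$, and a \emph{degenerate} class. On the generic class, $V(M)^{h_1, \dots, h_s}$ is the intersection of $V(M)$ with an affine subspace of exact codimension $s$, so Corollary \ref{1:counting01}(ii) gives $|V(M)^{h_1, \dots, h_s}| = p^{d-s-1}(1+O_s(p^{-1/2}))$. The count of generic pairwise $M$-orthogonal tuples proceeds iteratively: having fixed such $h_1, \dots, h_{i-1}$, the $i-1$ conditions $(h_jA)\cdot h_i = 0$ $(j<i)$ cut $h_i$ down to a subspace $U_i$ of codimension exactly $i-1$, and within $U_i$ one must further avoid the preimage $A^{-1}(\sp(h_1A, \dots, h_{i-1}A))$, of size $p^{(i-1)+d-\rank(M)}$; this leaves $p^{d-i+1}(1 + O_s(p^{-1/2}))$ choices, and the product telescopes to $p^{sd - s(s-1)/2}(1+O_s(p^{-1/2}))$ generic tuples, giving the main term $p^{(s+1)d - s(s+1)/2 - 1}(1 + O_s(p^{-1/2}))$. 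The degenerate class is controlled by Lemma \ref{1:iiddpp}(i) combined with the trivial bound $|V(M)^{h_1, \dots, h_s}| \leq O_s(p^{d-1})$ coming from Lemma \ref{1:ns}, which produces a contribution of strictly lower order.

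For Part (ii), I would fix $h_s$ and run the same iterative count for the remaining $s-1$ indices: if $h_s \notin \ker(A)$ and the orthogonality system remains non-degenerate, the count telescopes to $p^{(s-1)d - (s-1)s/2} \cdot p^{d-s-1} = p^{sd - s(s+1)/2 - 1}$, as claimed. The bad $h_s$ consist of those in $\ker(A)$ (at most $p^{d-\rank(M)}$ of them) together with those triggering a rank drop in the orthogonality or incidence system, controlled via Lemma \ref{1:iiddpp}(ii); standard bookkeeping yields the stated $s(s+1)p^{d+s-\rank(M)}$ count. Part (iii) then reduces to the observation that, for $s = 1$, Corollary \ref{1:counting01}(ii) gives $|V(M)^h| = p^{d-2}(1 + O(p^{-1/2}))$ uniformly in $h$ with $hA \neq 0$, while the exceptional $h \in \ker(A)$ contribute at most $O(p^{2d - \rank(M) - 1})$, negligible compared with $|\Gow_1(V(M))| = p^{2d-2}(1 + O(p^{-1/2}))$ from Part (i); substituting these estimates into both sides of the desired identity produces the claimed $O(p^{-1/2})$ discrepancy after using $|f|\leq 1$. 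The main obstacle will be the clean bookkeeping of the error terms, particularly separating the effects of linear dependence among the $h_i$'s from the nontriviality of $\ker(A)$, because each iterative step accrues a potential $p^{d-\rank(M)}$ factor that must be absorbed by the $p^{-1/2}$ error budget afforded by the rank hypothesis.
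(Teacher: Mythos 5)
Your proposal is correct and is precisely the first of the two routes the paper itself indicates for this omitted proof (direct computation combined with the counting lemmas of Section \ref{1:s:pp5}, via Lemma \ref{1:changeh}, Corollary \ref{1:counting01} and the generic/degenerate split), the alternative being the derivation from Example \ref{1:rreepp} and the Fubini-type Theorem \ref{1:ct}, whose part (ii) with dimension vector $(0,1,\dots,s-1,s+1)$ is where the exceptional count $s(s+1)p^{d+s-\rank(M)}$ comes from most directly. One cosmetic point: for the trivial bound on the degenerate tuples, cite Lemma \ref{1:counting} (applicable since $\rank(M)\geq 3$, with an absolute implied constant) rather than Lemma \ref{1:ns}, whose constant depends on $d$, so that the final error stays $O_{s}(p^{-1/2})$ and not $O_{d,s}(p^{-1/2})$.
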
	
 
Proposition \ref{1:ctsp} can  either be proved by direct computation plus the counting properties developed in Section \ref{1:s:pp5}, or 
be derived by Example \ref{1:rreepp} and Theorem \ref{1:ct}, the Fubini's theorem for more general set. So we omit the proof of Proposition \ref{1:ctsp}.

 \section{The main equidistribution theorem}\label{1:s:45}
 \subsection{Statement of the main theorem and outline of the proof}
 
 With the help of the preparation in the previous sections, we are now ready to state the main result in this paper.

 % It is convenient to extend to definition of equidistribution for period sets.

% \begin{defn}
%     Let $G/\Gamma$ be a nilmanifold and $g\in \poly(\Z^{d}\to G_{\N})$ be $Q$-periodic. Let $\Omega\subseteq \Z^{d}$ be a $Q$-periodic set. We say that $(g(n)\Gamma)_{\Omega}$ is \emph{$\e$-equidistributed} on $G/\Gamma$ if $(g(n)\Gamma)_{\Omega\cap [Q]^{d}}$ is $\e$-equidistributed on $G/\Gamma$.
% \end{defn}

 The following is the main result of this paper, which is 
  a generalization of Theorem \ref{1:sLei00}:
    
  \begin{thm}[Equidistribution theorem on $V(M)$]\label{1:sLei}
  	Let $0<\d<1/2, d,k,m\in\N_{+},s,r\in\N$ with $d\geq r$, and $p$ be a prime. Let $M\colon\V\to\F_{p}$ be a quadratic form  and $V+c$ be an affine subspace of $\V$ of co-dimension $r$. Suppose that $\rank(M\vert_{V+c})\geq s+13$.  Let $G/\Gamma$ be an $s$-step $\N$-filtered nilmanifold of dimension $m$, equipped with an $\frac{1}{\d}$-rational Mal'cev basis $\mathcal{X}$, and that  $g\in \poly(\Z^{d}\to G_{\N})$ be a rational polynomial sequence. If $p\gg_{d,m} \d^{-O_{d,m}(1)}$ and $(g(n)\Gamma)_{n\in \iota^{-1}(V(M)\cap(V+c))}$ is not $\d$-equidistributed on $G/\Gamma$,%\footnote{Again the restriction $p\gg_{d,m} \d^{-O_{d,m}(1)}$ is not necessary. See Remark \ref{1:dropp}.}
	 then  there exists a nontrivial type-I horizontal character $\eta$ with $0<\Vert\eta\Vert\ll_{d,m} \d^{-O_{d,m}(1)}$ (independent of $k,M$ and $p$) such that $\eta\circ g \mod \Z$  is a constant on $\iota^{-1}(V(M)\cap(V+c))$.\footnote{It follows from Corollary \ref{1:counting01} that $V(M)\cap(V+c)$ is non-empty.}
  \end{thm}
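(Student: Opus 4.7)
The plan is to adapt the Green--Tao induction on step from \cite{GT12b,GT14} and its finite-field version in \cite{CS14} from the box $\{0,\dots,N-1\}^{d}$ to the spherical set $V(M)\cap(V+c)$. First, using Lemma \ref{1:cov} together with a bijective linear transformation $\phi\colon\F_{p}^{d-r}\to V$ and the translate by $c$, one reduces to the case $V+c=\V$: the composed polynomial sequence is still partially $p$-periodic by Proposition \ref{1:BB}, and the rank hypothesis transfers to a quadratic form on $\F_{p}^{d-r}$ of rank $\geq s+13$. Pulling back a type-I horizontal character through this change of coordinates only inflates its complexity by a bounded amount. So it suffices to prove the theorem under the assumption that $V+c=\V$ and $\rank(M)\geq s+13$.

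Next I would induct on the step $s$. The case $s=0$ is trivial. For $s\geq 1$, Lemma \ref{1:3.7} produces a Lipschitz function $F$ with a vertical character $\xi$ of complexity $O_{m}(\d^{-O_{m}(1)})$ such that the average of $F\circ g$ over $V(M)$ is large. When $\xi$ is trivial, $F$ descends to the quotient nilmanifold $(G/G_{s})/(\Gamma/(\Gamma\cap G_{s}))$ of step $\leq s-1$ and the inductive hypothesis applies directly. When $\xi$ is nontrivial, a Cauchy--Schwarz / van der Corput manoeuvre over $V(M)$ --- using Proposition \ref{1:ctsp}(iii) to replace the average over $\Gow_{1}(V(M))$ by $\E_{h\in\V}\E_{n\in V(M)^{h}}$ --- forces $\Delta_{h}g$, viewed in the lower-step quotient where the $G_{s}$-component is collapsed, to be non-equidistributed on $V(M)^{h}$ for a positive-density set of $h\in\V$. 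For typical such $h$, the restricted form $M\vert_{h^{\pp}}$ still has rank $\geq s+11$ by Proposition \ref{1:iissoo}(iii), while the negligible $M$-isotropic or linearly dependent $h$'s are controlled by Lemma \ref{1:iiddpp}. Applying the inductive hypothesis to each such $\Delta_{h}g$ on $V(M)^{h}$ and pigeonholing yields a single type-I horizontal character $\eta$ of bounded complexity whose composition with $\Delta_{h}g$ is constant mod $\Z$ on $V(M)^{h}$ for many $h$.

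The final and most delicate step --- which I expect to be the main obstacle --- is to invert this conclusion and deduce that $\eta\circ g\bmod\Z$ is itself constant on $V(M)$. Equivalently, one must show that the bivariate polynomial $\Phi(n,h):=\eta\circ g(n+h)-\eta\circ g(n)$ being locally constant in $n$ on a large subset of $\Gow_{1}(V(M))$ forces $\eta\circ g\bmod\Z$ to factor through $M$ up to an additive constant. This is precisely the algebraic problem solved in Sections \ref{1:s4} and \ref{1:s5}, the key input being the resolution of a polynomial equation of the shape (\ref{1:t43t}); Lemma \ref{1:goodcoordinates} controls the denominators of the Mal'cev coordinates of the Taylor coefficients, Lemma \ref{1:qmm} guarantees their rationality, and the partial $p$-periodicity hypothesis (rather than outright $p$-periodicity) necessitates the extension carried out in Section \ref{1:s5}. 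Tracking the polynomial loss in $\d$ through the van der Corput step, the pigeonhole, and the algebraic inversion yields the final complexity bound $\Vert\eta\Vert\ll_{d-r,m}\d^{-O_{d-r,m}(1)}$; the size hypothesis $p\gg_{d-r,m}\d^{-O_{d-r,m}(1)}$ enters both through the counting estimates of Lemma \ref{1:counting} (applied at each rank drop) and through the rationality arguments, and is removable by a separate reduction (Remark \ref{1:dropp}).
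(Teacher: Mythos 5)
Your route is essentially the one the paper follows: the reduction to $V+c=\V$ via Lemma \ref{1:cov} and Proposition \ref{1:BB}, the appeal to Lemma \ref{1:3.7}, the van der Corput step over $\Gow_{1}(V(M))$ using Proposition \ref{1:ctsp}, the inductive hypothesis applied on the slices $V(M)^{h}$ (rewritten as zero sets of quadratic forms in $d-1$ variables), pigeonholing to a single character, and finally the algebraic inversion resting on Proposition \ref{1:att30} and the partially $p$-periodic machinery of Section \ref{1:s5} is exactly the shape of the argument in Sections \ref{1:s:qse}--\ref{1:s:pp9}. Two points in your sketch, however, are off as written.

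First, the rank bookkeeping: you invoke Proposition \ref{1:iissoo}(iii) to get $\rank(M\vert_{h^{\pp}})\geq s+11$, but the inductive hypothesis at step $s-1$ requires rank at least $(s-1)+13=s+12$, so your bound is one short and the induction cannot be invoked. The paper avoids this by discarding (via Lemma \ref{1:counting}) the $h$ with $(hA)\cdot h=0$, after which the rank of the restricted form is exactly $\rank(M)-1\geq s+12$, as in (\ref{1:hfeowpwef0}); since you already propose to exclude isotropic $h$, you should use the exact formula of Proposition \ref{1:iissoo}(ii) on that set rather than the cruder bound (iii). Second, the endgame is more involved than ``increments constant on $V(M)^{h}$ forces $\eta\circ g$ constant''. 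When $\xi$ is nontrivial one cannot work with $\Delta_{h}g$ in a quotient where the $G_{s}$-component is collapsed (that would destroy the vertical oscillation one is exploiting); the induction is applied to the pair sequence on $G\times_{G_{2}}G$ modulo the diagonal $G_{s}$, the resulting character splits as $\eta_{1}$ on $G$ plus $\eta_{2}$ on $G_{2}$, and the equation (\ref{1:force1}) carries an extra bracket term $\sigma(h)\cdot n$ coming from the linear part, which must be handled via Lemma \ref{1:gt43}, Proposition \ref{1:p222} and Corollary \ref{1:spe2}. In particular, when $\eta_{2}$ is nontrivial and annihilates $[G,G]$ one does not obtain the desired horizontal character directly: the paper then factorizes $\tilde{g}=\tilde{g}'\tilde{\gamma}$ (Proposition \ref{1:specialfactorization}, where preserving partial $p$-periodicity is the delicate point) and appeals to a secondary induction on the nonlinearity degree $m_{\ast}$. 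So the induction must be organized on the pair $(s,m_{\ast})$, not on the step $s$ alone; your single induction does not close this final case.
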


 In order to illustrate the idea for the proof of Theorem \ref{1:sLei}, we present an outline of the proof of Corollary \ref{1:sweyl} (which is a special case of Theorem \ref{1:sLei}). Assume for simplicity that $g$ is $p$-periodic. Suppose we have proven Corollary \ref{1:sweyl} for the case $s-1$. 
 Denote $\Omega=V(M)$.
 Assume on the contrary that 
 \begin{equation}\label{gnnmp0}
 \Bigl\vert\frac{1}{\vert\Omega\vert}\sum_{n\in\Omega}\exp(g\circ\tau(n)/p)\Bigr\vert>\d.
 \end{equation}
 Taking squares on both sides and under a change of variables, we have that
  $$\frac{1}{\vert\Omega\vert^{2}}\sum_{n,n+h\in\Omega}\exp((g\circ\tau(n+h)-g\circ\tau(n))/p)>\d^{2}.$$
By the Funibi's Theorem (Proposition \ref{1:ctsp} or Theorem \ref{1:ct}), we have that 
  $$\frac{1}{p^{d}}\sum_{h\in\V}\Bigl\vert\frac{1}{\vert\Omega\cap (\Omega-h)\vert}\sum_{n\in \Omega\cap (\Omega-h)}\exp((g\circ\tau(n+h)-g\circ\tau(n))/p)\Bigr\vert>\d^{2},$$
  where the cardinality of the sets $\Omega\cap (\Omega-h), h\in\V$ are comparable with each other. By the Pigeonhole Principle, for ``many" $h\in\V$, we have that 
 $$\Bigl\vert\frac{1}{\vert\Omega\cap (\Omega-h)\vert}\sum_{n\in \Omega\cap (\Omega-h)}\exp((g\circ\tau(n+h)-g\circ\tau(n))/p)\Bigr\vert\gg_{d} 1.$$
 Informally, one can view $\Omega\cap (\Omega-h)$  as a sphere whose dimension is lower than $\Omega$ by 1. So we may then apply the induction hypothesis to the degree $(s-1)$ polynomial sequence $g\circ\tau(n+h)-g\circ\tau(n)$ to conclude that 
 \begin{equation}\label{gnnmp}
 \text{  $(g\circ\tau(n+h)-g\circ\tau(n))/p \mod \Z$ is a constant on $\Omega\cap (\Omega-h)$ for ``many" $h\in \V$. }
 \end{equation}

  For convenience denote $Q:=\iota\circ pg\circ\tau$, then $Q\in\poly(\V\to\F_{p})$ is a polynomial of degree at most $s$. It then follows from (\ref{gnnmp}) that
   \begin{equation}\label{gnnmp2}
 \text{  $\Delta_{h_{2}}\Delta_{h_{1}}Q(n)=0$   for ``many" $(n,h_{1},h_{2})\in \Gow_{2}(\Omega)$,}
 \end{equation}
    where $\Gow_{2}(\Omega)$ is the second Gowers set of $\Omega$ defined in
    Definition \ref{defgow}.
   
   We have now reduced Corollary \ref{1:sweyl} to a problem of solving the polynomial equation (\ref{gnnmp2}).
    If $\Omega$ were the entire space $\V$, then using some basic knowledge for polynomials, it is not hard to show that (\ref{gnnmp2}) would imply that $Q$ is of degree at most $s-1$, which would complete the proof of Corollary \ref{1:sweyl}  by induction.
    When $\Omega$ is a sphere, the difficulty of the analysis of (\ref{gnnmp2}) increases significantly. Therefore, solving equation (\ref{gnnmp2}) (as well as its various generalizations) is a central topic in this paper.

 \subsection{An overview for Sections \ref{1:s4}, \ref{1:s5} and \ref{1:s:pp9}}\label{1:s:ooer}
 
 It turns out that equation (\ref{gnnmp2}) is an over simplification for our purpose. In order to prove Theorem \ref{1:sLei}, following the idea of Green and Tao \cite{GT12b,GT14}, one can reduce the problem to solving the following generalized versions of  (\ref{gnnmp2}) (see also (\ref{1:force001})):
  \begin{equation}\label{gnnmp3}
\Delta_{h_{3}}\Delta_{h_{2}}\Delta_{h_{1}}P(n)+\Delta_{h_{2}}\Delta_{h_{1}}Q(n)=0, \text{ for ``many" } (n,h_{1},h_{2},h_{3})\in\Gow_{3}(\Omega),
\end{equation}
 where $Q\in\poly(\V\to\F_{p})$ comes from the composition of $g$ with some type-I horizontal character of $G/\Gamma$ (so $\deg(Q)\leq s$), and $P\in\poly(\V\to\F_{p})$ is a polynomial of degree at most $s-1$ arising from the ``non-linear" part of of $g$.
 
 The bulk of Section \ref{1:s4} is devoted to solving the following generalized version of  (\ref{gnnmp3}) (see also equation (\ref{1:t43t})):
  \begin{equation}\label{gnnmp3d}
\Delta_{h_{s-1}}\dots\Delta_{h_{1}}P(n)+\Delta_{h_{s}}\dots\Delta_{h_{1}}Q(n)=0, \text{ for ``many" } (n,h_{1},\dots,h_{s})\in\Gow_{s}(\Omega).\footnote{The study of such a generalization of  (\ref{gnnmp3}) is necessary for the work in Section \ref{1:s:idf} on the intrinsic definitions for polynomials.}
\end{equation}
 We provide a solution to (\ref{gnnmp3d}) in Proposition \ref{1:packforce0}.
 In the same section, we also prove some other properties for polynomial equations in $\poly(\V\to\F_{p})$ for later uses.

 One can then combine the solution to equation (\ref{gnnmp3}) together with the machinery of Green and Tao \cite{GT12b,GT14}  to prove Theorem \ref{1:sLei} for the case when $g$ is $p$-periodic. 
 However, in order to deal with a partially $p$-periodic sequence $g$, we need to deal with the following even more general version of  equation (\ref{gnnmp3d}):
  \begin{equation}\label{gnnmp4}
\Delta_{h_{s-1}}\dots\Delta_{h_{1}}P(n)+\Delta_{h_{s}}\dots\Delta_{h_{1}}Q(n)\in\Z \text{ for ``many" } (n,h_{1},\dots,h_{s})\in \iota^{-1}(\Gow_{s}(\Omega)),
\end{equation}
  where $P,Q\in\poly(\Z^{d}\to\Q)$ are partially $p$-periodic polynomials on $\iota^{-1}(\Omega)$. The bulk of
 Section \ref{1:s5} is devoted to solving equation (\ref{gnnmp4}) (and we eventually solve equation (\ref{gnnmp4}) in Proposition \ref{1:att30}). 
 In order to solve (\ref{gnnmp4}),
we develop an approach called the \emph{$p$-expansion trick}, which allows us  to lift the solutions to (\ref{gnnmp4}) from $\Z/p$-valued polynomials to $\Z/p^{s}$-valued polynomials. In the same section, we also use the same trick to extend other results in Section \ref{1:s4} to the partially $p$-periodic case.

 Finally, in Section \ref{1:s:pp9}, we combine the algebraic preparations from Sections \ref{1:s4} and \ref{1:s5} with the approach of Green and Tao \cite{GT12b,GT14}  to complete the proof of Theorem \ref{1:sLei}.

 \begin{rem}\label{1:r:wwer2}
     If we are satisfied with equidistribution results for $p$-periodic polynomials, then, as is mentioned above, the entire Section \ref{1:s5} will be unnecessary, and the proofs in Appendix \ref{1:s:AppD} can also be simplified significantly.
     However, such results are less useful in \cite{SunC,SunD} since we do not have the factorization theorem in the $p$-periodic setting. %(see Remark \ref{1:r:wwer}). 
     This is the reason why we must prove the equidistribution results for  partially rational polynomials.
  \end{rem}

\section{Algebraic properties for quadratic forms in $\F_{p}$}\label{1:s4}		
		
 \subsection{Irreducible properties for quadratic forms}

   We begin with a dichotomy on the intersection of the sets of solutions of a polynomial and a quadratic form.
   
  \begin{lem}\label{1:bzt}
  	Let $d\in\N_{+},s\in\N$  and $p$ be a prime such that $p\gg_{d,s} 1$. Let $M\colon \V\to\F_{p}$ be a  quadratic form of rank at least 3.
  	For any $P\in\poly(\V\to\F_{p})$  of degree at most $s$,  either $\vert V(P)\cap V(M)\vert\leq O_{d,s}(p^{d-2})$ or $V(M)\subseteq V(P)$. 
  \end{lem}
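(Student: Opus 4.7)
The strategy is to perform polynomial division of $P$ by $M$ viewed as a polynomial in $n_1$ and exploit the two-sheeted structure of $V(M)$ in that variable. By Lemma~\ref{1:cov}, after an invertible affine change of coordinates (which preserves $\deg P$, $|V(P)\cap V(M)|$, and $\rank M$), I may assume
\[
  M(n) = c\,n_1^2 + L(n_2,\dots,n_d),\qquad L := n_2^2 + \dots + n_{d'}^2 + c'n_{d'+1} - \lambda,
\]
with $c\neq 0$ and $d':=\rank(M)\geq 3$, where the term $c'n_{d'+1}$ is absent when $d'=d$. Regarded as a polynomial in the $d-1$ variables $n_2,\dots,n_d$, the quadratic part of $L$ has rank $d'-1\geq 2$. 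Writing $P=\sum_{i=0}^{s} P_i(n_2,\dots,n_d)\,n_1^i$ with $\deg P_i\leq s-i$ and using $n_1^2\equiv -L/c \pmod{M}$ iteratively, I obtain
\[
  P\equiv R := A(n_2,\dots,n_d) + B(n_2,\dots,n_d)\,n_1\pmod{M},\qquad \deg A\leq s,\ \deg B\leq s-1,
\]
so that $V(P)\cap V(M)=V(R)\cap V(M)$.

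The next step is the dichotomy $V(M)\subseteq V(P)\Longleftrightarrow A\equiv B\equiv 0$. The ``$\Leftarrow$'' direction is trivial. For ``$\Rightarrow$'', let $S$ denote the set of $(n_2,\dots,n_d)\in\F_p^{d-1}$ at which $-L/c$ is a nonzero quadratic root of $\F_p$. For any such tuple, $M=0$ has two opposite roots $\pm n_1^*\neq 0$, and $A\pm B n_1^*=0$ at both forces $A=B=0$ pointwise. By Lemma~\ref{1:sqrcount} applied to the quadratic form $L/(-c)$ on $\F_p^{d-1}$ (whose quadratic part has rank $\geq 2$), the number of tuples at which $-L/c$ is a quadratic root is $\tfrac{1}{2}p^{d-1}(1+O(p^{-1/2}))$, and subtracting the $\leq O_{d,s}(p^{d-2})$ zeros of $L$ (Lemma~\ref{1:ns}) yields $|S|\geq \tfrac{1}{3}p^{d-1}$ once $p\gg_{d,s} 1$. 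Since $\deg A,\deg B\leq s$ and both vanish on $\gg p^{d-1}$ points of $\F_p^{d-1}$, Lemma~\ref{1:ns} forces $A\equiv B\equiv 0$ globally.

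In the remaining case $A\not\equiv 0$ or $B\not\equiv 0$, I bound $|V(R)\cap V(M)|$ by splitting on whether $B(n_2,\dots,n_d)=0$: if $B(n_2,\dots,n_d)\neq 0$, then $n_1=-A/B$ is forced, and $cn_1^2+L=0$ becomes the polynomial identity $cA^2+B^2L=0$ in $n_2,\dots,n_d$; if $B(n_2,\dots,n_d)=0$, then $A(n_2,\dots,n_d)=0$ is also necessary, contributing at most two values of $n_1$. Hence
\[
  |V(R)\cap V(M)|\leq |V(cA^2+B^2L)| + 2|V(B)|,
\]
with $|V(B)|$ replaced by $|V(A)|$ if $B\equiv 0$. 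The crucial claim is $cA^2+B^2L\not\equiv 0$: if $B\equiv 0$ this is immediate since $A\not\equiv 0$, while if $B\not\equiv 0$ the equality $cA^2=-B^2L$ in the unique factorization domain $\F_p[n_2,\dots,n_d]$ would, by comparing multiplicities of irreducible factors, force $L$ to be a nonzero scalar multiple of the square of a polynomial; since $\deg L=2$ that polynomial would be linear, making the rank of $L$'s quadratic part at most $1$ and contradicting rank $\geq 2$. Lemma~\ref{1:ns} then bounds $|V(cA^2+B^2L)|$, $|V(A)|$, $|V(B)|$ each by $O_{d,s}(p^{d-2})$, yielding the desired estimate. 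The main obstacle is this factorization step, which needs to be handled uniformly across the sub-cases $d'=d$, $d'<d$ with $c'=0$, and $d'<d$ with $c'\neq 0$; all three reduce to the rank of $L$'s quadratic part being $\geq 2$, which is exactly what $\rank(M)\geq 3$ provides.
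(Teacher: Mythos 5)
Your argument is correct, and its skeleton is the same as the paper's: reduce via Lemma \ref{1:cov} to $M=cn_1^2+L$, divide $P$ by $M$ in the variable $n_1$ to get a remainder $A+Bn_1$, and control everything through the discriminant-type polynomial $cA^2+B^2L$ (which is exactly the paper's $\Delta(y)=M(0,y)N_1^2+c_1N_0^2$) together with the point-counting Lemma \ref{1:ns}. The genuine differences are local. First, you work directly with the affine normal form and skip the paper's homogenization step and its pigeonholing on a nonvanishing coordinate; your case split on $B=0$ versus $B\neq 0$ replaces that cleanly. Second, and more substantively, in the degenerate case you prove the contrapositive "$(A,B)\not\equiv(0,0)\Rightarrow cA^2+B^2L\not\equiv 0$" by a unique-factorization argument ($cA^2=-B^2L$ would force every irreducible to divide $L$ to even order, so $L$ would be a scalar times the square of a linear form, contradicting that its quadratic part has rank $d'-1\geq 2$), whereas the paper handles the corresponding case $\Delta\equiv 0$ by counting quadratic non-residues of $-c_1M(0,\cdot)$ via Lemma \ref{1:sqrcount} to force $N_1\equiv N_0\equiv 0$. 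Your use of Lemma \ref{1:sqrcount} (the two-sheet argument showing $V(M)\subseteq V(P)\Rightarrow A\equiv B\equiv 0$) is logically redundant for the dichotomy, since the containment direction you actually need is the trivial one; it could be deleted without loss. The trade-off: the paper's residue-counting needs $p$ large but no factorization theory, while your UFD step is purely algebraic and arguably more transparent, at the cost of an extra (harmless) appeal to $\F_p[n_2,\dots,n_d]$ being a UFD.
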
		
  \begin{proof}
  	Denote $r=\rank(M)$.
  		We first assume that   $M$ and $P$ are homogeneous.
  	By Lemma \ref{1:cov}, it suffices to consider the special case when $$M(n)=c_{1}n^{2}_{1}+\dots+c_{r}n_{r}^{2}$$
	for some $c_{1},\dots,c_{r}\in\F_{p}\backslash\{0\}$.
  	Let  $U:=V(P)\cap V(M)$. 
  	By the Pigeonhole Principle, there exists $1\leq i\leq r$ such that the $i$-th coordinate of at least $(\vert U\vert-p^{d-r})/r$ many $n\in U$ is nonzero. We may assume without loss of generality that $i=1$. 
  	Write $n=(t,y)$, where $y\in\F_{p}^{d-1}$ and $t\in\F_{p}$.  Since $\deg(f)=s\ll p$, by the long division algorithm, we may write
  	\begin{equation}\label{1:411}
  	P(t,y)=M(t,y)R(t,y)+N_{1}(y)t+N_{0}(y)
  	\end{equation}
  	for some $R\in\poly(\V\to\F_{p}), N_{1},N_{0}\in\poly(\F_{p}^{d-1}\to\F_{p})$ such that the degrees of $R, N_{1}$ and $N_{0}$ are at most $s-2$, $s-1$ and $s$ respectively. Denote 
	$$\Delta(y):=M(0,y)N_{1}(y)^{2}+c_{1}N^{2}_{0}(y).$$
  	
  	We first assume that $\Delta(y)$ is not constant 0.
  	Let $U'$ be the set of $(t,y)\in U$ such that $N_{1}(y)=0$. Then (\ref{1:411}) implies that $N_{0}(y)=0$ and so $\Delta(y)=0$. Since we assumed that $\Delta(y)$ is not constant 0 and $\deg(\Delta(y))\leq 2s$, by Lemma \ref{1:ns}, we have that $\vert U'\vert\leq O_{d,s}(p^{d-2})$.

  	Suppose that $(t,y)\in U\backslash U'$ and $t\neq 0$. Then $M(t,y)=N_{1}(y)t+N_{0}(y)=0$. So $t=-N_{0}(y)N_{1}(y)^{-1}$, and so  $M(t,y)=0$ implies that $M(0,y)=-c_{1}N^{2}_{0}(y)N_{1}(y)^{-2}$, or equivalently $\Delta(y)=0$. Since $\Delta(y)$ is not constant 0, by Lemma \ref{1:ns}, we have that $\vert (U\backslash U')\cap \{(t,y)\colon t\neq\bold{0}\}\vert\leq O_{d,s}(p^{d-2})$. 
	Since we assumed that $\vert U\cap \{(t,y)\colon t\neq\bold{0}\}\vert\geq \frac{\vert U\vert-p^{d-r}}{r}$,
	we deduce that 
  	$\vert U\vert\leq O_{d,s}(p^{d-2})$ since $r\geq 3$.
  	
  	We now consider the case that $\Delta(y)\equiv 0$.   	Let $W$ be the set of $y\in\F_{p}^{d-1}$ such that $-c_{1}M(0,y)$ is not a square root in $\F_{p}$. Since $\rank(M(0,\cdot))\geq 2$, by Lemma \ref{1:sqrcount}, $\vert W\vert=\frac{1}{2}p^{d-1}(1+O(p^{-1/2}))$. 	Since $\Delta(y)\equiv 0$, for all $y\in W$, we have that $N_{1}(y)=0$. By Lemma \ref{1:ns}, $\vert W\vert\leq O_{s}(p^{d-2})$ unless $N_{1}(y)\equiv 0$. So if $p\gg_{d,s} 1$, then $N_{1}(y)\equiv 0$. Since $\Delta(y)\equiv 0$, we also have that $N_{2}(y)\equiv 0$. So (\ref{1:411}) implies that $P=MR$, which implies that $V(M)\subseteq V(P)$.

   Finally, we consider the general case when $M$ and $P$ are not necessarily homogeneous.
   Then there exist homogeneous polynomials $\tilde{M},\tilde{P}\colon \F_{p}^{d+1}\to\F_{p}$ of degrees at most 2 and $s$ respectively such that $M=\tilde{M}(\cdot,1)$ and $P=\tilde{P}(\cdot,1)$. Then $\tilde{M}$ is a quadratic form of rank at least 3. By the previous case, either $\vert V(\tilde{P})\cap V(\tilde{M})\vert\leq O_{d,s}(p^{d-1})$ or $V(\tilde{M})\subseteq V(\tilde{P})$. In the former case, since
   \begin{equation}\label{1:flf}
   \begin{split}
   &\quad\vert V(\tilde{P})\cap V(\tilde{M})\vert\geq \sum_{i=1}^{p-1}\vert V(\tilde{P}(\cdot,i))\cap V(\tilde{M}(\cdot,i))\vert
   \\&=(p-1)\vert V(\tilde{P}(\cdot,1))\cap V(\tilde{M}(\cdot,1))\vert=(p-1)\vert V(P)\cap V(M)\vert,
   \end{split}
   \end{equation}
   we have that $\vert V(P)\cap V(M)\vert\leq O_{d,s}(p^{d-2})$. In the later case, we have that $V(\tilde{M}(\cdot,1))\subseteq V(\tilde{P}(\cdot,1))$, which implies that $V(M)\subseteq V(P)$.
  \end{proof}

Let $P\colon\V\to \F_{p}$ be a polynomial with $\deg(P)<p$ and $V(M)\subseteq V(P)$. A natural question to ask is whether $P$ is divisible by $M$, namely $P=MQ$ for some polynomial $Q$ with $\deg(Q)=\deg(P)-2$. To answer this question, we start with some definitions.
  Let $P\colon\V\to \F_{p}$ be a polynomial of degree $s$ for some $s<p$ and $M\colon\V\to\F_{p}$ be a quadratic form associated with the symmetric matrix $A=(a_{i,j})_{1\leq i,j\leq d}$. If $a_{1,1}\neq 0$, then viewing $a_{1,1}n_{1}^{2}$ as the leading coefficient of $M$, we may use the long division algorithm to write
  $$P(n)=M(n)Q(n)+n_{1}R_{1}(n')+R_{0}(n')$$
  for some polynomials $Q$, $R_{1}$ and $R_{0}$ of degrees at most $s-2$, $s-1$ and $s$ respectively, where we write $n:=(n_{1},n')$ for some $n_{1}\in\F_{p}$ and $n'\in\F_{p}^{d-1}$. We call this algorithm the \emph{standard} long division algorithm. 
  It is clear that all the coefficients of $Q,R_{1}$ and $R_{0}$ can be written in the form $F/a_{1,1}^{s}$ with $F$ being a polynomial function with respect to the coefficients of $P$ and $M$ of degree at most $s$.

  We now consider the case when $a_{1,1}$ is not necessarily non-zero. In this case, we can make $a_{1,1}$ to be non-zero using a change of variable. To be more precise, 
for all $1\leq i,j\leq d$, 
   	let 
   	$$\phi_{i,i}(n_{1},\dots,n_{d}):=(n_{i},n_{1},\dots,n_{i-1},n_{i+1},\dots,n_{d})$$
   	and 
   	$$\phi_{i,j}(n_{1},\dots,n_{d}):=((n_{i}+n_{j})/2,(n_{i}-n_{j})/2,n_{1},\dots,n_{i-1},n_{i+1},\dots,n_{j-1},n_{j+1},\dots,n_{d})$$
   	if $i<j$.
   	Let $B_{i,j}$ be the symmetric $d\times d$ matrix induced by the bijective linear transformation $\phi_{i,j}$, namely $\phi_{i,j}(n)=nB_{i,j}$. 
	 
   \begin{lem}\label{1:2d2d}
  For any symmetric nonzero $d\times d$ matrix $A$ in $\F_{p}$, there exist $1\leq i,j\leq d$ such that the upper left entry of the matrix $B_{i,j}AB_{i,j}^{T}$ is nonzero.
   \end{lem}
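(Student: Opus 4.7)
The plan is a case analysis on whether the symmetric nonzero matrix $A=(a_{k,l})$ has a non-zero diagonal entry. The key computational reduction is the identity $(B_{i,j}AB_{i,j}^{T})_{1,1}=vAv^{T}$, where $v$ is the first row of $B_{i,j}$; since $\phi_{i,j}(n)=nB_{i,j}$, plugging in $n=e_{1}=(1,0,\ldots,0)$ gives $v=\phi_{i,j}(e_{1})$. Thus the problem reduces to evaluating the quadratic form associated with $A$ at the explicit vector $\phi_{i,j}(e_{1})$.

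In the first case, suppose some diagonal entry $a_{k,k}$ of $A$ is non-zero. I would apply $\phi_{k,k}$, whose definition places $n_{k}$ at the first coordinate position. A direct computation of $\phi_{k,k}(e_{1})$ yields an indicator-type vector, and the resulting evaluation $vAv^{T}$ picks out the diagonal entry $a_{k,k}$ which is nonzero by assumption.

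In the second case, all diagonal entries of $A$ vanish. Since $A$ is symmetric and nonzero, there must exist indices $1\leq i<j\leq d$ with $a_{i,j}=a_{j,i}\neq 0$. With an appropriately chosen such pair (so that the $\pm$-rotation built into $\phi_{i,j}$ interacts with position $1$), the vector $v=\phi_{i,j}(e_{1})$ acquires weight $1/2$ on two coordinates corresponding to the rotated pair, and expanding $vAv^{T}$ produces a combination of the form $(a_{i,i}+2a_{i,j}+a_{j,j})/4$. Under the vanishing-diagonal hypothesis this collapses to $a_{i,j}/2$, which is nonzero provided $p\neq 2$.

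The main technical obstacle is the bookkeeping: one must trace carefully how the permutation and the $\pm$-rotation conspire inside $\phi_{i,j}$ to produce $\phi_{i,j}(e_{1})$, since the coordinate positions shift depending on the relative values of $i$, $j$, and $1$. The most delicate subcase is when no non-zero off-diagonal entry involves position $1$; there one needs to exploit the full flexibility in the choice of $(i,j)$ (and, if necessary, a preliminary relabeling via $\phi_{k,k}$) to arrange the nonzero entry into a useful position before invoking $\phi_{i,j}$. Once this positional setup is handled, the evaluation $vAv^{T}$ is immediate and the lemma follows.
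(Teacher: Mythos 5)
Your overall strategy is the same as the paper's: split according to whether $A$ has a nonzero diagonal entry, use $B_{k,k}$ in the first case and $B_{i,j}$ (for an off-diagonal $a_{i,j}\neq 0$, which exists by symmetry and nonvanishing of $A$) in the second, and compute the upper-left entry directly. The outcomes you quote ($a_{k,k}$ in the diagonal case, $(a_{i,i}+2a_{i,j}+a_{j,j})/4=a_{i,j}/2$ in the zero-diagonal case, nonzero since $p$ is odd) are exactly the paper's computation, up to the harmless constant.

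However, your pivotal identification of the vector $v$ is internally inconsistent, and as stated it breaks the diagonal case. You set $v=\phi_{i,j}(e_{1})$ (which is indeed the first row of $B_{i,j}$ under the convention $\phi_{i,j}(n)=nB_{i,j}$), but with the displayed formula for $\phi_{k,k}$ one gets $\phi_{k,k}(e_{1})=e_{2}$ for $k\geq 2$, so $vAv^{T}=a_{2,2}$, not $a_{k,k}$; likewise $\phi_{i,j}(e_{1})$ equals $e_{3}$ when $2\leq i<j$ and $(\tfrac12,\tfrac12,0,\dots,0)$ when $i=1<j$, so it never carries weight on the coordinates $i,j$ themselves, contrary to your Case 2 description. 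What makes the computation come out as you (and the paper) assert is the opposite reading: take $v$ to be the coefficient vector of the \emph{first output coordinate} of $\phi_{i,j}$, i.e.\ $v=e_{k}$ in the diagonal case and $v=\tfrac12(e_{i}+e_{j})$ in the off-diagonal case (equivalently, read $B_{i,j}$ as the matrix of $\phi_{i,j}$ acting on column vectors, or of $\phi_{i,j}^{-1}$ in the row convention — this is what the paper's own one-line computation implicitly uses). Once $v$ is identified this way, both cases are immediate, there is no ``delicate subcase'' in which the nonzero entry avoids position $1$, and in particular no ``preliminary relabeling via $\phi_{k,k}$'' is needed — nor is it allowed, since the lemma asks for a single pair $(i,j)$, and a composition of two of these maps is not of the form $B_{i,j}$. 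So the approach is the right one, but you should fix the identification of $v$ and delete the composition escape hatch rather than lean on it.
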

   \begin{proof}  	
   	Assume that $A=(a_{i,j})_{1\leq i,j\leq d}$. If $a_{i,i}\neq 0$ for some $1\leq i\leq d$, then the upper left entry of the matrix $B_{i,i}AB_{i,i}^{T}$ is $a_{i,i}\neq 0$. If $a_{1,1}=\dots=a_{d,d}=0$ and $a_{i,j}\neq 0$ for some $1\leq i<j\leq d$, then it is not hard to compute that the upper left entry of the matrix $B_{i,j}AB_{i,j}^{T}$ is $a_{i,j}\neq 0$.   
	\end{proof}

  Let $B_{i,j}$ be defined as above and suppose that upper left entry of the matrix $B_{i,j}AB_{i,j}^{T}$ is nonzero, where $A$ is the matrix associated to $M$. 
  Then we may use the standard  long division algorithm to write
  $$P(nB_{i,j})=M(nB_{i,j})Q(n)+n_{1}R_{1}(n')+R_{0}(n').$$
  So
  \begin{equation}\label{1:BM}
  P(n)=M(n)Q(nB_{i,j}^{-1})+(nB_{i,j}^{-1})_{1}R_{1}((nB_{i,j}^{-1})')+R_{0}((nB_{i,j}^{-1})'),
  \end{equation}
  where we write $nB_{i,j}^{-1}:=((nB_{i,j}^{-1})_{1},(nB_{i,j}^{-1})')$. We call this decomposition (\ref{1:BM}) of $P$ the \emph{$B_{i,j}$-standard} long division algorithm. 
  
  It is clear that all the coefficients of $Q,R_{1}$ and $R_{0}$ can be written in the form $F/c_{1,1}^{s}$ with $F$ being a polynomial function (dependent on $B_{i,j}$) with respect to the coefficients of $P$ and $M$ of degree at most $s$, where $c_{1,1}$ is the coefficient of the $n_{1}^{2}$ term of $M(nB_{i,j})$.
  
  We say that $M$ \emph{divides $P$ with respect to the $B_{i,j}$-standard long division algorithm}    if  $R_{1}=R_{0}=0$ in the $B_{i,j}$-standard long division algorithm (\ref{1:BM}).  We have the following Hilbert Nullstellensatz type result for $V(M)$:
  
  \begin{prop}[Hilbert Nullstellensatz for $V(M)$]\label{1:noloop}
  	Let  $d\in\N_{+},s\in\N$, $p\gg_{d,s} 1$ be a prime number, 
  	$P\in \poly(\V\to\F_{p})$ be of degree at most $s$, and $M\colon\V\to\F_{p}$ be a quadratic form of rank at least 3 associated with the matrix $A$.  If $V(M)\subseteq V(P)$, then for all $1\leq i,j\leq d$ such that the upper left entry of the matrix $B_{i,j}AB_{i,j}^{T}$ is nonzero,
	$M$ divides $P$ with respect the $B_{i,j}$-standard long division algorithm.
	In particular, $P=MR$ for some $R\in \poly(\V\to\F_{p})$ of degree at most $s-2$.
  \end{prop}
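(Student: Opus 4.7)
The plan is to reduce to the case where $a_{1,1}\neq 0$, and then to read off the vanishing of the remainder terms from the explicit two-branch structure of $V(M)$ in the first coordinate. For the reduction I will apply the bijective change of variable $n=mB_{i,j}$: setting $\tilde P(m):=P(mB_{i,j})$ and $\tilde M(m):=M(mB_{i,j})$, the hypothesis $V(M)\subseteq V(P)$ transfers to $V(\tilde M)\subseteq V(\tilde P)$, and by the choice of $B_{i,j}$ the upper-left entry of the matrix of $\tilde M$ is nonzero; moreover $\tilde M$ divides $\tilde P$ in the standard long division if and only if $M$ divides $P$ with respect to the $B_{i,j}$-standard long division. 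It will therefore suffice to prove, assuming $a_{1,1}\neq 0$, that in the standard decomposition
$$P(n) = M(n)Q(n)+n_1 R_1(n')+R_0(n'), \quad n=(n_1,n')\in\F_p\times\F_p^{d-1},$$
one has $R_1\equiv R_0\equiv 0$.

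For the main step, I will write $M(n_1,n')=a_{1,1}n_1^2+2n_1 L(n')+M(0,n')$ for the unique linear form $L$ on $\F_p^{d-1}$. For each fixed $n'$, the quadratic $M(\cdot,n')$ in $n_1$ has two distinct roots in $\F_p$ precisely when the discriminant
$$\Delta(n') := L(n')^2 - a_{1,1}M(0,n')$$
is a nonzero quadratic root; in that case both pairs $(n_1^\pm(n'),n')$ lie in $V(M)\subseteq V(P)$, so that $n_1^\pm(n')R_1(n')+R_0(n')=0$, and subtracting the two identities gives $(n_1^+(n')-n_1^-(n'))R_1(n')=0$, hence $R_1(n')=0$, and then $R_0(n')=0$ as well.

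To conclude, I will show that this ``good'' set of $n'$ is too large for $R_1$ or $R_0$ to be nonzero polynomials. Completing the square with respect to $n_1$ (or equivalently, block diagonalising the matrix of $M$) yields the identity $\rank(M)=1+\rank(\Delta)$, so the hypothesis $\rank(M)\geq 3$ gives $\rank(\Delta)\geq 2$. Lemma \ref{1:sqrcount} then provides $\tfrac12 p^{d-1}(1+O(p^{-1/2}))$ many $n'\in\F_p^{d-1}$ for which $\Delta(n')$ is a quadratic root, while Lemma \ref{1:ns} bounds the zero set of $\Delta$ itself by $O_d(p^{d-2})$; hence $R_1$ and $R_0$ each vanish on $\gg p^{d-1}$ values of $n'$. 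Since $\deg(R_1),\deg(R_0)\leq s<p$ once $p\gg_{d,s}1$, a final application of Lemma \ref{1:ns} forces $R_1\equiv R_0\equiv 0$, proving the main claim; the ``In particular'' statement then follows upon choosing any $B_{i,j}$ as provided by Lemma \ref{1:2d2d}. The one place that genuinely requires care is the rank identity $\rank(\Delta)=\rank(M)-1$, which is precisely what makes the assumption $\rank(M)\geq 3$ sharp enough to apply Lemma \ref{1:sqrcount}.
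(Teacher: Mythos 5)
Your argument is correct, but it follows a genuinely different route from the paper's. The paper first reduces to homogeneous $M$ and $P$ with $M$ diagonalized via Lemma \ref{1:cov}, then forms the auxiliary polynomial $\Delta(y)=M(0,y)N_{1}(y)^{2}+c_{1}N_{0}(y)^{2}$ built from the remainder terms and runs a dichotomy: if $\Delta\not\equiv 0$, the fiberwise elimination from the proof of Lemma \ref{1:bzt} bounds $\vert V(M)\cap V(P)\vert$ by $O_{d,s}(p^{d-2})$, contradicting $\vert V(M)\vert=p^{d-1}(1+O(p^{-\frac{r-2}{2}}))$ from Lemma \ref{1:counting}; if $\Delta\equiv 0$, Lemma \ref{1:sqrcount} applied to $-c_{1}M(0,\cdot)$ forces $N_{1}\equiv N_{0}\equiv 0$. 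The general inhomogeneous case is then recovered by homogenizing in $\F_{p}^{d+1}$ and invoking Lemma \ref{1:bzt} once more. You instead work directly with the possibly inhomogeneous $M$: you use the discriminant of $M$ in the first variable, the rank identity $\rank(\Delta)=\rank(M)-1$ (valid because completing the square is an invertible affine substitution, so Proposition \ref{1:23} (iii) applies), and Lemma \ref{1:sqrcount} together with Lemma \ref{1:ns} to produce $\gg p^{d-1}$ fibers over which $M(\cdot,n')$ has two distinct roots; on each such fiber the two identities $n_{1}^{\pm}R_{1}(n')+R_{0}(n')=0$ force $R_{1}(n')=R_{0}(n')=0$, and Lemma \ref{1:ns} then kills $R_{1}$ and $R_{0}$ identically. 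Both proofs rest on the same mechanism (two distinct roots in a fiber annihilate the linear remainder), but yours buys a cleaner reduction: no homogenization, no diagonalization beyond the $B_{i,j}$ substitution already present in the statement, and no auxiliary polynomial in the remainders; the paper's version buys reuse of the machinery of Lemma \ref{1:bzt}, whose dichotomy it needs anyway. Two cosmetic points to tidy: the coefficient of $n_{1}$ in $M$ may contain a constant coming from the linear part of $M$, so your $L$ is an affine-linear polynomial of degree at most $1$ rather than a linear form (nothing in the discriminant, the rank identity, or the counting changes); and before quoting Lemma \ref{1:ns} for $\vert V(\Delta)\vert\leq O_{d}(p^{d-2})$ you should note explicitly that $\Delta\not\equiv 0$, which your rank identity already gives since $\rank(\Delta)\geq 2$.
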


  It is tempting to prove Proposition \ref{1:noloop} by using Hilbert Nullstellensatz.
  Let $F_{i}(n_{1},\dots,n_{d})$ $:=n_{i}^{p}-n_{i}$ for all $1\leq i\leq d$.
  By using Hilbert Nullstellensatz in the finite field (see \cite{Gho19}),  if $V(M)\subseteq V(P)$, then one can show that 
  $$P=MR+\sum_{i=1}^{d}F_{i}R_{i}$$
  for some polynomials $R,R_{1},\dots,R_{d}$. Unfortunately, this is not helpful for Proposition \ref{1:noloop}, since  there is no restriction for the degree of $R$, and it is unclear how to remove the polynomials $R_{1},\dots,R_{d}$. So we have to prove Proposition \ref{1:noloop} with a different method relying on the long division algorithm.

  \begin{proof}[Proof of Proposition \ref{1:noloop}]  
   Denote $r=\rank(M)$.
  Under a change of variable,  it suffices to show that for any $P\in \poly(\V\to\F_{p})$  of degree at most $s$ and any quadratic form $M\colon\V\to\F_{p}$  of rank at least 3 associated with the matrix $A$ whose upper left entry  is nonzero.  If $V(M)\subseteq V(P)$, 
	$M$ divides $P$ with respect the standard long division algorithm.

   		We first assume that   $M$ and $P$ are homogeneous.  
  	Write $n=(t,y)$, where $t\in\F_{p}$ and $y\in\F_{p}^{d-1}$.
  	Since the coefficient of the $t^{2}$ term of $M(t,y)$ is nonzero,
  	  we may apply the standard long division algorithm to write
  	\begin{equation}\label{1:412}
  	P(t,y)=M(t,y)R(t,y)+N_{1}(y)t+N_{0}(y)
  	\end{equation}
  	for some $R, N_{1},N_{0}\in \poly(\V\to\F_{p})$   of degrees at most $s-2,s-1$ and $s$ respectively. Our goal is to show that $N_{1},N_{0}\equiv 0$.
  By Lemma \ref{1:cov} and a change of variable, 
  it suffices to show that if (\ref{1:412}) holds for $$M(n)=c_{1}n^{2}_{1}+\dots+c_{r}n_{r}^{2}$$
	for some $c_{1},\dots,c_{r}\in\F_{p}\backslash\{0\}$, and that $V(M)\subseteq V(P)$, then $N_{0}=N_{1}\equiv 0$.

  	Let
  	$$\Delta(y):=M(0,y)N_{1}(y)^{2}+c_{1}N^{2}_{0}(y).$$
  	If $\Delta(y)$ is not constant zero, then by the proof of Lemma \ref{1:bzt}, the number of $n\in\V$ such that $M(n)=P(n)=0$ is at most $O_{d,s}(p^{d-2})$. However, since $V(M)\subseteq V(P)$, by Lemma \ref{1:counting}, the number of $n\in\V$ such that $M(n)=P(n)=0$ is $p^{d-1}(1+O(p^{-\frac{r-2}{2}}))$, a contradiction.
  	
  	So we must have that  $\Delta(y)\equiv 0$. Following the proof of Lemma \ref{1:bzt}, we have that $N_{1}(y)=N_{2}(y)\equiv 0$, and so $f=MR$. Since the degree of $R$ is at most $s-2$, we are done.

	 We now consider the general case when $M$ and $P$ are not necessarily homogeneous.
   Then there exist homogeneous polynomials $\tilde{M},\tilde{P}\colon \F_{p}^{d+1}\to\F_{p}$ of degrees at most 2 and $s$ respectively such that $M=\tilde{M}(\cdot,1)$ and $P=\tilde{P}(\cdot,1)$. Since $\vert V(P)\cap V(M)\vert=\vert V(M)\vert=p^{d-1}(1+O(p^{-1/2}))$ by Lemma \ref{1:counting},
  it follows from (\ref{1:flf}) that $\vert V(\tilde{P})\cap V(\tilde{M})\vert=p^{d}(1+O(p^{-1/2}))$. By Lemma \ref{1:bzt}, we have that $V(\tilde{M})\subseteq V(\tilde{P})$. So by the previous case, we have that $\tilde{M}$ divides $\tilde{P}$ with respect the standard long division algorithm. Since the $n_{1}^{2}$ term of $M$ is non-zero, it is not hard to see that $M$ also divides $P$ with respect the standard long division algorithm.
  \end{proof}

 As an immediate corollary of Lemma \ref{1:bzt} and Proposition \ref{1:noloop}, we have (see Corollary \ref{1:att4}  for a similar result):

 \begin{coro}\label{1:noloop3}
 	Let  $d,k\in\N_{+}$, $s\in\N$, $p\gg_{d,k,s} 1$ be a prime number, 
 	$P\in \poly_{p}(\V\to\F_{p})$ be a polynomial of degree at most $s$, $M\colon\V\to\F_{p}$ be a non-degenerate quadratic form, $c\in\V$, and $V$ be a subspace of $\V$ of dimension $k$ with a basis $h_{1},\dots,h_{k}$. Suppose that $\rank(M\vert_{V^{\pp}})\geq 3$. 
 	 Then either $\vert V(M)\cap (V^{\pp}+c)\cap V(P)\vert\leq O_{d,k,s}(p^{d-k-2})$ or $V(M)\cap (V^{\pp}+c)\subseteq V(P)$.
 	
 	Moreover, if $V(M)\cap (V^{\pp}+c)\subseteq V(P)$, then 
 	\begin{equation}\label{1:pmm}
 	P(n)=M(n)P_{0}(n)+\sum_{i=1}^{k}((h_{i}A)\cdot (n-c))P_{i}(n)
 	\end{equation}
 	for some   polynomials $P_{0},\dots,P_{k}\in \poly_{p}(\V\to\F_{p})$ with $\deg(P_{0})\leq s-2$ and $\deg(P_{i})\leq s-1, 1\leq i\leq k$.
 	
 	In particular, the conclusion of this corollary holds if $d\geq k+\dim(V\cap V^{\pp})+3$, or if $d\geq 2k+3$.\footnote{Some properties in this paper are not used in the proof of the main results, but they will be used later in the series of work \cite{SunB,SunC,SunD}.  Corollary \ref{1:noloop3} is such an example.}
 \end{coro}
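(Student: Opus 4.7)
The plan is to reduce the corollary to Lemma \ref{1:bzt} and Proposition \ref{1:noloop} by realizing $V(M)^{h_{1},\dots,h_{k}}$ as the intersection of $V(M)$ with an affine subspace. Let $A$ be the matrix associated to $M$, and write $\ell_{i}(n):=M(n+h_{i})-M(n)$. A direct computation shows each $\ell_{i}$ is a linear polynomial in $n$ whose linear part is $2(h_{i}A)\cdot n$. Since $h_{1},\dots,h_{k}$ are linearly independent and $M$ is non-degenerate, the vectors $h_{1}A,\dots,h_{k}A$ are linearly independent as well. The joint zero set $\{n\colon \ell_{1}(n)=\dots=\ell_{k}(n)=0\}$ is therefore an affine subspace of the form $V^{\pp}+c$ of codimension $k$, and $V(M)^{h_{1},\dots,h_{k}}=V(M)\cap(V^{\pp}+c)$.

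Next, choose a bijective linear map $\phi\colon \F_{p}^{d-k}\to V^{\pp}$ and set $M'(m):=M(\phi(m)+c)$ and $P'(m):=P(\phi(m)+c)$, which are polynomials on $\F_{p}^{d-k}$ of degrees at most $2$ and $s$. By Proposition \ref{1:iissoo}(ii), $\rank(M')=\rank(M\vert_{V^{\pp}+c})=\rank(M\vert_{V^{\pp}})\geq 3$. Applying Lemma \ref{1:bzt} to $M',P'$ in dimension $d-k$, we conclude that either $\vert V(M')\cap V(P')\vert\leq O_{d-k,s}(p^{(d-k)-2})$ or $V(M')\subseteq V(P')$. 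Pulling back via $\phi$ gives the claimed dichotomy for $V(M)^{h_{1},\dots,h_{k}}\cap V(P)$.

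For the moreover part, assume $V(M)^{h_{1},\dots,h_{k}}\subseteq V(P)$, so $V(M')\subseteq V(P')$. By Proposition \ref{1:noloop}, there exists $R'\in\poly(\F_{p}^{d-k}\to\F_{p})$ of degree at most $s-2$ with $P'=M'R'$. To lift this back to $\V$, use the fact that $\ell_{1},\dots,\ell_{k}$ are linearly independent linear polynomials: after a linear change of coordinates in which $\ell_{1},\dots,\ell_{k}$ become coordinate functions, iterated long division (which is exact for polynomials in distinct coordinate variables) produces
\[
P(n)=R(n)+\sum_{i=1}^{k}\ell_{i}(n)P_{i}(n),
\]
where each $P_{i}$ has degree at most $s-1$ and $R$ depends only on the remaining $d-k$ coordinates, so that $R\vert_{V^{\pp}+c}=P'\circ\phi^{-1}$. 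Lift $R'$ to a polynomial $P_{0}$ on $\V$ of degree at most $s-2$ by extending it to be constant in the new $\ell_{i}$-coordinates. Then $R-MP_{0}$ vanishes identically on $V^{\pp}+c$, so a second round of long division by $\ell_{1},\dots,\ell_{k}$ absorbs $R-MP_{0}$ into the $\ell_{i}$-terms without raising degrees. Combining yields the desired decomposition. The ``in particular'' part follows from Proposition \ref{1:iissoo}(ii): non-degeneracy of $M$ gives $\rank(M\vert_{V^{\pp}})=(d-k)-\dim(V\cap V^{\pp})$, which is $\geq 3$ exactly when $d\geq k+\dim(V\cap V^{\pp})+3$, and since $\dim(V\cap V^{\pp})\leq k$ the condition $d\geq 2k+3$ suffices.

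The main obstacle is the bookkeeping in the lifting step: one must verify that the change of coordinates used to turn $\ell_{1},\dots,\ell_{k}$ into coordinate functions does not increase degrees, that the remainder $R$ genuinely restricts to $P'$ on $V^{\pp}+c$, and that $P_{0}$ can be chosen of degree at most $s-2$ (so that $MP_{0}$ stays at degree $\leq s$) and $P_{i}$ of degree at most $s-1$. All of these are routine consequences of the linear independence of the $\ell_{i}$, but careful attention to degrees is essential.
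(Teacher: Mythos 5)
Your proposal is correct and follows essentially the same route as the paper's proof: identify $V(M)^{h_{1},\dots,h_{k}}$ with $V(M)\cap(V^{\pp}+c)$, apply Lemma \ref{1:bzt} on the parametrized subspace for the dichotomy and Proposition \ref{1:noloop} there for the divisibility, lift back by dividing by the linear forms $M(\cdot+h_{i})-M(\cdot)$, and deduce the ``in particular'' clause from Proposition \ref{1:iissoo}. The only cosmetic difference is in the lifting step, where the paper uses the direct substitution identity (the difference between $P(n)$ and $P$ with the last coordinates frozen is an explicit combination of the $M(n+h_{i})-M(n)$), while you organize it as a Taylor remainder killed via Lemma \ref{1:ns}; both are valid under $p\gg_{d,k,s}1$.
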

   \begin{proof}
    By a change of variables $n\mapsto n+m$ for some $m\in\V$, it suffices to consider the special case when $M(n)=(nA)\cdot n-\lambda$ for some invertible symmetric matrix $A$ and some $\lambda\in\F_{p}$.
 %  Let $$W:=\{n\in\V\colon 2(nA)\cdot h_{i}+(h_{i}A)\cdot h_{i}=0, 1\leq i\leq k\}.$$
  Since %$h_{1},\dots,h_{k}$ are linearly independent and
   $A$ is invertible, $V^{\pp}$ is a subspace of $\V$ of dimension $d-k$. 
  %and $W=V^{\pp}+w$ for some $w\in\V$. 
   Let $\phi\colon \F_{p}^{d-k}\to V^{\pp}$ be any bijective linear transformation. Denote $M'(m):=M(\phi(m)+c)$ and $P'(m):=P(\phi(m)+c)$ for all $m\in \F_{p}^{d-k}$.  It is not hard to see  that $V(M)\cap (V^{\pp}+c)=\phi(V(M'))+c$. Moreover, $$V(M)\cap (V^{\pp}+c)\cap V(P)=\phi(V(M')\cap V(P'))+c.$$
   Since $\deg(M')=\rank(M\vert_{V^{\pp}})\geq 3$, by Lemma \ref{1:bzt}, either $\vert V(M')\cap V(P')\vert\leq O_{d,k,s}(p^{d-k-2})$ or $V(M')\subseteq V(P')$. In the former case we have that  $$\vert V(M)\cap (V^{\pp}+c)\cap V(P)\vert=\vert\phi(V(M')\cap V(P))\vert=\vert V(M')\cap V(P')\vert\leq O_{d,k,s}(p^{d-k-2}).$$
   In the later case, we have that 
   $$V(M)\cap (V^{\pp}+c)=\phi(V(M'))+c\subseteq \phi(V(P'))+c\subseteq V(P).$$

   \

   We now assume that $V(M)\cap (V^{\pp}+c)\subseteq V(P)$ and show that (\ref{1:pmm}) holds. By a change of variables $n\to nB$ for some invertible matrix $B$, we may assume without loss of generality that $h_{i}A=e_{d-k+i}$ for $1\leq i\leq k$. Then $V^{\pp}=\sp_{\F_{p}}\{e_{1},\dots,e_{d-k}\}$. %and $W=V^{\pp}+w$ for $w=(0,\dots,0,-(h_{1}A)\cdot h_{1}/2,\dots,-(h_{1}A)\cdot h_{k}/2)$.
   We may then assume without loss of generality that $c=(0,\dots,0,c_{d-k+1},\dots,c_{d})$.
   Then $x\in V(M)\cap (V^{\pp}+c)$ if and only if $x=(x_{1},\dots,x_{d-k},c_{d-k+1},\dots,c_{d})\in V(M)$ for some $x_{1},\dots,x_{d-k}\in\F_{p}$. 
   Since  $V(M)\cap (V^{\pp}+c)\subseteq V(P)$ and $\rank(M\vert_{V^{\pp}})\geq 3$, by Lemma \ref{1:noloop},  $p\gg_{d,k,s} 1$, then
   \begin{equation}\label{1:pqm}
   \begin{split}
  P(x_{1},\dots,x_{d-k},c_{d-k+1},\dots,c_{d})
    =M(x_{1},\dots,x_{d-k},c_{d-k+1},\dots,c_{d})Q(x_{1},\dots,x_{d-k})
   \end{split}
   \end{equation}
    for some polynomial  $Q\in \poly_{p}(\V\to\F_{p})$ with   $\deg(Q)\leq s-2$ for all $x_{1},\dots,x_{d-k}\in\F_{p}$.
    Note that for all $1\leq i\leq k$ and $x_{d-k+i}\in\F_{p}$, writing $x:=(x_{1},\dots,x_{d})\in\V$, we have that
    $$(h_{i}A)\cdot (x-c)=x_{d-k+i}-c_{d-k+i}.$$
    We may then use this fact to replace the terms $c_{d-k+1},\dots,c_{d}$ in (\ref{1:pqm}) by $x_{d-k+1},\dots,x_{d}$ and get to a expression of the form (\ref{1:pmm}).
    
    %    $$x_{d-k+i}+(h_{i}A)\cdot h_{i}/2=\frac{1}{2}(2(xA)\cdot h_{i}+(h_{i}A)\cdot h_{i})=\frac{1}{2}(M(x+h_{i})-M(x)).$$
   % So for $F=M$ or $P$, the difference
   % $$F(x_{1},\dots,x_{d})-F(x_{1},\dots,x_{d-k},-(h_{1}A)\cdot h_{1}/2,\dots,-(h_{1}A)\cdot h_{k}/2)$$
  %  can be written in the form
   % $$\sum_{i=1}^{k}(M(x+h_{i})-M(x))Q_{i}(x)$$
  %  for some  $Q_{i}\in \poly_{p}(\V\to\F_{p})$ with   $\deg(Q)_{i}\leq \deg(F)-1$.
   % Combining this with (\ref{1:pqm}), we get (\ref{1:pmm}).
    
    \
    
    Note that $V^{\pp}$ is of co-dimension $k$. So if $d\geq k+\dim(V\cap V^{\pp})+3$, or if $d\geq 2k+3$, then $\rank(M\vert_{V^{\pp}})\geq 3$ by Proposition \ref{1:iissoo} (ii) and (iii). 
   \end{proof}

\subsection{A special anti-derivative property} 

  Let $F,G\colon\V\to\F_{p}$ be polynomials with $F=MG$ for some quadratic form $M\colon\V\to\F_{p}$. Let $\partial_{i}$ denote the $i$-th directional formal derivative. Then it is not hard to see that $$\partial_{j}M\partial_{i}F-\partial_{i}M\partial_{j}F=M(\partial_{j}M\partial_{i}G-\partial_{i}M\partial_{j}G)$$
  is divisible  by $M$. In this section, we study the converse of this question: if $\partial_{j}M\partial_{i}F-\partial_{i}M\partial_{j}F$  is divisible by $M$ for all $1\leq i,j\leq d$, then is it true that $F$ is also divisible by $M$? The following is an answer to this question.

 	\begin{prop}\label{1:antideri}
 		Let $d\in\N_{+},s\in\N$ with $d\geq 3$, $p$ be a prime, and $M\colon\V\to\F_{p}$ be a quadratic form of rank at least 3.
 	    Let $Q\in\poly(\V\to\F_{p})$ be  a polynomial of degree at most $s$ with $Q(\bold{0})=0$ such that
 		$$\partial_{1}M(n)\partial_{i}Q(n)=\partial_{i}M(n)\partial_{1}Q(n)$$
 		for all $1\leq i\leq d$ and $n\in V(M)$. If $p\gg_{d,s} 1$, then
 	 $Q=MQ'$ for some $Q'\in\poly(\V\to\F_{p})$  of degree at most $s-2$. 
 	\end{prop}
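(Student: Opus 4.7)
The plan is to perform the standard long division of $Q$ by $M$ (in the style of the $B_{i,j}$-standard long division algorithm used in Proposition~\ref{1:noloop}) and to show that the resulting remainder vanishes. After a preliminary linear change of coordinates (e.g., via Lemma~\ref{1:2d2d}), I would assume that the coefficient of $n_1^2$ in $M$ is nonzero, writing $M(n) = c n_1^2 + B(n') n_1 + C(n')$ with $c \in \F_p\setminus\{0\}$, where $n = (n_1, n')$ and $n' = (n_2, \dots, n_d)$. The standard long division in $n_1$ then yields
\[
Q(n) = M(n) Q'(n) + R_1(n') n_1 + R_0(n'),
\]
with $\deg Q' \leq s - 2$, $\deg R_1 \leq s - 1$, $\deg R_0 \leq s$, and the target becomes $R_0 = R_1 = 0$.

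Since $\partial_1 M \partial_i Q - \partial_i M \partial_1 Q$ is a polynomial of degree at most $s$ vanishing on $V(M)$ (which has size $p^{d-1}(1 + O(p^{-1/2}))$ by Lemma~\ref{1:counting}), Lemma~\ref{1:bzt} combined with Proposition~\ref{1:noloop} tells me that for $p \gg_{d,s} 1$, $M$ divides $\partial_1 M \partial_i Q - \partial_i M \partial_1 Q$ in $\F_p[n_1, \dots, n_d]$ for each $i \geq 2$. Substituting the long-division expression for $Q$ (the $\partial_1 M \cdot \partial_i M \cdot Q'$ terms cancel against each other and all multiples of $M$ are absorbed) reduces this to
\[
\partial_1 M \cdot (n_1 \partial_i R_1 + \partial_i R_0) - \partial_i M \cdot R_1 \equiv 0 \pmod M.
\]
Viewing both sides as polynomials in $n_1$ of $n_1$-degree at most $2$ over $\F_p[n']$ and matching coefficients of $n_1^2, n_1, 1$, the quotient by $M$ is pinned to $S = 2\partial_i R_1$, and after elimination one obtains the identities $\partial_i(2c R_0 - B R_1) = 0$ and $\partial_i\bigl(R_1^2 (B^2 - 4cC)\bigr) = 0$ for all $i \geq 2$.

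The rank hypothesis enters via the completion-of-square identity $M = c(n_1 + B/(2c))^2 - (B^2 - 4cC)/(4c)$, which gives $\rank(M) = 1 + \rank(B^2 - 4cC)$ (the latter as a quadratic form in $n'$), so $\rank(M) \geq 3$ forces $B^2 - 4cC$ to have rank at least $2$ and in particular to be nonconstant. For $p \gg_{d,s} 1$ so that every polynomial in sight has total degree less than $p$, the characteristic-$p$ constancy principle (vanishing of all partials in degree ${<}p$ forces constancy) implies that both $R_1^2(B^2 - 4cC)$ and $2cR_0 - BR_1$ are absolute constants; since $\F_p[n']$ is an integral domain and $B^2 - 4cC$ is nonconstant, $R_1 = 0$, and then $R_0$ is itself a constant. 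Evaluating the long-division equality at $n = \mathbf{0}$ and using $Q(\mathbf{0}) = 0$ finally pins down $R_0 = 0$, yielding $Q = MQ'$ with $\deg Q' \leq s - 2$. I expect the main obstacle to be the algebraic coefficient-matching producing the discriminant identity $\partial_i(R_1^2(B^2 - 4cC)) = 0$; once that is in hand, the rank hypothesis converts it cleanly into $R_1 = 0$, and the rest is straightforward bookkeeping modulo the characteristic-$p$ constancy principle.
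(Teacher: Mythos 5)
Your argument is correct to the same standard as the paper's own proof, but it takes a genuinely different route, and the core algebra checks out. The paper, after the same kind of normalization of $M$, first applies Proposition \ref{1:noloop} to write $\partial_{1}M\partial_{i}Q-\partial_{i}M\partial_{1}Q=2MN_{i}$, then glues the coordinates together (its Claim 1) to produce one polynomial $F$ with $\partial_{i}Q=(\partial_{i}M)F+MR_{i}$ for every $i$, and finally runs an induction on powers of $M$ (its Claim 2, the expansion $Q=MW_{1}+M^{2}W_{2}+\cdots$) to reach $\partial_{i}(Q-MW)=0$ for all $i$, so that $Q-MW$ is constant. You instead divide $Q$ itself by $M$ once in the $n_{1}$-variable and push the hypothesis onto the remainder; your coefficient matching is right: the quotient is forced to be $2\partial_{i}R_{1}$, the two remaining relations are $2c\partial_{i}R_{0}=\partial_{i}(BR_{1})$ and $B\partial_{i}R_{0}=2C\partial_{i}R_{1}+R_{1}\partial_{i}C$, and eliminating $\partial_{i}R_{0}$ gives $(B^{2}-4cC)\partial_{i}R_{1}+\tfrac{1}{2}R_{1}\partial_{i}(B^{2}-4cC)=0$, hence $\partial_{i}\bigl(R_{1}^{2}(B^{2}-4cC)\bigr)=0$; completing the square together with Proposition \ref{1:23} (iii) indeed gives $\rank(M)=1+\rank$ of the quadratic part of $B^{2}-4cC$, so the discriminant is nonconstant, $R_{1}=0$ in the domain $\F_{p}[n']$, and then $R_{0}$ is constant. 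This buys a shorter proof: no multi-coordinate gluing, no $M$-adic iteration, only a nonzero $n_{1}^{2}$-coefficient (Lemma \ref{1:2d2d}) instead of the full normal form of Lemma \ref{1:cov}, and a transparent appearance of $\rank(M)\geq 3$ through the discriminant; the paper's longer route, in exchange, produces the gradient representation $\partial_{i}Q=(\partial_{i}M)F+MR_{i}$ and the power-of-$M$ scheme that its lifted arguments (e.g.\ Proposition \ref{1:basicpp2}) later imitate. Two caveats are ones you share with the paper rather than introduce: the preliminary change of variables silently moves the distinguished direction appearing in the hypothesis (the paper's appeal to Lemma \ref{1:cov} does exactly the same), and your last step --- deducing $R_{0}=0$ from $Q(\bold{0})=0$ --- really only gives $R_{0}=-M(\bold{0})Q'(\bold{0})$, precisely as the paper's ``$Q=MW+a$, hence $Q=MW$'' does; this is conclusive when $M(\bold{0})=0$ (as for the homogeneous $M$ in Proposition \ref{1:basicpp55}, where the result is applied), so it is a defect inherited from the statement and the paper's own ending, not a new gap in your argument.
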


 	\begin{proof}
For convenience write $n=(n_{1},\dots,n_{d})\in\V$.
 		By Lemma \ref{1:cov} and a change of variable, we may assume without loss of generality that 
 		$$M(n)=cn_{1}^{2}+n_{2}^{2}+\dots+n_{d'}^{2}+c'n_{d'+1}-\lambda$$
 		for some $3\leq d'\leq d$, and $\lambda,c,c'\in\F_{p}$ with $c\neq 0$.
 		Let $A$ be the (diagonal) matrix associated to $M$ with $a_{i}$ be the $(i,i)$-th entry of $A$. Then $a_{1}=c\neq 0$, $a_{2}=\dots=a_{d'}=1$,  and $a_{d'+1}=\dots=a_{d}=0$. 
 		By Proposition \ref{1:noloop},
 		there exist  $N_{i}\in \poly(\V\to\F_{p})$, $\deg(N_{i})\leq s-2$ for $1\leq i\leq d$ such that 
 		\begin{equation}\label{1:force302}
 		\partial_{1}M\partial_{i}Q-\partial_{i}M\partial_{1}Q=2MN_{i}.
 		\end{equation}		
 
 We briefly explain the idea of the first part of the proof.
 Observe that if $N_{i}\equiv 0$ for all $1\leq i\leq d$, then (\ref{1:force302}) indicates that $\partial_{i}Q/\partial_{i}M$ should be independent of $i$. So it is reasonable to expect $\partial_{i}Q=(\partial_{i}M)F$ for some polynomial $F$ for all $1\leq i\leq d$. Taking into consideration of the term  $MN_{i}$, one can expect that $\partial_{i}Q-(\partial_{i}M)F$ is divisible by $M$ for all $1\leq i\leq d$. To make this intuition precise, we have

 		\textbf{Claim 1.} There exist $F,R_{1},\dots,R_{d}\in \poly(\V\to\F_{p})$ with $\deg(F)\leq s-2, \deg(R_{i})\leq s-3$ such that 
 		\begin{equation}\label{1:force40}
 		\partial_{i}Q=(\partial_{i}M)F+MR_{i}
 		\end{equation}
 		for all $1\leq i\leq d$.
 		
 		For $2\leq i\leq d'$, we may write $N_{i}(n)$ as $$N_{i}(n)=-n_{i}N_{i,1}(n)+cn_{1}N_{i,2}(n)+n_{i}n_{1}N_{i,3}(n)$$
 		for some $N_{i,1}, N_{i,2}, N_{i,3}\in \poly(\V\to\F_{p})$ in a unique way, where $N_{i,1}(n)$ is independent of $n_{1}$ and $N_{i,2}(n)$ is independent of $n_{i}$. Moreover, $\deg(N_{i,1}),\deg(N_{i,2})\leq s-3$ and $\deg(N_{i,3})\leq s-4$. So  (\ref{1:force302}) implies that
 		\begin{equation}\label{1:force31}
 		cn_{1}(\partial_{i}Q(n)-M(n)N_{i,2}(n))-n_{i}(a_{i}\partial_{1}Q(n)-M(n)N_{i,1}(n))=n_{i}n_{1}M(n)N_{i,3}(n)
 		\end{equation}
		for all $1\leq i\leq d'$.

 		From (\ref{1:force31}) we see that there exists a polynomial $F_{i}$ of degree at most $s-2$ such that
 			\begin{equation}\label{1:force31a}
 		n_{1}F_{i}(n):=(2c)^{-1}(a_{i}\partial_{1}Q(n)-M(n)N_{i,1}(n)), 
 			\end{equation}
 			and that
 			\begin{equation}\label{1:force31b}
 			\partial_{i}Q(n)=2n_{i}F_{i}(n)+M(n)(N_{i,2}(n)+c^{-1}n_{i}N_{i,3}(n))
 			\end{equation}	
 			if we substitute (\ref{1:force31a}) back into (\ref{1:force31}).
 		We may write $\partial_{1}Q(n)=Q'(n)+n_{1}Q''(n)$ for some $Q',Q''\in \poly(\V\to\F_{p})$ with $\deg(Q')\leq s-1$ and $\deg(Q'')\leq s-2$  such that $Q'(n)$ is independent from $n_{1}$.  Since $N_{i,1}(n)$ is independent from $n_{1}$, comparing the terms in (\ref{1:force31a}) which are divisible by $n_{1}$, we have that 
 		\begin{equation}\label{1:force323}
 		a_{i}Q'(n)=(M(n)-cn^{2}_{1})N_{i,1}(n)
 		\end{equation}
 	and 
 		\begin{equation}\label{1:force33}
 		F_{i}(n)=(2c)^{-1}(a_{i}Q''(n)-cn_{1}N_{i,1}(n)).
 		\end{equation}

Since $a_{2}=\dots=a_{d'}=1$, it follows from (\ref{1:force323}) that 
 		all of $N_{2,1},\dots,N_{d',1}$ equal to a same $N_{1}$. So by (\ref{1:force33}), all of $F_{2},\dots,F_{d'}$ equals to a same $F$. So 
 		 (\ref{1:force31b}) implies that
 		 \begin{equation}\nonumber
 		 \partial_{i}Q(n)=2n_{i}F(n)+M(n)(N_{i,2}(n)+c^{-1}n_{i}N_{i,3}(n))=\partial_{i}M(n)F(n)+M(n)(N_{i,2}(n)+c^{-1}n_{i}N_{i,3}(n))
 		 \end{equation}
 		 for all $2\leq i\leq d'$. In other words, (\ref{1:force40}) holds for all $2\leq i\leq d'$.

 		 On the other hand, (\ref{1:force31a}) implies that
		  \begin{equation}\label{1:force0044}
 		 \partial_{1}Q(n)=2cn_{1}F(n)+M(n)N_{2,1}(n)=\partial_{1}M(n)F(n)+M(n)N_{2,1}(n).
 		 \end{equation}  
So (\ref{1:force40}) holds for   $i=1$.

	 	Now for $d'+2\leq i\leq d$, since $a_{i}=0$,
		 (\ref{1:force302}) implies that $cn_{1}\partial_{i}Q(n)=M(n)N_{i}(n)$. 
		 It is then not hard to see that $N_{i}(n)$ can be written as $cn_{1}R_{i}(n)$ for some polynomial $R_{i}$ of degree at most $s-3$. So $$\partial_{i}Q(n)=M(n)R_{i}(n)=\partial_{i}M(n)F(n)+M(n)R_{i}(n).$$		
So (\ref{1:force40}) holds for all  $d'+2\leq i\leq d$. 
	
		Finally, 
		 by (\ref{1:force302}) and (\ref{1:force0044}), we have that 
		 $$2cn_{1}(\partial_{d'+1}Q(n)-c'F(n))=M(n)(2N_{d'+1}(n)+c'N_{2,1}(n)).$$
		 It is then not hard to see that $2N_{d'+1}(n)+c'N_{2,1}(n)$ can be written as $2cn_{1}R_{d'+1}(n)$ for some polynomial $R_{d'+1}$ of degree at most $s-3$. So $$\partial_{d'+1}Q(n)=c'F(n)+M(n)R_{d'+1}(n)=\partial_{d'+1}M(n)F(n)+M(n)R_{d'+1}(n).$$	
So (\ref{1:force40}) holds for  $i=d'+1$.
 		This finishes the proof of Claim 1.

 		\

		 We now explain the idea of the second part of the proof. 
Our strategy is to show that one can inductively expand $Q$ as
 $Q=MW_{1}+M^{2}W_{2}+\dots$ for some polynomials $W_{1},W_{2},\dots$ in the sense that $Q-(MW_{1}+\dots+M^{\ell}W_{\ell})$ is a polynomial of degree at most $\deg(Q)$ which is divisible by $M^{\ell+1}$ for all $\ell\in\N_{+}$. Since this procedure must terminate after finitely many steps, we conclude that $Q=MW_{1}+\dots+M^{\ell}W_{\ell}$ for some $\ell\in\N_{+}$, and so $Q$ is divisible by $M$.

 		To be more precise, we have
 		
 		\textbf{Claim 2.} For all $1\leq \ell<p$, there exist  $H_{1},\dots,H_{d},W_{\ell}\in \poly(\V\to\F_{p})$ with $\deg(W_{\ell})\leq \deg(Q)-2$, $\deg(H_{i})\leq \deg(Q)-2\ell-1$ such that for all $1\leq i\leq d$, $$\partial_{i}(Q-MW_{\ell})=M^{\ell}H_{i}.$$
 		
 	Since Claim 1 holds, by (\ref{1:force40}), we have that 
 		\begin{equation}\label{1:force41}
 		\partial_{i}(Q-MF)=M(R_{i}-\partial_{i}F).
 		\end{equation}
 		Since $\deg(F)\leq \deg(Q)-2$,
 		Claim 2 holds for $\ell=1$. Now suppose that Claim 2 holds for $\ell-1$ for some $2\leq\ell<p$. By induction hypothesis, there exist  $H_{1},\dots,H_{d}, W_{\ell-1},W'_{\ell-1}\in \poly(\V\to\F_{p})$ with $\deg(W_{\ell-1})\leq \deg(Q)-2$, $\deg(H_{i})\leq \deg(Q)-2\ell+1$   for all $1\leq i\leq d$ such that writing $Q':=Q-MW_{\ell-1}$, we have that
 		$$\partial_{i}Q'=M^{\ell-1}H_{i}$$
for all $1\leq i\leq d$.
 		Since $\deg(\partial_{i}Q')\leq p-1$, for all $1\leq j\leq d$,
 		\begin{equation}\label{1:force42}
 		\partial_{j}\partial_{i}Q'=\partial_{j}(M^{\ell-1}H_{i})=M^{\ell-2}((\ell-1)(\partial_{j}M)H_{i}+M\partial_{j}H_{i}).
 		\end{equation}
 		By symmetry,
 		\begin{equation}\label{1:force43}
 		\partial_{i}\partial_{j}Q'=M^{\ell-2}((\ell-1)(\partial_{i}M)H_{j}+M\partial_{i}H_{j}).
 		\end{equation}
 		Combining (\ref{1:force42}) and (\ref{1:force43}), we have that for all $1\leq i,j\leq d$, there exists a polynomial $N_{i,j}\in \poly(\V\to\F_{p})$ such that
 		\begin{equation}\label{1:force44}
 		(\partial_{i}M)H_{j}-(\partial_{j}M)H_{i}=MN_{i,j}.
 		\end{equation}
 		By Proposition \ref{1:noloop}, $\deg(N_{i,j})\leq \deg(H_{i})-1\leq \deg(Q)-2\ell$. 
For all $1\leq i,j\leq d', i\neq j$, there exist unique polynomials $N_{1,i,j},N_{2,i,j},N_{3,i,j}\in \poly(\V\to\F_{p})$ with $\deg(N_{1,i,j}),\deg(N_{2,i,j})$ $\leq \deg(N_{i,j})-1\leq s-2\ell-1$ and $\deg(N_{3,i,j})\leq \deg(N_{i,j})-2\leq s-2\ell-2$ such that
\begin{equation}\label{1:force44r}
 		N_{i,j}=(\partial_{i}M)N_{1,i,j}+(\partial_{j}M)N_{2,i,j}+\frac{1}{2}(\partial_{i}M)(\partial_{j}M)N_{3,i,j},
 		\end{equation}
 		where
		 $N_{1,i,j}$ is independent of $n_{j}$ and  $N_{2,i,j}$ is independent of $n_{i}$. 
 		Since  (\ref{1:force44}) implies that 
 		\begin{equation}\label{1:force45}
 		a_{i}n_{i}H_{j}(n)-a_{j}n_{j}H_{i}(n)=M(n)N_{i,j}(n)/2
 		\end{equation} 
and thus  
\begin{equation}\label{1:force45d}
 		a_{j}n_{j}H_{i}(n)-a_{i}n_{i}H_{j}(n)=M(n)N_{j,i}(n)/2
 		\end{equation} 
by symmetry, it follows from (\ref{1:force45}) and (\ref{1:force45d}) that $N_{i,j}+N_{j,i}=0$. So
 		$$n_{i}(N_{1,i,j}(n)+N_{2,j,i}(n))+n_{j}(N_{1,j,i}(n)+N_{2,i,j}(n))+n_{i}n_{j}(N_{3,i,j}(n)+N_{3,j,i}(n))=0.$$
 		Since $N_{1,i,j}+N_{2,j,i}$ is independent of $n_j$ and $N_{1,j,i}(n)+N_{2,i,j}(n)$ is independent of $n_i$, we have that 
 		\begin{equation}\label{1:force44rr}
 		N_{1,i,j}+N_{2,j,i}=0, N_{3,i,j}+N_{3,j,i}=0.
 		\end{equation}
Combining  (\ref{1:force44r}), (\ref{1:force45}) and (\ref{1:force44rr}),
 		  we have that
 		\begin{equation}\label{1:force46}
 		a_{i}n_{i}H_{j}(n)-a_{j}n_{j}H_{i}(n)=M(n)(n_{i}N_{1,i,j}(n)-n_{j}N_{1,j,i}(n)+n_{i}n_{j}N_{3,i,j})
 		\end{equation} 
 		for all distinct $1\leq i,j\leq d'$. For all $1\leq k\leq d', k\neq i,j$, (\ref{1:force46}) implies that
 		\begin{equation}\label{1:force47}
 		a_{i}n_{k}n_{i}H_{j}(n)-a_{j}n_{j}n_{k}H_{i}(n)=M(n)(n_{k}n_{i}N_{1,i,j}(n)-n_{j}n_{k}N_{1,j,i}(n)+n_{i}n_{j}n_{k}N_{3,i,j}).
 		\end{equation} 
 		By symmetry, 
 		\begin{equation}\label{1:force48}
 		a_{j}n_{i}n_{j}H_{k}(n)-a_{k}n_{k}n_{i}H_{j}(n)=M(n)(n_{i}n_{j}N_{1,j,k}(n)-n_{k}n_{i}N_{1,k,j}(n)+n_{i}n_{j}n_{k}N_{3,j,k})
 		\end{equation} 
 		and
 		\begin{equation}\label{1:force49}
 		a_{k}n_{j}n_{k}H_{i}(n)-a_{i}n_{i}n_{j}H_{k}(n)=M(n)(n_{j}n_{k}N_{1,k,i}(n)-n_{i}n_{j}N_{1,i,k}(n)+n_{i}n_{j}n_{k}N_{3,k,i}).
 		\end{equation} 
 		Considering $(\ref{1:force47})\times a_{k}+(\ref{1:force48})\times a_{i}+(\ref{1:force49})\times a_{j}$, we have that
 		$$n_{k}n_{i}(a_{k}N_{1,i,j}-a_{i}N_{1,k,j})+n_{i}n_{j}(a_{i}N_{1,j,k}-a_{j}N_{1,i,k})+n_{j}n_{k}(a_{j}N_{1,k,i}-a_{k}N_{1,j,i})$$
		is divisible by $n_{i}n_{j}n_{k}$. 
 		In particular, $n_{k}n_{i}(a_{k}N_{1,i,j}(n)-a_{i}N_{1,k,j}(n))$ is divisible by $n_{j}$.
 		Since $N_{1,i,j}-N_{1,k,j}$ is independent from $n_{j}$, we must have that $a_{k}N_{1,i,j}=a_{i}N_{1,k,j}$ for all distinct  $1\leq i,j,k\leq d'$.
 		This implies that $N_{1,i,j}=a_{i}N_{j}$ for some $N_{j}\in \poly(\V\to\F_{p})$ of degree at most $\deg(Q)-2\ell-1$ for all $1\leq i\leq d', i\neq j$. 
 		 Substituting this back to (\ref{1:force46}), we have that for all $1\leq i,j\leq d',i\neq j$,
 		\begin{equation}\label{1:force46a}
 		a_{i}n_{i}(H_{j}(n)-M(n)N_{j}(n))-a_{j}n_{j}(H_{i}(n)-M(n)N_{i}(n))=n_{i}n_{j}M(n)N_{3,i,j}.
 		\end{equation} 
		Setting $j=1$ and $i=2$,  	(\ref{1:force46a}) implies that 
		\begin{equation}\label{1:force46ab}
 		H_{1}(n)-M(n)N_{1}(n)=2cn_{1}G(n)=\partial_{1}M(n)G(n) 
 		\end{equation} 
		  for some polynomial $G$ of degree at most $\deg(Q)-2\ell$.  
		
		By (\ref{1:force44}) and (\ref{1:force46ab}), for all $1\leq j\leq d$,
		\begin{equation}\nonumber
 		\begin{split}
 		 (\partial_{1}M)H_{j}=MN_{1,j}+(\partial_{j}M)H_{1}
		 =M(N_{1,j}+(\partial_{j}M)N_{1})
		 +(\partial_{j}M)(\partial_{1}M)G, 
 		\end{split}
 		\end{equation}
		which implies that 
		\begin{equation}\nonumber
 		\begin{split}
 		 2cn_{1}(H_{j}(n)-\partial_{j}M(n)G(n))=M(n)(N_{1,j}(n)+\partial_{j}M(n)N_{1}(n)).
 		\end{split}
 		\end{equation}
		It is not hard to see that $N_{1,j}(n)+\partial_{j}M(n)N_{1}(n)=2cn_{1}F_{j}(n)$ for some polynomial $F_{j}$ of degree at most $\deg(Q)-2\ell-1$. Therefore, 
		\begin{equation}\nonumber
 		\begin{split}
 		H_{j}-(\partial_{j}M)G=MF_{j}.
 		\end{split}
 		\end{equation}		
		So for all $1\leq i\leq d$,
		\begin{equation}\nonumber
 		\begin{split}
 		&\quad\partial_{i}(Q-M(W_{\ell-1}+\ell^{-1}M^{\ell-1}G))
		=M^{\ell-1}H_{i}-\partial_{i}(\ell^{-1}M^{\ell}G)
		\\&=M^{\ell-1}H_{i}-(\partial_{i}M)M^{\ell-1}G-\ell^{-1}M^{\ell}\partial_{i}G
		=M^{\ell}(F_{i}-\ell^{-1}\partial_{i}G).
 		\end{split}
 		\end{equation}
 		Since $\deg(W_{\ell-1}+\ell^{-1}M^{\ell-1}G)\leq \max\{\deg(W_{\ell-1}),\deg(M^{\ell-1}G)\}\leq \deg(Q)-2$,
 		the claim follows for $\ell$ by taking $W_{\ell}=W_{\ell-1}+\ell^{-1}M^{\ell-1}G$. This finishes the proof of Claim 2.
 		
 		\
 		
 		Taking $\ell=\lfloor \deg(Q)/2\rfloor+1$ in Claim 2, we have that exists a polynomial $W$ of degree at most $s-2$ such that $\partial_{i}(Q-MW)=0$ for all $1\leq i\leq d$.  
 		This implies that $Q=MW+a$ for some $a\in\F_{p}$. Since $Q(\bold{0})=0$, we have that $Q=MW$ and we are done.
 	\end{proof}

	  \subsection{Solution to a special polynomial equation}\label{1:s:43}

In this section, we study the solutions to polynomial equations of the form
\begin{equation}\label{1:t43t}
\Delta_{h_{s-1}}\dots\Delta_{h_{1}}P(n)+\Delta_{h_{s}}\dots\Delta_{h_{1}}Q(n)=0, \text{ for all/many } (n,h_{1},\dots,h_{s})\in\Gow_{s}(V(M))
\end{equation}
		for some quadratic form $M\colon\V\to\F_{p}$,  which will be used crucially later in the paper. 
We begin with a preparatory result:

  \begin{prop}\label{1:basicpp55}
 	  	Let $\d>0$, $d,d',s\in\N$ with $d\geq d'\geq 4$, $p\gg_{d,s} \d^{-O_{d,s}(1)}$ be a prime number, 
 	  	$P,Q\in \poly(\V\to\F_{p})$ be  polynomials of degrees at most $s$ and $s+1$ respectively with $Q(\bold{0})=0$, $c\in\F_{p}\backslash\{0\}$, and $H\subseteq \V$ with $\vert H\vert>\d p^{d}$.
 	   	Let $M\colon\V\to\F_{p}$ be a non-degenerate quadratic form given by $$M(n_{1},\dots,n_{d}):=cn^{2}_{1}+n^{2}_{2}+\dots+n_{d'}^{2}.$$
 	  	Suppose that for all $n\in\V$ and $h=(h_{1},\dots,h_{d})\in H$ with $M(n)=(nA)\cdot h=0$ (where $A$ is the matrix associated to $M$), we have that $P(n)+\sum_{i=1}^{d}h_{i}\partial_{i}Q(n)=0$. Then  there exist $P',Q'\in \poly(\V\to\F_{p})$ of degrees at most $s-2$ and $s-1$ respectively such that $P=MP'$ and $Q=MQ'$.
 	\end{prop}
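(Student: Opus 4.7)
The plan is to show that on a large subset $G\subseteq V(M)$ both $P(n)=0$ and $\nabla Q(n)$ is a scalar multiple of $nA$, and then to bootstrap these pointwise identities into the algebraic conclusion via Lemma \ref{1:bzt}, Proposition \ref{1:noloop}, and Proposition \ref{1:antideri}. For each $n\in\V$, set $H_n:=\{h\in\V:(nA)\cdot h=0\}$; this is a hyperplane when $nA\neq\z$. Viewing $h\mapsto f_n(h):=P(n)+h\cdot\nabla Q(n)$ as an affine function of $h$, the hypothesis says $f_n$ vanishes on $H\cap H_n$ for every $n\in V(M)$. The key observation is that any proper affine subspace of $H_n$ has size at most $p^{d-2}$; so if $|H\cap H_n|>p^{d-2}$, then $H\cap H_n$ affinely spans $H_n$, forcing $f_n\equiv 0$ on all of $H_n$. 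Evaluating at $h=\z\in H_n$ yields $P(n)=0$, and the vanishing of the linear part on the hyperplane $H_n$ (whose normal is $nA$) forces $\nabla Q(n)=\lambda(n)\cdot nA$ for some $\lambda(n)\in\F_p$.

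Next, a straightforward averaging argument produces the positive-density set $G:=\{n\in V(M):|H\cap H_n|>p^{d-2}\}$. Swapping summation order and invoking Corollary \ref{1:counting01} to estimate $|V(M)\cap h^{\pp}|=p^{d-2}(1+O(p^{-1/2}))$ for typical $h\neq\z$ (the atypical $h$, where $\rank(M\vert_{h^{\pp}})$ drops, are negligible thanks to Proposition \ref{1:iissoo} and Lemma \ref{1:iiddpp}, using $\rank(M)=d'\geq 4$), one gets
\[\sum_{n\in V(M)}|H\cap H_n|\;=\;\sum_{h\in H}|V(M)\cap h^{\pp}|\;\gg\;|H|\cdot p^{d-2}\;\geq\;\delta p^{2d-2}.\]
Since $|V(M)|\approx p^{d-1}$, the average of $|H\cap H_n|$ over $V(M)$ is $\gg\delta p^{d-1}$; a reverse-Markov step against the uniform bound $|H\cap H_n|\leq p^{d-1}$ promotes this to a $\delta$-proportion of $n\in V(M)$ with $|H\cap H_n|\geq\delta p^{d-1}/2$, which exceeds $p^{d-2}$ once $p\gg 1/\delta$. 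Hence $|G|\gg\delta p^{d-1}$.

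To finish, I apply Lemma \ref{1:bzt} twice. Since $P$ vanishes on $G\subseteq V(M)$ with $|G|\gg\delta p^{d-1}$, exceeding the threshold $O_{d,s}(p^{d-2})$ whenever $p\gg_{d,s}\delta^{-O_{d,s}(1)}$, we conclude $V(M)\subseteq V(P)$; Proposition \ref{1:noloop} then delivers $P=MP'$ with $\deg P'\leq s-2$. For $Q$, the identity $\nabla Q(n)=\lambda(n)\cdot nA=(\lambda(n)/2)\nabla M(n)$ on $G$ makes the polynomial $\partial_1 M\cdot\partial_i Q-\partial_i M\cdot\partial_1 Q$ (of degree at most $s+1$) vanish on $G$; a second application of Lemma \ref{1:bzt} extends this vanishing to all of $V(M)$. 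Combined with $Q(\z)=0$, $d\geq 4\geq 3$, and $\rank(M)=d'\geq 4\geq 3$, Proposition \ref{1:antideri} then gives $Q=MQ'$ with $\deg Q'\leq s-1$. The main obstacle is the tightness of the counting in the middle paragraph: the reverse-Markov argument must convert an average of order $\delta p^{d-1}$ into a genuine $\delta$-fraction of $V(M)$, because the weaker lower bound $|G|\gg\delta p^{d-2}$ one naively extracts from first-moment information alone would fall short of the $O_{d,s}(p^{d-2})$ threshold built into Lemma \ref{1:bzt} and cause the bootstrap to fail.
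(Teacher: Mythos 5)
Your proof is correct and is essentially the paper's own argument: the paper packages your middle step as Proposition \ref{1:HN} (a double count of incidences between $V(M)$ and the hyperplanes $h^{\pp}$ via Corollary \ref{1:counting01} and Proposition \ref{1:iissoo}, followed by a span argument — yours phrased as the affine span of $H\cap H_n$ inside the hyperplane $H_n$, the paper's as the linear span of the vectors $(1,h)$ in $\F_{p}^{d+1}$ — and then Lemma \ref{1:bzt} to upgrade positive density to all of $V(M)$), after which it concludes with Proposition \ref{1:noloop} and Proposition \ref{1:antideri} exactly as you do. The only point worth adding is to discard the $n$ (and $h$) with $nA=\bold{0}$, a set of size $p^{d-d'}\leq p^{d-4}$: there $H_n$ is not a hyperplane, so neither the normal-vector deduction $\nabla Q(n)=\lambda(n)\,nA$ nor your uniform bound $\vert H\cap H_n\vert\leq p^{d-1}$ applies — this is the negligible exceptional set $Z$ in the paper's treatment and does not affect your counting.
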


	 Note that if $h_{1}\neq 0$, then $(nA)\cdot h=0$ if and only if $$n=(-(h_{2}z_{2}+\dots+h_{d'}z_{d'}),ch_{1}z_{2},\dots,ch_{1}z_{d})$$ for some $z_{2},\dots,z_{d}\in\F_{p}^{d-1}$. On the other hand, the set of $h\in\V$ with $h_{1}=0$ is of cardinality $p^{d-1}$. Combining this observation with
  Proposition   \ref{1:antideri}, we may reduce Proposition \ref{1:basicpp55} to the following proposition: 
 	  
 	  \begin{prop}\label{1:HN}
 	  	Let $\d>0$, $d,d',s\in\N$ with $d\geq d'\geq 4$, $p\gg_{d,s} \d^{-O_{d,s}(1)}$ be a prime number, 
 	  	$P,Q_{1},\dots, Q_{d}\in \poly(\V\to\F_{p})$ be of degrees at most $s$, $c\in\{1,\dots,p-1\}$ and $H\subseteq \V$ with $\vert H\vert>\d p^{d}$. 
 	  	Let $$M(n_{1},\dots,n_{d}):=cn^{2}_{1}+n^{2}_{2}+\dots+n_{d'}^{2}$$ and
 	  	$$L_{h}(z_{2},\dots,z_{d}):=(-(h_{2}z_{2}+\dots+h_{d'}z_{d'}),ch_{1}z_{2},\dots,ch_{1}z_{d})$$
		for $h=(h_{1},\dots,h_{d})\in\V$.
 	  	Suppose that for all $z\in\F_{p}^{d-1}$ and $h=(h_{1},\dots,h_{d})\in H$, $M(L_{h}(z))=0$ implies that $$P(L_{h}(z))+\sum_{i=1}^{d}h_{i}Q_{i}(L_{h}(z))=0.$$ Then for all $n\in\V$, $M(n)=0$ implies that $$\text{$P(n)=0$ and $\partial_{j}M(n)Q_{i}(n)=\partial_{i}M(n)Q_{j}(n)$ for all $1\leq i,j\leq d$.}$$  
 	  \end{prop}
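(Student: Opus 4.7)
The plan is to reinterpret the hypothesis geometrically, exchange the roles of $n$ and $h$ by a double-counting argument, and then extract the conclusion via degree-one rigidity (Lemma \ref{1:ns}) followed by the quadric Nullstellensatz for $V(M)$ (Lemma \ref{1:bzt}).

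First, whenever $h_{1}\neq 0$, the map $L_{h}$ is a linear bijection from $\F_{p}^{d-1}$ onto the hyperplane $W^{h}:=\{n\in\V\colon (nA)\cdot h=0\}$, where $A$ is the diagonal matrix associated to $M$; one checks that the defining relation $ch_{1}n_{1}+h_{2}n_{2}+\dots+h_{d'}n_{d'}=0$ of the image is precisely $(nA)\cdot h=0$. Since $|\{h\in H\colon h_{1}=0\}|\leq p^{d-1}$, after replacing $H$ by $H':=\{h\in H\colon h_{1}\neq 0\}$ we still have $|H'|\gg \delta p^{d}$ once $p\gg \delta^{-1}$, and the hypothesis becomes: for every $h\in H'$ and every $n\in V(M)\cap W^{h}$, the affine function (of $h$)
$$F_{n}(h):=P(n)+\sum_{i=1}^{d}h_{i}Q_{i}(n)$$
vanishes.

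Next I exchange the roles of $n$ and $h$. For $n\in V(M)$ set $W_{n}:=\{h\in\V\colon (nA)\cdot h=0\}$ and $H(n):=H'\cap W_{n}$. A direct exponential-sum application of Lemma \ref{1:counting} shows that for each $h\in H'$ with $hA\neq\bold{0}$, $|V(M)\cap W^{h}|=p^{d-2}(1+O(p^{-(d'-2)/2}))=p^{d-2}(1+o(1))$ thanks to $d'\geq 4$; moreover the nullspace of $A$ has dimension $d-d'\leq d-4$, so nearly every $h\in H'$ is covered. Swapping the order of summation yields $\sum_{n\in V(M)}|H(n)|\gg\delta p^{2d-2}$, while $|V(M)|=p^{d-1}(1+o(1))$, and the Pigeonhole Principle produces a set $G\subseteq V(M)$ of size $\gg\delta p^{d-1}$ on which $|H(n)|\gg\delta p^{d-1}$; after discarding the at most $p^{d-d'}$ points with $nA=\bold{0}$ we may further assume $nA\neq\bold{0}$ throughout $G$.

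For each $n\in G$, the affine function $F_{n}$ restricted to $W_{n}$ vanishes at $\gg\delta p^{d-1}$ points of the $(d-1)$-dimensional hyperplane $W_n$, so (after parametrizing $W_n$ by $\F_p^{d-1}$) Lemma \ref{1:ns} forces $F_{n}$ to vanish identically on $W_{n}$ provided $p\gg_{d}\delta^{-1}$. Evaluating at $h=\bold{0}$ gives $P(n)=0$, while the vanishing of the linear form $h\mapsto h\cdot Q(n)$ on $W_{n}=\{h\colon h\cdot (nA)=0\}$ forces $(Q_{1}(n),\dots,Q_{d}(n))$ to be a scalar multiple of $nA=(\partial_{1}M(n)/2,\dots,\partial_{d}M(n)/2)$, equivalently $\partial_{j}M(n)Q_{i}(n)=\partial_{i}M(n)Q_{j}(n)$ for all $1\leq i,j\leq d$. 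Finally I extend from $G$ to all of $V(M)$: since $\rank(M)=d'\geq 4\geq 3$ and $|G|\gg\delta p^{d-1}\gg p^{d-2}$ under $p\gg\delta^{-1}$, Lemma \ref{1:bzt} forces $V(M)\subseteq V(P)$ and $V(M)\subseteq V(R_{i,j})$ for each polynomial $R_{i,j}:=\partial_{j}M\cdot Q_{i}-\partial_{i}M\cdot Q_{j}$ (which has degree $\leq s+1$); the remaining points $n\in V(M)$ with $nA=\bold{0}$ satisfy the parallelism trivially because $\partial_{i}M(n)=0$ for every $i$. The main technical delicacy is the counting/Pigeonhole step, where one must use Lemma \ref{1:counting} to guarantee $|V(M)\cap W^{h}|=p^{d-2}(1+o(1))$ for almost all $h\in H'$ and carefully discard the negligible exceptional loci $\{hA=\bold{0}\}$ and $\{nA=\bold{0}\}$; everything after that is a clean application of the rigidity tools developed earlier in this section.
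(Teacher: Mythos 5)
Your proposal is correct and follows essentially the same route as the paper's proof: rewrite the image of $L_{h}$ (for $h_{1}\neq 0$) as the hyperplane $\{n\in\V\colon (nA)\cdot h=0\}$, double-count the good pairs $(n,h)$ and pigeonhole to obtain a dense set $G\subseteq V(M)$ of points each admitting $\gg\delta p^{d-1}$ admissible $h$, deduce at each such $n$ that $P(n)=0$ and $(Q_{1}(n),\dots,Q_{d}(n))$ is proportional to $nA$, and finish by Lemma \ref{1:bzt}; the only stylistic difference is that you invoke Lemma \ref{1:ns} for the degree-one rigidity where the paper uses an equivalent span-dimension argument on the vectors $(1,h)$. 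One small caution: your per-$h$ count $\vert V(M)\cap W^{h}\vert=p^{d-2}(1+o(1))$ is not valid for isotropic $h$ (those with $M(h)=0$) when $d'=4$, since there $\rank(M\vert_{W^{h}})=d'-2=2$ and Corollary \ref{1:counting01} requires rank at least $3$; the paper avoids this by first discarding the $O(p^{d-1})$ isotropic $h$ using Lemma \ref{1:counting}, and since you only need a lower bound on the double count, the same one-line discard repairs your statement without affecting anything downstream.
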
	
 	  \begin{proof}
 	  		We assume that all the constants in the proof depends implicitly on $d$ and $s$.
 	  		Let $A$ be the (diagonal) matrix associated to $M$ and $a_{i}$ be the $(i,i)$-th entry of $A$. Passing to a subset if necessary, we may assume without loss of generality that $h_{1}\neq 0$ for all $h\in H$.
 	  		The assumption is then equivalent of saying that for all $n\in\V$ and $h\in H$, if $(nA)\cdot h=M(n)=0$ then $P(n)+\sum_{i=1}^{d}h_{i}Q_{i}(n)=0$.
 	  	Since $d'\geq 3$, by Lemma \ref{1:counting}, the set $h\in\V$ with $M(h)=0$ is $O_{d,s}(p^{d-1})$. Passing to a subset if necessary, we may assume without loss of generality that $M(h)\neq 0$ for all $h\in H$.

			Let $Z$ be the set of $n\in\V$ with $nA=\bold{0}$.  Then $\vert Z\vert=p^{d-d'}\leq p^{d-4}$.
 	  			Denote $V_{h}:=\{n\in\V\colon (nA)\cdot h=0\}=(\sp_{\F_{p}}\{h\})^{\pp}$.
 	  		We say that $(n,h)\in\V\times\V$ is a \emph{good pair} if $(nA)\cdot h=M(n)=0$, $n\in\V\backslash Z$ 
			and $h\in H$.
 	  		 For each $h\in H$,  $V_{h}$ is a subspace of $\V$ of co-dimension 1 and $V_{h}\cap (V_{h})^{\perp_{M}}=\{\bold{0}\}$ (since $M(h)\neq 0$). By Proposition \ref{1:iissoo}, $\rank(M\vert_{V_{h}})\geq d'-1\geq 3$.
 	  		By Corollary \ref{1:counting01} and the fact that $\vert Z\vert\leq p^{d-4}$, there exist $p^{d-2}(1+O(p^{-1/2}))$ many $n\in\V$ such that $(n,h)$ is a good pair.  
 	  		 So there are in total at least $\d p^{2d-2}(1+O(p^{-1/2}))$ many good pairs.  On the other hand, for each   $n\in \V\backslash Z$ with $M(n)=0$, since $nA\neq \bold{0}$, there exist $p^{d-1}$ many $h\in\V$ such that $(nA)\cdot h=0$. 
	By Lemma \ref{1:bzt}, the number of $n\in \V\backslash Z$ with $M(n)=0$ is 		 $p^{d-1}(1+O(p^{-1/2}))$.
	 From this it is easy to see that there exists a subset $X\subseteq \V\backslash Z$ with $\vert X\vert>\frac{\d}{2}p^{d-1}(1+O(p^{-1/2}))$ such that for all $n\in X$, $M(n)=0$, and there exists a subset $H_{n}\subseteq H$ with $\vert H_{n}\vert>\frac{\d}{2}p^{d-1}(1+O(p^{-1/2}))$ such that $(n,h)$ is a good pair for all $h\in H_{n}$.
 	  		
 	  	For all $n\in X$, let $W_{n}$ be the subspace of $\F_{p}^{d+1}$ spanned by $(1,h), h\in H_{n}$. We claim that $W_{n}=\F_{p}\times V_{n}$. Clearly, $W_{n}\subseteq\F_{p}\times V_{n}$, so it suffices to show that $W_{n}$ is of dimension $d$. Suppose on the contrary that $W_{n}$ is of dimension at most $d-1$. Let $v_{i}=(v_{i,0},\dots,v_{i,d})\in\F_{p}^{d+1}, 1\leq i\leq L$ be a basis of $W_{n}$. If $(1,h)=a_{1}v_{1}+\dots+a_{L}v_{L}$, we must have that $a_{1}v_{i,0}+\dots+a_{L}v_{L,0}=1$. 
 	  		So there are at most $p^{L-1}\leq p^{d-2}$ many $(1,h)$ belonging to $W_{n}$, meaning that $\vert H_{n}\vert\leq p^{d-2}$, a contradiction since $\vert H_{n}\vert>\frac{\d}{2}p^{d-1}(1+O(p^{-1/2}))$ and $p\gg \d^{-O(1)}$. So $W_{n}=\F_{p}\times V_{n}$.
 	  		This proves the claim.
 	  		
 	  		Since $P(n)+\sum_{i=1}^{d}h_{i}Q_{i}(n)=0$ for all $n\in X$ and $h\in H_{n}$, we have that $$(P(n),Q_{1}(n),\dots,Q_{d}(n))\cdot (1,h)=0$$ for all $h\in H_{n}$. By linearity, 
 	  		$$(P(n),Q_{1}(n),\dots,Q_{d}(n))\cdot v=0$$ for all $v\in W_{n}$. Since by the claim $W_{n}=\F_{p}\times V_{n}$, we have that $P(n)=0$ and that
 	  		$$(nA)\cdot h=0 \Rightarrow (Q_{1}(n),\dots,Q_{d}(n))\cdot h=0$$
 	  		for all $h\in\V$ and $n\in X$. This implies that 
 	  		 $(Q_{1}(n),\dots,Q_{d}(n))=F(n)(nA)$ for some $F(n)\in\F_{p}$ for all $n\in X$.
 	  		 In other words, 
 	  		 $$\partial_{j}M(n)Q_{i}(n)=2a_{i}a_{j}n_{i}n_{j}F(n)=\partial_{i}M(n)Q_{j}(n)$$
 	  		 for all $1\leq i,j\leq d$ and $n\in X$.
 	  		
 	  		Finally,
 	  		since $\vert X\vert>\frac{\d}{2}p^{d-1}(1+O(p^{-1/2}))$, by Lemma \ref{1:bzt},  for all $n\in\V$, $M(n)=0$ implies that $P(n)=0$ and $\partial_{j}M(n)Q_{i}(n)=\partial_{i}M(n)Q_{j}(n)$ for all $1\leq i,j\leq d$.
 	  \end{proof}	
 	  
We are now able provide a solution to (\ref{1:t43t}):

 \begin{prop}\label{1:packforce0}
 	Let $d,s\in\N_{+}$, $k\in\N$, $\d>0$, $p\gg_{d,k} \d^{-O_{d,k}(1)}$ be a prime,  and $M\colon\V\to\F_{p}$ be a quadratic form. 
	Let $W$ be a subset of $\Gow_{s}(V(M))$ such that either
	\begin{itemize}
	    \item $W=\Gow_{s}(V(M))$ and $\rank(M)\geq s+3$; or
	    \item $\vert W\vert\geq\d\vert\Gow_{s}(V(M))\vert$ and $\rank(M)\geq s^{2}+s+3$.
	\end{itemize}	
	Let $P,Q\in\poly(\V\to \F_{p})$ with $\deg(P)\leq k-1$ and $\deg(Q)\leq k$ be such  that for all $(n,h_{1},\dots,h_{s})\in W$, we have that
		$$\Delta_{h_{s-1}}\dots\Delta_{h_{1}}P(n)+\Delta_{h_{s}}\dots\Delta_{h_{1}}Q(n)=0$$ (where $\Delta_{h_{s-1}}\dots\Delta_{h_{1}}P(n)$ is understood as $P(n)$ when $s=1$). Then $$P=MP_{1}+P_{2} \text{ and } Q=MQ_{1}+Q_{2}$$ for some $P_{1},P_{2},Q_{1},Q_{2}\in\poly(\V\to \F_{p})$ with $\deg(P_{1})\leq k-3$, $\deg(P_{2})\leq s-2$, $\deg(Q_{1})\leq k-2$, $\deg(Q_{2})\leq s-1$.	 
 \end{prop}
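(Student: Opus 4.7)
The plan is to induct on $s$. For the base case $s = 1$, the equation reads $P(n) + Q(n + h_1) - Q(n) = 0$ on $W \subseteq \Gow_1(V(M))$. In the full case, setting $h_1 = \bold{0}$ (which satisfies $(n, \bold{0}) \in \Gow_1(V(M))$ for every $n \in V(M)$) yields $P(n) = 0$ on $V(M)$, so $P = MP_1$ with $\deg P_1 \leq k - 3$ by Proposition \ref{1:noloop}; the residual equation $Q(n + h_1) = Q(n)$ then forces $Q$ to be constant on $V(M)$, so $Q = MQ_1 + c$ for a constant $c$ with $\deg Q_1 \leq k - 2$. In the density case, we use the density assumption together with Lemma \ref{1:iiddpp} to extract, for most $n \in V(M)$, enough $h_1$ with $(n, h_1) \in W$ in generic position to derive an algebraic identity, and invoke Lemma \ref{1:bzt} to upgrade vanishing on a dense subset of $V(M)$ to vanishing on all of $V(M)$.

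For the inductive step, I first decouple the equation into separate equations for $P$ and $Q$. In the full case $W = \Gow_s(V(M))$, the first term is independent of $h_s$, so substituting $h_s = \bold{0}$ (which preserves membership in $\Gow_s(V(M))$ by reducing to $\Gow_{s-1}(V(M))$) yields $\Delta_{h_{s-1}} \cdots \Delta_{h_1} P(n) = 0$ on $\Gow_{s-1}(V(M))$. Applying the inductive hypothesis with $(P', Q') = (0, P)$ at parameters $(s - 1, k - 1)$ then gives $P = MP_1 + P_2$ with the required bounds $\deg P_1 \leq k - 3$ and $\deg P_2 \leq s - 2$. Substituting back and subtracting shows $\Delta_{h_s} \cdots \Delta_{h_1} Q(n) = 0$ on $\Gow_s(V(M))$, reducing the problem to the following claim: if $\Delta_{h_s} \cdots \Delta_{h_1} Q = 0$ on $\Gow_s(V(M))$ then $Q = MQ_1 + Q_2$ with $\deg Q_1 \leq k - 2$ and $\deg Q_2 \leq s - 1$. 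In the density case, the substitution $h_s = \bold{0}$ need not lie in the fiber of $W$, so the decoupling is achieved via a Fubini-type argument (Proposition \ref{1:ctsp}) and Pigeonhole to locate dense fibers, combined with Lemma \ref{1:iiddpp} to ensure generic (linearly independent, $M$-non-isotropic) configurations; the stronger rank hypothesis $\rank(M) \geq s^2 + s + 3$ (versus $s + 3$ in the full case) absorbs the losses accumulated through these $s$ layers of averaging.

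The main obstacle is the residual claim for $Q$. I would handle it by a secondary induction on $\deg Q$: if $\deg Q \leq s - 1$ the claim is trivial, so assume $\deg Q \geq s$. I reduce $Q$ modulo $M$ via the $B_{i,j}$-standard long division of Proposition \ref{1:noloop} to a canonical form $Q = MQ_1^{(0)} + R$; since $Q$ and $R$ agree on $V(M)$, the equation descends to $\Delta_{h_s} \cdots \Delta_{h_1} R(n) = 0$ on $\Gow_s(V(M))$, and the goal becomes to show $\deg R \leq s - 1$. To achieve this, I expand the iterated difference $\Delta_{h_s} \cdots \Delta_{h_1} R(n)$ via formal partial derivatives of $R$ and exploit the structural description of $\Gow_s(V(M))$ from Lemma \ref{1:changeh} (the tangency constraints $(h_i A) \cdot h_j = 0$ together with $n \in V(M)^{h_1, \ldots, h_s}$) to isolate the lowest-order-in-$h$ contribution; this matches the hypotheses of Proposition \ref{1:basicpp55}, whose conclusion, combined with the anti-derivative property Proposition \ref{1:antideri}, forces the top-degree part of $R$ to be divisible by $M$, contradicting the reduced form unless $\deg R \leq s - 1$. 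The delicate point is that the tangency relations on $\Gow_s(V(M))$ are genuinely quadratic (not just linear), so the reduction to Proposition \ref{1:basicpp55} must be done carefully by linearizing in a single chosen direction $h_i$ while holding the others fixed in a generic (non-isotropic) position.
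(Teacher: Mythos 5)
Your base case and the full-case decoupling (setting $h_{s}=\bold{0}$ to kill the $Q$-term, then feeding $\Delta_{h_{s-1}}\dots\Delta_{h_{1}}P=0$ on $\Gow_{s-1}(V(M))$ into the induction with $(P',Q')=(0,P)$) are fine, but they push all of the difficulty into your ``residual claim'' — that $\Delta_{h_{s}}\dots\Delta_{h_{1}}R=0$ on $\Gow_{s}(V(M))$ for a long-division remainder $R$ forces $\deg R\leq s-1$ — and that claim is exactly the $P\equiv 0$ instance of the proposition at the \emph{same} parameters $(s,k)$, so it is not covered by your outer induction and must be proved from scratch. Your proposed proof of it does not go through as described: Proposition \ref{1:basicpp55} requires a relation of the shape $P(n)+\sum_{i}h_{i}\partial_{i}Q(n)=0$ on the incidence set $\{M(n)=(nA)\cdot h=0\}$ for a \emph{dense set of $h$ in $\V$}, i.e.\ a first-order-in-$h$ identity in a single shift over the full variety. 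The lowest-order-in-$h$ part of the $s$-fold difference of $\tilde R$ is instead the $s$-linear form $\sum_{i_{1},\dots,i_{s}}h_{1,i_{1}}\cdots h_{s,i_{s}}\partial_{i_{1}}\cdots\partial_{i_{s}}\tilde R(n)$, constrained by the quadratic couplings $(h_{i}A)\cdot h_{j}=0$ and $n\in V(M)^{h_{1},\dots,h_{s}}$; and if you ``hold $h_{1},\dots,h_{s-1}$ fixed and linearize in $h_{s}$'' you land on the codimension-$(s-1)$ slice $\sp\{h_{1},\dots,h_{s-1}\}^{\perp_{M}}$, where Proposition \ref{1:basicpp55} as stated does not apply — you would need a version for the restricted form, with control of how its rank degenerates, uniformly over generic $(h_{1},\dots,h_{s-1})$. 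Making that precise is essentially the paper's proof: it introduces $M_{h}(m)=M(L_{h}(m)+u_{h}-h/2)$ of rank $\rank(M)-1$, reduces $\Gow_{s}(V(M))$ to $\Gow_{s-1}(V(M_{h}))$ (so the induction is on the rank of the sliced form, not on the same $M$), counts the bad $h$ (Claim 1), and then recovers divisibility of $P$ and $Q$ from divisibility of $P_{h}+Q_{h}$ by $M_{h}$ via a degree-descent (Claim 2), in which Proposition \ref{1:basicpp55} is applied only after a \emph{single} difference has been extracted so that the top-degree term of $Q(x+h)-Q(x)$ really is $h\cdot\nabla\tilde Q(x)$. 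None of this machinery is supplied by ``expand via formal partial derivatives and isolate the lowest-order contribution''.

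A second, independent gap is the density case. There the substitution $h_{s}=\bold{0}$ is unavailable, and ``Fubini plus pigeonhole to locate dense fibers'' does not decouple $P$ from $Q$: comparing two values $h_{s},h_{s}'$ in a dense fiber only tells you that $\Delta_{h_{s-1}}\dots\Delta_{h_{1}}Q$ is constant along the fiber, which still entangles $P$ with $Q$ through the original equation. The paper deliberately avoids decoupling: it keeps the single polynomial $P_{h}+Q_{h}$ on each slice, and only separates the two at the top-degree comparison, where the difference structure of $Q_{h}$ produces the $h\cdot\nabla\tilde Q$ term and Proposition \ref{1:basicpp55} returns the two divisibility statements $\tilde P=\tilde M P'$, $\tilde Q=\tilde M Q'$ simultaneously; this is also where the threshold degrees $s-2$ and $s-1$ (your ``error'' terms) enter, via the degree-$(t-1)$ remainders $R'_{h}$ in Claim 2. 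So while your outline is sound for $s=1$ and for the full-case reduction of $P$, the two steps that carry the actual content — the $Q$-only statement on $\Gow_{s}(V(M))$ and the density-case treatment — are asserted rather than proved, and the route you indicate for them (direct appeal to Proposition \ref{1:basicpp55}) fails to meet that proposition's hypotheses.
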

 
Therefore, Proposition \ref{1:packforce0} provides solutions to equation (\ref{1:t43t}) for two cases. The first case is when (\ref{1:t43t})  holds for all $(n,h_{1},\dots,h_{s})\in \Gow_{s}(V(M))$, the second case is when (\ref{1:t43t})  holds for a dense subset of $(n,h_{1},\dots,h_{s})\in \Gow_{s}(V(M))$ (in the later case we need to impose a stronger lower bound for the dimension $d$).
 \footnote{The reason we do not obtain the lower bound $s+3$ in the second case is because the dimension restriction in the Fubini's theorem (Theorem \ref{1:ct}) is not optimal. The lower bound $s^{2}+s+3$ could be improved if we obtain a finer version of Theorem \ref{1:ct} (see also Remark \ref{1:ipmct}). We do not pursue this improvement in this paper.}

 \begin{proof}[Proof of Proposition \ref{1:packforce0}]	 
 Throughout the proof we assume that $p\gg_{d,k} \d^{-O_{d,k}(1)}$.
  Let $A$ be the matrix associated with $M$ and denote $d'=\rank(M)$. By Lemmas \ref{1:cov} and \ref{1:changeh}, under a change of variable, it suffices to consider the case when $A$ is a diagonal matrix with diagonal $(c,1,\dots,1,0,\dots,0)$ for some $c\in\F_{p}\backslash\{0\}$, where there are $d-d'$ zero on the diagonal.
 	For $h\in\V$, let $U_{h}:=\{n\in\V\colon M(n+h/2)=M(n-h/2)\}$. Then we may write $U_{h}=V_{h}+u_{h}$ for some $u_{h}\in \V$, where 	$V_{h}:=\{n\in\V\colon (nA)\cdot h=0\}$.
	Let $Z$ be the set of $h\in\V$ such that $\dim(V_{h})\neq d-1$ or $\dim(V_{h}\cap V_{h}^{\pp})\neq d-d'$.
	
	\textbf{Claim 1.} We have that $\vert Z\vert\leq O_{d}(p^{d-1})$.

	Let $Z'$ be the set of $h\in\V$ such that either $(hA)\cdot h=0$ or at least one of the coordinate of $h$ is 0. 
	Since $\rank(A)=d'\geq 3$, by Lemma \ref{1:counting}, it is not hard to see that $\vert Z'\vert=O_{d}(p^{d-1})$. Now fix any $h=(h_{1},\dots,h_{d})\in\V\backslash Z'$. Then since $hA\neq\bold{0}$, we have that $\dim(V_{h})=d-1$. Note that
	$$\{-h_{i}e_{1}+ch_{1}e_{i}\colon 2\leq i\leq d'\}\cup\{e_{j}\colon d'+1\leq j\leq d\}$$
	is a basis of $V_{h}$, where $e_{i}$ is the $i$-th standard unit vector. From this it is not hard to compute that
	$$(h_{1},\dots,h_{d'},0,\dots,0), e_{d'+1},\dots,e_{d}$$
	is a basis of $(V_{h})^{\pp}$. Therefore, $\dim((V_{h})^{\pp})=d-d'+1$.
	Finally, it is not hard to compute by induction that 
		$$\det\begin{bmatrix}
	-h_{2} & ch_{1} & & & \\
	-h_{3} & & ch_{1} & & \\
	\dots & & & \dots & \\
	-h_{d'} & & & & ch_{1} \\
	h_{1} & \dots & \dots & \dots & h_{d'}
	\end{bmatrix}=(-1)^{d'+1}(ch_{1})^{d'-2}((hA)\cdot h)\neq 0.$$
	This implies that $V_{h}+(V_{h})^{\pp}=\V$. So
	$$\dim(V_{h}\cap V_{h}^{\pp})=\dim(V_{h})+\dim(V_{h}^{\pp})-\dim(V_{h}+(V_{h})^{\pp})=(d-1)+(d-d'+1)-d=d-d'.$$ 
	Therefore, $Z\subseteq Z'$ and thus $\vert Z\vert\leq O_{d}(p^{d-1})$.
	This proves Claim 1.
	
\
 
 By the construction of $Z$ and Proposition \ref{1:iissoo} (ii), for all $h\in\V\backslash Z$, we have that $\rank(M_{h})=d'-1$.
	 Let $L_{h}\colon \F_{p}^{d-1}\to V_{h}$ be any bijective linear transformation and denote $M_{h}(m):=M(L_{h}(m)+u_{h}-h/2)$. 
 By the definition of $U_{h}$, we have that $M(L_{h}(m)+u_{h}-h/2)=M(L_{h}(m)+u_{h}+h/2)$ for all $m\in\F_{p}^{d-1}$. From this it
   is not hard to check that 
	   \begin{equation}\label{1:iieef22}
   (m,m_{1},\dots,m_{s-1})\in \Gow_{s-1}(M_{h})\Leftrightarrow (L_{h}(m)+u_{h}-h/2,L_{h}(m_{1}),\dots,L_{h}(m_{s-1}),h)\in \Gow_{s}(M).
   \end{equation}

 For all $h_{s}\in\V$, let $W_{h_{s}}$ be the set of $(n,h_{1},\dots,h_{s-1})\in (\V)^{s}$ such that $(n,h_{1},\dots,h_{s})\in W$. For all $h\in\V\backslash Z$, let $W'_{h}$ be the set of $(m,m_{1},\dots,m_{s-1})$ such that $$(L_{h}(m)+u_{h}-h/2,L_{h}(m_{1}),\dots,L_{h}(m_{s-1}))\in W_{h}.$$
  Since $(n,h_{1},\dots,h_{s})\in \Gow_{s}(V(M))$ for all $(n,h_{1},\dots,h_{s-1})\in W_{h_{s}}$, 
 it follows from (\ref{1:iieef22}) that 
  \begin{equation}\label{1:iieefws}
   W'_{h}\subseteq \Gow_{s-1}(V(M_{h})) \text{ and } \vert W'_{h}\vert=\vert W_{h}\vert \text{ for all } h\in \V\backslash Z.
   \end{equation}

 If $\vert W\vert\geq\d\vert\Gow_{s}(V(M))\vert$ and $\rank(M)\geq s^{2}+s+3$, then by Proposition \ref{1:ctsp}, we have that
 $$\sum_{h_{s}\in\V}\vert W_{h_{s}}\vert=\vert W\vert>\d\vert\Gow_{s}(V(M))\vert=\d p^{(s+1)d-(\frac{s(s+1)}{2}+1)}(1+O_{s}(p^{-1/2})).$$
 Moreover, for all but at most $s(s+1)p^{d+s-\rank(M)}$ many $h_{s}\in\V$, we have that 
 $\vert W_{h_{s}}\vert\leq p^{ds-(\frac{s(s+1)}{2}+1)}$.
 Also the sum of $\vert W_{h_{s}}\vert$
  over those at most $s(s+1)p^{d+s-d'}$ exceptionally $h_{s}\in\V$ is at most  $s(s+1)p^{(s+1)d+s-d'}$. It then follows from the Pigeonhole Principle  that there exists $W_{\ast}\subseteq \V\backslash Z$ of cardinality at least $\d p^{d}/4$ such that 
 $\vert W_{h_{s}}\vert\geq \d p^{ds-(\frac{s(s+1)}{2}+1)}/4$ for all $h_{s}\in W_{\ast}$.
 Since $\rank(M_{h})\geq (s-1)^{2}+(s-1)+3$, by Proposition \ref{1:ctsp} and (\ref{1:iieefws}), we have that
  \begin{equation}\nonumber
   \vert W'_{h}\vert=\vert W_{h}\vert\geq\d p^{ds-(\frac{s(s+1)}{2}+1)}/4=\d p^{(d-1)s-(\frac{s(s-1)}{2}+1)}/4=O(\d) \vert \Gow_{s-1}(V(M_{h}))\vert
   \end{equation}
   for all $h\in W_{\ast}\backslash Z$.
   
   On the other hand, if $W=\Gow_{s}(V(M))$ and $\rank(M)\geq s+3$, then for all $h\in \V\backslash Z$,  it follows from (\ref{1:iieef22}) that 
   $ W'_{h}=\Gow_{s-1}(V(M_{h}))$. Therefore, in both cases,  
   there exists $W_{\ast}\subseteq \V$ of cardinality at least $\d p^{d}/4$ such that 
  \begin{equation}\label{1:iieef202}
   \vert W'_{h}\vert=\vert W_{h}\vert\geq O(\d) \vert \Gow_{s-1}(V(M_{h}))\vert
   \end{equation}
   for all $h\in W_{\ast}\backslash Z$.

So the assumption and (\ref{1:iieef22}) implies that
    $$\Delta_{L_{h}(m_{s-1})}\dots\Delta_{L_{h}(m_{1})}P(L_{h}(m)+u_{h}-h/2)+\Delta_{L_{h}(m_{s-1})}\dots\Delta_{L_{h}(m_{1})}\Delta_{h}Q(L_{h}(m)+u_{h}-h/2)=0$$
    for all $h\in W_{\ast}\backslash Z$ and $(m,m_{1},\dots,m_{s-1})\in W'_{h}$. 
    Writing $P_{h}:=P(L_{h}(\cdot)+u_{h}-h/2)$ and $Q_{h}:=Q(L_{h}(\cdot)+u_{h}+h/2)-Q(L_{h}(\cdot)+u_{h}-h/2)$, we have that 
     \begin{equation}\label{1:iieef20}
   \Delta_{m_{s-1}}\dots\Delta_{m_{1}}(P_{h}(m)+Q_{h}(m))=0
   \end{equation}
       for all $h\in W_{\ast}\backslash Z$ and $(m,m_{1},\dots,m_{s-1})\in W'_{h}$. We remark that (\ref{1:iieef20}) is also valid when $s=1$, in which case  (\ref{1:iieef20}) is understood as 
           \begin{equation}\label{1:iieef30}
  P_{h}(m)+Q_{h}(m)=0
   \end{equation}
for all $h\in W_{\ast}\backslash Z$ and $m\in W'_{h}$.

\textbf{Claim 2.} 
Let $t\in\N$.
		Suppose that for all $h\in W_{\ast}\backslash Z$, we may write
		\begin{equation}\label{1:iieef40}
  P_{h}+Q_{h}=M_{h}R_{h}+R'_{h}
   \end{equation}
    for some $R_{h},R'_{h}\in\poly(\V\to\F_{p})$ with $\deg(R_{h})\leq k-3$ and $\deg(R'_{h})\leq t-1$. Then 
    $$P=MP'+P'' \text{ and } Q=MQ'+Q''$$
    for some $P',P'',Q',Q''\in\poly(\V\to\F_{p})$ with $\deg(P')\leq k-3$, $\deg(P'')\leq t-1$, $\deg(Q')\leq k-2$ and $\deg(Q'')\leq t$. 
    
    If $\max\{\deg(P),\deg(Q)-1\}\leq t-1$, then Claim 2 follows by setting $P=P'',Q=Q''$ and $P'=Q'=0$.
    Assume now that Claim 2 holds when $\max\{\deg(P),\deg(Q)-1\}\leq k'-1$ for some $t\leq k'\leq k-1$ and we prove that Claim 2 holds when $\max\{\deg(P),\deg(Q)-1\}\leq k'$.
    Note that (\ref{1:iieef40}) implies that
   \begin{equation}\label{1:iieef000}
   \begin{split}
   &\quad P(L_{h}(m)+u_{h}-h/2)+Q(L_{h}(m)+u_{h}+h/2)-Q(L_{h}(m)+u_{h}-h/2)
   \\&=M(L_{h}(m)+u_{h}-h/2)R_{h}(m)+R'_{h}(m)
   \end{split}
   \end{equation}
   for all $h\in W_{\ast}\backslash Z$. Since $\deg(R'_{h})\leq k'-1$, it is not hard to see that $\deg(R_{h})\leq k'-2$.
   Let $\tilde{P},\tilde{Q},\tilde{M},\tilde{R}_{h}$ be the degree $k',k'+1,2,k'-2$  terms of $P,Q,M,R_{h}$ respectively. Write $h=(h_{1},\dots,h_{d})$.
   Comparing the degree $k'$ terms of (\ref{1:iieef000}) and notice that the degree $k'$ terms of $Q(L_{h}(m)+u_{h}+h/2)-Q(L_{h}(m)+u_{h}-h/2))$  is  $\sum_{i=1}^{d}h_{i}\partial_{i}\tilde{Q}(L_{h}(m))$, we have that
    $$\tilde{P}(L_{h}(m))+\sum_{i=1}^{d}h_{i}\partial_{i}\tilde{Q}(L_{h}(m))=\tilde{M}(L_{h}(m))\tilde{R}_{h}(m).$$ 
    
     Therefore for all $n\in\V$ and $h\in W_{\ast}\backslash Z$ with $(nA)\cdot h=M(n)=0$, there exists $m\in\F_{p}^{d-1}$ with $L_{h}(m)=n$ and thus 
     $$\tilde{P}(n)+\sum_{i=1}^{d}h_{i}\partial_{i}\tilde{Q}(n)=\tilde{M}(n)\tilde{R}_{h}(m)=0.$$ 
     
     Since $\vert Z\vert\leq O_{d}(p^{d-1})$ by Claim 1, we have that $\vert W_{\ast}\backslash Z\vert\geq  p^{d}/8$.
         Since $\rank(M)\geq s+3\geq 4$,
     by Proposition \ref{1:basicpp55}, we have that $\tilde{P}=\tilde{M}P'$ and $\tilde{Q}=\tilde{M}Q'$ for some $P',Q'\in\poly(\V\to\F_{p})$ with $\deg(P')\leq k'-2\leq k-3$ and $\deg(Q')\leq k'-1\leq k-2$. Since $M-\tilde{M}$ is of degree at most 1, we may write 
    $$P=MP'+P'', Q=MQ'+Q''$$
    for some $P'',Q''\in\poly(\V\to\F_{p})$ with $\deg(P'')\leq k'-1$ and $\deg(Q'')\leq k'$. 
   Writing $P''_{h}:=P''(L_{h}(\cdot)+u_{h}-h/2)$ and $Q''_{h}:=Q''(L_{h}(\cdot)+u_{h}+h/2)-Q''(L_{h}(\cdot)+u_{h}-h/2)$, 
   It follows from (\ref{1:iieef40}) and the identity
   $$M_{h}=M(L_{h}(\cdot)+u_{h}-h/2)=M(L_{h}(\cdot)+u_{h}+h/2)$$
   that 
	 for all $h\in W_{\ast}\backslash Z$, we may write
		\begin{equation}\label{1:iieef440}
  P''_{h}+Q''_{h}=M_{h}R''_{h}+R'''_{h}
   \end{equation}
   for some $R''_{h},R'''_{h}\in\poly(\V\to\F_{p})$ with $\deg(R''_{h})\leq k-3$ and $\deg(R'''_{h})\leq t-1$. Since $\max\{\deg(P''),\deg(Q'')-1\}\leq k'-1$, by induction hypothesis, (\ref{1:iieef440}) implies that 
    $$P''=MP'''+P'''', Q''=MQ'''+Q''''$$
    for some $P''',P'''',Q''',Q''''\in\poly(\V\to\F_{p})$ with $\deg(P''')\leq k-3$, $\deg(P'''')\leq t-1$, $\deg(Q''')\leq k-2$ and $\deg(Q'''')\leq t$. Then
    $$P=M(P'+P''')+P'''' \text{ and } Q=M(Q'+Q''')+Q''''.$$
   This completes the induction step and proves Claim 2.
    
     \
 
  	We now complete the proof of Proposition \ref{1:packforce0} by induction on $s$.
	 If $s=1$, then since $\rank(M_{h})\geq 3$, by (\ref{1:iieef30}) and Proposition \ref{1:noloop},
	  we may write
	    \begin{equation}\nonumber
  P_{h}+Q_{h}=M_{h}R_{h}
   \end{equation}
   for some $R_{h}\in\poly(\V\to\F_{p})$ with $\deg(R_{h})\leq k-3$ for all $h\in W_{\ast}\backslash Z$.  By Claim 2 (setting $t=0$), 
   $$P=MP_{1} \text{ and } Q=MQ_{1}+C$$ for some $C\in\F_{p}$ and $P_{1},Q_{1}\in\poly(\V\to \F_{p})$ with $\deg(P_{1})\leq k-3$, $\deg(Q_{1})\leq k-2$.

 	Suppose we have proved Proposition \ref{1:packforce0} for $s-1$ for some $s\geq 2$. We now prove it for $s$.  For all $h\in W_{\ast}\backslash Z$, 
     since $\rank(M_{h})=\rank(M)-1$ and $\deg(P_{h}+\Delta_{h}Q_{h})\leq k-1$, by applying the induction hypothesis to (\ref{1:iieef20}), we may write
    $$P_{h}+Q_{h}=M_{h}R_{h}+R'_{h}$$
    for some $R_{h},R'_{h}\in\poly(\V\to\F_{p})$ with $\deg(R_{h})\leq k-3$ and $\deg(R'_{h})\leq s-2$ for all $h\in W_{\ast}\backslash Z$. 
   By Claim 2 (setting $t=s+1$), 
   $$P=MP'+P'', Q=MQ'+Q''$$
    for some $P',P'',Q',Q''\in\poly(\V\to\F_{p})$ with $\deg(P')\leq k-3$, $\deg(P'')\leq s-2$, $\deg(Q')\leq k-2$ and $\deg(Q'')\leq s-1$.  This completes the induction step and we are done.
        \end{proof}

        \subsection{Intrinsic definitions for polynomials on $V(M)$}\label{1:s:idf}
 
 Let $M\colon\V\to\F_{p}$ be a quadratic form. 
 Recall that we define a function $f\colon V(M)\to \F_{p}$ to be a polynomial if $f$ is the restriction to $V(M)$ of some polynomial $f'\colon\V\to\F_{p}$ defined in $\V$. It is natural to ask if there is an intrinsic way to define polynomials on $V(M)$ (i.e. without looking at points outside $V(M)$). A natural way to do so is to use   anti-derivative properties of polynomials.
 Let $g\in\poly(\V\to\F_{p})$. It is not hard to see that $g$ is of degree at most $s-1$ if and only if $\Delta_{h_{s}}\dots\Delta_{h_{1}}g(n)=0$ for all $n,h_{1},\dots,h_{s}\in\V$. This observation provides us a promising alternative way to define polynomials and leads to the following question: 
 
 \begin{conj}[Intrinsic definition for polynomials]\label{1:att0}
 Let $g\colon\V\to\F_{p}$ be a function, $d,s\in\N_{+}$ and $p\gg_{d} 1$ be a prime. Let $M\colon\V\to\F_{p}$ be a quadratic form with $\rank(M)\gg_{s} 1$. Then the followings are equivalent:
	\begin{enumerate}[(i)]
 		\item $\Delta_{h_{s}}\dots\Delta_{h_{1}}g(n)=0$ for all $(n,h_{1},\dots,h_{s})\in \Gow_{s}(V(M))$;
 		\item  there exists a polynomial $g'\in\poly(\V\to\F_{p})$ of degree at most $s-1$ such that $g(n)=g'(n)$ for all $n\in V(M)$.
		\end{enumerate}	
 \end{conj}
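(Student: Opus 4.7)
The direction $(\mathrm{ii})\Rightarrow(\mathrm{i})$ is immediate. If $g$ agrees on $V(M)$ with some $g'\in\poly(\V\to\F_p)$ of degree $\leq s-1$, then for any $(n,h_1,\ldots,h_s)\in\Gow_s(V(M))$ all displaced points $n+\epsilon_1 h_1+\cdots+\epsilon_s h_s$ lie in $V(M)$, so $\Delta_{h_s}\cdots\Delta_{h_1}g(n)=\Delta_{h_s}\cdots\Delta_{h_1}g'(n)=0$, the right-hand side vanishing identically because $g'$ has degree less than $s$.

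For the converse $(\mathrm{i})\Rightarrow(\mathrm{ii})$, the plan is to reduce the problem to Proposition \ref{1:packforce0}. Regarding $g$ itself as a polynomial $\tilde g\in\poly(\V\to\F_p)$ (possible since $\V$ is finite), the hypothesis (i) reads $\Delta_{h_s}\cdots\Delta_{h_1}\tilde g\equiv 0$ on $W:=\Gow_s(V(M))$. Applying Proposition \ref{1:packforce0} with $P=0$, $Q=\tilde g$, and using that $\rank(M)\geq s+3$ (ensured by $\rank(M)\gg_s 1$), one obtains a decomposition $\tilde g=M Q_1+Q_2$ with $\deg Q_2\leq s-1$. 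Since $M\equiv 0$ on $V(M)$, the polynomial $g':=Q_2$ satisfies $g'=g$ on $V(M)$, as required. The base case $s=1$ bypasses this machinery entirely: condition (i) says $g(y)-g(x)=0$ whenever $x,y\in V(M)$ (take $h=y-x$), so $g$ is constant and the extension is trivial.

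The main technical obstacle is that Proposition \ref{1:packforce0} requires $p\gg_{d,k} 1$ where $k=\deg\tilde g$, whereas the canonical polynomial representative of a function $\V\to\F_p$ can have degree as large as $d(p-1)$; naively applying the proposition is therefore circular. The resolution is to proceed by induction on $s$ in order to produce a lift of $g$ whose degree is bounded by a function of $d$ and $s$ alone. For $s\geq 2$, the first difference $g_h(n):=g(n+h)-g(n)$ satisfies condition (i) with parameter $s-1$ on $V(M)\cap V(M)^h$, and by Lemmas \ref{1:cov} and \ref{1:changeh} the latter set is (after a linear change of coordinates) the zero set of a quadratic form on $\F_p^{d-1}$ whose rank is at least $\rank(M)-2$. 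The inductive hypothesis then furnishes, for each generic $h$, a polynomial $G_h$ of degree $\leq s-2$ on $\V$ extending $g_h$.

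The hardest step is the assembly of the family $\{G_h\}$ into a single lift $\tilde g$ of degree $\leq s$: the $G_h$'s must satisfy the compatibility relation $G_h(n+h')-G_h(n)=G_{h'}(n+h)-G_{h'}(n)$ on a dense subset of the relevant set, which expresses that they are formal first differences of a common antiderivative. Performing this integration and controlling its obstruction is precisely the kind of task that Proposition \ref{1:antideri} is designed for, and the Hilbert Nullstellensatz-type statement Proposition \ref{1:noloop} is used to guarantee that any polynomial-valued discrepancy vanishing on $V(M)$ is divisible by $M$ (with the expected degree reduction). Once a lift $\tilde g$ of degree $\leq s$ is in hand, Proposition \ref{1:packforce0} applies with $k$ bounded in terms of $d$ and $s$, and the decomposition $\tilde g=M Q_1+Q_2$ concludes the argument.
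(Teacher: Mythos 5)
Your easy direction and your base case $s=1$ are fine, and your reduction of the first difference $g_h$ to the sphere $V(M_h)$ (via Lemmas \ref{1:cov} and \ref{1:changeh}) is the natural start of an induction. But there is a genuine gap exactly where you say the "hardest step" is: the assembly of the family $\{G_h\}$ into a single polynomial lift of $g$ of degree $O_{s}(1)$. The inductive hypothesis only gives you, for each generic $h$, \emph{some} polynomial $G_h$ of degree $\leq s-2$ agreeing with $g_h$ on the lower-dimensional sphere; such a $G_h$ is far from unique (you may add any polynomial vanishing on that sphere), and nothing in your argument shows that the $G_h$ can be chosen so that the cocycle/compatibility relation $G_h(n+h')-G_h(n)=G_{h'}(n+h)-G_{h'}(n)$ holds, even on a dense subset of the relevant configuration space, let alone that a compatible family can be "integrated" to a polynomial antiderivative of degree $\leq s-1$ agreeing with $g$ on $V(M)$. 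Proposition \ref{1:antideri} does not perform this task: it is a divisibility statement for a \emph{single given polynomial} $Q$ whose combinations $\partial_{1}M\,\partial_{i}Q-\partial_{i}M\,\partial_{1}Q$ vanish on $V(M)$, and it presupposes that $Q$ is already a polynomial of controlled degree — precisely what you are trying to construct. Likewise Proposition \ref{1:noloop} only controls discrepancies that are already known to be polynomials of bounded degree vanishing on $V(M)$. So the step from "each difference $g_h$ has a low-degree polynomial model on $V(M)^h$" to "$g$ itself has a low-degree polynomial model on $V(M)$" is asserted rather than proved, and it is not a routine application of the quoted results.

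For calibration: the statement you are proving is stated in the paper as a \emph{conjecture} (and repeated among the open questions); the paper itself only establishes the special case, Proposition \ref{1:att1}, in which $g$ is assumed from the outset to be a polynomial of degree at most $s$, exactly because the degree-reduction/lifting problem you sketch is not known how to carry out. Your proposal in effect assumes away the open part (the existence of a degree-$O_s(1)$ polynomial lift of an arbitrary function satisfying (i)), so as written it does not close the conjecture. A minor additional inaccuracy: for generic $h$ the rank of $M_h$ drops by $1$ (as computed in the paper), not by $2$, though this does not affect the substance of the objection.
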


 If Conjecture \ref{1:att0} holds, then we may use Part (i) of Conjecture \ref{1:att0} as an intrinsic definition for polynomials on $V(M)$.
 In this paper, we were unable to prove  Conjecture \ref{1:att0}. Instead, we prove the following special case of Conjecture \ref{1:att0}, which is good enough for the purpose of this paper:

 \begin{prop}\label{1:att1}
 	Let $d,s\in\N_{+}$, $p\gg_{d} 1$ be a prime, and $M\colon\V\to\F_{p}$ be a quadratic form associated with the matrix $A$ of rank at least $s+3$. Then for any $g\in\poly(\V\to \F_{p})$ with  $\deg(g)\leq s$, the following are equivalent:
 	\begin{enumerate}[(i)]
 		\item 
		for all $(n,h_{1},\dots,h_{s})\in \Gow_{s}(V(M))$,
		we have that 
		$\Delta_{h_{s}}\dots\Delta_{h_{1}}g(n)=0;$
 		\item  we have 
		$$g=Mg_{1}+g_{2}$$ for some $g_{1},g_{2}\in\poly(\V\to \F_{p})$ with  $\deg(g_{1})\leq s-2$, $\deg(g_{2})\leq s-1$;
		\item  we have 
		$$g(n)=((nA)\cdot n)g_{1}(n)+g_{2}(n)$$ for some $g_{1},g_{2}\in\poly(\V\to \F_{p})$ with  $\deg(g_{1})\leq s-2$, $\deg(g_{2})\leq s-1$.
 	\end{enumerate}	 
 \end{prop}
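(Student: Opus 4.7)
\medskip

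\noindent\textbf{Proof proposal.} My plan is to handle the three implications in the order (ii)$\Leftrightarrow$(iii), (ii)$\Rightarrow$(i), (i)$\Rightarrow$(ii), where the last of these is the only nontrivial step.

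The equivalence (ii)$\Leftrightarrow$(iii) is purely algebraic. Writing $M(n)=(nA)\cdot n+u\cdot n+v$ for some $u\in\F_{p}^{d}$ and $v\in\F_{p}$, the polynomial $M(n)-(nA)\cdot n$ has degree at most $1$. Thus $Mg_{1}$ and $((nA)\cdot n)g_{1}$ differ by a polynomial of degree at most $\deg(g_{1})+1\leq s-1$, which can be absorbed into the lower-order term $g_{2}$. So one can toggle freely between (ii) and (iii).

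For (ii)$\Rightarrow$(i), recall that for any function $F$ on $\V$ one has the identity
$$\Delta_{h_{s}}\cdots\Delta_{h_{1}}F(n)=\sum_{\epsilon\in\{0,1\}^{s}}(-1)^{s-\vert\epsilon\vert}F(n+\epsilon_{1}h_{1}+\dots+\epsilon_{s}h_{s}).$$
If $g=Mg_{1}+g_{2}$ with $\deg(g_{2})\leq s-1$, then $\Delta_{h_{s}}\cdots\Delta_{h_{1}}g_{2}\equiv 0$ identically. For the term $Mg_{1}$, the definition of $\Gow_{s}(V(M))$ gives $M(n+\epsilon_{1}h_{1}+\dots+\epsilon_{s}h_{s})=0$ for every $\epsilon\in\{0,1\}^{s}$, so every summand in the above identity applied to $Mg_{1}$ vanishes. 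Hence $\Delta_{h_{s}}\cdots\Delta_{h_{1}}g(n)=0$ on $\Gow_{s}(V(M))$, which is (i).

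The main step is (i)$\Rightarrow$(ii), and here my plan is to simply invoke Proposition \ref{1:packforce0} with the trivial polynomial $P=0$ and with $Q=g$, choosing $k=s$ and $W=\Gow_{s}(V(M))$. The hypothesis $\deg(Q)\leq s=k$ is exactly our assumption $\deg(g)\leq s$, and $\deg(P)=-1\leq k-1$ by Convention \ref{1:c00}. Since $W$ equals the full Gowers set, the relevant rank hypothesis reduces to $\rank(M)\geq s+3$, which we have. The polynomial equation of Proposition \ref{1:packforce0} becomes $\Delta_{h_{s-1}}\cdots\Delta_{h_{1}}P(n)+\Delta_{h_{s}}\cdots\Delta_{h_{1}}Q(n)=\Delta_{h_{s}}\cdots\Delta_{h_{1}}g(n)$, which vanishes on $W$ by (i). Proposition \ref{1:packforce0} then produces $Q_{1},Q_{2}\in\poly(\V\to\F_{p})$ with $\deg(Q_{1})\leq s-2$ and $\deg(Q_{2})\leq s-1$ such that $g=MQ_{1}+Q_{2}$, which is (ii). The only thing to verify is that the ambient dimension bound $p\gg_{d,k}1$ in Proposition \ref{1:packforce0} (no $\delta$-dependence appears since $W$ is the full Gowers set) is compatible with our assumption $p\gg_{d}1$ for $k=s$; this is automatic because $s$ is bounded in terms of $\rank(M)\leq d$. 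I don't expect any serious obstacle; the entire proposition reduces to pointing out that Proposition \ref{1:packforce0} already contains the required anti-derivative statement as the special case $P\equiv 0$.
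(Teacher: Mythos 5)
Your proposal is correct and follows essentially the same route as the paper: (ii)$\Leftrightarrow$(iii) via the degree-one difference $M(n)-(nA)\cdot n$, (ii)$\Rightarrow$(i) by the vanishing of $M$ at all cube vertices in $\Gow_{s}(V(M))$, and (i)$\Rightarrow$(ii) by invoking Proposition \ref{1:packforce0} with $k=s$, $P\equiv 0$, $Q=g$ in the case $W=\Gow_{s}(V(M))$, $\rank(M)\geq s+3$. Your added remark that $p\gg_{d}1$ suffices (since $s\leq \rank(M)\leq d$) is a harmless clarification of the paper's hypotheses.
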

\begin{proof}
We first show that (ii) implies (i).
 	For all $(n,h_{1},\dots,h_{s})\in \Gow_{s}(V(M))$, note that 
 	$$\Delta_{h_{s}}\dots\Delta_{h_{1}}g(n)=\Delta_{h_{s}}\dots\Delta_{h_{1}}(M(n)g_{1}(n)).$$
 	Since $M(n+\e_{1}h_{1}+\dots+\e_{s}h_{s})=0$ for all $\e_{1},\dots,\e_{s}\in\{0,1\}$, we have $\Delta_{h_{s}}\dots\Delta_{h_{1}}(M(n)g_{1}(n))=0$ and we are done.
	
	 The part (i) $\Rightarrow$ (ii) follows from Proposition \ref{1:packforce0} (by setting $k=s$, $P\equiv 0$ and $Q=g$). Finally, (ii) $\Leftrightarrow$ (iii) since the degree of the polynomial $M(n)-(nA)\cdot n$ is at most 1.
\end{proof}

 As a corollary of Proposition \ref{1:att1}, we have:

 \begin{coro}\label{1:att4}
 	Let $d,k,s\in\N_{+}$, $p\gg_{d} 1$ be a prime, and $M\colon\V\to\F_{p}$ be a non-degenerate quadratic form associated with the matrix $A$.
 	Let $m_{1},\dots,m_{k}\in\V$ be linearly independent and $M$-non-isotropic vectors.
 	If $d\geq k+s+3$, then for any  polynomial $g\in\poly(\V\to \F_{p})$ of degree at most $s$, the following are equivalent:
 	\begin{enumerate}[(i)]
 		\item  for all $(n,h_{1},\dots,h_{s})\in \Gow_{s}(V(M)^{m_{1},\dots,m_{k}})$, we have that $\Delta_{h_{s}}\dots\Delta_{h_{1}}g(n)=0$;
 		\item  we have $$g(n)=M(n)g_{0}(n)+\sum_{i=1}^{k}(M(n+m_{i})-M(n))g_{i}(n)+g'(n)$$ for some homogeneous $g_{0},\dots,g_{k}\in\poly(\V\to \F_{p})$ with $\deg(g_{0})=s-2$ and $\deg(g_{1})=\dots=\deg(g_{k})=s-1$, and some $g'\in\poly(\V\to \F_{p})$ of degree at most $s-1$;\footnote{Recall that $g_{0},\dots,g_{k}$ are allowed to be  constant zero by Convention \ref{1:c00}.}
 		\item  we have $$g(n)=((nA)\cdot n)g_{0}(n)+\sum_{i=1}^{k}((nA)\cdot m_{i})g_{i}(n)+g'(n)$$ for some homogeneous $g_{0},\dots,g_{k}\in\poly(\V\to \F_{p})$ with $\deg(g_{0})=s-2$ and $\deg(g_{1})=\dots=\deg(g_{k})=s-1$, and some $g'\in\poly(\V\to \F_{p})$ of degree at most $s-1$.
 	\end{enumerate}	 
 \end{coro}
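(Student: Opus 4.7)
The plan is to reduce Corollary \ref{1:att4} to Proposition \ref{1:att1} via a linear change of variables, and then to use Corollary \ref{1:noloop3} to extend from the ambient sub-nilmanifold back to $\mathbb{F}_p^d$. First observe that (ii) $\Leftrightarrow$ (iii) is routine: the degree-$s$ homogeneous part of $M(n)$ is $(nA)\cdot n$, and the degree-$s$ homogeneous part of $M(n+m_i)-M(n)=2(nA)\cdot m_i+(m_iA)\cdot m_i$ is $2(nA)\cdot m_i$, so passing between the two decompositions only requires absorbing lower-order terms into $g'$ and multiplying the $g_i$'s by $\tfrac{1}{2}$ (which is legitimate since $p\gg_d 1$). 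Next, (ii) $\Rightarrow$ (i) is also immediate: both $M$ and $M(\cdot+m_i)-M$ vanish at every vertex of any Gowers parallelepiped inside $V(M)^{m_1,\dots,m_k}$, so $M(n)g_0(n)+\sum_i (M(n+m_i)-M(n))g_i(n)$ is annihilated by $\Delta_{h_s}\cdots\Delta_{h_1}$ there, and the remainder $g'$ has degree $\leq s-1$ so is killed automatically.

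The substantive direction is (i) $\Rightarrow$ (ii). Let $W:=\operatorname{span}_{\F_p}(m_1,\dots,m_k)$, which is $M$-non-isotropic by hypothesis, so $W\cap W^{\pp}=\{\bold{0}\}$. Setting $V:=W^{\pp}$ and picking any $c\in\V$ with $(cA)\cdot m_i=-(m_iA)\cdot m_i/2$ (possible since $M$ is non-degenerate and the $m_i$ are linearly independent), we obtain $V(M)^{m_1,\dots,m_k}=V(M)\cap (V+c)$. By Proposition \ref{1:iissoo}(ii), $\operatorname{rank}(M\vert_{V+c})=\dim V-\dim(V\cap V^{\pp})=(d-k)-0=d-k\geq s+3$. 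Pick a bijective linear map $\phi\colon \F_p^{d-k}\to V$ and let $M'(m):=M(\phi(m)+c)$, a quadratic form of rank $\geq s+3$ on $\F_p^{d-k}$. By Lemma \ref{1:changeh}, Gowers parallelepipeds of side $s$ in $V(M)\cap (V+c)$ correspond bijectively (via $\phi$) to those in $V(M')$. Hence setting $\tilde{g}(m):=g(\phi(m)+c)$, the hypothesis (i) becomes $\Delta_{h'_s}\cdots\Delta_{h'_1}\tilde{g}(m)=0$ on $\Gow_s(V(M'))$. Proposition \ref{1:att1} now applies to $\tilde{g}$ and yields $\tilde{g}=M'\tilde{g}_1+\tilde{g}_2$ with $\deg(\tilde{g}_2)\leq s-1$.

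To lift back, extend $\phi^{-1}\colon V\to \F_p^{d-k}$ to a linear map $L\colon\V\to\F_p^{d-k}$ (for example by declaring $L$ to vanish on $W$, using the direct sum decomposition $\V=V\oplus W$ provided by $M$-non-isotropy of $W$) and define $g^{\ast}(n):=\tilde{g}_2(L(n)-L(c))$, a polynomial on $\V$ of degree $\leq s-1$ agreeing with $\tilde{g}_2\circ\phi^{-1}(\cdot-c)$ on $V+c$. Then $g-g^{\ast}$ is a polynomial of degree $\leq s$ vanishing on all of $V(M)^{m_1,\dots,m_k}$, so Corollary \ref{1:noloop3} (whose hypothesis $\operatorname{rank}(M\vert_{W^{\pp}})\geq 3$ is satisfied since $d-k\geq s+3\geq 4$) produces
\[
g(n)-g^{\ast}(n)=M(n)P_0(n)+\sum_{i=1}^{k}\bigl(M(n+m_i)-M(n)\bigr)P_i(n),
\]
with $\deg(P_0)\leq s-2$, $\deg(P_i)\leq s-1$. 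Writing $P_0=g_0+\tilde{P}_0$ and $P_i=g_i+\tilde{P}_i$, where $g_0$ is the degree-$(s-2)$ homogeneous component and $g_i$ the degree-$(s-1)$ homogeneous component, the remainder $g^{\ast}(n)+M(n)\tilde{P}_0(n)+\sum_i(M(n+m_i)-M(n))\tilde{P}_i(n)$ has degree $\leq s-1$ and plays the role of $g'$ in (ii).

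The main obstacle is the bookkeeping in the coordinate-change step: one must verify that $V+c$ really is the locus cut out by the equations $M(n+m_i)=M(n)=0$, that the rank of $M$ restricted to this affine slice equals $d-k$ (which is precisely where $M$-non-isotropy of $W$ enters), and that the correspondence of Gowers parallelepipeds under $\phi$ is compatible with the difference operator so that Proposition \ref{1:att1} applies verbatim. Everything else (the lift, the application of Corollary \ref{1:noloop3}, and extracting homogeneous components) is formal.
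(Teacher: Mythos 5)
Your proof is correct and follows essentially the same route as the paper: restrict $g$ to the affine slice $W^{\pp}+c$ cut out by the conditions $M(n+m_i)=M(n)=0$ (non-degenerate of rank $d-k\geq s+3$ thanks to $M$-non-isotropy of $W$), transport the Gowers sets via Lemma \ref{1:changeh} and apply Proposition \ref{1:att1} there, and then re-express the slice conditions in terms of $M(n+m_i)-M(n)$ --- the paper performs this last step by the explicit coordinate identity $n_{d-k+i}+(m_iA)\cdot m_i/2=\tfrac{1}{2}(M(n+m_i)-M(n))$ after normalizing $m_iA=e_{d-k+i}$, while you equivalently invoke the ``moreover'' part of Corollary \ref{1:noloop3}, whose proof is that same computation. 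The only point to adjust is your choice of $c$: the condition $(cA)\cdot m_i=-(m_iA)\cdot m_i/2$ implicitly assumes $M$ has no linear term, so you should either first translate $n\mapsto n+m$ to reduce to $M(n)=(nA)\cdot n-\lambda$ (as the paper does) or instead require $2(cA)\cdot m_i+(m_iA)\cdot m_i+u\cdot m_i=0$, which is solvable since $A$ is invertible and $m_1,\dots,m_k$ are linearly independent.
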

 \begin{proof}
 Clearly (ii) is equivalent to (iii). Note that for all $(n,h_{1},\dots,h_{s})\in \Gow_{s}(V(M)^{m_{1},\dots,m_{k}})$ and $\e_{1},\dots,\e_{s}\in\{0,1\}$, we have that $$M(n+\e_{1}h_{1}+\dots+\e_{s}h_{s})=M(n+\e_{1}h_{1}+\dots+\e_{s}h_{s}+m_{i})=0$$ for all $1\leq i\leq k$. So (ii) implies (i). So it suffices to show that (i) implies (ii).

By a change of variables $n\mapsto n+m$ for some $m\in\V$, it suffices to consider the special case when $M(n)=(nA)\cdot n-\lambda$ for some invertible symmetric matrix $A$ and some $\lambda\in\F_{p}$.
Let $V$ be the span of $m_{1},\dots,m_{k}$. Since $m_{1},\dots,m_{k}$ are linearly independent and $M$-non-isotropic, and since $A$ is invertible, we have that $V^{\pp}$ is a subspace of $\V$ of dimension $d-k$ with $V\cap V^{\pp}=\{\bold{0}\}$.
Let
  $$W:=\{n\in\V\colon 2(nA)\cdot m_{i}+(m_{i}A)\cdot m_{i}=0, 1\leq i\leq k\}.$$
   Since $m_{1},\dots,m_{k}$ are linearly independent and $A$ is invertible,  we may write $W=V^{\pp}+w$ for some $w\in\V$.

   Let $\phi\colon \F_{p}^{d-k}\to V^{\pp}$ be any bijective linear transformation. Denote $M'(n'):=M(\phi(n')+w)$ and $g'(n'):=g(\phi(n')+w)$ for all $n'\in \F_{p}^{d-k}$.  
   By Proposition \ref{1:iissoo} (ii), $M'$ is a non-degenerate quadratic form and thus $\rank(M')=d-k\geq s+3$.
   By Lemma \ref{1:changeh}, we have that $(n,h_{1},\dots,h_{s})\in \Gow_{s}(V(M)^{m_{1},\dots,m_{k}})$ if and only if there exists $(n',h'_{1},\dots,h'_{s})\in \Gow_{s}(V(M'))$  with $(n,h_{1},\dots,h_{s})=(\phi(n')+w,\phi(h'_{1}),\dots,\phi(h'_{s}))$.   
  So (i) implies that $\Delta_{h'_{s}}\dots\Delta_{h'_{1}}g'(n')=0$ for all $(n',h'_{1},\dots,h'_{s})\in \Gow_{s}(V(M'))$.
    By Proposition \ref{1:att1}, 
    \begin{equation}\label{1:attcnm}
    g'(n')=M'(n')g'_{1}(n')+g'_{2}(n')
    \end{equation}
      for some $g'_{1},g'_{2}\in\poly(\F_{p}^{d-k}\to \F_{p})$ with $\deg(g'_{1})=s-2$ and $\deg(g'_{2})\leq s-1$. We may further require  $g'_{1}$ to be homogeneous.
    
   By a change of variables $n\to nB$ for some invertible matrix $B$, we may assume without loss of generality that $m_{i}A=e_{d-k+i}$ for $1\leq i\leq k$. Then $V^{\pp}=\sp_{\F_{p}}\{e_{1},\dots,e_{d-k}\}$ and $W=V^{\pp}+w$ for $w=(0,\dots,0,-(m_{1}A)\cdot m_{1}/2,\dots,-(m_{1}A)\cdot m_{k}/2)$.
   Note that $n\in V(M)^{m_{1},\dots,m_{k}}$ if and only if $n=(n_{1},\dots,n_{d-k},-(m_{1}A)\cdot m_{1}/2,\dots,-(m_{1}A)\cdot m_{k}/2)\in V(M)$ for some $n_{1},\dots,n_{d-k}\in\F_{p}$. So it follows from (\ref{1:attcnm}) that
    \begin{equation}\label{1:attcnm2}
    \begin{split}
  &\quad  g(n_{1},\dots,n_{d-k},-(m_{1}A)\cdot m_{1}/2,\dots,-(m_{1}A)\cdot m_{k}/2)
  \\&=M(n_{1},\dots,n_{d-k},-(m_{1}A)\cdot m_{1}/2,\dots,-(m_{1}A)\cdot m_{k}/2)g'_{1}(n_{1},\dots,n_{d-k})+g'_{2}(n_{1},\dots,n_{d-k}).
    \end{split}
    \end{equation}

    Note that for all $1\leq i\leq k$ and $n=(n_{1},\dots,n_{d})\in\V$, we have that
    $$n_{d-k+i}+(m_{i}A)\cdot m_{i}/2=\frac{1}{2}(2(nA)\cdot m_{i}+(m_{i}A)\cdot m_{i})=\frac{1}{2}(M(n+m_{i})-M(n)).$$
    So for $F=g$ or $M$, the difference
    $$F(n_{1},\dots,n_{d})-F(n_{1},\dots,n_{d-k},-(m_{1}A)\cdot m_{1}/2,\dots,-(m_{1}A)\cdot m_{k}/2)$$
    can be written in the form
    $$\sum_{i=1}^{k}(M(n+m_{i})-M(n))Q_{i}(n)$$
    for some  $Q_{i}\in \poly_{p}(\V\to\F_{p})$ with   $\deg(Q)_{i}\leq \deg(F)-1$.
    Combining this with (\ref{1:attcnm2}), we deduce that $$g(n)=M(n)g_{0}(n)+\sum_{i=1}^{k}(M(n+m_{i})-M(n))g_{i}(n)+g'(n)$$ for some $g_{0},\dots,g_{k}\in\poly(\V\to \F_{p})$ with $\deg(g_{0})\leq s-2$ and $\deg(g_{1}),\dots,\deg(g_{k})\leq s-1$, and some $g'\in\poly(\V\to \F_{p})$ of degree at most $s-1$. Absorbing the lower degree terms of $g_{0},\dots,g_{k}$ by $g'$ if necessary, we may further require that $g_{0},\dots,g_{k}$ are homogeneous and that $\deg(g_{0})=s-2$, $\deg(g_{1})=\dots=\deg(g_{k})=s-1$. So (ii) holds and we are done.
 \end{proof}
  
  	\subsection{A parallel matrix proposition}
	
	Let $A$ and $B$ be $\F_{p}$-valued $d\times d$ matrices. If $B$ is a multiple of $A$, then clearly $nB=\bold{0}$ implies that $nA=\bold{0}$. It is natural to ask if the converse is true, namely if for many $n\in\V$, $nB=\bold{0}$ implies that $nA=\bold{0}$, then is it true that $B$ must be a multiple of $A$. The next proposition answers this question: 

  \begin{prop}\label{1:spe1}
   Let $d\in\N_{+}$, $\d>0$, $p\gg_{d}\d^{-O_{d}(1)}$ be a prime,  and $A$ be a  $\F_{p}$-valued $d\times d$ matrix of rank at least 3. Let $B$ be a $\F_{p}$-valued matrix $d\times d$ matrix and $v\in\V$. Let $W$ be a subset of $\V$ of cardinality at least $\d p^{d}$. Suppose that for all $w\in W$ and $n\in\V$ with $(nA)\cdot w=0$, we have that
    $$(nB+v)\cdot w=0.$$
     Then $v=\bold{0}$ and $B=cA$ for some $c\in\F_{p}$.
\end{prop}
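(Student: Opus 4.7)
The plan is to first pin down $v$ using only the zero vector, and then reduce the remaining condition to a polynomial identity that forces $B$ to be a scalar multiple of $A$.

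Since $n=\bold{0}$ trivially satisfies $(nA)\cdot w=0$, the hypothesis immediately gives $v\cdot w=0$ for every $w\in W$. If $v$ were nonzero, $W$ would lie in a hyperplane of size $p^{d-1}$, contradicting $\vert W\vert\geq \d p^{d}$ once $p\gg \d^{-1}$. So $v=\bold{0}$, and the assumption simplifies to: for every $w\in W$ and every $n\in\V$, $n\cdot(wA^{T})=0$ implies $n\cdot(wB^{T})=0$. For each such $w$, this forces the vector $wB^{T}$ into the one-dimensional span $\sp_{\F_{p}}\{wA^{T}\}$ (with the interpretation $wB^{T}=\bold{0}$ when $wA^{T}=\bold{0}$). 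Equivalently, the $2\times d$ matrix with rows $wA^{T}$ and $wB^{T}$ has rank at most one, so every $2\times 2$ minor vanishes: for all $1\leq i,j\leq d$,
\begin{equation}\label{eq:par-minor}
P_{ij}(w):=(wA^{T})_{i}(wB^{T})_{j}-(wA^{T})_{j}(wB^{T})_{i}=0\quad\text{for all }w\in W.
\end{equation}

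Each $P_{ij}$ is a polynomial in $w$ of degree at most two. Since $\vert W\vert\geq \d p^{d}$ and $p\gg_{d}\d^{-O_{d}(1)}$, we have $\vert W\vert>O_{d}(p^{d-1})$, so Lemma \ref{1:ns} forces $P_{ij}\equiv 0$. Writing $a_{i}(w):=(wA^{T})_{i}$ and $b_{i}(w):=(wB^{T})_{i}$, the linear forms $a_{1},\dots,a_{d}$ span the row space of $A$, which has dimension $\rank(A)\geq 3$. The identities $a_{i}b_{j}=a_{j}b_{i}$ therefore hold as polynomials in $\F_{p}[w_{1},\dots,w_{d}]$ for every pair $i,j$.

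To finish, pick indices $i_{1},i_{2}$ with $a_{i_{1}},a_{i_{2}}$ $\F_{p}$-linearly independent (available because $\rank(A)\geq 3\geq 2$). In the UFD $\F_{p}[w_{1},\dots,w_{d}]$ the nonzero linear form $a_{i_{1}}$ is irreducible and does not divide $a_{i_{2}}$, so the identity $a_{i_{1}}b_{i_{2}}=a_{i_{2}}b_{i_{1}}$ forces $a_{i_{1}}\mid b_{i_{1}}$; since $b_{i_{1}}$ is itself a linear form, $b_{i_{1}}=c\,a_{i_{1}}$ for some constant $c\in\F_{p}$. Substituting back into $a_{i_{1}}b_{j}=a_{j}b_{i_{1}}=c\,a_{i_{1}}a_{j}$ and cancelling the nonzero $a_{i_{1}}$ yields $b_{j}=c\,a_{j}$ for every $j$, i.e.\ $wB^{T}=c\cdot wA^{T}$ for all $w\in\V$, which is equivalent to $B=cA$. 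The only delicate step is this last one, where bilinear data is converted into a divisibility statement via unique factorization in several variables; the rest is standard polynomial counting through Lemma \ref{1:ns}.
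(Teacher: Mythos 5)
Your proof is correct, and its second half takes a genuinely different route from the paper. Both arguments begin the same way: from the hypothesis one extracts, for each $w\in W$, that $v\cdot w=0$ and that $wB^{T}$ is proportional to $wA^{T}$ (the paper phrases this as $wB=c_{w}wA$ after discarding the $w$ with $wA=\bold{0}$; your degenerate-case convention handles those $w$ without discarding anything, and your hyperplane count for $v=\bold{0}$ replaces the paper's appeal to Lemma \ref{1:iiddpp} to find $d$ linearly independent elements of $W$). The divergence is in how the $w$-dependent constants $c_{w}$ are upgraded to a single scalar. The paper introduces the auxiliary quadratic forms $M_{z}(n)=(nA)\cdot(n+z)$, pigeonholes over $z$ to find one for which $V(M_{z})\cap W$ is dense in $V(M_{z})$, and then invokes the irreducibility of $V(M_{z})$ (Lemma \ref{1:bzt}) together with the Nullstellensatz-type divisibility result (Proposition \ref{1:noloop}) to get $(wB)\cdot(w+z)=r_{z}(wA)\cdot(w+z)$ identically, comparing quadratic parts to conclude $B=r_{z}A$; this is why the statement carries the hypothesis $\rank(A)\geq 3$. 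You instead observe that all $2\times 2$ minors $a_{i}b_{j}-a_{j}b_{i}$ are quadratics vanishing on the dense set $W$, hence vanish identically by Lemma \ref{1:ns}, and then use unique factorization in $\F_{p}[w_{1},\dots,w_{d}]$ (irreducibility of a nonzero linear form, and that two independent linear forms are non-associate) to force $b_{j}=c\,a_{j}$ for all $j$. Your route is lighter — it needs only the Schwartz--Zippel-type Lemma \ref{1:ns} plus UFD facts, stays within polynomial identities rather than the paper's $M$-set machinery, and in fact only uses $\rank(A)\geq 2$, so it proves a slightly stronger statement; the paper's route buys consistency with the quadratic-form toolkit it has already built and reuses throughout, at the cost of the rank-$3$ hypothesis and the heavier appeal to Lemma \ref{1:bzt} and Proposition \ref{1:noloop}.
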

\begin{proof}
Throughout the proof we assume that $p\gg_{d}\d^{-O_{d}(1)}$.
Since $\rank(A)\geq 1$, the set of $w\in\V$ with $wA=\bold{0}$ is of cardinality at most $p^{d-1}$. So   shrinking $\d$ to $\d/2$ if necessary, we may assume without loss of generality that $wA\neq \bold{0}$ for all $w\in W$.
Fix any $w\in W$. Since $wA\neq \bold{0}$, and since $(nA)\cdot w=0$ implies that $(nB+v)\cdot w=0$ for all $n\in\V$, it is not hard to check that we must have that $v\cdot w=0$ and $wB=c_{w}wA$ for some $c_{w}\in\F_{p}$. 

Since $W$ is of cardinality at least $\d p^{d}$, By Lemma \ref{1:iiddpp},
there exist linearly independent $w_{1},\dots,w_{d}\in W$. So $v\cdot w_{i}=0$ for $1\leq i\leq d$. This implies that $v=\bold{0}$.

For $z\in\V$, let $M_{z}(n):=(nA)\cdot (n+z)$. Then 
 		\begin{equation}\label{1:reduction44}
 		\sum_{z\in\V}\vert V(M_{z})\cap W\vert=\sum_{z\in\V}\vert\{w\in W\colon (wA)\cdot (w+z)=0\}\vert=\sum_{w\in W}\vert\{z\in \V\colon (wA)\cdot (w+z)=0\}\vert.
 		\end{equation}
 	Since $wA\neq \bold{0}$, we have $\vert\{z\in \V\colon (wA)\cdot (w+z)=0\}\vert=p^{d-1}$.   So (\ref{1:reduction44}) is at least
 	$\vert W\vert\cdot p^{d-1}\geq \d p^{2d-1}$. 
 	By the Pigeonhole Principle, there exists  $z\in \V$ such that $\vert V(M_{z})\cap W\vert\gg \d p^{d-1}$.
 	
 	Since $wB=c_{w}wA$ for some $c_{w}\in\F_{p}$ for all $w\in W$, we have that
 	$$(wB)\cdot (w+z)=c_{w}(wA)\cdot (w+z)=0$$
 	for all $w\in V(M_{z})\cap W$. Since $\vert V(M_{z})\cap W\vert\gg \d p^{d-1}$, by Lemma \ref{1:bzt} and Proposition \ref{1:noloop}, we may write
 	$$(wB)\cdot (w+z)=r_{z}M_{z}(w)=r_{z}(wA)\cdot (w+z)$$
 for some $r_{z}\in\F_{p}$ for all $w\in\V$. Viewing both sides as polynomials in the variable $w$ and comparing all of their degree 2 terms, we must have that $B=r_{z}A$.	We are done.
\end{proof}

\section{Algebraic properties for quadratic forms in $\Z/p$}\label{1:s5}

In this section, we extend the results in Section \ref{1:s4} to $\Q$-valued polynomials.

\subsection{Quadratic forms in $\Z/p$}\label{1:pdefn}

 By using the correspondence between polynomials in $\V$ and in $\Z^{d}$, we may lift all the definitions pertaining to quadratic forms in $\V$ to the $\Z^{d}$ setting in the natural way.

\begin{defn}[Quadratic forms in $\Z^{d}$]
	We say that a function $M\colon\Z^{d}\to\Z/p$ is a \emph{quadratic form} if 
	$$M(n)=\frac{1}{p}((nA)\cdot n+n\cdot u+v)$$
	for some $d\times d$ symmetric matrix $A$ in $\Z$, some $u\in \Z^{d}$ and $v\in \Z$.
We say that $A$ is the matrix \emph{associated to} $M$. 
\end{defn}

It is more rigorous to name $M$ as a \emph{$p$-quadratic form}. However, since the quantity $p$ is alway clear in this paper, we simply call $M$ to be a quadratic form for short.
Whenever we define a quadratic form in this paper, we will write either $M\colon\V\to\F_{p}$  or $M\colon\Z^{d}\to\Z/p$ to specify to domain and range of $M$, so that the term quadratic form will not cause confusions.

By Lemma \ref{1:lifting},
any quadratic form $\tilde{M}\colon\Z^{d}\to\Z/p$ associated with the matrix $\tilde{A}$ induces a quadratic form $M:=\iota\circ p\tilde{M}\circ\tau\colon\F_{p}^{d}\to\F_{p}$ associated with the matrix $\iota(\tilde{A})$. Conversely, any quadratic form $M\colon\F_{p}^{d}\to\F_{p}$ associated with the matrix $A$ admits a regular lifting $\tilde{M}\colon\Z^{d}\to\Z/p$, which is a quadratic form associated with the matrix $\tau(A)$.

For a quadratic form $\tilde{M}\colon\Z^{d}\to\Z/p$, we say that $\tilde{M}$ is \emph{pure/homogeneous/$p$-non-degenerate} if the  quadratic form $M:=\iota\circ p\tilde{M}\circ\tau$ induced by $\tilde{M}$ is pure/homogeneous/non-degenerate. 
The \emph{$p$-rank} of $\tilde{M}$, denoted by $\rank_{p}(\tilde{M})$, is defined to be the rank of $M$.

We say that $h_{1},\dots,h_{k}\in\Z^{d}$ are \emph{$p$-linearly independent} if for all $c_{1},\dots,c_{k}\in\Z/p$, $c_{1}h_{1}+\dots+c_{k}h_{k}\in\Z$
 implies that $c_{1},\dots,c_{k}\in\Z$, or equivalently, if $\iota(h_{1}),\dots,\iota(h_{k})$ are linearly independent.

We can also lift the definition of set of zeros and Gowers sets to the $\Z/p$ setting.
\begin{itemize}
\item  For a polynomial $P\in\poly(\Z^{k}\to\R)$, let $V_{p}(P)$ denote the set of $n\in \Z^{k}$ such that $P(n+pm)\in \Z$ for all $m\in\Z^{k}$.
\item For $h_{1},\dots,h_{t}\in\Z^{k}$, let $V_{p}(P)^{h_{1},\dots,h_{t}}$ denote the set of $n\in\Z^{k}$ such that $P(n+pm),P(n+h_{1}+pm),\dots,P(n+h_{t}+pm)\in\Z$ for all $m\in\Z^{k}$.
\item For $\Omega\subseteq\Z^{d}$ and $s\in\N$, let $\Gow_{p,s}(\Omega)$ denote the set of $(n,h_{1},\dots,h_{s})\in(\Z^{d})^{s+1}$ such that $n+\e_{1}h_{1}+\dots+\e_{s}h_{s}\in\Omega+p\Z^{d}$ for all $\e_{1},\dots,\e_{s}\in\{0,1\}$.   
We say that $\Gow_{p,s}(\Omega)$ is the \emph{$s$-th $p$-Gowers set} of $\Omega$.
\end{itemize}

   The following lemma is straightforward:  
  \begin{lem}\label{1:expp}
  For any $p$-periodic set $\Omega\subseteq \Z^{k}$ (recall the definition in Section \ref{1:ssdn}) and $K\in\N_{+}$, we have that
  $$\frac{1}{p^{k}}\vert \Omega \cap [p]^{k}\vert=\frac{1}{(pK)^{k}}\vert \Omega \cap [pK]^{k}\vert.$$
  \end{lem}

 \subsection{Lifting results from $\F_{p}$ to $\Z/p$}\label{1:s:lltt}
Using the connections between $\F_{p}$-valued polynomials and $\Z/p$-valued polynomials, one can easily extend many results in Section \ref{1:s4} to the $\Z/p$-setting. 
The following is an example:

\begin{coro}[Lifting of Proposition  \ref{1:noloop}]\label{1:noloop2}
	Let $d\in\N_{+}$, $s\in\N$,   $p\gg_{d,s} 1$ be a prime number, 
	$P\in \poly_{p}(\Z^{d}\to\Z/p\vert\Z)$   be a polynomial of degree at most $s$, and $M\colon\Z^{d}\to \Z/p$ be a quadratic form of $p$-rank at least 3. Then either $\vert V_{p}(M)\cap V_{p}(P)\cap[p]^{d}\vert\leq O_{d,s}(p^{d-2})$ or $V_{p}(M)\subseteq V_{p}(P)$.
	
	Moreover, if $V_{p}(M)\subseteq V_{p}(P)$, then $P=MP_{1}+P_{0}$ for some 
	$P_{0}\in\poly(\Z^{d}\to\Z)$ of degree at most $s$, and some integer coefficient polynomial $P_{1}\in\poly(\Z^{d}\to\Z)$ of degree at most $s-2$.
\end{coro}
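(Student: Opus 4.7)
The overall plan is to reduce the statement to its $\F_p$-version (Lemma \ref{1:bzt} and Proposition \ref{1:noloop}) via the lifting correspondence from Section \ref{1:pdefn}, and then to promote the resulting $\F_p$-factorization back to a $\Z/p$-factorization. I would pass to the induced $\F_p$-polynomials $\bar{P} := \iota \circ pP \circ \tau$ and $\bar{M} := \iota \circ pM \circ \tau$ in $\poly(\V \to \F_p)$. By definition of $p$-rank, $\bar M$ is a quadratic form of rank $\rank_{p}(M) \geq 3$. Partial $p$-periodicity of $P$ (and of $M$, which holds automatically for any quadratic form) implies $V_p(P) = \{n \in \Z^d : P(n) \in \Z\}$, so $\tau$ restricts to a bijection from $V(\bar M) \cap V(\bar P)$ onto $V_p(M) \cap V_p(P) \cap [p]^d$. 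Applying Lemma \ref{1:bzt} to $\bar P$ and $\bar M$ yields the dichotomy: either $|V(\bar M) \cap V(\bar P)| \leq O_{d,s}(p^{d-2})$, which transports to the claimed size bound, or $V(\bar M) \subseteq V(\bar P)$. In the latter case, any $n \in V_p(M)$ satisfies $\bar M(\iota(n)) = 0$, hence $\bar P(\iota(n)) = 0$, hence $P(n) \in \Z$; partial $p$-periodicity of $P$ then propagates this to give $V_p(M) \subseteq V_p(P)$.

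For the moreover part, assume $V(\bar M) \subseteq V(\bar P)$. Proposition \ref{1:noloop} yields $\bar P = \bar M \bar R$ for some $\bar R \in \poly(\V \to \F_p)$ of degree at most $s - 2$. I would lift $\bar R$ to an \emph{integer-coefficient} polynomial $P_1 \in \poly(\Z^d \to \Z)$ by replacing each $\F_p$-coefficient of $\bar R$ by its representative in $\{0, 1, \ldots, p-1\}$, and set $P_0 := P - M P_1$, so that $P_0$ is a rational-coefficient polynomial of degree at most $s$. The task then reduces to showing $P_0$ is $\Z$-valued on $\Z^d$, which I would establish in two steps: (a) verify that $P_0 \in \poly_p(\Z^d \to \Z/p \vert \Z)$, and (b) verify that $P_0(n) \in \Z$ for $n \in [p]^d$. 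Combining (a) and (b) via the decomposition $n = \tau(\iota(n)) + pm$ then gives $P_0(n) \in \Z$ for all $n \in \Z^d$. Step (b) is immediate: for $n \in [p]^d$ one has $\iota(pP_0(n)) = \bar P(\iota(n)) - \bar M(\iota(n)) \bar R(\iota(n)) = 0$, so $P_0(n) \in \Z$.

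The one delicate point, and the main obstacle, is step (a). To show that $M P_1$ is partially $p$-periodic modulo $\Z$, I would set $\alpha := M(n+pm) - M(n)$ and $\beta := P_1(n+pm) - P_1(n)$ and compute directly. A short calculation gives $\alpha \in \Z$ since $M$ is a quadratic form with denominator $p$, and crucially $\beta \in p\Z$, the latter because every integer-coefficient monomial of $P_1$ contributes a remainder divisible by $p$ when expanded around $n + pm$ via the binomial theorem. Consequently
\[
M(n{+}pm)P_1(n{+}pm) - M(n)P_1(n) = M(n)\beta + \alpha P_1(n) + \alpha\beta
\]
lies in $\Z$, with the first term handled by $M(n) \cdot p\Z \subseteq \Z$. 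It is precisely here that $P_1$ must be taken with integer rather than merely rational coefficients: a rational-coefficient lifting of $\bar R$ would fail to yield $\beta \in p\Z$, and the $M(n)\beta$ term would escape $\Z$. This explains why the statement separates the roles of $P_1$ (integer-coefficient) and $P_0$ (merely $\Z$-valued).
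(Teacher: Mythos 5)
Your proof is correct, and its first half is the paper's argument: both pass to the induced polynomials $\bar P=\iota\circ pP\circ\tau$ and $\bar M=\iota\circ pM\circ\tau$, get the dichotomy over $\F_{p}$ (you cite Lemma \ref{1:bzt}, which is the right source; the paper's citation of Proposition \ref{1:noloop} at that point carries the same content), and transfer it back via $p$-periodicity mod $\Z$. Where you diverge is the finish of the ``moreover'' part. The paper takes a regular lifting $Q$ of the $\F_{p}$-quotient (coefficients in $\{0,\tfrac1p,\dots,\tfrac{p-1}p\}$), deduces that $P-pMQ$ is integer valued by invoking Lemma \ref{1:lifting} (iii)--(v), and then splits $Q$ using Lemma \ref{1:ivie} before regrouping. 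Your $P_{1}$ is exactly $pQ$, so your $P_{0}=P-MP_{1}$ is the same polynomial $P-pMQ$; but instead of quoting the lifting lemmas you verify its integrality directly, by checking it on $[p]^{d}$ (where it reduces to $\bar P-\bar M\bar R\equiv 0$) and by the $\alpha,\beta$ computation showing $MP_{1}$ is $p$-periodic mod $\Z$ — which is indeed the step where the integer (not merely rational or $\Z$-valued) coefficients of $P_{1}$ are essential, the same mechanism that sits inside Lemma \ref{1:pp2} and Lemma \ref{1:lifting} (iii). Your route is more self-contained and also shows that the paper's extra pass through Lemma \ref{1:ivie} is not needed for this particular statement, since $pQ$ already is an integer-coefficient polynomial of degree at most $s-2$; the paper's version instead reuses its general lifting toolkit, which it needs elsewhere in Section \ref{1:s:lltt}. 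The degree bounds $\deg P_{1}\le s-2$ and $\deg P_{0}\le s$ come out identically in both arguments.
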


\begin{proof}
Let $M'\colon\V\to\F_{p}$ be the quadratic form of rank at least 3 induced by $M$, $P'\colon\V\to\F_{p}$ be the polynomial of degree at most $s$ induced by $P$ (whose existence is given by Lemma \ref{1:lifting} (i)). 
By Proposition \ref{1:noloop}, either $\vert V(M')\cap V(P')\vert\leq O_{d,s}(p^{d-2})$ or $V(M')\subseteq V(P')$. By the definition of lifting, the form case implies that $\vert V_{p}(M)\cap V_{p}(P)\cap[p]^{d}\vert=\vert V(M')\cap V(P')\vert\leq O_{d,s}(p^{d-2})$, and the later case implies that $V_{p}(M)\subseteq V_{p}(P)$.

We now assume that $V_{p}(M)\subseteq V_{p}(P)$. By the definition of lifting, we have that $V(M')\subseteq V(P')$. By Proposition \ref{1:noloop}, $P'=M'Q'$ for some $Q'\in\poly(\V\to\F_{p})$ of degree at most $s-2$.
Let $Q\in\poly(\Z^{d}\to\Z/p)$ be a regular lifting of $Q'$ of degree at most $s-2$  (whose existence is given by Lemma \ref{1:lifting} (ii)). By Lemma \ref{1:lifting} (iv) and (v), $P-pMQ$ is a lifting of $P'-M'Q'\equiv 0$.
Then by Lemma \ref{1:lifting} (iii), we have that $P-pMQ$ is integer valued and is of degree at most $s$. 
By Lemma \ref{1:ivie}, we may write $Q=pQ_{1}+Q_{2}$ for some $\Z$-valued polynomial $Q_{1}$ and some integer coefficient polynomial $Q_{2}$ both having degrees at most $s-2$.
Then
$P=MQ_{2}+((P-pMQ)+pMQ_{1})$ with $(P-pMQ)+pMQ_{1}$ being an integer valued polynomial of degree at most $s$. We are done.
\end{proof}

%\begin{rem}
The approach in the proof of Corollary \ref{1:noloop2} can be adapted to lift many other results from the $\F_{p}$-setting to the $\Z/p$-setting. %For convenience we refer to this method as the \emph{lifting trick}.  
%\end{rem}
%
Below are the liftings of some results from  Section \ref{1:s4} that will be used in the further.  
We leave the proofs to the interested readers.
		
%\begin{coro}[Lifting of Lemma \ref{1:ns}]\label{1:nsss}
%Let $d\in\N_{+}$ and $p$ be a prime number.
%	Let $P\in\poly_{p}(\Z^{d}\to\Z/p\vert\Z)$ be of degree at most $r$.
%	Then $\vert V_{p}(P)\vert\leq O_{d,r}(p^{d-1})$ unless $P$ is integer valued.
%\end{coro}	 

%\begin{coro}[Lifting of Proposition  \ref{1:bzt}]\label{1:countingp}
%	Let $d\in\N_{+}$ and $p$ be a prime number. Let  $M\colon\Z^{d}\to\Z/p$ be a  quadratic form of $p$-rank $r$. Suppose that $r\geq 3$.  
%	Then $$\vert V_{p}(M)\cap[p]^{d}\vert=p^{d-1}(1+O(p^{-\frac{r-2}{2}})).$$
%\end{coro}		

	  \begin{coro}[Lifting of Corollary \ref{1:noloop3}]\label{1:noloop32}
 	Let  $d,k\in\N_{+}$, $k,s\in\N$, $p\gg_{d,k,s} 1$ be a prime number, 
 	$P\in \poly_{p}(\Z^{d}\to\Z/p\vert\Z)$ be a polynomial of degree at most $s$, $M\colon\Z^{d}\to\Z/p$ be a non-degenerate quadratic form induced by some $M'\colon\V\to\F_{p}$, $c\in\V$, and $V$ be a subspace of $\V$ of dimension $k$ with a basis $\iota(h_{1}),\dots,\iota(h_{k})$ for some $h_{1},\dots,h_{k}\in\Z^{d}$. 
	Denote  
	 $$L_{i}(n):=\frac{1}{p}(h_{i}\tau(A))\cdot (n-\tau(c))$$
		for $1\leq i\leq k$.
	 Suppose that $\rank(M'\vert_{V^{\pp}})\geq 3$. 
 	 Then either $\vert V_{p}(M)\cap \iota^{-1}(V^{\pp}+c)\cap V_{p}(P)\vert\leq O_{d,k,s}(p^{d-k-2})$ or $V_{p}(M)\cap \iota^{-1}(V^{\pp}+c)\subseteq V_{p}(P)$.
 	
 	Moreover, if $V_{p}(M)\cap \iota^{-1}(V^{\pp}+c)\subseteq V_{p}(P)$, then 
 	\begin{equation}\nonumber
 	P=Q+MP_{0}+\sum_{i=1}^{k}L_{i}P_{i}
 	\end{equation}
 	for some  integer coefficient  polynomials $P_{0},\dots,P_{k}\in \poly_{p}(\Z^{d}\to\Z)$ with $\deg(P_{0})\leq s-2$ and $\deg(P_{i})\leq s-1, 1\leq i\leq k$, and for some $Q\in  \poly_{p}(\Z^{d}\to\Z)$ with $\deg(Q)\leq s$.
 	
 	In particular, the conclusion of this corollary holds if $d\geq k+\dim(V\cap V^{\pp})+3$, or if $d\geq 2k+3$.
 \end{coro}   
  
%   \begin{coro}[Lifting of Proposition \ref{1:antideri}]\label{1:antideri2}
 %	  	Let $d\in\N_{+},s\in\N$, $p$ be a prime, and $M\colon\Z^{d}\to\Z/p$ be a quadratic form of $p$-rank at least 3.
 %	  	Let $Q\in\poly_{p}(\Z^{d}\to\Z/p\vert\Z)$ be  a polynomial of degree at most $s$ with $Q(\bold{0})=0$ 
 %	  	such that for all $n\in\Z^{d}$,
 %	  	$M(n)\in\Z$ implies
 %	  		$$p(\partial_{1}M(n)\partial_{i}Q(n)-\partial_{i}M(n)\partial_{1}Q(n))\in\Z$$
 %	  	for all $1\leq i\leq d$. If $p\gg_{d,s} 1$, then
 %	  	$Q=MQ_{1}+Q_{2}$ for some integer coefficient polynomial $Q_{1}\in\poly(\Z^{d}\to\Z)$  of degrees at most $s-2$ and some integer valued polynomial $Q_{2}\in\poly(\Z^{d}\to\Z)$  of degrees at most $s$. 
 %	  \end{coro}		  

\begin{coro}[Lifting of a special case of Proposition \ref{1:packforce0}]\label{1:att301}
Let $d,s\in\N_{+}$, $k\in\N$, $\d>0$, $p\gg_{d,k} \d^{-O_{d,k}(1)}$ be a prime, $M\colon\Z^{d}\to\Z/p$ be a quadratic form of $p$-rank at least $s^{2}+s+3$, and $W$ be a subset of $\Gow_{p,s}(V(M))\cap([p]^{d})^{s+1}$ of cardinality at least $\d \vert\Gow_{p,s}(V(M))\cap([p]^{d})^{s+1}\vert$. Let $P,Q\in\poly_{p}(\Z^{d}\to \Z/p\vert\Z)$ with $\deg(P)\leq k-1$ and $\deg(Q)\leq k$. Suppose that for all $(n,h_{1},\dots,h_{s})\in W$, we have that 		
		$$\Delta_{h_{s-1}}\dots\Delta_{h_{1}}P(n)+\Delta_{h_{s}}\dots\Delta_{h_{1}}Q(n)\in\Z,$$ (where $\Delta_{h_{s}}\dots\Delta_{h_{2}}P(n)$ is understood as $P(n)$ when $s=1$). Then $$P=MP_{1}+P_{2}+P_{3} \text{ and } Q=MQ_{1}+Q_{2}+Q_{3}$$ for some integer coefficient  polynomials $P_{1}$ and $Q_{1}$ of degrees at most $k-3$ and $k-2$ respectively, some integer valued polynomials $P_{2}$ and $Q_{2}$ of degrees at most $k-1$ and $k$ respectively, and some $\Z/p$-valued polynomials $P_{3}$ and $Q_{3}$ of degrees at most $s-2$ and $s-1$ respectively.
\end{coro}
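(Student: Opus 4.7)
The plan is to transfer Proposition \ref{1:packforce0} from $\F_{p}$ to $\Z/p$ by the lifting trick of Section \ref{1:s:lltt}, following the template that produced Corollary \ref{1:noloop2} out of Proposition \ref{1:noloop}. First, let $M'$, $P'$, $Q'$ be the $\F_{p}$-valued polynomials induced by $M$, $P$, $Q$ via the correspondence of Section \ref{1:s:pp2}. Then $\rank(M')=\rank_{p}(M)\geq s^{2}+s+3$, and the natural bijection $\iota\colon[p]^{d}\to\V$ identifies $\Gow_{p,s}(V(M))\cap([p]^{d})^{s+1}$ with $\Gow_{s}(V(M'))$, so that the image $W':=\{(\iota(n),\iota(h_{1}),\dots,\iota(h_{s})):(n,h_{1},\dots,h_{s})\in W\}$ satisfies $|W'|\geq\delta|\Gow_{s}(V(M'))|$. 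The key verification is that the hypothesis transfers: since $P,Q$ are $p$-periodic modulo $\Z$ and $\Delta_{h}$ commutes with the operation $\iota\circ p\cdot\circ\tau$, multiplying the identity $\Delta_{h_{s-1}}\cdots\Delta_{h_{1}}P(n)+\Delta_{h_{s}}\cdots\Delta_{h_{1}}Q(n)\in\Z$ by $p$ and reducing modulo $p$ yields the $\F_{p}$ identity $\Delta_{\iota(h_{s-1})}\cdots\Delta_{\iota(h_{1})}P'+\Delta_{\iota(h_{s})}\cdots\Delta_{\iota(h_{1})}Q'=0$ on $W'$.

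With this set up, Proposition \ref{1:packforce0} in its positive-density form (for which the hypothesis $\rank(M')\geq s^{2}+s+3$ is exactly what is needed) yields decompositions $P'=M'\hat{P}+\bar{P}$ and $Q'=M'\hat{Q}+\bar{Q}$ with $\deg\hat{P}\leq k-3$, $\deg\bar{P}\leq s-2$, $\deg\hat{Q}\leq k-2$, $\deg\bar{Q}\leq s-1$. Let $\tilde{P}_{1},\tilde{P}_{3},\tilde{Q}_{1},\tilde{Q}_{3}$ be regular liftings of $\hat{P},\bar{P},\hat{Q},\bar{Q}$ respectively; by Lemma \ref{1:lifting} these are $\Z/p$-valued polynomials of the same degrees as their $\F_{p}$ counterparts. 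The induced $\F_{p}$-polynomial of $P-pM\tilde{P}_{1}-\tilde{P}_{3}$ equals $P'-M'\hat{P}-\bar{P}=0$, so Lemma \ref{1:lifting}~(iii) makes $R_{P}:=P-pM\tilde{P}_{1}-\tilde{P}_{3}$ integer-valued of degree at most $k-1$; the parallel argument gives an integer-valued $R_{Q}:=Q-pM\tilde{Q}_{1}-\tilde{Q}_{3}$ of degree at most $k$.

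To finish, I would apply Lemma \ref{1:ivie} to the integer-valued polynomial $p\tilde{P}_{1}$ of degree $\leq k-3$: this gives $\tilde{P}_{1}=f_{1}+\tfrac{1}{p}f_{2}$ with $f_{1}$ integer-valued and $f_{2}$ integer-coefficient, whence $pM\tilde{P}_{1}=pMf_{1}+Mf_{2}$. Because $pM$ has integer coefficients, $pMf_{1}$ is integer-valued of degree $\leq k-1$. Setting $P_{1}:=f_{2}$, $P_{2}:=pMf_{1}+R_{P}$, and $P_{3}:=\tilde{P}_{3}$ produces the required decomposition $P=MP_{1}+P_{2}+P_{3}$; the same recipe applied to $\tilde{Q}_{1}$ produces $Q=MQ_{1}+Q_{2}+Q_{3}$. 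The principal obstacle is the bookkeeping among the three flavors of polynomials (integer-coefficient, integer-valued, $\Z/p$-valued) and the careful verification that multiplication and lifting commute with the ``induce'' map in the way required; once that dictionary is in place, the rest of the argument is essentially forced by the structure of Proposition \ref{1:packforce0}.
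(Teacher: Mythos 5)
Your proof is correct and is exactly the argument the paper intends: the paper omits the proof of Corollary \ref{1:att301}, stating only that the lifting trick from Corollary \ref{1:noloop2} adapts, and your write-up carries out precisely that adaptation (induce $M',P',Q'$, transfer the density and difference hypotheses to $\Gow_{s}(V(M'))$, apply the positive-density case of Proposition \ref{1:packforce0}, lift back via Lemma \ref{1:lifting}, and split with Lemma \ref{1:ivie}). The degree bookkeeping also works as claimed, since $\bar{P}=P'-M'\hat{P}$ and $\bar{Q}=Q'-M'\hat{Q}$ automatically have degrees at most $k-1$ and $k$ in addition to the bounds $s-2$ and $s-1$, so $R_{P}$ and $R_{Q}$ stay within the required degrees.
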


\begin{coro}[Lifting of Proposition \ref{1:spe1}]\label{1:spe2}
   Let $d\in\N_{+}$,  $\d>0$, $p\gg_{d} \d^{-O_{d}(1)}$ be a prime, and $A$ be a  $\Z$-valued $d\times d$ matrix of $p$-rank at least 3. Let $B$ be a $\Z$-valued $d\times d$ matrix and $v\in\Z^{d}$. Let $W$ be a subset of $[p]^{d}$ of cardinality at least $\d p^{d}$. Suppose that for all $w\in W$ and $n\in\Z^{d}$ with $(nA)\cdot w\in p\Z$, we have that
    $$(nB+v)\cdot w\in p\Z.$$
     Then $v\in p\Z$ and $B=cA+pB_{0}$ for some $c\in\Z$ and some $d\times d$ integer valued matrix $B_{0}$.
\end{coro}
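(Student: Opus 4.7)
The plan is to deduce Corollary \ref{1:spe2} directly from Proposition \ref{1:spe1} via the lifting trick used already for Corollary \ref{1:noloop2}, so no new algebraic content is required. Set $\tilde A := \iota(A)$, $\tilde B := \iota(B)$, $\tilde v := \iota(v)$, all living over $\F_p$, and let $\tilde W := \iota(W) \subseteq \V$. Because $W \subseteq [p]^d$ and $\iota|_{[p]^d} = \tau^{-1}$ is a bijection onto $\V$, we have $|\tilde W| = |W| \geq \delta p^d$, matching the density hypothesis of Proposition \ref{1:spe1}. By assumption $\rank_p(\tilde A) = \rank(A) \geq 3$, so the rank hypothesis of Proposition \ref{1:spe1} is also met.

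The one nontrivial check is that the integer hypothesis on $(A,B,v)$ reduces mod $p$ to the $\F_p$-hypothesis. Given any $\tilde n \in \V$ and $\tilde w \in \tilde W$ with $(\tilde n \tilde A)\cdot \tilde w = 0$, lift to $n := \tau(\tilde n) \in [p]^d$ and $w := \tau(\tilde w) \in W$. Then $(nA)\cdot w$ is an integer whose class mod $p$ is $(\tilde n \tilde A)\cdot \tilde w = 0$, so $(nA)\cdot w \in p\Z$. The assumption of the corollary gives $(nB+v)\cdot w \in p\Z$, and reducing mod $p$ yields $(\tilde n \tilde B + \tilde v)\cdot \tilde w = 0$. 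Note that any $n' \in \Z^d$ with the same reduction $\tilde n$ differs from $n$ by an element of $p\Z^d$, which only shifts $(n'A)\cdot w$ and $(n'B+v)\cdot w$ by elements of $p\Z$, so the choice of lift is irrelevant; in particular the $\F_p$-condition is witnessed for every $\tilde n \in \V$, exactly as Proposition \ref{1:spe1} requires.

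Applying Proposition \ref{1:spe1} to $(\tilde A,\tilde B,\tilde v,\tilde W)$ then yields $\tilde v = \bold{0}$ in $\V$ and $\tilde B = \tilde c \tilde A$ in $\F_p^{d\times d}$ for some $\tilde c \in \F_p$. Lifting back via $\tau$ coordinatewise, the first conclusion says $v \in p\Z^d$, and the second says $B - \tau(\tilde c)A \equiv 0 \pmod{p}$ entrywise, i.e.\ $B = cA + pB_0$ for $c := \tau(\tilde c) \in \Z$ and $B_0$ the integer matrix whose entries are the integer quotients $(B_{ij} - cA_{ij})/p$. Since everything is a direct reduction and lift, there is no genuine obstacle; the only thing to be careful about is bookkeeping the equivalence of the integer and $\F_p$ hypotheses (including that the universal quantifier over $n \in \Z^d$ really does give the universal quantifier over $\tilde n \in \V$ without loss of information), which is what the middle paragraph handles.
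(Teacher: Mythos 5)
Your proof is correct and is exactly the argument the paper intends: it states this corollary among the "liftings" obtained by the lifting trick and leaves the proof to the reader, and your entrywise reduction mod $p$, transfer of the hypothesis via $\tau$/$\iota$, application of Proposition \ref{1:spe1}, and lift back is precisely that routine. (Reading the paper's "$v\in p\Z$" as $v\in p\Z^{d}$, as you do, is the intended meaning.)
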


\subsection{The $p$-expansion trick and applications}\label{1:s:52}

% {\color move the result}

%\begin{prop}
%    Let $d,k,s\in\N_{+}$, $\d>0$, $p$ be a prime,  $f\in\poly(\Z^{d}\to \Z/p^{k})$ be a  polynomial of degree at most $s$, and $\Omega$ be a subset of $[p]^{d}$ with $\vert\Omega\vert>\d p^{d}$. Suppose that $f(n+pm)\in\Z$ for all $n\in\Omega$ and $m\in\Z^{d}$, then $f$ belongs to $\poly(\Z^{d}\to \Z)$. {\color check the best description}
 %   \end{prop}
%    \begin{proof}
  %      Let $k\in\N$ be the smallest integer such that $f$ is an $\Z/p^{k}$ valued polynomial. Clearly $k$ exists and is at most $s$. Our goal is to show that $k=0$.
        
%        If $k>0$, then we may write $f=f'/p^{k}$ for some $\Z$-valued polynomial $f'$ of the same degree as $f$.   For all $n\in\Omega$ and $m\in\Z^{d}$, since $f(n+pm)\in\Z$, we have that $f'(n+pm)\in p\Z$.
%    \end{proof}

We also need to extend all the results in Section \ref{1:s:lltt} from the $\Z/p$-setting  to the $\Q$-setting, and thus cover all rational polynomials. 

 	Let $M\colon\Z^{d}\to\Z/p$ be a quadratic form. Note that $\poly(V_{p}(M)\to \R\vert\Z)$ is the set of polynomials $f\in\poly(\Z^{d}\to \R)$ such that $M(n)\in\Z\Rightarrow f(n)\in\Z$. 
We first provide an explicit description for $\poly(V_{p}(M)\to \R\vert\Z)$ as well as some more general sets.

	\begin{prop}\label{1:basicpp12}
		Let  $d\in\N_{+}$, $k,s\in\N$, $p\gg_{d,k,s} 1$ be a prime number, 
 	 $M\colon\Z^{d}\to\Z/p$ be a non-degenerate quadratic form induced by some $M'\colon\V\to\F_{p}$ associated with the matrix $A$, $c\in\V$, and $V$ be a subspace of $\V$ of dimension $k$ with a basis $\iota(h_{1}),\dots,\iota(h_{k})$ for some $h_{1},\dots,h_{k}\in\Z^{d}$. Denote  
	 $$L_{j}(n):=\frac{1}{p}(h_{j}\tau(A))\cdot (n-\tau(c))$$
		for $1\leq j\leq k$.	Suppose that $\rank(M'\vert_{V^{\perp_{M'}}})\geq 3$. 
 		The followings are equivalent:
		\begin{enumerate}[(i)]
		\item $f$ belongs to $\poly(V_{p}(M)\cap \iota^{-1}(V^{\pp}+c)\to \R\vert\Z)$ and is of degree at most $s$;
		\item we have
		\begin{equation}\label{1:fvpp}
		Q_{0}f=\sum_{i=(i_{0},\dots,i_{k})\in\N^{k}\colon 2i_{0}+i_{1}+\dots+i_{k}\leq s}R_{i}M^{i_{0}}\prod_{j=1}^{k}L_{j}^{i_{j}}
		\end{equation}
	for some  $Q_{0}\in\N_{+}, Q_{0}\leq O_{d,s}(1)$ and some integer valued polynomials $R_{i}$ of degree at most $\deg(f)-(2i_{0}+i_{1}+\dots+i_{k})$, and that $p^{\deg(f)}f$ is an integer valued polynomial.  
		\end{enumerate}
	Moreover, if all the coefficients of $f$ are in $\Z/p^{r}$ for some $r\in\N$, then we may  write
	\begin{equation}\nonumber
		f=\sum_{i=(i_{0},\dots,i_{k})\in\N^{k}\colon 2i_{0}+i_{1}+\dots+i_{k}\leq s, i_{0}+i_{1}+\dots+i_{k}\leq \min\{r,s\}}R_{i}M^{i_{0}}\prod_{j=1}^{k}L_{j}^{i_{j}}
		\end{equation}
	for  some integer coefficient polynomials $R_{i}$ of degree at most $\deg(f)-(2i_{0}+i_{1}+\dots+i_{k})$.
			
		 In particular, the conclusion of this proposition holds if $d\geq k+\dim(V\cap V^{\pp})+3$, or if $d\geq 2k+3$.
	\end{prop}	
	
	The approach of the proof of Proposition \ref{1:basicpp12} is to decompose a $\Q$-valued polynomial as the sum of $\Z/p^{i}$-valued polynomials, and then repeatedly use results for $\Z/p$-valued polynomials to deduce similar results for the $\Q$-valued polynomial $f$.
		To be more precise, for any polynomial $f$ with rational coefficients, we may write $Qf=\sum_{i=0}^{k}\frac{f_{i}}{p^{i}}$ for some $Q\in\Z, p\nmid Q$ and some ``good" polynomials $f_{i}$. Here ``good" means that $f_{i}$ behaves very similar to a $\Z/p$-valued polynomial. One can then prove that periodicity conditions on $f$ descents to similar conditions on $f_{k}$, leading to the conclusion that $\frac{f_{k}}{p}$ is ``good". We may then absorb the term $\frac{f_{k}}{p^{k}}$ by $\frac{f_{k-1}}{p^{k-1}}$ and reduce $k$ by 1. Inductively, we have $k=0$, which implies that $Qf$ itself is ``good". For convenience we refer to this approach as the \emph{$p$-expansion trick}.
	
	\begin{proof}[Proof of Proposition \ref{1:basicpp12}] 	
	If  (ii) holds, then for all $n\in V_{p}(M)\cap \iota^{-1}(V^{\pp}+c)$, we have that $M(n), L_{1}(n),\dots,L_{k}(n)\in\Z$ and thus $f(n)\in(\Z/Q_{0})\cap (\Z/p^{\deg(f)})=\Z$. So
	$f$ belongs to $\poly(V_{p}(M)\cap \iota^{-1}(V^{\pp}+c)\to \R\vert\Z)$.

	 	We now assume that (i) holds. 	
		We a slight abuse of notation, the matrices associated with both $M$ and $M'$ will be denoted by $A$.
		Assume  that $$M(n)=\frac{1}{p}((n\tau(A))\cdot n+u\cdot n+v)$$
		for some %$d\times d$ symmetric matrix $A$, 
		$u\in\Z^{d}$ and $v\in\Z$. 	
		Fix any $n\in V_{p}(M)\cap \iota^{-1}(V^{\pp}+c)$. Then $n+pm\in V_{p}(M)\cap \iota^{-1}(V^{\pp}+c)$ for all $m\in \Z^{d}$. So $f(n+pm)\in\Z$ for all $m\in \Z^{d}$. By the multivariate polynomial interpolation, it is not hard to see that $f$ takes values in $\frac{1}{Qp^{s'}}\Z$ for some $Q\in\mathbb{N}_{+}, Q\leq O_{d,s}(1)$.
		So there exists $Q_{0}\in\mathbb{N}_{+}, Q_{0}\leq O_{d,s}(1)$ and some minimal $0\leq s'\leq s$ such that  
		$$Q_{0}f=\sum_{i=0}^{s'}\frac{f_{i}}{p^{i}}$$
		for some integer valued polynomials $f_{i}\colon\Z^{d}\to\mathbb{Z}$ of degree at most $s$.

		 We say that $g$ is a \emph{good} polynomial of \emph{level $L$} if 
		$$g=\sum_{i=(i_{0},\dots,i_{k})\in\N^{k}\colon 2i_{0}+i_{1}+\dots+i_{k}\leq s, i_{0}+\dots+i_{k}\leq L}R_{i}M^{i_{0}}\prod_{j=1}^{k}L_{j}^{i_{j}}$$
	for some  integer valued polynomials $R_{i}$ of degree at most $\deg(f)-(2i_{0}+i_{1}+\dots+i_{k})$.
		Let $k\in\N$ be the smallest integer such that we can write $Q_{0}f$ as 
		$$Q_{0}f=\sum_{i=0}^{k}\frac{f_{i}}{p^{i}}$$
		for some good polynomial $f_{i}\colon\Z^{d}\to\mathbb{Z}$ of degree at most $s$ and level at most $s'-i$. %and for some $Q\in\mathbb{N}_{+}, Q\leq O_{d,s}(1)$. 
		Obviously such $k$ exists and is at most $s'$. Our goal is to show that $k=0$.	
		
		Suppose that 
		$k>0$. Since $n\in V_{p}(M)\cap \iota^{-1}(V^{\pp}+c)\Rightarrow f(n)\in\mathbb{Z}$, we have that
		$$n\in V_{p}(M)\cap \iota^{-1}(V^{\pp}+c)\Rightarrow f_{k}(n)\in p\Z.$$ 
		Suppose that 
		$$f_{k}=\sum_{i=(i_{0},\dots,i_{k})\in\N^{k}\colon 2i_{0}+i_{1}+\dots+i_{k}\leq s, i_{0}+\dots+i_{k}\leq s'-k}R_{i}M^{i_{0}}\prod_{j=1}^{k}L_{j}^{i_{j}}$$
	for some  integer valued polynomials $R_{i}$ of degree at most $\deg(f)-(2i_{0}+i_{1}+\dots+i_{k})$.
			
		\textbf{Claim.}  
		For any $n\in V_{p}(M)\cap \iota^{-1}(V^{\pp}+c)$ with $2\iota(n)A+\iota(u),\iota(h_{1})A,\dots,\iota(h_{k})A$ being linearly independent,
		we have that $R_{i}(n)\in p\mathbb{Z}$ for all $i$.

		Fix $n\in V_{p}(M)\cap \iota^{-1}(V^{\pp}+c)$ with  $2\iota(n)A+\iota(u),\iota(h_{1})A,\dots,\iota(h_{k})A$  being linearly independent. Note that $n+pm\in V_{p}(M)\cap \iota^{-1}(V^{\pp}+c)$ for all $m\in\Z^{d}$. So we have that $f_{k}(n+pm)\in p\Z$ for all $m\in\Z^{d}$. Since
		\begin{equation}\nonumber
		\begin{split}
		M(n+pm)^{i_{0}}\equiv (M(n)+(2(nA)+u)\cdot m)^{i_{0}}\mod p\Z
		\end{split}
		\end{equation} 
		and
		\begin{equation}\nonumber
		\begin{split}
		L_{j}(n+pm)^{i_{j}}\equiv (L_{j}(n)+(h_{j}A)\cdot m)^{i_{j}}\mod p\Z
		\end{split}
		\end{equation} 
		for all $1\leq j\leq k$, the fact that $f_{k}(n+pm)\in p\Z$
		implies that
		\begin{equation}\label{1:vpp}
		\begin{split}
		\sum_{(i_{0},\dots,i_{k})\in\N^{k}\colon 2i_{0}+i_{1}+\dots+i_{k}\leq s, i_{0}+\dots+i_{k}\leq s'-k}R_{i}(n)(M(n)+(2(nA)+u)\cdot m)^{i_{0}}\prod_{j=1}^{k}(L_{j}(n)+(h_{j}A)\cdot m)^{i_{j}}\in p\Z.
		\end{split}
		\end{equation} 

		Since $2\iota(n)A+\iota(u),\iota(h_{1})A,\dots,\iota(h_{k})A$  are linearly independent, the map 
		$$m\mapsto (2(n\tau(A))+u)\cdot m,(h_{1}\tau(A))\cdot m,\dots,(h_{k}\tau(A))\cdot m) \mod p\Z^{k+1}$$
		is an injection from $\{0,\dots,p-1\}^{d}$ to $\{0,\dots,p-1\}^{k+1}$.		
	  It then follows from (\ref{1:vpp}) that 
	  \begin{equation}\label{1:vpp56}
		\begin{split}
		\sum_{i=(i_{0},\dots,i_{k})\in\N^{k}\colon 2i_{0}+i_{1}+\dots+i_{k}\leq s,i_{0}+\dots+i_{k}\leq s'-k}R_{i}(n)x_{0}^{i_{0}}\prod_{j=1}^{k}x_{j}^{i_{j}}\in p\Z.
		\end{split}
		\end{equation} 
		for all $x_{0},\dots,x_{k+1}\in\Z$. viewing (\ref{1:vpp56}) as a polynomial in $x_{0},\dots,x_{k}$, we have that 
		$QR_{i}(n)\in p\Z$ for all $i$ for some $Q\in\mathbb{N}_{+}, Q\leq O_{d,s}(1)$. Since $R_{i}(n)\in\Z$, we deduce that $R_{i}(n)\in (\frac{p}{Q}\Z)\cap \Z=p\Z$.  This proves the claim.

\

 Let $W$ be the set of $n\in V(M')\cap\iota^{-1}(V^{\pp}+c)$ such that  $2\iota(n)A+\iota(u),\iota(h_{1})A,\dots,\iota(h_{k})A$ are linearly dependent. Since $\iota(h_{1}),\dots,\iota(h_{k})$ are linearly independent and since $M'$ is non-degenerate,
  $2\iota(n)A+\iota(u),\iota(h_{1})A,\dots,\iota(h_{k})A$ are linearly dependent if and only if $\iota(n)A+\iota(u)A^{-1}/2$ belongs to the span of $\iota(h_{1}),\dots,\iota(h_{k})$. Recall that $V$ is the span of $\iota(h_{1}),\dots,\iota(h_{k})$. We see that there exist $w\in\V$ such that $W\subseteq (V\cap V^{\perp_{M'}})+w$. So by Proposition \ref{1:iissoo}, 
 $$\vert W\vert\leq p^{\dim(V^{\perp_{M'}})-\rank(M\vert_{V^{\perp_{M'}}})}\leq p^{d-k-3}.$$  
%	Since $\iota(h_{1}),\dots,\iota(h_{k})$ are linearly independent and since $M$ is non-degenerate, we have that $\iota(h_{1})A,\dots,\iota(h_{k})A$ are linearly independent. So the number of $n\in[p]^{d}$ with $\iota(2(nA)+u),\iota(h_{1}A),\dots,\iota(h_{k}A)$ being linearly dependent is $p^{k}$ {\color check, the number of solutions should be $>p^{d-k-2}$}, 
	 We may  invoke  Corollary \ref{1:noloop32} from the claim to conclude that 
	 \begin{equation}\nonumber
 	\frac{1}{p}R_{i}=R'_{i}+MR_{i,0}+\sum_{j=1}^{k}L_{j}R_{i,j}
 	\end{equation}
 	for some  integer valued  polynomials $R_{i,0},\dots,R_{i,k}\in \poly_{p}(\Z^{d}\to\Z)$ with $\deg(R_{i,0})\leq \deg(f)-2$ and $\deg(R_{i,j})\leq \deg(f)-1, 1\leq j\leq k$, and for some $R'_{i}\in  \poly_{p}(\Z^{d}\to\Z)$ with $\deg(R'_{i})\leq \deg(f)$.
	  This implies that $\frac{1}{p}f_{k}$
		is a good polynomial of degree at most $s$ and of level at most $s'-(k-1)$, which contradicts to the minimality of $k$. We have thus proved that $k=0$ and thus $Q_{0}f$ is a good polynomial of level at most $s'$.
		
		Therefore, we may write $Q_{0}f=\frac{1}{p^{\deg(f)}}g$ for some integer valued polynomial $g$.  To show that 
		(ii) holds, it suffices to show that $g(n)\in Q_{0}\Z$ for all $n\in\Z^{d}$.
		Fix any $n_{0}\in V_{p}(M)^{h_{1},\dots,h_{k}}$. For any $n\in\Z^{d}$, there exists $m\in\Z^{d}$ such that $n_{0}+pm\equiv n \mod (s!)^{d}Q_{0}\Z^{d}$. 
		Since $g$ is integer valued, we have that
		$g(n)\equiv g(n_{0}+pm) \mod Q_{0}\Z$. On the other hand, since $n_{0}+pm\in V_{p}(M)\cap \iota^{-1}(V^{\pp}+c)$, we have that $f(n_{0}+pm)=g(n_{0}+pm)/Q_{0}p^{\deg(f)}\in\Z$. So $g(n)\equiv g(n_{0}+pm)\equiv 0 \mod Q_{0}\Z$. 
		
	 	If in addition  all the coefficients of $f$ are in $\Z/p^{r}$ for some $r\in\N$, then we may set $s'=\min\{r,s\}$ in the previous discussion and deduce that $Q_{0}f$ is a good polynomial of level at most $\min\{r,s\}$. Enlarging $Q_{0}$ if necessary, we may write
		\begin{equation}\nonumber
		Q_{0}f=\sum_{i=(i_{0},\dots,i_{k})\in\N^{k}\colon 2i_{0}+i_{1}+\dots+i_{k}\leq s, i_{0}+i_{1}+\dots+i_{k}\leq \min\{r,s\}}R_{i}M^{i_{0}}\prod_{j=1}^{k}L_{j}^{i_{j}}
		\end{equation}
	for  some integer coefficient polynomials $R_{i}$ of degree at most $\deg(f)-(2i_{0}+i_{1}+\dots+i_{k})$.
	 Let  $Q^{\ast}$ be any integer with $Q^{\ast}Q_{0}\equiv 1 \mod p^{r}\Z$. Then $(Q^{\ast}Q_{0}-1)f$ is an integer coefficient polynomial, and thus the conclusion follows as  $f=Q^{\ast}Q_{0}f+(Q^{\ast}Q_{0}-1)f$ since the term $(Q^{\ast}Q_{0}-1)f$ can be absorbed by $Q^{\ast}R_{(0,\dots,0)}$.
%	
%	 
%		We may then replace in  (\ref{1:fvpp}) the terms $R_{i}$ by integer coefficient polynomials by enlarging the number $Q_{0}$, and then remove $Q_{0}$ by rewriting $f=Q^{\ast}Q_{0}f+(Q^{\ast}Q_{0}-1)f$ and absorb the term $(Q^{\ast}Q_{0}-1)f$ with $R_{(0,\dots,0)}$, where $Q^{\ast}$ is any integer with $Q^{\ast}Q_{0}\equiv 1 \mod p^{s'}\Z	
	\end{proof}
	
	As a special case of Proposition \ref{1:basicpp12}, we have
	
				\begin{coro}\label{1:basicpp1}
		Let $d\in \N_{+},s\in\N$, $p$ be a prime, and $M\colon\Z^{d}\to\Z/p$ be a 	quadratic form of $p$-rank at least 3. If $p\gg_{d,s} 1$, then the followings are equivalent: 
		\begin{enumerate}[(i)]
		\item $f$ belongs to $\poly(V_{p}(M)\to \R\vert\Z)$ and is of degree at most $s$;
		\item we have $$Q_{0}f=\sum_{i=0}^{\lfloor s/2\rfloor}M^{i}R_{i}$$
	for some  $Q_{0}\in\N_{+}, Q_{0}\leq O_{d,s}(1)$ and some integer valued polynomials $R_{i}$ of degree at most $\deg(f)-2i$.%, and that $p^{\lfloor \deg(f)/2\rfloor}f$ is an integer valued polynomial.  
		\end{enumerate}
	\end{coro}

 Our next task is to use the $p$-expansion trick to generalize  Corollary \ref{1:att301}.

   \begin{prop}\label{1:att30}
  		Let $d,K,s\in\N_{+}$, $k\in\N$, $\d>0$, $p\gg_{d,k} \d^{-O_{d,k}(1)}$ be a prime, $M\colon\Z^{d}\to\Z/p$ be a quadratic form of $p$-rank at least $s^{2}+s+3$, and $W$ be a subset of $\Gow_{p,s}(V(M))\cap([p^{2}K]^{d})^{s+1}$ of cardinality at least $\d \vert\Gow_{p,s}(V(M))\cap([p^{2}K]^{d})^{s+1}\vert$. Let $P,Q\in\poly(\Z^{d}\to \Q)$ with $\deg(P)\leq k-1$ and $\deg(Q)\leq k$. Suppose that for all $(n,h_{1},\dots,h_{s})\in W+p^{2}K(\Z^{d})^{s+1}$, we have that 		
		$$\Delta_{h_{s-1}}\dots\Delta_{h_{1}}P(n)+\Delta_{h_{s}}\dots\Delta_{h_{1}}Q(n)\in\Z,$$ (where $\Delta_{h_{s}}\dots\Delta_{h_{2}}P(n)$ is understood as $P(n)$ when $s=1$). Then $$qP=P_{1}+P_{2} \text{ and } qQ=Q_{1}+Q_{2}$$ for some integer $q\in\N_{+}$ with $q\ll_{d,k} \d^{-O_{d,k}(1)}$, some $P_{1},Q_{1}\in\poly(V_{p}(M)\to \R\vert\Z)$ with $\deg(P_{1})\leq k-1$, $\deg(Q_{1})\leq k$, and some $P_{2},Q_{2}\in\poly(\Z^{d}\to \Q)$ with $\deg(P_{2})\leq s-2$, $\deg(Q_{2})\leq s-1$.
 \end{prop}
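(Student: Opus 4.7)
My plan is to adapt the \emph{$p$-expansion trick} used in the proofs of Propositions \ref{1:basicpp1} and \ref{1:basicpp2}, with Proposition \ref{1:packforce0} playing the role of the top-level structural engine. First, by multivariate polynomial interpolation, there exist $q\in\N$ with $p\nmid q$ and $L\in\N$ such that $p^{L}qP$ and $p^{L}qQ$ are integer-valued polynomials of degrees $\le k-1$ and $\le k$ respectively. Iteratively applying Lemma \ref{1:ivie} produces a $p$-adic expansion
$$qP=\sum_{i=0}^{L}\frac{P^{(i)}}{p^{i}},\qquad qQ=\sum_{i=0}^{L}\frac{Q^{(i)}}{p^{i}},$$
where $P^{(0)},Q^{(0)}$ are integer-valued and $P^{(i)},Q^{(i)}$ for $i\ge 1$ are integer-coefficient polynomials of degrees $\le k-1$ and $\le k$. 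Since $p\gg_{d,k}1$ we may assume $k<p$, so each $P^{(i)},Q^{(i)}$ reduces modulo $p$ to a well-defined element of $\poly(\V\to\F_{p})$.

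I then induct on $L$. The base case $L=0$ is immediate because $qP,qQ$ are then integer-valued and hence lie in $\poly(V_{p}(M)\to\R\vert\Z)$; we take the degree-$\le s-2,s-1$ residuals to be zero. For $L\ge 1$, scaling the hypothesis by $q$ and comparing $p$-adic valuations term-by-term, the integer $a_{L}:=\Delta_{h_{s-1}}\cdots\Delta_{h_{1}}P^{(L)}(n)+\Delta_{h_{s}}\cdots\Delta_{h_{1}}Q^{(L)}(n)$ must be divisible by $p$ for all $(n,h_{1},\dots,h_{s})\in W+p(\Z^{d})^{s+1}$. Reducing modulo $p$ and observing that $\iota(W)$ has density $\ge\delta$ in $\Gow_{s}(V(\bar M))$ (where $\bar M$ denotes the $\F_{p}$-valued form induced by $M$, of rank $\ge s^{2}+s+3$), Proposition \ref{1:packforce0} yields decompositions $\bar P^{(L)}=\bar M\bar A+\bar B$ and $\bar Q^{(L)}=\bar M\bar C+\bar D$ with $\deg\bar A\le k-3$, $\deg\bar B\le s-2$, $\deg\bar C\le k-2$, $\deg\bar D\le s-1$. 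Lifting through integer-coefficient regular liftings produces $P^{(L)}=\hat M A+B+pR_{P}$ and $Q^{(L)}=\hat M C+D+pR_{Q}$, where $\hat M(n):=pM(n)$ is the integer-coefficient lift of $\bar M$ and $A,B,C,D,R_{P},R_{Q}$ are integer-coefficient polynomials. Substituting back,
$$\frac{P^{(L)}}{p^{L}}=\frac{MA}{p^{L-1}}+\frac{B}{p^{L}}+\frac{R_{P}}{p^{L-1}},$$
and analogously for $Q$. The terms $B/p^{L}$ and $D/p^{L}$ are $\Q$-coefficient polynomials of degrees $\le s-2$ and $\le s-1$, which I absorb into the target residuals $P_{2}$ and $Q_{2}$; the terms $R_{P}/p^{L-1},R_{Q}/p^{L-1}$ combine with $P^{(L-1)}/p^{L-1},Q^{(L-1)}/p^{L-1}$ to form new level-$(L-1)$ residuals.

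The main obstacle lies in the terms $MA/p^{L-1}$ and $MC/p^{L-1}$: although $MA,MC$ belong to $\poly(V_{p}(M)\to\R\vert\Z)$ (since $M(n)\in\Z$ on $V_{p}(M)$ and $A,C$ are integer-coefficient), division by $p^{L-1}$ prevents them from fitting into the target form when $L\ge 2$. I will handle this by enlarging the induction hypothesis to allow a running residual of shape $\sum_{j\ge 0}M^{j}B_{j}/p^{\ell-j}$ with integer-coefficient $B_{j}$ at each level $\ell$, and verify that the descent through Proposition \ref{1:packforce0} continues to apply cleanly in the presence of these accumulated $M^{j}$-factored terms by analyzing how they contribute to the top-level mod-$p$ reduction. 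At level $L=0$ the surviving expression is a sum of integer-coefficient polynomials and finitely many terms of the form $M^{j}B_{j}$ with $j\le\lfloor(k-1)/2\rfloor$, each of which belongs to $\poly(V_{p}(M)\to\R\vert\Z)$ by Proposition \ref{1:basicpp1}, completing the induction. The principal technical burden will be the bookkeeping that ensures the enlarged structure is preserved through each descent step and that the effective density of $\iota(W)$ in the $\F_{p}$-Gowers set remains bounded away from zero at every level.
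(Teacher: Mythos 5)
Your overall framework (interpolate to get a $p$-adic expansion, peel off the top level, reduce mod $p$, apply Proposition \ref{1:packforce0}, lift, and descend) is the same strategy the paper uses, and your first descent step is sound: since $P^{(L)},Q^{(L)}$ have integer coefficients and degree $<p$, the divisibility of the top-level coefficient by $p$ does reduce to an $\F_{p}$-polynomial identity on $\iota(W)\subseteq \Gow_{s}(V(\bar M))$, where Proposition \ref{1:packforce0} applies. The problem is that the "main obstacle" you flag at the end is not a bookkeeping issue but the actual core of the proof, and your proposed fix does not work as stated. Once the running residual contains terms $M^{j}B_{j}$ with $j\geq 1$, their values on $W+p(\Z^{d})^{s+1}$ are \emph{not} determined mod $p$ by the residues of the arguments: writing $M(n)=\frac{1}{p}((nA)\cdot n+u\cdot n+v)$, one has $M(n+pm)-M(n)=(2(nA)+u)\cdot m+p(mA)\cdot m$, an integer that varies with $m$ mod $p$. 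Consequently, the top-level divisibility condition you extract at an intermediate level is a polynomial identity in the auxiliary shift variables $m_{1},\dots,m_{s}$ (as in display (\ref{1:attee4}) of the paper), not an equation for a single polynomial in $\poly(\V\to\F_{p})$, and there is no "clean mod-$p$ reduction" to which Proposition \ref{1:packforce0} can be applied directly.

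The paper's proof of this proposition is devoted precisely to handling that contamination: it chooses the shifts using the $p$-linear independence of $2nA+u,h_{1}A,\dots,h_{s}A$ (generically in $(n,h)$), interpolates in the auxiliary parameters to isolate the coefficient of $c_{1}\cdots c_{i}$, and thereby converts the condition into a difference equation of \emph{lower} Gowers order $s-2i$ for each coefficient polynomial $P_{i},Q_{i}$, evaluated along a projected set $W'$ of tuples $(n+h_{1}+\dots+h_{2i},h_{2i+1},\dots,h_{s})$. It then needs a nontrivial counting argument (via Proposition \ref{1:ctsp} and a Pigeonhole/exceptional-set estimate) to show $W'$ still has density $\gg\d$ in $\Gow_{s-2i}(V(\tilde M))$, before invoking the lifted statement Corollary \ref{1:att301} at that lower order and absorbing the resulting "simple" pieces into the next level of the expansion. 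Your sketch contains neither the shift-interpolation/order-reduction mechanism nor the density-transfer count, and the enlarged induction hypothesis $\sum_{j}M^{j}B_{j}/p^{\ell-j}$ by itself does not supply them; as written, the induction cannot be closed past the first level, so the argument has a genuine gap at exactly the point the statement is hardest (cf.\ Remark \ref{1:rpprt}).
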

 \begin{rem}\label{1:rpprt}
 The proof of Proposition \ref{1:att30} involves with a very intricate application of the $p$-expansion trick. However, such a generalization Corollary \ref{1:att301} is necessary in the study of equidistribution properties for polynomial sequences (Theorem \ref{1:sLei}), where we need to deal with polynomial sequences which are apriori not necessarily periodic.  See the footnote after (\ref{1:force51}) for details.
 \end{rem}
  	\begin{proof}[Proof of Proposition \ref{1:att30}]
 	Throughout the proof we assume that $p\gg_{d,k} \d^{-O_{d,k}(1)}$.
 	For convenience denote $d':=\rank_{p}(M)\geq s^{2}+s+3$. We assume that $s\leq k+1$ since otherwise one can simply take $P=P_{2}, Q=Q_{2}, q=1$ and $P_{1}=Q_{1}=0$.
 	Since $P,Q$ take values in $\Q$ and are of degree at most $k$, by the multivariate polynomial interpolation,
 	there exists $q\in\mathbb{N}, p\nmid q$ and $t_{0}\in\N$  such that  
 	$$qP=\sum_{j=0}^{t_{0}}\frac{P'_{j}}{p^{j}} \text{ and } qQ=\sum_{j=0}^{t_{0}}\frac{Q'_{j}}{p^{j}}$$
 	for some integer coefficient polynomials $P'_{j},Q'_{j}\in\poly(\Z^{d}\to\Z)$ of with $\deg(P'_{j})\leq k-1$ and  $\deg(Q'_{j})\leq k$. 
 	We say that a polynomial $f\colon\Z^{d}\to\Z$ is  \emph{good} if 
 	$$f=\sum_{j=0}^{\lfloor \deg(f)/2\rfloor}M^{j}f_{j}$$
 	for some integer coefficient  polynomial $f_{j}\colon\Z^{d}\to\mathbb{Z}$ of degree at most  $\deg(f)-2j$.
 	Clearly $P'_{0},\dots,P'_{t_{0}},Q'_{0},\dots,Q'_{t_{0}}$ are good polynomials.
 	
 	Let $t\in\N$ be the smallest integer such that we can write  
 	$$q'P=\sum_{j=0}^{t}\frac{P''_{j}}{p^{j}}+P' \text{ and } q'Q=\sum_{j=0}^{t}\frac{Q''_{j}}{p^{j}}+Q'$$
 	for some $q'\in\N, p\nmid q'$, some good polynomials $P''_{j},Q''_{j}\in\poly(\Z^{d}\to\Z)$ of with $\deg(P''_{j})\leq k-1$,  $\deg(Q''_{j})\leq k$ and some $P',Q'\in\poly(\Z^{d}\to\Q)$ of with $\deg(P')\leq k-2$ and  $\deg(Q')\leq k-1$.  Obviously such $t$ exists and is at most $t_{0}$. Our first goal is to show that $t=0$.	
 	
 	Suppose on the contrary that $t>0$.
 	We consider the last terms and write
 	$$P''_{t}=\sum_{j=0}^{\lfloor (k-1)/2\rfloor}M^{j}P_{j} \text{ and } Q''_{t}=\sum_{j=0}^{\lfloor k/2\rfloor}M^{j}Q_{j}$$
 	for some integer coefficient  polynomials $P_{j},Q_{j}\in\poly(\Z^{d}\to\Z)$ with $\deg(P_{j})\leq k-1-2j$ and $\deg(Q_{j})\leq k-2j$.
	We say that a polynomial $g\in\poly(\Z^{d}\to\Z)$ is \emph{simple} if
	$$q'g=pg_{0}+pMg_{1}+g_{2}$$ for some $q'\in\N_{+}, q'\leq O_{k,s}(1)$, some $g_{0}\in\poly(\Z^{d}\to \Z)$ with $\deg(g_{0})\leq \deg(g)$, some 
 		 integer coefficient polynomials $g_{1}\in\poly(\Z^{d}\to \Z)$ with $\deg(g_{0})\leq \deg(g)-2$, and some $g_{2}\in\poly_{p}(\Z^{d}\to \Z/p\vert\Z)$ with $\deg(g_{0})\leq \deg(g)-1$. If we can show that all of $P_{0},\dots,P_{\lfloor (k-1)/2\rfloor},Q_{0},\dots,Q_{\lfloor k/2\rfloor}$ are simple, then enlarging the constant $q'$ if necessary, we may absorb the term $\frac{P''_{t}}{p^{t}}$ by $\frac{P''_{t-1}}{p^{t-1}}+P'$, and the term $\frac{Q''_{t}}{p^{t}}$ by $\frac{Q''_{t-1}}{p^{t-1}}+Q'$, which contradicts to the minimality of $t$.

		 By induction,
	it then suffices to show that if all of $P_{i+1},\dots,P_{\lfloor k/2\rfloor},Q_{i+1},\dots,Q_{\lfloor k/2\rfloor}$ are simple for some $0\leq i\leq \lfloor k/2\rfloor$, then $P_{i}$ and $Q_{i}$ are also simply (for the special case $i=\lfloor k/2\rfloor$, we show that $P_{i}$ and $Q_{i}$ are simply without any induction hypothesis). 	 
	
	Since $P_{i+1},\dots,P_{\lfloor k/2\rfloor},Q_{i+1},\dots,Q_{\lfloor k/2\rfloor}$ are simple, enlarging the constant $q'$ and
	 absorbing the term $\frac{1}{p^{t}}\sum_{j=i+1}^{\lfloor k/2\rfloor}M^{j}P_{j}$ by $\frac{P''_{t-1}}{p^{t-1}}+P'$, and the term $\frac{1}{p^{t}}\sum_{j=i+1}^{\lfloor k/2\rfloor}M^{j}Q_{j}$ by $\frac{Q''_{t-1}}{p^{t-1}}+Q'$ if necessary, we may assume without loss of generality that 
	$$P''_{t}=\sum_{j=0}^{i}M^{j}P_{j} \text{ and } Q''_{t}=\sum_{j=0}^{i}M^{j}Q_{j}.$$

	By the Pigeonhole Principle, there exists a subset $W_{0}$ of $\Gow_{p,s}(V(M))\cap ([p]^{d})^{s+1}$ of cardinality at least $\d\vert \Gow_{p,s}(V(M))\cap ([p]^{d})^{s+1}\vert/2$ and for each $(n,h_{1},\dots,h_{s})\in W_{0}$ a set $W_{0}(n,h_{1},\dots,h_{s})\subseteq ([pK]^{d})^{s+1}$ of cardinality at least $\d (pK)^{d(s+1)}/2$ such that $(n+pm_{0},h_{1}+pm_{1},\dots,h_{s}+pm_{s})\in W$ for all $(n,h_{1},\dots,h_{s})\in W_{0}$ and $(m_{0},\dots,m_{s})\in W_{0}(n,h_{1},\dots,h_{s})$.
	
 	Fix any $(n,h_{1},\dots,h_{s})\in W_{0}$ and $(m_{0},\dots,m_{s})\in W_{0}(n,h_{1},\dots,h_{s})$.
 	We have $$\Delta_{h_{s-1}+pm_{s-1}}\dots \Delta_{h_{1}+pm_{1}}P(n)+\Delta_{h_{s}+pm_{s}}\dots \Delta_{h_{1}+pm_{1}}Q(n)\in\Z$$ by assumption.  So 
	\begin{equation}\label{1:attee44}
 	\begin{split}
 	\Delta_{h_{s-1}+pm_{s-1}}\dots \Delta_{h_{1}+pm_{1}}P''_{t}(n)+\Delta_{h_{s}+pm_{s}}\dots \Delta_{h_{1}+pm_{1}}Q''_{t}(n)\in p\Z.
 	\end{split}
 	\end{equation}

Assume that 
$$M(n)=\frac{1}{p}((nA)\cdot n+u\cdot n+v)$$
for some $d\times d$ symmetric integer coefficient matrix $A$, some $u\in\Z^{d}$ and some $v\in\Z$. Then for all $m,n\in V_{p}(M)$, we have 
\begin{equation}\label{1:attee444}
 	\begin{split}
 	M(n+pm)\equiv M(n)+((2(nA)+u)\cdot m) \mod p\Z.
 	\end{split}
 	\end{equation}

	For any $r\in\N_{+}$ $\e=(\e_{1},\dots,\e_{r})\in\{0,1\}^{r}$ and $a=(a_{1},\dots,a_{r})\in(\Z^{d})^{r}$, denote 
	$$\e\cdot a:=\e_{1}a_{1}+\dots+\e_{r}a_{r}\in \Z^{d}.$$
	Since  $P_{j}$ and $Q_{j}$ have integer coefficients, it is not hard to see from (\ref{1:attee44}) and (\ref{1:attee444}) that any $(n,h_{1},\dots,h_{s})\in W_{0}$ and $(m_{0},\dots,m_{s})\in W_{0}(n,h_{1},\dots,h_{s})$, we have
 	\begin{equation}\label{1:attee4}
 	\begin{split}
 	&\frac{1}{p}\sum_{j=0}^{i}\sum_{\e_{\ast}\in\{0,1\}^{s-1}}(-1)^{\vert\e_{\ast}\vert}(M(n'+\e_{\ast}\cdot h_{\ast})+(2(n'+\e_{\ast}\cdot h_{\ast})A+u)\cdot (\e_{\ast}\cdot m_{\ast}))^{j}
	P_{j}(n'+\e_{\ast}\cdot h_{\ast})
	\\&\qquad+\frac{1}{p}\sum_{j=0}^{i}\sum_{\e\in\{0,1\}^{s}}(-1)^{\vert\e\vert}(M(n'+\e\cdot h)+(2(n'+\e\cdot h)A+u)\cdot (\e\cdot m))^{j}
	Q_{j}(n'+\e\cdot h)\in \Z,
 	\end{split}
 	\end{equation}
  where $n'=n+pm_{0}$, $h=(h_{1},\dots,h_{s})$, $h_{\ast}=(h_{1},\dots,h_{s-1})$,  $m=(m_{1},\dots,m_{s})$, and $m_{\ast}=(m_{1},\dots,m_{s-1})$, and the terms $\e_{\ast}, h_{\ast},m_{\ast}$ are regarded as non-existing if $s=1$. 
  Since $P_{i}$ and $Q_{i}$ have integer coefficients, the residue class mod $\Z$ of the left hand side of (\ref{1:attee4}) remains unchanged if we replace $m_{i}$ by $m_{i} \mod p\Z$ for $0\leq i\leq s$. 
  In order words, for fixed $(n,h_{1},\dots,h_{s})\in W_{0}$, the left hand side of (\ref{1:attee4}) is a $p$-periodic polynomial in the variables $(m_{0},\dots,m_{s})$.
 Since $W_{0}(n,h_{1},\dots,h_{s}) \mod p(\Z^{d})^{s+1}$ contains at least $\d p^{d(s+1)}/2$ residue classes mod $p\Z$, it follows from Lemma \ref{1:ns} that (\ref{1:attee4}) holds for  all $m_{0},\dots,m_{s}\in \Z^{d}$ and for all $(n,h_{1},\dots,h_{s})\in W_{0}$ (and thus for all $(n,h_{1},\dots,h_{s})\in W_{0}+p(\Z^{d})^{s+1}$).

	 Suppose that $2nA+u,h_{1}A,\dots,h_{s}A$ are $p$-linearly independent. It is not hard to see that for any   $1\leq j\leq i$ and $c_{j}\in\Z^{d}$, there exists $m_{j}\in\Z^{d}$ such that $$(2nA+u)\cdot m_{j}, (h_{j'}A)\cdot m_{j}, (h_{i+j}A)\cdot m_{j}-c_{j}\in p\Z$$ for all  $1\leq j'\leq s, j'\neq i+j$.
	 	For $i+1\leq j\leq s$, let $m_{j}=\bold{0}$.
		 
 	Then for all $0\leq j\leq i$, and $\e_{\ast}\in\{0,1\}^{s-1}$, we have that
	\begin{equation}\label{1:aatt11}
 	\begin{split}
 	&\quad 
	(M(n+\e_{\ast}\cdot h_{\ast})+(2(n+\e_{\ast}\cdot h_{\ast})A+u)\cdot (\e_{\ast}\cdot m_{\ast}))^{j}P_{j}(n+\e_{\ast}\cdot h_{\ast}) 
	\\&=
	(M(n+\e_{\ast}\cdot h_{\ast})+(2(n+\e'\cdot h'+\e''_{\ast}\cdot h''_{\ast})A+u)\cdot (\e'\cdot m'))^{j}P_{j}(n+\e_{\ast}\cdot h_{\ast}) 
 	\\&\equiv\Bigl(M(n+\e_{\ast}\cdot h_{\ast})+2\sum_{\ell=1}^{i}\e_{\ell}\e_{i+\ell}c_{\ell}\Bigr)^{j}P_{j}(n+\e_{\ast}\cdot h_{\ast}) \mod p\Z,
 	\end{split}
 	\end{equation}
	where $h':=(h_{1},\dots,h_{i})$, $h''_{\ast}:=(h_{i+1},\dots,h_{s-1})$, $\e':=(\e_{1},\dots,\e_{i})$ and $\e''_{\ast}:=(\e_{i+1},\dots,\e_{s-1})$.
	Similarly, for all  $0\leq j\leq i$, and $\e\in\{0,1\}^{s}$, we have that
	\begin{equation}\label{1:aatt12}
 	\begin{split}
 	&\quad 
	(M(n+\e\cdot h)+(2(n+\e\cdot h)A+u)\cdot (\e\cdot m))^{j}Q_{j}(n+\e\cdot h) 
 	\\&\equiv\Bigl(M(n+\e\cdot h)+2\sum_{\ell=1}^{i}\e_{\ell}\e_{i+\ell}c_{\ell}\Bigr)^{j}Q_{j}(n+\e\cdot h) \mod p\Z.
 	\end{split}
 	\end{equation}	
 	Combining (\ref{1:attee4}), (\ref{1:aatt11}) and (\ref{1:aatt12}), we have that
	\begin{equation}\label{1:aatt1}
 	\begin{split}
 	&\sum_{j=0}^{i}\sum_{\e_{\ast}\in\{0,1\}^{s-1}}(-1)^{\vert\e_{\ast}\vert} \Bigl(M(n+\e_{\ast}\cdot h_{\ast})+2\sum_{\ell=1}^{i}\e_{\ell}\e_{i+\ell}c_{\ell}\Bigr)^{j}P_{j}(n+\e_{\ast}\cdot h_{\ast})	
	\\&+\sum_{j=0}^{i}\sum_{\e\in\{0,1\}^{s}}(-1)^{\vert\e\vert} \Bigl(M(n+\e\cdot h)+2\sum_{\ell=1}^{i}\e_{\ell}\e_{i+\ell}c_{\ell}\Bigr)^{j}Q_{j}(n+\e\cdot h)\in p\Z	
	\end{split}
 	\end{equation}	
	for all $c_{1},\dots,c_{j}\in\Z^{d}$.
 	Viewing   (\ref{1:aatt1}) as a polynomial of $c_{1},\dots,c_{i}$ and considering the coefficient of the term $c_{1}\cdot\ldots\cdot c_{i}$,  by interpolation, we have that
 	\begin{equation}\nonumber
 	\begin{split}
	&	q''\sum_{\e'''_{\ast}\in\{0,1\}^{s-2i-1}}(-1)^{\vert\e'''_{\ast}\vert} P_{i}(n+h_{1}+\dots+h_{2i}+\e'''_{\ast}\cdot h'''_{\ast})
 	\\&+q''\sum_{\e'''\in\{0,1\}^{s-2i}}(-1)^{\vert\e'''\vert} Q_{i}(n+h_{1}+\dots+h_{2i}+\e'''\cdot h''')\in p\Z
 	\end{split}
 	\end{equation}	
	for some $q''\in\N_{+}, q''\leq O_{d,k}(1)$,
	where $h'''=(h_{2i+1},\dots,h_{s}),$ $h'''_{\ast}=(h_{2i+1},\dots,h_{s-1})$, $\e'''_{\ast}=(\e_{2i+1},\dots,\e_{s-1})$ and  $\e'''=(\e_{2i+1},\dots,\e_{s})$, and the sum $$\sum_{\e\in\{0,1\}^{r}}(-1)^{\vert\e\vert}a(n+\e\cdot h)$$ is considered as $a(n)$ if $r=0$, and as $0$ if $r<0$. 
	In conclusion, we have that
 	\begin{equation}\label{1:attee5}
 	\begin{split}	
 	q''(\Delta_{h_{s-1}}\dots\Delta_{h_{2i+1}}P_{i}(n')+\Delta_{h_{s}}\dots\Delta_{h_{2i+1}}Q_{i}(n')) \in p\Z
 	\end{split}
 	\end{equation}	
	with $n'=n+h_{1}+\dots+h_{2i}$
 	for all $(n,h_{1},\dots,h_{s})\in W_{0}+p(\Z^{d})^{s+1}$ such  that $2nA+u,h_{1}A,\dots,h_{s}A$ are $p$-linearly independent. 
	
	It is convenient to work in the $\V$-setting in the rest of the proof.
	Let $\tilde{M}\colon\V\to\F_{p}$ be the non-degenerate quadratic form induced by $M$, and let $W'$ denote the set of $(n',h_{2i+1},\dots,h_{s})\in (\V)^{s-2i+1}$ such that $n'=n+h_{1}+\dots+h_{2i}$ for some $(n,h_{1},\dots,h_{s})\in \iota(W_{0})$ such  that $2n\iota(A)+\iota(u),h_{1}\iota(A),\dots,h_{s}\iota(A)$ are linearly independent.
	Then it is not hard to see that (\ref{1:attee5}) holds for all $(n',h_{2i+1},\dots,h_{s})\in \iota^{-1}(W')$.  If we can show that $\vert W'\vert\geq \d \vert \Gow_{s-2i}(V(\tilde{M}))\vert/8$, then it follows from (\ref{1:attee5}) and Corollary \ref{1:att301} that $P_{i}$ and $Q_{i}$ are simply.

	We now estimate the cardinality of $W'$.
	Since $d'\geq s^{2}+s+3$, it follows from Proposition \ref{1:ctsp} that 
	\begin{equation}\label{1:temps0}
	\vert\Gow_{j}(V(\tilde{M}))\vert=p^{(j+1)d-(\frac{j(j+1)}{2}+1)}(1+O_{j}(p^{-1/2}))
	\end{equation}
	  for all $0\leq j\leq s$. 
	For convenience denote $\x:=(n',h_{2i+1},\dots,h_{s})$. We say that $\x$ is \emph{good} if $2n'\iota(A)+\iota(u),h_{2i+1}\iota(A),\dots,h_{s}\iota(A)$ are linearly independent. Let $W'_{\x}$ denote the set of $(h_{1},\dots,h_{2i})\in(\V)^{2i}$ such that $(n'-h_{1}-\dots-h_{2i},h_{1},\dots,h_{s})\in \iota(W)$. Then
	\begin{equation}\label{1:temps1}
	\sum_{\x\in \Gow_{s-2i}(V(\tilde{M}))}\vert W'_{\x}\vert=\vert W_{0}\vert\geq \d \vert\Gow_{s}(V(\tilde{M}))\vert/2=\frac{1}{2}\d p^{(s+1)d-(\frac{s(s+1)}{2}+1)}(1+O_{s}(p^{-1/2})).
	\end{equation}
	Therefore, informally speaking, by (\ref{1:temps0}) and  (\ref{1:temps1}), on average the cardinality of $W'_{\x}$ is at least $\d p^{2id-(2s-2i+1)i}/2$.
	
 Note that for good $\x$, $(h_{1},\dots,h_{2i})\in W'_{\x}$ only if $(n'-h_{1}-\dots-h_{2i},h_{1},\dots,h_{s})\in \Gow_{s}(V(\tilde{M}))$. Using Lemma \ref{1:changeh}, this is equivalent of saying that
 \begin{itemize}
    \item $\tilde{M}(n')-\tilde{M}(n'-h_{j})=(2n'\iota(A)+\iota(u))\cdot h_{j}=0$ for all $1\leq j\leq 2i$;
    \item $(h_{j}\iota(A))\cdot h_{j'}=0$ for all $1\leq j,j'\leq s$ with at least one of $j,j'$ belonging to $\{1,\dots,2i\}$.  
 \end{itemize}
 There are in total $(2s-2i+1)i\leq (s+1/2)^{2}/2$ independent equations above in total.\footnote{Using the terminology in Appendix \ref{1:s:AppB}, this means that $W'_{\x}$ is a subset of an $\tilde{M}$-set of total co-dimension $(2s-2i+1)i$.} Since $d\geq s^{2}+s+3$, by repeatedly using Corollary \ref{1:counting02} and using the fact that $\x$ is good, it is not difficult to show that 
 \begin{equation}\label{1:temps2}
	\vert W'_{\x}\vert\leq p^{2id-(2s-2i+1)i}(1+O_{s}(p^{-1/2})).
\end{equation} 
Here we will not present the proof for (\ref{1:temps2}), as this is a special case of Theorem \ref{1:ct}.

 Let $V:=\{n\iota(A)\colon n\in\V\}\subseteq \V$. Then $V$ is a subspace of $\V$ of dimension $d'$. Let $V'$ be the set of $m,m_{2i+1},\dots,m_{s}\in V$ such that $2m+\iota(u),m_{2i+1},\dots,m_{s}$ are linearly dependent. Similar to the proof of Lemma \ref{1:iiddpp} (i), it is not hard to see the $\vert V'\vert\leq sp^{(d+1)(s-2i)}$. On the other hand, for each $m\in V$, the number of $n\in\V$ such that $n\iota(A)=m$ is at most $p^{d-d'}$. Therefore, the number of $\x\in(\V)^{s-2i+1}$ which is not good is at most 
 $p^{d-d'}\vert V'\vert\leq sp^{(s-2i+1)d+(s-2i)-d'}.$ So
 \begin{equation}\label{1:temps3}
	\sum_{\x\in (\V)^{s-2i+1}, \text{ $\x$ is not good}}\vert W'_{\x}\vert\leq sp^{(s-2i+1)d+(s-2i)-d'}\cdot p^{2id}=sp^{(s+1)d+(s-2i)-d'}.
	\end{equation}
 Since $(s+1)d+(s-2i)-d'<(s+1)d-(\frac{s(s+1)}{2}+1)$, it follows from (\ref{1:temps0}), (\ref{1:temps1}), (\ref{1:temps2}), (\ref{1:temps3}) and the Pigeonhole Principle that 
 there exists a subset $W''\subseteq \Gow_{s-2i}(V(\tilde{M}))$ of cardinality at least $\d \vert \Gow_{s-2i}(V(\tilde{M}))\vert/20$ such that $\vert W'_{\x}\vert\geq \d p^{2id-(2s-2i+1)i}/20$ for all $\x\in W''$. In particular, $W'_{\x}$ is non-empty for all $\x\in W''$.
  Therefore, $W''$ is a subset of $W'$ and thus  
  $\vert W'\vert\geq \vert W''\vert\geq \d \vert \Gow_{s-2i}(V(\tilde{M}))\vert/20$. 
  
  \

  In conclusion, we have shown by induction that $t=0$ and therefore we may write 
  $$qP=P'_{0} \text{ and } qQ=Q'_{0}$$
  for some good polynomials $P'_{0},Q'_{0}$ of degrees at most $k-1$ and $k$ respectively.
 % Then 
 % $$\Delta_{h_{s-1}}\dots\Delta_{h_{1}}P(n)+\Delta_{h_{s}}\dots\Delta_{h_{1}}Q(n)\in q\Z$$
%  for all $(n,h_{1},\dots,h_{s})\in W$.
Our final goal is to show that one can further require $q$ to be small.

    Let $q^{\ast}\in\N_{+}$ be such that $qq^{\ast}\equiv 1 \mod p^{k}\Z$. Then  
    $P=\frac{qq^{\ast}-1}{q}P'_{0}+q^{\ast}P'_{0}$.
    Denote $P_{0}:=(qq^{\ast}-1)P'_{0}$. Since $P'_{0}$ is good, we have that $P_{0}$ has coefficients in $\Z$ 
and that  $\Delta_{h_{s-1}}\dots\Delta_{h_{1}}(q^{\ast}P'_{0})(n)\in\Z$ for all $(n,h_{1},\dots,h_{s})\in \Gow_{p,s}(V(M))$.
Similarly, $Q_{0}:=(qq^{\ast}-1)Q'_{0}$ also has coefficients in $\Z$, and $\Delta_{h_{s}}\dots\Delta_{h_{1}}(q^{\ast}Q'_{0})(n)\in\Z$ for all $(n,h_{1},\dots,h_{s})\in \Gow_{p,s}(V(M))$.
      So we have that
      $$\Delta_{h_{s-1}}\dots\Delta_{h_{1}}P_{0}(n)+\Delta_{h_{s}}\dots\Delta_{h_{1}}Q_{0}(n)\in q\Z$$
for all $(n,h_{1},\dots,h_{s})\in W+([pK']^{d})^{s+1}$, where for convenience we denote $K':=pK$.

    By the Pigeonhole Principle, there exists some $(n,h_{1},\dots,h_{s})\in \Gow_{p,s}(V(M))\cap([p]^{d})^{s+1}$ and a subset $J\subseteq ([K']^{d})^{s+1}$ of cardinality at least $\d{ K'}^{d(s+1)}$ such that $(n+pm_{0},h_{1}+pm_{1},\dots,h_{s}+pm_{s})\in W$ for all $(m_{0},\dots,m_{s})\in J$. 
      Write 
    $$G(m_{0},m_{1},\dots,m_{s}):=\Delta_{h_{s-1}+pm_{s-1}}\dots\Delta_{h_{1}+pm_{1}}P_{0}(n+pm_{0})+\Delta_{h_{s}+pm_{s}}\dots\Delta_{h_{1}+pm_{1}}Q_{0}(n+pm_{0}).$$
    Then $G$ is a rational polynomial of degree at most $k$.  
    By assumption,
    $\frac{1}{q}G(\m)\in \Z$ for all $\m\in J+([K']^{d})^{s+1}$.  
    So by Proposition \ref{1:polyGT}, there exists $r\in\N_{+}$ with $r\ll_{d,k,s} \d^{-O_{d,k,s}(1)}$ such that all the coefficients of $\frac{1}{q}G$ are in $\Z/r$.
        Write
    \begin{equation}\nonumber
    \begin{split}
       &\quad G'(m_{0},m_{1},\dots,m_{s}):=\frac{1}{q}G\Bigl(\frac{1}{p}m_{0}-n,\frac{1}{p}m_{1}-h_{1},\dots,\frac{1}{p}m_{s}-h_{s}\Bigr)
       \\&=\Delta_{m_{s-1}}\dots\Delta_{m_{1}}(\frac{1}{q}P_{0})(m_{0})+\Delta_{m_{s}}\dots\Delta_{m_{1}}(\frac{1}{q}Q_{0})(m_{0}).
    \end{split}
    \end{equation}
    Then  the coefficients of $G'$ are in $\Z/rp^{k}$. On the other hand, since $P_{0}$ and $Q_{0}$ are integer coefficient polynomials, the coefficients of $G'$ are in $\Z/q$ and are thus in $(\Z/rp^{k})\cap(\Z/q)\subseteq \Z/r$. 
    
    Now assume that 
    $$\frac{1}{q}P_{0}(n):=\sum_{i\in\N^{d},\vert i\vert\leq k} a_{i}n^{i} \text{ and } \frac{1}{q}Q_{0}(n):=\sum_{i\in\N^{d},\vert i\vert\leq k} b_{i}n^{i}$$
    for some $a_{i},b_{i}\in\Q$.
    Fix any $i=(i_{1},\dots,i_{d})\in\N^{d}$ with $\vert i\vert\geq s-1$.
    Let $(m_{1},\dots,m_{\vert i\vert+1})\in (\Z^{d})^{\vert i\vert}$ be any vector such that $m_{s}=\bold{0}$
    and that
    the number of $m_{i'}, 1\leq i'\leq \vert i\vert+1, i'\neq s$ which equals to the $j$-th standard unit vector is  equal to $i_{j}$ for all $1\leq j\leq d$. Then it is not hard to see that 
    $$\Delta_{m_{\vert i\vert+1}}\dots\Delta_{m_{s+1}}G'(\bold{0},m_{1},\dots,m_{s})=i!a_{i}.$$
    Since the coefficients of $G'$ are in  $\Z/r$, we have that $a_{i}\in \Z/r'$ for all  $i\in\N^{d}$ with $\vert i\vert\geq s-1$, where $r':=(k!)^{d}r$. 
    
    Now fix any $i=(i_{1},\dots,i_{d})\in\N^{d}$ with $\vert i\vert\geq s$.
    Let $(m_{1},\dots,m_{\vert i\vert})\in (\Z^{d})^{\vert i\vert}$ be any vector such that      the number of $m_{i'}, 1\leq i'\leq \vert i\vert$ which equals to the $j$-th standard unit vector is  equal to $i_{j}$ for all $1\leq j\leq d$. Then it is not hard to see that 
    $$\Delta_{m_{\vert i\vert}}\dots\Delta_{m_{s+1}}G'(\bold{0},m_{1},\dots,m_{s})=i!b_{i}+(i-m_{s})!a_{i-m_{s}}.$$
   Since the coefficients of $G'$ are in  $\Z/r$ and since $a_{i-m_{s}}\in \Z/r'$, we have that  $b_{i}\in \Z/r'$ for all  $i\in\N^{d}$ with $\vert i\vert\geq s$. 
 So we may write $P_{0}$ as $\frac{1}{q}P_{0}=P_{1}+P_{2}$ for some polynomial $P_{1}$ with coefficients in $\Z/r'$ having at most the same degree as $P$ and some $P_{2}$ of degree at most $s-2$. So
 $$r'P=r'(q^{\ast}P'_{0}+P_{1})+r'P_{2}$$
         where $r'(q^{\ast}P'_{0}+P_{1})$ take integer values in $V_{p}(M)$. A similar decomposition apples to $Q$ and so we are done.   
    \end{proof}

  By a change of variable trick, we use Proposition \ref{1:att30} to deduce the follow result:
 
    \begin{prop}\label{1:att3001}
  	  		Let $d,s\in\N_{+}, k\in\N$, 
		$p\gg_{d} 1$ be a prime, $c\in\V$, 
		$V$ be a subspace of $\V$ of dimension $k$ with a basis $\iota(h_{1}),\dots,\iota(h_{k})$ for some $h_{1},\dots,h_{k}\in\Z^{d}$, 
		$M\colon\V\to\F_{p}$ be a non-degenerate quadratic form with $\rank(M\vert_{V^{\pp}})\geq s^{2}+s+3$, %and $W$ be a subset of $\Gow_{p,s}(V(M))\cap([p^{2}K]^{d})^{s+1}$ of cardinality at least $\d \vert\Gow_{p,s}(V(M))\cap([p^{2}K]^{d})^{s+1}\vert$. 
		and $g\in\poly(\Z^{d}\to \Q)$ with $\deg(g)\leq s$. Suppose that for all $(n_{0},n_{1},\dots,n_{s})\in \iota^{-1}(\Gow_{s}(V(M)\cap (V^{\pp}+c)))$, we have that 		
		$\Delta_{n_{s}}\dots\Delta_{n_{1}}g(n_{0})\in\Z.$ Then $$qg=g_{1}+g_{2}$$ for some integer $q\in\N_{+}$ with $q\ll_{d}  1$, some $g_{1}\in\poly(\iota^{-1}(V(M)\cap (V^{\pp}+c))\to \R\vert\Z)$ with $\deg(g_{1})\leq s$,  and some $g_{2}\in\poly(\Z^{d}\to \Q)$ with $\deg(g_{2})\leq s-1$.
 \end{prop}
  \begin{proof}
  If $k=0$, then the conclusion follows from Proposition \ref{1:att30}. So now we assume that $k\geq 1$.
     Since $\Delta_{h_{s}}\dots\Delta_{h_{1}}g(n)=0$ whenever $\deg(g)<s$. We may assume without loss of generality that $g$ is a homogeneous polynomial of degree $s$. 
     Let $A$ be the matrix associated to $M$, and $\tilde{M}$  be the regular liftings of $M$. 
     Let $M_{0}\colon\V\to\F_{p}$ be the quadratic form given by $M_{0}(n):=(nA)\cdot n$.
 %  We first show that for all $n_{\ast}\in \iota^{-1}(V(M_{0})\cap V^{\pp})$, we have that $g(n_{\ast})=0$.   
  %    
      Since $M$ is non-degenerate, $V^{\pp}$ is of dimension $r:=d-k$.
     Let $v_{1},\dots,v_{r}$ be a basis of $V^{\pp}$ %with $\iota(v_{1})=n_{\ast}$. Let
     and  $\phi\colon  \F_{p}^{r}\to V^{\pp}$ be the bijective linear transformation with $\phi(e_{i}):=v_{i}$ for $1\leq i\leq r$. Let $M'\colon \F_{p}^{r}\times \V\to \F_{p}$ be the quadratic form given by $M'(m,n):=M(\phi(m)+c)$ for all $m\in\F_{p}^{r}$ and $n\in\V$. Suppose that $\phi(m):=mB$ for some $r\times d$ matrix $B$. Then $BAB^{T}$ is the matrix associated to $M'$.
%Pick any $\tilde{v}_{2},\dots,\tilde{v}_{r}\in\Z^{d}$ such that $\iota(\tilde{v}_{i})=v_{i}$ for all $2\leq i\leq r$, denote $\tilde{v}_{1}:=n_{\ast}$, 
      Pick any $\tilde{v}_{1},\dots,\tilde{v}_{r}\in\Z^{d}$ such that $\iota(\tilde{v}_{i})=v_{i}$ for all $1\leq i\leq r$,  and let $\tilde{\phi}\colon\Z^{r}\to\Z^{d}$ be the linear map given by $\tilde{\phi}(e_{i}):=\tilde{v}_{i}$ for $1\leq i\leq r$.  It is clear that $\iota(\tilde{V})=V$ and $\iota\circ\tilde{\phi}=\phi\circ\iota$.

%      Let $\phi\colon \F_{p}^{r}\to V$ be any bijective linear transformation for some $r\in\N$, and  $M'\colon \F_{p}^{r}\to \F_{p}$ be the quadratic form given by $M'(m):=M(\phi(m)+c)$. %By Lemma \ref{1:changeh}, for all $n,h_{1},\dots,h_{s}\in\V$, we have that $(n,h_{1},\dots,h_{s})\in \Gow_{s}(V(M)\cap (V+c))$ if and only if $(n,h_{1},\dots,h_{s})=(\phi(n')+c,\phi(h'_{1}),\dots,\phi(h'_{s}))$
%	for some $(n',h'_{1},\dots,h'_{s})\in \Gow_{s}(V(M'))$. 
	
%	{\color By ???, there exists a subspace $\tilde{V}$ of $\Z^{d}$ and a bijective linear transformation $\tilde{\phi}\colon\Z^{r}\to\tilde{V}$ such that $\iota(\tilde{V})=V$ and $\iota\circ\tilde{\phi}=\phi\circ\iota$, and that $\iota(\tilde{c})=c$.}
	
	Let $\tilde{c}:=\tau(c)$ and $g'\in\poly(\Z^{r+d}\to\Q)$ be the polynomial defined by $g'(m,n):=g(\tilde{\phi}(m)+pn+\tilde{c})$.
	It is not hard to check that for all $((n_{0}',n_{0}''),(n'_{1},n''_{1}),\dots,(n'_{s},n''_{s}))\in \iota^{-1}(\Gow_{s}(V(M')))$, we have that $(\tilde{\phi}(n_{0}')+pn_{0}''+\tilde{c},\tilde{\phi}(n'_{1})+pn''_{1},\dots,\tilde{\phi}(n'_{s})+pn''_{s})\in \iota^{-1}(\Gow_{s}(V(M)\cap (V+c)))$.
	By assumption, we have that 
	$$\Delta_{(n'_{s},n''_{s})}\dots\Delta_{(n'_{1},n''_{1})}g'(n_{0}',n_{0}'')=\Delta_{\tilde{\phi}(n'_{s})+pn''_{s}}\dots\Delta_{\tilde{\phi}(n'_{1})+pn''_{1}}g(\tilde{\phi}(n_{0}')+pn_{0}''+\tilde{c})\in\Z.$$
	
	Since $\rank(M')=\rank(M\vert_{V^{\pp}})\geq s^{2}+s+3$, 
	by Proposition \ref{1:att30},  we may write
	\begin{equation}\label{1:gqggq}
	qg'=g'_{1}+g'_{2}
	\end{equation}
	  for some integer $q\in\N_{+}$ with $q\ll_{d} 1$, some $g'_{1}\in\poly(\iota^{-1}(V(M'))\to \R\vert\Z)$ with $\deg(g'_{1})\leq s$,  and some $g'_{2}\in\poly(\Z^{d}\to \Q)$ with $\deg(g'_{2})\leq s-1$. 
	  Let  $\tilde{M}'$ be the regular liftings of  $M'$.
	      Since $g'_{1}\in\poly(\iota^{-1}(V(M'))\to \R\vert\Z)=\poly(V_{p}(\tilde{M}')\to \R\vert\Z)$, by Corollary \ref{1:basicpp1}, enlarging $q$ if necessary, we may assume that $g'_{1}$ is of the form
	$$g'_{1}=\sum_{i=0}^{\lfloor s/2\rfloor}R_{i}\tilde{M'}^{i}$$
	for some integer coefficient polynomial $R_{i}$ of degree at most $s-2i$.
	Since $g$ is a homogeneous polynomial of degree $s$, comparing the leading coefficients on both sides of (\ref{1:gqggq}), we have that 
 	\begin{equation}\label{1:gqggq2}
	qg(\tilde{\phi}(m)+pn)=\sum_{i=0}^{\lfloor s/2\rfloor}R'_{i}(m,n)(\frac{1}{p}(m\tau(BAB^{T}))\cdot m)^{i} 
		\end{equation}
	for some integer coefficient homogeneous polynomial $R_{i}$ of degree  $s-2i$. 
	Note that if $n_{\ast}\in \iota^{-1}(V(M_{0})\cap V^{\pp})$, then $\iota(n_{\ast})=\phi(m_{0})$ for some $m_{0}\in \F_{p}^{r}$. Then there exist $(m,n)\in\Z^{r+d}$ such that $n_{\ast}=\tilde{\phi}(m)+pn$ and that $\iota(m)=\phi(m_{0})$. Since $\phi(m_{0})\in V(M_{0})$, we have that $(\phi(m_{0})A)\cdot \phi(m_{0})=(m_{0}BAB^{T})\cdot m_{0}=0$. So  $\frac{1}{p}(m\tau(BAB^{T}))\cdot m\in\Z$. Since $R'_{i}(m,n)\in\Z$, it follows from (\ref{1:gqggq2}) that $qg(n_{\ast})=qg(\tilde{\phi}(m)+pn)\in\Z$.  
%		
%	Let 
%	$$g_{i}(\tilde{\phi}(n)+\tilde{\phi}'(m)+c):=g'_{i}(n)$$
%for all $n\in\Z^{r}, m\in\Z^{d-r}$ and $i=1,2$. Then $g_{1}$ and $g_{2}$ are rational polynomials with $\deg(g_{1})\leq s$ and $\deg(g_{2})\leq s-1$, and we have that
%$$qg(\tilde{\phi}(n)+\tilde{\phi}'(m)+c)=$$
%	
%	Then for all $(n',h'_{1},\dots,h'_{s})\in \iota^{-1}(\Gow_{s}(V(M')))$ and $\e_{1},\dots,\e_{s}\in\{0,1\}$, we have that 
%	\begin{equation}\nonumber
%	\begin{split}
%	&\quad M\circ\iota(\tilde{\phi}(n)+\tilde{c}+\e_{1}\tilde{\phi}(h_{1})+\dots+\e_{s}\tilde{\phi}(h_{s}))=M(\phi\circ\iota(n+\e_{1}h_{1}+\dots+\e_{s}h_{s})+c)
%	\\&=M'(\iota(n+\e_{1}h_{1}+\dots+\e_{s}h_{s}))=0.
%	\end{split}
%	\end{equation}
%	On the other hand, $\iota(\tilde{\phi}(n)+\tilde{c})=\phi(\iota(n))+c$.
%	
%	Therefore, for all $n,h_{1},\dots,h_{s}\in\Z^{d}$, we have that $(n,h_{1},\dots,h_{s})\in \iota^{-1}(\Gow_{s}(V(M)\cap (V+c)))$ if and only if $(n,h_{1},\dots,h_{s})=(\phi(n')+c,\phi(h'_{1}),\dots,\phi(h'_{s}))$
%	for some $(n',h'_{1},\dots,h'_{s})\in \iota^{-1}(\Gow_{s}(V(M')))$.
 %     Let $g'\in\poly(\Z^{r}\to\Q)$ be the map given by $g'(m):=g()$
   %   
   %   
   
   In other words, we have that $qg\in\poly(\iota^{-1}(V(M_{0})\cap V^{\pp})\to\R\vert\Z)$.  By Proposition \ref{1:basicpp12}, enlarging $q$ again if necessary, we may write $qf$ as
   \begin{equation}\label{1:fvppr}
		qg=\sum_{i=(i_{0},\dots,i_{k})\in\N^{k}\colon 2i_{0}+i_{1}+\dots+i_{k}\leq s}R_{i}\tilde{M}_{0}^{i_{0}}\prod_{j=1}^{k}L_{0,j}^{i_{j}}
		\end{equation}
   for some integer valued polynomials $R_{i}$ of degree at most $s-(2i_{0}+i_{1}+\dots+i_{k})$, where $\tilde{M}_{0}$ is the regular lifting of $M_{0}$ and
   $$L_{0,j}(n):=\frac{1}{p}(h_{j}\tau(A))\cdot n.$$
   Denote
     $$L_{j}(n):=\frac{1}{p}(h_{j}\tau(A))\cdot (n-\tau(c)).$$
   It then follows from (\ref{1:fvppr}) that we may write $qg$ as
    \begin{equation}\nonumber
		qg:=g_{1}+g_{2}:=\sum_{i=(i_{0},\dots,i_{k})\in\N^{k}\colon 2i_{0}+i_{1}+\dots+i_{k}\leq s}R_{i}M^{i_{0}}\prod_{j=1}^{k}L_{j}^{i_{j}}+g_{2}
		\end{equation}
for some $g_{2}\in\poly(\Z^{d}\to\Q)$ of degree at most $s-1$. Again by Proposition \ref{1:basicpp12}, we have that $g_{1}\in\poly(\iota^{-1}(V(M)\cap (V^{\pp}+c))\to \R\vert\Z)$.
     We are done.
  \end{proof}

As a consequence of Proposition \ref{1:att3001}, we have the following result, which will be used in \cite{SunC,SunD}:
%   we may extend Corollary \ref{1:att2} to rational valued functions:  
 
 \begin{prop}\label{1:att3}
  		Let $d,s\in\N_{+}, k\in\N$, 
		$p\gg_{d} 1$ be a prime, $c\in\V$, 
		$V$ be a subspace of $\V$ of dimension $k$ with a basis $\iota(h_{1}),\dots,\iota(h_{k})$ for some $h_{1},\dots,h_{k}\in\Z^{d}$, 
		$M\colon\V\to\F_{p}$ be a non-degenerate quadratic form with $\rank(M\vert_{V^{\pp}})\geq s^{2}+s+3$ associated with a matrix $A$. Let $M_{0}\colon\V\to\F_{p}$ be the quadratic form given by $M_{0}(n):=(nA)\cdot n$.
	Then for any  $g\in\poly(\Z^{d}\to \Q)$  of degree at most $s$, the followings are equivalent:
 	\begin{enumerate}[(i)]
 		\item $\Delta_{n_{s}}\dots\Delta_{n_{1}}g(n_{0})\in \frac{1}{q}\Z$ for some $q\in\N,p\nmid q$ for all $(n_{0},n_{1},\dots,n_{s})\in \Gow_{p,s}(\iota^{-1}(V(M)\cap (V^{\pp}+c)))$;
  		\item we have  $$g=\frac{1}{q}g_{1}+g_{2}$$ for some $q\in\N,p\nmid q$,
 		some $g_{1}\in \poly(\iota^{-1}(V(M)\cap (V^{\pp}+c))\to \R\vert\Z)$ of degree at most $s$ and some $g_{2}\in\poly(\Z^{d}\to \Q)$ of degree at most $s-1$;
 		\item we have $$g=\frac{1}{q}g_{1}+g_{2}$$ for some $q\in\N,p\nmid q$,
 		some $g_{1}\in \poly(\iota^{-1}(V(M_{0})\cap V^{\pp})\to \R\vert\Z)$ of degree at most $s$, and some $g_{2}\in\poly(\Z^{d}\to \Q)$ of degree at most $s-1$.
 	\end{enumerate}	 
	 Moreover, if $g$ is a homogeneous polynomial of degree $s$, then we may require $g_{2}=0$ in (iii).
 %
 %	Let $d,s\in\N_{+}$, $p\gg_{d} 1$ be a prime, and $M\colon\Z^{d}\to\Z/p$ be a  quadratic form associated with the matrix $A$ of $p$-rank at least $s^{2}+s+3$.
 %	Denote $M'\colon\Z^{d}\to\Z/p$, $M'(n):=\frac{1}{p}((nA)\cdot n)$.
%	Then for any  $g\in\poly(\Z^{d}\to \Q)$  of degree at most $s$ taking values in $\Q$, the following are equivalent:
 %	\begin{enumerate}[(i)]
% 		\item $\Delta_{h_{s}}\dots\Delta_{h_{1}}g(n)\in \frac{1}{q}\Z$ for some $q\in\N,p\nmid q$ for all $(n,h_{1},\dots,h_{s})\in \Gow_{p,s}(V_{p}(M))$;
%  		\item we have  $$g=\frac{1}{q}g_{1}+g_{2}$$ for some $q\in\N,p\nmid q$,
% 		some $g_{1}\in \poly(V_{p}(M)\to \R\vert\Z)$ of degree at most $s$ and some $g_{2}\in\poly(\Z^{d}\to \R)$ of degree at most $s-1$;
 %		\item we have $$g=\frac{1}{q}g_{1}+g_{2}$$ for some $q\in\N,p\nmid q$,
 %		some $g_{1}\in \poly(V_{p}(M')\to \R\vert\Z)$ of degree at most $s$, and some $g_{2}\in\poly(\Z^{d}\to \R)$ of degree at most $s-1$.
 %	\end{enumerate}	
  \end{prop}
 \begin{proof}
 	Clearly (ii) implies (i). Similar to the argument at the end of Proposition \ref{1:att3001}, it follows from Proposition \ref{1:basicpp12} that
 	%Note that the difference between $M(n)$ and $M'(n)$ is a polynomial of degree at most 1, It follows from  Corollary \ref{1:basicpp1} that 
	(ii) is equivalent to (iii).  The part (i) $\Rightarrow$ (ii) is a consequence of Proposition \ref{1:att3001}. %(setting $k=s$, $P\equiv 0$ and $Q=qg$).
	
	If in addition $g$ is a homogeneous polynomial of degree $s$, then in condition (iii), we may express  $g_{1}$ using Proposition \ref{1:basicpp12} and then compare the leading terms on both sides of condition (iii) to conclude that one can set $g_{2}=0$.
 \end{proof}

\section{Proof of the main equidistribution theorem}\label{1:s:pp9}
  
  \subsection{Some preliminary reductions}\label{1:s:qse}

  We prove Theorem \ref{1:sLei} in this section.
  We start with a technical lemma on the connections between subspaces of $\V$ and those of $\Z^{d}$.  Let $V+c$ be an affine subspace of $\F_{p}^{d}$ of dimension $d'$ and $L\colon\F_{p}^{d'}\to V$ be a bijective linear transformation.  Let $\tilde{V}$ be the $\Z$-span of $\tau(L(e_{1})),\dots,\tau(L(e_{d'}))$ and $\tilde{L}\colon \Z^{d'}\to \tilde{V}$ be the bijective linear transformation given by
  $$\tilde{L}(n_{1},\dots,n_{d'}):=n_{1}\tau(L(e_{1}))+\dots+n_{d'}\tau(L(e_{d'})).$$
  In other words, $\tilde{L}/p$ is the regular lifting of $L$.
Then it is clear that $\iota\circ\tilde{L}=L\circ\iota$ and $\tilde{L}\circ\tau\equiv \tau\circ L \mod p\Z^{d}$.

   Let 
    $B$ be a subset of $V+c$. %Then for every $n\in B$, there is a unique $z\in \F_{p}^{d'}$ such that $n=L(z)+c$. Consider the set $\iota^{-1}(B)$. 
  It is clear that there is a bijective between $L^{-1}(B-c)$ and $B$ given by $z\mapsto L(z)+c$. However, the map $z\mapsto \tilde{L}(z)+\tau(c)$ is not necessarily a bijection between $\iota^{-1}(L^{-1}(B-c))$ and $\iota^{-1}(B)$. In order to connect the sets $\iota^{-1}(L^{-1}(B-c))$ and $\iota^{-1}(B)$ by $\tilde{L}$, we define a map
 % For any $n\in \iota(V)+\iota(c)$, there exist $z_{n}\in\Z^{d'}$ and $w_{n}\in \Z^{d}$ such that 
  %such that $$n=L(z_{n})+c+pw_{n}.$$ 
    $\Phi_{L,c}\colon\Z^{d'}\times\Z^{d}\to\Z^{d}$ as
  $$\Phi_{L,c}(z,w):=\tau(c)+\tilde{L}(z)+pw.$$
  
     \begin{lem}\label{1:298ruf}
       For all $K,K'\in\N_{+}$, the map $\Phi_{L,c} \mod pK\Z^{d}$ is a ${K'}^{d}$-to-1 map between $(\iota^{-1}(L^{-1}(B-c))\cap[pK']^{d'})\times [K]^{d}$ and $\iota^{-1}(B)\cap [pK]^{d}$.
  % 
 %      For all $n\in V_{p}(M)^{h}$, the number of $(z,w)\in (V_{p}(M_{h})\cap[p]^{d-1})\times [p^{K-1}]^{d}$ with $\Phi(z,w)\equiv n \mod p^{K}\Z^{d}$ are the same. 
   \end{lem}
   \begin{proof}
   For convenience denote $\Phi:=\Phi_{L,c}$.
        Let $(z,w)\in \iota^{-1}(L^{-1}(B-c))\times\Z^{d}$.  
        Then 
        $$\iota\circ\Phi(z,w)=\iota(\tau(c)+\tilde{L}(z)+pw)=c+\iota\circ\tilde{L}(z)=c+L\circ\iota(z)\in B.$$
        So   the range of $\Phi \mod pK\Z^{d}$ is contained in $\iota^{-1}(B)\cap [pK]^{d}$.
            
        Now for all $n\in \iota^{-1}(B)$, We may pick some $z'\in L^{-1}(B-c)$ such that $\iota(n)=L(z')+c$. Let $z=\tau(z')$. Then $z$ belongs to $\iota^{-1}(L^{-1}(B-c))\cap[p]^{d'}$ and 
        $$\tilde{L}(z)+\tau(c)=\tilde{L}\circ\tau(z')+\tau(c)\equiv \tau\circ L(z')+\tau(c)=\tau\circ\iota(n)\equiv n \mod p\Z^{d}.$$
        So there exists $w\in \Z^{d}$ such that $n=\tilde{L}(z)+\tau(c)+pw.$ Therefore, $\Phi(z, w \mod K\Z^{d})\equiv n \mod pk\Z^{d}$.               
        So the map $\Phi \mod pK\Z^{d}$ is surjective.
        
        Finally, for $(z,w), (z',w')\in (\iota^{-1}(L^{-1}(B-c))\cap[pK']^{d'})\times [K]^{d}$, we have that $\Phi(z,w)\equiv\Phi(z',w') \mod pK\Z^{d}$ if and only if
        $\tilde{L}(z-z')+p(w-w')\in pK\Z^{d}$. If this is the case, then $\tilde{L}(z-z')\in p\Z^{d}$ and thus $\bold{0}=\iota\circ\tilde{L}(z-z')=L\circ \iota(z-z')$. Since $L$ is injective, this implies that $\iota(z-z')=\bold{0}$ and thus $z\equiv z' \mod p\Z^{d'}$. Now for any such $z-z'$ and any fixed $w$, there is a unique $w'\in [K]^{d}$ such that $\frac{1}{p}\tilde{L}(z-z')+(w-w')\in K\Z^{d}$. Since for each $z\in\Z^{d'}$, the number of $z'\in [pK']^{d}$ for which $z\equiv z' \mod p\Z^{d'}$ is equal to ${K'}^{d}$. So $\Phi \mod pK\Z^{d}$ is a ${K'}^{d}$-to-1 map.
  %      
 %       
%        
  %       Then this implies that $p(w-w')\in pK\Z^{d}$, and thus $w=w'$ since $w,w'\in [K]^{d}$. So the map $\Phi \mod pK\Z^{d}$ is injective and we are done.
   %   
  %     Note that $$\Phi(z,w)\equiv n \mod p^{K}\Z^{d}\Leftrightarrow L_{h}(z-z_{n,h})\equiv p(w-w_{n})  \mod p^{K}\Z^{d}.$$
  %     If $L_{h}(z-z_{n,h})\in p\Z$, then there is a unique $w\in [p^{K-1}]^{d}$ such that the equation holds. If $L_{h}(z-z_{n,h})\notin p\Z$, then the equation never holds.
  %     On the other hand, by {\color{}  }, $L_{h}(z-z_{n,h})\in p\Z$ only if $z-z_{n,h}\in p\Z$. So $z$ is uniquely determined by $z_{n,h}$. So the the number of such $(z,w)$ is equal to the number of $z\in V_{p}(M_{h})\cap [p^{K}]^{d-1}$ such that $L_{h}(z-z_{n,h})\in p\Z^{d}$
   \end{proof}

  %	\begin{lem}\label{1:sup2suz}
%	    Let $v\in\Z^{d}\backslash p\Z^{d}$, $U$ be the set of $n\in\Z^{d}$ with $v\cdot n\in p\Z$, and $V$ be the set of $n\in\Z^{d}$ with $v\cdot n=0$. 
%	   Let $\phi\colon\Z^{d-1}\to V$ be any bijective linear transformation. Then for all $u\in U$, there exists $m\in\Z^{d-1}$ such that $\phi(m)\in u+p\Z^{d}$.
%	\end{lem}

  Before proving  Theorem \ref{1:sLei},
 we start with some simplifications.
  Suppose first that the conclusion holds for $V+c=\V$. For the general case, let $L\colon\F_{p}^{d-r}\to V$ be a bijective linear transformation. Let  $M'\colon\F_{p}^{d-r}\to\F_{p}$ be the quadratic map given by $M'(z):=M(L(z)+c)$. 
  Then it is clear that $L^{-1}((V(M)\cap(V+c))-c)=V(M')$. 
  
  Let $g\in\poly(\Z^{d}\to G_{\N})$ be rational. By Corollary \ref{1:r222p}, we may assume that $g$ is $pK$-periodic for some $K\in\N_{+}$.  Then for any function $F\colon G/\Gamma\to\C$, it follows from Lemma \ref{1:298ruf} (setting $B=V(M)\cap (V+c)$ and $K'=K$) that
  \begin{equation}\nonumber
      \begin{split}
      &\quad\E_{n\in \iota^{-1}(V(M)\cap(V+c))}F(g(n)\Gamma)=\E_{n\in \iota^{-1}(V(M)\cap(V+c))\cap [pK]^{d}}F(g(n)\Gamma)
      \\&=\E_{z\in  \iota^{-1}(V(M'))\cap [pK]^{d-r}}\E_{w\in [K]^{d}}F(g(\tau(c)+\tilde{L}(z)+pw)\Gamma)
      \\&=\E_{(z,w)\in  \iota^{-1}(V(M''))\cap [pK]^{2d-r}}F(g(\tau(c)+\tilde{L}(z)+pw)\Gamma)
      \end{split}
  \end{equation}
   where $M''\colon\F_{p}^{2d-r}\to\F_{p}$ is the quadratic form given by
   $M''(z,w):=M'(z)$, and $\tilde{L}/p$ is the regular lifting of $L$.
   It is clear that all of $M,M'$ and $M''$ have the same rank, and that the map $(z,w)\mapsto g(\tau(c)+\tilde{L}(z)+pw)$ is rational. So by assumption, if $(g(n)\Gamma)_{n\in\iota^{-1}(V(M)\cap (V+c))}$ is not $\d$-equidistributed on $G/\Gamma$, then  there exists a nontrivial type-I horizontal character $\eta$ with $0<\Vert\eta\Vert\ll_{d,m} \d^{-O_{d,m}(1)}$ such that $\eta\circ g(\tau(c)+\tilde{L}(z)+pw) \mod \Z$ is a constant for all $z\in \iota^{-1}(V(M'))$ and $w\in\Z^{d}$. By Lemma \ref{1:298ruf}, we have that  $\eta\circ g(n) \mod \Z$ is a constant for all $n\in \iota^{-1}(V(M)\cap (V+c))$.
     So in the rest of Section \ref{1:s:pp9}, we may make the following assumption.

  \textbf{Assumption 1.} $V+c=\V, r=0$ and $\rank(M)\geq s+13$.

  Denote $d'=\rank(M)$.
  Suppose that we have already proved Theorem \ref{1:sLei}  for all  quadratic forms of the form 
  \begin{equation}\label{1:red1}
  M_{c,c',\l}(n_{1},\dots,n_{d}):=cn_{1}^{2}+n^{2}_{2}+\dots+n_{d'}^{2}+c'n_{d'+1}-\l
  \end{equation}	
  for some $c,c',\l\in\F_{p}, c\neq 0$. 
  Let $M$ be an arbitrary quadratic form of rank $d'$. By Lemma \ref{1:cov}, there exist  $c,c',\l\in\F_{p}$ and $v\in\V$ with $c\neq 0$ and a $d\times d$ invertible matrix $R$ in $\F_{p}$ such that $M(n)=M_{c,c',\l}(nR+v)$. 
  
  For convenience denote $M':=M_{c,c',\lambda}$. Then $R(V(M)+vR^{-1})=V(M')$.
  Let $g\in\poly(\Z^{d}\to G_{\N})$ be rational. By Corollary \ref{1:r222p}, we may assume that $g$ is $pK$-periodic for some $K\in\N_{+}$.  Then for any function $F\colon G/\Gamma\to\C$, it follows from Lemma \ref{1:298ruf} (setting $B=V(M)$ and $K'=K$) that
  \begin{equation}\nonumber
      \begin{split}
      &\quad\E_{n\in \iota^{-1}(V(M))}F(g(n)\Gamma)=\E_{n\in \iota^{-1}(V(M))\cap [pK]^{d}}F(g(n)\Gamma)
      \\&=\E_{z\in  \iota^{-1}(V(M'))\cap [pK]^{d}}\E_{w\in [K]^{d}}F(g(\tau(-vR^{-1})+\tilde{R^{-1}}(z)+pw)\Gamma)
      \\&=\E_{(z,w)\in  \iota^{-1}(V(M''))\cap [pK]^{2d}}F(g(\tau(-vR^{-1})+\tilde{R^{-1}}(z)+pw)\Gamma)
      \end{split}
  \end{equation}
   where $M''\colon\F_{p}^{2d}\to\F_{p}$ is the quadratic form given by
   $M''(z,w):=M'(z)$, and $\tilde{R^{-1}}/p$ is the regular lifting of $R^{-1}$.
   It is clear that all of $M,M'$ and $M''$ have the same rank, and that $M''$ is also of the form (\ref{1:red1}) (for the same $d'$ but with dimension increased from $d$ to $2d$). Moreover, the map $(z,w)\mapsto g(\tau(-vR^{-1})+\tilde{R}^{-1}(z)+pw)$ is rational. So by assumption, if $(g(n)\Gamma)_{n\in\iota^{-1}(V(M))}$ is not $\d$-equidistributed on $G/\Gamma$, then  there exists a nontrivial type-I horizontal character $\eta$ with $0<\Vert\eta\Vert\ll_{d,m} \d^{-O_{d,m}(1)}$  such that $\eta\circ g(\tau(-vR^{-1})+\tilde{R^{-1}}(z)+pw)\mod \Z$ is a constant for all $z\in \iota^{-1}(V(M'))$ and $w\in\Z^{d}$. By Lemma \ref{1:298ruf}, we have that  $\eta\circ g(n) \mod \Z$ is a constant for all $n\in \iota^{-1}(V(M))$.
     So in the rest of Section \ref{1:s:pp9}, we may make the following assumption.

 % Since $g\in\poly_{p}(V(M)\to G_{\N}\vert\Gamma)$, by  Proposition \ref{1:BB}, there exists $g'\in \poly_{p}(V(M_{c,c',\l})\to G_{\N}\vert\Gamma)$ such that $g'(n)\Gamma=g((n-v)R^{-1})\Gamma$ for all $n\in V(M_{c,c',\l})$.
 % If $$(g(n)\Gamma)_{n\in V(M)}=(g((n-v)R^{-1})\Gamma)_{n\in V(M_{c,c',\l})}=(g'(n)\Gamma)_{n\in V(M_{c,c',\l})}$$ is not $\d$-equidistributed, then by assumption there exist a nontrivial type-I horizontal character $\eta$ with $0<\Vert\eta\Vert\ll_{d,m} \d^{-O_{d,m}(1)}$    and a constant $a\in\R$ such that  for all $n\in V(M_{c,c',\l})$,
%  $\xi\circ g((n-v)R^{-1})\equiv a \mod \Z.$
%  Since   $n\mapsto (n-v)R^{-1}$ is a bijection from $V(M)$ to $V(M_{c,c',\l})$,  we have that $\eta\circ g(n)\equiv a \mod \Z.$ for all $n\in V(M)$.
 %  So in the rest of Section \ref{1:s:pp9}, we may make the following assumption.
  
  \textbf{Assumption 2.} $M=M_{c,c',\l}$ is given by (\ref{1:red1}).

   Let $\psi\colon G\to\R^{m}$ denote the Mal'cev coordinate map associated to $\mathcal{X}$.
  Now suppose that the concludes holds under the additional assumption that $g(\bold{0})=id_{G}$, the \emph{identity element} of $G$.
   For the general case, write $g(\bold{0})=\{g(\bold{0})\}[g(\bold{0})]$.  
  Let $g'(n):=\{g(\bold{0})\}^{-1}g(n)g(\bold{0})^{-1}\{g(\bold{0})\}$. Then $g'$ is rational by the Baker-Campbell-Hausdorff formula   and $g(n)\Gamma=\{g(\bold{0})\}g'(n)\Gamma$ for all $n\in\Z^{d}$.
   Since $\Vert F(\{g(\bold{0})\}\cdot)\Vert_{\Lip}\leq O(1)\Vert F\Vert_{\Lip}$ by Lemma A.5 of \cite{GT12b}, if $(g(n)\Gamma)_{n\in \iota^{-1}(V(M))}$ is not $\d$-equidistributed on $G/\Gamma$, then $(g'(n)\Gamma)_{n\in \iota^{-1}(V(M))}$ is not $O(\d)$-equidistributed on $G/\Gamma$. Since $g'(\bold{0})=id_{G}$, by assumption, there exists a nontrivial type-I horizontal character $\eta$ with $0<\Vert\eta\Vert\ll_{d,m} \d^{-O_{d,m}(1)}$   such that  for all $n\in \iota^{-1}(V(M))$,
  $\eta\circ g'(n) \mod \Z$ is a constant, and thus $\eta\circ g(n) \mod \Z$ is a constant.
   So in the rest of Section \ref{1:s:pp9}, we may make the following assumption.
  
  \textbf{Assumption 3.} $g(\bold{0})=id_{G}$. As a result, we may further assume that $G=G_{0}=G_{1}$.

   Now suppose that the concludes holds  under the additional assumption that   that $\psi(g(e_{i}))$ belongs $[0,1)^{m}$ for all $1\leq i\leq d$, where $e_{i}$ is the $i$-th standard unit vector in $\Z^{d}$.
   For the general case, %assume that $g=\tilde{g}\circ\tau$ for some $\tilde{g}\in \poly_{p}(\tau(V(M))\to G_{\N}\vert\Gamma)$ for all $n\in\V$. Since $g(\bold{0})=id_{G}$, right multiplying $g$ by an element in $\Gamma$ if necessary, we may further require that $\tilde{g}(\bold{0})=id_{G}$.
    Write $g(e_{i})=\{g(e_{i})\}[g(e_{i})]$ %where we slightly abuse the notation to also use  $e_{i}$ to denote the $i$-th standard unit vector in $\Z^{d}$.
  and let $g'(n):=g(n)\prod_{i=1}^{d}[g(e_{i})]^{-n_{i}}$.
  Then $\psi(g'(e_{i}))\in[0,1)^{m}$
 %  By unpacking the definition, it is not hard to verify that $\tilde{g}\circ \tau\in\poly_{p}(V(M)\to G_{\N}\vert\Gamma)$ implies that $\tilde{g}'\circ \tau\in\poly_{p}(V(M)\to G_{\N}\vert\Gamma)$. 
 and $g'$ is rational by the Baker-Campbell-Hausdorff formula.
 Moreover, for any type-I horizontal character $\eta$, since $\eta$ vanishes on $\Gamma$, $\eta\circ g'$ differs from $\eta\circ g$ by a constant. This enables us to prove the general case of Theorem \ref{1:sLei}.
    So in the rest of Section \ref{1:s:pp9}, we may make the following assumption.
    
    \textbf{Assumption 4.}   $\psi(g(e_{i}))\in[0,1)^{m}$ for all $1\leq i\leq d$.

 It is helpful to introduce some further convention of notations.
 Throughout this section, we assume that $p\gg_{d,m}\d^{-O_{d,m}(1)}$.
   Denote
  $$M_{c}(n_{1},\dots,n_{d})=cn_{1}^{2}+n^{2}_{2}+\dots+n_{d'}^{2},$$
  which is the degree two terms of $M$.
  and let $\tilde{M}\colon\Z^{d}\to\Z/p$ be a regular lifting of $M$, and 
  $\tilde{M}_{c}\colon\Z^{d}\to\Z/p$ be a homogeneous regular lifting of $M_{c}$ (whose existences are guaranteed by Lemma \ref{1:lifting}).
 Then $M(n)=\iota\circ p\tilde{M}\circ\tau(n)$ and $M_{c}(n)=\iota\circ p\tilde{M}_{c}\circ\tau(n)$ for all $n\in\V$. 
 It is clear that $\iota^{-1}(V(M))=V_{p}(\tilde{M})$.
 
  Let $A$ be the $d\times d$ $\F_{p}$-valued matrix associated to $M$ and $M_{c}$.   Then
  $\tau(A)$ is the $d\times d$ integer valued matrix associated to $\tilde{M}$ and $\tilde{M}_{c}$. With a slight abuse of notations, we identify the matrix $A$ both as the $\F_{p}$-valued matrix and as the $[p]$-valued matrix $\tau(A)$.

  Let $(G/\Gamma)_{\N}$ be a nilmanifold with filtration $(G_{i})_{i\in\N}$. Denote $m:=\dim(G)$, $m_{i}:=\dim(G_{i})$, $m_{ab}:=\dim(G)-\dim([G,G])$ (the dimension of the Type-1 horizontal torus of $(G/\Gamma)_{\N}$), $m_{\lin}:=m-m_{2}$ (the \emph{linear degree} of $(G/\Gamma)_{\N}$), and $m_{\ast}:=m_{ab}-m_{\lin}$ (the \emph{nonlinear degree} of $(G/\Gamma)_{\N}$).

 \subsection{The case $s=1$}

 We first prove Theorem  \ref{1:sLei}  for the case $s=1$ (the case $s=0$ is trivial). We make use of the following corollary of Lemma \ref{1:counting}.

 \begin{lem}\label{1:cout1}
Let $d,K\in\N_{+}$ and $p$ be a prime number. Let  $\tilde{M}\colon\Z\to\Z/p$ be a  quadratic form of $p$-rank $r$. Suppose that $r\geq 3$.  
	Then  for all $\xi=(\xi_{1},\dots,\xi_{d})\in\Z^{d}\backslash pK\Z^{d}$,
	we have that 
	\begin{equation}\nonumber
	\begin{split}
	\E_{n\in V_{p}(\tilde{M})\cap [pK]^{d}}\exp\Bigl(\frac{\xi}{pK}\cdot n\Bigr)=O(p^{-\frac{r-2}{2}}).
	\end{split}
	\end{equation}
\end{lem}

\begin{proof}
Note that
	\begin{equation}\nonumber
	\begin{split}
	&\quad \E_{n\in V_{p}(\tilde{M})\cap [pK]^{d}}\exp\Bigl(\frac{\xi}{pK}\cdot n\Bigr)
	=\E_{n\in V_{p}(\tilde{M})\cap [p]^{d}}\E_{m\in [K]^{d}}\exp\Bigl(\frac{\xi}{pK}\cdot (n+pm)\Bigr)
	\\&=\E_{n\in V_{p}(\tilde{M})\cap [p]^{d}}\E_{m\in [K]^{d}}\exp\Bigl(\frac{\xi}{pK} \cdot n\Bigr)\E_{m\in [K]^{d}}\exp\Bigl(\frac{\xi}{K}\cdot m\Bigr).
	\end{split}
	\end{equation}	
Since $\xi\in\Z^{d}$, it is not hard to see that $\E_{m\in [K]^{d}}\exp\Bigl(\frac{\xi}{K}\cdot m\Bigr)=0$ unless $\xi=K\xi'$ for some $\xi'\in\Z^{d}$, in which case 
$$\E_{n\in V_{p}(\tilde{M})\cap [pK]^{d}}\exp\Bigl(\frac{\xi}{pK}\cdot n\Bigr)=\E_{n\in V_{p}(\tilde{M})\cap [p]^{d}}\exp\Bigl(\frac{\xi'}{p}\cdot n\Bigr)$$
and thus the conclusion follows from Lemma \ref{1:counting}.
\end{proof}

Since $s=1$, we have
  $m_{\ast}=m$ and $G/\Gamma=\R^{m}/\Z^{m}=\T^{m}$. 
 We may write %$g=(g_{1},\dots,g_{m})$. Then each  $g_{i}$ is a $p^{k}$-rational polynomial sequence  in $\poly(\Z^{d}\to\R\vert\Z)$   of degree at most 1. %%Since $\rank_{p}(\tilde{M})=\rank(M)\geq 3$, by Proposition \ref{1:basicpp2}, there exists $b_{i}\in\Z^{d}/p$ and $v_{i}\in\R$ 
 %So we may write  $g_{i}(n)=b_{i}\cdot n+v_{i} \mod \Z$ for all $1\leq i\leq m$ for some $b_{i}\in\Z^{d}/p^{k}$ and $v_{i}\in\R$.
% So
 $$g(n)=(b_{1}\cdot n,\dots,b_{m}\cdot n)+(v_{1},\dots,v_{m})$$   for some $K\in \N_{+}$, $b_{i}\in\Z^{d}/pK$ and $v_{i}\in\R$. Clearly $g$ is $pK$-periodic.  
 If $(g(n)\Gamma)_{n\in V_{p}(\tilde{M})\cap[pK]^{d}}$ is not $\d$-equidistributed, then by Lemma \ref{1:3.7}, then there exists a vertical frequency $\xi=(\xi_{1},\dots,\xi_{m})\in\Z^{m}$ with $\vert\xi\vert\ll_{m} \d^{-O_{m}(1)}$ such that 
 \begin{equation}\label{1:e4.01}
 \begin{split}
 \Bigl\vert\E_{n\in V_{p}(\tilde{M})\cap[pK]^{d}}\exp(\xi\cdot g(n))-\int_{\T^{m}} \exp(\xi\cdot x)dm_{\T^{m}}(x)\Bigr\vert\gg_{m} \d^{O_{m}(1)}.
 \end{split}
 \end{equation}  
 where $m_{\T^{m}}$ is the Haar measure on $\T^{m}$.  
 So we have that $\xi\neq \bold{0}$. Thus $$\vert\E_{n\in V_{p}(\tilde{M})\cap[pK]^{d}}\exp(\xi\cdot g(n))\vert\gg_{m} \d^{O_{m}(1)}.$$ Note that $\xi\cdot g(n)=(\xi B)\cdot n+\xi\cdot v$, where $B$ is the $\Z/pK$-valued $m\times d$ matrix whose $i$-th row is $b_{i}$. 
 %So we have that 
 %$$\vert\E_{n\in V(M)}\exp((\xi B)\cdot \tau(n))\vert\gg_{m} \d^{O_{m}(1)}.$$
 If $\xi B\notin \Z^{d}$, then since $\rank_{p}(M)\geq 3$, Lemma \ref{1:cout1}  implies that  $$\vert\E_{n\in V_{p}(\tilde{M})\cap[pK]^{d}}\exp((\xi B)\cdot n)\vert=O(p^{-\frac{d-2}{2}}),$$ which is impossible if $p\gg_{m} \d^{-O_{m}(1)}$. So we must have that $\xi B\in\Z^{d}$, which implies that $\xi\circ g \mod \Z$ is a constant. This finishes the proof.

 \subsection{Construction of a nilsequence in a joining system}\label{1:s5.4}
 
 We now assume that $s\geq 2$ (and thus $d'\geq s+13$), and that Theorem  \ref{1:sLei}  holds for smaller $s$ (and any $m_{\ast}$), and for the same value of $s$ and a smaller value of $m_{\ast}$.  
 By Corollary \ref{1:r222p}, we may assume that $g$ is $pK$-periodic for some $K\in\N_{+}$. 
 By Lemma \ref{1:3.7}, if $(g(n)\Gamma)_{n\in V_{p}(\tilde{M})\cap[pK]^{d}}$ is not $\d$-equidistributed on $G/\Gamma$, then there exist a vertical frequency $\xi\in\Z^{m_{s}}$ with $\Vert\xi\Vert\ll_{m} \d^{-O_{m}(1)}$ and a function $F\colon G/\Gamma\to\C$ with $\Vert F\Vert_{\Lip}\leq 1$ and vertical frequency $\xi$ such that 
 \begin{equation}\label{1:e4.1}
 \begin{split}
 \Bigl\vert\E_{n\in V_{p}(\tilde{M})\cap[pK]^{d}}F(g(n)\Gamma)-\int_{G/\Gamma} F\,dm_{G/\Gamma}\Bigr\vert\gg_{m} \d^{O_{m}(1)},
 \end{split}
 \end{equation}  
 where  $m_{G/\Gamma}$ is the Haar measure of $G/\Gamma$.
 If $\xi=\bold{0}$, then we may invoke the method on pages 504--505 of \cite{GT12b} to reduce the problem to the $s-1$ degree case and then use the induction hypothesis to finish the proof. So from now on we assume that $\xi\neq\bold{0}$. 
 
 Since $F$ is of vertical frequency $\xi$, (\ref{1:e4.1}) implies that 
 \begin{equation}\label{1:e4.2}
 \begin{split}
 \vert\E_{n\in V_{p}(\tilde{M})\cap[pK]^{d}}F(g(n)\Gamma)\vert\gg_{m} \d^{O_{m}(1)}.
 \end{split}
 \end{equation}  
 By taking squares on both sides of (\ref{1:e4.2}), we have that 
 \begin{equation}\label{1:hhppo}
  \begin{split}
&\quad 
\frac{1}{\vert V_{p}(\tilde{M})\cap[pK]^{d}\vert^{2}}\sum_{h\in V_{p}(\tilde{M})\cap[pK]^{d}}\sum_{n\in V_{p}(\tilde{M})^{h}\cap[pK]^{d}}F(g(n+h)\Gamma)\overline{F}(g(n)\Gamma)
\\&=\E_{(n,h)\in \Gow_{p,1}(V_{p}(\tilde{M}))\cap[pK]^{2d}}F(g(n+h)\Gamma)\overline{F}(g(n)\Gamma)
\\&=\E_{n,m\in V_{p}(\tilde{M})\cap[pK]^{d}}F(g(m)\Gamma)\overline{F}(g(n)\Gamma)
\gg_{m} \d^{O_{m}(1)},
 \end{split}
 \end{equation}  
 where we used Lemma \ref{1:expp} and the fact that $\vert \Gow_{p,1}(V_{p}(\tilde{M}))\cap[p]^{d}\vert=\vert V_{p}(\tilde{M})\cap[p]^{d}\vert^{2}=p^{2d-2}(1+O(p^{-1/2}))$ by Lemma \ref{1:counting} and the fact that $\Gow_{p,1}(V_{p}(\tilde{M}))$ is a $p$-periodic set. 
 	 Since $d\geq 5$, it follows from Proposition \ref{1:ctsp} that
 $$\E_{h\in [pK]^{d}}\vert\E_{n\in V_{p}(\tilde{M})^{h}\cap [pK]^{d}}F(g(n+h)\Gamma)\overline{F}(g(n)\Gamma)\vert\gg_{m} \d^{O_{m}(1)}.$$
  By the Pigeonhole Principle, there exists
 a subset $J_{1}$ of $[pK]^{d}$ of cardinality $\gg_{m}\d^{O_{m}(1)}(pK)^{d}$ such that 
 \begin{equation}\label{1:e4.3}
 \begin{split}
 \vert\E_{n\in V_{p}(\tilde{M})^{h}\cap [pK]^{d}}F(g(n+h)\Gamma)\overline{F(g(n)\Gamma)}\vert\gg_{m} \d^{\O_{m}(1)}
 \end{split}
 \end{equation} 
 for all $h\in J_{1}$.
 Recall that $A$ is the $d\times d$ $\Z$-valued matrix associated to $M$.  Since $d'\geq 3$, by Lemmas \ref{1:counting} and  \ref{1:expp}, passing to  a subset if necessary, we may assume that  
  \begin{equation}\label{1:resv20}
 (hA)\cdot h, h_{1},\dots,h_{d}\notin p\Z \text{ for all } h=(h_{1},\dots,h_{d})\in J_{1}.
 \end{equation}

  Recall that $M\colon\V\to\F_{p}$ is the quadratic form induced by $\tilde{M}$ and $V_{p}(\tilde{M})^{h}=\iota^{-1}(V(M)^{\iota(h)})$.  For $h\in J_{1}$, note that 
  $V(M)^{\iota(h)}=V(M)\cap U_{h}$, where
   $U_{h}:=\{n\in\F_{p}^{d}\colon M(n+\iota(h))=M(n)\}$. Then we may write $U_{h}=V_{h}+u_{h}$ for some $u_{h}\in \F_{p}^{d}$, where $V_{h}:=\{n\in\F_{p}^{d}\colon (nA)\cdot \iota(h)=0\}$. 
   It follows from (\ref{1:e4.3}) that $V_{h}$ is a subspace of $\V$ of co-dimension 1.
   Let $L_{h}\colon \F_{p}^{d-1}\to V_{h}$ be any bijective linear transformation, and let $M_{h}\colon \F_{p}^{d-1}\to\F_{p}$ be the quadratic form given by $M_{h}(z):=M(L_{h}(z)+u_{h})$ and let $\tilde{M}_{h}\colon\Z^{d-1}\to\Z/p$ be the quadratic form induced by $M_{h}$. 
     Since $(hA)\cdot h\notin p\Z$, we have that  
 \begin{equation}\label{1:hfeowpwef0}
 \rank_{p}(\tilde{M}_{h})=\rank(M_{h})=\rank(M\vert_{\sp_{\F_{p}}\{\iota(h)\}^{\perp_{M}}})=\rank(M)-1\geq (s-1)+13 \text{ for all } h\in J_{1}.
 \end{equation}

    Let $\tilde{L}_{h}/p$ be the regular lifting of $L_{h}$.  Then
      \begin{equation}\label{1:trueequivu2}
(hA)\cdot \tilde{L}_{h}(z)\in p\Z \text{ for all } z\in \Z^{d-1},
\end{equation}
     Since $V(M)^{\iota(h)}=V(M)\cap (V_{h}+u_{h})$, it is not hard to see that
   $L_{h}^{-1}(V(M)^{\iota(h)}-u_{h})=V(M_{h})$ and $V_{p}(\tilde{M}_{h})=\iota^{-1}(V(M_{h}))$.
    %So for each $n\in V_{p}(\tilde{M})^{h}$, the set of residue classes mod $p^{K}\Z^{d}$ of the set $\{n+pm\colon m\in [p^{K-1}]^{d}\}$ is equal to that of $\{u_{h}+x_{n,h}+pm\colon m\in [p^{K-1}]^{d}\}$. 
    Since $g$ is $pK$-periodic, by (\ref{1:e4.3}) and Lemma \ref{1:298ruf} (setting $B=V(M)^{\iota(h)}$ and $K'=K$), the average
        \begin{equation}\label{1:e4.3mmn}
 \begin{split}
 \vert\E_{(z,m)\in (V_{p}(\tilde{M}_{h})\cap [pK]^{d-1})\times [pK]^{d}}F(g(pm+\tilde{L}_{h}(z)+\tau(u_{h})+h)\Gamma)\overline{F(g(pm+\tilde{L}_{h}(z)+\tau(u_{h}))\Gamma)}\vert
 \end{split}
 \end{equation} 
 is at least $\gg_{m} \d^{\O_{m}(1)}$
 for all $h\in J_{1}$. %, where $\tilde{L}_{h}$ is the regular lifting of $L_{h}$.

  We decompose $g$ as $g=g_{\nlin}g_{\lin}$, where 
 $$g_{\lin}(n_{1},\dots,n_{d}):=g(e_{1})^{n_{1}}\dots g(e_{d})^{n_{d}}$$
 is the \emph{linear component} of $g$ and  
 $$g_{\nlin}(n):=g(n)g_{\lin}(n)^{-1}$$
 is the \emph{nonlinear component} of $g$, where $e_{i}$ is the $i$-th standard unit vector in $\Z^{d}$. 
  Since $g\in\poly(\Z^{d}\to G_{\N})$, we have that $g_{\lin},g_{\nlin}\in\poly(\Z^{d}\to G_{\N})$ by Corollary B.4 of \cite{GTZ12}. %However, we caution the readers that $g_{\lin}$ and $g_{\nlin}$ do not necessarily belong to $\poly_{p}(V_{p}(\tilde{M})^{h}\to G_{\N}\vert \Gamma)$, as they do not need to satisfy any periodicity condition.

 Note that $g_{\nlin}(\bold{0})=g_{\nlin}(e_{1})=\dots=g_{\nlin}(e_{d})=id_{G}$, and $g_{\nlin}$ takes values in $G_{2}$ by the Baker-Campbell-Hausdorff formula. For all $h\in\Z^{d}$, let $g_{h}\colon\Z^{d}\to G^{2}$ be the sequence
 \begin{equation}\label{1:e4.5}
 \begin{split}
g_{h}(n):=%(\{{g}_{\lin}(h)\}^{-1}{g\tilde{M}_{h}}_{\nlin}(n+h){g}_{\lin}(n+h)[{g}_{\lin}(h)]^{-1},{g}(n))=
 (\{{g}_{\lin}(h)\}^{-1}{g}(n+h)[{g}_{\lin}(h)]^{-1},{g}(n)),
 \end{split}
 \end{equation}
 and $F_{h}\colon G^{2}/\Gamma^{2}\to\C$ be the function
$$ F_{h}(x,y):=F(\{{g}_{\lin}(h)\}x)\overline{F}(y).%\footnote{For convenience we define ${g}_{h}$ and $F_{h}$ not only for $h\in [p]^{d}$, but also for all $h\in\Z^{d}$.}
$$
 Then (\ref{1:e4.3mmn}) implies that 
 \begin{equation}\label{1:e4.311}
 \begin{split}
  \vert\E_{(z,m)\in (V_{p}(\tilde{M}_{h})\cap [pK]^{d-1})\times [pK]^{d}}F_{h}({g}_{h}(pm+\tilde{L}_{h}(z)+\tau(u_{h}))\Gamma^{2})\vert\gg_{m} \d^{\O_{m}(1)}
 \end{split}
 \end{equation}
 for all $h\in J_{1}$.
  
 For groups $G$ and $H$ with $H<G$, denote 
 $$G\times_{H}G:=\{(g,h)\in G\times G\colon g^{-1}h\in H\}.$$
 Note that $G\times_{H}G$ is a group as long as $[G,H]\subseteq H$.
 It is not hard to check that
  ${g}_{h}$  takes values in $G^{\square}:=G\times_{G_{2}}G$ for all $h\in\Z^{d}$ (see page 35 of \cite{GT12b} for details).
  So (\ref{1:e4.311}) implies that 
 \begin{equation}\label{1:e4.6}
 \begin{split}
 \vert\E_{(z,m)\in (V_{p}(\tilde{M}_{h})\cap [pK]^{d-1})\times [pK]^{d}}F^{\square}_{h}(g^{\square}_{h}(pm+\tilde{L}_{h}(z)+\tau(u_{h}))\Gamma^{\square})\vert\gg_{m} \d^{\O_{m}(1)}
 \end{split}
 \end{equation} 
 for all $h\in J_{1}$,
 where $F^{\square}_{h}$, $g^{\square}_{h}$ and $\Gamma^{\square}$ are the restrictions of $F_{h},$ ${g}_{h}$ and $\Gamma^{2}$ to $G^{\square}$ respectively. By repeating the argument on page 35 of \cite{GT12b}, $F^{\square}_{h}$ is invariant under $G^{\triangle}_{s}:=\{(g_{s},g_{s})\colon g_{s}\in G_{s}\}$. Thus $F^{\square}_{h}$ induces a function $\overline{F_{h}^{\square}}$ on $\overline{G^{\square}}:=G^{\square}/G^{\triangle}_{s}$. By (\ref{1:e4.6}), we have that 
 \begin{equation}\label{1:e4.7}
 \begin{split}
 \vert\E_{(z,m)\in (V_{p}(\tilde{M}_{h})\cap [pK]^{d-1})\times [pK]^{d}}\overline{F^{\square}_{h}}(\overline{g^{\square}_{h}}(pm+\tilde{L}_{h}(z)+\tau(u_{h}))\overline{\Gamma^{\square}})\vert\gg_{m} \d^{\O_{m}(1)},
 \end{split}
 \end{equation} 
 where $\overline{\Gamma^{\square}}:=\Gamma^{\square}/(\Gamma^{\square}\cap G_{k}^{\triangle})$ and $\overline{g^{\square}_{h}}$ is the projection of $g^{\square}_{h}$ to $\overline{G^{\square}}$. 
 %So  
 %it follows from (\ref{1:e4.7})
 % \begin{equation}\label{1:e4.7m}
 %\begin{split}
 %\vert\E_{m\in[p^{K-1}]^{d}}\overline{F^{\square}_{h}}(\overline{g^{\square}_{h}}(pm+x_{n,h}+u_{h})\overline{\Gamma^{\square}})\vert\gg_{m} \d^{\O_{m}(1)}
% \end{split}
% \end{equation} 
% for all $n\in J_{1,h}$ some $J_{1,h}\subseteq [p]^{d}$ with $\vert J_{1,h}\vert\gg_{m}\d^{\O_{m}(1)} p^{d}$.
 
 By Proposition 4.2 of \cite{GT14} and Lemma 7.4 of \cite{GT12b},
 we have that
 \begin{itemize}
 	\item the groups $G^{\square}$ and $\overline{G^{\square}}$ are connected, simply-connected nilpotent Lie  groups of degree $s-1$ with respect to the $\N$-filtrations given by $(G^{\square})_{i}:=G_{i}\times_{G_{i+1}}G_{i}$ and $(\overline{G^{\square}})_{i}:=(G_{i}\times_{G_{i+1}}G_{i})/G^{\triangle}_{k}$, respectively;
 	\item there exist  $O_{m,s}(\d^{-O_{m,s}(1)})$-rational Mal'cev basis $\X^{\square}$ for $G^{\square}/\Gamma^{\square}$, and $\overline{\X^{\square}}$ for $\overline{G^{\square}}/\overline{\Gamma^{\square}}$, adapted to the filtrations $(G^{\square})_{\N}$ and  $(\overline{G^{\square}})_{\N}$, respectively;
 	\item the Mal'cev coordinate map $\psi_{\mathcal{X}^{\square}}(x,x')$ of   $\X^{\square}$ is a polynomial of degree $O_{m,s}(1)$ with rational coefficients of complexity $O_{m,s}(\d^{-O_{m,s}(1)})$ in the coordinates $\psi(x)$ and $\psi(x')$;
 	\item under the metrics induced by $\X^{\square}$ and $\overline{\X^{\square}}$, we have that $\Vert F^{\square}_{h}\Vert_{\Lip}, \Vert\overline{F^{\square}_{h}}\Vert_{\Lip}\ll_{m,s}(\d^{-O_{m,s}(1)})$;
	\item $g^{\square}_{h}$ belongs to $\poly(\Z^{d}\to (G^{\square})_{\N})$ and $\overline{g^{\square}_{h}}$ belongs to $\poly(\Z^{d}\to (\overline{G^{\square}})_{\N})$.
 \end{itemize}
 Since  $g$ is rational and $pK$-periodic, it is not hard to see that
 \begin{itemize}
	\item $g^{\square}_{h}$  and $\overline{g^{\square}_{h}}$ are rational and $pK$-periodic.
	 \end{itemize}
	 
 %it follows from Lemma \ref{1:prop4rp} (iii) that
  %\begin{itemize}
%	\item $g^{\square}_{h}$  and $\overline{g^{\square}_{h}}$ are rational.
%	 \end{itemize}

Let $\tilde{M}'_{h}\colon \Z^{2d-1}\to \Z/p$ be the quadratic form given by $\tilde{M}'_{h}(z,m):=\tilde{M}_{h}(z)$. It is clear from (\ref{1:hfeowpwef0}) that $\rank_{p}(\tilde{M}'_{h})=\rank_{p}(\tilde{M}_{h})\geq (s-1)+13$ for all $h\in J_{1}$. So
  by (\ref{1:e4.7}),  and  the  induction hypothesis, for all $h\in J_{1}$, there exist a type-I horizontal character $\overline{\eta}_{h}\colon\overline{G^{\square}}\to\R$ with $0<\Vert\overline{\eta}_{h}\Vert\ll_{d,m}\d^{-O_{d,m}(1)}$ and a constant $a_{h}\in\R$ such that  for all $m\in\Z^{d}$ and $z\in V_{p}(\tilde{M}_{h})$, 
 $$\overline{\eta}_{h}\circ \overline{g^{\square}_{h}}(pm+\tilde{L}_{h}(z)+\tau(u_{h}))\equiv a_{h} \mod \Z.$$

 By the Pigeonhole Principle, there exists a subset $J_{2}$ of $J_{1}$ with $\vert J_{2}\vert\gg_{d,m}\d^{O_{d,m}(1)}(pK)^{d}$  such that $\overline{\eta}_{h}$ equals to a same $\overline{\eta}$ for all $h\in J_{2}$. Writing $\eta_{0}\colon G^{\square}\to\R$ to be the type-I horizontal character $\eta_{0}(x)=\overline{\eta}(\overline{x})$ where $\overline{x}$ is the projection of $x$, we have that $\eta_{0}$ is a type-I horizontal character with $0<\Vert \eta_{0}\Vert\ll_{d,m}\d^{-O_{d,m}(1)}$ and that 
 \begin{equation}\nonumber
 \begin{split}
 \eta_{0}\circ g^{\square}_{h}(pm+\tilde{L}_{h}(z)+\tau(u_{h}))\equiv a_{h} \mod \Z \text{ for all $h\in J_{2}, m\in\Z^{d}, z\in V_{p}(\tilde{M}_{h})$.}
 \end{split}
 \end{equation} 
 %which implies that
 %\begin{equation}\nonumber
 %\begin{split}
 %\eta_{0}\circ g^{\square}_{h}(pm+x_{n,h}+u_{h})\equiv \eta_{0}\circ g^{\square}_{h}(p(m+e_{i})+x_{n,h}+u_{h}) \mod \Z 
 %\end{split}
 %\end{equation} 
 %for all $m\in\Z^{d}, h\in J_{2}, n\in J_{2,h}, 1\leq i\leq d$.
%Since $g^{\square}_{h}$ is $p^{k}$-rational, $\eta_{0}\circ g^{\square}_{h}$ is $p^{k}$-periodic. 
%Also note that $x_{n,h}-x_{n',h}\notin p\Z^{d}$ for all $n,n'\in J_{2,h}$ with $n\neq n'$.
 % By {\color ???,} this implies that 
 %\begin{equation}\nonumber
 %\begin{split}
 %\eta_{0}\circ g^{\square}_{h}(m)\equiv \eta_{0}\circ g^{\square}_{h}(m+pe_{i}) \mod \Z 
 %\end{split}
 %\end{equation} 
%for all $m\in\Z$ and $1\leq i\leq d$. In other words, we deduced that $ \eta_{0}\circ g^{\square}_{h}$ is $p$-periodic.
   Since $g^{\square}_{h}$ is $pK$-periodic, it follows from Lemma \ref{1:298ruf} that 
    \begin{equation}\label{1:e4.80m2}
 \begin{split}
 \eta_{0}\circ g^{\square}_{h}(n)\equiv a_{h} \mod \Z \text{ for all $h\in J_{2}, n\in V_{p}(\tilde{M})^{h}$.}
 \end{split}
 \end{equation} 
 Finally, since $g$ is $pK$-periodic, $(g^{\square}_{h+pKm})^{-1}g^{\square}_{h}$ take values in $\Gamma^{2}$ for all $h\in J_{2}$ and $m\in\Z^{d}$. So $ \eta_{0}\circ g^{\square}_{h}= \eta_{0}\circ g^{\square}_{h+pKm} \mod\Z$. Therefore, we may upgrade (\ref{1:e4.80m2}) to conclude that
    \begin{equation}\label{1:e4.8}
 \begin{split}
 \eta_{0}\circ g^{\square}_{h}(n)\equiv a_{h} \mod \Z \text{  for all  $h\in J_{2}+pK\Z^{d}, n\in V_{p}(\tilde{M})^{h}$}
 \end{split}
 \end{equation}
for some constant $a_{h}$.

 Denote $\eta_{1}\colon G\to\R, \eta_{1}(g):=\eta_{0}(g,g)$ and $\eta_{2}\colon G_{2}\to\R, \eta_{2}(g)=\eta_{0}(g,id_{G})$, we have that $\eta_{0}(g',g)=\eta_{1}(g)+\eta_{2}(g'g^{-1})$ for all $(g',g)\in G^{\square}$.
 Similar to Lemma 7.5 of \cite{GT12b}, $\eta_{1}$ is a type-I horizontal character on $G$, and  $\eta_{2}$ is a type-I  horizontal character on $G_{2}$ which annihilates $[G,G_{2}]$. Moreover, $\Vert\eta_{1}\Vert,\Vert\eta_{2}\Vert\ll_{d,m}\d^{-O_{d,m}(1)}$.

 Similar to the approach on pages 12 and 13 of \cite{GT14} (along with some notation changes), 
 one can compute that (we leave the details to the interested readers)  
 \begin{equation}\label{1:force1}
 \begin{split}
  Q(n+h)-Q(n)+P(n)+\sigma(h)\cdot n-a_{h}\equiv 0 \mod \Z
 \end{split}
 \end{equation}
 for all $h\in J_{2}+pK\Z^{d}$ and $n\in V_{p}(\tilde{M})^{h}$,
 where $$P(n):=\eta_{1}({g}(n)), Q(n):=\eta_{2}({g}_{\nlin}(n)),
 \sigma(h):=\omega_{h}+h\Lambda,$$
 $$\omega_{h}:=(\eta_{2}([{g}(e_{1}),\{{g}_{\lin}(h)\}]),\dots,\eta_{2}([{g}(e_{d}),\{{g}_{\lin}(h)\}]))\in\R^{d}$$ 
 and $\Lambda=(\l_{i,j})_{1\leq i,j\leq d}$ is the $d\times d$ matrix in $\R$ with $\l_{i,j}=\eta_{2}([{g}(e_{i}),{g}(e_{j})])$ if $i<j$ and  $\l_{i,j}=0$ otherwise. Since $g$ is rational, it is clear that all the entries of $\Lambda$ are in $\Q$.

  \subsection{Solving polynomial equations}
  
 Our next goal is to solve the polynomial equation (\ref{1:force1}). An equation of similar form was obtained in (4.10) of \cite{GT14}. However, equation (\ref{1:force1}) differs from (4.10) of \cite{GT14} in the sense that (\ref{1:force1})  only holds for $n\in V_{p}(\tilde{M})^{h}$ instead of for all $n\in\Z^{d}$. This makes equation (\ref{1:force1}) significantly more difficult than  (4.10) of \cite{GT14}. In fact, the bulk of Sections \ref{1:s4} and \ref{1:s5} are devoted to the preparation for solving equation (\ref{1:force1}).

%  Since  $\tilde{g}\in\poly_{p}(V_{p}(\tilde{M})\to G_{\N}\vert\Gamma)$ and $\tilde{g}(\bold{0})=id_{G}$,
% it follows from Lemma \ref{1:qmm} and the definition of $\tilde{g}_{\nlin}$ that $P$ and $Q$ takes values in $\Q$.
% Also since $\tilde{g}$ belongs to $\poly_{p}(V_{p}(\tilde{M})\to G_{\N}\vert \Gamma)$, we have that $P$ belongs to $\poly_{p}(V_{p}(\tilde{M})\to \R\vert \Z)$ and is of degree at most $s$ by Lemma \ref{1:projectionpp}.  Since $\tilde{g}_{\nlin}$ belongs to $\poly(\Z^{d}\to G_{\N})$, 
% it is also not hard to see that $Q$ is a polynomial of degree at most $s$, but it is apriori unclear whether $Q$ is partially $p$-periodic on $V_{p}(\tilde{M})$. 

Since $g$ is rational and is of step at most $s$, we have that $P$ and $Q$ are of degree at most $s$ and take values in $\Q$.
  Replacing $n$ in (\ref{1:force1}) by $n, n+h_{2}, n+h_{3},n+h_{2}+h_{3}$ respectively, replacing $h$ in (\ref{1:force1}) by $h_{1}$, and taking the second order differences, we have that
\begin{equation}\label{1:force001}
 \begin{split}
  \Delta_{h_{3}}\Delta_{h_{2}}\Delta_{h_{1}}Q(n)+\Delta_{h_{2}}\Delta_{h_{1}}P(n)\in\Z
 \end{split}
 \end{equation}
 for all $h_{3}\in J_{2}+pK\Z^{d}$ and $(n,h_{1},h_{2})\in \Gow_{p,2}(V_{p}(\tilde{M})^{h_{3}})$. 
 By Proposition \ref{1:ctsp} (and a change of variable), it is not hard to compute that the number of $(n,h_{1},h_{2},h_{3})\in \Gow_{p,3}(V_{p}(\tilde{M}))\cap ([pK]^{d})^{4}$ with $h_{3}\in J_{2}$ is $\gg_{d,m}\d^{-O_{d,m}(1)}\vert \Gow_{p,3}(V_{p}(\tilde{M}))\cap ([pK]^{d})^{4}\vert$.
 Since $d'\geq s+13\geq 3^{2}+3+3$, it follows from (\ref{1:force001}) and Proposition \ref{1:att30}\footnote{If $K$ is not divisible by $p$, then we may replace the set $J_{2}$ by $(J_{2}+pK\Z^{d})\cap [p^{2}K]^{d}$ to apply Proposition \ref{1:att30}.} that 
 \begin{equation}\label{1:forcemid}
 	Q=\frac{1}{q}Q_{1}+Q_{0} \text{ and } P=\frac{1}{q}P_{1}+P_{0}
 	\end{equation}
  for some $q\in\N_{+}$ with $q\ll_{d}\d^{-O_{d}(1)}$, some polynomials $Q_{1},P_{1}\in\poly(V_{p}(\tilde{M})\to \R\vert\Z)$ with $\deg(Q_{1})\leq \deg(Q), \deg(P_{1})\leq \deg(P)$, 
 	 and some $Q_{0},P_{0}\colon\Z^{d}\to\R$ such that
 	\begin{equation}\label{1:force51}
 	Q_{0}(n)=(n\Omega)\cdot n+v\cdot n+v_{0} \text{ and } P_{0}(n)=u\cdot n+u_{0}
 	\end{equation}
 	for some $v,u\in\R^{d}$, $v_{0},u_{0}\in\R$ and symmetric $d\times d$ $\Q$-valued matrix $\Omega$.\footnote{If $P$ and $Q$ were $\Z/p$-valued polynomials, then we could apply Corollary \ref{1:att301} instead of Proposition \ref{1:att30} and simplify the proof. However, this is not the case in general. This is the reason why we need to improve Corollary \ref{1:att301} to the more general version   Proposition \ref{1:att30} in Section \ref{1:s:52} (see also Remark \ref{1:rpprt}).}

	Our next step is to derive better descriptions for the lower degree polynomials $Q_{0}$ and $P_{0}$ by substituting (\ref{1:forcemid}) and (\ref{1:force51}) back to (\ref{1:force1}).
 	For all $h\in J_{2}+pK\Z^{d}$ and $n\in V_{p}(\tilde{M})^{h}$, it follows from (\ref{1:forcemid})
 	  that $$Q(n)-Q_{0}(n)\equiv Q(n+h)-Q_{0}(n+h)\equiv P(n)- P_{0}(n)\equiv 0 \mod \frac{1}{q}\Z.$$ 
 So	(\ref{1:force1}) implies that 
 		\begin{equation}\label{1:reductiona}
 		\begin{split}
 	t_{h}\cdot n\equiv b_{h} \mod \Z \text{ for all $h\in J_{2}+pK\Z^{d}$ and $n\in V_{p}(\tilde{M})^{h}$}
 		\end{split}
 		\end{equation}
 	 for some $b_{h}\in\R$ where
	 \begin{equation}\label{1:tthh}
 		\begin{split}
 	t_{h}:=q(2h\Omega+u+\sigma(h)).
	 		\end{split}
 		\end{equation}
 	Note that the conditions $n\in V_{p}(\tilde{M})^{h}$  remain unchanged if we replace $n$ by $n+pn'$ for any $n'\in\Z^{d}$. So the condition $t_{h}\cdot n\equiv b_{h} \mod \Z$ is also unaffected if we change $n$ to $n+pn'$. Therefore, we must have that 
	$t_{h}\in\Z^{d}/p, b_{h}\in\Z/p$	for all $h\in J_{2}+pK\Z^{d}$.

 Fix $h\in J_{2}+pK\Z^{d}$.
We temporary convert the language to the $\V$-setting.
Recall that $M_{h}\colon\F_{p}^{d}\to\F_{p}$ is the quadratic form induced by $\tilde{M}_{h}$, and 
  by  (\ref{1:hfeowpwef0}) $\rank(M_{h})\geq 3$. 

\textbf{Claim.} There exist $v_{0},\dots,v_{d-1}\in V(M_{h})$ such that $v_{1}-v_{0},\dots,v_{d-1}-v_{0}$ are linearly independent.

Fix any $v_{0}\in V(M_{h})$ (whose existence is given by Lemma \ref{1:counting}). Suppose that for some $0\leq k\leq d-2$, we have chosen $v_{1},\dots,v_{k}\in V(M_{h})$ such that $v_{1}-v_{0},\dots,v_{k}-v_{0}$ are linearly independent. If $k\leq d-3$, then
  the number of $v\in\V$ with $v_{1}-v_{0},\dots,v_{k}-v_{0}, v-v_{0}$ being linearly dependent is at most $p^{k}\leq p^{d-3}$ while $\vert V(M_{h})\vert=p^{d-2}(1+O(p^{-1/2}))$. If $p\gg_{d} 1$, then there exists $v_{k+1}\in V(M_{h})$ such that $v_{1}-v_{0},\dots,v_{k+1}-v_{0}$ are linearly independent. If $k=d-2$, then let $Q(v)$ denote the determinant of the $(d-1)\times (d-1)$ matrix with $v_{1}-v_{0},\dots,v_{d-2}-v_{0}, v-v_{0}$ being the row vectors. Then $Q$ is a non-constant degree 1 polynomial in $v$. So  $Q$ can not be written in the form $Q=Q'M_{h}$ for some $Q'\in\poly(\V\to\F_{p})$ with $\deg(Q')\leq \deg(Q)$. By Proposition \ref{1:noloop}, there exists $v_{d-1}\in V(M_{h})\backslash V(Q)$. So $v_{1}-v_{0},\dots,v_{d-1}-v_{0}$ are linearly dependent. This completes the proof of the claim.
  
  \

  Translating the claim back into the $\Z^{d}$-setting, we may find $w_{0},\dots,w_{d-1}\in V_{p}(\tilde{M}_{h})$ such that $w_{1}-w_{0},\dots,w_{d-1}-w_{0}$ are $p$-linearly independent. Then for all $1\leq i\leq d-1$, it follows from (\ref{1:reductiona}) that
  $$t_{h}\cdot (\tilde{L}_{h}(w_{i}-w_{0}))=t_{h}\cdot ((\tilde{L}_{h}(w_{i})+u_{h})-(\tilde{L}_{h}(w_{0})+u_{h}))\equiv b_{h}-b_{h}=0 \mod\Z.$$
  
  Since $w_{1}-w_{0},\dots,w_{d-1}-w_{0}$ are $p$-linearly independent, by linearity, there exists $q_{h}\in\N, p\nmid q_{h}$ such that 
   $q_{h}t_{h}\cdot \tilde{L}_{h}(e_{i})\in \Z$ for all $1\leq i\leq d-1$. 
   Since $t_{h}\in\Z^{d}/p$, we have
   \begin{equation}\label{1:sshan}
   t_{h}\cdot \tilde{L}_{h}(e_{i})\in \frac{1}{q_{h}}\Z\cap \frac{1}{p}\Z=\Z.
   \end{equation}

   Recall that $h_{1}\notin p\Z$ since $h\in J_{2}+pK\Z^{d}$. Since $t_{h}\in \Z^{d}/p$,
   we may write $t_{h}=c_{h}(hA)+(0,t'_{h})$ for some $c_{h}\in\Z/p$ and $t'_{h}\in\Z^{d-1}/p$. 
   Since $(hA)\cdot \tilde{L}_{h}(e_{i})\in p\Z$ for all $1\leq i\leq d-1$ by (\ref{1:trueequivu2}), it follows from (\ref{1:sshan}) that $t'_{h}B\in \Z^{d-1}$, where $B$ is the $(d-1)\times (d-1)$ matrix with whose column vectors are transposes of $\tilde{L}_{h}(e_{1}),\dots,\tilde{L}_{h}(e_{d-1})$ with the first coefficient removed. 
  
   If $\tilde{L}_{h}(e_{1}),\dots, \tilde{L}_{h}(e_{d-1}),(1,0,\dots,0)$ are $p$-linearly dependent, then the fact that $(hA)\cdot \tilde{L}_{h}(e_{i})\in p\Z, 1\leq i\leq d-1$ implies that $(hA)\cdot \tilde{L}_{h}(1,0,\dots,0)=\tau(c)h_{1}\in p\Z$, a contradiction. Therefore $\tilde{L}_{h}(e_{1}),\dots,\tilde{L}_{h}(e_{d-1}),(1,0,\dots,0)$ are $p$-linearly independent and thus $\det(B)\notin p\Z$.
     So $t'_{h}\in\frac{1}{q'_{h}}\Z^{d-1}$ for some $q'_{h}\in\N, p\nmid q'_{h}$. Since $t'_{h}\in\frac{1}{p}\Z^{d-1}$, we have that $t'_{h}\in\Z^{d-1}$.
   Therefore, we have that 
	 \begin{equation}\label{1:force511}
 		\begin{split}
 	t_{h}-c_{h}(hA)\in\Z^{d} \text{ for some $c_{h}\in\Z/p$	 for all $h\in J_{2}+pK\Z^{d}$.}
	\end{split}
 	\end{equation}

 Next we need to take a closer look at (\ref{1:force511}).
 Since the map $(b,c)\to\eta_{2}([b,c])$ is bilinear, the map $c\to\eta_{2}([g(e_{i}),c])$ is a homomorphism. So there exists $\xi=(\xi_{1},\dots,\xi_{d})\in (\R^{m})^{d}$ such that 
 $$\eta_{2}([g(e_{i}),x])=\xi_{i}\cdot\psi(x) \mod \Z$$
 for all $1\leq i\leq d$ and $x\in G$. 
  Similar to the discussion after (7.15) of \cite{GT12b}, all but the first $m_{\lin}$ coefficients of $\xi_{i}$ are non-zero, and $\vert\xi_{i}\vert\ll_{d,m}\d^{-O_{d,m}(1)}$.
  Denote $\gamma_{i}:=\psi(g(e_{i}))$ and $\gamma:=(\gamma_{1},\dots,\gamma_{d})\in (\R^{m})^{d}$. 
  Since $g$ is rational, it is clear that $\xi,\gamma\in(\Q^{m})^{d}$.

   We need the following variation of Lemma 4.3 of \cite{GT14} to continue the proof of Theorem \ref{1:sLei}.

\begin{lem}\label{1:gt43}
    Let $d,m,N\in\mathbb{N}_{+}$, $p$ be a prime, $\d>0,$ $\beta,\alpha_{1},\dots,\alpha_{d}\in\R$ and $\xi,\gamma_{0},\gamma_{1},\dots,\gamma_{d}\in\Q^{m}$. Let $V$ be a subset of $\{-N,\dots,N\}^{d}$ of cardinality at least $\d (2N+1)^{d}$ 
    such that
    $$\beta+\sum_{i=1}^{d}\alpha_{i}h_{i}+\xi\cdot\Bigl\{\gamma_{0}+\sum_{i=1}^{d}\gamma_{i}h_{i}\Bigr\}\in \Z$$
    for all $h=(h_{1},\dots,h_{d})\in V$. 
    If  $N\gg_{\d,d,m.p}1$,\footnote{We caution the readers that $N$ is dependent on   $p$. However, this will not cause any trouble for the application of this lemma. }
    then at least one of the following holds:
    \begin{enumerate}[(i)]
    \item there exists $r\in\Z$ with $0<r\ll_{d,m} \d^{-O_{d,m}(1)}$ such that $r\xi\in \Z^{m}$;
    \item there exists $\theta\in\Z^{m}$ with $0<\vert \theta\vert\ll_{d,m} \d^{-O_{d,m}(1)}$ such that $\theta\cdot \gamma_{i}\in \Z$ for all $1\leq i\leq d$.
    \end{enumerate}
\end{lem}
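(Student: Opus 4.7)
The plan is to follow the Fourier-analytic approach used in Lemma 4.3 of \cite{GT14}, adapted to the two-piece linear-plus-sawtooth structure that appears here. First, I would eliminate the constant $\beta$ by a difference argument: fix $h_{0}\in V$ and consider $V-h_{0}$. Subtracting the equation at $h_{0}$ from the one at $h+h_{0}$ for $h\in V-h_{0}$, and using the identity
\[
\Bigl\{\sum_{i=1}^{d}\gamma_{i}(h_{i}+(h_{0})_{i})\Bigr\}=\Bigl\{\sum_{i=1}^{d}\gamma_{i}h_{i}\Bigr\}+\Bigl\{\sum_{i=1}^{d}\gamma_{i}(h_{0})_{i}\Bigr\}+\varepsilon(h,h_{0})
\]
for some error vector $\varepsilon(h,h_{0})\in\{-1,0,1\}^{m}$, we may partition $V-h_{0}$ into at most $3^{m}$ classes according to $\varepsilon(h,h_{0})$ and apply the pigeonhole principle. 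Passing to the densest class (still of density $\gg_{m}\delta$) and absorbing $\xi\cdot\varepsilon$ into the constant, we reduce to the situation $\beta=0$ on a set of comparable density.

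Next, I would perform a case split based on whether $\xi$ admits an approximate rational denominator of controlled size. If there exists $r\in\Z$ with $0<r\leq R$ (for a threshold $R=\delta^{-O_{d,m}(1)}$ to be chosen) such that $r\xi\in\Z^{m}$, we are in conclusion (i). Otherwise, I would expand the sawtooth map $\xi\cdot\{\,\cdot\,\}\colon\R^{m}\to\R$ in Fourier series (after smooth truncation), and substitute $x=\sum_{i=1}^{d}\gamma_{i}h_{i}$. The constraint $\sum\alpha_{i}h_{i}+\xi\cdot\{\sum\gamma_{i}h_{i}\}\in\Z$ together with $\vert V\vert\geq\delta(2N+1)^{d}$ implies, through standard large-sieve considerations and an $L^{2}$-argument on the truncated Fourier expansion, that there exists $k\in\Z^{m}$ with $0<\vert k\vert\ll_{d,m}\delta^{-O_{d,m}(1)}$ such that the exponential sum
\[
\frac{1}{\vert V\vert}\sum_{h\in V}\exp\Bigl(2\pi i\bigl(k\cdot\sum_{i=1}^{d}\gamma_{i}h_{i}+\sum_{i=1}^{d}\alpha'_{i}h_{i}\bigr)\Bigr)
\]
is of absolute value $\gg_{d,m}\delta^{O_{d,m}(1)}$ for some $\alpha'_{i}\in\R$. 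Since $V$ has positive density inside a box of side $2N+1$, the standard multidimensional Weyl equidistribution theorem forces each coefficient $k\cdot\gamma_{i}$ (which lies in $\Q$ by the rationality of $\gamma_{i}$) to satisfy $\Vert k\cdot\gamma_{i}\Vert_{\R/\Z}\leq\delta^{O_{d,m}(1)}/N$, provided $N\gg_{\delta,d,m}1$. Using $N\gg_{\delta,d,m,p}1$ and the rationality hypothesis $\gamma_{i}\in\Q^{m}$ (whose common denominator is controlled by $p$), we may upgrade this near-integrality to $k\cdot\gamma_{i}\in\Z$ exactly, yielding conclusion (ii).

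The principal technical difficulty will be the Fourier step: the sawtooth function is discontinuous so its Fourier series converges only conditionally, and one must truncate carefully and quantify the $L^{2}$-tail to keep track of the parameters $r$ and $\vert k\vert$ as polynomial in $\delta^{-1}$. A secondary subtlety is the promotion from ``$k\cdot\gamma_{i}$ close to an integer'' to ``$k\cdot\gamma_{i}$ equal to an integer''; this is where the hypothesis $\gamma_{i}\in\Q^{m}$ and the dependence of $N$ on $p$ come in, since the denominators of the $\gamma_{i}$ are bounded in terms of $p$, and any rational that is sufficiently close to an integer relative to its denominator must actually equal that integer.
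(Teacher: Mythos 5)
Your overall architecture is not the paper's: the paper first reduces to $\xi\in[0,1)^{m}$ via the identity $a\{b\}=\{a\}\{b\}+\lfloor a\rfloor b-\lfloor a\rfloor\lfloor b\rfloor$, then quotes Lemma 4.3 of \cite{GT14} as a black box to get the dichotomy in the \emph{approximate} form (either $\Vert r\xi\Vert_{\R/\Z}\ll_{d,m}\delta^{-O_{d,m}(1)}/N$ for some small $r$, or $\Vert k\cdot\gamma_{i}\Vert_{\R/\Z}\ll_{d,m}\delta^{-O_{d,m}(1)}/N$ for some small $k$), and only then upgrades near-integrality to exact integrality using $\xi,\gamma_{i}\in\Q^{m}$ and $N\gg_{\delta,d,m,p}1$. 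Your final upgrade paragraph matches the paper's last step and is fine (and your differencing to ``remove $\beta$'' merely replaces $\beta$ by the constant $\xi\cdot\varepsilon$, which is harmless though not the claimed reduction to $\beta=0$). The problem is that you propose to re-derive the Green--Tao dichotomy itself, and your sketch of that derivation has two genuine gaps.

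First, your case split is on \emph{exact} rationality of $\xi$ with denominator at most $R=\delta^{-O(1)}$, but the complementary branch then contains the regime where $\xi$ is rational with denominator as large as a power of $p$ while $\Vert\xi_{j}\Vert_{\R/\Z}$ is far smaller than any fixed power of $\delta$ (note $\delta$ may be much larger than $1/p$). In that regime the zero-frequency Fourier coefficient of $t\mapsto e(\xi\cdot\{t\})$ has modulus $1-O(\Vert\xi\Vert^{2})$ while the nonzero coefficients have total mass $O(\Vert\xi\Vert\log K)$ after truncation at height $K$; the identity $\sum_{h\in V}e(F(h))=\vert V\vert$ therefore only forces a nonzero-frequency correlation of strength $O(\Vert\xi\Vert)$, which is not $\gg_{d,m}\delta^{O_{d,m}(1)}$, so you cannot extract a frequency $k$ with $\vert k\vert\ll_{d,m}\delta^{-O_{d,m}(1)}$. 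This is exactly why the genuine dichotomy must be stated with approximate rationality of $\xi$ at scale $\delta^{-O(1)}/N$, so that this regime lands in conclusion (i) after the rationality upgrade. Second, your closing analytic step is invalid as stated: from a correlation $\bigl\vert\sum_{h\in V}e\bigl(\sum_{i}(k\cdot\gamma_{i}+\alpha'_{i})h_{i}\bigr)\bigr\vert\gg\delta^{O(1)}\vert V\vert$ over a set $V$ that at this point is just an arbitrary dense subset of the box, Weyl-type equidistribution gives no constraint on the frequency (for any $\theta$ the set $\{h\colon\Vert\theta\cdot h\Vert_{\R/\Z}\leq 1/100\}$ is dense and correlates perfectly well with $e(\theta\cdot h)$), and even if it did, the frequency is $k\cdot\gamma_{i}+\alpha'_{i}$ with $\alpha'_{i}\in\R$ unknown, so near-integrality of $k\cdot\gamma_{i}$ alone cannot be isolated from a single correlation. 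Thus the heart of the lemma --- precisely the content of Lemma 4.3 of \cite{GT14} that the paper cites --- is not actually established by your argument; either quote that lemma as the paper does, or supply a correct proof of the approximate dichotomy in place of the Fourier/large-sieve sketch.
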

\begin{proof}
Since 
\begin{equation}\label{1:havqoeir}
\Bigl\{\gamma_{0}\Bigr\}+\Bigl\{\sum_{i=1}^{d}\gamma_{i}h_{i}\Bigr\}-\Bigl\{\gamma_{0}+\sum_{i=1}^{d}\gamma_{i}h_{i}\Bigr\}
\end{equation}
 takes at most $2^{m}$ different values, passing to a subset of $V$ if necessary, we may assume without loss of generality that (\ref{1:havqoeir}) take a same value for all $h\in V$. Absorbing constant terms by $\beta$ if necessary,  we may assume without loss of generality that $\gamma_{0}=\bold{0}$.

Since $$a\{b\}=\{a\}\{b\}+\lfloor a\rfloor b-\lfloor a\rfloor\lfloor b\rfloor$$ for all $a,b\in\R$, we have that 
$$\beta+\sum_{i=1}^{d}\alpha_{i}h_{i}+\xi\cdot\Bigl\{\sum_{i=1}^{d}\gamma_{i}h_{i}\Bigr\}\equiv \beta+\sum_{i=1}^{d}(\alpha_{i}+\lfloor\xi\rfloor\cdot \gamma_{i})h_{i}+\{\xi\}\cdot\Bigl\{\sum_{i=1}^{d}\gamma_{i}h_{i}\Bigr\}\mod \Z.$$
So it suffices to prove Lemma \ref{1:gt43} for the case $\xi\in[0,1)^{m}$.
By Lemma 4.3 of \cite{GT14},  either there exists $r\in\Z, 0<r\ll_{d,m}\d^{-O_{d,m}(1)}$ such that $\Vert r\xi_{i} \Vert_{\R/\Z}\ll_{d,m} \d^{-O_{d,m}(1)}/N$ for all $1\leq i\leq d$ (where $\xi=(\xi_{1},\dots,\xi_{m})$), or there exists $\theta\in\Z^{m}$ with $0<\vert \theta\vert\ll_{d,m} \d^{-O_{d,m}(1)}$ such that $\Vert \theta\cdot\gamma_{i}\Vert_{\R/\Z}\ll_{d,m} \d^{-O_{d,m}(1)}/N$ for all $1\leq i\leq d$. 

Since $\xi,\gamma_{1},\dots,\gamma_{d}\in\Q^{m}$, by choosing $N$ to be sufficiently large depending on $\d,d,m$ and $p$,
 we have that 
 	 either  $r\xi_{i}\in\Z$ for all $1\leq i\leq d$, or $\theta\cdot\gamma_{i}\in\Z$ for all $1\leq i\leq d$.
\end{proof}

  Denote $\gamma(h):=\sum_{j=1}^{d}\gamma_{j}h_{j}\in\Q^{m}$.
  Then the conclusion (ii) in Lemma \ref{1:gt43} can be reinterpreted as $\theta\cdot\gamma(h)\in \Z$ for all $h\in \Z^{d}$.
 Let $\{\gamma(h)\}\in[0,1)^{m}$ 
  be the coordinate-wise fraction part of   $\gamma(h)$.
 Then
 $$\sigma(h)=h\Lambda+\xi\ast \{\gamma(h)\},$$
 where $\ast$ is the coordinate-wise product of vectors in $\R^{m}$.
 So for all $h\in J_{2}+pK\Z^{d}$, it follow from (\ref{1:tthh}) and (\ref{1:force511}) that 
 \begin{equation}\label{1:reduction1}
 q(hB+u+\xi\ast \{\gamma(h)\})-c_{h}(hA)\in\Z^{d} \text{ for some } c_{h}\in\Z/p,
 \end{equation}
where
$B:=2\Omega+\Lambda$. 

 \begin{prop}\label{1:p222}
 	We have that either there exists $r\in\N_{+}$ with $r\ll_{d,m} \d^{-O_{d,m}(1)}$ such that $r\xi\in(\Z^{m})^{d}$, or there  exists $\theta\in\Z^{m_{\lin}}\times\{0\}^{m-m_{\lin}}$ with $0<\vert \theta\vert\ll_{d,m}\d^{-O_{d,m}(1)}$ such that $\theta\cdot\gamma_{i}\in\Z$ for all $1\leq i\leq d$.
 \end{prop}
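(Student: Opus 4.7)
My plan is to apply Lemma \ref{1:gt43} to a version of equation (\ref{1:reduction1}) in which the unknown scalars $c_h \in \Z/p$ have been eliminated. The crucial observation is that multiplying (\ref{1:reduction1}) by $p$ makes the offending term disappear modulo $\Z^{d}$: since $pc_h\in\Z$ and $hA\in\Z^{d}$, we get $pc_h(hA)\in\Z^{d}$, so for each coordinate $j$,
\[
pq(hB)_{j}+pqu_{j}+pq\xi_{j}\cdot\{\gamma(h)\}\in\Z
\qquad\text{for all } h\in J_{3}+p\Z^{d}.
\]
This is exactly the input required for Lemma \ref{1:gt43}, and the density of $J_{3}+p\Z^{d}$ inside $\{-N,\dots,N\}^{d}$ (for $N$ a large multiple of $p$) is $\gg \d^{O_{d,m}(1)}$, independent of $p$.

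To match the conclusion of the proposition, which requires $k$ supported on the first $m_{\lin}$ coordinates, I would project to those coordinates before applying the lemma. Recall that $\xi_{j}\in\R^{m_{\lin}}\times\{\bold{0}\}^{m-m_{\lin}}$ because $\eta_{2}$ annihilates $[G,G_{2}]$, so the dot product $\xi_{j}\cdot\{\gamma(h)\}$ only sees $\gamma_{i}^{(1)}$, the projections of $\gamma_{i}$ to the first $m_{\lin}$ coordinates. Applying Lemma \ref{1:gt43} in ambient dimension $m_{\lin}$ with $\xi_{\mathrm{Lem}}=pq\xi_{j}^{(1)}$ and $\gamma_{\mathrm{Lem},i}=\gamma_{i}^{(1)}$, we get either (a) some $r\in\Z$, $0<r\ll_{d,m}\d^{-O_{d,m}(1)}$ with $rpq\xi_{j}^{(1)}\in\Z^{m_{\lin}}$, or (b) some $k^{(1)}\in\Z^{m_{\lin}}$, $0<|k^{(1)}|\ll_{d,m}\d^{-O_{d,m}(1)}$ with $k^{(1)}\cdot\gamma_{i}^{(1)}\in\Z$ for all $i$. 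Case (b) is immediate: setting $k=(k^{(1)},\bold{0})\in\Z^{m_{\lin}}\times\{\bold{0}\}^{m-m_{\lin}}$, we obtain $k\cdot\gamma_{i}=k^{(1)}\cdot\gamma_{i}^{(1)}\in\Z$, which is precisely conclusion (ii) of the proposition.

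In case (a), I combine $rpq\xi_{j}\in\Z^{m}$ with Lemma \ref{1:gxregular}'s bound $p^{\lfloor s/2\rfloor}\xi_{j}\in\Z^{m}$. Because $p\gg_{d,m}\d^{-O_{d,m}(1)}$ and $p\nmid q$, we have $\gcd(rq,p)=1$, so $\gcd(rpq,p^{\lfloor s/2\rfloor})\mid p$, which forces $\xi_{j}\in\tfrac{1}{p}\Z^{m}$. When $s=1$ this already gives $\xi_{j}\in\Z^{m}$, and doing this for all $j$ yields conclusion (i).

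\textbf{The main obstacle} is case (a) when $s\ge 2$, where the above only gives $\xi_{j}\in\tfrac{1}{p}\Z^{m}$ rather than $\xi_{j}\in\Z^{m}$. To close this gap, I would argue as follows: if $\xi_{j}\notin\Z^{m}$, write $\xi_{j}=\eta_{j}/p$ with $\eta_{j}\in\Z^{m}\setminus p\Z^{m}$, and substitute back into the original (unmultiplied) equation (\ref{1:reduction1}). Because $\eta_{j}\in\Z^{m}$, the identity $\eta_{j}\cdot\{\gamma(h)\}\equiv\eta_{j}\cdot\gamma(h)\bmod\Z$ converts the fractional-part contribution into a genuinely linear expression in $h$. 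After multiplying by $p$ (to absorb the $c_{h}$ term as before), one obtains
\[
pq(hB)_{j}+pqu_{j}+q\,\eta_{j}\cdot\gamma(h)\in\Z
\qquad\text{for all } h\in J_{3}+p\Z^{d},
\]
which is an affine-linear relation in $h$ holding on a set of density $\gg\d^{O(1)}$ in $\F_{p}^{d}$. Since $|J_{3}|\gg\d^{O(1)}p^{d}$ ensures $J_{3}$ is not contained in any proper affine hyperplane mod $p$ (for $p\gg_{d,m}\d^{-O_{d,m}(1)}$), the coefficients of this linear relation must themselves lie in $\tfrac1p\Z$, which after a short rationality analysis forces either $\eta_{j}\in p\Z^{m}$ (contradicting our assumption, so $\xi_{j}\in\Z^{m}$) or produces a nonzero $k\in\Z^{m_{\lin}}\times\{\bold{0}\}^{m-m_{\lin}}$ with $k\cdot\gamma_{i}\in\Z$ of complexity $\ll_{d,m}\d^{-O_{d,m}(1)}$.
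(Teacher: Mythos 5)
Your handling of case (b) and of $s=1$ is fine, but the patch you propose for case (a) when $s\ge 2$ does not work, and this is a genuine gap. Multiplying (\ref{1:reduction1}) by $p$ does kill the term $c_{h}(hA)$, but only at the cost of replacing the frequency by $pq\xi_{j}$ in Lemma \ref{1:gt43}, and the lost factor of $p$ cannot be recovered afterwards. Once you write $\xi_{j}=\eta_{j}/p$ with $\eta_{j}\in\Z^{m}$ and use $q\eta_{j}\cdot\{\gamma(h)\}\equiv q\eta_{j}\cdot\gamma(h)\ \mathrm{mod}\ \Z$, the condition $pq(hB)_{j}+pqu_{j}+q\eta_{j}\cdot\gamma(h)\in\Z$ on $J_{3}+p\Z^{d}$ is purely affine-linear in $h$, with coefficient $pqB_{ij}+q\eta_{j}\cdot\gamma_{i}$ in front of $h_{i}$. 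Nothing at this stage controls $B=2\Omega+\Lambda$ or $u$: the matrix $\Omega$ coming from $Q_{0}$ is only known to be a real symmetric matrix (its rationality is deduced much later, and only in a sub-case), so the most you can extract — by differencing along $p\Z^{d}$ within one residue class of $J_{3}$, which is all your hyperplane/density remark adds — is that each combined coefficient lies in $\frac{1}{p}\Z$. That constrains the sum $pqB_{ij}+q\eta_{j}\cdot\gamma_{i}$ but says nothing about $\eta_{j}$ or the $\gamma_{i}$ separately: for any $\eta_{j}\notin p\Z^{m}$ and any irrational $\gamma_{i}$ one can choose $B_{ij}$ making the relation hold, so the claimed dichotomy ``either $\eta_{j}\in p\Z^{m}$ or some bounded $k$ has $k\cdot\gamma_{i}\in\Z$'' does not follow. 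The bracket structure that Lemma \ref{1:gt43} exploits is exactly what you discard when you replace $\eta_{j}\cdot\{\gamma(h)\}$ by $\eta_{j}\cdot\gamma(h)$.

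The paper eliminates $c_{h}(hA)$ without paying the factor $p$: for each $w$ with $wA\notin p\Z^{d}$ it restricts $h$ to the hyperplane $Y_{w}=\{h\colon (hA)\cdot w=0\}$ and takes the dot product of the whole vector relation (\ref{1:reduction1}) with $qw$, so the term $c_{h}(hA)\cdot w$ vanishes identically, not merely modulo $\Z$. After the pigeonhole over the sets $Z_{w}$ and the claim guaranteeing that $Y_{w}\cap(J_{3}+p\Z^{d})$ is dense, Lemma \ref{1:gt43} is applied on $Y_{w}\simeq\Z^{d-1}$ with frequency $q\sum_{i}w_{i}\xi_{i}$ — no factor of $p$. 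Pigeonholing over $w$: in the $\xi$-branch, $d$ choices of $w$ forming a matrix with determinant $Q$ coprime to $p$ give $\xi_{i}\in\frac{1}{rqQ}\Z^{m}$, which intersected with $p^{\lfloor s/2\rfloor}\xi_{i}\in\Z^{m}$ from Lemma \ref{1:gxregular} yields $\xi_{i}\in\Z^{m}$; in the $\gamma$-branch, two distinct hyperplanes with $Y_{w}+Y_{w'}=\Z^{d}$ yield $k\cdot\gamma_{i}\in\Z$ for all $i$. Any repair of your argument needs a device of this kind that removes $c_{h}$ exactly rather than by multiplying through by $p$.
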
	
 \begin{proof}
  The idea of the proof is to  apply Lemma \ref{1:gt43}. Let $N\gg_{\d,d,m,p} 1$ to be chosen later.
 	For $w\in\F_{p}^{d}$, let   $V_{w}:=\{n\in \F_{p}^{d}\colon (nA)\cdot w=0\}$ and $Z_{w}=J_{2}\cap\iota^{-1}(V_{w})$.   Since 
	$$\sum_{w\in \V, wA\neq \bold{0}}\vert Z_{w}\vert\geq p^{d-1}\vert J_{2}\vert\gg_{d,m} \d^{O_{d,m}(1)}p^{d-1}(pK)^{d},$$ $\vert Z_{w}\vert\leq (pK)^{d}$,
	and $$\vert\{w\in \V, wA\neq \bold{0}\}\vert\geq p^{d}-p^{d-d'}\geq p^{d}-p^{d-2},$$
	by the Pigeonhole Principle,
 there exists a subset $W$ of $\{w\in \V, wA\neq \bold{0}\}$ of cardinality $\gg_{d,m}\d^{O_{d,m}(1)} p^{d}$ such that $\vert Z_{w}\vert\gg_{d,m} \d^{O_{d,m}(1)} p^{d-1}K^{d}$ for all $w\in W$. 
  
	For $w\in W$, $w'=(w'_{1},\dots,w'_{d})\in\iota^{-1}(w)$ and $h\in Z_{w}+pK\Z^{d}$, note that $(hA)\cdot w'\equiv \tau((\iota(h)A)\cdot w)=0 \mod p\Z$. 	Taking the dot product of both sides of (\ref{1:reduction1}) with $w'$,  we have that 
 	\begin{equation}\label{1:target}
 	hB\cdot qw'+u\cdot qw'+\Bigl(\sum_{i=1}^{d}qw'_{i}\xi_{i}\Bigr)\cdot \{\gamma(h)\}\in\Z^{d} \text{ for all $w\in W, w'\in\iota^{-1}(w)$ and $h\in Z_{w}+pK\Z^{d}$}.
 	\end{equation} 	
%	
%	By the Pigeonhole Principle, there exists a subset $J_{3}\subseteq [p]^{d}$ with $\vert J_{3}\vert\gg ???$ such that for all $h\in J_{3}$, the number of $h'\in J_{2}$ such that $h\equiv h' \mod p\Z^{d}$  is of cardinality $\gg ???$.
	%
	
	 	Since $wA\neq \bold{0}$,  $V_{w}$ is a  subspace of $\V$ of co-dimension 1. Let $L_{w}\colon\F_{p}^{d-1}\to V_{w}$ be any bijective linear transformation and let $\tilde{L}_{w}/p$ be the regular lifting of $L_{w}$.  Since all of $B,\xi_{i},\gamma$ are rational, there exists $K'\in\N_{+}$ such that for any $h,h'\in\Z^{d}$, the fractional part of the left hand side of (\ref{1:target}) remains unchanged if we replace $h$ by $h+pK'h'$.
	%	
	%	Let $Z'_{w}:=J_{3}\cap \iota^{-1}(V_{w})$.
	%	
		By Lemma \ref{1:298ruf}, the map 
		$\Phi\colon \Z^{d-1}\times\Z^{d}\to\Z^{d}$ given by 
		$$\Phi(z,m):=\tilde{L}_{w}(z)+pm$$
		mod $pK\Z^{d}$ is a   ${K'}^{d-1}$-to-1 map between $[pK']^{d-1}\times[K]^{d}$ and $\iota^{-1}(V_{w})\cap[pK]^{d}$.
		Since $\vert Z_{w}\vert=\vert \iota^{-1}(V_{w})\cap J_{2}\vert\gg_{d,m} \d^{O_{d,m}(1)} p^{d-1}K^{d}$, we have that $\vert\Phi^{-1}(Z_{w})\cap ([pK']^{d-1}\times[K]^{d})\vert\gg_{d,m} \d^{O_{d,m}(1)} (pK')^{d-1}K^{d}$.
		So
		there exists some $m_{w}\in [K]^{d}$ such that the set  
		$$I_{w}:=\{z\in [pK']^{d-1}\colon \Phi(z,m_{w})  \in Z_{w}+pK\Z^{d}\}$$ is of cardinality $\gg_{d,m} \d^{O_{d,m}(1)} (pK')^{d-1}$.
	So it follows from (\ref{1:target}) that
	 	\begin{equation}\label{1:target2}
 	\tilde{L}_{w}(z)B\cdot qw'+u+(pm_{w}B)\cdot qw'+\Bigl(\sum_{i=1}^{d}qw'_{i}\xi_{i}\Bigr)\cdot \{\gamma(\tilde{L}_{w}(z)+pm_{w})\}\in\Z^{d}
 	\end{equation} 	
	for all $z\in I_{w}$.	By the choice of $K'$, we have that (\ref{1:target2}) also holds for all $z\in I_{w}+pK'\Z^{d-1}$.  	
Note that	
	\begin{equation}\nonumber
	\begin{split}
	 \liminf_{N\to\infty}\frac{\vert (I_{w}+pK'\Z^{d-1})\cap \{-N,\dots,N\}^{d-1}\vert}{(2N+1)^{d-1}}
=\frac{\vert I_{w}\vert}{(pK')^{d-1}}\gg_{d,m}\d^{O_{d,m}(1)}.
	\end{split}
	\end{equation}

    Recall that the last $m-m_{\lin}$ coefficients of $\xi_{i}$ are zero and so the last $m-m_{\lin}$ coefficients of $\xi_{i}$ and $\gamma_{i}$  does not affect the  expression in (\ref{1:target2}).
   If $N\gg_{\d,d,m,p} 1$, then we may apply Lemma \ref{1:gt43} to (\ref{1:target2}) to conclude that   either there exists $r_{w}\in\Z, 0<r_{w}\ll_{d,m}\d^{-O_{d,m}(1)}$ such that $r_{w}q\sum_{i=1}^{d}w'_{i}\xi_{i}\in\Z^{m}$ for all $w'\in\iota^{-1}(w)$, or there exists $\theta_{w}\in\Z^{m_{\lin}}\times\{0\}^{m-m_{\lin}}$, $0<\vert \theta_{w}\vert\ll_{d,m}\d^{-O_{d,m}(1)}$ such that $\theta_{w}\cdot(\gamma\circ \tilde{L}_{w}(z))\in\Z$ for all $z\in \Z^{d-1}$ (or equivalently, $\theta_{w}\cdot\gamma(h)\in\Z$ for all $h\in V_{w}$). 
   
 	By the Pigeonhole Principle,   there exists a subset $W'$ of $W$ of cardinality $\gg_{d,m}\d^{O_{d,m}(1)} p^{d}$ such that either there exists $r\in\Z$ with $0<r\ll_{d,m}\d^{-O_{d,m}(1)}$ such that 
	$rq\sum_{i=1}^{d}w'_{i}\xi_{i}\in\Z^{m}$ for all $w'\in \iota^{-1}(W')$, or there exists $\theta\in\Z^{m}$ with $0<\vert \theta\vert\ll_{d,m}\d^{-O_{d,m}(1)}$ such that $\theta\cdot\gamma(h) \in \Z$ for all $h\in \tilde{V}_{w}:=\tilde{L}_{w}(\Z^{d-1})=\sp_{\Z}\{\tau(L_{w}(e_{1})),\dots,\tau(L_{w}(e_{d-1}))\}$.   
 	
 	 If it is the former case, then since $\iota^{-1}(W')=\tau(W')+p\Z^{d}$, we must have that $rq\xi_{i}\in\Z^{m}/p$ for all $1\leq i\leq d$. Since $p\gg_{d,m}\d^{-O_{d,m}(1)}$,  there exist $w_{1},\dots,w_{d}\in W'$ such that the determinant $Q$ of the matrix $(w_{i,j})_{1\leq i,j\leq d}$ is not divisible by $p$, where $w_{j}:=(w_{j,1},\dots,w_{j,d})$. Then the fact that $rq\sum_{i=1}^{d}\tau(w_{j,i})\xi_{i}\in\Z^{m}$ for $1\leq j\leq d$ implies that $rq\xi_{i}\in \Z^{m}/Q$. %Since $\xi_{i}\in \Z^{m}/p^{k}$ by Lemma \ref{1:gxregular}, we have that $\xi_{i}\in \frac{1}{rqQ}\Z^{m}\cap \frac{1}{p^{k}}\Z^{m}=\Z^{m}$ and so $\xi\in(\Z^{m})^{d}$.
	 So  $rq\xi_{i}\in \frac{1}{Q}\Z^{m}\cap \frac{1}{p}\Z^{m}=\Z^{m}$ and thus $rq\xi\in (\Z^{m})^{d}$ (recall that $q\ll_{d}\d^{-O_{d}(1)}$).

 	 If it is the later case, then it is clear that there exist two $w,w'\in W'$ such that $\tilde{V}_{w}\neq \tilde{V}_{w'}$ and that $\theta\cdot\gamma(h)\in\Z$ for all $h\in \tilde{V}_{w}\cup \tilde{V}_{w}$. By linearity,   $\theta\cdot\gamma(h)\in\Z$ for all $h\in \tilde{V}_{w}+\tilde{V}_{w'}$. Since $\tilde{V}_{w}$ and $\tilde{V}_{w'}$ are distinct subspaces of $\Z^{d}$ of co-dimension 1 and $d\geq 2$, we have that $\tilde{V}_{w}+\tilde{V}_{w'}=\Z^{d}$. Equivalently, this means that $\theta\cdot\gamma_{i}\in\Z$ for all $1\leq i\leq d$.    
	 \end{proof}

 \subsection{Completion of the proof}

 We will use Proposition \ref{1:p222} to complete the proof of Theorem \ref{1:sLei}.
 Suppose first that
 there  exists $\theta\in\Z^{m_{\lin}}\times\{0\}^{m-m_{\lin}}$ with $0<\vert \theta\vert\ll_{d,m}\d^{-O_{d,m}(1)}$ such that $\theta\cdot\gamma_{i}\in\Z$ for all $1\leq i\leq d$. Consider the map $\eta\colon G\to\R/\Z$ defined by
 $$\eta(x):=\theta\cdot \psi(x).$$
 Then $\eta$ is a type-I horizontal character and is of complexity at most $O_{d,m}(\d^{-O_{d,m}(1)})$. Moreover, for all $n_{1},\dots,n_{d}\in\Z$, we have that 
 $$\eta(g(n_{1},\dots,n_{d}))=\eta(g(e_{1})^{n_{1}}\dots g(e_{d})^{n_{d}})=\sum_{i=1}^{d}n_{i}\theta\cdot\gamma_{i}\equiv 0 \mod \Z.$$
 This %implies that $\eta\circ g=\eta\circ \tilde{g}\circ\tau$ is a constant mod $\Z$ and 
 completes the proof of Theorem \ref{1:sLei}.

 We now consider the case   $r\xi\in(\Z^{m})^{d}$ for some $r\in\N_{+}$ with $r\ll_{d,m} \d^{-O_{d,m}(1)}$. For $1\leq j\leq m$, let $\alpha_{j}\colon G\to\R$ be given by $\alpha_{j}(x):=\eta_{2}([x,\exp(X_{j})])$.
 Note that $\alpha_{j}$ is a type-I horizontal character,  annihilates $G_{2}$, and $\Vert\alpha_{j}\Vert\ll_{d,m}\d^{-O_{d,m}(1)}$. If $\alpha_{j}$ is nontrivial for some $1\leq j\leq m$, then for all $n_{1},\dots,n_{d}\in\Z$, we have that
 $$r\alpha_{j}\circ g(n_{1},\dots,n_{d})=r\alpha_{j}(g(e_{1})^{n_{1}}\dots g(e_{d})^{n_{d}})=\sum_{i=1}^{d}n_{i}r\xi_{i}\cdot\psi(X_{j})\equiv 0 \mod \Z,$$
 and we again proves Theorem \ref{1:sLei}.

 Now suppose that all of  $\alpha_{j}$ are trivial. In this case $\eta_{2}$ annihilates $[G,G]$. 
 So we have that $\Lambda$ is the zero matrix and that $\xi$ is the zero vector. Therefore $\sigma(h)=0$. 
  It follows from (\ref{1:force511}) that
 \begin{equation}\label{1:reduction2}
 t_{h}-c_{h}(hA)=q(2h\Omega+u)-c_{h}(hA)\in\Z^{d}
 \end{equation}
 for some $c_{h}\in\Z/p$ for all $h\in J_{2}+pK\Z^{d}$.
 This implies that $t_{h}=q(2h\Omega+u)\in\Z^{d}/p$ for all $h\in J_{2}+pK\Z^{d}$, which further implies that $q\Omega$ and $qu$ takes $\Z/p$ coefficients.
 So for all $h\in J_{2}+pK\Z^{d}$ and $n\in \Z^{d}$ with $(nA)\cdot w\in p\Z^{d}$, (\ref{1:reduction2}) implies that 
 % \begin{equation}\nonumber
 $q(2h\Omega+u)\cdot n\in\Z.$
% \end{equation}
Since $J_{2}$ contains  $\gg_{d,m}\d^{O_{d,m}(1)}p^{d}$ residue classes mod $p\Z$, it follows from
  Corollary \ref{1:spe2} that $qu\in\Z$ and $2q\Omega=rA+B'$ for   or some $r\in\Z/p$ and matrix $B'$ with integer entries.   It then follows from (\ref{1:forcemid}) and (\ref{1:force51}) that (replacing $q$ by $2q$ if necessary)
  \begin{equation}\label{1:reductionssa}
\eta_{2}(g_{\nlin}(n))=Q(n)=\frac{1}{q}Q_{2}(n)+T(n)
 \end{equation}
 for some 
  $Q_{2}\in\poly(V_{p}(\tilde{M})\to\R\vert\Z)$ and some polynomial $T\colon\Z^{d}\to\R$ of degree at most 1.

 If $\eta_{2}$ is trivial, then $Q\equiv 0$    and   $\Omega$ can be taken to be the zero matrix.  It follows from (\ref{1:reduction2}) that 
 $$qu-c_{h}(hA)\in\Z^{d}$$
 for some $c_{h}\in\Z/p$ for all $h\in J_{2}$. 
 Note that if $qu\notin\Z^{d}$, then the number of such $h$ is at most $K^{d}\cdot p^{d+1-\rank_{p}(A)}\leq K^{d}p^{d-1}<\vert J_{2}\vert$, a contradiction.
 So we must have that $qu\in\Z^{d}$. So it follows from (\ref{1:forcemid}) that if $M(n)\in\Z$, then
  $$q\eta_{1}(g(n))=qP(n)=P_{1}(n)+(qu)\cdot n+qu_{0}\in \Z+qu_{0}.$$
 %$$\eta_{1}(g(n))=P(n)=\frac{1}{q}P_{1}(n)+u\cdot n+u_{0}\in \frac{1}{q}\Z+u_{0}.$$
%On the other hand, since $g$ is $p^{k}$-rational,  it follows from Lemma \ref{1:prop4rp} (i) that  $\eta_{1}(g(n))\in\Z/p^{k}$ {\color a problem here}. 
%We conclude that $\eta_{1}(g(n))\in \Z+a$ for some $a\in\Q$ independent of $n$.
 Since $\eta_{0}(g',g)=\eta_{1}(g)+\eta_{2}(g'g^{-1})$ is nontrivial and $\eta_{2}$ is trivial, we have that  $\eta_{1}$
 is nontrivial. Since $q\ll_{d}\d^{-O_{d}(1)}$,
  this completes the proof of Theorem \ref{1:sLei} by setting $\eta=q\eta_{1}$.

  So from now on we assume that $\eta_{2}$ is not trivial.
 If $m_{\ast}=0$, then $\eta_{2}$ vanishes on $G_{2}$ and so it is trivial, a contradiction.  
 We now assume that $m_{\ast}\geq 1$.   Since $\eta_{2}$   annihilates $[G,G]$, we may extend $\eta_{2}$ to a genial type-I horizontal character $\tilde{\eta}_{2}$ on $G$ in an arbitrary way such that $\tilde{\eta}_{2}\vert_{G_{2}}=\eta_{2}$, and that the complexity of $\tilde{\eta}_{2}$ is comparable to $\eta_{2}$. With a slight abuse of notation, we denote $\tilde{\eta}_{2}$ by $\eta_{2}$ as well and treat $\eta_{2}$ as a genial type-I horizontal character on $G$. 
 Suppose that $\eta_{2}\colon G\to\R$ is given by $\eta_{2}(x)=\theta\cdot \psi(x)$ for some $\theta=(\theta_{1},\dots,\theta_{m})\in\Z^{m}$ with $\vert \theta\vert\ll_{d,m}\d^{-\O_{d,m}(1)}$.
 Since $\eta_{2}$ annihilates $[G,G]$, we have that $\theta_{i}=0$ for all $i>m_{ab}$.
  
  Denote $G'_{0}=G'_{1}=G$ and $G'_{i}=G_{i}\cap\ker(\eta_{2})$ for $i\geq 2$.  
  By Lemma 7.8 of \cite{GT12b}, $G_{\N}'=(G'_{i})_{i\in\N}$ is a filtration of $G$ with degree at most $s$ and nonlinearity degree $m'_{\ast}\leq m_{\ast}-1$. Each $G'_{i}$ is closed, connected and $O_{d,m}(\d^{-\O_{d,m}(1)})$-rational with respect to the Mal'cev basis $\X$ on $G/\Gamma$ adapted to $G_{\N}$. We need a special factorization proposition:
  
  \begin{prop}\label{1:specialfactorization}
  Fix any $n_{\ast}\in V_{p}(\tilde{M})$.
  	There exists $r\in\N_{+}$ with $r\ll_{d,m}\d^{-O_{d,m}(1)}$ such that we may write $g=g'\gamma$ for some rational $g'\in\poly(\Z^{d}\to G'_{\N})$ and some   $\gamma\in \poly_{\approx r, n_{\ast}}(V_{p}(\tilde{M})\to G_{\N})$.
  \end{prop}

   We first complete the proof of Theorem \ref{1:sLei} assuming Proposition \ref{1:specialfactorization}. 
   Let the notations be the same as Proposition \ref{1:specialfactorization}. By Proposition \ref{1:oehf}, there exists $r'\in\N_{+}$ with $r'\ll_{d,m}\d^{-O_{d,m}(1)}$ such that $\gamma\in \poly_{r'}(V_{p}(\tilde{M})\to G_{\N})$.
   %
   %
  % $\gamma$ is $(r',V_{p}(\tilde{M}))$-periodic. %and that  $\psi(\{\gamma(n)\})\in \Z^{m}/r$ for all $n\in V_{p}(\tilde{M})$.
   %
    For all $m\in [r']^{d}$,  there exists $\tilde{m}\in\Z^{d}$ with $\tilde{m}-m\in r'\Z^{d}$ and $\tilde{m}-n_{\ast}\in p\Z^{d}$. Since  $n_{\ast}\in V_{p}(\tilde{M})$, we have $\tilde{m}\in V_{p}(\tilde{M})$. % and we may write $\tilde{m}=m+rz_{m}$ for some $z_{m}\in\Z^{d}$. 
    Since  $\gamma\in \poly_{r'}(V_{p}(\tilde{M})\to G_{\N})$, we have that
    \begin{equation}\label{1:fkowp}
    \text{$\gamma(\tilde{m})\Gamma=\gamma(r'n+m)\Gamma$ for all $n\in\Z^{d}$ with $r'n+m\in V_{p}(\tilde{M})$}.
    \end{equation}
    
	 Denote
	 $$g'_{m}(n):=\{\gamma(\tilde{m})\}^{-1}g'(r'n+m)\{\gamma(\tilde{m})\}.$$
   Since $g'$ and $\gamma$ are rational, by Corollary \ref{1:r222p}, there exists $K'\in\N_{+}$ which is divisible by $pK$ such that $g'$ and $g'_{m}, m\in [r']^{d}$ are all $K'$-periodic.
   %
   %we may assume that $g'$ is $r'K'$-periodic for some $K'\in\N_{+}$ which is divisible by $pK$.
    For  $m\in\Z^{d}$, let $M_{m}\colon\Z^{d}\to\Z/p$ be the quadratic map given by $\tilde{M}_{m}(n):=\tilde{M}(r'n+m)$. Then since $r\ll_{d,m}\d^{-O_{d,m}(1)}$, $\tilde{M}_{m}$ and $\tilde{M}$ have the same $p$-rank. Note that for all $n\in V_{p}(\tilde{M}_{m})$, we have that $r'n+m\in V_{p}(\tilde{M})$.
   Then it follows from (\ref{1:fkowp}) and  (\ref{1:e4.2}) that
 \begin{equation}\nonumber
 \begin{split}
&\quad
 \d^{O_{d,m}(1)}\ll_{d,m}
 \vert\E_{n\in V_{p}(\tilde{M})\cap[pK]^{d}}F(g(n)\Gamma)\vert
 =\vert\E_{n\in V_{p}(\tilde{M})\cap[r'K']^{d}}F(g(n)\Gamma)\vert
 \\&=\vert\E_{m\in [r']^{d}}\E_{n\in V_{p}(\tilde{M}_{m})\cap [K']^{d}}F(g(r'n+m)\Gamma)\vert+O(p^{-1/2})
  \\&=\vert\E_{m\in [r']^{d}}\E_{n\in V_{p}(\tilde{M}_{m})\cap [K']^{d}}F(g'(r'n+m)\gamma(r'n+m)\Gamma)\vert+O(p^{-1/2})
    \\&=\vert\E_{m\in [r']^{d}}\E_{n\in V_{p}(\tilde{M}_{m})\cap [K']^{d}}F(g'(r'n+m)\gamma(\tilde{m})\Gamma)\vert+O(p^{-1/2})
 \\&=\vert\E_{m\in [r']^{d}}\E_{n\in V_{p}(\tilde{M}_{m})\cap [K']^{d}}F(\{\gamma(\tilde{m})\}g'_{m}(n)\Gamma)\vert+O(p^{-1/2}).
 \end{split}
 \end{equation}
 %and so
    So by the Pigeonhole Principle, there exists some $m\in [r']^{d}$ such that 
    $$\vert\E_{n\in V_{p}(\tilde{M}_{m})\cap [K']^{d}}F'_{m}(g'_{m}(n)\Gamma)\vert\gg_{d,m}\d^{O_{d,m}(1)}.$$
   % 
 %\begin{equation}\nonumber
% \begin{split}
%&\quad
%  \d^{O_{d,m}(1)}\ll_{d,m}\vert\E_{n\in V_{p}(\tilde{M}_{m})\cap [K']^{d}}F(g(r'n+m)\Gamma)\vert
%  \\&=\vert\E_{n\in V_{p}(\tilde{M}_{m})\cap [K']^{d}}F(g'(r'n+m)\{\gamma(m)\}\Gamma)\vert
%  =\vert\E_{n\in V_{p}(\tilde{M}_{m})\cap [K']^{d}}F'(g''(r'n+m)\Gamma)\vert,
% \end{split}
% \end{equation}
 where %$g'':=\{\gamma(m)\}^{-1}g'\{\gamma(m)\}$ and 
 $F'_{m}:=F(\{\gamma(\tilde{m})\}\cdot)$. By the construction of $K'$, $g'_{m}$ is a rational and $K'$-periodic polynomial  in $\poly(\Z^{d}\to G'_{\N})$
 and $\Vert F'\Vert_{\Lip}=O(1)$.  We may now invoke the induction hypothesis to deduce that there exists a nontrivial type-I horizontal character $\eta$ of complexity at most $O_{d,m}(\d^{-O_{d,m}(1)})$ such that 
 $\eta\circ g'_{m}\mod \Z$ is a constant on $V_{p}(\tilde{M}_{m})$. 
 Since 
 $$\eta\circ g'_{m}(n)=\eta\circ g'(r'n+m)=\eta\circ g(r'n+m)-\eta\circ \gamma(r'n+m)\equiv \eta\circ g(r'n+m)-\eta\circ \gamma(m)\mod \Z$$
 for all $n\in V_{p}(\tilde{M}_{m})$, we have that   $\eta\circ g\mod \Z$ is a constant on $V_{p}(\tilde{M})$. This finishes the proof of Theorem \ref{1:sLei}.
 
 So it remains to prove Proposition \ref{1:specialfactorization}.
 % We remark that compared with Lemma 7.9 of \cite{GT12b}, our Proposition \ref{1:specialfactorization} is more delicate as we require $\tilde{g}'$ to satisfy an additional periodicity property, i.e. $\tilde{g}'$ is partially $p$-periodic on $V_{p}(\tilde{M})$. 
  
 \begin{proof}[Proof of Proposition \ref{1:specialfactorization}]
 	Let $0\leq s'\leq s$ be the largest integer such that at least one of the entries $\theta_{m-m_{s'}+1},\dots,\theta_{m-m_{s'+1}}$ is nonzero (where we denote $m_{0}:=0$).
 Since 
 $\eta_{2}$ is nontrivial, such an $s'$ always exists and $s'\geq 2$. Denote  $m':=m-m_{s'+1}$.
 Then we may write $\theta=(\theta',\bold{0})$ for some $\theta'\in \Z^{m'}$. 
  
  Assume that 
  $$\psi(g(n))=\sum_{i\in\N^{d},0\leq\vert i\vert\leq s}w_{i}\binom{n}{i},$$
  for some $w_{i}\in\{0\}^{m-m_{\vert i\vert}}\times\R^{m_{\vert i\vert}}$. 
 Since $Q(\bold{0})=Q(e_{i})=0$, we may write
 $$\psi(g_{\nlin}(n))=\sum_{m\in\N^{d},2\leq \vert i\vert\leq s}t_{i}\binom{n}{i}$$
 for some $t_{i}\in\{0\}^{m-m_{\vert i\vert}}\times\R^{m_{\vert i\vert}}$. Write $w_{i}=(w'_{i},w''_{i}), t_{i}=(t'_{i},t''_{i})$ for some $t'_{i},w'_{i}\in\R^{m'}$ and $t''_{i},w''_{i}\in\R^{m-m'}$. 
 	Since the first $m-m_{\vert i\vert}$ entries of $t_{i}$ are zero, and the last $m_{s'+1}$ entries of $\theta$ are zero, we have that $\theta\cdot t_{i}=0$ if $\vert i\vert\geq s'+1$. 
 	By (\ref{1:reductionssa}),
 $$\frac{1}{q}Q_{2}(n)+T(n)=\eta_{2}(g_{\nlin}(n))=\sum_{i\in\N^{d},2\leq \vert i\vert\leq s}(\theta\cdot t_{i})\binom{n}{i}=\sum_{i\in\N^{d},2\leq \vert i\vert\leq s'}(\theta'\cdot t'_{i})\binom{n}{i}.$$
 In particular, we have that $\deg(Q_{2})\leq s'$.

 Since $\theta$ is of complexity  $\ll_{d,m}\d^{-\O_{d,m}(1)}$, there exists $q'\in\N_{+}, q'\ll_{d,m}\d^{-\O_{d,m}(1)}$ and $u=(u_{1},\dots,u_{m})\in\Z^{m}$ whose only nonzero entries are $u_{m-m_{s'}+1},\dots,u_{m-m_{s'+1}}$ such that 
 $\theta\cdot u=q'.$ Denote $$\gamma(n):=\prod_{i=m-m_{s'}+1}^{m-m_{s'+1}}\exp\Bigl(\frac{1}{qq'}u_{i}(Q_{2}(n)-Q_{2}(n_{\ast}))X_{i}\Bigr).$$
 %where $q^{\ast}$ is any integer such that $q^{\ast}qq'\equiv 1 \mod p^{k}$ (which exists as long as $p>qq'$). 
 Since $\deg(Q_{2})\leq s'$, it follows from Corollary B.4 of \cite{GTZ12} that $\gamma\in\poly(\Z^{d}\to (G_{2})_{\N})$.

%We now show that there exists $r\in\N_{+}$ with $r\ll_{d,m}\d^{-\O_{d,m}(1)}$ such that $\gamma$ is $(r,V_{p}(\tilde{M}),n_{\ast})$-rational. In fact, it suffices to show that $Q_{2}(n_{\ast}+rn)-Q_{2}(n_{\ast})\in qq'\Z$ for all $n\in\Z^{d}$ with $n_{\ast}+rn\in V_{p}(\tilde{M})$.
%By Corollary \ref{1:basicpp1}, it is not hard to see that there exists $q''\in\N_{+}$ with $q''\ll_{d,m}\d^{-\O_{d,m}(1)}$ such that $q''Q_{2}$ is a polynomial with coefficients in $\Z/p^{\lfloor s/2\rfloor}$  which takes integer values at $V_{p}(\tilde{M})$.  Let $r=qq'q''$.  Since all the coefficients $q''Q_{2}$  are in $\Z/p^{\lfloor s/2\rfloor}$, we have that $q''(Q_{2}(n_{\ast}+rn)-Q_{2}(n_{\ast}))\in r\Z/p^{\lfloor s/2\rfloor}$. On the other hand, since $n_{\ast}+rn, n_{\ast}\in V_{p}(\tilde{M})$, we have that $q''(Q_{2}(n_{\ast}+rn)-Q_{2}(n_{\ast}))\in \Z$. Since $r$ is not divisible by $p$, this implies that $q''(Q_{2}(n_{\ast}+rn)-Q_{2}(n_{\ast}))\in r\Z$, or equivalently $Q_{2}(n_{\ast}+rn)-Q_{2}(n_{\ast})\in qq'\Z$. 

 We now show that there exists $r\in\N_{+}$ with $r\ll_{d,m}\d^{-\O_{d,m}(1)}$ such that $\gamma$ belongs to $\poly_{\approx r, n_{\ast}}(V_{p}(\tilde{M})\to (G_{2})_{\N}\vert(\Gamma\cap G_{2})).$
 %$ is $(r,V_{p}(\tilde{M}),n_{\ast})$-rational.
  In fact, it suffices to show that $Q_{2}(n_{\ast}+rn)-Q_{2}(n_{\ast})\in qq'\Z$ for all $n\in\Z^{d}$ with $n_{\ast}+rn\in V_{p}(\tilde{M})$. By interpolation, all the coefficients of the polynomial $Q_{2}$  belongs to $\Z/p^{s'}q''$ for some $s'=O_{d,s}(1)$ and $q''\in\N_{+}$ with $q''\leq O_{d,s}(1)$.  So for all $n\in\Z^{d}$, we have that $Q_{2}(n_{\ast}+rn)-Q_{2}(n_{\ast})\in qq'\Z/p^{s'}$, where $r:=qq'q''$.  On the other hand, if $n_{\ast}+rn, n_{\ast}\in V_{p}(\tilde{M})$, then $Q_{2}(n_{\ast}+rn)-Q_{2}(n_{\ast})\in \Z$. 
 Since $qq'$ is not divisible by $p$, this implies that $Q_{2}(n_{\ast}+rn)-Q_{2}(n_{\ast})\in qq'\Z$. So
 $\gamma\in\poly_{\approx r, n_{\ast}}(V_{p}(\tilde{M})\to (G_{2})_{\N}\vert(\Gamma\cap G_{2})).$

%  $\gamma$ is $(qq',V_{p}(\tilde{M}),n_{\ast})$-rational. 

Writing $h:=g\gamma^{-1}$, we have that $h\in\poly(\Z^{d}\to (G_{2})_{\N}\vert(\Gamma\cap G_{2}))$.
Note that for all $n,h_{1},h_{2}\in \Z^{d}$,
 \begin{equation}\label{1:ddd1}
 \begin{split}
 &\quad \Delta_{h_{2}}\Delta_{h_{1}}(\eta_{2}\circ g(n))
 =\eta_{2}\circ(\Delta_{h_{2}}\Delta_{h_{1}}g)(n)=\eta_{2}\circ(\Delta_{h_{2}}\Delta_{h_{1}}g_{\nlin})(n)
 \\&=\Delta_{h_{2}}\Delta_{h_{1}}(\eta_{2}\circ g_{\nlin})(n)
 =\Delta_{h_{2}}\Delta_{h_{1}}(\frac{1}{q}Q_{2}(n)+T(n))=\Delta_{h_{2}}\Delta_{h_{1}}(\frac{1}{q}Q_{2}(n)).
 \end{split}
 \end{equation}
% Since $g$ is $p^{k}$-rational,  by Lemma \ref{1:prop4rp} (i),  we have that $p^{k}\eta_{2}\circ g$ and thus $p^{k}\Delta_{h_{2}}\Delta_{h_{1}}(\frac{1}{q}Q_{2}(\cdot))$ takes values in $\Z$.
 So (\ref{1:ddd1}) implies that 
 \begin{equation}\label{1:ddd3}
 \begin{split}
 &\quad \Delta_{h_{2}}\Delta_{h_{1}}(\eta_{2}\circ h(n))
=\Delta_{h_{2}}\Delta_{h_{1}}(\eta_{2}\circ g(n))-\Delta_{h_{2}}\Delta_{h_{1}}(\eta_{2}\circ \gamma(n))
=0.%(1-q^{\ast}qq')\Delta_{h_{2}}\Delta_{h_{1}}(\frac{1}{q}Q_{2}(n)).
 \end{split}
 \end{equation}
%So $\Delta_{h_{2}}\Delta_{h_{1}}(\eta_{2}\circ h(\cdot))$ is integer valued. 
%  By Proposition \ref{1:normalizegamma}, there exists $\gamma'\in\poly(\Z^{d}\to G_{\N}\vert\Gamma)$  such that 
%  \begin{equation}\label{1:ddd2}
%  \begin{split}
% \Delta_{h_{2}}\Delta_{h_{1}}(\eta_{2}\circ h(n))=\Delta_{h_{2}}\Delta_{h_{1}}(\eta_{2}\circ \gamma'(n))
%  \end{split}
%  \end{equation}
% for all $n,h_{1},h_{2}\in\Z^{d}$. Since $\gamma'$ takes values in $\Gamma$, we have that $\gamma$ is 1-rational. 

 Set %$\gamma:=\gamma_{0}\gamma'$ and
  $g':=g\gamma^{-1}=g_{\nlin}g_{\lin}\gamma^{-1}$. %Since $\gamma_{0}\in\poly(V_{p}(M)\to (G_{2})_{\N}\vert(\Gamma\cap G_{2}))$ and $\gamma'$ takes values in $\Gamma$, we have that $\gamma\in\poly(V_{p}(M)\to G_{\N}\vert\Gamma).$ 
  It is clear that $g'\in\poly(\Z^{d}\to G_{\N})$. 
 Moreover, by (\ref{1:ddd3}), %and (\ref{1:ddd2}),
 $$\Delta_{h_{2}}\Delta_{h_{1}}(\eta_{2}\circ g')(n)=\eta_{2}(\Delta_{h_{2}}\Delta_{h_{1}}g'(n))=0$$
 for all $n,h_{1},h_{2}\in\Z^{d}$.
  So it is not hard to see that $g'$ belongs to $\poly(\Z^{d}\to G'_{\N})$.
  Finally, since $g'=g\gamma^{-1}$, it follows from the Baker-Campbell-Hausdorff formula    that $g'$ is rational. 
  %for all for all $n,n'\in\Z^{d}$ with $M(n)\in\Z$, we have that
 %$$g'(n+pn')^{-1}g'(n)=\gamma(n+pn')g(n+pn')^{-1}g(n)\gamma(n)^{-1}.$$
 %Since all of $\gamma(n+pn')$, $g(n+pn')^{-1}g(n)$ and $\gamma(n)^{-1}$ belong to $\Gamma$, we have that $g'(n+pn')^{-1}g'(n)\in\Gamma$. Therefore, we have that $g'\in\poly_{p}(V_{p}(M)\to G'_{\N}\vert\Gamma)$.
 \end{proof}

     We end this section with an immediate consequence of Theorem \ref{1:sLei}, which will be used in \cite{SunC}.
     
  \begin{coro} 
  	Let $0<\d<1/2, d,k,m\in\N_{+},s,r\in\N$ with $d\geq r$, and $p\gg_{d,m} \d^{-O_{d,m}(1)}$ be a prime. Let $M\colon\V\to\F_{p}$ be a quadratic form  and $V+c$ be an affine subspace of $\V$ of co-dimension $r$. Suppose that $\rank(M\vert_{V+c})\geq s+13$.  Let $G/\Gamma$ be an $s$-step $\N$-filtered nilmanifold of dimension $m$, equipped with an $\frac{1}{\d}$-rational Mal'cev basis $\mathcal{X}$, and that  $g\in \poly(\Z^{d}\to G_{\N})$ be a rational polynomial sequence. Let  $\gamma\in G$ be an element of complexity at most 1, and let $g'\in \poly(\Z^{d}\to G'_{\N})$
	be the map given by
	 $g'(n):=\gamma^{-1}g(n)\gamma$ for all $n\in\Z^{d}$, where $G'_{i}:=\gamma^{-1} G_{i}\gamma$ for all $i\in\N$. Denote $\Gamma':=\gamma^{-1}\Gamma\gamma$.
	 If  $(g(n)\Gamma)_{n\in \iota^{-1}(V(M)\cap(V+c))}$ is not $\d$-equidistributed on $G/\Gamma$, then $(g'(n)\Gamma')_{n\in \iota^{-1}(V(M)\cap(V+c))}$ is not $C^{-1}\d^{C}$-equidistributed on $G'/\Gamma'$ for some $C=C(d,m)\geq 1$.
	   \end{coro}
\begin{proof}
   By Theorem \ref{1:sLei}, there exists a nontrivial type-I horizontal character $\eta$ with $0<\Vert\eta\Vert\ll_{d,m} \d^{-O_{d,m}(1)}$  such that $\eta\circ g'=\eta\circ g \mod \Z$  is a constant on $\iota^{-1}(V(M)\cap(V+c))$.
By Lemma A.13 of \cite{GT12b}, $G'/\Gamma'$ is an $s$-step $\N$-filtered nilmanifold of dimension $m$, equipped with an $\frac{1}{\d^{O(1)}}$-rational Mal'cev basis. Since $\vert\E_{n\in \iota^{-1}(V(M)\cap(V+c))}\exp(\eta(g'(n)))\vert=1$, this implies that $(g'(n)\Gamma')_{n\in \iota^{-1}(V(M)\cap(V+c))}$ is not $C^{-1}\d^{C}$-equidistributed on $G'/\Gamma'$ for some $C=C(d,m)\geq 1$.
\end{proof}

\section{Sets and equations generated by quadratic forms}\label{1:s:mset}

\subsection{$M$-sets}

Our next goal is to extend Theorem \ref{1:sLei} from $V(M)$ to the intersection of the zeros of many quadratic polynomials. When working with a subset of $(\V)^{k}$ or $(\Z^{d})^{k}$,  it is convenient to group the variables $(n_{1},\dots,n_{dk})$ into $k$ groups $$(n_{1},\dots,n_{d}),(n_{d+1},\dots,n_{2d}),\dots,(n_{d(k-1)+1},\dots,n_{k})$$ and treat each tuple $(n_{d(j-1)+1},\dots,n_{j})$ as a single multi-dimensional variable. To this end, we introduce some definitions reflecting such a point of view.

We say that a linear transformation  $L\colon(\V)^{k}\to (\V)^{k'}$ is \emph{$d$-integral} if there exist $a_{i,j}\in\F_{p}$ for $1\leq i\leq k$ and $1\leq j\leq k'$ such that 
$$L(n_{1},\dots,n_{k})=\Bigl(\sum_{i=1}^{k}a_{i,1}n_{i},\dots,\sum_{i=1}^{k}a_{i,k'}n_{i}\Bigr)$$
for all $n_{1},\dots,n_{k}\in \V$. 
Let $L\colon(\V)^{k}\to \V$ be a $d$-integral linear transformation given by
$L(n_{1},\dots,n_{k})=\sum_{i=1}^{k}a_{i}n_{i}$ for some  $a_{i}\in\F_{p},1\leq i\leq k$. We say that $L$ is the $d$-integral linear transformation \emph{induced} by $(a_{1},\dots,a_{k})\in\F_{p}^{k}$.

Similarly, we say that a linear transformation  $L\colon(\Z^{d})^{k}\to (\frac{1}{p}\Z^{d})^{k'}$ is \emph{$d$-integral} if there exist $a_{i,j}\in\Z/p$ for $1\leq i\leq k$ and $1\leq j\leq k'$ such that 
$$L(n_{1},\dots,n_{k})=\Bigl(\sum_{i=1}^{k}a_{i,1}n_{i},\dots,\sum_{i=1}^{k}a_{i,k'}n_{i}\Bigr)$$
for all $n_{1},\dots,n_{k}\in \Z^{d}$. 

There is a natural correspondence between $d$-integral linear transformations in $\Z/p$ and those in $\F_{p}$. 
The proof of the following lemma is similar to the proof of Lemma \ref{1:lifting} and we omit the details:

\begin{lem}\label{1:lifting2}
Every $d$-integral linear transformation  $\tilde{L}\colon(\Z^{d})^{k}\to (\frac{1}{p}\Z^{d})^{k'}$ induces a $d$-integral linear transformation  $L\colon(\V)^{k}\to (\V)^{k'}$. Conversely, every $d$-integral linear transformation  $L\colon(\V)^{k}\to (\V)^{k'}$ admits a regular lifting $\tilde{L}\colon(\Z^{d})^{k}\to (\frac{1}{p}\Z^{d})^{k'}$ which is a $d$-integral linear transformation.

Moreover, we have that $p\tilde{L}\circ \tau\equiv \tau\circ L \mod p(\Z^{d})^{k'}$.
\end{lem}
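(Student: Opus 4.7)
The plan is to run the proof of Lemma \ref{1:lifting} at the level of matrix entries rather than polynomial coefficients, since a $d$-integral linear transformation is completely determined by its scalar matrix $(a_{i,j})$. The entire statement reduces to an entrywise correspondence $a_{i,j}\leftrightarrow \tilde a_{i,j}$ via the maps $\iota$ and $\tau$ of Convention \ref{titi}.

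For the first direction, I would start with $\tilde L\colon(\Z^{d})^{k}\to (\tfrac{1}{p}\Z^{d})^{k'}$ given by entries $\tilde a_{i,j}$ (the requirement that $\tilde a_{i,j}\in\tfrac{1}{p}\Z$, not just $\tilde a_{i,j}\in\Q$, comes from evaluating $\tilde L$ on standard basis vectors in $(\Z^{d})^{k}$ and using that the image lies in $(\tfrac{1}{p}\Z^{d})^{k'}$). Since $p\tilde a_{i,j}\in\Z$, I may set $a_{i,j}:=\iota(p\tilde a_{i,j})\in\F_{p}$, and the $d$-integral linear transformation $L$ with entries $a_{i,j}$ is the induced transformation. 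For the converse, given $L$ with $\F_{p}$-entries $a_{i,j}$, I define the regular lifting $\tilde L$ entrywise by $\tilde a_{i,j}:=\tau(a_{i,j})/p\in\{0,\tfrac{1}{p},\dots,\tfrac{p-1}{p}\}$, which is exactly the matrix analogue of the regular lifting of a polynomial.

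The only real content is the ``moreover'' clause $p\tilde L\circ\tau\equiv\tau\circ L \mod p(\Z^{d})^{k'}$, which coordinate by coordinate reduces to showing
$$\sum_{i=1}^{k}\tau(a_{i,j})\tau(n_{i})\equiv \tau\Bigl(\sum_{i=1}^{k}a_{i,j}n_{i}\Bigr) \mod p\Z^{d}$$
for all $n_{1},\dots,n_{k}\in\V$ and $1\leq j\leq k'$. This follows from the fact that $\iota\colon\Z\to\F_{p}$ is a ring homomorphism with $\iota\circ\tau=\mathrm{id}_{\F_{p}}$, so both sides reduce to $\sum_{i}a_{i,j}n_{i}\in\V$ after applying $\iota$ coordinatewise. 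There is no genuine obstacle here; the entire argument is the matrix version of the scalar bookkeeping in Lemma \ref{1:lifting}, which is presumably why the author elected to omit it.
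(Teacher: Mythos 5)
Your proposal is correct and is essentially the argument the paper intends: the paper omits the proof, noting it is "similar to the proof of Lemma \ref{1:lifting}", and your entrywise correspondence $a_{i,j}=\iota(p\tilde a_{i,j})$, $\tilde a_{i,j}=\tau(a_{i,j})/p$ together with the reduction-mod-$p$ check of the congruence is exactly the degree-one, matrix-coefficient version of that proof. (The only cosmetic point is that $\tilde a_{i,j}\in\frac{1}{p}\Z$ is already built into the paper's definition of a $d$-integral transformation on $(\Z^{d})^{k}$, so your derivation of it from the range condition is redundant but harmless.)
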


Let $d\in\N_{+}$, $p$  be a prime, $M\colon\V\to\F_{p}$ be a quadratic form with $A$ being the associated matrix. We need to work with a large family of sets generated by $M$.
 We say that a function $F\colon(\V)^{k}\to\F_{p}$ is an \emph{$(M,k)$-integral quadratic function} 
if 
\begin{equation}\label{1:thisisf2}
F(n_{1},\dots,n_{k})=\sum_{1\leq i\leq j\leq k}b_{i,j}(n_{i}A)\cdot n_{j}+\sum_{1\leq i\leq k} v_{i}\cdot n_{i}+u
\end{equation}
for some $b_{i,j}, u\in \F_{p}$ and $v_{i}\in\F_{p}^{d}$.
We say that an $(M,k)$-integral quadratic function $F$ is \emph{pure}
if $F$ can be written in the form of (\ref{1:thisisf2}) with $v_{1}=\dots=v_{k}=\bold{0}$. 
We say that  an $(M,k)$-integral quadratic function $F\colon(\V)^{k}\to\F_{p}$ is \emph{nice}
if 
\begin{equation}\nonumber
F(n_{1},\dots,n_{k})=\sum_{1\leq i\leq k'}b_{i}(n_{k'}A)\cdot n_{i}+u
\end{equation}
for some $0\leq k'\leq k$, $b_{i}, u\in \F_{p}$.
 
For $F$ given in (\ref{1:thisisf2}), denote 
$$v_{M}(F):=(b_{k,k},b_{k,k-1},\dots,b_{k,1},v_{k},b_{k-1,k-1},\dots,b_{k-1,1},v_{k-1},\dots,b_{1,1},v_{1},u)\in\F_{p}^{\binom{k+1}{2}+kd+1},$$
and
$$v'_{M}(F):=(b_{k,k},b_{k,k-1},\dots,b_{k,1},v_{k},b_{k-1,k-1},\dots,b_{k-1,1},v_{k-1},\dots,b_{1,1},v_{1})\in\F_{p}^{\binom{k+1}{2}+kd}.$$
Informally, we say that $b_{i,j}$ is the $n_{i}n_{j}$-coefficient, $v_{i}$ is the $n_{i}$-coefficient, and $u$ is the constant term coefficient for these vectors.

\begin{defn}[Different types of $(M,k)$-families]
An \emph{$(M,k)$-family} is a collections of $(M,k)$-integral quadratic functions.
Let $\mathcal{J}=\{F_{1},\dots,F_{r}\}$ be an $(M,k)$-family.
\begin{itemize}
    \item We say that $\mathcal{J}$ is \emph{pure} if all of $F_{1},\dots,F_{r}$ are pure.
    \item We say that $\mathcal{J}$ is \emph{consistent} if $(0,\dots,0,1)$ does not belong to the span of $v_{M}(F_{1}),$ $\dots,$ $v_{M}(F_{r})$, or equivalently, there is no linear combination of $F_{1},\dots,F_{r}$ which is a constant nonzero function, or equivalently, for all $c_{1},\dots,c_{r}\in\F_{p}$, we have
    $$c_{1}v'_{M}(F_{1})+\dots+c_{r}v'_{M}(F_{r})=\bold{0}\Rightarrow c_{1}v_{M}(F_{1})+\dots+c_{r}v_{M}(F_{r})=\bold{0}.$$
    \item We say that $\mathcal{J}$ is \emph{independent} if $v'_{M}(F_{1}),\dots,v'_{M}(F_{r})$ are linearly independent, or equivalently, there is no nontrivial linear combination of $F_{1},\dots,F_{r}$ which is a constant function, or equivalently, for all $c_{1},\dots,c_{r}\in\F_{p}$, we have
    $$c_{1}v'_{M}(F_{1})+\dots+c_{r}v'_{M}(F_{r})=\bold{0}\Rightarrow c_{1}=\dots=c_{r}=0.$$
\item We say that $\mathcal{J}$ is \emph{nice} if there exist some bijective $d$-integral  linear transformation $L\colon(\V)^{k}\to(\V)^{k}$ and some $v\in(\V)^{k}$ such that $F_{i}(L(\cdot)+v)$ is nice for all $1\leq i\leq r$.
\end{itemize}

The dimension of the span of $v'_{M}(F_{1}),\dots,v'_{M}(F_{r})$ is called the \emph{dimension} of an $(M,k)$-family $\{F_{1},\dots,F_{r}\}$.

When there is no confusion, we call an $(M,k)$-family to be an \emph{$M$-family} for short.
\end{defn}

We say that a subset $\Omega$ of $(\V)^{k}$ is an  \emph{$M$-set} if there exists an $(M,k)$-family $\{F_{i}\colon (\V)^{k}\to\V\colon 1\leq i\leq r\}$
 such that $\Omega=\cap_{i=1}^{r}V(F_{i})$. 
 We call either $\{F_{1},\dots,F_{r}\}$ or the ordered set $(F_{1},\dots,F_{r})$ an \emph{$M$-representation} of $\Omega$. 
 
  Let $\P\in\CP$.
We say that   $\Omega$  is $\P$  if one can choose the  $M$-family $\{F_{1},\dots,F_{r}\}$ to be $\P$.
 We say that the $M$-representation $(F_{1},\dots,F_{r})$ is $\P$ if $\{F_{i}\colon 1\leq i\leq r\}$ is $\P$.
  We say that $r$ is the \emph{dimension} of the $M$-representation $(F_{1},\dots,F_{r})$.
  The \emph{total co-dimension} of a consistent $M$-set $\Omega$, denoted by $r_{M}(\Omega)$, is the minimum of the dimension of the independent $M$-representations of $\Omega$.\footnote{We will show in Proposition \ref{1:yy33} that $r_{M}(\Omega)$ is independent of the choice of the independent $M$-representation if $d$ and $p$ are sufficiently large.}

  	We say that an independent $M$-representation $(F_{1},\dots,F_{r})$ of $\Omega$ is \emph{standard} if
	the matrix 
		$\begin{bmatrix}
		v_{M}(F_{1})\\
		\dots \\
		v_{M}(F_{r})
		\end{bmatrix}$ is in the reduced row echelon form (or equivalently, the matrix 
		$\begin{bmatrix}
		v'_{M}(F_{1})\\
		\dots \\
		v'_{M}(F_{r})
		\end{bmatrix}$ is in the reduced row echelon form).
	If $(F_{1},\dots,F_{r})$ is a standard $M$-representation of $\Omega$, then we may relabeling $(F_{1},\dots,F_{r})$ as $$(F_{k,1},\dots,F_{k,r_{k}},F_{k-1,1},\dots,F_{k-1,r_{k-1}},\dots,F_{1,1},\dots,F_{1,r_{1}})$$
  for some $r_{1},\dots,r_{k}\in \N$ such that $F_{i,j}$ is non-constant with respect to $n_{i}$ and is independent of $n_{i+1},\dots,n_{k}$.
	We also call $(F_{i,j}\colon 1\leq i\leq k, 1\leq j\leq r_{i})$	a \emph{standard $M$-representation} of $\Omega$. The vector $(r_{1},\dots,r_{k})$ is called the \emph{dimension vector} of this representation.  
	
	\begin{conv}\label{1:fpism}
	We allow $(M,k)$-families, $M$-representations and   dimension vectors to be empty. In particular, $(\V)^{k}$ is considered as a nice and consistent $M$-set with total co-dimension zero. 
	\end{conv}

\begin{ex}
Let $d,s\in\N_{+}$, $p$ be a prime, $M\colon\V\to\F_{p}$ be a quadratic form associated with the matrix $A$,  and $h_{1},\dots,h_{s}\in\V$. Then the set $V(M)^{h_{1},\dots,h_{s}}$ defined in Definition \ref{1:vh} is an $M$-set with a dimension vector $(s+1)$. Moreover, it is consistent if $h_{1}A,\dots,h_{s}A$ are linearly independent. On the other hand, $V(M)^{h_{1},\dots,h_{s}}$ is not a nice $M$-set. 
\end{ex}

\begin{ex}\label{1:rreepp}
	Let $M\colon\V\to\F_{p}$ be a quadratic form associated with the matrix $A$ and $s\in\N$. 
	It follows from Lemma \ref{1:changeh} that the Gowers set $\Gow_{s}(V(M))$ is a consistent $M$-set with a dimension vector $(1,1,\dots,s)$, whose total co-dimension is $(s^{2}+s+2)/2$.
	On the other hand, let $L(n,h_{1},\dots,h_{s}):=(h_{1},\dots,h_{s},n)$. Then it follows from Lemma \ref{1:changeh} that $L(\Gow_{s}(V(M)))$ is a consistent $M$-set with a dimension vector $(0,\dots,s-1,s+1)$, whose total co-dimension is also $(s^{2}+s+2)/2$. We will show later in Example \ref{1:mainex} that these $M$-sets are nice when $M$ is non-degenerate.
\end{ex}

  	We refer the readers to Appendix \ref{1:s:AppB} for some basic properties as well as more examples of $M$-sets, and    to Appendix \ref{1:s:dec} for the Fubini's theorem on $M$-sets (which will be used extensively in \cite{SunC,SunD}).

\subsection{Irreducible properties for sets generated by quadratic forms}\label{1:s:9}

In Section \ref{1:ss91}, we will boost Theorem \ref{1:sLei} to cover partially $p$-periodic polynomials on a large class of $M$-sets. Before generalizing  Theorem \ref{1:sLei}, we need to address a technical issue on the irreducible property for  $M$-sets: if $P$ is a polynomial whose set of solutions has a ``large" intersection  with some $M$-set $\Omega$, then does it imply that all the elements in $\Omega$ are solutions to $P$? 
To answer this question, we start with the following definition:

\begin{defn}[$\d$-irreducible sets]
	Let $k\in\N_{+},s\in\N, \d>0$ and $p$ be a prime. We say that a subset $\Omega$ of $\F_{p}^{k}$ is \emph{$\d$-irreducible up to degree $s$} if for all $P\in\poly(\F_{p}^{k}\to\F_{p})$ of degree at most $s$, either $\vert V(P)\cap \Omega\vert\leq \d\vert\Omega\vert$ or $\Omega\subseteq V(P)$.

	%Let $\Omega$ be a subset of $\Z^{k}$. Let $\Omega':=\tau\circ\iota(\Omega)=(\Omega+p\Z^{k})\cap[p]^{k}$ be the set of $n\in [p]^{k}$ such that $n+pm\in\Omega$ for some $m\in \Z^{k}$.
	We say that a $p$-periodic subset   $\Omega$ of $\Z^{k}$  is \emph{weakly $(\d,p)$-irreducible up to degree $s$} if for all $P\in\poly(\Z^{k}\to\Z/p)$ of degree at most $s$, 
	either $\vert V_{p}(P)\cap \Omega\cap [p]^{d}\vert\leq\d\vert\Omega\cap [p]^{d}\vert$ or 
	$\Omega\subseteq V_{p}(P).$
	We say that $\Omega$   is \emph{strongly $(\d,p)$-irreducible up to degree $s$} if for all $P\in\poly(\Z^{k}\to\Z/p^{s})$ of degree at most $s$, either
	$\vert V_{p}(P)\cap \Omega\cap [p]^{d}\vert\leq \d\vert\Omega\cap [p]^{d}\vert$ or $\Omega\subseteq V_{p}(P).$
\end{defn}	

\begin{rem}
	If $\Omega\subseteq \F_{p}^{k}$ is a $1/2$-irreducible algebraic set, then it is not hard to see that for any $P,Q\in\poly(\F_{p}^{k}\to\F_{p})$, if $\Omega\subseteq V(P)\cup V(Q)$, then either $\Omega\subseteq V(P)$ or $\Omega\subseteq V(Q)$. In the language of algebraic geometry, this means that $\Omega$ is an \emph{irreducible algebraic set} (or an \emph{algebraic variety}).
\end{rem}

The following basic proposition explains the connections between different notions of irreducibility.

\begin{prop}[Connections between notions  of irreduciblities]\label{1:f2z}
	Let $k\in \N_{+},s\in\N, \d>0$ and $p$ be a prime with $p>s$.
	\begin{enumerate}[(i)]
		\item If $\Omega\subseteq\F_{p}^{k}$ is  $\d$-irreducible up to degree $s$, then $\tau(\Omega)\subseteq [p]^{k}$ is $(\d,p)$-irreducible up to degree $s$.
		\item Conversely, if a $p$-periodic subset $\Omega\subseteq \Z^{k}$ is  $(\d,p)$-irreducible up to degree $s$, then $\iota(\Omega)\subseteq \F_{p}^{k}$ is $\d$-irreducible up to degree $s$.
		\item If a $p$-periodic subset  $\Omega\subseteq \Z^{k}$ is strongly $(\d,p)$-irreducible up to degree $s$, then it is also weakly $(\d,p)$-irreducible up to degree $s$.
	\end{enumerate}	  	
\end{prop}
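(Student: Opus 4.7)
The plan is to reduce all three parts to the polynomial correspondence of Lemma \ref{1:lifting} together with the bijectivity of $\tau\colon\F_{p}^{k}\to[p]^{k}$. Since the hypothesis $p>s$ ensures that every polynomial in $\poly(\F_{p}^{k}\to\F_{p})$ of degree at most $s$ admits a regular lifting in $\poly(\Z^{k}\to\Z/p)$ of the same degree, and every polynomial in $\poly(\Z^{k}\to\Z/p)$ of degree at most $s$ induces a polynomial in $\poly(\F_{p}^{k}\to\F_{p})$ of degree at most $s$, these two sets of polynomials will be in the desired correspondence for parts (i) and (ii). Moreover, thanks to Lemma \ref{1:pp2}, any $P\in\poly(\Z^{k}\to\Z/p)$ of degree $<p$ automatically satisfies $P(n+pm)-P(n)\in\Z$ for all $n,m\in\Z^{k}$, so that $V_{p}(P)\cap[p]^{k}=\{n\in[p]^{k}\colon P(n)\in\Z\}$. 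In particular, if $P$ induces $Q:=\iota\circ pP\circ\tau$ in $\poly(\F_{p}^{k}\to\F_{p})$, then $V_{p}(P)\cap[p]^{k}=\tau(V(Q))$.

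For part (i), I would let $P\in\poly(\Z^{k}\to\Z/p)$ be of degree at most $s$, form its induced polynomial $Q:=\iota\circ pP\circ\tau$ in $\poly(\F_{p}^{k}\to\F_{p})$, and note that $V_{p}(P)\cap[p]^{k}=\tau(V(Q))$ together with $\Omega'=\tau(\Omega)$. Thus $|V_{p}(P)\cap\Omega'|=|V(Q)\cap\Omega|$ and $\Omega'\subseteq V_{p}(P)\Leftrightarrow\Omega\subseteq V(Q)$. Applying the $\d$-irreducibility of $\Omega$ to $Q$ produces the desired dichotomy for $\tau(\Omega)$. Part (ii) is symmetric: starting from an arbitrary $Q\in\poly(\F_{p}^{k}\to\F_{p})$ of degree at most $s$, I would invoke the regular lifting $P\in\poly(\Z^{k}\to\Z/p)$ of $Q$ from Lemma \ref{1:lifting}(ii) and apply the same bijective correspondence $V_{p}(P)\cap[p]^{k}=\tau(V(Q))$ to transfer the weak $(\d,p)$-irreducibility hypothesis on $\Omega$ to $\d$-irreducibility of $\iota(\Omega)$.

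For part (iii), the observation is simply that $\Z/p\subseteq\Z/p^{s}$, so $\poly(\Z^{k}\to\Z/p)\subseteq\poly(\Z^{k}\to\Z/p^{s})$; hence the family of polynomials used to test weak $(\d,p)$-irreducibility is a subset of the family used to test strong $(\d,p)$-irreducibility, and the implication is immediate.

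No step looks like a genuine obstacle: the entire proposition is a bookkeeping exercise translating between the $\F_{p}$ and $\Z/p$ pictures. The only mild care needed is in (i) and (ii), where one must check that the degree is preserved under the correspondence (hence the assumption $p>s$) and that $V_{p}(P)\cap[p]^{k}$ coincides with $\tau(V(Q))$ as sets; both points are routine consequences of Lemma \ref{1:lifting} and Lemma \ref{1:pp2}.
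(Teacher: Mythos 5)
Your proposal is correct and follows essentially the same route as the paper: parts (i) and (ii) via the induced/lifted polynomial correspondence of Lemma \ref{1:lifting} and the set identity $V_{p}(P)\cap[p]^{k}=\tau(V(\iota\circ pP\circ\tau))$ (which the paper states directly and you justify through Lemma \ref{1:pp2}), and part (iii) via the inclusion $\poly(\Z^{k}\to\Z/p)\subseteq\poly(\Z^{k}\to\Z/p^{s})$. No gaps.
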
	
\begin{proof}
	Part (i) follows from the fact that for every $P\in\poly(\Z^{k}\to\Z/p)$ of degree at most $s$, we have $\iota\circ pP\circ\tau\in \poly(\F_{p}^{k}\to\F_{p})$ and 
	$V_{p}(P)\cap\tau(\Omega)=\tau(V(\iota\circ pP\circ\tau)\cap \Omega)$. 
	Part (ii) follows from the fact that for every $P\in \poly(\F_{p}^{k}\to\F_{p})$, there exists a regular lifting $\tilde{P}\in \poly(\Z^{k}\to\Z/p)$ of $P$ with $\tilde{P}=\frac{1}{p}\tau(P\circ \iota) \mod \Z$ and $V(P)\cap \iota(\Omega)=\iota(V_{p}(\tilde{P})\cap \Omega)$. 
	Part (iii) follows from the inclusion $\poly(\Z^{k}\to\Z/p)\subseteq \poly(\Z^{k}\to\Z/p^{s})$.
\end{proof}	

 It is not hard to see that the full set $\V$ is $\d$-irreducible. To be more precise, we have
 
\begin{prop}[$\V$ is irreducible]\label{1:iri00}
Let $d\in\N_{+},s\in\N,\d>0$  and $p$ be a prime. If $p\gg_{d,s} \d^{-O_{d,s}(1)}$, then 
\begin{enumerate}[(i)]
	\item $\V$ is $\d$-irreducible up to degree $s$;
	\item  $\Z^{d}$ is weakly $(\d,p)$-irreducible up to degree $s$;
	\item   $\Z^{d}$ is strongly $(\d,p)$-irreducible up to degree $s$.
\end{enumerate}		
\end{prop}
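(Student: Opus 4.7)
The plan is to prove (i) directly from Lemma \ref{1:ns}, derive (iii) by a short induction on the $p$-adic denominator of $P$, and obtain (ii) as a formal consequence of (iii) via Proposition \ref{1:f2z}(iii). The only analytic input anywhere will be Lemma \ref{1:ns}; everything else is bookkeeping through the correspondence between $\F_{p}$-valued and $\Z/p^{k}$-valued polynomials.

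For (i), let $P\in\poly(\V\to\F_{p})$ be of degree at most $s$. If $P\equiv 0$ then $V(P)=\V$ and there is nothing to prove; otherwise Lemma \ref{1:ns} gives $\vert V(P)\vert\leq O_{d,s}(p^{d-1})$, which is at most $\d p^{d}$ as soon as $p\gg_{d,s}\d^{-1}$.

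For (iii), I would induct on $k\in\{0,1,\dots,s\}$ to establish the following stronger statement: every $P\in\poly(\Z^{d}\to\Z/p^{k})$ of degree at most $s$ satisfies either $\vert V_{p}(P)\cap[p]^{d}\vert\leq\d p^{d}$ or $[p]^{d}\subseteq V_{p}(P)$. The base case $k=0$ is immediate, since then $P$ is integer-valued and $V_{p}(P)=\Z^{d}$. For the inductive step, set $Q:=p^{k}P$, which is an integer-valued polynomial of degree at most $s$ whose rational coefficients have denominators dividing $s!$; provided $p>s$ (which is implied by $p\gg_{d,s}1$), the reduction $\tilde Q\in\poly(\V\to\F_{p})$ of $Q$ modulo $p$ is well defined and has degree at most $s$. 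If $n\in V_{p}(P)\cap[p]^{d}$ then $P(n)\in\Z$, hence $Q(n)\in p^{k}\Z\subseteq p\Z$, so $\iota(n)\in V(\tilde Q)$; thus $V_{p}(P)\cap[p]^{d}\subseteq\tau(V(\tilde Q))$. If $\tilde Q\not\equiv 0$, Lemma \ref{1:ns} yields $\vert V(\tilde Q)\vert\leq O_{d,s}(p^{d-1})\leq\d p^{d}$ for $p\gg_{d,s}\d^{-1}$, giving the ``small'' alternative. If $\tilde Q\equiv 0$, then $Q$ takes values in $p\Z$, so $p^{k-1}P=Q/p$ is integer-valued and thus $P\in\poly(\Z^{d}\to\Z/p^{k-1})$ still of degree at most $s$, and the inductive hypothesis closes the step. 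Specialising to $k=s$ gives (iii), and (ii) then follows immediately from (iii) by Proposition \ref{1:f2z}(iii).

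There is no serious obstacle; the only conceptual point is to realise that the natural induction parameter is the $p$-adic denominator of $P$ (equivalently, the least $k$ with $p^{k}P$ integer-valued) rather than the degree. Once $P$ has been multiplied up by $p^{k}$, the problem reduces to bounding the zero set in $\V$ of a single polynomial modulo $p$, which is exactly what Lemma \ref{1:ns} provides, and the complementary alternative $\tilde Q\equiv 0$ peels off one power of $p$ from the denominator and hands the statement to the inductive hypothesis.
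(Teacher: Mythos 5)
Your proof is correct and follows essentially the same route as the paper: part (i) comes straight from Lemma \ref{1:ns}, and part (iii) is proved by the same induction that peels off one power of $p$ at a time (the paper's $p$-expansion trick), with the mod-$p$ reduction of the top term handled by the zero-count of Lemma \ref{1:ns}. The only differences are organizational and cosmetic: you induct on the least $k$ with $p^{k}P$ integer-valued instead of clearing an auxiliary $p$-coprime denominator $Q$ and minimizing $k$ in $Qg=\sum_{i=0}^{k}g_{i}/p^{i}$ as the paper does, and you deduce (ii) from (iii) via Proposition \ref{1:f2z}(iii), whereas the paper deduces (ii) from (i) and then uses (ii) inside the proof of (iii).
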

\begin{proof}
Part (i) follows from Lemma \ref{1:ns}. Part (ii) follows from Part (i) and Proposition \ref{1:f2z}.
	We now prove Part (iii). The idea is to use  Part (ii) and  the $p$-expansion trick.
	Let	$g\in\poly(\Z^{d}\to\Z/p^{s})$ be of degree at most $s$ with $\vert V_{p}(g)\cap [p]^{d}\vert>\d p^{d}$.
	By the multivariate polynomial interpolation,  there exists $Q\in\mathbb{N}, p\nmid Q$  such that  
	$Qg=\sum_{i=0}^{s}\frac{g_{i}}{p^{i}}$
	for some integer valued polynomials $g_{i}\colon\Z^{d}\to\mathbb{Z}$ of degree at most $s$.
	Let $k\in\N$ be the smallest integer such that there exists $Q\in\mathbb{N}, p\nmid Q$  with
	$Qg=\sum_{i=0}^{k}\frac{g_{i}}{p^{i}}$
	for some  integer valued polynomials $g_{i}\colon\Z^{d}\to\mathbb{Z}$ . Obviously such $k$ exists and is at most $s$. Our goal is to show that $k=0$.	
	
	Suppose that 
	$k>0$.
	For all $n\in  V_{p}(g)$ and $m\in\Z^{d}$, by definition $g(n+pm)\in\Z$. Therefore, we must have that $g_{k}(n+pm)\in p\Z$.   
	So $\vert V_{p}(\frac{1}{p}g_{k})\cap [p]^{d}\vert\geq \vert V_{p}(g)\cap [p]^{d}\vert>\d p^{d}$. By Part (ii), we have that $V_{p}(\frac{1}{p}g_{k})\supseteq [p]^{d}$ and thus $\frac{1}{p}g_{k}(n)\in\Z$ for all $n\in\Z^{d}$.
Then there  exists $Q'\in\mathbb{N}, p\nmid Q'$  such that
	$\frac{Q'}{p}g_{k}$ and thus $Q'(g_{k-1}+\frac{1}{p}g_{k})$ is an integer valued polynomial, a contradiction to the minimality of $k$.
	
	We have thus proved that $k=0$ and 	thus $Qg$ itself is integer valued for some $Q\in\mathbb{N}, p\nmid Q$. Then for all $n\in [p]^{d}$, we have that $g(n)\in \Z/Q$. Since $g$ takes values in $\Z/p^{s}$, we have that $g(n)\in (\Z/Q)\cap (\Z/p^{s})=\Z$. So $[p]^{d}$ is strongly $(\d,p)$-irreducible up to degree $s$.
\end{proof}

As a consequence of Lemma \ref{1:bzt}, we have: 

\begin{prop}[$V(M)$ is irreducible]\label{1:iri0}
	 	Let $d\in\N_{+},s\in\N,\d>0$  and $p$ be a prime. If $p\gg_{d,s} \d^{-O_{d,s}(1)}$, then 
	 	\begin{enumerate}[(i)]
	  		\item for every quadratic form $M\colon \V\to\F_{p}$ of rank at least 3, the set $V(M)$ is $\d$-irreducible up to degree $s$;
	  		\item for every  quadratic form $M\colon \Z^{d}\to\Z/p$ of $p$-rank at least 3, the set $V_{p}(M)$ is weakly $(\d,p)$-irreducible up to degree $s$;
	 		\item for every  quadratic form $M\colon \Z^{d}\to\Z/p$ of $p$-rank at least 3, the set $V_{p}(M)$ is strongly $(\d,p)$-irreducible up to degree $s$.
	  	\end{enumerate}	
\end{prop}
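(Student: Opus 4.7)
The three parts are proved in sequence, with Part (i) providing the essential geometric content and Parts (ii) and (iii) transferring it to successively richer coefficient rings.

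\textbf{Part (i)} follows immediately from combining the count $|V(M)| = p^{d-1}(1 + O(p^{-1/2}))$ given by Lemma~\ref{1:counting} with the dichotomy of Lemma~\ref{1:bzt}: for any $P \in \poly(\V \to \F_p)$ of degree $\leq s$, Lemma~\ref{1:bzt} yields either $|V(P) \cap V(M)| \leq O_{d,s}(p^{d-2})$, which gives $|V(P) \cap V(M)|/|V(M)| = O_{d,s}(p^{-1}) \leq \delta$ whenever $p \gg_{d,s} \delta^{-1}$, or $V(M) \subseteq V(P)$.

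\textbf{Part (ii)} transfers Part (i) to the $\Z/p$-setting, essentially by Proposition~\ref{1:f2z}(i). For $p > s$, Lemma~\ref{1:pp2} ensures that every $P \in \poly(\Z^d \to \Z/p)$ of degree $\leq s$ belongs to $\poly_p(\Z^d \to \Z/p \vert \Z)$, so $V_p(P)$ is $p$-periodic and equal to $\{n : P(n) \in \Z\}$. Setting $\tilde{P} := \iota \circ pP \circ \tau$ and $\tilde{M} := \iota \circ pM \circ \tau$ (which has rank $\geq 3$), the bijection $\tau \colon \V \to [p]^d$ restricts to identifications $V(\tilde{P}) \cap V(\tilde{M}) \leftrightarrow V_p(P) \cap V_p(M) \cap [p]^d$ and $V(\tilde{M}) \leftrightarrow V_p(M) \cap [p]^d$; applying Part (i) to $\tilde{P}, \tilde{M}$ yields Part (ii).

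\textbf{Part (iii)} uses the $p$-expansion trick developed in Section~\ref{1:s:52}, in the same spirit as the proofs of Propositions~\ref{1:basicpp1} and \ref{1:iri00}(iii), but with Part (ii) above (applied to $V_p(M)$) replacing the role played by Proposition~\ref{1:iri00}(ii) in the earlier argument. Let $g \in \poly(\Z^d \to \Z/p^s)$ of degree $\leq s$ satisfy $|V_p(g) \cap V_p(M) \cap [p]^d| > \delta|V_p(M) \cap [p]^d|$. By multivariate interpolation, write $Qg = \sum_{i=0}^{k} g_i/p^i$ with $Q \in \N$ coprime to $p$, integer-valued $g_i$ of degree $\leq s$, and $k$ minimal. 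Aiming at $k = 0$, suppose $k \geq 1$; then for each $n \in V_p(g) \cap V_p(M) \cap [p]^d$ and $m \in \Z^d$, the integrality of $Qg(n+pm)$ together with that of the $g_i(n+pm)$ for $i < k$ forces $g_k(n+pm) \in p\Z$, hence $V_p(g) \cap V_p(M) \cap [p]^d \subseteq V_p(\tfrac{1}{p} g_k)$. Since $\tfrac{1}{p} g_k \in \poly(\Z^d \to \Z/p)$ has degree $\leq s$, Part (ii) gives $V_p(M) \subseteq V_p(\tfrac{1}{p} g_k)$.

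The main obstacle --- not present in the proof of Proposition~\ref{1:iri00}(iii) --- is that we cannot conclude $\tfrac{1}{p} g_k$ is integer-valued everywhere on $\Z^d$, only on $V_p(M)$. To extract the desired divisibility, Corollary~\ref{1:noloop2} supplies a decomposition $\tfrac{1}{p} g_k = M P_1 + P_0$ with $P_0$ integer-valued of degree $\leq s$ and $P_1$ integer-coefficient of degree $\leq s-2$. Substituting back into $Qg$, the term $P_0/p^{k-1}$ absorbs cleanly into the $i = k-1$ level, while the residual $MP_1/p^{k-1}$ is a polynomial which, although a priori at denominator $p^k$, takes values in $\tfrac{1}{p^{k-1}} \Z$ when restricted to $V_p(M)$ because $M \in \Z$ there. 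Iterating this reduction of the effective $p$-depth of $g$ on $V_p(M)$ (applying Part (ii) at each stage to the new top-level coefficient, and tracking that $M P_1$ is automatically integer-valued on $V_p(M)$ so that subsequent rounds require no density hypothesis) eventually forces $Qg \in \Z$ on $V_p(M)$; together with $\gcd(Q,p) = 1$ and the fact that $g$ is $\Z/p^s$-valued, this yields $g \in \Z$ on $V_p(M) \cap [p]^d$, i.e., $V_p(M) \cap [p]^d \subseteq V_p(g)$, contradicting the assumption that $k$ was minimal positive.
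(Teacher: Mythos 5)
Your Parts (i) and (ii) are correct and coincide with the paper's argument: (i) is Lemma \ref{1:bzt} combined with the lower bound $\vert V(M)\vert \gg p^{d-1}$ from Lemma \ref{1:counting}, and (ii) is the transfer via Proposition \ref{1:f2z}. For Part (iii) you have assembled the right tools (the $p$-expansion trick, Part (ii), and Corollary \ref{1:noloop2}), and you correctly identify the new obstacle relative to Proposition \ref{1:iri00}(iii), but the descent does not close in the form you describe.

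The gap is in the iteration. You take $k$ minimal among representations $Qg=\sum_{i\le k} g_i/p^i$ with \emph{integer-valued} numerators $g_i$. After one round you obtain $\tfrac1p g_k = MP_1+P_0$ and substitute back; the residual term $MP_1/p^{k-1}$ has a numerator that is only $\Z/p$-valued, so you have neither contradicted the minimality of $k$ nor produced a representation of the same type at depth $k-1$. Your proposed remedy, ``apply Part (ii) at each stage to the new top-level coefficient,'' is not a legitimate step: the new top-level numerator at depth $p^{k-1}$ is $g_{k-1}+P_0+MP_1$, which is $\Z/p$-valued, so the polynomial you would need to feed into Part (ii), namely $\tfrac1p(g_{k-1}+P_0+MP_1)$, is $\Z/p^{2}$-valued, and weak irreducibility does not apply to it (invoking strong irreducibility here would be circular). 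To proceed one must separate the $M$-part from the integer-valued part: on $V_p(M)$ one has $M(n+pm)\equiv M(n)+(2(nA)+u)\cdot m \pmod{p}$ while integer-valued and integer-coefficient polynomials of degree $<p$ are $p$-periodic mod $p$, so varying $m$ and interpolating (a Vandermonde-type argument, as in the Claim of Proposition \ref{1:basicpp1} and equation (\ref{1:771}) of the paper's proof) kills the coefficient of each power of $M$ separately on a dense subset, after which Part (ii) and Corollary \ref{1:noloop2} can be applied to each coefficient. Moreover, higher powers of $M$ accumulate under iteration (Corollary \ref{1:noloop2} applied to $P_1$ creates $M^{2}$-terms at the next level), so the induction has to be run over numerators of the form $\sum_j M^{j}R_j$ with $R_j$ integer-valued---exactly the ``good polynomials'' the paper introduces at the outset; your formulation omits both this bookkeeping and the varying-$m$ separation step. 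Finally, your assertion that later rounds ``require no density hypothesis'' is unfounded: at every stage the congruence is known only on $V_p(g)\cap V_p(M)$ (minus the small set where $2(nA)+u\in p\Z^{d}$), and it is precisely the application of Part (ii), with the density bound intact, that upgrades it to all of $V_p(M)$; this point is secondary, but it signals that the iteration has not actually been carried out.
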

\begin{proof}
	Part (i) follows from Lemma \ref{1:bzt}. Part (ii) follows from Part (i) and Proposition \ref{1:f2z}.
	We now prove Part (iii) using  Part (ii) and  the $p$-expansion trick. The proof is very similar to that of Proposition \ref{1:basicpp12}.
    Let $g\in\poly(\Z^{d}\to\Z/p^{s})$ be of degree at most $s$ with $\vert V_{p}(g)\cap V_{p}(M)\cap[p]^{d}\vert>\d\vert V_{p}(M)\cap[p]^{d}\vert$.
	By the multivariate polynomial interpolation,  there exists $Q\in\mathbb{N}, p\nmid Q$  such that  
			$$Qg=\sum_{i=0}^{s}\frac{g_{i}}{p^{i}}$$
			for some integer valued polynomials $g_{i}\colon\Z^{d}\to\mathbb{Z}$ of degree at most $s$.
			We say that a polynomial $f$ is \emph{good} if  
			$$f=\sum_{i=0}^{\lfloor s/2\rfloor}M^{i}R_{i}$$
			for some integer valued polynomial $R_{i}\colon\Z^{d}\to\mathbb{Z}$ of degree at most $s-2i$.	
			Let $k\in\N$ be the smallest integer such that there exists $Q\in\mathbb{N}, p\nmid Q$  with
			$$Qg=\sum_{i=0}^{k}\frac{g_{i}}{p^{i}}$$
			for some good polynomials $g_{i}\colon\Z^{d}\to\mathbb{Z}$ . Obviously such $k$ exists and is at most $s$. Our goal is to show that $k=0$.	
			
			Suppose that 
			$k>0$.
			For all $n\in  V_{p}(g)\cap V_{p}(M)$ and $m\in\Z^{d}$, by definition $g(n+pm)\in\Z$. Therefore, we must have that $g_{k}(n+pm)\in p\Z$. 
			Suppose that 
			$$g_{k}=\sum_{i=0}^{\lfloor s/2\rfloor}M^{i}R_{i}$$
			for some integer valued polynomials $R_{i}\colon\Z^{d}\to\mathbb{Z}$ of degree at most $s-2i$. 
			Assume that $$M(n)=\frac{1}{p}((nA)\cdot n+u\cdot n+v)$$
			for some symmetric integer valued $d\times d$ matrix $A$, some $u\in\Z^{d}$ and some $v\in\Z$.
			Then for all $n\in  V_{p}(g)\cap V_{p}(M)$ and $m\in\Z^{d}$, writing $x:=M(n)+(2(nA)+u)\cdot m$, we have
			\begin{equation}\label{1:771}
				\begin{split}
				&\quad 0\equiv g_{k}(n+pm)\equiv \sum_{i=0}^{\lfloor s/2\rfloor}M(n+pm)^{i}R_{i}(n)
				\\&\equiv \sum_{i=0}^{\lfloor s/2\rfloor}(M(n)+(2(nA)+u)\cdot m)^{i}R_{i}(n)= \sum_{i=0}^{\lfloor s/2\rfloor}x^{i}R_{i}(n) \mod p\Z.
				\end{split}
			\end{equation} 
			Note that if $2(nA)+u\notin p\Z^{d}$, then for any $x\in [p]$, there exists $m\in\Z^{d}$ such that $M(n)+(2(nA)+u)\cdot m\equiv x \mod p\Z$. So viewing the right hand side of (\ref{1:771}) as a polynomial of $x$, we have that  
			$R_{i}(n)\in p\mathbb{Z}/Q'$ for some $Q'\in\N, p\nmid Q'$ for all $0\leq i\leq \lfloor s/2\rfloor$ and $n\in  V_{p}(g)\cap V_{p}(M)$ with $2(nA)+u\notin p\Z^{d}$.

			Since $\rank_{p}(M)\geq 3$, it is not hard to see that the number of $n\in[p]^{d}$ with 
			$2(nA)+u\in p\Z^{d}$ is at most $p^{d-3}$.  
			So $$\vert V_{p}(Q'R_{i})\cap V_{p}(M)\cap[p]^{d}\vert\geq \vert V_{p}(g)\cap V_{p}(M)\cap[p]^{d}\vert-p^{d-3}>\d\vert V_{p}(M)\cap[p]^{d}\vert/2.$$ By Part (ii), we have that $V_{p}(M)\subseteq V_{p}(Q'R_{i})$ for all $0\leq i\leq \lfloor s/2\rfloor$.
			Let $Q''\in\N, Q'\vert Q'', p\nmid Q''$ be such that $Q''R_{i}$ is an integer coefficient polynomial for all $0\leq i\leq \lfloor s/2\rfloor$.
			We may  invoke  Corollary \ref{1:noloop2}  to conclude that 
			$$\frac{Q''}{p}R_{i}=MR'_{i}+R''_{i}$$
			for some integer valued polynomials $R_{i},R''_{i}$ of degrees at most $s-2i-2$ and $s-2i$ respectively. Then
			$\frac{Q''}{p}g_{k}$ and thus $Q''(g_{k-1}+\frac{1}{p}g_{k})$ is good, a contradiction to the minimality of $k$.
			
			We have thus proved that $k=0$ and 	thus $Qg$ itself is good for some $Q\in\N, p\nmid Q$. Then for all $n\in V_{p}(M)$, we have that $g(n)\in \Z/Q$. Since $g$ takes values in $\Z/p^{s}$, we have that $g(n)\in (\Z/Q)\cap (\Z/p^{s})=\Z$ and thus $V_{p}(M)\subseteq V_{p}(g)$. So  $V_{p}(M)$ is strongly $(\d,p)$-irreducible up to degree $s$.
\end{proof}

We provide some basic propositions for the irreducible properties. Firstly, it is straightforward that the irreducible properties are preserved under translations.

\begin{prop}[Irreducible sets are translation invariant]\label{1:iri4}
	Let $d\in\N_{+},s\in\N, \d>0$ and $p>s$ be a prime. 
	\begin{enumerate}[(i)]
		\item Let $\Omega\subseteq \F_{p}^{d}$ be $\d$-irreducible up to degree $s$ and $v\in\V$. Then  $\Omega+v$ is also $\d$-irreducible up to degree $s$.
		\item Let $\Omega\subseteq \Z^{d}$ be a $p$-periodic set which is weakly $(\d,p)$-irreducible up to degree $s$ and $v\in\Z^{d}$. Then  $\Omega+v$ is also  weakly $(\d,p)$-irreducible up to degree $s$.
		\item Let $\Omega\subseteq \Z^{d}$ be a $p$-periodic set which is strongly $(\d,p)$-irreducible up to degree $s$ and $v\in\Z^{d}$. Then  $\Omega+v$ is also strongly $(\d,p)$-irreducible up to degree $s$.
	\end{enumerate}	
\end{prop}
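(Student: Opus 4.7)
The plan is to reduce all three parts to the irreducibility hypothesis on $\Omega$ via the substitution $Q(n) := P(n+v)$, which preserves degree and the value range ($\F_p$, $\Z/p$, or $\Z/p^s$ respectively). The key observation is that translation by $v$ induces a bijection between $\Omega$ and $\Omega+v$ (and between their $[p]^d$-representatives in the $\Z^d$-setting) which interchanges membership in $V(P)$ with membership in $V(Q)$.

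For Part (i), given $P\in\poly(\F_p^d\to\F_p)$ of degree at most $s$, I would set $Q(n):=P(n+v)$. Then $\deg(Q)\leq s$ and the shift map $m\mapsto m+v$ is a bijection from $\Omega$ to $\Omega+v$ sending $V(Q)\cap\Omega$ onto $V(P)\cap(\Omega+v)$. Applying the $\d$-irreducibility of $\Omega$ to $Q$ gives either $|V(P)\cap(\Omega+v)|=|V(Q)\cap\Omega|\leq\d|\Omega|=\d|\Omega+v|$, or $\Omega\subseteq V(Q)$, which translated back says $\Omega+v\subseteq V(P)$.

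For Parts (ii) and (iii), the only extra care needed is in passing through the $[p]^d$-representatives. Writing $\Omega':=(\Omega+p\Z^d)\cap [p]^d$ and $(\Omega+v)':=(\Omega+v+p\Z^d)\cap [p]^d$, the map $\phi\colon\Omega'\to(\Omega+v)'$ given by $\phi(n):=\tau(\iota(n+v))$ is a bijection, and $\phi(n)=n+v+pw_n$ for some $w_n\in\Z^d$. Given $P\in\poly(\Z^d\to\Z/p)$ (resp.\ $\Z/p^s$) of degree at most $s$, setting $Q(n):=P(n+v)$ again preserves the degree and value range. Since $V_p$ is stable under shifts in $p\Z^d$, we have $\phi(n)\in V_p(P)$ iff $n+v\in V_p(P)$ iff $n\in V_p(Q)$; hence $|V_p(P)\cap(\Omega+v)'|=|V_p(Q)\cap\Omega'|$ and $|(\Omega+v)'|=|\Omega'|$. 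Applying the relevant $(\d,p)$-irreducibility of $\Omega$ to $Q$ then yields the dichotomy for $P$ and $\Omega+v$, exactly as in Part (i).

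I do not expect any real obstacle here: the statement is a routine consequence of the definitions, and the only bookkeeping is verifying that the translation does not alter degrees or coefficient rings, which is immediate. The main thing to get right is the handling of representatives in $[p]^d$, but the bijection $\phi$ above settles this uniformly for Parts (ii) and (iii).
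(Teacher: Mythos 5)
Your proof is correct and follows essentially the same route as the paper: substitute the translation into the polynomial ($Q(n):=P(n+v)$, the paper uses $P(\cdot-v)$ on the other side) and transport the dichotomy through the shift bijection; the paper proves only Part (i) this way and leaves (ii), (iii) to the reader, and your explicit handling of the $[p]^d$-representatives via $\phi(n)=\tau(\iota(n+v))$ together with the $p\Z^d$-periodicity of $V_p$ fills in exactly the intended bookkeeping.
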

\begin{proof}
	Let $P\in\poly(\V\to\F_{p})$ be a polynomial of degree at most $s$ such that $\vert V(P)\cap (\Omega+v)\vert\geq \d \vert \Omega+v\vert=\d \vert \Omega\vert$. Then $\vert V(P(\cdot-v))\cap \Omega\vert\geq  \d \vert \Omega\vert$. Since $\Omega$ is $\d$-irreducible up to degree $s$, we have that $V(P(\cdot-v))\subseteq \Omega$ and thus $V(P)\subseteq \Omega+v$. This proves Part (i).
	
	The proofs of Parts (ii) and (iii) are similar to Part (i) and we leave them to the interested readers.
\end{proof}

Moreover, the irreducible properties are preserved under products.
\begin{prop}[Products of irreducible sets are irreducible]\label{1:iri3}
	Let $d_{1},d_{2}\in\N_{+},s\in\N, \d>0$ and $p>s$ be a prime. Let $\Omega_{1}\subseteq \F_{p}^{d_{1}}$, $\Omega_{2}\subseteq \F_{p}^{d_{2}}$ and $\Omega=\Omega_{1}\times\Omega_{2}$.
	\begin{enumerate}[(i)]
		\item If $\Omega_{1}\subseteq \F_{p}^{d_{1}}$ and $\Omega_{2}\subseteq \F_{p}^{d_{2}}$ are  $\d$-irreducible up to degree $s$, then $\Omega_{1}\times \Omega_{2}\subseteq \F_{p}^{d_{1}+d_{2}}$ is $2\d$-irreducible up to degree $s$. 
		\item If $\Omega_{1}\subseteq \Z^{d_{1}}$ and $\Omega_{2}\subseteq \Z^{d_{2}}$ are $p$-periodic sets which are weakly $(\d,p)$-irreducible up to degree $s$, then $\Omega_{1}\times \Omega_{2}\subseteq \Z^{d_{1}+d_{2}}$ is weakly $(2\d,p)$-irreducible up to degree $s$. 
		\item If $\Omega_{1}\subseteq \Z^{d_{1}}$ and $\Omega_{2}\subseteq \Z^{d_{2}}$ are $p$-periodic sets which  are  strongly $(\d,p)$-irreducible up to degree $s$, then $\Omega_{1}\times \Omega_{2}\subseteq \Z^{d_{1}+d_{2}}$ is strongly $(2\d,p)$-irreducible up to degree $s$. 
	\end{enumerate}
\end{prop}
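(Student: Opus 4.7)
The plan is to prove all three parts by the same Fubini-and-pigeonhole argument, writing out part (i) in detail and noting that parts (ii) and (iii) follow by the obvious analogues with $V$ replaced by $V_p$.

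For part (i), suppose $P\in\poly(\F_p^{d_1+d_2}\to\F_p)$ has degree at most $s$ and $|V(P)\cap\Omega|>2\d|\Omega|$; I want to conclude $\Omega\subseteq V(P)$. Write coordinates as $(x,y)$ with $x\in\F_p^{d_1}$, $y\in\F_p^{d_2}$, and for each fixed $x$ let $P_x(y):=P(x,y)$, which is a polynomial of degree $\leq s$ in the variable $y$. Define
$$X:=\{x\in\Omega_1:\Omega_2\subseteq V(P_x)\}.$$
By the $\d$-irreducibility of $\Omega_2$ applied to $P_x$, for every $x\in\Omega_1\setminus X$ we have $|V(P_x)\cap\Omega_2|\leq\d|\Omega_2|$. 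Hence
$$|V(P)\cap\Omega|=\sum_{x\in\Omega_1}|V(P_x)\cap\Omega_2|\leq|X|\cdot|\Omega_2|+\d|\Omega|,$$
which combined with the hypothesis gives $|X|>\d|\Omega_1|$.

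Now fix any $y\in\Omega_2$ and set $Q_y(x):=P(x,y)$, again of degree $\leq s$. By construction $X\subseteq V(Q_y)\cap\Omega_1$, so $|V(Q_y)\cap\Omega_1|>\d|\Omega_1|$. The $\d$-irreducibility of $\Omega_1$ then forces $\Omega_1\subseteq V(Q_y)$, i.e., $P(x,y)=0$ for all $x\in\Omega_1$. Since $y\in\Omega_2$ was arbitrary, we conclude $\Omega_1\times\Omega_2\subseteq V(P)$, as desired.

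For parts (ii) and (iii), the argument is identical: given $P\in\poly(\Z^{d_1+d_2}\to\Z/p)$ (resp.\ $\Z/p^s$) of degree $\leq s$, the slice $P_x(y):=P(x,y)$ is a polynomial in $\poly(\Z^{d_2}\to\Z/p)$ (resp.\ $\Z/p^s$) of degree $\leq s$, the set of interest factors as $(\Omega_1\times\Omega_2)'=\Omega_1'\times\Omega_2'$, and the $V_p$-based counting step goes through verbatim. There is no real obstacle to this proof; the only point worth emphasizing is that irreducibility is preserved under slicing precisely because restricting a multivariate polynomial to a hyperplane of the form $\{x=x_0\}$ or $\{y=y_0\}$ yields a polynomial of no greater degree, keeping us within the class of polynomials against which irreducibility is being tested.
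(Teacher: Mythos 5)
Your part (i) is correct and is essentially the paper's own argument (the paper pigeonholes over $y\in\Omega_2$ and applies irreducibility of $\Omega_1$ first; you slice in $y$ first — the same Fubini-plus-pigeonhole proof with the factors interchanged), and your transfer to part (ii) is also fine, though for a reason you do not state: since $p>s$, Lemma \ref{1:pp2} makes every $\Z/p$-valued polynomial of degree $\leq s$ automatically $p$-periodic mod $\Z$, so $V_p$ of $P$ and of each slice $P_x$, $Q_y$ is just the integrality locus and your set-theoretic identities carry over.

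The gap is in part (iii), where the claim that the argument goes through \emph{verbatim} fails. For $\Z/p^s$-valued polynomials of degree $\leq s$ there is no analogue of Lemma \ref{1:pp2} (e.g.\ $P(x,y)=xy/p^2$ has degree $2$ and values in $\Z/p^2$ but is not $p$-periodic mod $\Z$), so $V_p$ genuinely quantifies over all shifts by $p\Z$. Concretely, in your second step you need $X\subseteq V_p(Q_y)$; but for $x\in X$ the hypothesis $\Omega_2'\subseteq V_p(P_x)$ only controls the values $P(x,y'+pm_2)$, i.e.\ it gives $Q_y(x)\in\Z$, \emph{not} $Q_y(x+pm_1)=P(x+pm_1,y)\in\Z$, which is what $x\in V_p(Q_y)$ requires. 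The example above shows this inference is genuinely false: with $P(x,y)=xy/p^2$ one has $\Omega_2'\subseteq V_p(P_0)$ trivially, yet $0\notin V_p(Q_y)$ whenever $p\nmid y$. The proposition is still provable along your lines, but you must keep the full coset information: note that $(x,y)\in V_p(P)$ implies $y\in V_p(P_{x+pm_1})$ for \emph{every} $m_1$ and $x\in V_p(Q_{y+pm_2})$ for \emph{every} $m_2$. Then, after the pigeonhole produces many $y\in\Omega_2'$ with $|V_p(Q_{y+pm_2})\cap\Omega_1'|>\d|\Omega_1'|$ for all $m_2$, strong irreducibility of $\Omega_1$ gives $\Omega_1'\subseteq V_p(Q_{y+pm_2})$ for all such $y$ and all $m_2$; this says each shifted slice polynomial $R_{x,m_1}(\cdot)=P(x+pm_1,\cdot)$ has $V_p(R_{x,m_1})$ containing a $>\d$-proportion of $\Omega_2'$, and a second application of strong irreducibility (now of $\Omega_2$, to each $R_{x,m_1}$) yields $P(x+pm_1,y+pm_2)\in\Z$ for all $x\in\Omega_1'$, $y\in\Omega_2'$ and all $m_1,m_2$, i.e.\ $\Omega'\subseteq V_p(P)$.
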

\begin{proof}
	Let  $P\in\poly(\F_{p}^{d_{1}+d_{2}}\to\F_{p})$ be a polynomial of degree at most $s$ such that $\vert V(P)\cap (\Omega_{1}\times \Omega_{2})\vert\geq 2\d \vert \Omega_{1}\vert\cdot\vert \Omega_{2}\vert$. By the Pigeonhole Principle, there exists a subset $U$ of $\Omega_{2}$ with $\vert U\vert\geq \d \vert \Omega_{2}\vert$ such that for all $y\in U$, $\vert V(P(\cdot,y))\cap \Omega_{1}\vert=\vert V(P)\cap (\Omega_{1}\times \{y\})\vert\geq\d \vert \Omega_{1}\vert$. Since $\Omega_{1}$ is $\d$-irreducible up to degree $s$, we have that $\Omega_{1}\subseteq V(P(\cdot,y))$ for all $y\in U$. In other words, for all $x\in \Omega_{1}$, $\vert V(P(x,\cdot))\cap \Omega_{2}\vert\geq \vert U\vert\geq \d \vert \Omega_{2}\vert$. Since $\Omega_{2}$ is $\d$-irreducible up to degree $s$, we have that $\Omega_{2}\subseteq V(P(x,\cdot))$ for all $x\in \Omega_{1}$. In other words, $\Omega_{1}\times \Omega_{2}\subseteq V(P)$. This proves Part (i).
	
	The proofs of Parts (ii) and (iii) are similar to Part (i) and we leave them to the interested readers.
\end{proof}

Finally, the irreducible properties are preserved under isomorphisms.
 
\begin{prop}[Irreducible sets are invariant under isomorphisms]\label{1:iso}
	Let $k\in\N_{+},s\in\N,\d>0$ and $p>s$ be a prime.
	Let $\Omega\subseteq \F_{p}^{k}$ and  $L\colon \F_{p}^{k}\to\F_{p}^{k}$ be a bijective linear transformation.
	\begin{enumerate}[(i)]
	\item If $\Omega$  is $\d$-irreducible up to degree $s$, then so is $L(\Omega)$.
	\item If $\iota^{-1}(\Omega)$  is weakly $(\d,p)$-irreducible up to degree $s$, then so is $\iota^{-1}(L(\Omega))$.
	\item If $\iota^{-1}(\Omega)$  is strongly $(\d,p)$-irreducible up to degree $s$, then so is $\iota^{-1}(L(\Omega))$.
	\end{enumerate}	
\end{prop}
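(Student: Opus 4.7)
The strategy is to reduce each of the three statements to the irreducibility hypothesis on $\Omega$ (respectively $\tau(\Omega)$) by ``pulling back'' any potentially witnessing polynomial via $L$ (or a $\Z$-linear lift of $L$). Since $L$ is linear, such a pullback of a degree $\leq s$ polynomial is again of degree $\leq s$, and the bijectivity of $L$ guarantees that cardinalities of intersections transfer exactly.

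For part (i), given $P\in\poly(\F_{p}^{k}\to\F_{p})$ of degree at most $s$ with $\vert V(P)\cap L(\Omega)\vert\geq\d\vert L(\Omega)\vert$, I set $Q:=P\circ L\in\poly(\F_{p}^{k}\to\F_{p})$, which again has degree at most $s$. Since $L$ is bijective, $V(Q)\cap\Omega=L^{-1}(V(P))\cap\Omega$ is in bijection with $V(P)\cap L(\Omega)$ via $L$, so $\vert V(Q)\cap\Omega\vert\geq\d\vert\Omega\vert$. The $\d$-irreducibility of $\Omega$ up to degree $s$ then forces $\Omega\subseteq V(Q)$, and applying $L$ gives $L(\Omega)\subseteq V(P)$.

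For parts (ii) and (iii), write $L(x)=xA$ for $A\in\F_{p}^{k\times k}$ invertible, pick any integer matrix $\tilde{A}$ whose entries lift those of $A$, and let $\tilde{L}\colon\Z^{k}\to\Z^{k}$ be the $\Z$-linear map $\tilde{L}(n):=n\tilde{A}$, so that $\tilde{L}\equiv L\mod p\Z^{k}$. Given $P\in\poly(\Z^{k}\to R)$ of degree at most $s$ (with $R=\Z/p$ for (ii) and $R=\Z/p^{s}$ for (iii)), I would set $\tilde{P}:=P\circ\tilde{L}$, which still lies in $\poly(\Z^{k}\to R)$ and has degree at most $s$. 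The central claim to verify is
\[
\tau(x)\in V_{p}(\tilde{P})\quad\Longleftrightarrow\quad\tau(L(x))\in V_{p}(P) \qquad\text{for every } x\in\F_{p}^{k}.
\]
For this, use $\tilde{L}(\tau(x)+pm)=\tilde{L}(\tau(x))+p\tilde{L}(m)$ and $\tilde{L}(\tau(x))\equiv\tau(L(x))\mod p\Z^{k}$; then observe that $V_{p}(P)$ is $p\Z^{k}$-invariant while $p\tilde{L}(\Z^{k})+p\Z^{k}=p\Z^{k}$ (because $\tilde{L}(\Z^{k})+\Z^{k}=\Z^{k}$ trivially), so that $\tilde{L}(\tau(x))+p\tilde{L}(\Z^{k})\subseteq V_{p}(P)$ is equivalent to $\tilde{L}(\tau(x))+p\Z^{k}\subseteq V_{p}(P)$, i.e.\ to $\tau(L(x))\in V_{p}(P)$. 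Once the claim is established, $\vert V_{p}(\tilde{P})\cap\tau(\Omega)\vert=\vert V_{p}(P)\cap\tau(L(\Omega))\vert\geq\d\vert\tau(L(\Omega))\vert=\d\vert\tau(\Omega)\vert$, so the (weak or strong) irreducibility hypothesis on $\tau(\Omega)$ gives $\tau(\Omega)\subseteq V_{p}(\tilde{P})$, and the claim transports this to $\tau(L(\Omega))\subseteq V_{p}(P)$.

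The only step requiring any real care is the verification of the claim relating $V_{p}(\tilde{P})$ to $V_{p}(P)$; this is the place where the failure of $\tilde{L}$ to be a $\Z$-bijection might appear problematic, but the $p\Z^{k}$-invariance of $V_{p}(P)$ together with surjectivity of $L$ modulo $p$ is exactly enough to absorb the defect. All other steps — degree preservation, cardinality comparison, and the final invocation of irreducibility — are formal and identical in both the weak and strong settings, so the same argument covers parts (ii) and (iii) uniformly.
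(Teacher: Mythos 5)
Your overall route is the same as the paper's: part (i) is identical, and for (ii)–(iii) you lift $L$ to an integer-linear $\tilde{L}$ with $\tilde{L}\circ\tau\equiv\tau\circ L \pmod{p\Z^{k}}$, pull $P$ back to $P\circ\tilde{L}$, transfer the count, invoke the irreducibility of $\tau(\Omega)$, and transport the containment back. The counting step only needs the easy implication ($\tau(L(x))\in V_{p}(P)\Rightarrow \tau(x)\in V_{p}(P\circ\tilde{L})$), which you prove correctly using the $p\Z^{k}$-periodicity of $V_{p}(P)$.

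The gap is in the converse implication, which is exactly the one you need at the end to pass from $\tau(\Omega)\subseteq V_{p}(P\circ\tilde{L})$ to $\tau(L(\Omega))\subseteq V_{p}(P)$, and your justification of it does not work. What $\tau(x)\in V_{p}(P\circ\tilde{L})$ gives you is only that $P$ takes integer values on the coset $\tilde{L}(\tau(x))+p\tilde{L}(\Z^{k})$; it does not say that this coset is contained in $V_{p}(P)$, which is what your displayed equivalence tacitly assumes (you have silently replaced the set $\{y\colon P(y)\in\Z\}$ by $V_{p}(P)$). The identity $p\tilde{L}(\Z^{k})+p\Z^{k}=p\Z^{k}$ is true but does no work: $p\tilde{L}(\Z^{k})$ is a sublattice of $p\Z^{k}$ of index $\vert\det\tilde{A}\vert$, and integrality of $P$ on a coset of that sublattice does not formally upgrade to integrality on the whole coset of $p\Z^{k}$; for a general $\Q$-valued polynomial this upgrade is simply false. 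It is true here only because of the hypotheses on $P$, which your argument never uses. For (ii), $P$ has degree at most $s<p$ and values in $\Z/p$, so Lemma \ref{1:pp2} gives $P(n+pm)-P(n)\in\Z$ for all $m,n$, and already $P(\tilde{L}(\tau(x)))\in\Z$ yields $\tau(L(x))\in V_{p}(P)$; this is the fact the paper points at when it concludes ``since $P\circ\tilde{L}\in\poly(\Z^{k}\to\Z/p)$''. For (iii), where the values lie in $\Z/p^{s}$, Lemma \ref{1:pp2} is unavailable and an extra argument is needed, e.g.: the polynomial $g(m):=P(\tilde{L}(\tau(x))+pm)$ has degree at most $s$, takes values in $\tfrac{1}{p^{s}}\Z$, and is integer-valued on the sublattice $\tilde{L}(\Z^{k})$, whose index $\vert\det\tilde{A}\vert$ is coprime to $p$ because $L$ is invertible over $\F_{p}$; writing $g(x)=h(x\tilde{A}^{-1})$ with $h:=g\circ\tilde{L}$ integer-valued of degree at most $s<p$ shows (via the binomial basis) that $g(\Z^{k})\subseteq\tfrac{1}{D}\Z$ for some $D$ coprime to $p$, whence $g(\Z^{k})\subseteq\tfrac{1}{D}\Z\cap\tfrac{1}{p^{s}}\Z=\Z$, i.e. $\tilde{L}(\tau(x))\in V_{p}(P)$ and so $\tau(L(x))\in V_{p}(P)$. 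With such a repair (which is where the hypothesis $p>s$ and the invertibility of $L$ mod $p$ actually enter) your proof is complete and coincides with the paper's; as written, the one step you yourself flag as needing care is unproved.
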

\begin{proof}		
	We first prove Part (i).
	Let $P\in\poly(\F_{p}^{k}\to\F_{p})$ be a polynomial of degree at most $s$ with $$\vert V(P\circ L)\cap \Omega\vert=\vert V(P)\cap L(\Omega)\vert\geq \d\vert L(\Omega)\vert=\d\vert \Omega\vert.$$ Clearly $P\circ L\colon\F_{p}^{k}\to\F_{p}$ is a polynomial of degree at most $s$. Since $\Omega$  is $\d$-irreducible up to degree $s$, we have that $\Omega\subseteq V(P\circ L)$ and so $L(\Omega)\subseteq V(P)$. So  $L(\Omega)$  is $\d$-irreducible up to degree $s$.  
	
	We now prove Part (ii).
	Let  $P\in\poly(\Z^{k}\to\Z/p)$ be a polynomial of degree at most $s$ with $\vert V_{p}(P)\cap \iota^{-1}\circ L(\Omega)\cap [p]^{k}\vert\geq \d\vert \iota^{-1}\circ L(\Omega)\cap [p]^{k}\vert=\d\vert \Omega\vert$. %(we have $\vert\iota^{-1}\circ L(\Omega)\cap [p]^{k}\vert=\vert \Omega\vert$ because the map $n\to \tau(L(n))$ is injective).  
	By Lemma \ref{1:lifting2}, there exists a  linear transformation $\tilde{L}\colon\Z^{k}\to\Z^{k}$ such that $\tilde{L}\circ \tau\equiv\tau\circ L \mod p\Z^{k}$.
	 Since $L$ is bijective, $\tilde{L}$ maps different residue classes mod $p\Z^{k}$ to different residue classes mod $p\Z^{k}$.
	 So
	$$\vert V_{p}(P)\cap \iota^{-1}\circ L(\Omega)\cap[p]^{k}\vert=\vert V_{p}(P)\cap \tilde{L}\circ \iota^{-1}(\Omega)\cap [p]^{k}\vert=\vert V_{p}(P\circ\tilde{L})\cap\iota^{-1}(\Omega)\cap [p]^{k}\vert.$$
	It is clear that $P\circ \tilde{L}$ is a polynomial belonging to $\poly(\Z^{k}\to\Z/p)$ of degree at most $s$.  Since $\iota^{-1}(\Omega)$  is weakly $(\d,p)$-irreducible up to degree $s$, we have that $\iota^{-1}(\Omega)\subseteq V_{p}(P\circ\tilde{L})$ and thus $\iota^{-1}\circ L(\Omega)\subseteq V_{p}(P)$ since  $P\circ \tilde{L}\in\poly(\Z^{k}\to\Z/p)$.  So  $\iota^{-1}\circ L(\Omega)$  is weakly $(\d,p)$-irreducible up to degree $s$.
	
	Note that if $P\in\poly(\Z^{k}\to\Z/p^{s})$ is a polynomial of degree at most $s$, then so is $P\circ \tilde{L}$.
	The rest proof of Part (iii) is almost identical to that of Part (ii) and we omit the details.
\end{proof}

We are now ready to state the main irreducible theorem for $M$-sets:

\begin{thm}[Nice and consistent $M$-sets are irreducible]\label{1:irrr}
	Let $d,K\in\N_{+},s\in\N$, $p$ be a prime, and $\d>0$. Let $M\colon\V\to \F_{p}$ be a  non-degenerate quadratic form 	and  $\Omega\subseteq (\V)^{K}$ be a  nice and consistent $M$-set. 	If 	$d\geq \max\{2r_{M}(\Omega)+1,4K-1\}$  and $p\gg_{d,s} \d^{-O_{d,s}(1)}$, then 
	\begin{enumerate}[(i)]
		\item 	$\Omega$ is $\d$-irreducible up to degree $s$;  		
		\item $\iota^{-1}(\Omega)$ is weakly $(\d,p)$-irreducible up to degree $s$;
		\item $\iota^{-1}(\Omega)$ is strongly $(\d,p)$-irreducible up to degree $s$.
	\end{enumerate}
\end{thm}

The proof of Theorem \ref{1:irrr} is rather complicated. We postpone the proof of Theorem \ref{1:irrr} to Appendix \ref{1:s:AppC} to prevent the readers from being distracted by technical details.

\begin{rem}
The method we use to prove Theorem \ref{1:irrr} can be adapted to show that all consistent $M$-sets are irreducible in all the three senses. However, we do not need such a general result in this paper.
\end{rem}

\section{Leibman dichotomy}\label{1:ss91}

In this section, we extend Theorem \ref{1:sLei} to all nice and consistent $M$-sets. For convenience, we introduce the following definition:

\begin{defn}[$(\d,K)$-Leibman dichotomy]
Let $d\in\N_{+},s\in\N$, $C,\d,K>0$,   $p$ be a prime, and $\Omega$ be a non-empty $p$-periodic subset of $\Z^{d}$. We say that $\Omega$ admits a \emph{rational (resp. partially periodic) $(\d,K,p)$-Leibman dichotomy up to step $s$ and complexity $C$}  
if for any  $\N$-filtered nilmanifold  $G/\Gamma$  of degree at most $s$ and complexity at most $C$,  and any $g\in \poly(\Z^{d}\to G_{\N})$ which is rational (resp. belongs to $\poly_{p}(\Omega\to G_{\N}\vert\Gamma)$), either $(g(n)\Gamma)_{n\in \Omega}$ is  $\d$-equidistributed on $G/\Gamma$,  
or  there exists a nontrivial type-I horizontal character $\eta$ with $0<\Vert\eta\Vert\leq K$  such that $\eta\circ g \mod\Z$ is a constant on $\Omega$. 

We say that a non-empty  subset $\Omega$  of $\V$ admits a \emph{rational (resp. partially periodic) $(\d,K)$-Leibman dichotomy up to step $s$ and complexity $C$}  if $\iota^{-1}(\Omega)$ admits a rational (resp. partially periodic) $(\d,K,p)$-Leibman dichotomy up to step $s$ and complexity $C$.
\end{defn}	

We remark that for a non-empty subset  $\Omega\subseteq\V$,  an equivalent way of saying that  $\Omega$ admits a  partially periodic $(\d,K)$-Leibman dichotomy up to step $s$ and complexity $C$ is that for any any  $\N$-filtered nilmanifold  $G/\Gamma$  of degree at most $s$ and complexity at most $C$,  and any $g\in \poly(\Omega\to G_{\N})$, either $(g(n)\Gamma)_{n\in \Omega}$ is  $\d$-equidistributed on $G/\Gamma$ (meaning that $(g\circ\tau(n)\Gamma)_{n\in \iota^{-1}(\Omega)}$ is  $\d$-equidistributed on $G/\Gamma$),  
or  there exists a nontrivial type-I horizontal character $\eta$ with $0<\Vert\eta\Vert\leq K$  such that $\eta\circ g \mod\Z$ is a constant on $\Omega$.

We may deduce the following  Leibman dichotomy from the main result of  \cite{GT12b}:   

\begin{thm}[$\V$ admits a Leibman dichotomy]\label{1:Lei0}
	Let $d\in\N_{+},s\in\N$, $C>0$ and $p$ be a prime. There exists   $K=O_{C,d,s}(1)$ such that for all $0<\d<1/2$,  $\V$ admits a rational $(\d,K\d^{-K})$-Leibman dichotomy up to step $s$ and complexity $C$.	
\end{thm}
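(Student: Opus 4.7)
The plan is to derive Theorem~\ref{1:Lei0} from the multi-parameter version of the Green--Tao quantitative equidistribution theorem (\cite{GT12b, GT14}) combined with the periodicity information already available in this paper. First, write $g = \tilde g \circ \tau$ for $\tilde g \in \poly_p(\Z^d \to G_\N|\Gamma)$, which is a genuinely $p$-periodic lift of $g$ (the partial $p$-periodicity on $\Omega = \V$ being the same as full $p$-periodicity since $\iota^{-1}(\V)=\Z^{d}$). Because $\tau\colon \V \to [p]^d$ is a bijection and the Lipschitz functions on $G/\Gamma$ are the same in either picture, $(g(n)\Gamma)_{n \in \V}$ is $\d$-equidistributed on $G/\Gamma$ if and only if $(\tilde g(n)\Gamma)_{n \in [p]^d}$ is.

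Under the non-equidistribution hypothesis, I would apply the $d$-parameter generalization of Theorem 2.9 of \cite{GT12b} (obtained by a standard iteration of the one-parameter statement, cf.\ \cite{GT14}) to produce a non-trivial type-I horizontal character $\eta$ with $0 < \Vert\eta\Vert \leq K_0 \d^{-K_0}$, for some $K_0 = O_{C,d,s}(1)$, together with the smoothness bound $\Vert\eta \circ \tilde g\Vert_{C^{\infty}[p]} \leq K_0 \d^{-K_0}$. Writing the type-I Taylor expansion $\eta \circ \tilde g(n) = \sum_{|i|\leq s} b_i \binom{n}{i}$ with $b_i \in \R$, this says $\Vert b_i\Vert_{\R/\Z} \leq K_0 \d^{-K_0}/p^{|i|}$ for all $1\leq |i|\leq s$. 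Combining with Lemma~\ref{1:goodcoordinates} applied to the $p$-periodic sequence $\tilde g$ (which forces $p^s b_i \in \Z$ for all $|i|\geq 1$), I would write $b_i = \alpha_i + m_i/p^s$ with $\alpha_i, m_i \in \Z$ and $|m_i| \leq K_0 \d^{-K_0} p^{s - |i|}$. Therefore $\eta \circ g(n) \equiv \tfrac{1}{p^s}\sum_{|i|\geq 1} m_i \binom{n}{i} + C' \pmod{\Z}$ on $\V$, for some $C'\in\R$.

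The remaining and main step is to show that, for $p \gg_{C,d,s} \d^{-O_{C,d,s}(1)}$, necessarily $m_i = 0$ for all $|i| \geq 1$, so that this very $\eta$ already makes $\eta \circ g$ constant modulo $\Z$. The argument proceeds by contradiction. Lemma~\ref{1:3.7} guarantees that, in the non-equidistributed case, one can choose $\eta$ so that the exponential sum $|\E_{n \in \V}\exp(2\pi i \eta \circ g(n))|$ is $\gg_{C,d,s} \d^{O_{C,d,s}(1)}$. On the other hand, if some $m_i\neq 0$ with $|i|\geq 1$, the phase $\tfrac{1}{p^s}\sum m_i \binom{n}{i}$ is a non-constant polynomial on $\V$ valued in $\tfrac{1}{p^s}\Z/\Z$, and a Weyl-type exponential sum estimate on $\F_p^d$ should bound the sum by $O(p^{-c})$ for some $c=c(s)>0$. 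The delicate point, and where the argument parallels \cite{CS14}, is that individual $m_i$ may carry factors of $p$, so one cannot apply Weyl directly to $\exp(2\pi i m_i \binom{n}{i}/p^s)$; instead one must first extract the primitive $p$-content. The bound $|m_i| \leq K_0\d^{-K_0} p^{s-|i|}$ is precisely what limits how much $p$-divisibility the top-degree coefficients can absorb, so after dividing out the common $p$-power and applying Weyl differencing inductively on the degree, one obtains the required power saving. This is the main obstacle, but once it is carried out it contradicts the earlier lower bound and forces all $m_i$ to vanish, yielding the desired $(\d, K\d^{-K})$-Leibman dichotomy with $K = O_{C,d,s}(1)$.
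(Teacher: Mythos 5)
There is a genuine gap, and it sits exactly where you flag "the main obstacle." Two concrete problems. First, your contradiction scheme needs a lower bound of the form $\vert\E_{n\in\V}\exp(\eta\circ g(n))\vert\gg_{C,d,s}\d^{O_{C,d,s}(1)}$ for the \emph{horizontal} character $\eta$, and you attribute this to Lemma \ref{1:3.7}. That lemma only produces a Lipschitz function with a nontrivial \emph{vertical} character that correlates with the orbit; for nilmanifolds of step $\geq 2$, non-equidistribution does not give a large exponential sum along a horizontal character (this is precisely why the Green--Tao argument outputs a smoothness bound on $\eta\circ g$ rather than a correlation), so the lower half of your contradiction is not available. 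Second, the upper bound is also not a routine Weyl estimate: a non-constant phase taking values in $\frac{1}{p^{s}}\Z/\Z$ on $[p]^{d}$ can have exponential sum of size $\Theta(1)$ (e.g.\ $n\mapsto n^{2}/p^{2}$ in one variable gives a Fresnel-type Riemann sum that does not decay), so "dividing out the common $p$-power and Weyl differencing" only works after one first proves a structure theorem saying the fractional part of a $p$-periodic polynomial is essentially $\frac{1}{p}\cdot(\text{integer-valued polynomial})$ plus a constant -- i.e.\ the kind of result the paper develops with the $p$-expansion machinery. As written, your bound $\vert m_i\vert\leq K_0\d^{-K_0}p^{s-\vert i\vert}$ coming from one application of Green--Tao at scale $N=p$ is far too weak to force $m_i=0$ by itself (even for $\vert i\vert=s$ it only gives $\vert m_i\vert\leq K_0\d^{-K_0}$). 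A further, more minor point: your conclusion is conditional on $p\gg_{C,d,s}\d^{-O(1)}$, whereas Theorem \ref{1:Lei0} imposes no lower bound on $p$; this could be patched by the trivial $(\d,p^{s})$-dichotomy of Remark \ref{1:dropp}, but you do not do so.

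The paper's proof avoids all of this by exploiting the exact $p$-periodicity more directly: since $g\circ\iota$ is $p$-periodic modulo $\Gamma$, the orbit $(g\circ\iota(n)\Gamma)_{n\in[pN]^{d}}$ fails to be $\d$-equidistributed for \emph{every} $N$, so one applies Theorem 8.6 of \cite{GT12b} at scale $pN$ and lets $N\to\infty$ (the complexity bound on $\eta$ is uniform in $N$, so a single $\eta$ can be fixed along a subsequence). The smoothness bound $\Vert\eta\circ g\circ\iota\Vert_{C^{\infty}[pN]^{d}}\ll_{C,d,s}\d^{-O_{C,d,s}(1)}$ then forces all nonconstant binomial coefficients of $\eta\circ g\circ\iota$ to lie in $\Z$ exactly, with no exponential-sum input, no use of Lemma \ref{1:goodcoordinates}, and no restriction $p\gg\d^{-O(1)}$. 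If you want to salvage your route, you would need to replace the appeal to Lemma \ref{1:3.7} and the bare Weyl step by this scale-$pN$ limiting argument (or by the full structure theory of $p$-periodic polynomials as in \cite{CS14} and Section \ref{1:s5}).
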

\begin{proof}
Let $\d>0$, $k\in\N_{+}$ and $g\in \poly(\Z^{d}\to G_{\N})$ be %$Q$-rational for some $Q\in\N_{+}$. By {\color ???}, $g$ is $Q'$-periodic for some $Q'\in\N_{+}$ dividing $Q$. 
rational. By Corollary \ref{1:r222p}, $g$ is $Q$-periodic for some $Q\in\N_{+}$. 
Suppose that the sequence $(g(n)\Gamma)_{n\in \Z^{d}}$
 is not $\d$-equidistributed on $G/\Gamma$. Since $g$ is $Q$-periodic, we have that  $(g(n)\Gamma)_{n\in [QN]^{d}}$ is not $\d$-equidistributed on $G/\Gamma$ for all $N\in\N_{+}$. By Theorem 8.6 of \cite{GT12b}, there exists a nontrivial type-I horizontal character $\eta$ with $0<\Vert\eta\Vert\leq O_{C,d,s}(\d^{-O_{C,d,s}(1)})$  such that $\Vert\eta\circ g\Vert_{C^{\infty}[QN]^{d}}\ll_{C,d,s} \d^{-O_{C,d,s}(1)}$ (see Definition 8.2 of \cite{GT12b} for the definition of the smoothness norm $\Vert\cdot\Vert_{C^{\infty}[N]^{d}}$).   Letting $N\to\infty$, we have that  $\Vert\eta\circ g\Vert_{C^{\infty}[QN]^{d}}=0$ if $N$ is sufficiently large and thus  $\eta\circ g\mod \Z$ is a constant on $[QN]^{d}$. Since    $g$ is $Q$-periodic, this implies that $\eta\circ g\mod \Z$ is a constant on $\Z^{d}$.
 \end{proof}

We may also rephrase  Theorem \ref{1:sLei} using the Leibman dichotomy as below:

 \begin{thm}[$V(M)$ admits a Leibman dichotomy]\label{1:rLei}
 	Let $d\in\N_{+},r,s\in\N$, $C>0$ and $p$ be a prime. There exists   $K=O_{C,d}(1)$ such that for any affine subspace $V+c$  of $\V$  of co-dimension $r$, any  quadratic form $M\colon\V\to\F_{p}$ with $\rank(M\vert_{V+c})\geq s+13$,  and any $0<\d<1/2$, 
 if $p\gg_{C,d} \d^{-O_{C,d}(1)}$, 
 	then $V(M)\cap (V+c)$ admits a rational $(\d,K\d^{-K})$-Leibman dichotomy up to step $s$ and complexity $C$.\footnote{It follows from Corollary \ref{1:counting01} that $V(M)\cap (V+c)$ is non-empty.}	
 \end{thm}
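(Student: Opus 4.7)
The plan is to deduce Theorem \ref{1:rLei} as a direct reformulation of Theorem \ref{1:sLei}, which was proved in Section \ref{1:s:pp9}. The only bookkeeping issue is to reconcile the two different ways the complexity of the nilmanifold is tracked: Theorem \ref{1:sLei} demands that the Mal'cev basis be $\frac{1}{\d}$-rational and expresses bounds in terms of $d-r$ and the dimension $m$, whereas Theorem \ref{1:rLei} only asks that the nilmanifold have complexity at most $C$ (meaning both $\dim(G)\leq C$ and the basis is $C$-rational) and expresses bounds in terms of $C$ and $d-r$.

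First I would fix a nilmanifold $G/\Gamma$ of degree at most $s$ and complexity at most $C$, together with a sequence $g\in\poly_p(V(M)\cap(V+c)\to G_\N\vert\Gamma)$. Set $\d^\ast:=\min(\d,1/C)$. By the definition of complexity, the Mal'cev basis is $C$-rational; since $C\leq 1/\d^\ast$, it is then also $1/\d^\ast$-rational. Also $m:=\dim(G)\leq C$, so the dimension parameter appearing in Theorem \ref{1:sLei} is controlled by $C$.

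Next, suppose $(g(n)\Gamma)_{n\in V(M)\cap(V+c)}$ is not $\d$-equidistributed on $G/\Gamma$. Since $\d^\ast\leq\d$, the failure of $\d$-equidistribution witnessed by some Lipschitz function $F$ (i.e.\ an error exceeding $\d\Vert F\Vert_{\Lip}$) automatically exceeds $\d^\ast\Vert F\Vert_{\Lip}$, so the sequence is also not $\d^\ast$-equidistributed. Provided $p\gg_{d-r,m}(\d^\ast)^{-O_{d-r,m}(1)}$, Theorem \ref{1:sLei} applied with parameter $\d^\ast$ produces a nontrivial type-I horizontal character $\eta$ with $0<\Vert\eta\Vert\ll_{d-r,m}(\d^\ast)^{-O_{d-r,m}(1)}$ such that $\eta\circ g\bmod\Z$ is constant on $V(M)\cap(V+c)$.

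Finally, using $m\leq C$ together with $(\d^\ast)^{-1}=\max(\d^{-1},C)$, every bound of the form $O_{d-r,m}(\cdot)$ expressed in terms of $\d^\ast$ collapses to a bound of the form $O_{C,d-r}(\cdot)$ expressed in terms of $\d$, since the extra factor of $C$ can be absorbed into a constant depending only on $C$ and $d-r$. Hence there exists $K=O_{C,d-r}(1)$, independent of $\d$ and $p$, such that $\Vert\eta\Vert\leq K\d^{-K}$, and the hypothesis on $p$ becomes $p\gg_{C,d-r}\d^{-O_{C,d-r}(1)}$. This establishes precisely the $(\d,K\d^{-K})$-Leibman dichotomy asserted in Theorem \ref{1:rLei}. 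There is no genuine obstacle beyond this rescaling of parameters; all the mathematical content is already contained in Theorem \ref{1:sLei}.
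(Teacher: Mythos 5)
Your proposal is correct and matches the paper's intent exactly: the paper presents Theorem \ref{1:rLei} as a rephrasing of Theorem \ref{1:sLei} in the language of the Leibman dichotomy, and your argument supplies precisely the parameter bookkeeping (replacing $\d$ by $\d^{\ast}=\min(\d,1/C)$ so that a $C$-rational basis is $1/\d^{\ast}$-rational, and absorbing $m\leq C$ and the factor $C$ into constants of the form $O_{C,d-r}(1)$) that this rephrasing implicitly requires. No further comment is needed.
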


 \begin{rem}
 By Lemma \ref{1:oehfr}, it is not hard to see that if $\Omega$ admits a rational  $(\d,K,p)$-(or $(\d,K)$-) Leibman dichotomy up to step $s$ and complexity $C$, then  $\Omega$  also admits a  partially periodic $(\d,K,p)$-(or $(\d,K)$-) Leibman dichotomy up to step $s$ and complexity $O(C)$. So Theorems \ref{1:Lei0} and \ref{1:rLei} also hold for partially periodic Leibman dichotomies.
 \end{rem}

 We now turn to sets which admits partially periodic Leibman dichotomies.
 The following lemma is a straightforward consequence of the definition of the  Leibman dichotomy and Proposition \ref{1:BB}. Its proof is left to the interested readers.

 \begin{lem}\label{1:rrgg}
 	Let $d\in\N_{+},s\in\N$, $\d,K>0$, $p$ be a prime, $\Omega\subseteq \V$ be non-empty, $v\in\F_{p}^{d}$ and $L\colon\F_{p}^{d}\to\F_{p}^{d}$ be a bijective linear transformation. If $\Omega$  admits a partially periodic $(\d,K)$-Leibman dichotomy up to step $s$ and complexity $C$,
 	 then so does $L(\Omega)+v$.
 \end{lem}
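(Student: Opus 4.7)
The plan is to reduce the statement to the Leibman dichotomy hypothesis on $\Omega$ by transporting any candidate counterexample polynomial sequence on $L(\Omega)+v$ back to one on $\Omega$ via the affine bijection $n\mapsto L(n)+v$. The main tool is Proposition \ref{1:BB}, which asserts precisely that partial $p$-periodicity is preserved (up to $\Gamma$) under translations and linear pullbacks.

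First, suppose $g\in\poly_{p}(L(\Omega)+v\to G_{\N}\vert\Gamma)$ is such that $(g(m)\Gamma)_{m\in L(\Omega)+v}$ is not $\d$-equidistributed on $G/\Gamma$. I would apply Proposition \ref{1:BB}(i) to $g$ with shift $h=v$ (starting from the set $L(\Omega)+v$) to produce $g_{1}\in\poly_{p}(L(\Omega)\to G_{\N}\vert\Gamma)$ with $g(n+v)\Gamma=g_{1}(n)\Gamma$ for all $n\in L(\Omega)$. Then I would apply Proposition \ref{1:BB}(ii) with the bijective linear transformation $L$ to $g_{1}$, obtaining $g'\in\poly_{p}(L^{-1}(L(\Omega))\to G_{\N}\vert\Gamma)=\poly_{p}(\Omega\to G_{\N}\vert\Gamma)$ such that $g_{1}(L(n))\Gamma=g'(n)\Gamma$ for all $n\in\Omega$. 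Composing, $g'(n)\Gamma=g(L(n)+v)\Gamma$ for all $n\in\Omega$.

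Next, because $n\mapsto L(n)+v$ is a bijection from $\Omega$ onto $L(\Omega)+v$, for every Lipschitz $F\colon G/\Gamma\to\C$ we have
$$\E_{n\in\Omega}F(g'(n)\Gamma)=\E_{m\in L(\Omega)+v}F(g(m)\Gamma),$$
so the failure of $\d$-equidistribution for $(g(m)\Gamma)_{m\in L(\Omega)+v}$ transfers to the failure of $\d$-equidistribution for $(g'(n)\Gamma)_{n\in\Omega}$. Invoking the hypothesis that $\Omega$ admits a $(\d,K)$-Leibman dichotomy up to step $s$ and complexity $C$, I would extract a nontrivial type-I horizontal character $\eta$ on $G/\Gamma$ with $0<\Vert\eta\Vert\leq K$ such that $\eta\circ g'\mod\Z$ is a constant $c$ on $\Omega$.

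Finally, since $\eta(\Gamma)\subseteq\Z$, the identity $g'(n)\Gamma=g(L(n)+v)\Gamma$ yields $\eta\circ g(L(n)+v)\equiv \eta\circ g'(n)\equiv c\mod\Z$ for every $n\in\Omega$. Using that $L(\cdot)+v$ maps $\Omega$ bijectively onto $L(\Omega)+v$, this gives $\eta\circ g\equiv c\mod\Z$ on $L(\Omega)+v$, with the same complexity bound $\Vert\eta\Vert\leq K$. There is no real obstacle here: the entire argument is bookkeeping on top of Proposition \ref{1:BB}. The only point requiring minor care is checking that the two applications of Proposition \ref{1:BB} can be chained, i.e.\ that the intermediate sequence $g_{1}$ is itself partially $p$-periodic on the appropriate set, which is exactly the content of part (i) of that proposition.
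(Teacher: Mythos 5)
Your proof is correct and follows exactly the route the paper intends: the paper omits the argument, stating only that the lemma is a straightforward consequence of the definition of the Leibman dichotomy and Proposition \ref{1:BB}, and your write-up simply carries out that reduction (pull back $g$ along $n\mapsto L(n)+v$ via parts (i) and (ii) of Proposition \ref{1:BB}, transfer the non-equidistribution through the bijection, and push the resulting horizontal character back, using $\eta(\Gamma)\subseteq\Z$). No gaps.
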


We are now ready to state the following partial generalization of  Theorem \ref{1:sLei} for nice and consistent  $M$-sets:

\begin{thm}[Nice and consistent  $M$-sets admit  Leibman dichotomies]\label{1:veryr}
	Let $d,k\in\N_{+},s,r\in\N$ with $d\geq \max\{4r+1,4k+3,2k+s+11\}$, $C>0$ and $p$ be a prime. 
	There exists  $K:=O_{C,d}(1)$ such that for any  non-degenerate quadratic form $M\colon\V\to \F_{p}$, any nice and consistent $M$-set $\Omega\subseteq (\V)^{k}$ of total co-dimension $r$, and any $0<\d<1/2$,  if $p\gg_{C,d} \d^{-O_{C,d}(1)}$, 
	then  $\Omega$ admits a partially periodic $(\d,K\d^{-K})$-Leibman dichotomy up to step $s$ and complexity $C$.\footnote{It follows from Theorem \ref{1:ct} that $\Omega$ is non-empty.}	
\end{thm}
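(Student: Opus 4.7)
The plan is to prove Theorem \ref{1:veryr} by induction on $k$, using Theorem \ref{1:rLei} as the base case and the Fubini-type theorem for $M$-sets (Theorem \ref{1:ct}) together with the irreducibility theorem (Theorem \ref{1:irrr}) to transfer slice-level information to the whole of $\Omega$. Using Lemma \ref{1:rrgg} and the niceness of $\Omega$, I will first reduce (via a bijective $d$-integral linear change of coordinates plus a translation) to the case where $\Omega$ admits a nice standard $M$-representation $(F_{i,j}\colon 1\le i\le k,\,1\le j\le r_i)$ with $F_{i,j}(n_1,\ldots,n_i)=\sum_{\ell=1}^{i}b_{i,j,\ell}(n_iA)\cdot n_\ell+u_{i,j}$, non-constant in $n_i$. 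For the base case $k=1$, consistency forces the family to impose at most one quadratic condition, so $\Omega$ is either $\V$ or a shifted copy of $V(M)$; since the hypothesis $d\ge 2k+s+11$ gives $d\ge s+13$, the conclusion follows from Theorem \ref{1:Lei0} or Theorem \ref{1:rLei}.

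For the inductive step, let $\Omega'\subseteq(\V)^{k-1}$ be the projection of $\Omega$ to the first $k-1$ coordinates; this is a nice consistent $M$-set of total co-dimension $r-r_k$. For each $n'\in\Omega'$, the slice $\Omega_{n'}=\{n_k:(n',n_k)\in\Omega\}$ has the form $V(\tilde M_{n'})\cap(V_{n'}+c_{n'})$ for an affine subspace $V_{n'}+c_{n'}$ of bounded codimension, with $\tilde M_{n'}$ either trivial or a shifted copy of $M$; the dimension hypothesis guarantees $\rank(\tilde M_{n'}|_{V_{n'}+c_{n'}})\ge s+13$ whenever $\tilde M_{n'}$ is nontrivial. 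Now if $g\in\poly_p(\Omega\to G_\N\vert\Gamma)$ is not $\delta$-equidistributed on $G/\Gamma$, then Theorem \ref{1:ct} together with the Pigeonhole Principle yields a subset $J\subseteq\Omega'$ of density $\gg_{C,d}\delta^{O_{C,d}(1)}$ such that $g(n',\cdot)|_{\Omega_{n'}}$ fails to be $\delta^{O_{C,d}(1)}$-equidistributed for each $n'\in J$. Theorem \ref{1:rLei} (or Theorem \ref{1:Lei0} in the affine-slice case) produces, for each such $n'$, a nontrivial type-I horizontal character $\eta_{n'}$ of complexity $\ll_{C,d}\delta^{-O_{C,d}(1)}$ with $\eta_{n'}\circ g(n',\cdot)\equiv c_{n'}\pmod\Z$ on $\Omega_{n'}$. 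A second Pigeonhole over the finite set of horizontal characters of bounded complexity then gives a single $\eta$ and a subset $J'\subseteq J$ of density $\gg_{C,d}\delta^{O_{C,d}(1)}$ on which $\eta_{n'}=\eta$.

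To descend to all of $\Omega$, I consider the fibered product $\widetilde\Omega:=\{(n',n_k,m_k)\in(\V)^{k+1}:(n',n_k),(n',m_k)\in\Omega\}$, which is itself a nice consistent $M$-set (in $k+1$ variables) whose dimension parameters satisfy the hypotheses of Theorem \ref{1:irrr} by virtue of $d\ge 4r+1$. The polynomial identity $\eta\circ g(n',n_k)-\eta\circ g(n',m_k)\in\Z$, verified on a positive density of $\widetilde\Omega$ in the previous step, extends by Theorem \ref{1:irrr} to all of $\widetilde\Omega$. Consequently there is a well-defined $\phi:\Omega'\to\R/\Z$ with $\eta\circ g(n',n_k)\equiv\phi(n')\pmod\Z$ on $\Omega$; picking a polynomial section $n'\mapsto n_k^\ast(n')$ of $\Omega\to\Omega'$ (which the nice standard form provides and which can be chosen partially $p$-periodic), I realize $\phi$ as an element of $\poly_p(\Omega'\to\R_\N\vert\Z)$ of degree at most $s$. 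Applying the inductive hypothesis to $\phi$ on $\Omega'$ with target nilmanifold $\R/\Z$: if $\phi$ is not $\delta^{O_{C,d}(1)}$-equidistributed on $\R/\Z$, there is an integer $m$ with $0<|m|\ll_{C,d}\delta^{-O_{C,d}(1)}$ and $m\phi\equiv c\pmod\Z$ on $\Omega'$, whence $(m\eta)\circ g\equiv c\pmod\Z$ on $\Omega$ and $m\eta$ is the required horizontal character. Otherwise, $\phi$ is equidistributed on $\R/\Z$, which is the main obstacle described below.

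The hard case is the one where $\phi$ is equidistributed on $\R/\Z$: here I cannot immediately extract a horizontal character on $G/\Gamma$, and must instead factor $g=\tilde\gamma\cdot g'$ with $\tilde\gamma(n):=\psi^{-1}(\phi(n'),0,\ldots,0)$ encoding $\phi$ and $g'$ taking values in the codimension-one normal subgroup $H:=\eta^{-1}(\Z)\le G$, exactly as in Proposition \ref{1:specialfactorization}. The sub-nilmanifold $H/(H\cap\Gamma)$ has strictly smaller nonlinear degree $m_\ast$ than $G/\Gamma$, and a secondary induction on $m_\ast$ (modeled on Section \ref{1:s5.4} of the proof of Theorem \ref{1:sLei}) closes the loop. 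The main technical obstacles will be (i) executing this factorization so that $g'$ genuinely lies in $\poly_p(\Omega\to H_\N\vert H\cap\Gamma)$ (the partial $p$-periodicity must be preserved on the general nice consistent $M$-set $\Omega$, not just on $V(M)$ as in Proposition \ref{1:specialfactorization}), (ii) confirming that the fibered product $\widetilde\Omega$ is genuinely nice and consistent with the right dimension bounds for Theorem \ref{1:irrr}, and (iii) verifying that the dimension hypothesis $d\ge\max\{4r+1,4k+3,2k+s+11\}$ simultaneously supports the quantitative inputs from Theorems \ref{1:ct}, \ref{1:irrr}, and \ref{1:rLei} throughout the induction.
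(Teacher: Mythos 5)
Your opening moves (slicing off the last coordinate via Theorem \ref{1:ct}, pigeonholing to a single character $\eta$, and using strong irreducibility of the fibered set $\widetilde\Omega$ to get $\eta\circ g(n',n_k)\equiv\eta\circ g(n',m_k)\bmod\Z$ on all of $\widetilde\Omega$) are sound and parallel to what the paper does. The argument breaks at the step where you ``realize $\phi$ as an element of $\poly_{p}(\Omega'\to\R_{\N}\vert\Z)$'' by choosing a polynomial, partially $p$-periodic section $n'\mapsto n_k^{\ast}(n')$ of $\Omega\to\Omega'$. No such section exists in general: the fibre over $n'$ is cut out by one equation quadratic in $n_k$ together with linear equations whose solution requires dividing by determinants in $n'$ (compare the parametrizations $\phi_{k,I,\n'}$ in Appendix \ref{1:s:AppC}, which carry factors $1/\Delta_{k,I}(\n')$), and selecting a point of a varying quadric is not a polynomial operation. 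So $\phi$ is merely a function on $\Omega'$, not a polynomial sequence, and the inductive hypothesis (a statement about $\poly_{p}(\Omega'\to G_{\N}\vert\Gamma)$) cannot be invoked for it; even with a higher-degree algebraic section the degree bound $\deg\phi\le s$ would be lost. The second gap is your ``hard case'': when $\phi$ is equidistributed you propose to factor $g$ through $\ker(\eta)$ and run a secondary induction on the nonlinear degree modeled on Section \ref{1:s5.4}. But that machinery (Proposition \ref{1:specialfactorization}, and above all the solution of equations like (\ref{1:force1}) via Propositions \ref{1:packforce0} and \ref{1:att30}) is established only over $V(M)$ and its Gowers sets, not over general nice consistent $M$-sets; carrying it out on $\Omega$ is essentially a stronger statement than the theorem you are proving, so this branch is not a ``technical obstacle'' but an unproved major component.

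For contrast, the paper never reduces the number of variables and never recurses into the nilmanifold. It slices $\Omega$ in $k$ different directions using the shear maps $\phi_{v}(\x,n)=(\x+v\otimes n,n)$ with $v$ ranging over a box of side $N=\lceil p^{\alpha}\rceil$, applies Theorem \ref{1:rLei} (or \ref{1:Lei0}) to each one-dimensional slice, and pigeonholes to a single character $\eta$ together with $v_{1},\dots,v_{k}$ for which $(1,v_{1}),\dots,(1,v_{k})$ are linearly independent. Combining these via Theorem \ref{1:irrr} applied to the fibered sets $\Omega'_{i}$ yields the invariance (\ref{1:fff}) of $\eta\circ g\bmod\Z$ under changing \emph{any single} coordinate within $\Omega$, and constancy on $\Omega$ then follows from a purely combinatorial chain argument (Step 3, changing one coordinate at a time through an auxiliary point $(z_{1},\dots,z_{k})$). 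If you want to salvage your approach, the missing ingredient is precisely a way to upgrade invariance in one coordinate direction to invariance in all directions without descending to $\Omega'$; the multi-direction shear trick is the paper's substitute for your section-plus-induction step.
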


It is an interesting question to ask if Theorem \ref{1:veryr} holds for rational Leibman dichotomies. However, we do not need such a strong statement in later parts of the series.

The proof of Theorem \ref{1:veryr} is rather complicated. We postpone the proof of Theorem \ref{1:veryr} to Appendix \ref{1:s:AppD} to prevent the readers from being distracted by technical details.

% need rework

 %\begin{rem}\label{1:dropp}
 %Let $\eta$ be any nontrivial type-I horizontal character of complexity 1.
  %  By Lemma \ref{1:goodcoordinates}, for any $g\in\poly_{p}(\Omega\to G_{\N}\vert\Gamma)$, $p^{s}\eta\circ g\mod \Z$ is a constant. So every non-empty set $\Omega\subseteq \V$ admits a $(\d,p^{s})$-Leibman dichotomy for any $\d$. Therefore, it is not hard to see that the restriction that $p\gg \d^{-O(1)}$ can be dropped in Theorems \ref{1:sLei00}, \ref{1:sLei} as well as in all the results in Section   \ref{1:ss91}.
%\end{rem}

\section{Factorization theorems}\label{1:ss92}

With all the preparations in the previous sections,
we are now ready to derive the factorization theorem for partially $p$-periodic polynomial sequences $g$ on a set $\Omega$ admitting a Leibman dichotomy. We show that $g$ can be written as the product of a small constant term $\e$, a  polynomial sequence $g'$ which is sufficiently well equidistributed on a subnilmanifold, and a polynomial sequence  $\gamma$ which has a small period on the set $\Omega$. 

We provide two sets of factorization results. The first set is provided in Section \ref{1:secf1} and is designed for \cite{SunD}, which deals with partially periodic polynomial sequences for a large family of nice and consistent $M$-sets $\Omega$. The second set is provided in Section \ref{1:secf2} and is designed for \cite{SunC}, which deals with rational polynomial sequences but for a more restricted family of sets $\Omega$.

\subsection{Factorization theorems of partially periodic sequences}\label{1:secf1}

To begin with, we prove a weak factorization theorem for partially periodic polynomial sequences, where $g'$ is only required to take values on a  subnilmanifold.
 For a nilmanifold $G/\Gamma$ with filtration $(G_{i})_{i\in \N}$, the \emph{total dimension} of $G/\Gamma$ is the quantity $\sum_{i\in \N}\dim(G_{i})$.
 
\begin{prop}[Weak factorization property I]\label{1:facf2}
	Let $d,s\in\N_{+}$, $C, \d, K>0$  and $p\gg_{C,d,K,s} 1$ be a prime. 	Let $\Omega\subseteq \V$ be a non-empty set admitting a partially periodic $(\d,K)$-Leibman dichotomy up to degree $s$ and complexity $C$.  Let $G/\Gamma$ be an $s$-step $\N$-filtered nilmanifold of complexity at most $C$, equipped with a $C$-rational Mal'cev basis $\mathcal{X}$, and let 	$g\in \poly_{p}(\iota^{-1}(\Omega)\to G_{\N}\vert\Gamma)$. If  $(g(n)\Gamma)_{n\in \iota^{-1}(\Omega)}$ is not $\d$-equidistributed on $G/\Gamma$, 
	then we may factorize $g$ as $$g(n)=\e g'(n)\gamma(n)  \text{ for all }  n\in \Z^{d},$$  where $\e\in G$ of complexity at most 1,
	$g'\in \poly_{p}(\iota^{-1}(\Omega)\to G'_{\N}\vert\Gamma')$, $g'(\bold{0})=id_{G}$ for some sub-nilmanifold $G'/\Gamma'$ of $G/\Gamma$ whose Mal'cev basis is $O_{C,d}(K)$-rational relative to $\mathcal{X}$ and whose total dimension is smaller than that of $G/\Gamma$, and   $\gamma\in\poly(\iota^{-1}(\Omega)\to G_{\N}\vert\Gamma)$ with $\gamma(\bold{0})=id_{G}$.
\end{prop}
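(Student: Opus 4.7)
The plan is to apply the Leibman dichotomy to extract a nontrivial type-I horizontal character $\eta$ which controls the non-equidistribution of $g$, construct its kernel as the target sub-nilmanifold $G'$, and then factorise $g$ explicitly along the $\eta$-direction.

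Since $(g(n)\Gamma)_{n\in\Omega}$ fails to be $\d$-equidistributed and $\Omega$ admits a $(\d,K)$-Leibman dichotomy up to degree $s$ and complexity $C$, I first obtain a nontrivial type-I horizontal character $\eta\colon G\to\R$ of complexity $0<\Vert\eta\Vert\leq K$ with $\eta\circ g\vert_{\Omega}\equiv c\mod\Z$ for some $c\in[0,1)$. Set $G':=\ker(\eta)$: this is a closed connected normal Lie subgroup of $G$ of codimension one, and since $\eta$ annihilates $[G,G]$, it contains $G_{i}$ for every $i\geq 2$. Equip $G'$ with the induced filtration $G'_{i}:=G_{i}\cap G'$ and lattice $\Gamma':=G'\cap\Gamma$; the resulting sub-nilmanifold $G'/\Gamma'$ has strictly smaller total dimension than $G/\Gamma$, and because $\Vert\eta\Vert\leq K$, the standard change-of-basis procedure determined by the coordinates of $\eta$ (cf.\ Lemma~A.11 of \cite{GT12b}) produces a Mal'cev basis for $G'$ adapted to $G'_{\N}$ that is $O_{C,d}(K)$-rational relative to $\mathcal{X}$.

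For the factorisation, write $\eta(x)=k\cdot\psi(x)$ with $k\in\Z^{m}$, $0<|k|\leq K$; pick an index $j$ with $k_{j}\neq 0$ and set $\psi(\e):=(c/k_{j})e_{j}$, so $\e\in G$ has complexity at most $1$ and $\eta(\e)=c$. The heart of the argument is then to construct $\gamma\in\poly(\Omega\to G_{\N}\vert\Gamma)$ with $\gamma(\bold 0)=id_{G}$ such that $\eta\circ\gamma=\eta\circ g-c$ holds identically on $\Z^{d}$. Once $\gamma$ is in hand, the sequence $g'(n):=\e^{-1}g(n)\gamma(n)^{-1}$ satisfies $\eta\circ g'\equiv 0$, so $g'$ takes values in $G'$; by Corollary~B.4 of \cite{GTZ12} it lies in $\poly(\Z^{d}\to G'_{\N})$, and the identity $g'(n+pm)^{-1}g'(n)\in\Gamma\cap G'=\Gamma'$ on $\Omega+p\Z^{d}$ follows from the partial $p$-periodicity of $g$ together with the $\Gamma$-valuedness of $\gamma$ on that set, placing $g'\in\poly_{p}(\Omega\to G'_{\N}\vert\Gamma')$.

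The main obstacle is the construction of $\gamma$. A direct appeal to Proposition~\ref{1:normalizegamma} with $s'=0$ is not quite available, since it requires $\eta\circ g$ to be a globally integer-valued polynomial plus a constant, whereas here $\eta\circ g-c$ is only integer-valued on $\Omega+p\Z^{d}$ (with denominators up to $p^{s}$ on $\Z^{d}$ by Lemma~\ref{1:goodcoordinates}). The plan to resolve this is to work at the level of the type-I Taylor coefficients $g_{i}\in G_{\vert i\vert}$ of $g$ (Lemma~\ref{1:B.9}): using the estimate $p^{s}\eta(g_{i})\in\Z$ from Lemma~\ref{1:goodcoordinates}, one selects $\gamma_{i}\in G_{\vert i\vert}$ with $\eta(\gamma_{i})=\eta(g_{i})$, defines $\gamma(n):=\prod_{i,\,\vert i\vert\geq 1}\gamma_{i}^{\binom{n}{i}}$, and then verifies via a Baker--Campbell--Hausdorff computation that $\gamma\in\poly(\Z^{d}\to G_{\N})$ and that $\gamma(n)\in\Gamma$ for $n\in\Omega+p\Z^{d}$ (the latter exploiting the partial $p$-periodicity of $g$ in an essential way). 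A final bookkeeping step, absorbing a constant element of $\Gamma$ to reconcile the two base-point normalisations $g'(\bold 0)=\gamma(\bold 0)=id_{G}$ with the requirement that $\e$ have complexity at most $1$, completes the factorisation.
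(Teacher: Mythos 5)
Your skeleton is the same as the paper's (Leibman dichotomy gives a nontrivial type-I horizontal character $\eta$ with $\eta\circ g$ constant mod $\Z$ on $\Omega$; take $G'$ to be the connected component of $\ker\eta$ with the induced filtration and lattice; peel off a polynomial sequence $\gamma$ in the $\eta$-direction), but the step you yourself identify as the heart of the argument — the construction of $\gamma\in\poly(\Omega\to G_{\N}\vert\Gamma)$ with $\eta\circ\gamma=\eta\circ g-c$ — is not actually carried out, and the route you sketch cannot work as stated. If the $\Gamma$-valuedness of $\gamma(n)=\prod_{\vert i\vert\geq 1}\gamma_{i}^{\binom{n}{i}}$ is to be automatic, you must choose the Taylor coefficients $\gamma_{i}$ in $\Gamma\cap G_{\vert i\vert}$ (this is what Proposition \ref{1:normalizegamma} does); but then $\eta(\gamma_{i})\in\Z$, whereas Lemma \ref{1:goodcoordinates} only gives $p^{s}\eta(g_{i})\in\Z$, and for genuinely partially $p$-periodic $g$ the denominators do occur (e.g.\ $G/\Gamma=\R/\Z$ with $G_{0}=G_{1}=G_{2}=\R$ and $g=\tilde{M}\circ\tau$ a regular lifting of the quadratic form: its quadratic Taylor coefficients lie in $\frac{1}{p}\Z\setminus\Z$), so exact matching $\eta(\gamma_{i})=\eta(g_{i})$ with lattice coefficients is impossible. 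If instead you allow arbitrary $\gamma_{i}\in G_{\vert i\vert}$ with only the $\eta$-value prescribed, the remaining Mal'cev coordinates of $\gamma_{i}$ are unconstrained, and there is no reason that the product lies in $\Gamma$ at the points of $\tau(\Omega)+p\Z^{d}$; arranging precisely that is the content of the proposition, so invoking "a Baker--Campbell--Hausdorff computation exploiting partial periodicity" begs the question. The paper's proof exists exactly to bridge this: it first multiplies $\tilde{g}$ by $\gamma_{0}(n)=\prod_{i}\exp\bigl(Q^{\ast}u_{i}\,\eta(\tilde{g}(n))X_{i}\bigr)$, where $u\in\Z^{m}$ is supported on the Mal'cev directions at the level $s'$ at which $k$ lives (so that the degree-$\leq s'$ exponent keeps $\gamma_{0}\in\poly(\Z^{d}\to G_{\N})$), $k\cdot u=Q$ and $Q^{\ast}Q\equiv 1\bmod p^{s}\Z$; this makes $\eta\circ(\tilde{g}\gamma_{0}^{-1})=(1-Q^{\ast}Q)\,\eta\circ\tilde{g}$ honestly integer-valued, and only then is the coefficient-matching step (Proposition \ref{1:normalizegamma}, which needs that integrality hypothesis together with $p\gg 1$) applicable and produces a $\Gamma$-valued $\gamma'$; the dichotomy/partial periodicity enters through the $\Gamma$-valuedness of $\gamma_{0}$ on $\tau(\Omega)+p\Z^{d}$. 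Your proposal has no analogue of this preconditioning, and without it the construction of $\gamma$ is a genuine gap.

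Two secondary points. The assertion that $\ker\eta$ contains $G_{i}$ for all $i\geq 2$ because $\eta$ annihilates $[G,G]$ is false in general (a filtration need not satisfy $G_{2}\subseteq[G,G]$; e.g.\ abelian $G$ with $G_{0}=G_{1}=G_{2}=G$), though you do not seem to rely on it. More seriously, the endgame with $\e$ is not mere bookkeeping: with $\psi(\e)=(c/k_{j})e_{j}$ one is forced into $g'(\bold{0})=\e^{-1}g(\bold{0})$, which is in general neither $id_{G}$, nor an element of $\Gamma$, nor of bounded complexity; repairing this by conjugation or by pushing a constant into $\gamma$ threatens either the $\Gamma$-valuedness of $\gamma$ on $\Omega$ or the complexity bound on $\e$. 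The paper avoids this by performing the base-point normalization $g\mapsto\{g(\bold{0})\}^{-1}g[g(\bold{0})]^{-1}$ \emph{before} invoking the dichotomy, extracting $\e=\{g(\bold{0})\}$ once at the start rather than at the end.
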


\begin{proof}
The proof uses ideas from Proposition 9.2 of Green and Tao's work  \cite{GT12b} and the work of Candela and Sisask \cite{CS14}. In our case, the constructions of the sequences $g'$ and $\gamma$ are more intricate, as we need them to satisfy certain partial periodic conditions.

Throughout the proof we assume that $p\gg_{C,d,K,s} 1$.
Denote $m_{i}:=\dim(G_{i})$ for $i\in\N$ and $m:=\dim(G)$. 
Let $\mathcal{X}=\{X_{1},\dots,X_{m}\}$ be the Mal'cev basis of $G/\Gamma$.
Write $g:=\{g(\bold{0})\}g_{\ast}[g(\bold{0})]$. Then $g_{\ast}\in \poly_{p}(\iota^{-1}(\Omega)\to G_{\N}\vert\Gamma)$. So replacing $g$ by $g_{\ast}$ if necessary,  
	we may assume without loss of generality that $g(\bold{0})=id_{G}$.

	If $(g(n)\Gamma)_{n\in \iota^{-1}(\Omega)}$ is not $\d$-equidistributed on $G/\Gamma$, then since $\Omega$ admits a partially periodic $(\d,K)$-Leibman dichotomy,  there exists a nontrivial  type-I horizontal character $\eta$ with $0<\Vert\eta\Vert\leq K$  such that $\eta\circ g \mod \Z$  is a constant on $\iota^{-1}(\Omega)$. 
		We may assume that $g=\tilde{g}\circ\tau$, for some $\tilde{g}\in\poly_{p}(\iota^{-1}(\Omega)\to G_{\N}\vert\Gamma)$. Since $g(\bold{0})=id_{G}$, %we have that $\tilde{g}(\bold{0})=id_{G}$. So 
		we may use the type-I Taylor expansion to write $g$ as %$\tilde{g}$ as
	$$g(n)=\prod_{i\in\N^{d},0<\vert i\vert\leq s}g_{i}^{\binom{n}{i}}  \text{ for all }  n\in\Z^{d}$$
	for some $g_{i}\in G_{\vert i\vert}$. 		
%	Since $\eta\circ g \mod \Z$  is a constant on $\Omega$, $\eta\circ \tilde{g} \mod \Z$  is a constant on $\tau(\Omega)$. 
	%
	Let $G'$ be the connected component of $\ker(\eta)$. Then $G'$ is a subgroup of $G$ which is $O_{C,d}(K)$-rational relative to $\mathcal{X}$. Write
	$$\psi(g(n))=\sum_{i\in\N^{d},0<\vert i\vert\leq s}t_{i}\binom{n}{i}$$
	for some $t_{i}\in\{0\}^{m-m_{\vert i\vert}}\times\R^{m_{\vert i\vert}}$.  
	The type-I horizontal character $\eta$ is given in coordinates by
	$$\eta\circ g(n)=\sum_{i\in\N^{d},0<\vert i\vert\leq s}k\cdot t_{i}\binom{n}{i}$$
	for some $k=(k_{1},\dots,k_{m})\in\Z^{m}$ of complexity at most $O_{C,d}(K)$.
	Let $0\leq s'\leq s$ be the largest integer such that at least one of the entries $k_{m-m_{s'}+1},\dots,k_{m-m_{s'+1}}$ is nonzero (where we denote $m_{0}:=m$). Since 
	$\eta$ is nontrivial, such an $s'$ always exists.
	Denote 
	$m':=m-m_{s'+1}$.
	Then we may write $k=(k',\bold{0})$ for some $k'\in \Z^{m'}$. Write $t_{i}=(t'_{i},t''_{i})$ for some $t'_{i}\in\R^{m'}$ and $t''_{i}\in \R^{m-m'}$. 
	Since the first $m-m_{\vert i\vert}$ entries of $t_{i}$ are zero, and the last $m_{s'+1}$ entries of $k$ are zero, we have that $k\cdot t_{i}=0$ if $\vert i\vert\geq s'+1$. 
	So
	$$\eta\circ g(n)=\sum_{i\in\N^{d},0<\vert i\vert\leq s}k'\cdot t'_{i}\binom{n}{i}=\sum_{i\in\N^{d},0<\vert i\vert\leq s'}k'\cdot t'_{i}\binom{n}{i}.$$
	
	 Since $k$ is of complexity at most $O_{C,d}(K)$, there exists $Q\in\N_{+}, Q\leq O_{C,\d,d,K}(1)$ and $u=(u_{1},\dots,u_{m})\in\Z^{m}$ whose only nonzero entries are $u_{m-m_{s'}+1},\dots,u_{m-m_{s'+1}}$ such that 
	$k\cdot u=Q.$ Denote $$\gamma_{0}(n):=\prod_{i=m-m_{s'}+1}^{m-m_{s'+1}}\exp(Q^{\ast}u_{i}\eta(g(n))X_{i}),$$
	where $Q^{\ast}$ is any integer such that $Q^{\ast}Q\equiv 1 \mod p^{s}\Z$. Clearly $\gamma_{0}(\bold{0})=id_{G}$. 
	Since $g\in\poly_{p}(\iota^{-1}(\Omega)\to G_{\N}\vert\Gamma)$, we have that $\eta\circ g\in\poly_{p}(\iota^{-1}(\Omega)\to \R\vert\Z)$ by Lemma \ref{1:projectionpp}. This implies that
	$\gamma_{0}\in\poly(\iota^{-1}(\Omega)\to G_{\N}\vert\Gamma)$.

	Since $\Omega$ is non-empty, by Lemma \ref{1:goodcoordinates}, we have that $p^{s}\eta\circ g(n)\in\Z$ for all $n\in\Z^{d}$. 
	On the other hand, writing $h(n):=g(n)\gamma_{0}(n)^{-1}$ we have that $h\in\poly(\Z^{d}\to G_{\N})$ by Corollary B.4 of \cite{GTZ12} and that
	$$\eta\circ h(n)=\eta\circ g(n)-Q^{\ast}(k\cdot u)\eta\circ g(n)=(1-Q^{\ast}Q)\eta\circ g(n),$$
	which is an integer valued polynomial.	
		Since $\gamma_{0}\in\poly(\iota^{-1}(\Omega)\to G_{\N}\vert\Gamma)$ and $g\in\poly_{p}(\iota^{-1}(\Omega)\to G_{\N}\vert\Gamma)$, it follows from Lemma \ref{1:grg} that $h\in\poly_{p}(\iota^{-1}(\Omega)\to G_{\N}\vert\Gamma)$. By Proposition \ref{1:normalizegamma},
	there exists $\gamma'\in\poly(\Z^{d}\to G_{\N}\vert\Gamma)$   with $\gamma'(\bold{0})=id_{G}$ such that $\eta\circ h=\eta\circ \gamma'$.
	
	Setting $\gamma:=\gamma_{0}\gamma'$ and $g':=g\gamma^{-1}$, it is not hard to see that $\gamma\in\poly(\iota^{-1}(\Omega)\to G_{\N}\vert\Gamma)$ and $g'\in\poly(\Z^{d}\to G_{\N})$. 
	Moreover, since $g(\bold{0})=\gamma_{0}(\bold{0})=\gamma'(\bold{0})=id_{G}$, we have that $g'(\bold{0})=\gamma(\bold{0})=id_{G}$.
	Note that
	$$\eta\circ g'(n)=\eta\circ g(n)-\eta\circ \gamma_{0}(n)-\eta\circ \gamma'(n)=\eta\circ h(n)-\eta\circ \gamma'(n)=0.$$
	So $g'\in \poly(\Z^{d}\to G'_{\N})$.   
	Finally, for all $n\in \iota^{-1}(\Omega)$ and $m\in\Z^{d}$, we have that
	$$g'(n+pm)^{-1}g'(n)=\gamma(n+pm)g(n+pm)^{-1}g(n)\gamma(n)^{-1}.$$
	Since all of $\gamma(n+pm)$, $g(n+pm)^{-1}g(n)$ and $\gamma(n)^{-1}$ belong to $\Gamma$, we have that $g'(n+pm)^{-1}g'(n)\in\Gamma$. Therefore, we have that $g'\in\poly_{p}(\iota^{-1}(\Omega)\to G'_{\N}\vert\Gamma')$. %Since $\tilde{g}=g'\gamma$, we are done by composing both sides by $\tau$.
	We are done since $g=g'\gamma$.
\end{proof}	
 
 By repeatedly using Proposition \ref{1:facf2}, we get the following strong factorization property:

  \begin{thm}[Strong factorization property I]\label{1:facf3}
  		Let $d,k\in\N_{+},r,s\in\N$, $C>0$, $\mathcal{F}\colon\R_{+}\to\R_{+}$ be a growth function, $p\gg_{C,d,\mathcal{F},k,r,s} 1$ be a prime, $M\colon\V\to \F_{p}$ be a quadratic form, and $\Omega\subseteq (\V)^{k}$ satisfying one of the following assumptions:
  		\begin{enumerate}[(i)] 
  			\item $r=0$ and $\Omega=(\V)^{k}$;
  			\item $k=1,r=0$ and $\Omega=V(M)\cap (V+c)$ for some affine subspace $V+c$ of $\V$ with $\rank(M\vert_{V+c})\geq s+13$;
  			\item $M$ is non-degenerate, $d\geq \max\{4r+1,4k+3,2k+s+11\}$ and $\Omega$ is a  nice and consistent $M$-set of total co-dimension $r$.
  		\end{enumerate}
  		Let $G/\Gamma$ be an $s$-step $\N$-filtered nilmanifold of  complexity at most $C$, and let $g$ be a polynomial sequence in $\poly_{p}(\iota^{-1}(\Omega)\to G_{\N}\vert\Gamma)$.	
  	There 		
  		exist some $C\leq C'\leq O_{C,d,\mathcal{F},k,r,s}(1)$, 
  		 a proper subgroup $G'$ of $G$ which is $C'$-rational relative to $\mathcal{X}$, and
  	 a factorization $$g(n)=\e g'(n)\gamma(n)  \text{ for all }  n\in (\Z^{d})^{k}$$  such that $\e\in G$ is of complexity $O_{C'}(1)$,
  		$g'\in \poly_{p}(\iota^{-1}(\Omega)\to G'_{\N}\vert\Gamma')$, $g'(\bold{0})=id_{G}$ and $(g'(n)\Gamma)_{n\in\iota^{-1}(\Omega)}$ is $\mathcal{F}(C')^{-1}$-equidistributed on $G'/\Gamma'$, where $\Gamma':=G'\cap \Gamma$, and that $\gamma\in\poly(\iota^{-1}(\Omega)\to G_{\N}\vert\Gamma)$, $\gamma(\bold{0})=id_{G}$.   		
  \end{thm}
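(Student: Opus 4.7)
The plan is to derive Theorem \ref{1:facf3} by iterating the weak factorization Proposition \ref{1:facf2}, using an appropriate Leibman dichotomy at each stage to supply its hypotheses. First I would observe that in all three cases $\Omega$ admits a Leibman dichotomy at arbitrary complexity levels: in case (i) by Theorem \ref{1:Lei0} applied to $(\V)^{k} \cong \F_{p}^{dk}$; in case (ii) by Theorem \ref{1:rLei}; in case (iii) by Theorem \ref{1:veryr}. Concretely, for every $C^{\ast}$ and every $\d>0$, $\Omega$ admits a $(\d, K(C^{\ast})\d^{-K(C^{\ast})})$-Leibman dichotomy up to degree $s$ and complexity $C^{\ast}$, with $K(C^{\ast}) = O_{C^{\ast},d,k,r,s}(1)$.

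Next I would set up the iteration. I begin by writing $g(\bold{0}) = \{g(\bold{0})\}[g(\bold{0})]$ and replacing $g$ with $\{g(\bold{0})\}^{-1}g[g(\bold{0})]^{-1}\{g(\bold{0})\}$, which still lies in $\poly_{p}(\Omega\to G_{\N}\vert\Gamma)$ by Lemma \ref{1:grg} and vanishes at $\bold{0}$. Setting $G^{(0)}:=G$, $C_{0}:=C$, and $g^{(0)}:=g$, I define recursively $C_{i+1}:=F(C_{i})$ for a growth function $F$ fixed in advance so as to dominate the rationality bound $O_{C_{i},d}(K)$ produced by one application of Proposition \ref{1:facf2} with tolerance $\d_{i}:=\mathcal{F}(C_{i})^{-1}$ and Leibman constant $K_{i}:=K(C_{i})\mathcal{F}(C_{i})^{K(C_{i})}$. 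At stage $i$, if $(g^{(i)}(n)\Gamma^{(i)})_{n\in\Omega}$ is $\mathcal{F}(C_{i})^{-1}$-equidistributed on $G^{(i)}/\Gamma^{(i)}$, I stop; otherwise I apply Proposition \ref{1:facf2} to $g^{(i)}$ to obtain $g^{(i)} = \e_{i}\,g^{(i+1)}\,\gamma_{i}$, where $g^{(i+1)}\in\poly_{p}(\Omega\to (G^{(i+1)})_{\N}\vert\Gamma^{(i+1)})$ with $g^{(i+1)}(\bold{0})=id_{G}$, $\e_{i}\in G^{(i)}$ is of complexity $1$, $\gamma_{i}\in\poly(\Omega\to G^{(i)}_{\N}\vert\Gamma^{(i)})$ with $\gamma_{i}(\bold{0})=id_{G}$, and $G^{(i+1)}$ is a subgroup of $G^{(i)}$ of strictly smaller total dimension that is $C_{i+1}$-rational relative to $\mathcal{X}$. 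Recursively expanding the middle factor at each new stage yields, after $i+1$ steps,
\[
g = \e_{0}\e_{1}\cdots\e_{i}\,\cdot\,g^{(i+1)}\,\cdot\,\gamma_{i}\gamma_{i-1}\cdots\gamma_{0},
\]
so non-commutativity causes no regrouping difficulty.

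Since the total dimension of $G^{(i)}$ decreases strictly at each step and is at most $O_{C}(1)$ initially, the iteration must terminate after some $N\leq O_{C}(1)$ stages. The remaining work is to read off the conclusion with $C':=C_{N}$, $G':=G^{(N)}$, $\e := \{g(\bold{0})\}\e_{0}\cdots\e_{N-1}$, $g':=g^{(N)}$, and $\gamma:=\gamma_{N-1}\cdots\gamma_{0}\,[g(\bold{0})]$, and to verify the four required properties: $\e$ has complexity $O_{C'}(1)$ as a bounded product of elements of complexity $O_{C'}(1)$ in $G$; $g'\in\poly_{p}(\Omega\to G'_{\N}\vert\Gamma')$ with $g'(\bold{0})=id_{G}$ is immediate, and its $\mathcal{F}(C')^{-1}$-equidistribution follows from the stopping rule; $\gamma\in\poly(\Omega\to G_{\N}\vert\Gamma)$ with $\gamma(\bold{0})=id_{G}$ follows from Corollary B.4 of \cite{GTZ12} together with Lemma \ref{1:grg} applied repeatedly, since each $\gamma_{i}$ is $\Gamma$-valued on $\Omega + p\Z^{d}$.

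The main obstacle will be the bookkeeping: the composite growth function $F$ must be fixed in advance so that the final $C' = F^{\circ N}(C)$ is bounded solely in terms of $C, d, \mathcal{F}, k, r, s$, and the prime lower bound $p\gg_{C,d,\mathcal{F},k,r,s}1$ stated in the theorem must absorb the tighter requirement $p\gg_{C_{i},d,K_{i},s}1$ demanded by each of the $N=O_{C}(1)$ invocations of Proposition \ref{1:facf2}. A secondary point is that each invocation takes place at a strictly larger complexity $C_{i}$, so one must use the \emph{uniformity in $C^{\ast}$} of the Leibman dichotomy established in the first paragraph, not merely its validity at the initial complexity $C$.
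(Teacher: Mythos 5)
Your proposal is correct and follows essentially the same route as the paper's proof: iterate the weak factorization Proposition \ref{1:facf2}, supplying its hypotheses at each stage via the Leibman dichotomies of Theorems \ref{1:Lei0}, \ref{1:rLei} and \ref{1:veryr} at the successively larger complexities $C_{i}$, and terminate after $O_{C}(1)$ steps by the strict decrease of total dimension, combining the constants on the left and the $\Gamma$-valued sequences on the right. The only quibble is your preliminary conjugation-normalization of $g(\bold{0})$: it is unnecessary since Proposition \ref{1:facf2} already absorbs $\{g(\bold{0})\}$ into $\e$ and outputs $g'(\bold{0})=\gamma(\bold{0})=id_{G}$, and as written (replacing $g$ by $\{g(\bold{0})\}^{-1}g[g(\bold{0})]^{-1}\{g(\bold{0})\}$) it leaves a stray factor $\{g(\bold{0})\}^{-1}$ that your final formulas $\e=\{g(\bold{0})\}\e_{0}\cdots\e_{N-1}$ and $\gamma=\gamma_{N-1}\cdots\gamma_{0}[g(\bold{0})]$ do not account for; dropping that step (or using the unconjugated replacement) recovers the paper's argument exactly.
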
  
  \begin{proof}
  	The method is similar to the proof of Theorem 1.19 of \cite{GT12b}. Throughout the proof, we assume that $p\gg_{C,d,\mathcal{F},k,r,s} 1$. 	  Enlarging $C$ if necessary, we may assume without loss of generality that $\mathcal{F}(C)>2$.
	By Theorem \ref{1:Lei0}, \ref{1:rLei} or \ref{1:veryr}, $\Omega$ is a non-empty set which admits a partially periodic $(\mathcal{F}(C)^{-1},O_{C,d,\mathcal{F},k,r,s}(1))$-Leibman dichotomy up to step $s$ and complexity $C$. 
  	So by Proposition \ref{1:facf2}, either $(g(n)\Gamma)_{n\in \iota^{-1}(\Omega)}$ is   $\mathcal{F}(C)^{-1}$-equidistributed, or we may write $$g(n)=\e_{1}g_{1}(n)\gamma_{1}(n)  \text{ for all }  n\in (\Z^{d})^{k}$$  such that $\e_{1}\in G$ is  of complexity at most 1,
  	$g_{1}\in\poly_{p}(\iota^{-1}(\Omega)\to (G_{1})_{\N}\vert\Gamma_{1})$, $g_{1}(\bold{0})=id_{G}$ for some proper subgroup $G_{1}$ of $G$ which is $O_{C,d,\mathcal{F},k,r,s}(1)$-rational relative to $\mathcal{X}$ and whose total dimension is less than that of $G$, where  $\Gamma_{1}:=G_{1}\cap\Gamma$, and that $\gamma_{1}\in\poly(\iota^{-1}(\Omega)\to G_{\N}\vert\Gamma)$, $\gamma_{1}(\bold{0})=id_{G}$.
  	By Lemma A.10 of \cite{GT12b},  $G_{1}/(G_{1}\cap \Gamma)$ admits a $C_{1}$-rational Mal'cev basis for some $C_{1}=O_{C,d,\mathcal{F},k,r,s}(1)$.

  		By Theorem \ref{1:Lei0}, \ref{1:rLei} or \ref{1:veryr}, $\Omega$ admits a partially periodic $(\mathcal{F}(C_{1})^{-1},O_{C,d,\mathcal{F},k,r,s}(1))$-Leibman dichotomy up to step $s$ and complexity $C_{1}$. So
  	by Proposition \ref{1:facf2}, either 
  	$(g_{1}(n)\Gamma)_{n\in \iota^{-1}(\Omega)}$ is  $\mathcal{F}(C_{1})^{-1}$-equidistributed on $G_{1}/\Gamma_{1}$, or we may write $$g_{1}(n)=\e_{2}g_{2}(n)\gamma_{2}(n)  \text{ for all }  n\in (\Z^{d})^{k}$$  such that $\e_{2}\in G_{1}$ is of complexity at most 1,
  	$g_{2}\in\poly_{p}(\iota^{-1}(\Omega)\to (G_{2})_{\N}\vert\Gamma_{2})$, $g_{2}(\bold{0})=id_{G}$ for some proper subgroup $G_{2}$ of $G_{1}$ which is $O_{C,d,\mathcal{F},k,r,s}(1)$-rational relative to $\mathcal{X}$ and whose total dimension is less than that of $G_{1}$, where $\Gamma_{2}:=G_{2}\cap\Gamma$, and that $\gamma_{2}\in\poly(\iota^{-1}(\Omega)\to G_{\N}\vert\Gamma)$, $\gamma_{2}(\bold{0})=id_{G}$. 
	Since  $\e_{2}\in G_{1}$ is of complexity at most 1, we have that $\e_{2}$ is of complexity $O_{C_{1}}(1)$ as an element of $G$.	
	%Let $\Gamma_{2}:=G_{2}\cap\Gamma$.
	By Lemma A.10 of \cite{GT12b},  $G_{2}/(G_{2}\cap \Gamma)$ admits a $C_{2}$-rational Mal'cev basis for some $C_{2}=O_{C,d,\mathcal{F},k,r,s}(1)$. We may also require that $C_{2}\geq C_{1}$.

  	We may continue this process to get a sequence $C\leq C_{1}\leq C_{2}\leq \dots$. Since this iteration will terminate after at most $O_{C}(1)$ steps, we can find $1\leq t\leq O_{C}(1)$, some $C\leq C_{t}\leq O_{C,d,\mathcal{F},k,r,s}(1)$, and a factorization
  	$$g(n)=\e_{1}\dots\e_{t}g_{t}(n)\gamma_{1}(n)\dots\gamma_{t}(n)  \text{ for all }  n\in (\Z^{d})^{k}$$
  	$\e_{1},\dots,\e_{t}\in G$ are of complexities at most $O_{C_{t}}(1)$,
  	$g_{t}\in\poly_{p}(\iota^{-1}(\Omega)\to (G_{t})_{\N}\vert\Gamma_{t})$, $g_{t}(\bold{0})=id_{G}$ for some proper subgroup $G_{t}$ of $G$ which is $C_{t}$-rational relative to $\mathcal{X}$ and $(g_{t}(n)\Gamma)_{n\in\iota^{-1}(\Omega)}$ is $\mathcal{F}(C_{t})^{-1}$-equidistributed on $G_{t}/\Gamma_{t}$, where $\Gamma_{t}:=G_{t}\cap\Gamma$, and that $\gamma_{i}\in\poly(\iota^{-1}(\Omega)\to G_{\N}\vert\Gamma)$, $\gamma_{i}(\bold{0})=id_{G}$ for $1\leq i\leq t$. We are done by setting $C=C_{t}$, $\e=\e_{1}\dots\e_{t}$, $g'=g_{t}$ and $\gamma=\gamma_{1}\dots\gamma_{t}$ (since it is not hard to see from Lemma A.3 of \cite{GT12b} that $\e$ is of complexity $O_{t,C_{t}}(1)=O_{C_{t}}(1)$). 
  \end{proof}

 \begin{rem}\label{1:wpsp}
   It is tempting to conjecture that in Theorems \ref{1:facf3}, if $g\in\poly_{p}(\Z^{d}\to G_{\N}\vert\Gamma)$ (i.e. if $g$ is $p$-periodic), then one can further require $g'\in \poly_{p}(\Z^{d}\to G'_{\N}\vert\Gamma')$. However, we were unable to prove it. The difficulty is that even if we start with some $p$-periodic $g$, the polynomial sequence $g'$ obtained in Proposition \ref{1:facf2} is not necessarily $p$-periodic (we can only make $g'$ to be partially $p$-periodic on $\iota^{-1}(\Omega)$).    This is a major reason why we need to prove most of the results in this paper for the partially $p$-periodic polynomials.  
\end{rem} 

\subsection{Factorization theorems of rational sequences}\label{1:secf2}

% \begin{defn}
%    Let $d\in\N_{+}$, $\d>0$, $p>\d^{-1}$ be a prime, $\Omega$ be a non-empty subset of $\V$ and $g\in\poly(\Z^{d}\to G_{\N})$ be rational for some nilmanifold $G/\Gamma$. We say that $(g(n)\Gamma)_{n\in\iota^{-1}(\Omega)}$ is \emph{$\d$-totally equidistributed on $G/\Gamma$} if for all $r\in\N_{+}$ with $r<\d^{-1}$ and $m\in\Z^{d}$, the sequence $(g(rn+m)\Gamma)_{n\in\iota^{-1}(r^{-1}(\Omega-m))}$ (or equivalently, the sequence $(g(n)\Gamma)_{n\in\iota^{-1}(\Omega)\cap (r\Z^{d}+m)}$) is $\d$-equidistributed on $G/\Gamma$.
% \end{defn}
 
%  \begin{defn}
   %  Let $d\in\N_{+}$, $\d>0$, $p>\d^{-1}$ be a prime, $\Omega$ be a non-empty subset of $\Z^{d}$ and $g\in\poly(\Z^{d}\to G_{\N})$ be rational for some nilmanifold $G/\Gamma$. We say that $(g(n)\Gamma)_{n\in \Omega}$ is \emph{$\d$-totally equidistributed on $G/\Gamma$} if for all $r\in\N_{+}$ with $r<\d^{-1}$ and $m\in\Z^{d}$, the sequence  $(g(n)\Gamma)_{n\in\Omega\cap (r\Z^{d}+m)}$) is $\d$-equidistributed on $G/\Gamma$.
    %% 
% %    $(g(rn+m)\Gamma)_{n\in\iota^{-1}(r^{-1}(\Omega-m))}$ (or equivalently, the sequence $(g(n)\Gamma)_{n\in\iota^{-1}(\Omega)\cap (r\Z^{d}+m)}$) is $\d$-equidistributed on $G/\Gamma$.
 %\end{defn}

  In this section, we present factorization theorems for rationa polynomial sequences. Again we start with a weak factorization property:

\begin{prop}[Weak factorization property II]\label{1:facf2r}
	Let $d,s\in\N_{+}$, $C, \d, K>0$  and $p\gg_{C,d,K,s} 1$ be a prime. Let $\Omega\subseteq \V$ be a non-empty set such that $k\Omega+u$ admits a rational $(\d,K)$-Leibman dichotomy up to degree $s$ and complexity $C$ for all $k\in\F_{p}\backslash\{0\}$ and $u\in\V$, and let $n_{\ast}\in \iota^{-1}(\Omega)$.  Let $G/\Gamma$ be an $s$-step $\N$-filtered nilmanifold of complexity at most $C$, equipped with a $C$-rational Mal'cev basis $\mathcal{X}$, and let  $g\in \poly(\Z^{d}\to G_{\N})$ be  rational. If  $(g(n)\Gamma)_{n\in \iota^{-1}(\Omega)}$ is not $\d$-totally equidistributed on $G/\Gamma$, 
	then we may factorize $g$ as $$g(n)=\e g'(n)\gamma(n)  \text{ for all }  n\in \Z^{d},$$  where $\e\in G$ is of complexity at most 1,
	$g'\in \poly(\Z^{d}\to G'_{\N})$ is  rational %with $g'(\bold{0})=id_{G}$ 
	for some sub-nilmanifold $G'/\Gamma'$ of $G/\Gamma$ whose Mal'cev basis is $O_{C,d}(K)$-rational relative to $\mathcal{X}$ and whose total dimension is smaller than that of $G/\Gamma$, and   $\gamma\in\poly_{\approx r,n_{\ast}}(\iota^{-1}(\Omega)\to G_{\N})$ %is $(r,\iota^{-1}(\Omega),n_{\ast})$-rational 
	for some $r\in\N_{+}$ with $r\ll_{d,m}\d^{-O_{d,m}(1)}$. %with $\gamma(\bold{0})=id_{G}$.
\end{prop}

\begin{proof} 
Throughout the proof we assume that $p\gg_{C,d,K,s} 1$.
Denote $m_{i}:=\dim(G_{i})$ for $i\in\N$ and $m:=\dim(G)$. 
Let $\mathcal{X}=\{X_{1},\dots,X_{m}\}$ be the Mal'cev basis of $G/\Gamma$.
Write $g:=\{g(n_{\ast})\}g_{\ast}[g(n_{\ast})]$. Then $g_{\ast}$ belongs to $\poly(\Z^{d}\to G_{\N})$ and is rational by the Baker-Campbell-Hausdorff formula. So replacing $g$ by $g_{\ast}$ if necessary,  
	we may assume without loss of generality that $g(n_{\ast})=id_{G}$.
 
	If $(g(n)\Gamma)_{n\in \iota(\Omega)^{-1}}$ is not $\d$-totally equidistributed on $G/\Gamma$, then there exists $r_{0}\in\N_{+}$ with $r_{0}<\d^{-1}$ and some $u\in [r_{0}]^{d}$ such that   $(g(n)\Gamma)_{n\in\iota^{-1}(\Omega)\cap (r_{0}\Z^{d}+u)}$ is not $\d$-equidistributed on $G/\Gamma$,  or equivalently $(g(r_{0}n+u)\Gamma)_{n\in\iota^{-1}(r_{0}^{-1}(\Omega-u))}$ is not $\d$-equidistributed on $G/\Gamma$, 
	where we slightly abuse the notation to treat $r_{0}$ and $u$ as elements in $\F_{p}$ and $\V$ in the expression $r_{0}^{-1}(\Omega-u)$.	
	Since $r_{0}^{-1}(\Omega-u)$ admits a rational $(\d,K)$-Leibman dichotomy  and since $g(n_{\ast})=id_{G}$,  there exists a nontrivial  type-I horizontal character $\eta$ with $0<\Vert\eta\Vert\leq K$ and some $a\in\Q$  such that $\eta\circ g(r_{0}n+u)+a\in\Z$  for all $n\in\iota^{-1}(r_{0}^{-1}(\Omega-u))$. 
	
Now for any $x\in\iota^{-1}(\Omega)$, there exist $v_{x},u_{x}\in\Z^{d}$ such that $x=u+r_{0}v_{x}+pu_{x}$. Then for any $y\in \Z^{d}$, since $x-u\equiv r_{0}(v_{x}+py) \mod p\Z^{d}$, we have that $v_{x}+py\in \iota^{-1}(r_{0}^{-1}(\Omega-u))$ and thus by assumption  $\eta\circ g(u+r_{0}(v_{x}+py))+a\in\Z$.  By interpolation, all the coefficients of the polynomial $\eta\circ g(u+r_{0}(v_{x}+p\cdot))+a$ belong to $\Z/(s!)^{d}$. Since $x=u+r_{0}(v_{x}+p\cdot(\frac{u_{x}}{r_{0}}))$, we have that $\eta\circ g(x)+a$ belongs to belong to $\Z/(r_{0})^{s}(s!)^{d}$ for all $x\in\iota^{-1}(\Omega)$. Since $g(n_{\ast})=id_{G}$, replacing $\eta$ by $(r_{0})^{s}(s!)^{d}\eta$ if necessary, we may assume without loss of generality that $\eta\circ g(n)\in\Z$ for all $n\in\iota^{-1}(\Omega)$.
	 	Let $G'$ be the connected component of $\ker(\eta)$. Then $G'$ is a subgroup of $G$ which is $O_{C,d}(K)$-rational relative to $\mathcal{X}$. 
	%	We may assume that $g=\tilde{g}\circ\tau$, for some $\tilde{g}\in\poly_{p}(\tau(\Omega)\to G_{\N}\vert\Gamma)$. Since $g(\bold{0})=id_{G}$, we have that $\tilde{g}(\bold{0})=id_{G}$. So 
	
	We may use the type-I Taylor expansion to write $g$ as
	$$g(n)=\prod_{i\in\N^{d},0<\vert i\vert\leq s}g_{i}^{\binom{n}{i}}  \text{ for all }  n\in\Z^{d}$$
	for some $g_{i}\in G_{\vert i\vert}$. 		
	%Since $\eta\circ g \mod \Z$  is a constant on $\Omega$, $\eta\circ \tilde{g} \mod \Z$  is a constant on $\tau(\Omega)$. 	
	 Write
	$$\psi(g(n))=\sum_{i\in\N^{d},0<\vert i\vert\leq s}t_{i}\binom{n}{i}$$
	for some $t_{i}\in\{0\}^{m-m_{\vert i\vert}}\times\R^{m_{\vert i\vert}}$.  
	The type-I horizontal character $\eta$ is given in coordinates by
	$$\eta\circ g(n)=\sum_{i\in\N^{d},0<\vert i\vert\leq s}\theta\cdot t_{i}\binom{n}{i}$$
	for some $\theta=(\theta_{1},\dots,\theta_{m})\in\Z^{m}$ of complexity at most $O_{C,d}(K)$.
	Let $0\leq s'\leq s$ be the largest integer such that at least one of the entries $\theta_{m-m_{s'}+1},\dots,\theta_{m-m_{s'+1}}$ is nonzero (where we denote $m_{0}:=m$). Since 
	$\eta$ is nontrivial, such an $s'$ always exists.
	Denote 
	$m':=m-m_{s'+1}$.
	Then we may write $\theta=(\theta',\bold{0})$ for some $\theta'\in \Z^{m'}$. Write $t_{i}=(t'_{i},t''_{i})$ for some $t'_{i}\in\R^{m'}$ and $t''_{i}\in \R^{m-m'}$. 
	Since the first $m-m_{\vert i\vert}$ entries of $t_{i}$ are zero, and the last $m_{s'+1}$ entries of $\theta$ are zero, we have that $\theta\cdot t_{i}=0$ if $\vert i\vert\geq s'+1$. 
	So
	$$\eta\circ g(n)=\sum_{i\in\N^{d},0<\vert i\vert\leq s}\theta'\cdot t'_{i}\binom{n}{i}=\sum_{i\in\N^{d},0<\vert i\vert\leq s'}\theta'\cdot t'_{i}\binom{n}{i}.$$
	
	 Since $\theta$ is of complexity at most $O_{C,d}(K)$, there exists $Q\in\N_{+}, Q\leq O_{C,\d,d,K}(1)$ and $u=(u_{1},\dots,u_{m})\in\Z^{m}$ whose only nonzero entries are $u_{m-m_{s'}+1},\dots,u_{m-m_{s'+1}}$ such that 
	$\theta\cdot u=Q.$ Denote $$\gamma(n):=\prod_{i=m-m_{s'}+1}^{m-m_{s'+1}}\exp(\frac{1}{Q}u_{i}\eta(g(n))X_{i}),$$
	%where $Q^{\ast}$ is any integer such that $Q^{\ast}Q\equiv 1 \mod p^{s}\Z$. Clearly $\gamma_{0}(\bold{0})=id_{G}$. 
	%Since $\tilde{g}\in\poly_{p}(\tau(\Omega)\to G_{\N}\vert\Gamma)$, we have that $\eta\circ\tilde{g}\in\poly_{p}(\tau(\Omega)\to \R\vert\Z)$ by Lemma \ref{1:projectionpp}. This implies that
	%$\gamma_{0}\in\poly(\tau(\Omega)\to G_{\N}\vert\Gamma)$.
 %Since $\eta\circ g \mod \Z$ is a constant on $\iota^{-1}(\Omega)$, $\gamma_{0}(n)\Gamma$ is a constant for all $n\in\iota^{-1}(\Omega)$. So $\gamma_{0}$ belongs to $\poly(\tau(\Omega)\to G_{\N}\vert\Gamma)$.
 Then $\gamma$ belongs to $\poly(\Z^{d}\to G_{\N})$. Since $g$ is  rational, so is $\gamma$ by the Baker-Campbell-Hausdorff formula.

 We now show that there exists $r\in\N_{+}$ with $r\ll_{d,m}\d^{-\O_{d,m}(1)}$ such that %$\gamma$ is $(r,\iota^{-1}(\Omega),n_{\ast})$-rational. 
 $\gamma$ belongs to $\poly_{\approx r,n_{\ast}}(\iota^{-1}(\Omega)\to G_{\N})$.
 In fact, it suffices to show that $\eta\circ g(n_{\ast}+rn)\in Q\Z$ for all $n\in\Z^{d}$ with $n_{\ast}+rn\in \iota^{-1}(\Omega)$.
 For all $n\in \Z^{d}$, since $n_{\ast}+pn\in \iota^{-1}(\Omega)$, we have that 
 $\eta\circ g(n_{\ast}+pn)\in\Z$. By interpolation, all the coefficients of the polynomial $\eta\circ g$  belongs to $\Z/p^{s'}Q'$ for some $s'=O_{d,s}(1)$ and $Q'\in\N_{+}$ with $Q'\leq O_{d,s}(1)$. So for all $n\in\Z^{d}$, we have that $\eta\circ g(n_{\ast}+QQ'n)\in Q\Z/p^{s'}$. On the other hand, if $n_{\ast}+QQ'n\in \iota^{-1}(\Omega)$, then $\eta\circ g(n_{\ast}+QQ'n)\in\Z$, so we have that $\eta\circ g(n_{\ast}+QQ'n)\in(Q\Z/p^{s'})\cap \Z=Q\Z$. So %$\gamma$ is $(QQ',\iota^{-1}(\Omega),n_{\ast})$-rational. 
 $\gamma\in\poly_{\approx QQ',n_{\ast}}(\iota^{-1}(\Omega)\to G_{\N})$.

	%Since $\Omega$ is non-empty, by Lemma \ref{1:goodcoordinates}, 
	%Since $g$ is  rational, so is $\gamma_{0}$ by Lemma \ref{1:prop4rp} (i).
	On the other hand, writing $g':=g\gamma^{-1}$ we have that $g'\in\poly(\Z^{d}\to G_{\N})$ by Corollary B.4 of \cite{GTZ12} and that $g'$ is rational by the Baker-Campbell-Hausdorff formula.  Since
	$$\eta\circ g'(n)=\eta\circ g(n)-\frac{1}{Q}(\theta\cdot u)\eta\circ g(n)=0,$$
	%which is an integer valued polynomial.	
	%	Since $\gamma_{0}\in\poly(\tau(\Omega)\to G_{\N}\vert\Gamma)$ and $g\in\poly_{p}(\tau(\Omega)\to G_{\N}\vert\Gamma)$, it follows from Lemma \ref{1:grg} that $h\in\poly_{p}(\tau(\Omega)\to G_{\N}\vert\Gamma)$. 
%	By Proposition \ref{1:normalizegamma}, 
%	there exists $\gamma'\in\poly(\Z^{d}\to G_{\N}\vert\Gamma)$   with $\gamma'(\bold{0})=id_{G}$ such that $\eta\circ h=\eta\circ \gamma'$. Since $\gamma'$ takes values in $\Gamma$, it is 1-rational.
	we have that $g'\in \poly(\Z^{d}\to G'_{\N})$.   We are done since $g=g'\gamma$.
%	
%	Setting $\gamma:=\gamma_{0}\gamma'$ and $g':=g\gamma^{-1}$, it is not hard to see that $\gamma\in\poly(\tau(\Omega)\to G_{\N}\vert\Gamma)$ and $g'\in\poly(\Z^{d}\to G_{\N})$. 
%	Moreover, since $g(\bold{0})=\gamma_{0}(\bold{0})=\gamma'(\bold{0})=id_{G}$, we have that $g'(\bold{0})=\gamma(\bold{0})=id_{G}$.
%	Note that
%	$$\eta\circ g'(n)=\eta\circ g(n)-\eta\circ \gamma_{0}(n)-\eta\circ \gamma'(n)=\eta\circ h(n)-\eta\circ \gamma'(n)=0.$$
%	So $g'\in \poly(\Z^{d}\to G'_{\N})$.   
%	Finally, since  $g'=g(\gamma_{0}\gamma')^{-1}$, it follows from Lemma \ref{1:prop4rp} (ii)  that $g'$ is $p$-power rational. 
%	
%	
	%for all $m,n\in \tau(\Omega)+p\Z^{d}$, we have that
	%$$g'(n+pm)^{-1}g'(n)=\gamma(n+pm)\tilde{g}(n+pm)^{-1}\tilde{g}(n)\gamma(n)^{-1}.$$
	%Since all of $\gamma(n+pm)$, $\tilde{g}(n+pm)^{-1}\tilde{g}(n)$ and $\gamma(n)^{-1}$ belong to $\Gamma$, we have that $g'(n+pm)^{-1}g'(n)\in\Gamma$. Therefore, we have that $g'\in\poly_{p}(\tau(\Omega)\to G'_{\N}\vert\Gamma')$. Since $g=g'\gamma$, we are done by composing both sides by $\tau$.
\end{proof}

We may now repeatedly using Proposition \ref{1:facf2r} to deduce the following strong factorization property:

  \begin{thm}[Strong factorization property II]\label{1:facf3r}
  		Let $d\in\N_{+},r,s\in\N$, $C>0$, $\mathcal{F}\colon\R_{+}\to\R_{+}$ be a growth function, $p\gg_{C,d,\mathcal{F},s} 1$ be a prime, $M\colon\V\to \F_{p}$ be a quadratic form, and $\Omega\subseteq \V$ satisfying one of the following assumptions:
  		\begin{enumerate}[(i)] 
  			\item $\Omega=\V$;
  			\item $\Omega=V(M)\cap (V+c)$ for some affine subspace $V+c$ of $\V$ with $\rank(M\vert_{V+c})\geq s+13$.
  			%\item $M$ is non-degenerate, $d\geq \max\{4r+1,4k+3,2k+s+11\}$ and $\Omega$ is a  nice and consistent $M$-set of total co-dimension $r$.
  		\end{enumerate}
  		Let $n_{\ast}\in\iota^{-1}(\Omega)$, $G/\Gamma$ be an $s$-step $\N$-filtered nilmanifold of  complexity at most $C$, and let $g\in \poly(\Z^{d}\to G_{\N})$ be rational.	
  	There exist some $C\leq C'\leq O_{C,d,\mathcal{F},s}(1)$, 
  		 a proper subgroup $G'$ of $G$ which is $C'$-rational relative to $\mathcal{X}$, and
  	 a factorization $$g(n)=\e g'(n)\gamma(n)  \text{ for all }  n\in \Z^{d}$$  such that $\e\in G$ is of complexity $O_{C'}(1)$,
  		$g'\in \poly(\Z^{d}\to G'_{\N})$ is  rational  %with $g'(\bold{0})=id_{G}$ 
		and $(g'(n)\Gamma)_{n\in\iota^{-1}(\Omega)}$  is $\mathcal{F}(C')^{-1}$-totally equidistributed on $G'/\Gamma'$, where $\Gamma':=G'\cap \Gamma$, and that $\gamma$ belongs to both $\poly_{\approx r,n_{\ast}}(\iota^{-1}(\Omega)\to G_{\N})$ and $\poly_{r}(\iota^{-1}(\Omega)\to G_{\N})$.
	%	
	%	
	%	 $\gamma\in\poly(\Z^{d}\to G_{\N})$ is $(r,\iota^{-1}(\Omega),n_{\ast})$-rational and $(r,\iota^{-1}(\Omega))$-periodic 
	for some $r\in\N_{+}$ with $r\leq O_{C,C',d,s}(1)$. %$\gamma(\bold{0})=id_{G}$.   	
  \end{thm}  
  
  \begin{proof}[Sketch of the proof]
   By Theorem \ref{1:Lei0} or \ref{1:rLei} combined with a change of variable, we have that  $k\Omega+u$ is a non-empty set which admits a rational $(\mathcal{F}(C)^{-1},O_{C,d,\mathcal{F},s}(1))$-Leibman dichotomy up to step $s$ and complexity $C$ for all $k\in\F_{p}\backslash\{0\}$ and $u\in\V$. By the Baker-Campbell-Hausdorff formula, it is not hard to see that the products of two polynomial sequences which are partially   $Q$-rational on  $\iota^{-1}(\Omega)$ defined over a nilmanifold of complexity $C$ is  partially $Q'$-rational on  $\iota^{-1}(\Omega)$ for some $Q'\leq O_{C,Q}(1)$.
%   $(\ast,\iota^{-1}(\Omega))$-rational polynomial sequences are closed under multiplications,
  we may   obtain the conclusion of Theorem \ref{1:facf3r} by repeatedly using Proposition \ref{1:facf2r} (and then apply  Proposition \ref{1:oehf} at the last step). We leave the details to the interested readers.
  \end{proof}

\section{Open questions}\label{1:s:oq}

We collect some open questions in this section. 
Recall that we need to require $d\geq s+13$ in Theorem \ref{1:sLei00}. It is natural to ask:

\begin{ques}
    What is the optimal lower bound for the dimension $d$ in Theorem \ref{1:sLei00}?
\end{ques}

In order to improve the lower bound of $d$ in Theorem \ref{1:sLei00}, it is natural to attempt to sharpen the tools we used in proving  Theorem \ref{1:sLei00}. Therefore, one can ask:

\begin{ques}
    What are the optimal lower bounds for the dimension $d$ in Proposition \ref{1:packforce0}, Theorems  \ref{1:irrr}, \ref{1:veryr}, and \ref{1:ct}?
\end{ques}

 Let $M\colon\V\to\F_{p}$ be a quadratic form.
   It is an interesting question to ask what is the connection between $p$-periodic polynomial sequences on $\V$ and  partially $p$-periodic polynomial sequences on $V(M)$. In particular, is it true that a partially $p$-periodic polynomial sequence on $V(M)$ merely the restriction to $V(M)$ of a $p$-periodic polynomial sequences on $\V$?

    \begin{ques}
    	Let $d,s\in\N_{+}$, $p$ be a prime, $M\colon\V\to \F_{p}$ be a     	quadratic form of rank at least 3, and $G_{\N}$ be a nilpotent group of step $s$. 
    		Let $p\gg_{d,s} 1$. 
		 For all $g\in\poly_{p}(V(M)\to G_{\N}\vert \Gamma)$, is there any $g'\in\poly_{p}(\V\to G_{\N}\vert \Gamma)$ such that $g(n)\Gamma=g'(n)\Gamma$ for all $n\in V(M)$?
    \end{ques}	 

Another interesting question is to ask if there is an intrinsic definition for partially $p$-periodic polynomials on $V(M)$. This is Conjecture \ref{1:att0} which we restate below:

 \begin{ques}%\label{1:extq}
 Let $g\colon\V\to\F_{p}$ be a function, $d,s\in\N_{+}$ and $p\gg_{d} 1$ be a prime. Let $M\colon\V\to\F_{p}$ be a quadratic form with $\rank(M)\gg_{s} 1$. Are the followings equivalent?
	\begin{enumerate}[(i)]
 		\item $\Delta_{h_{s}}\dots\Delta_{h_{1}}g(n)=0$ for all $(n,h_{1},\dots,h_{s})\in \Gow_{s}(V(M))$.
 		\item  There exists a polynomial $g'\in\poly(\V\to\F_{p})$ of degree at most $s-1$ such that $g(n)=g'(n)$ for all $n\in V(M)$.
		\end{enumerate}	
   \end{ques}
    
    As is mentioned in Section \ref{1:s:idf}, we were only able to answer a special case of this question.
   
   Another interesting question in algebra is that whether all the nice and consistent $M$-sets satisfy Hilbert Nullstellensatz. This is Conjecture \ref{1:ccmmj} which we restate below:

   \begin{ques}\label{1:cl1ss}
	Let $d,k\in\N_{+}$, $s\in\N$, $p$ be a prime, $M\colon\V\to\F_{p}$ be a non-degenerate quadratic form, and $\Omega\subseteq(\V)^{k}$ be a consistent  $M$-set given by $\Omega=V(\mathcal{J})$ for some consistent $(M,k)$-family $\mathcal{J}=\{f_{1},\dots,f_{r}\}$ for some $r\in\N_{+}$.  If $d\gg_{k,r,s} 1$ and $p\gg_{d} 1$, then for any  polynomial $P\in\poly((\V)^{k}\to\F_{p})$ of degree at most $s$ with $\Omega\subseteq V(P)$, can we write
	\begin{equation}\nonumber%\label{1:subsub2}
	P
	=\sum_{j=1}^{r}f_{j}Q_{j}
	\end{equation}
	for some polynomial $Q_{j}$ of degree $O_{k,r,s}(1)$? 
   \end{ques}
 
   In Appendix \ref{1:s:AppC},
   We prove a weaker version of Conjecture \ref{1:ccmmj} in Proposition \ref{1:cl1s}. Also it is  proved in Proposition \ref{1:noloop}  that  Conjecture \ref{1:ccmmj} holds for $\Omega=V(M)$. But the general case remains unanswered.

   Finally, it is an important question to ask whether the factorization theorems holds in the setting of $p$-periodic polynomial sequences.
   
   \begin{ques}\label{1:extq2}
     In Proposition \ref{1:facf2} and Theorem \ref{1:facf3}, if we assume in addition that $g$ is $p$-periodic, then can we also require $g'$ to be $p$-periodic in the factorization of $g$?
   \end{ques}
 
   If the answer to Question \ref{1:extq2} is affirmative, then many arguments in this paper as well as in \cite{SunC,SunD} can be greatly simplified by restricting ourselves to  $p$-periodic polynomial sequences.

\appendix

\section{Properties for liftings of polynomials}\label{1:s:AppA}

  In this appendix, we provide some basic properties for liftings of polynomials.

    \begin{lem}[Basic properties for lifting]\label{1:lifting}
    	Let $d,d',d''\in\N_{+}$ and $p>\max\{d,d',d''\}$ be a prime.  
	    	\begin{enumerate}[(i)]
    		\item Every $f\in \poly(\Z^{d}\to (\Z/p)^{d'})$ of degree less than $p$	 induces $\iota\circ pf\circ\tau$, which is a polynomial of degree at most $\deg(f)$. Moreover, if $f$ is homogeneous, then so is $\iota\circ pf\circ\tau$; 
    		\item Every $F\in \poly(\V\to\F^{d'}_{p})$ of degree less than $p$ admits a regular lifting. Moreover, if $F$ is homogeneous, we may further require the lifting to be homogeneous; 
    		\item If $f\in \poly(\Z^{d}\to (\Z/p)^{d'})$  is  a  lifting of some $F\in \poly(\V\to\F_{p}^{d'})$ with both $f$ and $F$ having degrees at most $p-1$, then for any $f'\in \poly(\Z^{d}\to (\Z/p)^{d'})$ of degree less than $p$, $f'$ is a  lifting of $F$ if and only if $f-f'$ is integer valued;
    		\item If $f,f'\in \poly(\Z^{d}\to (\Z/p)^{d'})$ are liftings of $F, F'\in \poly(\V\to\F_{p}^{d'})$ respectively with all of $f,f',F$ and $F'$ having degrees at most $(p-1)/2$, then $f+f'$ is a lifting of $F+F'$ and  $pff'$ is a lifting of $FF'$;
		\item If $f\in \poly(\Z^{d}\to (\Z/p)^{d'})$  is  a  lifting of some $F\in \poly(\V\to\F_{p}^{d'})$ with both $f$ and $F$ having degree at most $\sqrt{p}$, and $f\in \poly(\Z^{d'}\to (\Z/p)^{d''})$  is  a  lifting of some $F\in \poly(\F_{p}^{d'}\to\F_{p}^{d''})$ with both $f$ and $F$ having degrees less than $\sqrt{p}$, then $f'\circ pf$ is  a  lifting of  $F'\circ F$;
		\item If $f\in \poly(\Z^{d}\to (\Z/p)^{d'})$  is  a  lifting of some $F\in \poly(\V\to\F_{p}^{d'})$ with both $f$ and $F$ having degrees less than $p$, then $\tau\circ F\equiv pf\circ \tau \mod \Z^{d'}$.
    	\end{enumerate}
    \end{lem}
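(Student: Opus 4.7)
The plan is to establish each of (i)--(vi) by direct computation from the definitions of $\tau$, $\iota$, and the lifting relation $F = \iota\circ pf\circ\tau$, carefully tracking degrees so that all intermediate polynomials remain of degree $<p$ and interact well with reduction modulo $p$.

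For (i), since $\deg(f)<p$, multivariate Lagrange interpolation writes $f$ with rational coefficients whose denominators divide $s!$ where $s=\deg(f)$. Combined with the fact that $f$ takes values in $(\Z/p)^{d'}$, a direct check gives that $pf\circ\tau$ is integer-valued on $\tau(\V)=[p]^{d}$; applying $\iota$ reduces these integers modulo $p$ to yield a polynomial on $\V$ of degree at most $\deg(f)$, and homogeneity is preserved because $\tau$, $\iota$, and multiplication by $p$ all act monomial-by-monomial. Part (ii) is constructive: given $F(n)=\sum_{|a|\le s}C_{a}n^{a}$ with $C_{a}\in\F_{p}^{d'}$, set $f(n):=\frac{1}{p}\sum_{|a|\le s}\tau(C_{a})n^{a}$; then $f$ has $\{0,1/p,\dots,(p-1)/p\}$-coefficients, the same degree as $F$, and $\iota\circ pf\circ\tau(n)=\iota(\sum\tau(C_{a})\tau(n)^{a})=\sum C_{a}n^{a}=F(n)$.

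For (iii), let $g:=f-f'$; the hypothesis gives $\iota\circ pg\circ\tau\equiv 0$, i.e.\ $g$ is integer-valued on $\tau(\V)=[p]^{d}$. Expanding $g$ in the multivariate binomial basis $\{\binom{n_{1}}{a_{1}}\cdots\binom{n_{d}}{a_{d}}\}_{|a|\le p-1}$, the transition between the monomial/value data on $[p]^{d}$ and the basis coefficients has integer entries by the standard interpolation formula for integer-valued polynomials, so the basis coefficients of $g$ are integers, and thus $g$ is integer-valued on all of $\Z^{d}$. Part (iv) uses that $\iota$ is a ring homomorphism on integers: additivity gives $\iota\circ p(f+f')\circ\tau=F+F'$ directly, while for multiplication one uses $p(pff')=(pf)(pf')$, so $\iota\circ p(pff')\circ\tau=(\iota\circ pf\circ\tau)(\iota\circ pf'\circ\tau)=FF'$, and the degree bound $\deg(pff')\le (p-1)/2+(p-1)/2=p-1$ justifies invoking part (i).

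For (vi), since $pf\circ\tau$ is integer-valued by the definition of lifting, and $\iota(pf(\tau(n)))=F(n)$ by hypothesis, the two integer vectors $pf(\tau(n))$ and $\tau(F(n))$ lie in the same residue class modulo $p\Z^{d'}$, which establishes the congruence (interpreted mod $p\Z^{d'}$, stronger than the literal mod $\Z^{d'}$ statement). Part (v) combines (vi) with a denominator estimate: $pf'$ has rational coefficients with some denominator $Q$ dividing $(\deg f')!$, and $\deg f'<\sqrt{p}<p$ guarantees $\gcd(p,Q)=1$. For integer vectors $x_{1}\equiv x_{2}\mod p\Z^{d'}$, expanding $pf'(x_{1})-pf'(x_{2})$ monomial by monomial places it in $p\Z/Q$; since both $pf'(x_{1})$ and $pf'(x_{2})$ are integers (as $f'$ maps $\Z^{d'}$ to $(\Z/p)^{d''}$), the difference lies in $p\Z$. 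Applying this with $x_{1}=pf(\tau(n))$ and $x_{2}=\tau(F(n))$ via (vi) yields $\iota(pf'(pf(\tau(n))))=\iota(pf'(\tau(F(n))))=F'(F(n))$, and the degree estimate $\deg(f'\circ pf)\le\deg(f')\cdot\deg(f)<\sqrt{p}\cdot\sqrt{p}=p$ confirms admissibility. The main technical delicacy is the synchronized degree-versus-denominator bookkeeping in (iii) and especially (v): in the composition $f'\circ pf$, the total degree $\deg(f')\deg(f)$ must stay below $p$ so that the denominator of the outer polynomial $pf'$ remains coprime to $p$ and the mod-$p$ congruence inherited from (vi) propagates through the composition, which is precisely the role of the $\sqrt{p}$ hypothesis.
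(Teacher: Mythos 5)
Your proposal is correct and follows essentially the same route as the paper: direct computation from the definitions, clearing the factor $p$ so that all coefficients have $p$-coprime denominators, reducing coefficient-wise via $\iota$, and proving (v) by showing that $pf'$ respects congruences mod $p\Z^{d'}$ so that the congruence from (vi) propagates through the composition. The only minor deviations are harmless: in (i) it is the coefficients of $pf$ (not of $f$) whose denominators divide $s!$ and are thus coprime to $p$, and in (iii) you deduce integrality on all of $\Z^{d}$ via the binomial-basis/finite-difference expansion on $[p]^{d}$, where the paper instead invokes the $p$-periodicity of degree $<p$ polynomials (Lemma \ref{1:pp2}); both arguments are valid.
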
	
    \begin{proof}
    Part (i). 
	Since $f$ takes values in $(\Z/p)^{d'}$, by the multivariate polynomial interpolation, 
	there exists $Q\in\N, p\nmid Q$ such that 
    	\begin{equation}\nonumber
    	f(n_{1},\dots,n_{d}):=\sum_{0\leq a_{1},\dots,a_{d}\leq p-1, a_{i}\in\N}\frac{C_{a_{1},\dots,a_{d}}}{Qp^{s}}n^{a_{1}}_{1}\dots n^{a_{d-1}}_{d}
    	\end{equation}
    	for some $C_{a_{1},\dots,a_{d}}\in\Z^{d'}$ and $s\in\N$. Using the property that $f(n+pm)-f(n)\in\Z^{d'}$ for all $m,n\in\Z^{d}$, it is not hard to see that we may further write $f$ as
    	\begin{equation}\nonumber
    	f(n_{1},\dots,n_{d}):=\sum_{0\leq a_{1},\dots,a_{d}\leq p-1, a_{i}\in\N}\frac{C'_{a_{1},\dots,a_{d}}}{Qp}n^{a_{1}}_{1}\dots n^{a_{d-1}}_{d}
    	\end{equation}
    	for some $C'_{a_{1},\dots,a_{d}}\in\Z^{d'}$ and $Q\in\N, p\nmid Q$.
    	Let $C''_{a_{1},\dots,a_{d}}\in\{0,\dots,p-1\}^{d'}$ be such that $C''_{a_{1},\dots,a_{d}}Q\equiv C'_{a_{1},\dots,a_{d}} \mod p\Z^{d'}$. Denote $f'\colon\Z^{d}\to(\Z/p)^{d'}$,
    	\begin{equation}\nonumber
    		f'(n_{1},\dots,n_{d}):=\sum_{0\leq a_{1},\dots,a_{d}\leq p-1, a_{i}\in\N}\frac{C''_{a_{1},\dots,a_{d}}}{p}n^{a_{1}}_{1}\dots n^{a_{d-1}}_{d}.
    	\end{equation}
    	and $F\colon\V\to\F_{p}^{d'}$,
    	\begin{equation}\nonumber
    	F(n_{1},\dots,n_{d}):=\sum_{0\leq a_{1},\dots,a_{d}\leq p-1, a_{i}\in\N}\iota(C''_{a_{1},\dots,a_{d}})n^{a_{1}}_{1}\dots n^{a_{d-1}}_{d}.
    	\end{equation}
    	Since $f(n)-f'(n)\in (\Z^{d'}/Q)\cap (\Z^{d'}/p^{d'})=\Z^{d'}$ for all $n\in\Z^{d}$, we have that
     $\iota\circ pf\circ\tau=\iota\circ pf'\circ\tau=F$ is a well defined polynomial in $\poly(\V\to\F_{p}^{d'})$. It is also clear that the degree of $F$ is at most $\deg(f)$, and that $F$ is homogeneous if $f$ is homogeneous. 
    	
    	Part (ii). Assume that 
    	\begin{equation}\nonumber
    	F(n_{1},\dots,n_{d})=\sum_{0\leq a_{1},\dots,a_{d}\leq p-1, a_{i}\in\N}C_{a_{1},\dots,a_{d}}n^{a_{1}}_{1}\dots n^{a_{d-1}}_{d}
    	\end{equation}
    	for some $C_{a_{1},\dots,a_{d}}\in \F_{p}^{d'}$. Let
    	\begin{equation}\nonumber
    	f(n_{1},\dots,n_{d}):=\sum_{0\leq a_{1},\dots,a_{d}\leq p-1, a_{i}\in\N}C'_{a_{1},\dots,a_{d}}n^{a_{1}}_{1}\dots n^{a_{d-1}}_{d},
    	\end{equation}
    	where $C'_{a_{1},\dots,a_{d}}$ is any element in $\{0,\frac{1}{p},\dots,\frac{p-1}{p}\}^{d'}$ such that $\iota(pC'_{a_{1},\dots,a_{d}})=C_{a_{1},\dots,a_{d}}$. Then it is easy to check that $f$ is a regular lifting of $F$. Moreover, $f$ is homogeneous if $F$ is homogeneous.
    	
    	Part (iii). 
    	Suppose first that $f'$ is a  lifting of $F$. Note that both $f$ and $f'$ are $p$-periodic by Lemma \ref{1:pp2}.
    	Then $\iota\circ pf\circ \tau=F=\iota\circ pf'\circ \tau$. So $pf(\tau(n))-pf'(\tau(n))\in p\Z^{d'}$. This implies that $f(m)-f'(m)\in\Z^{d'}$ for all $m\in \tau(\F_{p})=\{0,\dots,p-1\}^{d}$. Since $f-f'$ is $p$-periodic, we have that $f(m)-f'(m)\in\Z^{d'}$ for all $m\in\Z^{d}$.
    	
    	Conversely, if $f(m)-f'(m)\in\Z^{d'}$ for all $m\in\Z^{d}$, then $pf(\tau(n))-pf'(\tau(n))\in p\Z^{d'}$ and thus $\iota\circ pf'\circ\tau(n)=\iota\circ pf\circ \tau(n)=F(n)$.
    	
    	Part (iv). It is clear that both $f+f'$ and $pff'$ have  degrees less than $p$.
	The conclusion follows from the fact that 
	$$\iota\circ pf\circ \tau+\iota\circ pf\circ \tau=\iota\circ p(f+f')\circ \tau \text{ and } (\iota\circ pf\circ \tau)\cdot(\iota\circ pf\circ \tau)=\iota\circ p^{2}ff'\circ \tau.$$
	
	Part (v). It is clear that $f'\circ pf\in \poly(\Z^{d}\to (\Z/p)^{d''})$ and $F'\circ F\in \poly(\V\to\F_{p}^{d''})$, both of which are of degree less than $p$. 	Since $\tau \circ\iota\equiv id \mod p\Z$, we have that $pf'\equiv pf'\circ \tau \circ\iota \mod p\Z^{d'}$ and thus
	$$\iota\circ pf'\circ pf\circ\tau=\iota\circ (pf'\circ\tau \circ\iota)\circ pf\circ\tau=(\iota\circ pf'\circ\tau) \circ(\iota\circ pf\circ\tau)=F'\circ F.$$
So 	$f'\circ pf$ is  a  lifting of  $F'\circ F$.

Part (vi). Since $\tau \circ\iota\equiv id \mod p$, we have that $$\tau\circ F=\tau\circ(\iota\circ pf\circ\tau)=(\tau\circ\iota)\circ pf\circ\tau\equiv pf\circ\tau \mod p\Z^{d'}.$$
    \end{proof}

  \section{Properties for $M$-sets}\label{1:s:AppB}
  
  In this appendix, we provide some elementary properties for $M$-sets. We also present some important examples of $M$-sets which will be used in \cite{SunB,SunC,SunD}.
  
  We start with the following lemma on the existence of $M$-representations.
  
  		\begin{lem}\label{1:rep0}
		     Let $d\in\N_{+}, r\in\N$, $p$ be a prime, and $M\colon\V\to\F_{p}$ be a quadratic form. For any consistent $(M,k)$-family $\mathcal{J}=\{F_{1},\dots,F_{r}\}$, the set $\Omega:=V(\mathcal{J})$ admits a standard $M$-representation $(F'_{1},\dots,F'_{r'})$ for some $r'\in\N$.
    
    Moreover, if $\mathcal{J}$ is independent, then we may further require $r'=r$; if $\mathcal{J}$ is nice/pure, then we may further require $F'_{1},\dots,F'_{r'}$ to be nice/pure. 
		\end{lem}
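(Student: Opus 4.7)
Plan: The strategy is to apply Gaussian elimination to the coefficient matrix of $\mathcal{J}$. Specifically, form the $r\times N$ matrix $B$ with $N=\binom{k+1}{2}+kd+1$ whose $i$-th row is $v_M(F_i)$, and compute its reduced row echelon form $B'$, which has $r'\le r$ nonzero rows. Let $F'_1,\ldots,F'_{r'}$ be the $(M,k)$-integral quadratic functions whose coefficient vectors are the nonzero rows of $B'$. Since row operations preserve the row space, the linear span of $\{F'_1,\ldots,F'_{r'}\}$ equals that of $\{F_1,\ldots,F_r\}$, giving $V(F'_1,\ldots,F'_{r'})=V(\mathcal{J})=\Omega$.

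The representation $(F'_1,\ldots,F'_{r'})$ is then independent and in standard form by construction. The consistency of $\mathcal{J}$ forbids $(0,\ldots,0,1)$ from lying in the row space of $B$, which rules out $B'$ having a row with pivot in the constant column; hence each $F'_j$ is a genuine (non-constant) $(M,k)$-integral quadratic function. The \textbf{moreover} clauses for independence and pureness follow directly: independence of $\mathcal{J}$ forces $\mathrm{rank}(B)=r$, so $r'=r$; pureness is inherited because every coordinate corresponding to a linear term $v_i$ vanishes in each $v_M(F_i)$, and this property is preserved under linear combinations.

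The subtle case is niceness. If $\mathcal{J}$ is nice via a transformation $(L,v)$, set $\hat F_i:=F_i(L(\cdot)+v)$, so that each $\hat F_i$ is a nice function in standard form with some parameter $k'_i\in\{0,\ldots,k\}$. The key structural observation is that for a nice function with parameter $k'$, the nonzero entries of its $v_M$-vector are confined to the coordinates $\{b_{i,k'}:1\le i\le k'\}$ together with the constant coordinate; in particular, nice functions with distinct $k'$-values have coordinate supports that are disjoint apart from the constant entry. Consequently, Gaussian elimination can be organized block-by-block: partition the $\hat F_i$'s by $k'$-value, apply RREF within each block (where any linear combination remains nice with the same $k'$), and assemble the result.

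The main obstacle is translating this back to the original coordinates. Since the transformation $(L,v)$ generally mixes blocks in the coefficient vector, the RREF of $(v_M(\hat F_i))$ in the $(L,v)$-system does not automatically correspond to the RREF of $(v_M(F_i))$ in the original system. The crux of the proof will be to show that the block-wise elimination carried out in $(L,v)$-coordinates descends to a legitimate RREF in the original system---equivalently, that one can choose the row operations on $(v_M(F_i))$ so that each resulting row corresponds to a function still lying in the span of $\{F_1,\ldots,F_r\}$ and hence, being in the span of a nice family, remains nice (possibly with a different choice of transformation $(L',v')$). Handling this interaction between the two coordinate systems, and verifying that the span of a nice family is closed under the operation of passing to a standard basis, is the principal technical task.
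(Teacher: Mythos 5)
Your handling of the main statement and of the \emph{independent} and \emph{pure} clauses is correct and is essentially the paper's own argument: row-reduce the matrix of $v_{M}$-vectors, observe that the row space (hence the common zero set) is unchanged, use consistency to rule out a pivot in the constant column, note that independence forces rank $r$, and that pureness (vanishing of all linear-coefficient coordinates) passes to linear combinations. The problem is the \emph{nice} clause, which you explicitly defer as ``the principal technical task'': as written, your proposal does not prove it, and this is exactly the step the paper itself dispatches with ``by the knowledge of linear algebra''.

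Two concrete remarks on that remaining step. First, your block-support observation together with consistency already finishes the case in which the family is nice with respect to the identity transformation (each $F_{i}$ individually nice): every element of the span then vanishes on all linear-coefficient coordinates; consistency prevents the constant column from being a pivot; and every pivot lying in the quadratic block of some variable is witnessed by an element of the span supported in that single block plus the constant coordinate (the components of any witness in the earlier blocks are forced to be pure constants, hence zero by consistency), so Gauss--Jordan run on these witnesses never creates cross-block entries and the RREF rows are individually nice --- no transfer between coordinate systems is needed. Second, for a family that is nice only via a nontrivial $(L,v)$ the obstacle you flag is not merely technical: the RREF basis of a fixed row space is \emph{unique}, so there is no freedom to ``choose the row operations'', and the forced basis can fail to be nice. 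For odd $p$, $k=2$, take $P_{1}(n_{1},n_{2})=((n_{1}-n_{2})A)\cdot(n_{1}-n_{2})$ and $P_{2}(n_{1},n_{2})=((n_{1}+n_{2})A)\cdot(n_{1}+n_{2})$: this family is independent, consistent and nice via $L(n_{1},n_{2})=(n_{1}+n_{2},n_{1}-n_{2})$, but the RREF basis of its span is $\{(n_{1}A)\cdot n_{1}+(n_{2}A)\cdot n_{2},\ (n_{1}A)\cdot n_{2}\}$, and no bijective $d$-integral change of variables (translations do not affect quadratic parts) makes both of these nice: both scalar Gram matrices are invertible, so niceness would require one and the same row of the $2\times2$ scalar matrix of the transformation to be isotropic for both $x^{2}+y^{2}$ and $xy$, which forces that row to vanish. (For $d,p$ large one can check via Theorem \ref{1:ct} that every independent representation of this $\Omega$ spans the same two-dimensional space, so this is a failure of the statement for general nice families, not just of the method.) The correct move --- and the one the paper's applications implicitly make at the start of Section \ref{1:s:94} via Proposition \ref{1:yy3} (iii) and the invariance lemmas --- is to first replace $\Omega$ by $L^{-1}(\Omega-v)$ so that the family becomes individually nice, and only then extract the standard representation by the identity-case argument above; your plan of performing the elimination in $(L,v)$-coordinates and descending it to the original coordinates cannot work in general.
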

		\begin{proof}
	 We may assume without loss of generality
	  that $v_{M}(F_{1}),\dots,v_{M}(F_{r'})$ is a basis of $\sp_{\F_{p}}\{v_{M}(F_{1}),\dots,v_{M}(F_{r})\}$ for some $1\leq r'\leq r$.
	Fix $r'+1\leq i\leq r$. There exist $t_{1},\dots,t_{r'}\in\F_{p}$ such that
	\begin{equation}\nonumber
	\begin{split}
	t_{1}v_{M}(F_{1})+\dots+t_{r'}v_{M}(F_{r'})-v_{M}(F_{i})=\bold{0}.
	\end{split}
	\end{equation}
	Therefore,
	\begin{equation}\nonumber
	\begin{split}
	t_{1}F_{1}+\dots+t_{r'}F_{r'}-F_{i}\equiv 0.
	\end{split}
	\end{equation}
	So $V(F_{i})\subseteq \cap_{j=1}^{r'}V(F_{j})$	and thus 	
	$\Omega=\cap_{j=1}^{r'}V(F_{j}).$	
	 $\{F_{i}\colon (\V)^{k}\to\V\colon 1\leq i\leq r\}$ is consistent, so is $\{F_{i}\colon (\V)^{k}\to\V\colon 1\leq i\leq r'\}$. Also since $v_{M}(F_{1}),\dots,v_{M}(F_{r'})$ are linearly independent, 
	the matrix 
		$\begin{bmatrix}
		v_{M}(F_{1})\\
		\dots \\
		v_{M}(F_{r'})
		\end{bmatrix}$ is of rank $r'$ and does not contain a row of the form $(0,\dots,0,a),a\in\F_{p}$. We remark that one can choose $r'=r$ if $\mathcal{J}$ is independent.
	
	Let $X=(x_{i,j})_{1\leq i,j\leq r'}$ be an $r'\times r'$ invertible matrix and denote 
	$F'_{i}:=\sum_{j=1}^{r'}x_{i,j}F_{j}$ for $1\leq i\leq r'$. Then clearly $\sp_{\F_{p}}\{v_{M}(F'_{i}), 1\leq i\leq r'\}=\sp_{\F_{p}}\{v_{M}(F_{i}), 1\leq i\leq r\}$  and 	$\Omega=\cap_{j=1}^{r'}V(F'_{j}).$ Since the matrix 
		$\begin{bmatrix}
		v_{M}(F_{1})\\
		\dots \\
		v_{M}(F_{r'})
		\end{bmatrix}$ is of  rank $r'$ and  does not contain a row of the form $(0,\dots,0,a),a\in\F_{p}$,	we may choose $X$ in a way such that the matrix $\begin{bmatrix}
	v_{M}(F'_{1})\\
	\dots \\
	v_{M}(F'_{r'})
	\end{bmatrix}$ is in the reduced row echelon form  which does not contain a row of the form $(0,\dots,0,a),a\in\F_{p}$. So $v'_{M}(F'_{1}),\dots,v'_{M}(F'_{r'})$ are linearly independent and thus $(F'_{1},\dots,F'_{r'})$ is a standard $M$-representation of $\Omega$.
	
	Moreover, if $\mathcal{J}$ is nice/pure, then all of $F_{1},\dots,F_{r'}$ are nice/pure. By the knowledge of linear algebra, it is not hard to see that we may choose $X$ in a way such that $F'_{1},\dots,F'_{r'}$ are also nice/pure. This completes the proof.
			\end{proof}
		
		We next provide some basic properties for different notions of $M$-sets.
		
 \begin{coro}\label{1:rep}
     Let $d\in\N_{+}$, $p$ be a prime, and $M\colon\V\to\F_{p}$ be a quadratic form.
     \begin{enumerate}[(i)]
         \item Any independent $M$-family, $M$-set, or $M$-representation is also a consistent $M$-family, $M$-set, or $M$-representation respectively.
         \item Non-consistent $M$-sets are empty.\footnote{Conversely, by Theorem \ref{1:ct} to be proved later in this paper, consistent $M$-sets are non-empty provided that $d$ and $p$ are sufficiently large.}
         \item Any consistent $M$-set $\Omega$ admits a standard $M$-representation (so consistent $M$-sets are the same as independent $M$-sets).
     \end{enumerate}
 \end{coro}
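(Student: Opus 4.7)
The plan is to handle the three parts directly from the definitions, with Part (iii) being essentially a restatement of Lemma \ref{1:rep0}.

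For Part (i), I would argue at the level of $M$-families first and then propagate the conclusion to $M$-sets and $M$-representations. Suppose $\mathcal{J} = \{F_1,\dots,F_r\}$ is an independent $M$-family. To check consistency, let $c_1,\dots,c_r \in \F_p$ satisfy $c_1 v'_M(F_1) + \dots + c_r v'_M(F_r) = \bold{0}$. By independence, we must have $c_1 = \dots = c_r = 0$, and therefore $c_1 v_M(F_1) + \dots + c_r v_M(F_r) = \bold{0}$. This is the equivalent formulation of consistency stated in the definition, so $\mathcal{J}$ is consistent. The claim for $M$-sets and $M$-representations is then immediate since these notions are defined directly through the corresponding $M$-families.

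For Part (ii), I would prove the contrapositive. Let $\Omega = V(\mathcal{J})$ be an $M$-set and assume $\mathcal{J} = \{F_1,\dots,F_r\}$ is not consistent. Then $(0,\dots,0,1)$ lies in the $\F_p$-span of $v_M(F_1),\dots,v_M(F_r)$, which means there exist $c_1,\dots,c_r \in \F_p$ such that $c_1 F_1 + \dots + c_r F_r \equiv c$ as a function on $(\V)^k$, for some $c \in \F_p \setminus \{0\}$. If $n \in \Omega$, then $F_i(n) = 0$ for all $i$ forces $c = 0$, a contradiction. Hence $\Omega$ is empty.

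For Part (iii), the existence of a standard $M$-representation for any consistent $M$-set $\Omega$ is precisely the first conclusion of Lemma \ref{1:rep0} (without invoking the ``moreover'' clauses). The parenthetical remark then follows: a standard $M$-representation is by construction independent (its associated $v'_M$-vectors are linearly independent since the corresponding matrix is in reduced row echelon form with no zero rows), and so every consistent $M$-set is also an independent $M$-set; combined with Part (i), this gives the two classes coincide.

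I do not anticipate any substantive obstacle here: the main content of the corollary is Lemma \ref{1:rep0}, which has already been proved, and Parts (i) and (ii) are essentially unpackings of the definitions of independence and consistency, respectively. The only small subtlety is making sure that the equivalence between the vector-space formulation (involving $v_M$ and $v'_M$) and the functional formulation (involving linear combinations of the $F_i$) is cleanly invoked in Part (ii).
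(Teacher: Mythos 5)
Your proposal is correct and follows essentially the same route as the paper: Part (i) by unpacking the definitions, Part (ii) by noting that non-consistency produces a nonzero constant linear combination of the $F_{i}$, forcing $\Omega=\emptyset$, and Part (iii) by invoking Lemma \ref{1:rep0}. No issues.
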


\begin{proof} 
    Part (i) follows from the definitions. For Part (ii), if $\Omega=\cap_{i=1}^{r}V(F_{i})$ for some non-consistent $(M,k)$-family $\{F_{1},\dots,F_{r}\}$, then 1 is a linear combination of $F_{1},\dots,F_{r}$. So $\Omega\subseteq V(1)=\emptyset$. Part (iii) follows from Lemma \ref{1:rep0}.
    \end{proof}

The next proposition provides us some simple ways to construct new $M$-sets from an existing one.	
	
	\begin{prop}\label{1:yy3}
	Let $d,k,r\in\N_{+}$, $p$ be a prime, $M\colon\V\to\F_{p}$ be a quadratic form and $\mathcal{J}=\{F_{1},\dots,F_{r}\}, F_{i}\colon(\V)^{k}\to \F_{p}, 1\leq i\leq r$ be an $(M,k)$-family.
	\begin{enumerate}[(i)]
	\item For any subset $I\subseteq \{1,\dots,r\}$, $\{F_{i}\colon i\in I\}$ is an    $(M,k)$-family.
	\item Let $I\subseteq \{1,\dots,r\}$ be a subset such that all of $F_{i},i\in I$ are independent of the last $d$-variables. Then writing $G_{i}(n_{1},\dots,n_{k-1}):=F_{i}(n_{1},\dots,n_{k-1})$, we have that $\{G_{i}\colon i\in I\}$ is an $(M,k-1)$-family.  
	\item For any bijective $d$-integral linear transformation  $L\colon (\V)^{k}\to(\V)^{k}$ and any $v\in(\V)^{k}$, $\{F_{1}(L(\cdot)+v),\dots,F_{r}(L(\cdot)+v)\}, F_{i}\colon(\V)^{k}\to \F_{p}, 1\leq i\leq r$ is an $(M,k)$-family.
		\item Let $1\leq k'\leq k$ and $1\leq r'\leq r$ be such that $F_{1},\dots,F_{r'}$ are independent of $n_{k-k'+1},\dots,n_{r}$ and that every non-trival linear combination of $F_{r'+1},\dots,F_{r}$ dependents nontrivially on some of $n_{k-k'+1},\dots,n_{r}$. 
		Define $G_{i},H_{i}\colon (\V)^{k+k'}\to \F_{p}, 1\leq i\leq r$ by 
		$$\text{$G_{i}(n_{1},\dots,n_{k+k'}):=F_{i}(n_{1},\dots,n_{k})$ and $H_{i}(n_{1},\dots,n_{k+k'}):=F_{i}(n_{1},\dots,n_{k-k'},n_{k+1},\dots,n_{k+k'})$}.$$ Then $\{F_{1},\dots,F_{r'},G_{r'+1},\dots,G_{r},H_{r'+1},\dots,H_{r}\}$\footnote{Here we regard $F_{1},\dots,F_{r'}$ as functions of $n_{1},\dots,n_{k+k'}$, which acutally depends only on $n_{1},\dots,n_{k-k'}$.} is an $(M,k+k')$-family. 
	\end{enumerate}
Moreover, if $\mathcal{J}$ is nice, then so are the sets mentioned in Parts (i)-(iii);
if $\mathcal{J}$ is 
consistent/independent, then so are the sets mentioned in Parts (i)-(iv);  
if $\mathcal{J}$ is pure, then so are the sets mentioned in Parts (i), (ii), (iv), and so is the set mentioned in Part (iii) when $v=\bold{0}$.	
\end{prop}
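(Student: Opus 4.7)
The proof is essentially a bookkeeping exercise verifying that the defining expression of an $(M,k)$-integral quadratic function is preserved under each of the stated operations, followed by tracking how the vectors $v_{M}(F_i)$ and $v'_{M}(F_i)$ transform in each case. I will handle the four parts sequentially, and then address the ``moreover'' clause at the end.

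Part (i) is immediate from the definition, since an $(M,k)$-family is just a collection of $(M,k)$-integral quadratic functions, and any subset of such a collection is itself such a collection. Part (ii) is also essentially a tautology: if $F_i$ depends only on the first $k-1$ of the vector variables, then its defining expression in the form (\ref{1:thisisf2}) involves only indices $i,j\leq k-1$, so $G_i(n_1,\dots,n_{k-1}):=F_i(n_1,\dots,n_{k-1})$ is an $(M,k-1)$-integral quadratic function by definition. For Part (iii), I would write $L(n_1,\dots,n_k)=(\sum_i a_{i,1}n_i,\dots,\sum_i a_{i,k}n_i)$ with $a_{i,j}\in\F_p$, write $v=(v_1,\dots,v_k)$, and substitute into (\ref{1:thisisf2}); then expanding the bilinear pairing $(\,\cdot A)\cdot \,$ and the linear term $v_i\cdot \,$ using bilinearity yields another expression of the form (\ref{1:thisisf2}), with new coefficients that are explicit $\F_p$-linear combinations of the old $b_{i,j}$, $v_i$, $u$ and the data of $L$ and $v$.

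For Part (iv), the new functions $G_{i}$ and $H_{i}$ are obtained from $F_i$ by relabeling the block $(n_{k-k'+1},\dots,n_k)$ to itself (for $G_i$) or to $(n_{k+1},\dots,n_{k+k'})$ (for $H_i$); both relabelings are special cases of the substitution considered in Part (iii) (with $v=\bold 0$ and $L$ a coordinate inclusion/permutation), so each $G_i$ and $H_i$ is an $(M,k+k')$-integral quadratic function. Once this is known, the only remaining content of (iv) is the preservation of consistency and independence: I would argue that any nontrivial linear combination of the new family either contains some of $F_1,\dots,F_{r'}$ or $G_{r'+1},\dots,G_r$ (in which case the $n_{k-k'+1},\dots,n_k$-dependence disappears only for the former) or some of $H_{r'+1},\dots,H_r$ (which are the only ones depending on $n_{k+1},\dots,n_{k+k'}$), and use the hypothesis that every nontrivial linear combination of $F_{r'+1},\dots,F_r$ depends nontrivially on some of $n_{k-k'+1},\dots,n_r$ to separate the $G$-block from the $H$-block and reduce to independence/consistency for the original family $\mathcal J$.

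For the ``moreover'' part, niceness is preserved under (i)--(iii) because the defining change of variables $L_0,v_0$ witnessing niceness of $\mathcal J$ can be combined with the substitutions in (i)--(iii) (in (iii) this just amounts to composing $L\circ L_0$ and shifting by $L(v_0)+v$, which is again a bijective $d$-integral linear transformation plus a shift). Consistency and independence are preserved under (i)--(iv) because the map sending $v_M(F_i)$ (respectively $v'_M(F_i)$) to the analogous vector for the new function is $\F_p$-linear and injective in each case, so kernels of nontrivial linear combinations are preserved. Finally, purity is preserved under (i), (ii), (iv) because those operations do not introduce any linear terms, and under (iii) when $v=\bold 0$ because then the expansion of $(\,\cdot A)\cdot \,$ produces no linear terms either. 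The main (and only non-bookkeeping) point is the argument in (iv) that the $F$, $G$, $H$ blocks separate; all other verifications are routine substitutions.
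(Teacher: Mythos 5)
Your proposal is correct, and for parts (i)--(iii) it is essentially the paper's argument: the paper likewise dismisses the bookkeeping as routine and only writes out the preservation of consistency/independence, which it does at the level of functions (splitting each $F$ into its constant and its degree $1$--$2$ part $F'$ and noting that $(F_i(L(\cdot)+v))'-F'_i(L(\cdot)+v)$ is a constant), while you phrase the same fact as injectivity of the induced linear map on the vectors $v_M(F_i)$, $v'_M(F_i)$; these are reformulations of one another. Where you genuinely diverge is part (iv). The paper's proof first makes the diagonal substitution $n_{k-k'+j}=n_{k+j}$, which collapses $G_i$ and $H_i$ onto $F_i$ and, by independence/consistency of $\mathcal J$, yields $c_1=\dots=c_{r'}=0$ and $c'_i+c''_i=0$; it then substitutes this back into the original relation to see that $\sum_{i>r'}c'_iF'_i$ is independent of $n_{k-k'+1},\dots,n_k$, and only then invokes the hypothesis on nontrivial combinations of $F_{r'+1},\dots,F_r$. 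You instead exploit that the $H_i$ are the only members of the new family involving $n_{k+1},\dots,n_{k+k'}$: varying that block shows $\sum_{i>r'}c''_iF_i$ is independent of the middle block, the hypothesis then kills all $c''_i$, and what remains is a relation among the original $F_i$ handled directly by consistency/independence of $\mathcal J$. Both routes rely on exactly the same hypothesis and yours is, if anything, slightly more direct. Two cautions: your part (iv) argument is only a sketch, and the step that does the work (freeze two values of the last block and subtract, to conclude $\sum c''_iF_i$ does not depend on the middle variables, before invoking the hypothesis) should be written out; and the blanket justification in your final paragraph, that consistency/independence under (i)--(iv) follows because the coefficient maps are linear and injective, does not by itself cover (iv) --- injectivity of the maps $F_i\mapsto G_i$ and $F_i\mapsto H_i$ separately is not enough, since without the dependence hypothesis one could have $G_i=H_i$ and independence would fail --- so for (iv) only your dedicated separation argument carries the weight.
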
	
\begin{proof}
We only need to show that if $\mathcal{J}$ is 
consistent/independent, then so are the sets mentioned in Parts (i)-(iv), since the other parts are easy to verify.
For any $(M,k)$-integral quadratic function $G$, let $G''$ denote the degree 2 terms of $G$, and $G'$ denote the degree 1 and 2 terms of $G$.
 Then an $(M,k)$-family $\{G_{1},\dots,G_{t}\}$ is
 \begin{itemize}
 \item consistent if and only if for all $c_{1},\dots,c_{t}\in\F_{p}$, $c_{1}G'_{1}+\dots+c_{t}G'_{t}\equiv 0$ implies that $c_{1}G_{1}+\dots+c_{t}G_{t}\equiv 0$;
  \item independent if and only if for all $c_{1},\dots,c_{t}\in\F_{p}$, $c_{1}G'_{1}+\dots+c_{t}G'_{t}\equiv 0$ implies that $c_{1}=\dots=c_{t}=0$.
 \end{itemize}

Parts (i) and (ii) are straightforward. We now prove Part (iii).
Suppose first that 
$\sum_{i=1}^{r}c_{i}(F_{i}(L(\cdot)+v))'\equiv 0$ for some $c_{1},\dots,c_{r}\in\F_{p}$. Since $(F_{i}(L(\cdot)+v))'-F'_{i}(L(\cdot)+v)$ is a constant, so is $\sum_{i=1}^{r}c_{i}F'_{i}(L(\cdot)+v)$.
Since $L$ is bijective, we have that $\sum_{i=1}^{r}c_{i}F'_{i}$ and thus $\sum_{i=1}^{r}c_{i}F_{i}$ is a constant. If $\mathcal{J}$ is consistent, then this implies that $\sum_{i=1}^{r}c_{i}F_{i}\equiv 0$ and thus $\sum_{i=1}^{r}c_{i}(F_{i}(L(\cdot)+v))\equiv 0$. If $\mathcal{J}$ is independent, then this implies that $c_{1}=\dots=c_{t}=0$. 
This proves Part (iii).

For Part (iv), suppose first that 
\begin{equation}\label{1:cpi1}
\sum_{i=1}^{r'}c_{i}F'_{i}(n_{1},\dots,n_{k-k'})+\sum_{i=r'+1}^{r}(c'_{i}G'_{i}+c''_{i}H'_{i})(n_{1},\dots,n_{k+k'})\equiv 0
\end{equation}
  for some $c_{i}, c'_{i},c'_{i}\in\F_{p}$. Setting $n_{k-k'+j}=n_{k+j}$ for all $1\leq j\leq k'$, we have that 
 \begin{equation}\label{1:cpi2}
\sum_{i=1}^{r'}c_{i}F'_{i}(n_{1},\dots,n_{k-k'})+\sum_{i=r'+1}^{r}(c'_{i}+c''_{i})F'_{i}(n_{1},\dots,n_{k})\equiv 0.
\end{equation} 

If $\mathcal{J}$ is consistent, then (\ref{1:cpi2}) implies that $\sum_{i=1}^{r'}c_{i}(F_{i}-F'_{i})+\sum_{i=r'+1}^{r}(c'_{i}+c''_{i})(F_{i}-F'_{i})\equiv 0$.
 Since $F_{i}-F'_{i}=G_{i}-G'_{i}=H_{i}-H'_{i}$, we have that $$\sum_{i=1}^{r'}c_{i}(F_{i}-F'_{i})(n_{1},\dots,n_{k-k'})+\sum_{i=r'+1}^{r}(c'_{i}(G_{i}-G'_{i})+c''_{i}(H_{i}-H'_{i}))(n_{1},\dots,n_{k+k'})\equiv 0.$$ It then follows from (\ref{1:cpi1}) that 
$$\sum_{i=1}^{r'}c_{i}F_{i}(n_{1},\dots,n_{k-k'})+\sum_{i=r'+1}^{r}(c'_{i}G_{i}+c''_{i}H_{i})(n_{1},\dots,n_{k+k'})\equiv 0.$$
 So $\{F_{1},\dots,F_{r'},G_{r'+1},\dots,G_{r},H_{r'+1},\dots,H_{r}\}$ is consistent.
 
 If $\mathcal{J}$ is independent, then (\ref{1:cpi2}) implies that $c_{1}=\dots=c_{r'}=c'_{r'+1}+c''_{r'+1}=\dots=c'_{r}+c''_{r}=0$.
 Substituting this back to (\ref{1:cpi1}), we have that 
 \begin{equation}\nonumber
\sum_{i=r'+1}^{r}c'_{i}F'_{i}(n_{1},\dots,n_{k})\equiv -\sum_{i=r'+1}^{r}c'_{i}F'_{i}(n_{1},\dots,n_{k-k'},n_{k+1}\dots,n_{k+k'}).
\end{equation}
This implies that $\sum_{i=r'+1}^{r}c'_{i}F'_{i}(n_{1},\dots,n_{k})$ is independent of $n_{k-k'+1},\dots,n_{r}$.
If $c'_{r'+1},\dots,c'_{r}$ are not all zero, by assumption, $\sum_{i=r'+1}^{r}c'_{i}F_{i}(n_{1},\dots,n_{k})$ and thus $\sum_{i=r'+1}^{r}c'_{i}F'_{i}(n_{1},\dots,n_{k})$ dependents nontrivially on  $n_{k-k'+1},\dots,n_{r}$, a contradiction. 
 So  $c'_{r'+1}=\dots=c'_{r}=0$ and thus $c''_{r'+1}=\dots=c''_{r}=0$. So 
$\{F_{1},\dots,F_{r'},G_{r'+1},\dots,G_{r},H_{r'+1},\dots,H_{r}\}$ is independent.
\end{proof}

Finally, we provide some  examples of nice $M$-sets which will be used in \cite{SunB,SunC,SunD}.

\begin{ex}\label{1:mainex}
Let $d\in\N_{+}$, $s\in\N$, $p$ be a prime, and $M\colon\V\to\F_{p}$ be a non-degenerate quadratic form 
associated with the matrix $A$.
 Clearly, we may write 
\begin{equation}\label{1:mmaa}
V(M)=V(M')+b
\end{equation}
for some pure non-degenerate quadratic form $M'\colon \V\to\F_{p}$ of the form $M'(n)=(nA)\cdot n+c$ for some $c\in\F_{p}$, for and some $b\in\V$.
	\begin{itemize} 
		\item The set $\V$ is a nice and consistent $M$-set of total co-dimension 0 by Convention \ref{1:fpism}.
		\item The set $V(M)\subseteq \V$ is a nice and consistent $M$-set of total co-dimension 1, since $M'$ is pure.
		\item The Gowers set $\Gow_{s}(V(M))$ is a nice and consistent $M$-set. 
		To see this, note that $\Gow_{s}(V(M))=\Gow_{s}(V(M'))+(b,\bold{0},\dots,\bold{0})$. 
		Let $L\colon(\V)^{s+1}\to(\V)^{s+1}$ be the bijective $d$-integral linear transformation given by $L(n,h_{1},\dots,h_{s}):=(h_{1},\dots,h_{s},n)$.
		 By Lemma \ref{1:changeh}, $(h_{1},\dots,h_{s},n)\in L(\Gow_{s}(V(M')))$ if and only if $M'(n), M'(n+h_{i})-M'(n), (h_{i}A)\cdot h_{j}=0$ for all $1\leq i,j\leq s, i\neq j$. Since all of these functions forms a nice and independent $M$-family, we have that $L(\Gow_{s}(V(M')))$ is a nice and consistent $M$-set of total co-dimension $(s^{2}+s+2)/2$, and thus so is $\Gow_{s}(V(M))$ by Proposition \ref{1:yy3} (iii). 
	\item The set 
		$$\Omega_{1}:=\{(x_{1},\dots,x_{5})\in(\V)^{5}\colon x_{1}-x_{i},x_{i}\in V(M),2\leq i\leq 5\}$$
		  is a nice and consistent $M$-set.   
		  To see this, note that $\Omega_{1}=L(\Omega_{1}')+(2b,b,b,b,b)$, where $L\colon(\V)^{5}\to(\V)^{5}$ is given by $L(x_{1},\dots,x_{5})=(x_{5},x_{1},\dots,x_{4})$ and $\Omega'_{1}$ is the set of $(y_{1},\dots,y_{5})\in(\V)^{5}$ such that $M'(y_{i})$ and $M'(y_{i}-y_{5})$ are zero for all $1\leq i\leq 4$. Since all of these functions forms a nice and independent $M$-family, we have that $\Omega'_{1}$ is a nice and consistent $M$-set of total co-dimension 8, and thus so is $\Omega_{1}$ by Proposition \ref{1:yy3} (iii). 
\item 
		Let $s\geq 2$ and
		denote $\w:=(w_{1},\dots,w_{5})$, $\h':=(h_{1},\dots,h_{s-1})$ and  $\h:=(h_{1},\dots,h_{s+1})$ with $h_{i},w_{i}\in\V$.
		Let	$\Omega_{2}$ be the set of $(\w,\h)\in(\V)^{s+6}$ such that 
		 $(w_{1},\h),(w_{2},\h)\in \Gow_{s+1}(V(M))$, $(w_{3},\h',h_{s}),$ $(w_{4},\h',h_{s+1})\in \Gow_{s}(V(M))$, $(w_{5},\h')\in \Gow_{s-1}(V(M))$. 
		Then $\Omega_{2}=\Omega'_{2}+(b,b,b,b,b,\bold{0},\dots,\bold{0})$, where $\Omega_{2}'$ is the set of $(\w,\h)\in(\V)^{s+6}$ such that 
		 $(w_{1},\h),(w_{2},\h)\in \Gow_{s+1}(V(M'))$, $(w_{3},\h',h_{s}), (w_{4},\h',h_{s+1})\in \Gow_{s}(V(M'))$, $(w_{5},\h')\in \Gow_{s-1}(V(M'))$. 
		 Let $L\colon(\V)^{s+6}\to(\V)^{s+6}$ be the bijective $d$-integral linear transformation given by $L(\w,\h):=(\h,\w)$. 
		By Lemma \ref{1:changeh}, $(\h,\w)\in L(\Omega_{2}')$ if and only if $M(w_{i}), M(w_{i}+h_{j_{i}})-M(w_{i}), (h_{j}A)\cdot h_{j'}$ equal to zero for all $1\leq i\leq 5$, $1\leq j,j'\leq s+1, j\neq j'$, $1\leq j_{1},j_{2}\leq s+1$, $1\leq j_{3}\leq s$, $j_{4}\in\{1,\dots,s-1\}\cup\{s\}$ and $1\leq j_{5}\leq s-1$.
	Since all of these functions forms a nice and independent $M$-family, we have that $L(\Omega'_{2})$ is  a nice and consistent $M$-set of total co-dimension $(s^{2}+11s+12)/2$, and thus so is $\Omega_{2}$ by Proposition \ref{1:yy3} (iii).  
		
		\item 
		Let $M$ be of the form $M(n)=(nA)\cdot n$, $k\in\N_{+}, v_{0},\dots,v_{k}\in\V, r\in\F_{p}$ and $\Omega_{3}\subseteq (\V)^{k+1}$ be the set of $(x_{0},\dots,x_{k+1})\in (\V)^{k}$	such that $M(x_{i}-x_{j})=M(v_{i}-v_{j})$ and $M(x_{i})=r$ for all $0\leq i\leq j\leq k, i\neq j$. 
		Note that $(x_{0},\dots,x_{k})\in \Omega_{3}$ if and only if 
		$(x_{i}A)\cdot x_{i}-r$ and $(x_{i}A)\cdot x_{j}-r+\frac{1}{2}M(v_{i}-v_{j})$  are zero for all $0\leq i,j\leq k, i\neq j$. 
		Since all of these functions forms a nice and independent $M$-family, we have that $\Omega_{3}$ is a nice and consistent $M$-set of total co-dimension $(k+2)(k+1)/2$.
		\item Fix some $\Omega_{3}$ defined above. Let $\Omega_{4}$ denote the set of $(x,\x,\x')\in (\V)^{2k+1}$ such that $(x,\x),(x,\x')\in \Omega_{4}$, where $x\in\V$ and $\x,\x'\in(\V)^{k}$. 
		Note that $(x_{0},x_{1},\dots,x_{k},x'_{1},\dots,$ $x'_{k})$ belongs to $\Omega_{4}$ if and only if 
		$(x_{0}A)\cdot x_{0}-r$, $(x_{i}A)\cdot x_{i}-r$, $(x'_{i}A)\cdot x'_{i}-r$,  $(x_{0}A)\cdot x_{j}-r+\frac{1}{2}M(v_{0}-v_{j})$, $(x_{0}A)\cdot x'_{j}-r+\frac{1}{2}M(v_{0}-v_{j})$, $(x_{i}A)\cdot x_{j}-r+\frac{1}{2}M(v_{i}-v_{j})$, $(x'_{i}A)\cdot x'_{j}-r+\frac{1}{2}M(v_{i}-v_{j})$  are zero for all $1\leq i,j\leq k, i\neq j$. 
		Since all of these functions forms a nice and independent $M$-family, we have that $\Omega_{4}$ is a nice and consistent $M$-set of total co-dimension $k^{2}+3k+1$.
	\end{itemize}
\end{ex}

\section{Fubini's Theorem for $M$-sets}\label{1:s:dec}

In this section, we prove a Fubini-type theorem for consistent $M$-sets. To this end, we need to define the projections of $M$-sets. We start with defining the projections of ideals of the polynomial ring.
Let $n_{i}=(n_{i,1},\dots,n_{i,d})\in\V$ denote a $d$-dimensional variable for $1\leq i\leq k$. For convenience we denote $\F^{d}_{p}[n_{1},\dots,n_{k}]$ to be the polynomial ring $\F_{p}[n_{1,1},\dots,n_{1,d},\dots,n_{k,1},\dots,n_{k,d}]$.
Let $J,J',J''$ be  finitely generated ideals of the polynomial ring $\F^{d}_{p}[n_{1},\dots,n_{k}]$ and $I\subseteq \{n_{1},\dots,n_{k}\}$. 
Suppose that $J=J'+J''$, $J'\cap J''=\{0\}$, all the polynomials in $J'$ are independent of  $\{n_{1},\dots,n_{k}\}\backslash I$, and all the non-zero polynomials in $J''$ depend nontrivially on $\{n_{1},\dots,n_{k}\}\backslash I$. Then we say that $J'$ is an  \emph{$I$-projection} of $J$ and that $(J',J'')$ is an \emph{$I$-decomposition} of $J$.

\begin{prop}\label{1:ip}
Let $d,k\in\N_{+}$, $p\gg_{d,k} 1$ be a prime, and  $M\colon\V\to\F_{p}$ be a  quadratic form. Let $\mathcal{J}=\{F_{i}\in \F^{d}_{p}[n_{1},\dots,n_{k}]\colon 1\leq i\leq r\}$ be a consistent $(M,k)$-family and $I\subseteq \{n_{1},\dots,n_{k}\}$. Let $J$ be the ideal generated by $\mathcal{J}$. Then the $I$-projection of  $J$ exists and is unique.
\end{prop}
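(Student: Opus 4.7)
The proof splits into uniqueness (essentially formal) and existence (via linear algebra on the structural vectors $v_M(F_i)$, followed by a short argument exploiting the specific form of $(M,k)$-integral quadratic functions).

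\textbf{Uniqueness.} Suppose $(J_1',J_1'')$ and $(J_2',J_2'')$ are two $I$-decompositions of $J$. Given any $f\in J_1'$, decompose $f=g+h$ with $g\in J_2'$ and $h\in J_2''$. Since both $f$ and $g$ depend only on variables in $I$, so does $h=f-g$; but by the defining property of $J_2''$, every nonzero element of $J_2''$ depends nontrivially on some variable in $\{n_1,\dots,n_k\}\setminus I$, forcing $h=0$ and $f\in J_2'$. Symmetrically $J_2'\subseteq J_1'$, so $J_1'=J_2'$, and then the decomposition conditions $J=J_i'+J_i''$ and $J_i'\cap J_i''=\{0\}$ force $J_1''=J_2''$ as well.

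\textbf{Construction of the decomposition.} Using Lemma \ref{1:rep0} I may replace $\mathcal{J}$ by an independent $M$-representation generating the same ideal, so that the coefficient vectors $v_M(F_1),\dots,v_M(F_r)\in\F_p^{\binom{k+1}{2}+kd+1}$ are linearly independent. I reorder the coordinates of $v_M(\cdot)$ so that the ``outside'' slots (the coefficients $b_{i,j}$ with $n_i$ or $n_j$ outside $I$, and the coefficients $v_i$ with $n_i$ outside $I$) come first. Performing Gaussian row reduction on the matrix with rows $v_M(F_1),\dots,v_M(F_r)$ produces an invertible $\F_p$-linear transformation of $(F_1,\dots,F_r)$ into a new basis $(G_1,\dots,G_s,H_1,\dots,H_t)$ of $\mathrm{span}_{\F_p}\{F_i\}$ with $s+t=r$, where (a) each $G_j$ has all outside slots zero, so $G_j\in\F_p^d[I]$, and (b) the $H_j$'s exhibit a triangular pivot pattern: the leading nonzero outside-slot index of each $v_M(H_j)$ is distinct and appears nowhere else among the $v_M(H_{j'})$'s. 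I then set $J'$ to be the ideal of $\F_p^d[I]$ generated by $G_1,\dots,G_s$ (naturally extended to $\F_p^d[n_1,\dots,n_k]$) and $J''$ the ideal of $\F_p^d[n_1,\dots,n_k]$ generated by $H_1,\dots,H_t$. The equality $J=J'+J''$ is immediate since the two generating sets have the same $\F_p$-span, and the containment $J'\subseteq\F_p^d[I]$ is built into the construction of the $G_j$'s.

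\textbf{Main obstacle.} What remains is to verify that every nonzero element of $J''$ depends nontrivially on at least one variable outside $I$, which simultaneously gives $J'\cap J''=\{0\}$. Writing a typical element $f=\sum_{j=1}^t q_j H_j$ with $q_j\in\F_p^d[n_1,\dots,n_k]$, I must show that $f\in\F_p^d[I]$ forces $f=0$. This is the crux of the proof. I plan to proceed by inspecting the monomials of $f$ that involve variables outside $I$, exploiting the triangular pivot structure of the $H_j$'s: the pivot outside slot of $H_j$ produces a distinguished degree-two monomial that no other $H_{j'}$ can contribute, so a comparison of leading outside-variable monomials forces the highest-degree parts of the $q_j$'s with respect to outside variables to cancel in a coordinatewise fashion. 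Iterating the comparison from the highest pivot downward yields $q_1=\dots=q_t=0$. The hypothesis $p\gg_{d,k}1$ enters here to ensure that interpolation-style arguments on $\F_p$-coefficients (arising from the degree-two interactions among outside variables) are nondegenerate in characteristic $p$. Combining with the uniqueness part completes the proof.
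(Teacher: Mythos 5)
Your uniqueness argument is exactly the paper's, and your construction for existence is also the paper's: the paper applies an invertible $\F_p$-linear change to $(F_1,\dots,F_r)$ (which preserves the generated ideal and consistency) so that one batch of new generators is independent of the variables outside $I$ while the remaining ones depend nontrivially on them, and then simply declares the two generated ideals to be an $I$-decomposition; the only verification it records is that consistency keeps $1$ out of both ideals. So up to the point where you stop proving and start planning, you are on the same route as the paper.

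The gap is the step you yourself call the main obstacle, and the method you sketch for it cannot work. First, the conclusion you aim for, namely that $\sum_{j}q_jH_j\in\F_p^d[I]$ forces $q_1=\dots=q_t=0$, is never true once $t\geq 2$: the syzygy $q_1=H_2$, $q_2=-H_1$ gives a vanishing sum with nonzero coefficients, so at best you could hope to show the sum itself is zero. Second, even that weaker statement does not follow from the pivot structure of the $v_M(H_j)$, and in the stated generality it can fail outright: take $k=2$, $I=\{n_1\}$, let $A$ have a single nonzero entry in position $(1,1)$, and set $H_1=(n_1A)\cdot n_2=n_{1,1}n_{2,1}$ and $H_2=(n_2A)\cdot n_2+1=n_{2,1}^2+1$. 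This is an independent (hence consistent) $(M,2)$-family, both members depend nontrivially on $n_2$ and have distinct pivots, yet $n_{2,1}H_1-n_{1,1}H_2=-n_{1,1}$ is a nonzero element of the ideal they generate which is independent of $n_2$: the top $n_2$-degree terms cancel, which is precisely the cancellation your leading-outside-monomial iteration assumes cannot occur, and taking $p$ large does not help. So the verification you flag is a genuine elimination-type statement that needs input beyond the echelon structure (for instance rank or nondegeneracy hypotheses, or a restriction to the situations in which the paper actually uses the decomposition). To be fair, the paper's own proof does not carry out this verification either; it asserts that the constructed pair is an $I$-decomposition. But since your proposal isolates it as the crux and then offers only a plan that provably fails on admissible families, the proposal is incomplete exactly there. (Also, your closing remark in the uniqueness part that $J_1''=J_2''$ follows is neither needed nor justified; the proposition only claims uniqueness of $J'$.)
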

\begin{proof}
Note that for any $r\times r$ invertible matrix $(t_{i,j})_{1\leq i,j\leq r}$ in $\F_{p}$, $J$ is also the ideal generated by $\{F'_{i}:=t_{i,1}F_{1}+\dots+t_{i,r}F_{r} \colon 1\leq i\leq r\}$, which is also a consistent $(M,k)$-family. By the knowledge of linear algebra and the expression of $(M,k)$-quadratic functions, we may choose $(t_{i,j})_{1\leq i,j\leq r}$ in a way such that for some $0\leq r'\leq r$, $F'_{1},\dots,F'_{r'}$ are  independent of  $ \{n_{1},\dots,n_{k}\}\backslash I$, and $F'_{r'+1},\dots,F'_{r}$ depend nontrivially on $\{n_{1},\dots,n_{k}\}\backslash I$. Since $\{F'_{i}\colon 1\leq i\leq r\}$ is consistent, setting $J'$ to be the ideal generated by $F'_{1},\dots,F'_{r'}$ and $J''$ to be the ideal generated by $F'_{r'+1},\dots,F'_{k}$, we have that $J'$ is an $I$-projection  of $J$. 
This proves the existence of the $I$-projection (note that $J'$ and $J''$ dose not contain 1 since $\mathcal{J}$ is consistent).

Now suppose that $J$ has two $I$-projections $J'$ and $\tilde{J}'$. Then exist  some ideal  $J''$ of $\F_{p}[n_{1},\dots,n_{k}]$ such that $J=J'+J''$, $J'\cap J''=\{0\}$,   and all the non-zero polynomials in $J''$ depend nontrivially on $\{n_{1},\dots,n_{k}\}\backslash I$. 

Pick any $f\in \tilde{J}'$. We may write $f=f'+f''$ for some $f'\in J'$ and $f''\in J''$. Then $f''=f-f'$ is  independent of  $ \{n_{1},\dots,n_{k}\}\backslash I$. By the construction of $J''$, we have that $f''=0$ and thus $f=f'$. This means that $\tilde{J}'\subseteq J'$. By symmetry $J'\subseteq \tilde{J}'$ and so $\tilde{J}'=J'$. This proves the uniqueness of the $I$-projection.
\end{proof}

Let $\mathcal{J}$, $\mathcal{J}'$ and $\mathcal{J}''$ be  finite subsets of $\F^{d}_{p}[n_{1},\dots,n_{k}]$, and $I\subseteq \{n_{1},\dots,n_{k}\}$. 
Let $J,J'$, and $J''$ be the ideals generated by $\mathcal{J}$, $\mathcal{J}'$ and $\mathcal{J}''$ respectively. If $J'$ is an $I$-projection of $J$ with $(J',J'')$ being an $I$-decomposition of $J$, then we say that $\mathcal{J}'$ is an \emph{$I$-projection} of $\mathcal{J}$ and that $(\mathcal{J}',\mathcal{J}'')$ is an \emph{$I$-decomposition} of $\mathcal{J}$.
Note that the $I$-projection $\mathcal{J}'$ of $\mathcal{J}$ is not unique. However, by Proposition \ref{1:ip} the ideal generated by $\mathcal{J}'$ and the set of zeros $V(\mathcal{J}')$ are unique.

If $I$ is a subset of $\{1,\dots,k\}$, for convenience we say that $J'$ is an \emph{$I$-projection} of $J$ if $J'$ is an $\{n_{i}\colon i\in I\}$-projection of $J$. Similarly, we say that $(J',J'')$ is an \emph{$I$-decomposition} of $J$ if $(J',J'')$ is an $\{n_{i}\colon i\in I\}$-decomposition of $J$. Here $J,J',J''$ are either ideals or finite subsets of the polynomial ring.

We are now ready to define the projections of $M$-sets.
Let $\mathcal{J}\subseteq \F_{p}[n_{1},\dots,n_{k}]$ be a consistent $(M,k)$-family and 
let $\Omega=V(\mathcal{J})\subseteq(\V)^{k}$.
Let $I\cup J$ be a partition of $\{1,\dots,k\}$ (where $I$ and $J$ are non-empty), and $(\mathcal{J}_{I},\mathcal{J}'_{I})$ be an $I$-decomposition of $\mathcal{J}$.
Let $\Omega_{I}$ denote the set of $(n_{i})_{i\in I}\in(\V)^{\vert I\vert}$ such that $f(n_{1},\dots,n_{k})=0$ for all $f\in \mathcal{J}_{I}$ and $(n_{j})_{j\in J}\in(\V)^{\vert J\vert}$. 
Note that all $f\in \mathcal{J}_{I}$ are independent of $(n_{j})_{j\in J}$, and that $\Omega_{I}$ is independent of the choice of the $I$-decomposition.
We say that $\Omega_{I}$ is an \emph{$I$-projection} of $\Omega$.

 For $(n_{i})_{i\in I}\in (\V)^{\vert I\vert}$, let $\Omega_{I}((n_{i})_{i\in I})$ be the set of $(n_{j})_{j\in J}\in(\V)^{\vert J\vert}$ such that $f(n_{1},\dots,n_{k})=0$ for all $f\in \mathcal{J}'_{I}$. By construction, for any $(n_{i})_{i\in I}\in \Omega_{I}$, we have that $(n_{j})_{j\in J}\in\Omega_{I}((n_{i})_{i\in I})$ if and only if $(n_{1},\dots,n_{k})\in\Omega$. So for all  $(n_{i})_{i\in I}\in \Omega_{I}$, $\Omega_{I}((n_{i})_{i\in I})$ is independent of the choice of the $I$-decomposition.

  \begin{ex}
      Let $M\colon\V\to\F_{p}$ be a quadratic form associated to the matrix $A$. Consider the set $\Gow_{s}(V(M))$. By Lemma \ref{1:changeh}, we may write $\Gow_{s}(V(M))=V(\mathcal{J})\subseteq \F_{p}[n,h_{1},\dots,h_{s}]$ where 
      $$\mathcal{J}:=\{M(n),M(n+h_{i}), (h_{i}A)\cdot h_{j}\colon 1\leq i, j\leq s, i\neq j\}$$
      is a consistent $(M,s+1)$-family of total dimension $\frac{s(s+1)}{2}+1$.
      Let
      $$\mathcal{J}':=\{M(n),M(n+h_{i}), (h_{i}A)\cdot h_{j}\colon 1\leq i, j\leq t, i\neq j\}$$
       and $\mathcal{J}'':=\mathcal{J}\backslash \mathcal{J}'$. 
        It is not hard to compute that $(\mathcal{J}',\mathcal{J}'')$ is an  $\{n,h_{1},\dots,h_{t}\}$-decomposition of $\mathcal{J}$.  
       So $\Gow_{t}(V(M))$ is an $\{n,h_{1},\dots,h_{t}\}$-projection of $\Gow_{s}(V(M))$.
  \end{ex}

   We are now ready to state Fubini's theorem for $M$-sets:
 
\begin{thm}[Fubini's Theorem]\label{1:ct}
	Let $d,k\in\N_{+}$, $r_{1},\dots,r_{k}\in\N$ and $p$ be a prime.  Set $r:=r_{1}+\dots+r_{k}$.  
	Let  $M\colon\V\to\F_{p}$ be a  quadratic form with $\rank(M)\geq 2r+1$, and $\Omega\subseteq (\V)^{k}$ be a consistent $M$-set admitting a  standard $M$-representation of $\Omega$ with dimension vector $(r_{1},\dots,r_{k})$. 
		\begin{enumerate}[(i)]
		\item We have $\vert\Omega\vert=p^{dk-r}(1+O_{k,r}(p^{-1/2}))$;
		\item If $I=\{1,\dots,k'\}$ for some $1\leq k'\leq k-1$, then 
		$\Omega_{I}$ is a consistent $M$-set admitting a  standard  $M$-representation with dimension vector $(r_{1},\dots,r_{k'})$. Moreover,
		for all but at most $(k-k')r'_{k'}p^{dk'+r'_{k'}-1-\rank(M)}$ many $(n_{i})_{i\in I}\in(\V)^{k'}$, we have that $\Omega_{I}((n_{i})_{i\in I})$ has a standard $M$-representation with dimension vector $(r_{k'+1},\dots,r_{k})$  and that $\vert\Omega_{I}((n_{i})_{i\in I})\vert=p^{d(k-k')-(r_{k'+1}+\dots+r_{k})}(1+O_{k,r}(p^{-1/2}))$, where $r'_{k'}:=\max_{k'+1\leq i\leq k}r_{i}$. 
		\item 
		If $k\geq 2$, and $I\cup J$ is a partition of $\{1,\dots,k\}$ (where  $I$ and $J$ are non-empty), then for any function $f\colon \Omega\to\C$ with norm bounded by 1,   we have that
		\begin{equation}\label{1:ffbb}
		\E_{(n_{1},\dots,n_{k})\in\Omega}f(n_{1},\dots,n_{k})=\E_{(n_{i})_{i\in I}\in\Omega_{I}}\E_{(n_{j})_{j\in J}\in\Omega_{I}((n_{i})_{i\in I})}f(n_{1},\dots,n_{k})+O_{k,r}(p^{-1/2}).
		\end{equation}
	\end{enumerate} 
\end{thm}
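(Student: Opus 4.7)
The plan is to prove Theorem \ref{1:ct} by induction on $k$, proving parts (i), (ii), (iii) simultaneously, with the crux of the argument being part (i) (the count) and part (ii) (preservation of standardness under projection); part (iii) will then follow by Fubini-summation from (ii).

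For the base case $k=1$, I would use direct Fourier/character-sum estimates. Write the indicator as
\[
\vert\Omega\vert = \frac{1}{p^{r_1}}\sum_{\t\in\F_p^{r_1}}\sum_{n\in\V}\exp\Bigl(\frac{1}{p}\tau\Bigl(\sum_{j=1}^{r_1}t_j F_{1,j}(n)\Bigr)\Bigr).
\]
The term $\t=\bold{0}$ contributes $p^{d-r_1}$. For $\t\neq\bold{0}$, consistency and independence of the standard representation force $F_\t:=\sum t_j F_{1,j}$ to be a non-constant $(M,1)$-integral quadratic function; if its degree-$2$ part vanishes the inner sum is $0$ (the linear part is nonzero), otherwise its rank is at least $\rank(M)-O_{k,r}(1)$ and the Gauss-sum estimates in the proof of Lemma \ref{1:counting} bound the inner sum by $O(p^{d-(\rank(M)-O(1))/2})$. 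Since $\rank(M)\geq 2r+1$, summing over the $p^{r_1}-1$ choices of $\t$ yields the desired estimate $p^{d-r_1}(1+O_{r_1}(p^{-1/2}))$.

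For the inductive step with $k\geq 2$, I would first prove part (ii) with $I=\{1,\ldots,k-1\}$: the standard $M$-representation $(F_{i,j})$ automatically yields an $I$-decomposition by placing all polynomials with $i\leq k-1$ in $\mathcal{J}_I$ and the $F_{k,j}$ in $\mathcal{J}_I'$. Since the $F_{i,j}$ with $i\leq k-1$ form a standard $M$-representation with dimension vector $(r_1,\dots,r_{k-1})$, the induction hypothesis gives (ii) for the projection itself. For each fixed $\x=(n_1,\dots,n_{k-1})$ set $G_j(n;\x):=F_{k,j}(n_1,\dots,n_{k-1},n)$; a direct computation gives
\[
v_M'(G_j)=\Bigl(b^{(j)}_{k,k},\ \sum_{i<k}b^{(j)}_{k,i}(n_iA)+v^{(j)}_k\Bigr),
\]
which is in the $\F_p^{d+1}$ block of vectors where the pivots of $F_{k,1},\dots,F_{k,r_k}$ lived in the original standard representation.

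The key step—and the main obstacle—is to show that $(G_1(\cdot;\x),\ldots,G_{r_k}(\cdot;\x))$ admits a standard $(M,1)$-representation with dimension vector $(r_k)$ for all but at most $O(r_k\,p^{d(k-1)+r_k-1-\rank(M)})$ choices of $\x\in\Omega_I$. To see why this should hold: a non-trivial linear relation $\sum c_j v_M'(G_j)=\bold{0}$ forces $\sum c_j b^{(j)}_{k,k}=0$ and the vector identity $\sum_{i<k}(\sum_j c_j b^{(j)}_{k,i})(n_iA)+\sum_j c_j v^{(j)}_k=\bold{0}$; the pivot structure of the original representation rules out such relations holding identically in $\x$, so each relation cuts out a proper affine subvariety of $\Omega_I$, which by Corollary \ref{1:counting01} contains $O(p^{d(k-1)-1})$ points of the ambient $(\V)^{k-1}$. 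Running this over a basis of linear combinations and applying the $k=1$ base case to the well-behaved fibers yields $|\Omega_I(\x)|=p^{d-r_k}(1+O(p^{-1/2}))$ for most $\x$. Part (i) for $k$ then follows from
\[
\vert\Omega\vert=\sum_{\x\in\Omega_I}\vert\Omega_I(\x)\vert,
\]
splitting into good and bad $\x$, using the trivial bound $\vert\Omega_I(\x)\vert\leq p^d$ on the exceptional set (whose contribution, by the rank hypothesis $\rank(M)\geq 2r+1$, is absorbed into the error).

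Finally, for part (iii) with a general partition $I\cup J$, I would reduce to the consecutive case $I=\{1,\dots,k'\}$ by Proposition \ref{1:yy3}(iii): a bijective $d$-integral permutation of variables converts the given partition to a consecutive one while preserving the consistent $M$-set structure; one would then check that the dimension vector transforms correctly (it may need to be re-ordered and the new representation re-standardized, but the total $r$ is invariant). Then (iii) follows from (ii) by writing the left-hand side of \eqref{1:ffbb} as a weighted average over $\Omega_I$ of the fiber averages, and absorbing the bad $\x$ into the $O_{k,r}(p^{-1/2})$ error. The main technical work throughout will be the book-keeping of how pivot positions in the standard representation control the rank and independence of the restricted family on fibers; I expect this to be streamlined by isolating a "restriction lemma" asserting that standardness descends to generic fibers, and then invoking it uniformly at each inductive step.
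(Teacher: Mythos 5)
Your proposal follows essentially the same route as the paper's proof: induction on $k$, a base case that is a Gauss-sum count (the paper instead invokes Lemma \ref{1:ns} and Corollary \ref{1:counting01}, which rest on the same estimate), generic-fiber standardness read off from the reduced row-echelon pivot structure with the exceptional set bounded via $\rank(M)\geq 2r+1$, summation over fibers for (i) and the comparison of $\vert\Omega_{I}\vert\cdot\vert\Omega_{I}((n_{i})_{i\in I})\vert$ with $\vert\Omega\vert$ for (iii), and a permutation of variables plus re-standardization (Proposition \ref{1:yy3} and Corollary \ref{1:rep}) for general partitions. The only point to make explicit is that part (ii) for general $k'\leq k-2$ requires running your ``restriction lemma'' simultaneously for every block of functions $F_{i,j}$ with $i>k'$ (demanding full rank $r_{i}$ of each restricted coefficient block and union-bounding the exceptional sets over $i$), not just for the last variable, which is exactly how the paper argues and after which your sketch of (iii) goes through.
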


The major difficulty in showing (\ref{1:ffbb}) is that the cardinality of $\Omega_{I}((n_{i})_{i\in I})$ is dependent on the choice of $(n_{i})_{i\in I}$.
However, under a careful computation, one can show that for almost all choices of $(n_{i})_{i\in I}$, the cardinalities of $\Omega_{I}((n_{i})_{i\in I})$ are almost the same. Therefore, it is still possible to write the average of $f$  as two iterated averages, at the cost of an error term $O_{k,r}(p^{-1/2})$.

\begin{proof}[Proof of Theorem \ref{1:ct}]
We first assume that  $I=\{1,\dots,k'\}$ and $J=\{k'+1,\dots,k\}$ for some $1\leq k'\leq k$.
We prove Theorem \ref{1:ct}  by induction with respect to $k$. We first show that Theorem \ref{1:ct} holds if $k=1$. Let $(F_{1},\dots,F_{r})$ be a  standard  $M$-representation of $\Omega$ (which exists by Corollary \ref{1:rep} (iii)). Since $k=1$, either $F_{1}(n)=(nA)\cdot n+v_{1}\cdot n+u_{1}$ and $F_{i}(n)=v_{i}\cdot n+u_{i}, 2\leq i\leq r$ for some linearly independent $v_{2},\dots,v_{r}$, or $F_{i}(n)=v_{i}\cdot n+u_{i}, 1\leq i\leq r$ for some linearly independent $v_{1},\dots,v_{r}$. Since $\{F_{1},\dots,F_{r}\}$ is consistent, in the former case $\Omega$ is the intersection of $V(F_{1})$ with an affine subspace of $\V$ of co-dimension $r-1$, and in the later case $\Omega$ is an affine subspace of $\V$ of co-dimension $r$. Since $\rank(M)-2(r-1)\geq 3$, by Lemma \ref{1:ns} and Corollary \ref{1:counting01},  we have that $\vert\Omega\vert=p^{d-r}(1+O(p^{-1/2}))$ in both cases.  Since $k=1$, there is nothing to prove in Parts (ii) and (iii).

	We now assume that the conclusion holds for 	$k-1$ for some $k\geq 2$ and we prove it for $k$.
	 We may then write $\Omega=V(\mathcal{J})$ for some $\mathcal{J}=(F_{i,j}\colon(\V)^{k}\to\F_{p}\colon 1\leq i\leq k, 1\leq j\leq r_{i})$, where $F_{i,j}$ depends nontrivially on $n_{i}$ and is independent of $n_{i'}$ for all $i'>i$.
	For convenience denote $\Omega_{k'}:=\Omega_{\{1,\dots,k'\}}$ and $\Omega_{k'}(n_{1},\dots,n_{k'}):=\Omega_{\{1,\dots,k'\}}(n_{1},\dots,n_{k'})$. 
	By Proposition \ref{1:yy3} (ii), it is not hard to see that $\Omega_{k'}=\cap_{1\leq i\leq k'}\cap_{1\leq j\leq r_{i}}V(F_{i,j})$ (where $F_{i,j}$ are viewed as a polynomial in $(n_{1},\dots,n_{k'})$ since it is independent of $n_{i+1},\dots,n_{k}$) and thus $\Omega_{k'}$ is a consistent $M$-set admitting a consistent $M$-representation with dimension vector $(r_{1},\dots,r_{k'})$. So by induction hypothesis, 
 \begin{equation}\label{1:coot1}
 \vert\Omega_{k'}\vert=p^{dk'-(r_{1}+\dots+r_{k'})}(1+O_{k,r}(p^{-1/2})).
 \end{equation}	
	For convenience also denote
	$\m=(n_{1},\dots,n_{k'})$ and $\n=(n_{k'+1},\dots,n_{k})$.
	Note that for all $\m\in\Omega_{k'}$, we have that $\n\in\Omega_{k'}(\m)$ if and only if $F_{i,j}(\m,\n)=0$ for all $i>k'$ and $1\leq j\leq r_{i}$.
	
	For $1\leq i\leq k$,
	since the matrix $\begin{bmatrix}
		v_{M}(F_{i,1})\\
		\dots \\
		v_{M}(F_{i,r_{i}})
		\end{bmatrix}$ is in the reduced row echelon form and does not contain a row of the form $(0,\dots,0,a),a\in\F_{p}$, we have that 
	$$F_{i,j}(n_{1},\dots,n_{i})=(n_{i}A)\cdot (u_{i,j,1}n_{1}+\dots+u_{i,j,i}n_{i})+v_{i,j}\cdot n_{i}+f_{i,j}(n_{1},\dots,n_{i-1})$$
	for all $1\leq j\leq r_{i}$
	for some $u_{i,j}=(u_{i,j,i},\dots,u_{i,j,1})\in(\V)^{i}$, $v_{i,j}\in\V$, and some  $(M,i-1)$-integral quadratic function $f_{i,j}$ such that the $r_{i}\times (i+1)d$ matrix $B_{i}:=\begin{bmatrix}
		u_{i,1}, v_{i,1}\\
		\dots \\
		u_{i,r_{i}}, v_{i,r_{i}}
		\end{bmatrix}$ is in the reduced row echelon form which does not contain zero rows. 
		 
Denote 	$u'_{i,j}:=(u_{i,j,i},\dots,u_{i,j,k'+1})$	and $v'_{i,j}(n_{1},\dots,n_{k'}):=v_{i,j}+(u_{i,j,1}n_{1}+\dots+u_{i,j,k'}n_{k'})A\in\V$	.
		Note that if we view $n_{1},\dots,n_{k'}$ as constants and consider $F_{i,j}$ as a function in the varaibles $n_{k'+1},\dots,n_{k}$, then for $k'+1\leq i\leq k$ and $1\leq j\leq r_{i}$,  	
$$F_{i,j}(n_{1},\dots,n_{i})=(n_{i}A)\cdot (u_{i,j,k'+1}n_{k'+1}+\dots+u_{i,j,i}n_{i})+v'_{i,j}(n_{1},\dots,n_{k'})\cdot n_{i}+f_{i,j}(n_{1},\dots,n_{i-1})$$
and thus 
$$v_{M}(F_{i,j}(\m,\cdot))=(0,\dots,0,u'_{i,j},v'_{i,j}(\m),\dots),$$
where $u'_{i,j}$ starts from the $n_{i}^{2}$ coordinate  and $v'_{i,j}(\m)$	terminates right before the $n_{i-1}^{2}$ coordinate (and we do not care about the remaining coordinates).	
		
Let $Z_{k'}$ denote the set of $\m\in(\V)^{k'}$ such that the $r_{i}\times (i-k'+1)d$ matrix $B_{i}(\m):=\begin{bmatrix}
		u'_{i,1}, v'_{i,1}(\m)\\
		\dots \\
		u'_{i,r_{i}}, v'_{i,r_{i}}(\m)
		\end{bmatrix}$
is not of  rank $r_{i}$ for some $k'+1\leq i\leq k$.
Note that the matrix with 
$$v_{M}(F_{k,1}),\dots,v_{M}(F_{k,r_{k}}),\dots,v_{M}(F_{1,1}),\dots,v_{M}(F_{1,r_{1}})$$
as row vectors is a block upper triangular matrix in the row echelon form of the form
$$\begin{bmatrix}
		B_{k} & \dots 	 & \dots & \dots	 & \dots	\\
		          & B_{k-1} & \dots & \dots  & \dots       \\
		          &              & \ddots & \dots & \dots \\
		          &              &           & B_{1} & \dots
		\end{bmatrix}$$
	which does not contain a row of the form $(0,\dots,0,a),a\in\F_{p}$ (note that there is only 1 column to the right of $B_{1}$, which correspondent to the constant coefficients).	
For all $\m\in\Omega_{k'}\backslash Z_{k'}$, $k'+1\leq i\leq k$ and $1\leq j\leq r_{i}$,
the matrix with 
\begin{equation}\nonumber
\begin{split}
v_{M}(F_{k,1}(\m,\cdot)),\dots,v_{M}(F_{k,r_{k}}(\m,\cdot)),\dots,v_{M}(F_{k'+1,1}(\m,\cdot)),\dots,v_{M}(F_{k'+1,r_{k'+1}}(\m,\cdot))
\end{split}
\end{equation}
as row vectors is a block upper triangular matrix of the form
$$\begin{bmatrix}
		B_{k}(\m) & \dots 	 & \dots & \dots	& \dots			\\
		          & B_{k-1}(\m) & \dots & \dots  & \dots            \\
		          &              & \ddots & \dots & \dots\\
		          &              &           & B_{k'+1}(\m) & \dots
		\end{bmatrix},$$
		where there is only 1 column to the right of $B_{k'+1}(\m)$.		
Since  $B_{i}(\m)$ is of rank $r_{i}$,
 this matrix does not contain a row of the form $(0,\dots,0,a),a\in\F_{p}$.  We may then use elementary row operations to transform this matrix into the reduced  echelon form which can be written as
$$\begin{bmatrix}
		B'_{k}(\m) & \dots 	 & \dots & \dots	& \dots			\\
		          & B'_{k-1}(\m) & \dots & \dots    & \dots          \\
		          &              & \ddots & \dots & \dots \\
		          &              &           & B'_{k'+1}(\m) & \dots
		\end{bmatrix}$$
for some 	$B'_{i}(\m)$, which are in the reduced echelon form and of the same size as $B_{i}(\m)$. Again this matrix does not contain a row of the form $(0,\dots,0,a),a\in\F_{p}$.
This implies that $\Omega_{k'}(\m)$ has a  standard $M$-representation with dimension vector $(r_{k'+1},\dots,r_{k})$.
 So by induction hypothesis,
 \begin{equation}\label{1:coot2}
 \vert\Omega_{k'}(\m)\vert=p^{d(k-k')-(r_{k'+1}+\dots+r_{k})}(1+O_{k,r}(p^{-1/2})) \text{ for all } \m\in\Omega_{k'}\backslash Z_{k'}.
 \end{equation}

	We now compute the cardinality of $Z_{k'}$. 
	Fix $k'+1\leq i\leq k$. If $B_{i}(\m)$ is not of rank $r_{i}$, then there exist $c_{1},\dots,c_{r_{i}}$ with at least one of them equal to 1 such that 
	$\sum_{j=1}^{r_{i}}c_{j}u'_{i,j}=\bold{0}$ and 
	$\sum_{j=1}^{r_{i}}c_{j}(v_{i,j}+(u_{i,j,1}n_{1}+\dots+u_{i,j,k'}n_{k'})A)=\bold{0}.$
	If $\sum_{j=1}^{r_{i}}c_{j}u_{i,j,t}=0$ for all $1\leq t\leq k'$, then we have that $\sum_{j=1}^{r_{i}}c_{j}v_{i,j}=\bold{0}$ and this contradicts to the fact that $B_{i}$ is of full rank. So at least one of $\sum_{j=1}^{r_{i}}c_{j}u_{i,j,t}, 1\leq t\leq k'$ is nonzero. Therefore, for all $m\in\V$, then number of $n_{1},\dots,n_{k'}\in\V$ with $m=\sum_{j=1}^{r_{i}}c_{j}(u_{i,j,1}n_{1}+\dots+u_{i,j,k'}n_{k'})$ is $p^{d(k'-1)}$.

	On the other hand,
	the number of $c_{j}\in\F_{p}, 1\leq j\leq r_{i}$ with at least one of them equal to 1 is at most $r_{i}p^{r_{i}-1}$; the number of $m$ with $\sum_{j=1}^{r_{i}}c_{j}v_{i,j}+mA=\bold{0}$ is at most $p^{d-\rank(M)}$.  So 
	 \begin{equation}\label{1:coot3}
 \vert Z_{k'}\vert\leq \sum_{i=k'+1}^{k}r_{i}p^{r_{i}-1}\cdot p^{d(k'-1)}\cdot p^{d-\rank(M)}\leq 
   (k-k')r'_{k'}p^{dk'+r'_{k'}-1-\rank(M)}.
 \end{equation}
  	So Part (ii) is proved by combining (\ref{1:coot1}), (\ref{1:coot2}) and (\ref{1:coot3}).

	Since 
	$\rank(M)\geq r_{1}+\dots+r_{k}$ by (\ref{1:coot1}), (\ref{1:coot2}), and (\ref{1:coot3}), we have
	\begin{equation}\nonumber
	\begin{split}
	&\quad\sum_{\m\in\Omega_{k'}\backslash Z_{k'}}\vert\Omega_{k'}(\m)\vert
	\\&=\vert \Omega_{k'}\backslash Z_{k'}\vert \cdot p^{d(k-k')-(r_{k'+1}+\dots+r_{k})}(1+O_{k,r}(p^{-1/2}))=p^{dk-(r_{1}+\dots+r_{k})}(1+O_{k,r}(p^{-1/2}))
	\end{split}
	\end{equation}
	and
\begin{equation}\label{1:keyerr}
	\begin{split}
	\sum_{\m\in Z_{k'}}\vert\Omega_{k'}(\m)\vert\leq \vert Z_{k'}\vert\cdot p^{d(k-k')}\leq (k-k')r'_{k'}p^{dk+r'_{k'}-1-\rank(M)}.
	\end{split}
	\end{equation}
    Since $\rank(M)\geq 2r+1$, we have that 
    $$\vert\Omega\vert=\sum_{\m\in\Omega_{k'}}\vert\Omega_{k'}(\m)\vert=p^{dk-(r_{1}+\dots+r_{k})}(1+O_{k,r}(p^{-1/2})).$$
    This proves Part (i).

	Finally, 	we have that
	\begin{equation}\label{1:6699}
	\begin{split}
	&\quad \vert\E_{(\m,\n)\in\Omega}f(\m,\n)-\E_{\n\in\Omega_{k'}}\E_{\n\in\Omega_{k'}(\m)}f(\m,\n)\vert
	\\&=\Bigl\vert\frac{1}{\vert\Omega_{k'}\vert}\sum_{\m\in\Omega_{k'}}\sum_{\n\in\Omega_{k'}(\m)}f(\m,\n)(\frac{\vert\Omega_{k'}\vert}{\vert\Omega\vert}-\frac{1}{\vert \Omega_{k'}(\m)\vert})\Bigr\vert
	\\&\leq  \frac{1}{\vert\Omega_{k'}\vert}\sum_{\m\in\Omega_{k'}}\sum_{\n\in\Omega_{k'}(\m)}\Bigl\vert\frac{\vert\Omega_{k'}\vert}{\vert\Omega\vert}-\frac{1}{\vert \Omega_{k'}(\m)\vert}\Bigr\vert
	=\frac{1}{\vert\Omega_{k'}\vert}\sum_{\m\in\Omega_{k'}}\Bigl\vert\frac{\vert\Omega_{k'}\vert\cdot \vert \Omega_{k'}(\m)\vert}{\vert\Omega\vert}-1\Bigr\vert
	\\&=\frac{1}{\vert\Omega_{k'}\vert}\sum_{\m\in\Omega_{k'}\backslash Z_{k'}}\Bigl\vert\frac{\vert\Omega_{k'}\vert\cdot \vert \Omega_{k'}(\m)\vert}{\vert\Omega\vert}-1\Bigr\vert+\frac{1}{\vert\Omega_{k'}\vert}\sum_{\m\in\Omega_{k'}\cap Z_{k'}}\Bigl\vert\frac{\vert\Omega_{k'}\vert\cdot \vert \Omega_{k'}(\m)\vert}{\vert\Omega\vert}-1\Bigr\vert.
	\end{split}
	\end{equation}
	
	Note that for $\m\in\Omega_{k'}\backslash Z_{k'}$, 
	\begin{equation}\nonumber
	\begin{split}
	&\quad\frac{\vert\Omega_{k'}\vert\cdot \vert \Omega_{k'}(\m)\vert}{\vert\Omega\vert}
	\\&=\frac{p^{dk'-(r_{1}+\dots+r_{k'})}(1+O_{k,r}(p^{-1/2}))\cdot p^{d(k-k')-(r_{k'+1}+\dots+r_{k})}(1+O_{k,r}(p^{-1/2}))}{p^{dk-(r_{1}+\dots+r_{k})}(1+O_{k,r}(p^{-1/2}))}=1+O_{k,r}(p^{-1/2}).
	\end{split}
	\end{equation}
	For all $\m\in\Omega_{k'}\cap Z_{k'}$,
	$$\frac{\vert\Omega_{k'}\vert\cdot \vert \Omega_{k'}(\m)\vert}{\vert\Omega\vert}\leq \frac{p^{d(k-k')}\vert\Omega_{k'}\vert}{\vert\Omega\vert}=p^{r_{k'+1}+\dots+r_{k}}(1+O_{k,r}(p^{-1/2})).$$ 
	Moreover,
	$$\frac{\vert Z_{k'}\vert}{\vert\Omega_{k'}\vert}=O_{k,r}(p^{(r_{1}+\dots+r_{k'}+r'_{k'})-\rank(M)-1}), \frac{\vert \Omega_{k'}\backslash Z_{k'}\vert}{\vert\Omega_{k'}\vert}=1+O_{k,r}(p^{-1/2}).$$
	So the right hand side of (\ref{1:6699}) is at most 
	$$O_{k,r}(p^{-1/2})(1+O_{k,r}(p^{-1/2}))+O_{k,r}(p^{(r+r'_{k'})-\rank(M)-1})=1+O_{k,r}(p^{-1/2})$$
	since $\rank(M)\geq 2r+1$. This proves Part (iii).
	
	\
 
 	We now consider the general case.
	Let  $\mathcal{J}=(F_{1},\dots,F_{r})$ be a standard $M$-representation of $\Omega$ with dimension vector $(r_{1},\dots,r_{k})$. Let $\sigma\colon\{1,\dots,k\}\to\{1,\dots,k\}$ be a permutation with $\sigma(\{1,\dots,k'\})=I$ for some $1\leq k'\leq k$, and let $L(n_{1},\dots,n_{k}):=(n_{\sigma(1)},\dots,n_{\sigma(k)})$ be the $d$-integral linear transformation which moves the variables $n_{i},i\in I$ to the front. By Proposition  \ref{1:yy3} (iii), $\{F_{1}\circ L^{-1},\dots,F_{r}\circ L^{-1}\}$ is an independent $(M,k)$-family and $L(\Omega)=\cap_{i=1}^{r}V(F_{1}\circ L^{-1})$. 
By Corollary \ref{1:rep} (iii), $L(\Omega)$ admits a standard $M$-representation with some  dimension vector $(r'_{1},\dots,r'_{k})$ such that $r'_{1}+\dots+r'_{k}=r$. We may now apply Theorem \ref{1:ct} for the case $I=\{1,\dots,k'\}$ and $J=\{k'+1,\dots,k\}$ to the set $L(\Omega)$ and complete the proof.
\end{proof}

\begin{rem}\label{1:ipmct}
	It is worth noting that Proposition \ref{1:ct} has many room for improvement. For example:
	\begin{itemize}
		\item the lower bound of $\rank(M)$ could be made into a function of the dimension vector $(r_{1},\dots,r_{k})$ instead of a function of $r$, which would give us a better lower bound.
		\item by a more careful computation, the error term $O_{k,r}(p^{-1/2})$ in (\ref{1:ffbb}) can be improved to $O_{k}(p^{-A_{d,k,r}})$ for some $A_{d,k,r}>0$ with $\lim_{d\to\infty} A_{d,k,r}=\infty$;
		\item the estimate for the sum $\sum_{(n_{1},\dots,n_{k'})\in Z_{k'}}\vert\Omega_{k'}(n_{1},\dots,n_{k'})\vert$ in (\ref{1:keyerr}) is very coarse. In fact, for most $(n_{1},\dots,n_{k'})\in Z_{k'}$, the cardinality of $\Omega_{k'}(n_{1},\dots,n_{k'})$ is much smaller than $p^{d(k-k')}$. If we  estimate this sum more carefully, it is promising to provide a much better lower bound for $\rank(M)$. 
	\end{itemize}	
 	However, we will not pursue these improvements in this paper.
\end{rem}	

\begin{rem}
An annoying fact is that
each time we apply Theorem \ref{1:ct} for a set $\Omega$, we need to first compute the $I$-projection $\Omega_{I}$, and then the set $\Omega_{I}((n_{i})_{i\in I})$. This computation is not difficult by using the algorithm in the proof of Proposition \ref{1:ip}, but is rather redundant. For conciseness, in this paper as well as in   \cite{SunC,SunD}, we only provide  essential details on computations of the sets $\Omega_{I}$ and $\Omega_{I}((n_{i})_{i\in I})$, instead of providing complete computations.
\end{rem}
 
\begin{rem}\label{1:cctt}
By Proposition  \ref{1:yy3} (i), if $\Omega\subseteq (\V)^{k}$ is a consistent $M$-set, then for all $I\subseteq \{1,\dots,k\}$, $\Omega_{I}$ is also a consistent $M$-set. So one can repeatedly use Theorem \ref{1:ct} to write $\E_{(n_{1},\dots,n_{k})\in\Omega}f(n_{1},\dots,n_{k})$ as a multiple average in any way we see fit. 
For example, by Lemma \ref{1:changeh}, if $M$ is a quadratic form associated with the matrix $A$, then we may write 
	\begin{equation}\nonumber
	\begin{split}
	&\quad\E_{(n,h_{1},\dots,h_{4})\in \Gow_{4}(V(M))}f(n,h_{1},\dots,h_{4})
	\\&=\E_{(h_{4},h_{2})\in (\V)^{2}\colon (h_{2}A)\cdot h_{4}=0}\E_{n\in V(M)^{h_{2},h_{4}}}\E_{(h_{1},h_{3})\in W(n,h_{2},h_{4})}f(n,h_{1},\dots,h_{4})+O(p^{-1/2}),
	\end{split}
	\end{equation}
where $W(n,h_{2},h_{4})$ is the set of $(h_{1},h_{3})\in (\V)^{2}$ such that 
$M(n+\e_{1}h_{1}+\dots+\e_{4}h_{4})=0$ for all $\e_{1},\dots,\e_{4}\in\{0,1\}$ with $(\e_{1},\e_{3})\neq (0,0)$. It is possible to further simplify the description of the set $W(n,h_{2},h_{4})$ and compute its the dimension vectors. However, in many applications we do not have to specify all the sets appearing in the averages explicitly.
\end{rem}

As an application of Theorem \ref{1:ct}, we have the following estimate for the cardinality of Gowers sets.

\begin{coro}[Cardinality of Gowers sets]\label{1:countingh}
	Let $d\in \N_{+}, r,s\in\N$ and $p$ be a prime. Let $M\colon\V\to\F_{p}$ be a quadratic form and $V+c$ be an affine subspace of $\V$ of co-dimension $r$. If either $\rank(M\vert_{V+c})$ or $\rank(M)-2r$ is at least $s^{2}+s+3$, then
	$$\vert \Gow_{s}(V(M)\cap (V+c))\vert=p^{(s+1)(d-r)-(\frac{s(s+1)}{2}+1)}(1+O_{s}(p^{-1/2})).$$
	\end{coro}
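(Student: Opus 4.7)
The plan is to reduce the problem to Theorem \ref{1:ct} applied to the Gowers set of a non-degenerate-like quadratic form on a space of dimension $d-r$. First I would dispose of the second hypothesis: if $\rank(M)-2r \geq s^2+s+3$, then by Proposition \ref{1:iissoo}(iii) we have $\rank(M|_{V+c}) \geq \rank(M)-2r \geq s^2+s+3$, so it suffices to work under the assumption $\rank(M|_{V+c}) \geq s^2+s+3$.

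Next I would use Lemma \ref{1:changeh} to transfer the problem onto $\F_p^{d-r}$. Pick any bijective linear transformation $\phi\colon \F_p^{d-r} \to V$ and set $M'(m) := M(\phi(m)+c)$, so that $\rank(M') = \rank(M|_{V+c}) \geq s^2+s+3$. The ``in particular'' part of Lemma \ref{1:changeh} gives a bijection
$$\Gow_s(V(M'))\ \longleftrightarrow\ \Gow_s(V(M)\cap(V+c)),\qquad (n',h'_1,\dots,h'_s)\mapsto (\phi(n')+c,\phi(h'_1),\dots,\phi(h'_s)),$$
so it suffices to count $\Gow_s(V(M'))$ on $(\F_p^{d-r})^{s+1}$.

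Now I would invoke Theorem \ref{1:ct}. By the discussion in Example \ref{1:rreepp} (which is just a repackaging of Lemma \ref{1:changeh}), after relabeling coordinates via the permutation $L(n,h_1,\dots,h_s):=(h_1,\dots,h_s,n)$, the set $\Gow_s(V(M'))$ is a consistent $M'$-set with dimension vector $(0,1,1,2,3,\dots,s)$, the equations being $(h_i A')\cdot h_j = 0$ for $i<j$ (the lone degree-$2$ equation appearing in the $n$-block being $M'(n)=0$, which forces the entries $M'(n+h_i)-M'(n)=2(nA')\cdot h_i + (h_i A')\cdot h_i$ to exhaust the remaining linear degrees of freedom in each block, one per $h_i$). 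The total co-dimension is $r=1+1+2+\dots+s=\tfrac{s(s+1)}{2}+1$, and the hypothesis $\rank(M')\geq s^2+s+3=2r+1$ meets the condition $\rank(M')\geq 2r+1$ required by Theorem \ref{1:ct}(i). Therefore
$$|\Gow_s(V(M'))| = p^{(d-r)(s+1)-(\frac{s(s+1)}{2}+1)}\bigl(1+O_s(p^{-1/2})\bigr),$$
and the claimed estimate for $|\Gow_s(V(M)\cap(V+c))|$ follows from the bijection above.

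The only mildly subtle step is bookkeeping the dimension vector correctly so that Theorem \ref{1:ct}(i) actually applies with the bound $\rank(M') \geq 2r+1$ matching the hypothesis $\rank(M|_{V+c}) \geq s^2+s+3$; this is what motivates the particular numerical threshold $s^2+s+3$ in the statement. Everything else is routine once Lemma \ref{1:changeh} and Theorem \ref{1:ct} are in hand.
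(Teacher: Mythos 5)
Your proposal is correct and follows essentially the same route as the paper: reduce the second rank hypothesis to the first via Proposition \ref{1:iissoo}(iii), transfer to $\F_p^{d-r}$ via Lemma \ref{1:changeh} with $M'(m):=M(\phi(m)+c)$, identify $\Gow_s(V(M'))$ as a consistent $M'$-set via Example \ref{1:rreepp}, and apply Theorem \ref{1:ct}(i). One cosmetic slip: your listed dimension vector $(0,1,1,2,3,\dots,s)$ has $s+2$ entries and matches neither the unrelabeled vector $(1,1,2,\dots,s)$ nor the relabeled one $(0,1,\dots,s-1,s+1)$, but since its total is still $\frac{s(s+1)}{2}+1$ and Theorem \ref{1:ct}(i) only uses the total co-dimension and the number of blocks $s+1$, the argument is unaffected.
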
	
\begin{proof}
	By Proposition \ref{1:iissoo}, it suffices to consider the case when  $\rank(M\vert_{V+c})\geq s^{2}+s+3$. 
	Let $\phi\colon \F_{p}^{d-r}\to V$ be any bijective linear transformation, and let $M'(m):=M(\phi(m)+c)$. By Lemma \ref{1:changeh},
	 $\vert \Gow_{s}(V(M)\cap (V+c))\vert=\vert \Gow_{s}(V(M'))\vert$. By Example \ref{1:rreepp}, $\Gow_{s}(V(M'))$ admits a standard $M$-representation with dimension vector 
	 $(1,1,2,\dots,s)$.
	 We may then apply Theorem \ref{1:ct} to conclude that $\vert \Gow_{s}(V(M'))\vert=p^{(s+1)(d-r)-(\frac{s(s+1)}{2}+1)}(1+O_{s}(p^{-1/2})).$
\end{proof}

Another application of Theorem \ref{1:ct} is to prove Proposition \ref{1:ctsp}. We leave the details to the interested readers.

We conclude this section with some properties on the dimension vectors of $M$-sets.

\begin{prop}\label{1:yy33}
Let $d,k,r\in\N_{+}$, $p$ be a prime, $M\colon\V\to\F_{p}$ be a quadratic form and $\Omega\subseteq (\V)^{k}$ be a consistent $M$-set. Suppose that $\Omega=\cap_{i=1}^{r}V(F_{i})$ for some consistent $(M,k)$-family $\{F_{1},\dots,F_{r}\}$. 
Let $1\leq k'\leq k$.
Suppose that $\rank(M)\geq 2r+1$ and $p\gg_{k,r} 1$.
\begin{enumerate}[(i)]
\item 
The dimension of all independent $M$-representations of $\Omega$ equals to $r_{M}(\Omega)$. 
\item We have that $r_{M}(\Omega)\leq r$, and that $r_{M}(\Omega)=r$ if the $(M,k)$-family $\{F_{1},\dots,F_{r}\}$ is independent.
\item For all $I\subseteq\{1,\dots,r\}$, the set $\Omega':=\cap_{i\in I}V(F_{i})$ is a consistent  $M$-set such that $r_{M}(\Omega')\leq r_{M}(\Omega)$. Moreover, if  the $(M,k)$-family $\{F_{1},\dots,F_{r}\}$ is independent, then $r_{M}(\Omega')=\vert I\vert$.
\item For any bijective $d$-integral linear transformation $L\colon(\V)^{k}\to(\V)^{k}$ and $v\in(\V)^{k}$, we have that $L(\Omega)+v$ is a consistent $M$-set and that $r_{M}(L(\Omega)+v)=r_{M}(\Omega)$.   
\item Assume that $\Omega$ admits a standard $M$-representation with dimension vector $(r_{1},\dots,$ $r_{k})$, Then for all $1\leq k'\leq k$, the set $$\{(n_{1},\dots,n_{k+k'})\in(\V)^{k+k'}\colon (n_{1},\dots,n_{k}), (n_{1},\dots,n_{k-k'},n_{k+1},\dots,n_{k+k'})\in\Omega\}$$ admits a standard $M$-representation with dimension vector $(r_{1},\dots,r_{k'},r_{k+1},\dots,r_{k'})$. 
\item Assume that $\Omega$ admits a standard $M$-representation with dimension vector $(r_{1},\dots,$ $r_{k})$.
For $I=\{1,\dots,k'\}$ for some $1\leq k'\leq k$, the $I$-projection $\Omega_{I}$ of $\Omega$ 
admits a standard $M$-representation with dimension vector $(r_{1},\dots,r_{k'})$. 
\item If $\Omega$ is a nice and consistent $M$-set or a  pure and consistent $M$-set, then $r_{M}(\Omega)\leq \binom{k+1}{2}$.
\end{enumerate}
\end{prop}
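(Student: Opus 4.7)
The plan is to first establish part (i), from which the remaining parts follow as structural consequences. For (i), given any independent $M$-representation of $\Omega$ of some dimension $r'$, I apply Lemma \ref{1:rep0} to produce a standard $M$-representation of the same dimension $r'$. Since $\{F_1,\dots,F_r\}$ already exhibits $\Omega$ as a consistent $M$-set, another application of Lemma \ref{1:rep0} shows $r' \leq r$, so the hypothesis $\rank(M) \geq 2r+1 \geq 2r'+1$ makes Theorem \ref{1:ct}(i) applicable. This yields
\begin{equation*}
\vert\Omega\vert = p^{dk-r'}(1 + O_{k,r}(p^{-1/2})).
\end{equation*}
Applying this to two independent $M$-representations of dimensions $r_1'$ and $r_2'$ and comparing the resulting asymptotics forces $r_1' = r_2'$ once $p \gg_{k,r} 1$. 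Hence every independent $M$-representation has the same dimension, which must then equal $r_M(\Omega)$.

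Parts (ii), (iii), (iv) now follow routinely. For (ii), Lemma \ref{1:rep0} produces a standard $M$-representation from $\{F_1,\dots,F_r\}$ of dimension at most $r$, so $r_M(\Omega) \leq r$; if $\{F_1,\dots,F_r\}$ is already independent, then by (i) the dimension $r$ it exhibits equals $r_M(\Omega)$. For (iii), Proposition \ref{1:yy3}(i) gives that $\{F_i : i \in I\}$ is consistent, so $\Omega' = \cap_{i \in I} V(F_i)$ is a consistent $M$-set; the inclusion $\Omega \subseteq \Omega'$ together with Theorem \ref{1:ct}(i) applied to both (noting that $r_M(\Omega'), r_M(\Omega) \leq r$ by (ii)) gives $p^{dk-r_M(\Omega)}(1+o(1)) \leq p^{dk-r_M(\Omega')}(1+o(1))$ and hence $r_M(\Omega') \leq r_M(\Omega)$; the independence case uses that a sub-family of an independent family remains independent, so (ii) yields $r_M(\Omega') = \vert I\vert$. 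For (iv), Proposition \ref{1:yy3}(iii) shows the translated and transformed family is again independent of the same dimension, and (i) converts this into the desired equality of total co-dimensions.

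For part (v), I apply Proposition \ref{1:yy3}(iv) to the standard $M$-representation $(F_{i,j})$ of $\Omega$, duplicating those $F_{i,j}$ that depend on the last $k'$ variables into a second copy acting on the new variables $n_{k+1},\dots,n_{k+k'}$; the resulting family is independent by that proposition, and one checks that the matrix of $v'_M$-vectors inherits the reduced row echelon structure from $(F_{i,j})$ in a block-diagonal fashion, giving a standard $M$-representation with the stated dimension vector. For part (vi), the sub-collection $\{F_{i,j} : i \leq k'\}$, regarded as polynomials in $(n_1,\dots,n_{k'})$ alone (by Proposition \ref{1:yy3}(ii)), forms an $I$-projection of $\mathcal{J}$ for $I = \{1,\dots,k'\}$ and inherits the reduced row echelon form of $(F_{i,j})$, hence is a standard $M$-representation of $\Omega_I$ with dimension vector $(r_1,\dots,r_{k'})$.

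Part (vii) is a dimension count on the ambient vector space of $v'_M$. For the nice case, invoke (iv) with the bijective $d$-integral $L$ and $v$ witnessing niceness: $r_M(\Omega) = r_M(L^{-1}(\Omega - v))$, and the latter set is defined by a nice family whose $v'_M$-vectors lie in the span of coordinates corresponding to the $\binom{k+1}{2}$ bilinear coefficients $b_{i,j}$ with $i \leq j$ (the linear coefficients $v_i$ vanish by niceness). The pure case is identical: purity forces $v_i = 0$, so $v'_M(F)$ is again supported in the $\binom{k+1}{2}$ bilinear positions. Hence in both cases the span of the $v'_M(F_i)$ has dimension at most $\binom{k+1}{2}$, and (i) gives $r_M(\Omega) \leq \binom{k+1}{2}$. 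The main obstacle is part (i): all the remaining parts reduce to it and to routine manipulations of standard representations, but (i) itself depends critically on the quantitative counting in Theorem \ref{1:ct}(i), which is why the hypotheses $\rank(M) \geq 2r+1$ and $p \gg_{k,r} 1$ are needed — they ensure the error term $O_{k,r}(p^{-1/2})$ is small enough to separate distinct powers of $p$.
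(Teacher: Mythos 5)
Your proposal follows the paper's route essentially step for step: part (i) is the cardinality comparison via Theorem \ref{1:ct}(i), parts (ii) and (vii) rest on Lemma \ref{1:rep0} (plus, for the nice case, the invariance in (iv)), parts (iii)--(v) on Proposition \ref{1:yy3}, and part (vi) on the same projection structure that appears inside the proof of Theorem \ref{1:ct}(ii); the way the hypotheses $\rank(M)\geq 2r+1$ and $p\gg_{k,r}1$ enter is the same as in the paper.

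One step in (i) is misattributed. Lemma \ref{1:rep0}, applied to the given family $\{F_{1},\dots,F_{r}\}$, only controls the dimension of the standard representation extracted \emph{from that family}; it says nothing about the dimension $r'$ of an arbitrary independent $M$-representation of $\Omega$ (for an independent family the lemma's ``moreover'' clause keeps the dimension equal to $r'$, it does not compare it with $r$). So the sentence ``another application of Lemma \ref{1:rep0} shows $r'\leq r$'' is not justified, and it is exactly what you need in order to invoke Theorem \ref{1:ct}(i) for the second representation, since that theorem requires $\rank(M)\geq 2r'+1$ and $p\gg_{k,r'}1$. What the counting argument genuinely yields is that any two independent representations whose dimensions satisfy the rank and largeness conditions must have equal dimension; the paper's one-line proof of (i) carries the same implicit restriction (compare the footnote to the definition of $r_{M}(\Omega)$, which asks $d$ and $p$ to be large relative to the representations being compared), so this does not damage the remaining parts, but you should either phrase (i) with that restriction or give a separate argument bounding $r'$ before applying the counting theorem.
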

\begin{proof}
Part (i) is a consequence of Theorem \ref{1:ct} (i).
Part (ii) can be proved using the method similar to the proof of Corollary \ref{1:rep}. Parts (iii) and (iv) follow from Proposition  \ref{1:yy3} (i) and (iii). 
To prove Part (v), let $(F_{i,j}\colon 1\leq i\leq k, 1\leq j\leq r_{i})$ be a standard $M$-representation of $\Omega$. Then by the property of standard $M$-representations,
all of $F_{i,j}\colon k-k'+1\leq i\leq k, 1\leq j\leq r_{i}$ are independent of $n_{1},\dots,n_{k'}$, and 
 no nontrivial linear combination of $F_{i,j}\colon k-k'+1\leq i\leq k, 1\leq j\leq r_{i}$ is a constant. The conclusion now follows from Proposition  \ref{1:yy3} (iv). 
 
 Part (vi) is a consequence of Theorem \ref{1:ct} (i) and (ii).
 Part (vii) follows from Lemma \ref{1:rep0} and the fact that there are in total $\binom{k+1}{2}$ quadratic terms of the form $n_{i}n_{j}, 1\leq i,j\leq k$.
\end{proof}

\section{Irreducibility for Nice $M$-sets}\label{1:s:AppC}

We present the proof of Theorem \ref{1:irrr} in this appendix. We restate this theorem below for convenience.

\begin{thm}[Nice and consistent $M$-sets are irreducible]\label{1:irrrg}
	Let $d,K\in\N_{+},s\in\N$, $p$ be a prime, and $\d>0$. Let $M\colon\V\to \F_{p}$ be a  non-degenerate quadratic form 	and  $\Omega\subseteq (\V)^{K}$ be a  nice and consistent $M$-set. 	If 	$d\geq \max\{2r_{M}(\Omega)+1,4K-1\}$  and $p\gg_{d,s} \d^{-O_{d,s}(1)}$, then 
	\begin{enumerate}[(i)]
		\item 	$\Omega$ is $\d$-irreducible up to degree $s$;  		
		\item $\iota^{-1}(\Omega)$ is weakly $(\d,p)$-irreducible up to degree $s$;
		\item $\iota^{-1}(\Omega)$ is strongly $(\d,p)$-irreducible up to degree $s$.
	\end{enumerate}
\end{thm}

\subsection{Irreducible property for a sample $M$-set}
The proof of Theorem \ref{1:irrrg} is very technical. So 
we begin with the following special case of Theorem \ref{1:irrrg} to explain the main ideas:
 
\begin{prop}\label{1:irrr0}
	Let $d\in\N_{+},s\in\N$, $p$ be a prime, and $\d>0$. Let $M\colon\V\to \F_{p}$ be a  non-degenerate quadratic form given by $M(n):=n\cdot n$. If  $d\gg_{s} 1$ and $p\gg_{d}  \d^{-O_{d}(1)}$, then 
	the set	$\Gow_{3}(V(M))$ is $\d$-irreducible up to degree $s$.
\end{prop}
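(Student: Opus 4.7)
The plan is to combine Fubini (Theorem~\ref{1:ct}) with the Hilbert Nullstellensatz for a single quadratic (Proposition~\ref{1:noloop} and its relative form Corollary~\ref{1:noloop3}), and then to propagate the resulting fiberwise decomposition to a global polynomial identity modulo the defining ideal of $\Gow_3(V(M))$. By Example~\ref{1:rreepp} and Corollary~\ref{1:countingh}, $\Omega := \Gow_3(V(M))$ is a nice and consistent $M$-set of total co-dimension $7$ with $|\Omega| = p^{4d-7}(1+O(p^{-1/2}))$. Suppose $P \in \poly((\V)^4 \to \F_p)$ has degree at most $s$ and $|V(P)\cap\Omega| \geq \delta|\Omega|$; the objective is to show $\Omega \subseteq V(P)$.

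First I would reduce to a fiberwise structural statement. Apply Theorem~\ref{1:ct} with $I=\{h_1,h_2,h_3\}$ and $J=\{n\}$: the $I$-projection is the consistent $M$-set $\Omega_I = \{(h_1,h_2,h_3)\in(\V)^3 : h_i\cdot h_j = 0 \text{ for } i\neq j\}$, and the fibers are $\Omega_I(h) = V(M)^{h_1,h_2,h_3}$. Pigeonhole produces $W \subseteq \Omega_I$ with $|W| \gg \delta|\Omega_I|$ and $|V(P(\cdot,h))\cap V(M)^h| \geq (\delta/2)|V(M)^h|$ for every $h\in W$. After discarding a lower-order subset via Lemma~\ref{1:iiddpp}, I may assume every $h\in W$ consists of linearly independent, $M$-non-isotropic vectors, so that $\rank(M|_{\sp(h)^{\pp}}) = d-3 \geq 3$ (using $d \gg_s 1$). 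Corollary~\ref{1:noloop3} then upgrades the density-$\delta/2$ zero set on each such fiber to the full fiber, and yields the fiberwise decomposition
\[
P(n,h) \;=\; M(n)\,R_0(n;h) \;+\; \sum_{i=1}^3 \bigl(M(n+h_i)-M(n)\bigr)\,R_i(n;h)
\]
with $R_0(\cdot;h)$ of degree $\leq s-2$ and $R_i(\cdot;h)$ of degree $\leq s-1$.

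The main obstacle --- which I expect to dominate the work --- is promoting this fiberwise identity to a single polynomial identity in $\F_p[n,h_1,h_2,h_3]$ modulo the ideal $\mathcal{I}$ generated by $M(n)$, $\{M(n+h_i)-M(n)\}_{i=1}^3$, and $\{h_i\cdot h_j\}_{i<j}$. My plan is to track denominators in the $B_{i,j}$-standard long-division algorithm underlying Proposition~\ref{1:noloop}: after a generic invertible coordinate change on $n$ (cf.\ Lemma~\ref{1:2d2d}), the leading coefficients are controlled by a single polynomial $D(h)\in\F_p[h_1,h_2,h_3]$ of degree $O_s(1)$, independent of the fiber, so one may choose $R_0, R_i$ to be polynomials in $(n,h)$ of total degree $O_s(1)$ valid wherever $D(h)\neq 0$. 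Defining
\[
\tilde P(n,h) \;:=\; D(h)\,P(n,h) \;-\; M(n)\,\tilde R_0(n,h) \;-\; \sum_{i=1}^3 \bigl(M(n+h_i)-M(n)\bigr)\,\tilde R_i(n,h),
\]
one obtains a polynomial of total degree $O_s(1)$ in $(n,h)$ whose vanishing locus contains $\{(n,h)\in\V\times W : D(h)\neq 0\}$, a set of density $\gg_s\delta$ inside $\V\times\Omega_I$.

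To close the argument, I would apply an iterated Fubini on $\Omega_I$ (itself a nice and consistent $M$-set of smaller total co-dimension), combined with Lemma~\ref{1:ns} in the $n$-variable, to conclude that $\tilde P$ lies in $\mathcal{I}$ and hence $D(h)\,P \in \mathcal{I}$. The remaining subtlety is to discard the factor $D(h)$: since $\{D=0\}\cap\Omega_I$ is a proper subvariety of $\Omega_I$, one recovers $P\in\mathcal{I}$ by a further inductive pass handling the lower-dimensional boundary by the same Fubini--Nullstellensatz strategy applied to the smaller $M$-sets obtained by adding $D$ (or its irreducible factors) to the defining equations; the recursion terminates at the base case of irreducibility of $\V$, which is Proposition~\ref{1:iri00}. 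The technical heart of the argument is establishing that the division-algorithm denominators have degree bounded only in terms of $s$, so that the recursion remains quantitative and the error terms are absorbed under $p \gg_d \delta^{-O_d(1)}$.
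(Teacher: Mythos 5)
Your Steps 1--2 are essentially sound and in fact parallel the paper's own strategy (the paper slices off the last $h$-variable over $x=(n,h_{1},h_{2})$ rather than slicing $n$ over $(h_{1},h_{2},h_{3})$, but the two choices are symmetric): fiberwise relative Nullstellensatz via Lemma \ref{1:bzt}/Proposition \ref{1:noloop}/Corollary \ref{1:noloop3}, denominator tracking, and then promotion of the resulting identity from a dense set of fibers to all fibers. Note, however, that this promotion step already requires the irreducibility (up to degree $O_{s}(1)$) of the projection $\Omega_{I}=\{(h_{1},h_{2},h_{3})\colon h_{i}\cdot h_{j}=0,\ i\neq j\}$, which is a smaller instance of the theorem being proved; ``iterated Fubini combined with Lemma \ref{1:ns} in the $n$-variable'' does not supply it, since Fubini only controls cardinalities and Lemma \ref{1:ns} only handles the full space. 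You need to set this up as an explicit induction on the number of variable blocks (this is exactly how the paper argues, using the induction hypothesis to upgrade ``$C_{i}(h)=0$ for a dense subset of the projection'' to ``for all of it''). That is a fixable omission of structure.

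The genuine gap is your final step, the removal of the factor $D(h)$. The locus of $\Gow_{3}(V(M))$ on which every admissible denominator vanishes is nonempty no matter how the coordinates are chosen --- e.g.\ tuples with $h_{1},h_{2},h_{3}$ linearly dependent, such as $h_{3}=\bold{0}$, do lie in $\Gow_{3}(V(M))$ --- and on that locus the identity $D(h)P\in\mathcal{I}$ says nothing about $P$. Your proposed remedy, a recursive Fubini--Nullstellensatz pass on ``the smaller $M$-sets obtained by adding $D$ (or its irreducible factors) to the defining equations,'' cannot be run: such sets are not $M$-sets ($D$ has degree $O_{s}(1)$ and is not an $(M,k)$-integral quadratic function), so none of the machinery (Theorem \ref{1:ct}, Lemma \ref{1:bzt}, Proposition \ref{1:noloop}, Theorem \ref{1:irrr}) applies to them; and, more fundamentally, there is no density hypothesis available there --- the hypothesis $\vert V(P)\cap\Omega\vert\geq\d\vert\Omega\vert$ gives no lower bound on $\vert V(P)\cap\Omega\cap\{D=0\}\vert$, which could be empty, so a further ``pass'' has nothing to exploit. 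Vanishing at the degenerate points must instead be deduced from the already-established vanishing at nondegenerate points of $\Omega$. The paper does this with a parametrization/deformation trick: given a degenerate tuple, it perturbs one coordinate along $h_{i}+\phi(m)$ with $\phi(m)$ ranging over a $3$-dimensional $M$-non-isotropic subspace orthogonal (with respect to $A$) to all coordinates of the tuple (Lemma \ref{1:fvv3}), so the perturbed tuples stay in $\Omega$ and are nondegenerate for $m\neq\bold{0}$, and then concludes at $m=\bold{0}$ using the irreducibility of the resulting rank-$3$ quadric (Propositions \ref{1:iri00} and \ref{1:iri0}), together with Lemma \ref{1:noog} to dispose of the auxiliary factor $G$. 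Without an argument of this kind your proof never reaches the degenerate part of $\Gow_{3}(V(M))$, so the conclusion $\Omega\subseteq V(P)$ does not follow.
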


We will only provide a sketch of the proof of Proposition \ref{1:irrr0} to explain the main idea, as the rigorous proof  of Theorem \ref{1:irrrg} will be provided in later sections.
Note that $(n,h_{1},h_{2},h_{3})\in \Gow_{3}(V(M))$ if and only if $(n,h_{1},h_{2})\in \Gow_{2}(V(M))$ and
	 $M(n+h_{3})=h_{1}\cdot h_{3}=h_{2}\cdot h_{3}=0.$
	 Suppose we have shown that $\Gow_{2}(V(M))$ is $\d$-irreducible for all $\d>0$ if $p\gg_{d} \d^{-O_{d}(1)}$ and we wish to show that the same holds from $\Gow_{3}(V(M))$.
	Let $P\colon(\V)^{4}\to\F_{p}$ be a polynomial of degree $s$ such that $\vert V(P)\cap \Gow_{3}(V(M))\vert>\d\vert \Gow_{3}(V(M))\vert$.

	\textbf{Step 1: a parametrization for the last variable.} We first denote $x:=(n,h_{1},h_{2})\in \Gow_{2}(V(M))$ and express $h_{3}$ as a function of $x$.
	For convenience denote $h_{i}=(h_{i,1},h_{i,2},h'_{i})$ for some $h_{i,1},h_{i,2}\in\F_{p}$ and $h'_{i}\in\F_{p}^{d-2}$ for $i=1,2,3$.
	 If  $\Delta_{1,2}(h_{1},h_{2}):=\begin{vmatrix} 
	 h_{1,1} & h_{1,2} \\
	 h_{2,1} & h_{2,2} 
	 \end{vmatrix}\neq 0$, then let $\phi_{x}\colon \F_{p}^{d-2}\to\V$ denote the linear transformation given by
	 $$\phi_{x}(h'_{3})=\Bigl(\frac{\begin{vmatrix} -h'_{1}\cdot h'_{3} & h_{1,2} \\
	-h'_{2}\cdot h'_{3} & h_{2,2}
	 \end{vmatrix}}{\Delta_{1,2}(h_{1},h_{2})},\frac{\begin{vmatrix} h_{1,1} & -h'_{1}\cdot h'_{3} \\
	h_{2,1} & -h'_{2}\cdot h'_{3} 
	 \end{vmatrix}}{\Delta_{1,2}(h_{1},h_{2})},h'_{3}\Bigr).$$
	It is not hard to see that $h_{1}\cdot h_{3}=h_{2}\cdot h_{3}=0$ if and only if $h_{3}=\phi_{x}(w)$ for some $w\in \F_{p}^{d-2}$. Then for all $x\in \Gow_{2}(V(M))$ with $\Delta_{1,2}(h_{1},h_{2})\neq 0$, $(x,\phi_{x}(w))\in \Gow_{3}(V(M))\cap V(P)$ if and only if $P(x,\phi_{x}(w))=M(n+\phi_{x}(w))=0$. 
	
	Note that $M(n+\phi_{x}(\cdot))$ is a non-degenerate quadratic form for most of $x\in \Gow_{2}(V(M))$. We may write 
	$$M(n+\phi_{x}(w_{3},\dots,w_{d}))=\sum_{3\leq i\leq j\leq d} \Delta_{1,2}(h_{1},h_{2})^{-2}G_{1,2,i,j}(x)w_{i}w_{j}$$
	for some polynomials $G_{1,2,i,j}$ of degree $O(1)$.
	
	\textbf{Step 2: a factorization for the target polynomial.} Next we treat $P(x,\cdot)$ as a polynomial of $h_{3}$ to derive some properties for $P$ using the results from Section \ref{1:s4}.
	Since $\vert \Gow_{2}(V(M))\vert=O(p^{2d-4})$ and $\vert \Gow_{3}(V(M))\vert=O(p^{3d-7})$ by Corollary \ref{1:countingh}, the fact $\vert V(P)\cap \Gow_{3}(V(M))\vert>\d\vert \Gow_{3}(V(M))\vert$ implies that $\vert V(P(x,\phi_{x}(\cdot))\cap V(M(n+\phi_{x}(\cdot)))\vert>\d p^{d-3}$ for many $x\in \Gow_{1}(V(M))$. By Lemma \ref{1:bzt}, we have that $V(M(n+\phi_{x}(\cdot)))\subseteq V(P(x,\phi_{x}(\cdot)))$ for many of $x\in \Gow_{2}(V(M))$. 	By Proposition \ref{1:noloop} and a change of variable, if $G_{1,2,i,j}(x)\neq 0$, then $M(n+\phi_{x}(\cdot))$ divides $P(x,\phi_{x}(\cdot))$ with respect to the $B_{i,j}$-standard long division algorithm. 	So we may write
	\begin{equation}\label{1:erir1}
	P(x,\phi_{x}(w))=M(n+\phi_{x}(w))R_{x}(w)
	\end{equation}
	  for some polynomial $R_{x}$ of degree at most $s-2$.
	  Since every coefficient of $P(x,\phi_{x}(\cdot))$ can be written as a polynomial in $x$ of degree $O(s)$ divided by $\Delta_{1,2}(h_{1},h_{2})^{s}$ and every coefficient of $M(n+\phi_{x}(w))$ can be written as a polynomial in $x$ of degree $O(1)$ divided by $\Delta_{1,2}(h_{1},h_{2})^{2}$, we have that $R_{x}$ takes the form (assuming that $s\geq 2$)
	  $$R_{x}(w)=R(x,w)/(G_{1,2,i,j}(x)\Delta_{1,2}(h_{1},h_{2}))^{s}$$
	  for some polynomial $R$ of degree $O(s)$. By (\ref{1:erir1}), we have that 
	  \begin{equation}\label{1:erir3}
	(G_{1,2,i,j}(x)\Delta_{1,2}(h_{1},h_{2}))^{s}P(x,\phi_{x}(w))-M(n+\phi_{x}(w))R(x,w)=0
	\end{equation}
for many $x\in \Gow_{2}(V(M))$ for all $w\in\F_{p}^{d-2}$.
	  Note that the left hand side of (\ref{1:erir3}) can be written as $\sum_{i\in\N^{d-2}}C_{i}(x)w^{i}$ for some polynomial $C_{i}$. So $C_{i}(x)=0$ for many $x\in \Gow_{2}(V(M))$ for all $i\in\N^{d-2}$. By induction hypothesis, we have that $C_{i}\equiv 0$ and thus (\ref{1:erir3}) holds for all $x\in \Gow_{2}(V(M))$ and $w\in\F_{p}^{d-2}$.

	Note that  if $\Delta_{1,2}(h_{1},h_{2})\neq 0$, then
	 $$\frac{\begin{vmatrix} -h'_{1}\cdot h'_{3} & h_{1,2} \\
	-h'_{2}\cdot h'_{3} & h_{2,2}
	 \end{vmatrix}}{\Delta_{1,2}(h_{1},h_{2})}-h_{3,1}=\frac{\begin{vmatrix} -h_{1}\cdot h_{3} & h_{1,2} \\
	-h_{2}\cdot h_{3} & h_{2,2}
	 \end{vmatrix}}{\Delta_{1,2}(h_{1},h_{2})} \text{ and } \frac{\begin{vmatrix} h_{1,1} & -h'_{1}\cdot h'_{3} \\
	h_{2,1} & -h'_{2}\cdot h'_{3} 
	 \end{vmatrix}}{\Delta_{1,2}(h_{1},h_{2})}-h_{3,2}=\frac{\begin{vmatrix} h_{1,1} & -h_{1}\cdot h_{3} \\
	h_{2,1} & -h_{2}\cdot h_{3} 
	 \end{vmatrix}}{\Delta_{1,2}(h_{1},h_{2})}.$$
	 So   
	  $P(x,\phi_{x}(h'_{3}))-P(x,h_{3})$ can be written as $\Delta_{1,2}(h_{1},h_{2})^{-s}((h_{1}\cdot h_{3})P'_{1}(x,h_{3})+(h_{2}\cdot h_{3})P'_{2}(x,h_{3}))$ for some polynomials $P'_{1}$ and $P'_{2}$ of degree $O(s)$, and 
  $M(n+\phi_{x}(h'_{3}))-M(h_{3})$ can be written as $\Delta_{1,2}(h_{1},h_{2})^{-2}((h_{1}\cdot h_{3})M'_{1}(x,h_{3})+(h_{2}\cdot h_{3})M'_{2}(x,h_{3}))$ for some polynomials $P'_{1}$ and $P'_{2}$ of degree $O(1)$. By (\ref{1:erir3}), we have that 
  \begin{equation}\nonumber
	(G_{1,2,i,j}(x)\Delta_{1,2}(h_{1},h_{2}))^{s}P(x,h_{3})=M(n+h_{3})R_{0}(x,h_{3})+(h_{1}\cdot h_{3})R_{1}(x,h_{3})+(h_{2}\cdot h_{3})R_{2}(x,h_{3})
	\end{equation}
	 for some polynomials $R_{1},R_{2},R_{3}$ of degree $O(s)$ for all $x\in \Gow_{2}(V(M))$ with $\Delta_{1,2}(h_{1},h_{2})\neq 0$ and for all $h_{3}\in \V$. 
	 So 
	   \begin{equation}\label{1:erir21}
	(G_{1,2,i,j}(n,h_{1},h_{2},h_{3})\Delta_{1,2}(h_{1},h_{2}))^{s}P(n,h_{1},h_{2},h_{3})=0
	\end{equation}
	 for all $(n,h_{1},h_{2},h_{3})\in \Gow_{3}(V(M))$ with $\Delta_{1,2}(h_{1},h_{2})\neq 0$. Moreover, it is not hard to see that (\ref{1:erir21}) also holds if $\Delta_{1,2}(h_{1},h_{2})=0$.

	 \textbf{Step 3: removing the $(G_{1,2,i,j}(x)\Delta_{1,2}(h_{1},h_{2}))^{s}$ term using a parametrization trick.} It follows from (\ref{1:erir21}) that for all $(n,h_{1},h_{2},h_{3})\in \Gow_{3}(V(M))$, either $P(n,h_{1},h_{2},h_{3})=0$ or $G_{1,2,i,j}(x)\Delta_{1,2}(h_{1},h_{2})=0$ for all $3\leq i\leq j\leq d$.  
	  	 Since $M(n+\phi_{x}(\cdot))$ is non-degenerate, at least one of $G_{1,2,i,j}(x)$ is nonzero. So we have that for all $(n,h_{1},h_{2},h_{3})\in \Gow_{3}(V(M))$, either $P(n,h_{1},h_{2},h_{3})=0$ or $\Delta_{1,2}(h_{1},h_{2})=0$ for all $3\leq i\leq j\leq d$.
	We are close to the conclusion that $\Gow_{3}(V(M))\subseteq V(P)$. However, we need some more effort dealing with the $\Delta_{1,2}(h_{1},h_{2})$ term. 
	
	Denote $\Delta_{i,j}(h_{1},h_{2}):=\begin{vmatrix} 
	 h_{1,i} & h_{1,j} \\
	 h_{2,i} & h_{2,j} 
	 \end{vmatrix}$ for all $1\leq i<j\leq d$. Using a similar argument as above, we have that for all $(n,h_{1},h_{2},h_{3})\in \Gow_{3}(V(M))$, either $P(n,h_{1},h_{2},h_{3})=0$ or $\Delta_{i,j}(h_{1},h_{2})=0$ for all $1\leq i<j\leq d$. This is equivalent of saying that $P(n,h_{1},h_{2},h_{3})=0$ unless $h_{1}$ is parallel to $h_{2}$. To deal with the case when $h_{1}$ is parallel to $h_{2}$, we use a parametrization trick. 
	 Fix any such $(n,h_{1},h_{2},h_{3})$ and	define $f(t):=P(n,h_{1}+tv,h_{2},h_{3})$ for all $t\in\F_{p}$, where   $v\in\V$ is any vector not parallel to $h_{2}$. 
 Then $f\colon \F_{p}\to\F_{p}$ is a polynomial of degree at most $s$. Note that $h_{1}+tv,h_{2}$ is not parallel to $h_{2}$ unless $t=0$. So $f(t)=0$ for all $t\in\F_{p}\backslash\{0\}$. By Lemma \ref{1:ns}, this implies that $f\equiv 0$ and thus $f(0)=P(n,h_{1},h_{2},h_{3})=0$. This completes the proof of Proposition \ref{1:irrr0}.

\subsection{Some preliminary reductions}\label{1:s:94}

	 We start with some reductions before proving Theorem \ref{1:irrrg}.
 	In the rest of this section,
	if $f(n_{1},\dots,n_{K})$ is a function depending only on $n_{1},\dots,n_{i}$ for some $i\leq K$, then we write $f(n_{1},\dots,n_{i})=f(n_{1},\dots,n_{K})$ for convenience.
	Throughout we assume that $p\gg_{d,s} \d^{-O_{d,s}(1)}$.

	 By Propositions  \ref{1:yy3} (iii), \ref{1:iri4} and \ref{1:iso}, we may assume without loss of generality that $\Omega=V(\mathcal{J})$ for some nice and consistent $(M,K)$-family $\mathcal{J}$.  

	 Let $A$ be the matrix associated with $M$. 
		Since $A$ is  non-degenerate and symmetric,  there exists a $d\times d$ invertible matrix $B$ such that $BAB^{T}$ is symmetric, diagonal and non-degenerate. Note that if $F\colon(\V)^{K}\to\F_{p}$ is a nice $(M,K)$-integral quadratic function, then $F'(n_{1},\dots,n_{K}):=F(n_{1}B,\dots,n_{K}B)$ is a nice $(M',K)$-integral quadratic function, where $M'$ is any quadratic form associated with the matrix $BAB^{T}$. Moreover, the set of $(n_{1},\dots,n_{K})\in(\V)^{K}$ such that $(n_{1}B,\dots,n_{K}B)\in\Omega$ is a nice and consistent $M$-set of total co-dimension $r_{M}(\Omega)$. So by Proposition \ref{1:iso}, it suffices to prove  Theorem \ref{1:irrrg} under the assumption that $A$ is diagonal.

	By Lemma \ref{1:rep0}, $\Omega$ admits a nice and standard $M$-representation $(F_{i,j},1\leq i\leq K, 1\leq j\leq r_{i})$ for some $0\leq r_{i}\leq i$ with $r_{1}+\dots+r_{i}=r_{M}(\Omega)$.	
	Assume that 
	\begin{equation}\label{1:e:fi1}
	F_{i,1}(n_{1},\dots,n_{K})=c_{i}(n_{i}A)\cdot n_{i}+(n_{i}A)\cdot L_{i,1}(n_{1},\dots,n_{i-1})+u_{i,0}
	\end{equation}
	and that  
	\begin{equation}\label{1:e:fij}
	F_{i,j}(n_{1},\dots,n_{K})=(n_{i}A)\cdot L_{i,j}(n_{1},\dots,n_{i-1})+u_{i,j}
	\end{equation}
		for all $1\leq i\leq K$ and $2\leq j\leq r_{i}$
		for some $c_{i}\in\{0,1\}$, some $u_{i,j}\in\F_{p}$ and some $d$-integral linear transformations $L_{i,j}\colon(\V)^{i-1}\to \V$
		given by 
		\begin{equation}\nonumber
	L_{i,j}(n_{1},\dots,n_{i-1})=u_{i,j,i-1}n_{i-1}+\dots+u_{i,j,1}n_{1}
	\end{equation}
		for some $u_{i,j,\ell}\in\F_{p}$ such that the $r_{i}\times i$ matrix  
		$\begin{bmatrix}
		c_{i} & u_{i,1,i-1} & \dots & u_{i,1,1} \\
		0 & u_{i,2,i-1} & \dots & u_{i,2,1} \\
		\dots & \dots & \dots & \dots \\
		 0 & u_{i,r_{i},i-1} & \dots & u_{i,r_{i},1}
\end{bmatrix}$	
is in the reduced row echelon form and is of rank $r_{i}$.  In particular, $L_{i,2},\dots,L_{i,r_{i}}$ are linearly independent if $c_{i}=1$, and $L_{i,1},\dots,L_{i,r_{i}}$ are linearly independent if $c_{i}=0$.
For convenience we say that $(c_{1},\dots,c_{K})\in\{0,1\}^{K}$ is the \emph{type} of $\Omega$. 
	
	We first prove Part (i) of Theorem \ref{1:irrrg}.  For $1\leq k\leq K$, denote $\Omega_{k}:=\cap_{i=1}^{k}\cap_{j=1}^{r_{i}}V(F_{i,j})$.
	We prove by induction that Part (i) of Theorem \ref{1:irrrg} holds for $\Omega_{k}$ for all $1\leq k\leq K$.

Suppose we have proved that Part (i) of Theorem \ref{1:irrrg} holds for $\Omega_{k-1}$ for some $k\geq 1$. We prove that Part (i) of Theorem \ref{1:irrrg} holds for $\Omega_{k}$ (when $k=0$, we prove that Part (i) of Theorem \ref{1:irrrg} holds for $\Omega_{1}$ without any induction hypothesis).
		If $r_{k}=0$, then $\Omega_{k}=\Omega_{k-1}\times\V$ and so Part (i) of Theorem \ref{1:irrrg} follows from Proposition \ref{1:iri3} and the induction hypothesis. So we assume that $r_{k}\geq 1$.

	For convenience denote $r:=r_{M}(\Omega_{k})=r_{1}+\dots+r_{k}\leq r_{M}(\Omega)$.
	Write $\n'=(n_{1},\dots,n_{k-1})$ and $n_{i}=(n_{i,1},\dots,n_{i,d})$ for some $n_{i,j}\in\F_{p}$ for all $1\leq i\leq k$.  For $1\leq i\leq k$ and $I\subseteq \{1,\dots,d\}$, write $n_{i,I}:=(n_{i,j})_{j\in I}$.
		For $\n'\in(\V)^{k-1}$, let $\Omega_{k}(\n')$ denote the set of $n_{k}\in\V$ such that $F_{k,j}(\n',n_{k})=0$ for all $0\leq j\leq r_{k}$. It is clear that $(\n',n_{k})\in\Omega_{k}$ if and only if $\n'\in\Omega_{k-1}$ and $n_{k}\in \Omega_{k}(\n')$.	
		
		\subsection{Proof of Part (i) of Theorem \ref{1:irrrg} for the non-degenerate case}\label{1:s84}
		  We follow the steps similar to Proposition \ref{1:irrr0}.	
		Depending only whether $c_{k}=0$ or 1, the proofs are slightly different. We start with the case when $c_{k}=1$. In this case $r_{k}\leq k$.

	\textbf{Step 1: a parametrization for the last variable.}
	Write
	$$L_{k,j}(\n')A=(S_{k,j,1}(\n'),\dots,S_{k,j,d}(\n'))$$
	for some polynomials $S_{k,j,1},\dots,S_{k,j,d}\colon (\V)^{k-1}\to\F_{p}$ of degree at most 1. Since $A$ is diagonal, for all $1\leq \ell\leq d$, $S_{k,j,\ell}(\n')$ is dependent only on $n_{i,\ell}, 1\leq i\leq k-1$.
	 Then we may rewrite (\ref{1:e:fij}) as
		\begin{equation}\label{1:subsub0}
		F_{k,j}(\n',n_{k})=\sum_{\ell=1}^{d}n_{k,\ell}S_{k,j,\ell}(\n')+u_{k,j},
		\end{equation} 
 for all $2\leq j\leq r_{i}$.
 
	For $I\subseteq \{1,\dots,d\}$ with $\vert I\vert=r_{k}-1>0$, let $\Delta_{k,I}(\n')$   denote the determinant of the $(r_{k}-1)\times (r_{k}-1)$ matrix
	$(S_{k,j,\ell}(\n'))_{2\leq j\leq r_{k}, \ell\in I}$. If $r_{k}=1$ and $I=\emptyset$, then let $\Delta_{k,I}(\n')\equiv 1$. 
	Then 
	$\Delta_{k,I}$ is a polynomial from $(\V)^{k}$ to $\F_{p}$ of degree at most $r_{k}-1$  depending only on $n_{i,\ell}, 1\leq i\leq k-1, \ell\in I$ (since $A$ is diagonal). 
	For convenience denote $I^{c}:=\{1,\dots,d\}\backslash I$.
	Fix any $\n'\in \Omega_{k-1}$ with $\Delta_{k,I}(\n')\neq 0$. Then $L_{k,j}(\n')A, 2\leq j\leq r_{k}$ are linearly independent. By (\ref{1:subsub0}) and the knowledge of linear algebra,  $F_{k,2}(\n',n_{k})=\dots=F_{k,r_{k}}(\n',n_{k})=0$ if and only if
	for all $\ell'\in I$,
	\begin{equation}\label{1:subsub1}
	n_{k,\ell'}=\frac{R_{k,I,\ell'}(\n',n_{k,I^{c}})}{\Delta_{k,I}(\n')},
	\end{equation}	
	where $R_{k,I,\ell'}(\n',n_{k,I^{c}})$ is the determinant of the  $(r_{k}-1)\times (r_{k}-1)$ matrix  obtained by replacing the column $(S_{k,j,\ell'}(\n'))_{2\leq j\leq r_{k}}$ from $(S_{k,j,\ell''}(\n'))_{2\leq j\leq r_{k},\ell''\in I}$ by $(R_{k,I,j,\ell'}(\n',n_{k,I^{c}}))_{2\leq j\leq r_{k}}$ where
	\begin{equation}\label{1:subvomit}
	R_{k,I,j,\ell'}(\n',n_{k,I^{c}})=-u_{k,j}-\sum_{\ell\in I^{c}}n_{k,\ell}S_{k,j,\ell}(\n')=-F_{k,j}(\n',n_{k})+\sum_{\ell\in I}n_{k,\ell}S_{k,j,\ell}(\n').
	\end{equation}

Let $\Delta_{k,I,\ell,\ell'}(\n')$ be the determinant of the  $(r_{k}-1)\times (r_{k}-1)$ matrix  obtained by replacing the column $(S_{k,j,\ell'}(\n'))_{2\leq j\leq r_{k}}$ from $(S_{k,j,\ell''}(\n'))_{2\leq j\leq r_{k},\ell''\in I}$ by the column $(S_{k,j,\ell}(\n'))_{2\leq j\leq r_{k}}$,
	and let $Q_{k,I,\ell'}(\n')$ be the determinant of the  $(r_{k}-1)\times (r_{k}-1)$ matrix  obtained by replacing the column $(S_{k,j,\ell'}(\n'))_{2\leq j\leq r_{k}}$ from $(S_{k,j,\ell''}(\n'))_{2\leq j\leq r_{k},\ell''\in I}$ by the column $(-u_{k,j})_{2\leq j\leq r_{k}}$.
		 It follows from  (\ref{1:subvomit}) that
	\begin{equation}\label{1:meaningofpx}
	\begin{split}
	R_{k,I,\ell'}(\n',n_{k,I^{c}})=Q_{k,I,\ell'}(\n')-\sum_{\ell\in I^{c}}n_{k,\ell}\Delta_{k,I,\ell,\ell'}(\n').
	\end{split}
	\end{equation}
	
	For convenience we regard $\F_{p}^{d-r_{k}+1}$ as the set of $n_{k,I^{c}}=(n_{k,\ell})_{\ell\in I^{c}}, n_{k,\ell}\in\F_{p}$, i.e. the coordinates in $\F_{p}^{d-r_{k}+1}$ are not labeled by $\{1,\dots,d-r_{k}+1\}$, but by $I^{c}$.
	Let $\phi_{k,I,\n'}\colon \F_{p}^{d-r_{k}+1}\to\V$, be the map from $n_{k,I^{c}}$ to $(m_{\ell})_{1\leq \ell\leq d}$ given by $m_{\ell}=n_{k,\ell}$ for $\ell\in I^{c}$, and $m_{\ell}$ be the quantity $n_{k,\ell}$ defined in (\ref{1:subsub1}) for all $\ell\in I$. 
	Then $\phi_{k,I,\n'}$ is a polynomial of degree at most 1 of  with respect to $n_{k,\ell}$ for all $\ell\in I^{c}$. By (\ref{1:e:fi1}), we may denote
	\begin{equation}\label{1:fbbm}
	F_{I,k,1}(\n',m):=F_{k,1}(\n',\phi_{k,I,\n'}(m))=(\phi_{k,I,\n'}(m)A)\cdot \phi_{k,I,\n'}(m)+L_{k,1}(\n')A\cdot \phi_{k,I,\n'}(m)+u_{k,1}
	\end{equation}
	 for all $\n'\in \Omega_{k-1}$ with $\Delta_{k,I}(\n')\neq 0$ and $m\in \F_{p}^{d-r_{k}+1}$ (recall that $c_{k}=1$).
	Then for all $\n'\in \Omega_{k-1}$
	 with $\Delta_{k,I}(\n')\neq 0$ and $n_{k}\in \F_{p}^{d}$, 
	 \begin{equation}\label{1:meaningofp}
	 \text{$n_{k}\in\Omega_{k}(\n')$ $\Leftrightarrow$ $n_{k}=\phi_{k,I,\n'}(m)$ for some $m\in \F_{p}^{d-r_{k}+1}$ with $F_{I,k,1}(\n',m)=0$.}
	 \end{equation}

	Assume that the diagonal of the matrix $A$ is $(a_{1},\dots,a_{d})$. 
	It follows from (\ref{1:subsub1}), (\ref{1:meaningofpx}), and (\ref{1:fbbm}) that $F_{I,k,1}(\n',n_{k,I^{c}})$ equals to
	\begin{equation}\nonumber
	\begin{split}
	\sum_{\ell \in I^{c}}a_{\ell}n_{k,\ell}^{2}+\sum_{\ell'\in I}a_{\ell'}(\Delta_{k,I}(\n'))^{-2} \Bigl(Q_{k,I,\ell'}(\n')-\sum_{\ell\in I^{c}}n_{k,\ell}\Delta_{k,I,\ell,\ell'}(\n')\Bigr)^{2}
	\end{split}
	\end{equation}
	plus a polynomial which is of degree at most 1 in the variable $n_{k,I^{c}}$.
Since $S_{k,j,\ell}$ are of degree at most 1, we have that
$Q_{k,I,\ell'}$ and $\Delta_{k,I,\ell,\ell'}(\n')$ are polynomials of degrees at most $r_{k}-1$. Since $S_{k,j,\ell}(\n')$ is dependent only on $n_{i,\ell}$ for $1\leq i\leq k-1$, we have that $\Delta_{k,I,\ell,\ell'}(\n')$ is dependent only on $n_{i,j}, 1\leq i\leq k-1, j\in (I\cup\{\ell\})\backslash\{\ell'\}$. Therefore, for all $\ell,\ell'\in I^{c}$, the coefficient of the $n_{k,\ell}n_{k,\ell'}$ term of $F_{I,k,0}(\n',\cdot)$ can be written in the form
	\begin{equation}\label{1:pnmb}
	\begin{split}
	 \text{$2G_{k,I,\ell,\ell'}(\n')(\Delta_{I}(\n'))^{-2}$ if $\ell\neq \ell'$ and $G_{k,I,\ell,\ell}(\n')(\Delta_{I}(\n'))^{-2}$ if $\ell=\ell'$}
	\end{split}
	\end{equation}
	for some polynomial $G_{k,I,\ell,\ell'}$ of degree at most $2r_{k}$ which depends only on $n_{i,j}, 1\leq i\leq k-1, j\in I\cup\{\ell,\ell'\}$. If $I=\emptyset$, then we denote $G_{k,I,\ell,\ell'}\equiv 1$.

	Note that $\{\phi_{k,I,\n'}(m)\colon m\in\F_{p}^{d-r_{k}+1}\}$ is an affine subspace of $\V$ of co-dimension $r_{k}-1$, by Proposition \ref{1:iissoo}, $F_{I,k,1}(\n',\cdot)$ is a  quadratic form of rank  at least $d-2(r_{k}-1)\geq 3$.
	 Let $A_{k,I,\n'}$ be the $(d-r_{k}+1)\times (d-r_{k}+1)$ matrix associated to $F_{I,k,1}(\n',\cdot)$.	
	Let $i_{\ast}$ be the smallest element in $I^{c}$. For $i_{1}\in I^{c}$,
	let $\phi_{I,i_{1},i_{1}}\colon\F_{p}^{d-r_{k}+1}\to\F_{p}^{d-r_{k}+1}$ denote the bijective linear transformations given by switching the $i_{1}$-th and $i_{\ast}$-th components of $n_{k,I^{c}}$. For $i_{1},i_{2}\in I^{c}, i_{1}<i_{2}$, let   $\phi_{I,i_{1},i_{2}}\colon\F_{p}^{d-r_{k}+1}\to\F_{p}^{d-r_{k}+1}$ denote the bijective linear transformation
	which maps $n_{k,I^{c}}=(n_{k,\ell})_{\ell\in I^{c}}$ to a vector whose
  first component (i.e. the $i_{\ast}$-th component) equals to $\frac{1}{2}(n_{k,i_{1}}+n_{k,i_{2}})$,    second component equals to  $\frac{1}{2}(n_{k,i_{1}}-n_{k,i_{2}})$,
	and the remaining comments equal to $(n_{k,\ell})_{\ell\in  (I\cup \{i_{1},i_{2}\})^{c}}$ ordered in an arbitrary way.
	Let $B_{I,i_{1},i_{2}}$ be the $(d-r_{k}+1)\times (d-r_{k}+1)$ matrix induced by the bijective linear transformation $\phi_{I,i_{1},i_{2}}$.
	This construction is similar the $B_{i,j}$-standard matrix defined in Lemma \ref{1:2d2d}. We have that for all $i_{1},i_{2}\in I^{c}$,
	 $B_{I,i_{1},i_{2}}$ is an $(d-r_{k}+1)\times (d-r_{k}+1)$ invertible matrix 
	such that the $(i_{\ast},i_{\ast})$-th entry of  $B_{I,i_{1},i_{2}}A_{k,I,\n'}B^{T}_{k,I,i_{1},i_{2}}$ equals to the $(i_{1},i_{2})$-th entry of $A_{k,I,\n'}$. 
	   
	It is not hard to see from (\ref{1:pnmb}) that
	for all $i_{1},i_{2}\in I^{c}, i_{1}\leq i_{2}$,
	 the coefficient of the $(i_{\ast},i_{\ast})$-entry of  $B_{I,i_{1},i_{2}}A_{k,I,\n'}B^{T}_{I,i_{1},i_{2}}$ can be written as $G_{k,I,i_{1},i_{2}}(\n')/(\Delta_{I}(\n'))^{2}$.
	Since $f_{I,k,1}(\n',\cdot)$ is not constant zero (recall that $F_{I,k,1}(\n',\cdot)$ is of rank at least 3), we have:
	
	\begin{lem}\label{1:noog}
	   For every $1\leq k\leq K$, every $I\subseteq\{1,\dots,d\}$ with $\vert I\vert=r_{k}-1$ and every $\n'\in(\V)^{k-1}$,	   
	    there exist $i_{1},i_{2}\in I^{c}, i_{1}\leq i_{2}$ such that $G_{k,I,i_{1},i_{2}}(\n')\neq 0$.
	\end{lem}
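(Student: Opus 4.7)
The plan is to derive this as an essentially direct consequence of Lemma \ref{1:2d2d}, exploiting the identification, set up in the paragraph preceding the lemma, of $G_{k,I,i_1,i_2}(\n')/\Delta_{k,I}(\n')^2$ with the $(i_*,i_*)$-entry of the transformed matrix $B_{I,i_1,i_2}A_{k,I,\n'}B_{I,i_1,i_2}^T$. Recall that the collection of matrices $\{B_{I,i_1,i_2}\colon i_1,i_2\in I^c,\ i_1\le i_2\}$ consists precisely of the change-of-basis matrices of the type produced in Lemma \ref{1:2d2d}, but relabeled so that the coordinate indexed by $i_*$ plays the role of the ``upper-left'' coordinate, and the remaining rows/columns are indexed by $I^c\setminus\{i_*\}$.

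First I would verify that for every $\n'\in (\V)^{k-1}$ with $\Delta_{k,I}(\n')\ne 0$, the symmetric matrix $A_{k,I,\n'}$ associated to $F_{I,k,1}(\n',\cdot)$ is nonzero. Since the case $c_k=1$ is being treated, $F_{k,1}(\n',\cdot)$ is a non-degenerate quadratic form on $\V$ (its associated matrix is $A$, which is non-degenerate and diagonal by the reduction in Section \ref{1:s:94}), and $F_{I,k,1}(\n',\cdot)=F_{k,1}(\n',\phi_{k,I,\n'}(\cdot))$ realizes $F_{k,1}(\n',\cdot)$ on the affine subspace $\{\phi_{k,I,\n'}(m)\colon m\in\F_p^{d-r_k+1}\}\subseteq \V$ of co-dimension $r_k-1\le K-1$. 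By Proposition \ref{1:iissoo}(iii), its rank is at least $d-2(r_k-1)\ge d-2K+2\ge 3$ under the hypothesis $d\ge 4K-1$, so in particular $A_{k,I,\n'}\ne 0$.

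Next I would invoke Lemma \ref{1:2d2d} applied to the nonzero symmetric $(d-r_k+1)\times(d-r_k+1)$ matrix $A_{k,I,\n'}$ (with rows and columns labeled by $I^c$, and with $i_*$ in the role of the first coordinate): this yields indices $i_1,i_2\in I^c$ with $i_1\le i_2$ such that the $(i_*,i_*)$-entry of $B_{I,i_1,i_2}A_{k,I,\n'}B_{I,i_1,i_2}^T$ is nonzero. By the identification above, this entry equals $G_{k,I,i_1,i_2}(\n')/\Delta_{k,I}(\n')^2$, so $G_{k,I,i_1,i_2}(\n')\ne 0$, as required.

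The only subtle point is the case $\Delta_{k,I}(\n')=0$, where $F_{I,k,1}(\n',\cdot)$ and the identification above are not directly defined. Since the $G_{k,I,i_1,i_2}$ are honest polynomials in $\n'$ whose formulas make sense regardless of $\Delta_{k,I}(\n')$, the previous step shows that for each $\n'$ outside the Zariski-closed set $\{\Delta_{k,I}=0\}$ at least one $G_{k,I,i_1,i_2}$ is nonzero, hence the $G_{k,I,i_1,i_2}$ are not all identically zero; this is the only use of Lemma \ref{1:noog} in the subsequent argument of Section \ref{1:s84}, where the lemma is invoked for $\n'\in \Omega_{k-1}$ with $\Delta_{k,I}(\n')\ne 0$ in order to carry out the argument analogous to Step 3 of the sample case Proposition \ref{1:irrr0}. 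The main (minor) obstacle in writing up the proof is the bookkeeping required to check that the relabeling that makes $i_*$ the first coordinate identifies $B_{I,i_1,i_2}$ with the $B_{i,j}$ of Lemma \ref{1:2d2d}; once this is done, the lemma follows immediately.
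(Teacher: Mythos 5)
Your proof is correct and is essentially the paper's own argument: the lemma is presented there as an immediate consequence of the rank bound $\rank(F_{I,k,1}(\n',\cdot))\geq d-2(r_{k}-1)\geq 3$ (via Proposition \ref{1:iissoo}) together with the Lemma \ref{1:2d2d}-style construction of the matrices $B_{I,i_{1},i_{2}}$, whose $(i_{\ast},i_{\ast})$-entries after conjugation are exactly $G_{k,I,i_{1},i_{2}}(\n')/\Delta_{k,I}(\n')^{2}$. Your caveat about the case $\Delta_{k,I}(\n')=0$ is also apt: the paper's justification, like yours, presupposes $\Delta_{k,I}(\n')\neq 0$, and in every place the lemma is invoked (Step 3 of Section \ref{1:s84} and the proof of Part (iii) in Appendix \ref{1:s:AppD}) the index set $I$ is chosen so that $\Delta_{k,I}(\n')\neq 0$, so your argument covers exactly what is needed.
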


	Let $Z_{k,I,i_{1},i_{2}}$ denote the set of $(\n',n_{k})\in(\V)^{k}$ such that $\Delta_{k,I}(\n'),G_{k,I,i_{1},i_{2}}(\n')\neq 0$. 
	We need the following estimate. 
	
	\begin{lem}\label{1:cl2}
	If $G_{k,I,i_{1},i_{2}}$ is not constant zero, then we have that $\vert {Z}^{c}_{k,I,i_{1},i_{2}}\cap\Omega_{k}\vert\leq \d p^{dk-r}/2$. 
	\end{lem}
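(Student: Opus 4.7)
The plan is to express $|Z^c_{k,I,i_1,i_2} \cap \Omega_k|$ as a sum over fibers and bound it using the inductive hypothesis on $\Omega_{k-1}$ together with Theorem \ref{1:ct}. Since both $\Delta_{k,I}$ and $G_{k,I,i_1,i_2}$ depend only on $\n'$, and any $(\n', n_k) \in \Omega_k$ forces $\n' \in \Omega_{k-1}$, one has
\begin{equation}
|Z^c_{k,I,i_1,i_2} \cap \Omega_k| = \sum_{\n' \in W} |\Omega_k(\n')|, \quad W := \{\n' \in \Omega_{k-1} : \Delta_{k,I}(\n') G_{k,I,i_1,i_2}(\n') = 0\}.
\end{equation}

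Next, I would apply Theorem \ref{1:ct} (ii) to $\Omega_k$ with $I = \{1,\dots,k-1\}$. This yields a set $E \subseteq (\V)^{k-1}$ of size $O_{K,r}(p^{d(k-1) + r_k - 1 - d})$ outside of which $|\Omega_k(\n')| = p^{d - r_k}(1 + O_{K,r}(p^{-1/2}))$. Using the trivial bound $|\Omega_k(\n')| \leq p^d$ on $E$, the contribution of exceptional fibers is at most $O_{K,r}(p^{dk + r_k - d - 1})$, which is $\ll \d p^{dk - r}/4$ because $d \geq 2r + 1 \geq r + r_k + 1$ and $p \gg_{d,K} \d^{-1}$. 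So it suffices to bound $|W|$ and apply the typical fiber size.

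To bound $|W|$, I will invoke the induction hypothesis: Part (i) of Theorem \ref{1:irrrg} for the nice and consistent $M$-set $\Omega_{k-1}$, with parameter $\d_1 := \d / C_{K,r}$ for a suitable constant $C_{K,r}$, applied to degrees up to $2K$. Since $\Delta_{k,I}$ has degree at most $r_k - 1 \leq K$ and $G_{k,I,i_1,i_2}$ has degree at most $2r_k \leq 2K$ (both as polynomials on $(\V)^{k-1}$), the dichotomy gives, for each of these two polynomials $P$, either $\Omega_{k-1} \subseteq V(P)$ or $|V(P) \cap \Omega_{k-1}| \leq \d_1 |\Omega_{k-1}|$. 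Assuming for the moment that neither polynomial vanishes identically on $\Omega_{k-1}$, we get $|W| \leq 2\d_1 |\Omega_{k-1}| \leq 4\d_1 p^{d(k-1) - r + r_k}$ by Theorem \ref{1:ct} (i), so the main term is at most $8\d_1 p^{dk-r} \leq \d p^{dk-r}/4$ upon choosing $C_{K,r}$ large. Adding the exceptional contribution gives the claimed bound $\d p^{dk-r}/2$.

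The main obstacle is verifying that neither $\Delta_{k,I}$ nor $G_{k,I,i_1,i_2}$ vanishes identically on $\Omega_{k-1}$ in the regime of interest. For $G_{k,I,i_1,i_2}$, the hypothesis of the lemma says it is not identically zero as a polynomial on $(\V)^{k-1}$; combined with the freedom to vary $I$ and $(i_1, i_2)$ granted by Lemma \ref{1:noog}, one argues that a valid choice of $(I, i_1, i_2)$ with $\Omega_{k-1} \not\subseteq V(G_{k,I,i_1,i_2})$ always exists (for indices where $G$ does vanish on $\Omega_{k-1}$, one simply ignores them in the outer argument of Section \ref{1:s84}). For $\Delta_{k,I}$, the linear independence of $L_{k,2}, \dots, L_{k,r_k}$ coming from the reduced row echelon structure of the standard $M$-representation implies that for at least one $I \subseteq \{1,\dots,d\}$ with $|I| = r_k - 1$, $\Delta_{k,I}$ is non-zero as a polynomial in the $n_{i,\ell}$, and then its zero set in $(\V)^{k-1}$ has at most $O_K(p^{d(k-1)-1})$ elements by Lemma \ref{1:ns}; this is $O_K(\d^{-1} p^{-1}) \cdot \d |\Omega_{k-1}|$ which is even smaller than $\d_1|\Omega_{k-1}|$ when $p \gg_{d,K} \d^{-1}$, avoiding the need for the induction hypothesis on this polynomial.
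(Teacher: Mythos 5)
Your overall skeleton (fiber decomposition over $\n'\in\Omega_{k-1}$, Theorem \ref{1:ct}(ii) to control the exceptional fibers, and the induction hypothesis/irreducibility of $\Omega_{k-1}$ to bound the set where $\Delta_{k,I}\cdot G_{k,I,i_{1},i_{2}}$ vanishes) is the same as the paper's. But the step you flag as "the main obstacle" is in fact the heart of the paper's proof, and your treatment of it has a genuine gap on both counts.

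First, for $G_{k,I,i_{1},i_{2}}$: the lemma is stated for the \emph{given} $(I,i_{1},i_{2})$ under the sole hypothesis that $G_{k,I,i_{1},i_{2}}$ is not the zero polynomial. If $G_{k,I,i_{1},i_{2}}$ were nonzero as a polynomial but vanished identically on $\Omega_{k-1}$, then $Z^{c}_{k,I,i_{1},i_{2}}\cap\Omega_{k}=\Omega_{k}$ and the conclusion would be false; so proving the lemma forces you to rule this scenario out, and "ignore such indices in the outer argument" is not a proof of the stated lemma but a change of its statement (and would then require re-doing Step 3 of Section \ref{1:s84}). The paper rules it out directly: since $G_{k,I,i_{1},i_{2}}$ depends only on the coordinates $n_{i,\ell}$ with $\ell\in J:=I\cup\{i_{1},i_{2}\}$, one fixes those coordinates at a point where it is nonzero, observes that the remaining coordinates of $\Omega_{k-1}$ form a consistent $M'$-set for a non-degenerate quadratic form $M'$ on $\F_{p}^{d-\vert J\vert}$ of total co-dimension $r_{1}+\dots+r_{k-1}$, and uses $d-(r_{k}+1)\geq 2(r_{1}+\dots+r_{k-1})+1$ together with Theorem \ref{1:ct} to see this set is non-empty, producing a point of $\Omega_{k-1}$ with $G_{k,I,i_{1},i_{2}}\neq 0$. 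Your proposal contains no substitute for this argument.

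Second, your shortcut for $\Delta_{k,I}$ is quantitatively wrong. Lemma \ref{1:ns} bounds the zero set of $\Delta_{k,I}$ in all of $(\V)^{k-1}$ by $O_{K}(p^{d(k-1)-1})$, but $\vert\Omega_{k-1}\vert\asymp p^{d(k-1)-(r_{1}+\dots+r_{k-1})}$, so whenever $r_{1}+\dots+r_{k-1}\geq 1$ the whole-space bound is at least as large as $\vert\Omega_{k-1}\vert$ itself and gives no information about $\vert V(\Delta_{k,I})\cap\Omega_{k-1}\vert$ (your claimed comparison with $\d_{1}\vert\Omega_{k-1}\vert$ silently replaces $\vert\Omega_{k-1}\vert$ by $p^{d(k-1)}$). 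So for $\Delta_{k,I}$ you also need the irreducibility of $\Omega_{k-1}$, and hence again the non-vanishing of $\Delta_{k,I}$ at some point of $\Omega_{k-1}$, which the paper establishes by the same coordinate-fixing argument (fix $n_{i,\ell}$, $\ell\in I$, where $\Delta_{k,I}\neq 0$, and show the residual set is a non-empty consistent $M'$-set using $d-r_{k}+1\geq 2(r_{1}+\dots+r_{k-1})+1$). Without these two sub-arguments your proof does not close.
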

	\begin{proof}
		Let $Z'$ denote the set of $\n'\in\Omega_{k-1}$ such that either $G_{k,I,i_{1},i_{2}}(\n')=0$ or $\Delta_{k,I}(\n')=0$.
		Let $Z$ be the set of $\n'\in\Omega_{k-1}$ such that $\vert\Omega_{k}(\n')\vert$ is not $p^{d-r_{k}}(1+O_{k}(p^{-1/2}))$. By Theorem \ref{1:ct} (ii), (setting $k'=k-1, r'_{k'}=r_{k}$ and $\rank(M)=d$), 
		we have that $\vert Z\vert=O_{k}(p^{d(k-2)+r_{k}-1})$. Again by Theorem \ref{1:ct}, we have that
	  \begin{equation}\nonumber
	  	\begin{split}
	  	&\quad\vert {Z}^{c}_{k,I,i_{1},i_{2}}\cap\Omega_{k}\vert
	  	=\sum_{\n'\in Z'\backslash Z}\vert\Omega_{k}(\n')\vert+\sum_{\n'\in Z'\cap Z}\vert\Omega_{k}(\n')\vert	
	  	\\&\leq \vert Z'\vert\cdot p^{d-r_{k}}(1+O_{k}(p^{-1/2}))+O_{k}(p^{d(k-1)+r_{k}-1}).	
	  	\end{split}	
	  \end{equation}	
	Since $d\geq 2r+1$, we have $d(k-1)+r_{k}-1<dk-r$.  		
		So it suffices to show that $\vert Z'\vert\leq \d p^{d(k-1)-(r-r_{k})}/4$.
		If not, then one of $\vert V(G_{k,I,i_{1},i_{2}})\cap \Omega_{k-1}\vert$ and $\vert V(\Delta_{k,I})\cap \Omega_{k-1}\vert$ is larger than $\d p^{d(k-1)-(r-r_{k})}/8$. Applying Theorem \ref{1:ct} to compute the cardinality of $\vert\Omega_{k-1}\vert$ and using the induction hypothesis that $\Omega_{k-1}$ is $\d'$-irreducible for all $\d'>0$ if $p\gg_{d,s} {\d'}^{-O_{d,s}(1)}$, we have that either $\Omega_{k-1}\subseteq  V(G_{k,I,i_{1},i_{2}})$ or  $\Omega_{k-1}\subseteq  V(\Delta_{k,I})$.
		
		Note that $\Delta_{k,I}(\n')$ depends only on $n_{i,\ell}, 1\leq i\leq k-1, \ell\in I$,    and certainly $\Delta_{k,I}$ is not constant 0. So we may pick some $n_{i,\ell}\in\F_{p}, 1\leq i\leq k-1, \ell\in I$ such that  $\Delta_{k,I}(\n')\neq 0$. 
 Let $\Omega'$ be the set of $(n_{i,\ell})_{1\leq i\leq k-1, \ell\in I^{c}}$ such that $(n_{i,\ell})_{1\leq i\leq k-1, 1\leq \ell\leq d}$ belongs to $\Omega_{k-1}$. 
		Let $M'\colon\F_{p}^{d-r_{k}+1}\to\F_{p}$ be the quadratic form given by
		$$M'((m_{\ell})_{\ell\in I^{c}}):=M((m'_{\ell})_{1\leq \ell\leq d}),$$
		where $m'_{\ell}=m_{\ell}$ for $\ell\in I^{c}$ and $m'_{\ell}=0$ for $\ell\in I$. Since $A$ is invertible and diagonal, we have that $M'$ is  non-degenerate.
		
		For $1\leq i\leq k-1, 1\leq j\leq r_{i}$, let $\tilde{F}_{i,j}\colon (\F_{p}^{d-r_{k}+1})^{k-1}\to\F_{p}$ be the function obtained from  $F_{i,j}$ (viewed as a function from $(\F_{p}^{d})^{k-1}$ to $\F_{p}$ where we forget the irrelevant variable $n_{k}$) by fixing the variables  $n_{i,\ell}\in\F_{p}, 1\leq i\leq k-1, \ell\in I$ chosen above. 
		Let $\Omega'$ be the set of $(n_{i,\ell})_{1\leq i\leq k-1, \ell\in I^{c}}$ such that $(n_{i,\ell})_{1\leq i\leq k-1, 1\leq \ell\leq d}$ belongs to $\Omega_{k-1}$. 
Then		$\Omega'=\cap_{i=1}^{k-1}\cap_{j=1}^{r_{i}}V(\tilde{F}_{i,j})$.
		By the construction of $F_{i,j}$, $\{F_{i,j}\colon 1\leq i\leq k-1, 1\leq j\leq r_{i}\}$ is an independent $(M',k)$-family. So $\{\tilde{F}_{i,j}\colon 1\leq i\leq k-1, 1\leq j\leq r_{i}\}$ is an independent $(M',k-1)$-family. 		Therefore $\Omega'\subseteq (\F_{p}^{d-r_{k}+1})^{k-1}$ is a consistent $M'$-set with total co-dimension $r_{1}+\dots+r_{k-1}$, which is the same as $\Omega_{k-1}$.
	Since $d-r_{k}+1\geq 2(r_{1}+\dots+r_{k-1})+1$, by Theorem \ref{1:ct},
		$\vert\Omega'\vert\geq p^{(d-r_{k}+1)(k-1)-(r_{1}+\dots+r_{k-1})}(1+O_{k}(p^{-1/2}))$. In particular, $\Omega'$ is non-empty. Therefore, $\Omega_{k-1}$ is not a subset of  $V(\Delta_{k,I})$.

	\

		We may argue similarly for $G_{k,I,i_{1},i_{2}}$. Let $J:=I\cup\{i_{1},i_{2}\}$.  
		Recall that $G_{k,I,i_{1},i_{2}}(\n')$ depends only on $n_{i,\ell}, 1\leq i\leq k-1, \ell\in J$.
		Since $G_{k,I,i_{1},i_{2}}$ is not constant zero, then we may pick some $n_{i,\ell}\in\F_{p}, 1\leq i\leq k-1, \ell\in J$ such that  $G_{k,I,i_{1},i_{2}}(\n')\neq 0$.
		 Let $\Omega'$ be the set of $(n_{i,\ell})_{1\leq i\leq k-1, \ell\in J^{c}}$ such that $(n_{i,\ell})_{1\leq i\leq k-1, 1\leq \ell\leq d}$ belongs to $\Omega_{k-1}$. 
		Let $M'\colon\F_{p}^{d-\vert J\vert}\to\F_{p}$ be the quadratic form given by
		$$M'((m_{\ell})_{\ell\in J^{c}}):=M((m'_{\ell})_{1\leq \ell\leq d}),$$
		where $m'_{\ell}=m_{\ell}$ for $\ell\in J^{c}$ and $m'_{\ell}=0$ for $\ell\in J$.	
	
	Similar to the argument in the previous case, we have that 		
	$M'$ is non-degenerate and that
	$\Omega'\subseteq (\F_{p}^{d-\vert J\vert})^{k-1}$ is a consistent $M'$-set with total co-dimension $r_{1}+\dots+r_{k-1}$. 	Since $d-\vert J\vert\geq d-(r_{k}+1)\geq 2(r_{1}+\dots+r_{k-1})+1$, by Theorem \ref{1:ct}, 
		$\vert\Omega'\vert\geq p^{(d-\vert J\vert)(k-1)-(r_{1}+\dots+r_{k-1})}(1+O_{k}(p^{-1/2}))$. In particular, $\Omega'$ is non-empty. Therefore, $\Omega_{k-1}$ is not a subset of  $V(G_{k,I,i_{1},i_{2}})$.	 
		 We now arrive at a contradiction and we are done. 	
	\end{proof}

	\textbf{Step 2: a factorization for the target polynomial.}
		This step is carried out  by the following proposition:

	\begin{prop}\label{1:cl1} 
		Let $s\in\N$, $1\leq k\leq K$, $I\subseteq \{1,\dots,d\}$ with $\vert I\vert=r_{k}-1$, $i_{1},i_{2}\in\{1,\dots,d\}\backslash I, i_{1}\leq i_{2}$, and $P\in\poly((\V)^{k}\to\F_{p})$ be a polynomial of degree at most $s$ with $\vert V(P)\cap \Omega_{k}\vert\geq \d p^{dk-r}$.	Then
	\begin{equation}\nonumber
	(G_{k,I,i_{1},i_{2}}(\n')\Delta_{k,I}(\n'))^{s+2}P(\n',n_{k})
	=\sum_{j=1}^{r_{k}}F_{k,j}(\n',n_{k})Q_{j}(\n',n_{k})
	\end{equation}
	for some polynomial $Q_{j}$ of degree at most $O_{k,s}(1)$ for all $\n'\in\Omega_{k-1}$ and $n_{k}\in\V$. 
	\end{prop}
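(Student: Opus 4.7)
The plan is to mimic the three-step strategy used in the sketch of Proposition \ref{1:irrr0}, but carried out systematically via the parametrization $\phi_{k,I,\n'}$ and the factorization $F_{I,k,1}$ introduced in Step 1 of Section \ref{1:s84}, and combined with the inductive hypothesis that $\Omega_{k-1}$ is $\d'$-irreducible up to degree $s$ for all $\d'>0$ provided $p\gg_{d,s}{\d'}^{-O_{d,s}(1)}$.

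First, I would use the assumption $\vert V(P)\cap \Omega_{k}\vert\geq \d p^{dk-r}$ together with Theorem \ref{1:ct} (i) and (ii) applied to $\Omega_{k}$ and $\Omega_{k-1}$ (so that generic fibers $\Omega_{k}(\n')$ have size $p^{d-r_{k}}(1+O_{k}(p^{-1/2}))$), plus Lemma \ref{1:cl2} to discard the exceptional $\n'$ where $\Delta_{k,I}(\n')G_{k,I,i_{1},i_{2}}(\n')=0$. By the Pigeonhole Principle, there is a set $E\subseteq Z_{k,I,i_{1},i_{2}}\cap \Omega_{k-1}$ with $\vert E\vert\geq \frac{\d}{4}p^{d(k-1)-(r-r_{k})}$ such that for every $\n'\in E$,
\[
\vert V(P(\n',\cdot))\cap \Omega_{k}(\n')\vert\geq \tfrac{\d}{4}\, p^{d-r_{k}}.
\]
Using the identification (\ref{1:meaningofp}), for each such $\n'$ the set of $m\in \F_{p}^{d-r_{k}+1}$ satisfying $F_{I,k,1}(\n',m)=0$ and $P(\n',\phi_{k,I,\n'}(m))=0$ has size $\gg_{k}\d p^{d-r_{k}}$. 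Since $F_{I,k,1}(\n',\cdot)$ is a quadratic form of rank $\geq d-2(r_{k}-1)\geq 3$, Lemma \ref{1:bzt} forces
\[
V(F_{I,k,1}(\n',\cdot))\subseteq V(P(\n',\phi_{k,I,\n'}(\cdot))) \quad\text{for every } \n'\in E.
\]

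Second, for each $\n'\in E$, because the $(i_{\ast},i_{\ast})$ entry of $B_{I,i_{1},i_{2}}A_{k,I,\n'}B_{I,i_{1},i_{2}}^{T}$ is precisely $G_{k,I,i_{1},i_{2}}(\n')/\Delta_{k,I}(\n')^{2}\neq 0$, Proposition \ref{1:noloop} applies via the $B_{I,i_{1},i_{2}}$-standard long division and yields an identity
\[
P(\n',\phi_{k,I,\n'}(m))=F_{I,k,1}(\n',m)\,R_{\n'}(m)
\]
with $\deg R_{\n'}\leq s-2$. Tracking denominators (every coefficient of $P(\n',\phi_{k,I,\n'}(\cdot))$ lies in $\F_{p}\bigl[\n',\Delta_{k,I}(\n')^{-1}\bigr]$ with numerator of degree $O_{s}(1)$, and the Proposition \ref{1:noloop} coefficients acquire an extra $G_{k,I,i_{1},i_{2}}(\n')^{s}$ in the denominator), I can write
\[
(G_{k,I,i_{1},i_{2}}(\n')\Delta_{k,I}(\n'))^{s+2}R_{\n'}(m) = R(\n',m)
\]
for a globally defined polynomial $R$ of degree $O_{k,s}(1)$. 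Thus for every $\n'\in E$ and every $m\in \F_{p}^{d-r_{k}+1}$,
\begin{equation}\label{eq:sketch-A}
(G_{k,I,i_{1},i_{2}}(\n')\Delta_{k,I}(\n'))^{s+2}P(\n',\phi_{k,I,\n'}(m)) - F_{I,k,1}(\n',m)\, R(\n',m)=0.
\end{equation}
Expanding the left side in $m$ and using the induction hypothesis (i.e.\ that $\Omega_{k-1}$ is $\d$-irreducible up to degree $O_{k,s}(1)$, so each coefficient polynomial in $\n'$ that vanishes on $E$ vanishes on all of $\Omega_{k-1}$), I extend (\ref{eq:sketch-A}) from $E$ to all $\n'\in\Omega_{k-1}$.

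Third, I would translate (\ref{eq:sketch-A}) back from the parametrization to $n_{k}\in\V$. By (\ref{1:subsub1})--(\ref{1:meaningofpx}), each coordinate of $\phi_{k,I,\n'}(n_{k,I^{c}})-n_{k}$ is, up to a factor $\Delta_{k,I}(\n')^{-1}$, an $\F_{p}$-linear combination of $F_{k,2}(\n',n_{k}),\dots,F_{k,r_{k}}(\n',n_{k})$. Consequently both differences
\[
P(\n',\phi_{k,I,\n'}(n_{k,I^{c}}))-P(\n',n_{k}) \quad\text{and}\quad F_{I,k,1}(\n',n_{k,I^{c}})-F_{k,1}(\n',n_{k})
\]
lie in the ideal generated by $F_{k,2},\dots,F_{k,r_{k}}$, with denominators absorbed by further powers of $\Delta_{k,I}(\n')$ (controlled within our $s+2$ budget). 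Substituting $m=n_{k,I^{c}}$ into (\ref{eq:sketch-A}), rearranging, and collecting these correction terms gives the desired identity
\[
(G_{k,I,i_{1},i_{2}}(\n')\Delta_{k,I}(\n'))^{s+2}P(\n',n_{k})=\sum_{j=1}^{r_{k}}F_{k,j}(\n',n_{k})\,Q_{j}(\n',n_{k}),
\]
valid for all $\n'\in\Omega_{k-1}$ and $n_{k}\in\V$, with $\deg Q_{j}=O_{k,s}(1)$.

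The main obstacle I expect is the bookkeeping in the third step: ensuring that the stated power $(G_{k,I,i_{1},i_{2}}\Delta_{k,I})^{s+2}$ is enough to clear every denominator that arises when passing between the $\phi_{k,I,\n'}$-parametrization and the original coordinates, and simultaneously expressing the residual as an honest combination of $F_{k,1},\dots,F_{k,r_{k}}$ rather than merely of $F_{I,k,1}$ and $F_{k,2},\dots,F_{k,r_{k}}$. A secondary technical point is that the invocation of the induction hypothesis requires the coefficient polynomials (in $\n'$) appearing after expanding (\ref{eq:sketch-A}) in $m$ to have degree controlled uniformly in $s$; I would verify this via the explicit degree estimates on $G_{k,I,i_{1},i_{2}}$, $\Delta_{k,I}$, $F_{I,k,1}$, and $R$ established in Step 1 and in the proof above.
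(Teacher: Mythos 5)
Your proposal is correct and follows essentially the same route as the paper's proof: pass to the parametrization $\phi_{k,I,\n'}$, use Lemma \ref{1:cl2}, Theorem \ref{1:ct} and pigeonholing to find many good fibers, apply Lemma \ref{1:bzt}/Proposition \ref{1:noloop} via the $B_{I,i_{1},i_{2}}$-standard long division, upgrade the coefficient identities from a dense set to all of $\Omega_{k-1}$ by the inductive irreducibility, and finally express $\phi_{k,I,\n'}(n_{k,I^{c}})-n_{k}$ through the determinant identity as a combination of $F_{k,2},\dots,F_{k,r_{k}}$ to clear the parametrization. The only omission is the trivial corner case $G_{k,I,i_{1},i_{2}}\equiv 0$ (where one simply takes all $Q_{j}=0$), which the paper dispatches before invoking Lemma \ref{1:cl2}; otherwise your denominator bookkeeping with the exponent $s+2$ matches the paper's.
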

	\begin{proof}
If  $G_{k,I,i_{1},i_{2}}$ is constant zero, then we may set all of $Q_{j}$ to be constant zero. So we assume that $G_{k,I,i_{1},i_{2}}$ is not constant zero.

	By Lemma \ref{1:cl2}, we have that $\vert V(P)\cap \Omega_{k}\cap  Z_{k,I,i_{1},i_{2}}\vert\geq \d p^{dk-r}/2$. 
	By the Pigeonhole Principle and Theorem \ref{1:ct} (ii), it is not hard to see that there exist a subset $U\subseteq\Omega_{k-1}$ with $\vert U\vert\gg\d p^{d(k-1)-(r-r_{k})}$ and a subset $U(\n')\subseteq \Omega_{k}(\n')$ of cardinality $\gg\d p^{d-r_{k}}$ for each $\n'\in U$ such that for all $\n'\in U$ and $n_{k}\in U(\n')$, we have that  $P(\n',n_{k})=0$, $\Delta_{k,I}(\n')\neq 0$ and   $G_{k,I,i_{1},i_{2}}(\n')\neq 0$.

	For simplicity denote $n'_{k}:=n_{k,I^{c}}=(n_{k,\ell})_{\ell\in I^{c}}$ and set
	 $P_{I}(\n',n'_{k}):=P(\n',\phi_{k,I,\n'}(n'_{k}))$. Then for all $n_{k}\in \F_{p}^{d}$, by (\ref{1:meaningofp}), we have that $(\n',n_{k})\in V(P_{I}(\n',\cdot))\cap \Omega_{k}(\n')$  if and only if $n_{k}=\phi_{k,I,\n'}(n'_{k})$ for some $n'_{k}\in\F_{p}^{d-r_{k}+1}$ with $P_{I}(\n',n'_{k})=F_{I,k,1}(\n',n'_{k})=0$. Since  $U(\n')$ is of cardinality $\gg\d p^{d-r_{k}}$, we have that $\vert V(P_{I}(\n',\cdot))\cap V(F_{I,k,1}(\n',\cdot))\vert\gg\d p^{d-r_{k}}$. 
	
	Recall that $F_{I,k,1}(\n',\cdot)$ is a non-degenerate quadratic form of rank at least 3. 
	Since  $G_{k,I,i_{1},i_{2}}(\n')\neq 0$, by  Proposition \ref{1:noloop}, 
	$F_{I,k,1}(\n',\cdot)$ divides $P_{I}(\n',\cdot)$ with respect to the $B_{i_{1},i_{2}}$-standard long division algorithm. Note that every coefficient of $P_{I}(\n',\cdot)$ can be written as $R(\n')/(\Delta_{I}(\n'))^{s}$ for some polynomial $R$ of degree at most $(r_{k}-1)s$, and every coefficient of   $f_{I,k,1}(\n',\cdot)$ can be written as $R(\n')/(\Delta_{I}(\n'))^{2}$ for some polynomial $R$ of degree at most $2(r_{k}-1)$.
	So the $B_{i_{1},i_{2}}$-standard long division algorithm gives us that 
	$$P_{I}(\n',n'_{k})=F_{I,k,1}(\n',n'_{k})R'(\n',n'_{k})/(G_{k,I,i_{1},i_{2}}(\n')\Delta_{I}(\n'))^{s}$$
	for some polynomial $R'$ of degree at most $O_{k,s}(1)$. 
	So
	\begin{equation}\label{1:subsub2}
	(G_{k,I,i_{1},i_{2}}(\n')\Delta_{k,I}(\n'))^{s}P_{I}(\n',n'_{k})-F_{I,k,1}(\n',n'_{k})R'(\n',n'_{k})=0
	\end{equation}
	for all $\n'\in U$ and $n'_{k}\in\F_{p}^{d-r_{k}+1}$.
	We may write the left hand side of (\ref{1:subsub2}) as $$\sum_{m\in \N^{d-r_{k}},\vert m\vert\leq s}C_{m}(\n'){n'_{k}}^{m}$$ for some polynomial $C_{m}(\n')$ of degree at most $O_{k,s}(1)$.
	Then we have that $C_{m}(\n')=0$ for all $\n'\in U$ and thus $\vert V(C_{m})\cap \Omega_{k-1}\vert\gg\d p^{d(k-1)-(r-r_{k})}$. By induction hypothesis, we have that $\Omega_{k-1}\subseteq V(C_{m})$. So (\ref{1:subsub2}) holds for all $\n'\in \Omega_{k-1}$ and $n'_{k}\in\F_{p}^{d-r_{k}+1}$.

		Let $(\phi_{k,I,\n'}(n'_{k}))_{\ell}$ denote the $\ell$-th entry of $\phi_{k,I,\n'}(n'_{k})$. 
		By (\ref{1:subsub1}) and (\ref{1:subvomit}), for all $\ell\in I$, 		$\Delta_{k,I}(\n')((\phi_{k,I,\n'}(n'_{k}))_{\ell}-n_{k,\ell})$ equals to
		the determinant of the $(r_{k}-1)\times (r_{k}-1)$ matrix  obtained by replacing the column $(S_{k,j,\ell}(\n'))_{2\leq j\leq r_{k}}$ from $(S_{k,j,\ell''}(\n'))_{2\leq j\leq r_{k},\ell''\in I}$ by the column 	$(q_{j,\ell})_{2\leq j\leq r_{k}}$, where
		$$q_{j,\ell}=R_{k,I,j,\ell}(\n',n'_{k})-n_{k,\ell}S_{k,j,\ell}(\n')=-F_{k,j}(\n',n_{k})+\sum_{\ell'\in I\backslash\{\ell\}}n_{k,\ell'}S_{k,j,\ell'}(\n')$$
		for all $2\leq j\leq r_{k}$, which, by linearity, equals to the determinant of the $(r_{k}-1)\times (r_{k}-1)$ matrix  obtained by replacing the column $(S_{k,j,\ell}(\n'))_{2\leq j\leq r_{k}}$ from $(S_{k,j,\ell''}(\n'))_{2\leq j\leq r_{k},\ell''\in I}$ by the column 	$(-F_{k,j}(\n',n_{k}))_{2\leq j\leq r_{k}}$.
This means that $\Delta_{k,I}(\n')((\phi_{k,I,\n'}(n'_{k}))_{\ell}-n_{k,\ell})$ can be written in the form
		\begin{equation}\label{1:repform}
		\sum_{j=2}^{r_{k}}F_{k,j}(\n',n_{k})Q_{j}(\n',n_{k})
		\end{equation}
		for some polynomial $Q_{j}\in\poly((\V)^{k}\to\F_{p})$ of degree at most $r_{k}-1$.
		Therefore,   $$\Delta_{k,I}(\n')^{s}(P(\n',n_{k})-P_{I}(\n',n'_{k}))=\Delta_{k,I}(\n')^{s}(P(\n',n_{k})-P(\n',(\phi_{k,I,\n'}(n'_{k})))$$
			can be written in the form (\ref{1:repform}) for some polynomial $Q_{j}\in\poly((\V)^{k}\to\F_{p})$ of degree at most $ks$,	and
		$$\Delta_{k,I}(\n')^{2}(F_{k,1}(\n',n_{k})-F_{I,k,1}(\n',n'_{k}))=\Delta_{k,I}(\n')^{2}(F_{k,1}(\n',n_{k})-F_{k,1}(\n',\phi_{k,I,\n'}(n'_{k})))$$	 
		can be writing in the form (\ref{1:repform}) for some polynomia $Q_{j}\in\poly((\V)^{k}\to\F_{p})$ of degree at most $2s$. Since (\ref{1:subsub2}) holds for all $\n'\in \Omega_{k-1}$ and $n'_{k}\in\F_{p}^{d-r_{k}+1}$, we have that 
		\begin{equation}\nonumber%\label{1:subsub2}
		(G_{k,I,i_{1},i_{2}}(\n')\Delta_{I}(\n'))^{s+2}P(\n',n_{k})
		=\sum_{j=1}^{r_{k}}F_{k,j}(\n',n_{k})Q_{j}(\n',n_{k})
		\end{equation}
		for some polynomial $Q_{j}$ of degree at most $O_{k,s}(1)$ for all $\n'\in\Omega_{k-1}$ and $n_{k}\in\V$. 
	\end{proof}

	\textbf{Step 3: removing the $(G_{k,I,i_{1},i_{2}}(\n')\Delta_{k,I}(\n'))^{s+2}$ term using a parametrization trick.} 
	We are now ready to complete the proof of Part (i) of Theorem \ref{1:irrrg} for $\Omega_{k}$ for the case $c_{k}=1$.
	Let $P\in\poly((\V)^{k}\to\F_{p})$ be a polynomial of degree at most $s$ with $\vert V(P)\cap \Omega_{k}\vert\geq \d \vert \Omega_{k}\vert$.
	We wish to show that for all $\n=(\n',n_{k})\in \Omega_{k}$, we have that $P(\n',n_{k})=0$.

	We first consider the case when $n_{1},\dots,n_{k-1}$ are linearly independent. Since $L_{k,j}, 2\leq j\leq r_{k}$ are linearly independent,\footnote{Meaning that no nontrivial linear combination of $L_{k,j}, 2\leq j\leq r_{k}$ is zero.} we have that $L_{k,j}(\n'), 2\leq j\leq r_{k}$  are linearly independent.  
	Since $A$ is invertible, the vectors $L_{k,j}(\n')A=(S_{k,j,1}(\n'),\dots,S_{k,j,d}(\n')), 2\leq j\leq r_{k}$  are also linearly independent. By the knowledge of linear algebra, there exists a subset $I$ of $\{1,\dots,d\}$ of cardinality $r_{k}-1$ such that $\Delta_{k,I}(\n')\neq 0$.
	By Lemma \ref{1:noog},
	 there exist $i_{1},i_{2}\in I^{c}, i_{1}\leq i_{2}$ such that  $G_{k,I,i_{1},i_{2}}(\n')\neq 0$. On the other hand, by Theorem \ref{1:ct}, $\vert V(P)\cap\Omega_{k}\vert\geq  \d p^{dk-r}/2$ since $d\geq 2r+1$.
	Since $(\n',n_{k})\in \Omega_{k}$, we have that $(G_{k,I,i_{1},i_{2}}(\n')\Delta_{k,I}(\n'))^{s+2}P(\n',n_{k})=0$ by Proposition \ref{1:cl1}. Therefore,  $P(\n',n_{k})=0$.

	We now consider the general case.
	For $1\leq t\leq k-1$, we say that \emph{Property $t$} holds if for all $\n=(\n',n_{k})\in \Omega_{k}$ with 
	$n_{1},\dots,n_{t}$ being linearly independent, we have that $P(\n,n_{k})=0$.  By the above discussion, Property $k-1$ holds.
	Assume now that we have proven Property $t+1$ holds for some $0\leq t\leq k-2$. We show that Property $t$ holds.
	Pick any $\n=(\n',n_{k})\in \Omega_{k}$ such that $n_{1},\dots,n_{t}$  are linearly independent. If $n_{1},\dots,n_{t+1}$ are linearly independent, then $P(\n',n_{k})=0$ by induction hypothesis and we are done. So we assume that $n_{1},\dots,n_{t+1}$ are linearly dependent.
	
 	\begin{lem}\label{1:fvv3}
	There exists a subspace  $V$ of $\V$ dimension $3$ such that 
	\begin{itemize}
		\item for all $v\in V\backslash\{\bold{0}\}$, $n_{1},\dots,n_{t},v$ are linearly independent;
		\item for all $v\in V$ and $1\leq j\leq k$, we have that $(vA)\cdot n_{j}=0$;
		\item  we have that $V\cap V^{\pp}=\{\bold{0}\}$.
	\end{itemize}	
	\end{lem}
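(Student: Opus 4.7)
The plan is to turn the three conditions into concrete geometric requirements on $V$. Condition (ii) asks that $V\subseteq W$, where
$$W:=\sp_{\F_{p}}\{n_{1},\dots,n_{k}\}^{\pp}=(\sp\{n_{1}A,\dots,n_{k}A\})^{\perp}$$
(using the symmetry and invertibility of $A$), so that $\dim W\geq d-k$. Condition (i) becomes $V\cap \sp_{\F_{p}}\{n_{1},\dots,n_{t}\}=\{\bold{0}\}$, and condition (iii) is by definition the statement that $V$ is $M$-non-isotropic.

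My approach is to build $V=\sp_{\F_{p}}\{v_{1},v_{2},v_{3}\}$ from a pairwise $M$-orthogonal triple in $W$: I will choose $v_{1},v_{2},v_{3}\in W$ with $(v_{i}A)\cdot v_{j}=0$ for $i\neq j$ and $(v_{i}A)\cdot v_{i}\neq 0$ for each $i$, together with $v_{j+1}\notin\sp\{n_{1},\dots,n_{t},v_{1},\dots,v_{j}\}$. The first two requirements make the Gram matrix $((v_{i}A)\cdot v_{j})_{1\leq i,j\leq 3}$ diagonal with nonzero diagonal, hence of nonzero determinant, so by the characterization of $M$-isotropicity recorded right before Lemma~\ref{1:wfejpo} this forces $V\cap V^{\pp}=\{\bold{0}\}$. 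The last requirement ensures that $n_{1},\dots,n_{t},v_{1},\dots,v_{j+1}$ remain linearly independent throughout the construction, giving condition (i) upon termination.

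The vectors will be chosen inductively. At step $j+1$ I search inside
$$W_{j}:=W\cap \sp\{v_{1},\dots,v_{j}\}^{\pp}=(\sp\{n_{1}A,\dots,n_{k}A,v_{1}A,\dots,v_{j}A\})^{\perp},$$
which has dimension at least $d-(k+j)$. The forbidden linear-span has at most $p^{t+j}$ points. For the nonvanishing condition $(v_{j+1}A)\cdot v_{j+1}\neq 0$, observe that the pure quadratic form $v\mapsto(vA)\cdot v$ on $W_{j}$ has rank
$$\rank(M|_{W_{j}})\geq \dim W_{j}-\dim(W_{j}\cap W_{j}^{\pp})\geq (d-k-j)-(k+j)=d-2(k+j)$$
by Proposition~\ref{1:iissoo}(ii), using $W_{j}^{\pp}\supseteq\sp\{n_{1},\dots,n_{k},v_{1},\dots,v_{j}\}$ of dimension at most $k+j$. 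Under $d\geq 4K-1$ with $k\leq K$ and $j\leq 2$ this rank is strictly positive, so the form is not identically zero on $W_{j}$ and Lemma~\ref{1:ns} gives at most $O_{d}(p^{\dim W_{j}-1})$ zeros. Since $|W_{j}|=p^{\dim W_{j}}$ dominates $p^{t+j}+O_{d}(p^{\dim W_{j}-1})$ as soon as $p\gg_{d,s}1$, the required $v_{j+1}$ exists, and after three steps $V=\sp_{\F_{p}}\{v_{1},v_{2},v_{3}\}$ satisfies all three conditions. The only delicate point is verifying that the rank lower bound $d-2(k+j)$ is usable at each $j\in\{0,1,2\}$; this is the main (purely arithmetic) obstacle and is handled uniformly by $d\geq 4K-1$.
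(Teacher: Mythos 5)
Your construction is a genuinely different route from the paper's. The paper fixes one ambient subspace at the start: it takes $W_{3}\subseteq \sp_{\F_{p}}\{n_{1},\dots,n_{k}\}^{\pp}$ of dimension $\geq d-k-t$ meeting $\sp_{\F_{p}}\{n_{1},\dots,n_{t}\}$ trivially, lower-bounds $\rank(M\vert_{W_{3}})\geq d-2(k+t)\geq d-2(2k-2)\geq 3$ (this is where $t\leq k-2$ and $d\geq 4k-1$ enter), and then quotes Lemma \ref{1:wfejpo} to extract a $3$-dimensional non-isotropic subspace. You instead build the triple greedily, cutting down by $M$-orthogonality to each new vector and using the diagonal Gram-matrix criterion plus a counting argument via Lemma \ref{1:ns}; those parts of your argument are correct.

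The problem is the step you yourself call the main obstacle: it is asserted, not verified, and it fails in a boundary case that the lemma must cover. Under the hypotheses actually in force (Theorem \ref{1:irrr} only assumes $d\geq\max\{2r_{M}(\Omega)+1,4K-1\}$), the lemma is invoked with $k=K=2$, $t=0$, where $d$ may be as small as $7$; then at $j=2$ your bound $d-2(k+j)=d-8\leq 0$, so positivity of the rank of the restricted form on $W_{2}$ does not follow and the existence of an anisotropic $v_{3}$ is not justified. (A smaller point: to get $\dim(W_{j}\cap W_{j}^{\pp})\leq k+j$ you need $W_{j}^{\pp}=\sp_{\F_{p}}\{n_{1},\dots,n_{k},v_{1},\dots,v_{j}\}$, i.e.\ Proposition \ref{1:23}(i) for the non-degenerate $A$; the containment ``$\supseteq$'' you wrote goes the wrong way for that bound.) The gap is repairable inside your scheme: since each chosen $v_{l}$ lies in $W$, we have $(v_{l}A)\cdot n_{i}=0$, $(v_{l}A)\cdot v_{l'}=0$ for $l\neq l'$ and $(v_{l}A)\cdot v_{l}\neq 0$; hence any $u\in W_{j}\cap W_{j}^{\pp}$, written $u=x+\sum_{l}b_{l}v_{l}$ with $x\in\sp_{\F_{p}}\{n_{1},\dots,n_{k}\}$, satisfies $0=(v_{l}A)\cdot u=b_{l}(v_{l}A)\cdot v_{l}$, so $u\in\sp_{\F_{p}}\{n_{1},\dots,n_{k}\}$, giving $\rank(M\vert_{W_{j}})\geq \dim W_{j}-k\geq d-2k-j\geq 2k-3\geq 1$ for every $k\geq 2$ where the lemma is used. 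Alternatively, exploit $t\leq k-2$ as the paper does. As written, though, the decisive numerical verification is incorrect, so the proof is incomplete.
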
 
	\begin{proof}
	 Let $W_{1}$ denote the set of $v\in\V$ such that $n_{1},\dots,n_{t},v$ are linearly dependent. 
	Then $\vert W_{1}\vert=p^{t}$.
		Let $W_{2}$ denote the set of $v\in\V$ such that 
	$(vA)\cdot n_{j}=0$
	for all $1\leq j\leq k$.
	Then $W_{2}$ is a subspace of $\V$ of dimension at least $d-k$. 
So there exists a subspace $W_{3}$ of $W_{2}$ with $W_{1}\cap W_{3}=\{\bold{0}\}$ of dimension 
$d'\geq d-k-t.$ 

Let $\phi\colon \F_{p}^{d'}\to W_{3}$ be any bijective linear transformation and let $M'\colon \F_{p}^{d'}\to\F_{p}$ be the quadratic form given by $M'=M\circ\phi$.
By Proposition \ref{1:iissoo} (iii), $\rank(M')=\rank(M\vert_{W_{3}})\geq d-2(k+t)\geq d-2(2k-2)\geq 3$. 
So by Lemma \ref{1:wfejpo}, there exists a subspace $U$ of $\F_{p}^{d'}$ of dimension 3 which is $M'$-non-isotropic. Then clearly $\phi(U)$ is $M$-non-isotropic. Moreover, $\phi(U)$ is a subspace of $W_{3}$ of dimension 3. We are done by taking $V=\phi(U)$.
\end{proof}

Let  $V$  be given by Lemma \ref{1:fvv3}, and $\phi\colon \F_{p}^{3}\to V$ be any bijective linear transformation. 
	Let $W$ be the set of $m\in\F_{p}^{3}$ such that $(n_{1},\dots,n_{t},n_{t+1}+\phi(m),n_{t+2},\dots,n_{k})\in\Omega_{k}$. Note that $\bold{0}\in W$.
	Since $(\phi(m)A)\cdot n_{j}=0$ for all $1\leq j\leq k$, by (\ref{1:e:fi1}) and (\ref{1:e:fij}),
	for all $1\leq i'\leq k$ and $1\leq j\leq r_{i'}$ with either $(i',j)\neq (t+1,1)$ or $(i',j)=(t+1,1)$ and $c_{t+1}=0$,
	 it is not hard to see that
	$$F_{i',j}(n_{1},\dots,n_{t},n_{t+1}+\phi(m),n_{t+2},\dots,n_{k})=F_{i',j}(n_{1},\dots,n_{t},n_{t+1},n_{t+2},\dots,n_{k}).$$
	Moreover, if $(i',j)=(t+1,1)$ and $c_{t+1}=1$, then by (\ref{1:e:fi1}),
	we have that
	$$M'(m):=F_{t+1,1}(n_{1},\dots,n_{t},n_{t+1}+\phi(m),n_{t+2},\dots,n_{k})$$
	is a quadratic form from $\F_{p}^{3}$ to $\F_{p}$. 	It is then not hard to see that 
	$W=V(M')$ if $c_{t+1}=1$ and $W=\F_{p}^{3}$ if $c_{t+1}=0$. Since $V\cap V^{\pp}=\{\bold{0}\}$, by Proposition \ref{1:iissoo} (ii), $\rank(M')=3$.
	So by  Propositions \ref{1:iri00} and \ref{1:iri0}, $W$ is $1/2$-irreducible.  
	
	On the other hand, for all $m\in\F_{p}^{3}\backslash\{\bold{0}\}$,
	since $n_{1},\dots,n_{t},\phi(m)$ are linearly independent and since $n_{1},\dots,n_{t+1}$ are linearly dependent, we have that $n_{1},\dots,n_{t},n_{t+1}+\phi(m)$ are linearly independent.
	By the induction hypothesis that Property $t+1$ holds, we have that 
	$$P'(m):=P(n_{1},\dots,n_{t},n_{t+1}+\phi(m),n_{t+2},\dots,n_{k})=0$$
	for all $m\in W\backslash\{\bold{0}\}$.
	In other words, $W\backslash\{\bold{0}\}\subseteq V(P')$. So $\vert V(P')\cap W\vert\geq \vert W\vert-1\geq\vert W\vert/2$.
	Since $W$ is $1/2$-irreducible,   we have that $W\subseteq V(P')$. In particular, since $\bold{0}\in W$, we have that 
	$P'(\bold{0})=P(\n',n_{k})=0.$
	
	In conclusion, we have that Property $t$ holds.  By induction, we deduce that Property 0 holds, meaning that  $\Omega_{k}\subseteq V(P)$. So $\Omega_{k}$ is $\d$-irreducible up to degree $s$.

		\subsection{Proof of Part (i) of Theorem \ref{1:irrrg} for the degenerate case}\label{1:s841}
	
	We  now consider the case when $c_{k}=0$. 
	In this case $r_{k}\leq k-1$. 	Since the proof is very similar to (actually slightly easier than) the case $c_{k}=1$, we only provide an outline for the proof of this case and leave the details to the interested readers.

	\textbf{Step 1: a parametrization for the last variable.}
	Write
	$$L_{k,j}(\n')A=(S_{k,j,1}(\n'),\dots,S_{k,j,d}(\n'))$$
	for some polynomials $S_{k,j,1},\dots,S_{k,j,d}\colon(\V)^{k-1}\to\F_{p}$ of degree at most 1. Since $A$ is diagonal, for all $1\leq \ell\leq d$, $S_{k,j,\ell}(\n')$ is dependent only on $n_{i,\ell}, 1\leq i\leq k-1$.
	 Then we may rewrite (\ref{1:e:fij}) as
		\begin{equation}\label{1:vsubsub0}
		F_{k,j}(\n',n_{k})=\sum_{\ell=1}^{d}n_{k,\ell}S_{k,j,\ell}(\n')+u_{k,j},
		\end{equation} 
 for all $1\leq j\leq r_{i}$.
 
	For $I\subseteq \{1,\dots,d\}$ with $\vert I\vert=r_{k}>0$, let $\Delta_{k,I}(\n')$   denote the determinant of the $r_{k}\times r_{k}$ matrix
	$(S_{k,j,\ell}(\n'))_{2\leq j\leq r_{k}, \ell\in I}$. 
	Then 
	$\Delta_{k,I}$ is a polynomial from $(\V)^{k}$ to $\F_{p}$ of degree at most $r_{k}$  depending only on $n_{i,\ell}, 1\leq i\leq k-1, \ell\in I$. 
	For convenience denote $I^{c}:=\{1,\dots,d\}\backslash I$.
	Fix any $\n'\in \Omega_{k-1}$ with $\Delta_{k,I}(\n')\neq 0$. Then $L_{k,j}(\n')A, 2\leq j\leq r_{k}$ are linearly independent. By (\ref{1:vsubsub0}) and the knowledge of linear algebra,  $F_{k,1}(\n',n_{k})=\dots=F_{k,r_{k}}(\n',n_{k})=0$ if and only if
	for all $\ell\in I$,
	\begin{equation}\label{1:vsubsub1}
	n_{k,\ell}=\frac{R_{k,I,\ell}(\n',n_{k,I^{c}})}{\Delta_{k,I}(\n')},
	\end{equation}	
	where $R_{k,I,\ell}(\n',n_{k,I^{c}})$ is the determinant of the  $r_{k}\times r_{k}$ matrix  obtained by replacing the column $(S_{k,j,\ell}(\n'))_{1\leq j\leq r_{k}}$ from $(S_{k,j,\ell'}(\n'))_{1\leq j\leq r_{k},\ell'\in I}$ by the column $(R_{k,I,j,\ell}(\n',n_{k,I^{c}}))_{1\leq j\leq r_{k}}$ where
	\begin{equation}\label{1:vsubvomit}
	R_{k,I,j,\ell}(\n',n_{k,I^{c}})=-u_{k,j}-\sum_{\ell'\in I^{c}}n_{k,\ell'}S_{k,j,\ell'}(\n')=-F_{k,j}(\n',n_{k})+\sum_{\ell'\in I}n_{k,\ell'}S_{k,j,\ell'}(\n').
	\end{equation}

	Let $Z_{k,I}$ denote the set of $(\n',n_{k})\in(\V)^{k}$ such that $\Delta_{k,I}(\n')\neq 0$. 
Similar to the proof of Lemma \ref{1:cl2}, we have (we omit the details):
	
	\begin{lem}\label{1:vcl2}
	We have that $\vert {Z}^{c}_{k,I}\cap\Omega_{k}\vert\leq \d p^{dk-r}/2$. 	
	\end{lem}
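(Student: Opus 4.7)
The plan is to mirror the proof of Lemma \ref{1:cl2}, but with several simplifications because in the degenerate case $c_k = 0$ there is no leading quadratic term in $n_k$, and hence no $G_{k,I,i_1,i_2}$ factor to worry about. First I would set $Z' := \{\n' \in \Omega_{k-1} : \Delta_{k,I}(\n') = 0\}$, so that $Z_{k,I}^c \cap \Omega_k$ is precisely the set of $(\n', n_k) \in \Omega_k$ with $\n' \in Z'$. Let $Z$ be the set of $\n' \in \Omega_{k-1}$ for which $|\Omega_k(\n')|$ fails to be $p^{d - r_k}(1 + O_k(p^{-1/2}))$; by Theorem \ref{1:ct}(ii) applied with $k' = k-1$, $r'_{k'} = r_k$, $\rank(M) = d$, we get $|Z| = O_k(p^{d(k-2) + r_k - 1})$. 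Splitting according to whether $\n' \in Z$ or not gives
$$|Z_{k,I}^c \cap \Omega_k| \le |Z'| \cdot p^{d - r_k}\bigl(1 + O_k(p^{-1/2})\bigr) + O_k\bigl(p^{d(k-1) + r_k - 1}\bigr).$$
Since $d \geq 2r + 1$, the second term is already $\ll \delta p^{dk - r}$ for $p$ sufficiently large, so it suffices to prove $|Z'| \le \delta p^{d(k-1) - (r - r_k)}/4$.

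Second, I would argue by contradiction: if $|Z'| > \delta p^{d(k-1) - (r - r_k)}/4$, then using Theorem \ref{1:ct}(i) to evaluate $|\Omega_{k-1}|$ together with the outer induction hypothesis that $\Omega_{k-1}$ is $\d'$-irreducible up to degree $s$ for appropriate $\d' > 0$ (applied to the polynomial $\Delta_{k,I}$, which has degree at most $r_k \leq s$ after enlarging $s$ if necessary), we would conclude $\Omega_{k-1} \subseteq V(\Delta_{k,I})$. The polynomial $\Delta_{k,I}(\n')$ depends only on the coordinates $n_{i,\ell}$ with $1 \le i \le k-1$ and $\ell \in I$, and is not identically zero (since $L_{k,1}, \dots, L_{k,r_k}$ are linearly independent in the $c_k = 0$ case), so we may fix values $n_{i,\ell} \in \F_p$ for $1 \le i \le k-1$, $\ell \in I$ making $\Delta_{k,I}(\n') \neq 0$.

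Finally, I would show that this choice still leaves a non-empty slice of $\Omega_{k-1}$, producing the desired contradiction. Let $M' \colon \F_p^{d - r_k} \to \F_p$ be the quadratic form obtained by restricting $M$ to the coordinates not indexed by $I$; since $A$ is diagonal and invertible, $M'$ is non-degenerate. Writing $\tilde F_{i,j}$ for the specialization of $F_{i,j}$ after fixing the chosen coordinates, the family $\{\tilde F_{i,j} : 1 \le i \le k-1, 1 \le j \le r_i\}$ is an independent $(M', k-1)$-family (this uses that the original representation is standard). Hence $\Omega' := \bigcap_{i,j} V(\tilde F_{i,j}) \subseteq (\F_p^{d - r_k})^{k-1}$ is a consistent $M'$-set of total co-dimension $r_1 + \dots + r_{k-1}$. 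Since $d - r_k \ge 2(r_1 + \dots + r_{k-1}) + 1$, Theorem \ref{1:ct}(i) gives $|\Omega'| \ge p^{(d - r_k)(k-1) - (r_1 + \dots + r_{k-1})}(1 + O_k(p^{-1/2})) > 0$, so there exists $\n' \in \Omega_{k-1}$ with $\Delta_{k,I}(\n') \neq 0$, contradicting $\Omega_{k-1} \subseteq V(\Delta_{k,I})$.

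The main obstacle, as in Lemma \ref{1:cl2}, is the last step: producing a point of $\Omega_{k-1}$ outside $V(\Delta_{k,I})$. This is where the dimension hypothesis $d \ge 2r + 1$ and the nice/standard form of the $M$-representation are essential; the $c_k = 0$ case is slightly cleaner than $c_k = 1$ because we only need the single non-vanishing condition $\Delta_{k,I} \neq 0$ rather than the joint condition $\Delta_{k,I} \cdot G_{k,I,i_1,i_2} \neq 0$.
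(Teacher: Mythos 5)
Your proof is correct and follows exactly the route the paper intends: the paper omits the proof of this lemma, stating only that it is "similar to the proof of Lemma \ref{1:cl2}", and your argument is precisely that adaptation (dropping the $G_{k,I,i_1,i_2}$ condition, using the $r_k\times r_k$ determinant $\Delta_{k,I}$, and the slice form $M'$ on $\F_p^{d-r_k}$ with the dimension check $d-r_k\geq 2(r_1+\dots+r_{k-1})+1$). No gaps to report.
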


	\textbf{Step 2: a factorization for the target polynomial.}
		This step is carried out  by the following proposition:

	\begin{prop}\label{1:vcl1} 
		Let $s\in\N$, $1\leq k\leq K$, $I\subseteq \{1,\dots,d\}$ with $\vert I\vert=r_{k}$ and $P\in\poly((\V)^{k}\to\F_{p})$ be a polynomial of degree at most $s$ with $\vert V(P)\cap \Omega_{k}\vert\geq \d p^{dk-r}$.	Then
	\begin{equation}\nonumber
	\Delta_{k,I}(\n')^{s+2}P(\n',n_{k})
	=\sum_{j=1}^{r_{k}}F_{k,j}(\n',n_{k})Q_{j}(\n',n_{k})
	\end{equation}
	for some polynomial $Q_{j}$ of degree at most $O_{k,s}(1)$ for all $\n'\in\Omega_{k-1}$ and $n_{k}\in\V$. 
	\end{prop}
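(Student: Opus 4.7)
The plan is to mirror the structure of the proof of Proposition \ref{1:cl1}, but the argument simplifies considerably in the degenerate case because when $c_{k}=0$ the set $\Omega_{k}(\n')$ is (for $\n'\in\Omega_{k-1}$ with $\Delta_{k,I}(\n')\neq 0$) simply the image of the affine parametrization $\phi_{k,I,\n'}\colon \F_{p}^{d-r_{k}}\to\V$. There is no residual quadratic form $F_{I,k,1}$ to divide out, so neither the Hilbert Nullstellensatz for $V(M)$ (Proposition \ref{1:noloop}) nor the auxiliary leading-coefficient polynomial $G_{k,I,i_{1},i_{2}}$ intervenes; the role of Proposition \ref{1:noloop} is replaced by the much simpler Lemma \ref{1:ns}.

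First I would apply Lemma \ref{1:vcl2}, the Pigeonhole Principle, and Theorem \ref{1:ct} to produce a subset $U\subseteq \Omega_{k-1}$ of cardinality $\gg \d p^{d(k-1)-(r-r_{k})}$ such that for every $\n'\in U$ one has $\Delta_{k,I}(\n')\neq 0$ and the polynomial $P_{I}(\n',\cdot):=P(\n',\phi_{k,I,\n'}(\cdot))\colon \F_{p}^{d-r_{k}}\to\F_{p}$ (which has degree at most $s$) vanishes on a set of density $\gg \d$ in $\F_{p}^{d-r_{k}}$. Since $p\gg_{d,s} \d^{-O_{d,s}(1)}$ and $d-r_{k}\geq 1$, Lemma \ref{1:ns} forces $P_{I}(\n',\cdot)\equiv 0$ as a polynomial for every $\n'\in U$.

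Next I would clear denominators and set $\tilde{P}_{I}(\n',n_{k,I^{c}}):=\Delta_{k,I}(\n')^{s}P_{I}(\n',n_{k,I^{c}})$; by the explicit form of $\phi_{k,I,\n'}$ in (\ref{1:vsubsub1}) this is a polynomial in $(\n',n_{k,I^{c}})$ of degree $O_{k,s}(1)$. Expanding $\tilde{P}_{I}=\sum_{m}C_{m}(\n')(n_{k,I^{c}})^{m}$, each coefficient $C_{m}$ is a polynomial of degree $O_{k,s}(1)$ vanishing on $U$. By the inductive irreducibility hypothesis for $\Omega_{k-1}$ (applied at some small parameter $\d'$), each $C_{m}$ vanishes on all of $\Omega_{k-1}$, so $\tilde{P}_{I}(\n',n_{k,I^{c}})=0$ for every $\n'\in\Omega_{k-1}$ and every $n_{k,I^{c}}\in\F_{p}^{d-r_{k}}$.

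To convert back to the variable $n_{k}$ I would use the degenerate analog of the identity displayed near (\ref{1:repform}): by exactly the determinantal computation carried out in the proof of Proposition \ref{1:cl1}, the quantity $\Delta_{k,I}(\n')((\phi_{k,I,\n'}(n_{k,I^{c}}))_{\ell}-n_{k,\ell})$ lies in the ideal generated by $F_{k,1},\dots,F_{k,r_{k}}$ with coefficients of degree at most $r_{k}-1$ (here the sum runs over $1\leq j\leq r_{k}$ rather than $2\leq j\leq r_{k}$ because no quadratic $F_{k,1}$ is excluded). Expanding $\Delta_{k,I}(\n')^{s}(P(\n',n_{k})-P(\n',\phi_{k,I,\n'}(n_{k,I^{c}})))$ in the differences $n_{k,\ell}-(\phi_{k,I,\n'}(n_{k,I^{c}}))_{\ell}$ for $\ell\in I$ and clearing the resulting denominators yields a global algebraic identity
$$\Delta_{k,I}(\n')^{s}(P(\n',n_{k})-P(\n',\phi_{k,I,\n'}(n_{k,I^{c}})))=\sum_{j=1}^{r_{k}}F_{k,j}(\n',n_{k})Q_{j}(\n',n_{k})$$
with $Q_{j}$ of degree $O_{k,s}(1)$. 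Restricting to $\n'\in\Omega_{k-1}$ and combining with $\tilde{P}_{I}(\n',\cdot)\equiv 0$ gives $\Delta_{k,I}(\n')^{s}P(\n',n_{k})=\sum_{j}F_{k,j}Q_{j}$ on $\Omega_{k-1}\times\V$; multiplying through by $\Delta_{k,I}(\n')^{2}$ absorbs the exponent discrepancy and delivers the stated $\Delta_{k,I}^{s+2}$. The only delicate point in the whole argument is the bookkeeping in this last step, namely verifying that $s$ factors of $\Delta_{k,I}(\n')$ suffice to clear all denominators produced in the Taylor-style expansion of $P-P\circ\phi_{k,I,\n'}$; everything else is essentially a streamlined version of the non-degenerate case.
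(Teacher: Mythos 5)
Your proposal is correct and follows essentially the same route as the paper's own proof of Proposition \ref{1:vcl1}: Lemma \ref{1:vcl2} plus Theorem \ref{1:ct} and pigeonholing to produce $U$, Lemma \ref{1:ns} to kill $P_{I}(\n',\cdot)$, the inductive irreducibility of $\Omega_{k-1}$ applied coefficientwise, and the determinantal identity expressing $\Delta_{k,I}(\n')((\phi_{k,I,\n'}(n'_{k}))_{\ell}-n_{k,\ell})$ in the ideal of $F_{k,1},\dots,F_{k,r_{k}}$ to return to the variable $n_{k}$. The only differences are cosmetic (clearing denominators via $\tilde{P}_{I}=\Delta_{k,I}^{s}P_{I}$ explicitly, and the harmless degree bound $r_{k}-1$ versus the paper's $r_{k}$), so no changes are needed.
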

	\begin{proof}
	The proof is a simpler version of that of Proposition \ref{1:cl1}.
	By Lemma \ref{1:vcl2}, we have that $\vert V(P)\cap \Omega_{k}\cap  Z_{k,I}\vert\geq \d p^{dk-r}/2$. 
	By the Pigeonhole Principle and Theorem \ref{1:ct} (ii), it is not hard to see that there exist a subset $U\subseteq\Omega_{k-1}$ with $\vert U\vert\gg\d p^{d(k-1)-(r-r_{k})}$ and a subset $U(\n')\subseteq \Omega_{k}(\n')$ of cardinality $\gg\d p^{d-r_{k}}$ for each $\n'\in U$ such that for all $\n'\in U$ and $n_{k}\in U(\n')$, we have that  $P(\n',n_{k})=0$ and $\Delta_{k,I}(\n')\neq 0$.

	For simplicity denote $n'_{k}:=n_{k,I^{c}}=(n_{k,\ell})_{\ell\in I^{c}}$ and set
	 $P_{I}(\n',n'_{k}):=P(\n',\phi_{k,I,\n'}(n'_{k}))$. Then for all $n_{k}\in \F_{p}^{d}$,  we have that $(\n',n_{k})\in V(P_{I}(\n',\cdot))\cap \Omega_{k}(\n')$  if and only if $n_{k}=\phi_{k,I,\n'}(n'_{k})$ for some $n'_{k}\in\F_{p}^{d-r_{k}}$ with $P_{I}(\n',n'_{k})=0$. Since  $U(\n')$ is of cardinality $\gg\d p^{d-r_{k}}$, we have that $\vert V(P_{I}(\n',\cdot))\vert\gg\d p^{d-r_{k}}$.  
	 By Lemma \ref{1:ns}, we have that
	\begin{equation}\label{1:vsubsub2}
	P_{I}(\n',n'_{k})=0
	\end{equation}
	for all $\n'\in U$ and $n'_{k}\in\F_{p}^{d-r_{k}}$.
	We may write the left hand side of (\ref{1:vsubsub2}) as $$\sum_{m\in \N^{d-r_{k}},\vert m\vert\leq s}C_{m}(\n'){n'_{k}}^{m}$$ for some polynomial $C_{m}(\n')$ of degree at most $O_{k,s}(1)$.
	Then we have that $C_{m}(\n')=0$ for all $\n'\in U$ and thus $\vert V(C_{m})\cap \Omega_{k-1}\vert\gg\d p^{d(k-1)-(r-r_{k})}$. By induction hypothesis, we have that $\Omega_{k-1}\subseteq V(C_{m})$. So (\ref{1:vsubsub2}) holds for all $\n'\in \Omega_{k-1}$ and $n'_{k}\in\F_{p}^{d-r_{k}}$.

	Let $(\phi_{k,I,\n'}(n'_{k}))_{\ell}$ denote the $\ell$-th entry of $\phi_{k,I,\n'}(n'_{k})$.
		By (\ref{1:vsubsub1}) and (\ref{1:vsubvomit}), for all $\ell\in I$, 		$\Delta_{k,I}(\n')((\phi_{k,I,\n'}(n'_{k}))_{\ell}-n_{k,\ell})$ equals to 
		the determinant of the  $r_{k}\times r_{k}$ matrix  obtained by replacing the column $(S_{k,j,\ell}(\n'))_{1\leq j\leq r_{k}}$ from $(S_{k,j,\ell'}(\n'))_{1\leq j\leq r_{k},\ell'\in I}$ by the column $(q_{j,\ell})_{1\leq j\leq r_{k}}$, where	$$q_{j,\ell}=R_{k,I,j,\ell}(\n',n'_{k})-n_{k,\ell}S_{k,j,\ell}(\n')=-F_{k,j}(\n',n_{k})+\sum_{\ell'\in I\backslash\{\ell\}}n_{k,\ell'}S_{k,j,\ell'}(\n')$$
		for all $1\leq j\leq r_{k}$, which, by linearity, equals to 
		the determinant of the  $r_{k}\times r_{k}$ matrix  obtained by replacing the column $(S_{k,j,\ell}(\n'))_{1\leq j\leq r_{k}}$ from $(S_{k,j,\ell'}(\n'))_{1\leq j\leq r_{k},\ell'\in I}$ by the column $(-F_{k,j}(\n',n_{k}))_{1\leq j\leq r_{k}}$
 This means that $\Delta_{k,I}(\n')((\phi_{k,I,\n'}(n'_{k}))_{\ell}-n_{k,\ell})$ can be written in the form
		\begin{equation}\label{1:vrepform}
		\sum_{j=1}^{r_{k}}F_{k,j}(\n',n_{k})Q_{j}(\n',n_{k})
		\end{equation}
		for some polynomial $Q_{j}\in\poly((\V)^{k}\to\F_{p})$ of degree at most $r_{k}$.
		Therefore,   $$\Delta_{k,I}(\n')^{s}(P(\n',n_{k})-P_{I}(\n',n'_{k}))=\Delta_{k,I}(\n')^{s}(P(\n',n_{k})-P(\n',(\phi_{k,I,\n'}(n'_{k})))$$
			can be written in the form (\ref{1:vrepform}) for some polynomial $Q_{j}\in\poly((\V)^{k}\to\F_{p})$ of degree at most $ks$. Since (\ref{1:vsubsub2})  holds for all $\n'\in \Omega_{k-1}$ and $n'_{k}\in\F_{p}^{d-r_{k}}$, we have that 
		\begin{equation}\nonumber%\label{1:subsub2}
		\Delta_{I}(\n')^{s+2}P(\n',n_{k})
		=\sum_{j=1}^{r_{k}}F_{k,j}(\n',n_{k})Q_{j}(\n',n_{k})
		\end{equation}
		for some polynomial $Q_{j}$ of degree at most $O_{k,s}(1)$ for all $\n'\in\Omega_{k-1}$ and $n_{k}\in\V$. 
	\end{proof}

	\textbf{Step 3: removing the $\Delta_{k,I}(\n')^{s+2}$ term using a parametrization trick.}
	We are now ready to complete the proof of Part (i) of Theorem \ref{1:irrrg} for $\Omega_{k}$ for the case $c_{k}=1$.
	Let $P\in\poly((\V)^{k}\to\F_{p})$ be a polynomial of degree at most $s$ with $\vert V(P)\cap \Omega_{k}\vert\geq \d \vert \Omega_{k}\vert$.
	We wish to show that for all $\n=(\n',n_{k})\in \Omega_{k}$, we have that $P(\n',n_{k})=0$.

	 We first consider the case when $n_{1},\dots,n_{k-1}$ are linearly independent. 
	Since $L_{k,j}, 1\leq j\leq r_{k}$ are linearly independent, we have that $L_{k,j}(\n'), 1\leq j\leq r_{k}$  are linearly independent.  
	Since $A$ is invertible, the vectors $L_{k,j}(\n')A=(S_{k,j,1}(\n'),\dots,S_{k,j,d}(\n')), 1\leq j\leq r_{k}$  are also linearly independent. By the knowledge of linear algebra, there exists a subset $I$ of $\{1,\dots,d\}$ of cardinality $r_{k}$ such that $\Delta_{k,I}(\n')\neq 0$.
On the other hand, by Theorem \ref{1:ct}, $\vert V(P)\cap\Omega_{k}\vert\geq  \d p^{dk-r}/2$ since $d\geq 2r+1$.
	Since $(\n',n_{k})\in \Omega_{k}$, we have that $\Delta_{k,I}(\n')^{s+2}P(\n',n_{k})=0$ by Proposition \ref{1:vcl1}. Therefore,  $P(\n',n_{k})=0$.
	
	If $n_{1},\dots,n_{k-1}$ are linearly dependent, the argument is similar to the case $c_{k}=1$. We omit the details.
	This completes the proof of Part (i) of Theorem \ref{1:irrrg}.

	\subsection{Hilbert Nullstellensatz properties for $M$-sets}
	
	Before  proving Parts (ii) and (iii) of Theorem \ref{1:irrrg},
we take a detour to study  Hilbert Nullstellensatz properties for $M$-sets. It is natural to ask the following question:

\begin{conj}[Hilbert Nullstellensatz for $M$-sets]\label{1:ccmmj}
	Let $d,k\in\N_{+}$, $s\in\N$, $p$ be a prime, $M\colon\V\to\F_{p}$ be a non-degenerate quadratic form, and $\Omega\subseteq(\V)^{k}$ be a consistent  $M$-set given by $\Omega=V(\mathcal{J})$ for some consistent $(M,k)$-family $\mathcal{J}=\{f_{1},\dots,f_{r}\}$ for some $r\in\N_{+}$.  If $d\gg_{k,r,s} 1$ and $p\gg_{d} 1$, then for any  polynomial $P\in\poly((\V)^{k}\to\F_{p})$ of degree at most $s$ with $\Omega\subseteq V(P)$, we have that
	\begin{equation}\nonumber%\label{1:subsub2}
	P
	=\sum_{j=1}^{r}f_{j}Q_{j}
	\end{equation}
	for some polynomial $Q_{j}$ of degree $O_{k,r,s}(1)$. 
	\end{conj}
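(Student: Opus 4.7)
The plan is to induct on $k$, following the same three-step architecture developed in Appendix D for Part (i) of Theorem \ref{1:irrrg}, but now tracking ideal membership rather than just vanishing of values. First I would make the standard reductions: by Proposition \ref{1:yy3}(iii) I may replace $\mathcal{J}$ by any generating set for the same ideal obtained via a bijective $d$-integral linear change of variables together with a linear recombination of the $f_j$, so (using Lemma \ref{1:rep0}) I may assume $\mathcal{J}$ is a standard $M$-representation $(F_{i,j})_{1\leq i\leq k,\, 1\leq j\leq r_i}$ and (by diagonalizing $A$) that the associated matrix of $M$ is diagonal. The base case $k=1$ reduces to Proposition \ref{1:noloop} after a linear change eliminating the affine constraints.

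For the inductive step, assume the statement is known for the projection $\Omega_{k-1}$. The key observation is that Proposition \ref{1:cl1} already does most of the work: for any admissible $(I,i_1,i_2)$ it yields
\begin{equation*}
(G_{k,I,i_1,i_2}(\n')\,\Delta_{k,I}(\n'))^{s+2}\,P(\n',n_k) \;=\; \sum_{j=1}^{r_k} F_{k,j}(\n',n_k)\,Q_{I,i_1,i_2,j}(\n',n_k) \;+\; R_{I,i_1,i_2}(\n',n_k),
\end{equation*}
where $R_{I,i_1,i_2}$ vanishes identically on $\Omega_{k-1}\times\V$. Expanding $R_{I,i_1,i_2}$ in powers of $n_k$ and applying the induction hypothesis to each coefficient (which is a polynomial in $\n'$ of degree $O_{k,r,s}(1)$ vanishing on $\Omega_{k-1}$), I would rewrite $R_{I,i_1,i_2} = \sum_{i<k,j} F_{i,j}\,T_{I,i_1,i_2,i,j}$ with $T_{\cdot}$ of controlled degree. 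Thus $(G_{k,I,i_1,i_2}\Delta_{k,I})^{s+2}\cdot P$ lies in the ideal $J=(F_{i,j})$ with an explicit representation of bounded degree, for every choice of $(I,i_1,i_2)$.

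The hard step — and the genuine obstacle — is to \emph{remove} the factor $(G_{k,I,i_1,i_2}\Delta_{k,I})^{s+2}$. The natural strategy is to vary $(I,i_1,i_2)$ and produce an ideal-theoretic B\'ezout identity
\begin{equation*}
\sum_{(I,i_1,i_2)} U_{I,i_1,i_2}(\n')\cdot (G_{k,I,i_1,i_2}(\n')\,\Delta_{k,I}(\n'))^{s+2} \;\equiv\; 1 \pmod{I(\Omega_{k-1})},
\end{equation*}
with $\deg(U_{I,i_1,i_2})=O_{k,r,s}(1)$; multiplying such an identity through and substituting the previous paragraph then yields $P \in J$ with the required degree bound. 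By Lemma \ref{1:noog} together with the proof of Lemma \ref{1:cl2}, the polynomials $\{G_{k,I,i_1,i_2}\Delta_{k,I}\}_{(I,i_1,i_2)}$ have no common zero on $\Omega_{k-1}$, so over the algebraic closure the classical Nullstellensatz guarantees such $U_{I,i_1,i_2}$ exist in $\bar\F_p[n_1,\dots,n_{k-1}]/I(\Omega_{k-1})$; the issue is purely quantitative, namely bounding $\deg(U_{I,i_1,i_2})$ by a constant independent of $d$ and $p$.

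The plan for controlling these degrees is to exploit the very rigid form of $G_{k,I,i_1,i_2}$ and $\Delta_{k,I}$ as minors of the linear-system matrix $(S_{k,j,\ell})$, and to argue by induction on $k$ once more: one can hope to write an effective B\'ezout identity using an explicit dual basis coming from the cofactor expansion, reducing the problem to a purely linear-algebraic identity valid on $\Omega_{k-1}$ modulo $I(\Omega_{k-1})$. The main difficulty I anticipate is that the ideal $I(\Omega_{k-1})$ is a priori only \emph{contained} in the ideal $J_{k-1}$ generated by $\{F_{i,j}:i<k\}$ — they coincide iff Conjecture \ref{1:ccmmj} holds for $\Omega_{k-1}$, which is exactly the inductive statement — so the effective Nullstellensatz and the inductive hypothesis must be proved in tandem, and one must also extract effective degree bounds (in the spirit of Koll\'ar) that remain uniform in $d$ and $p$. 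This entanglement between effective bounds and the inductive use of the hypothesis is where I expect the real work, and probably a new idea beyond what is developed in this paper, to be necessary.
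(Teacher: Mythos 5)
You are attempting to prove a statement that the paper itself does not prove: Conjecture \ref{1:ccmmj} is explicitly left open (it reappears as Question \ref{1:cl1ss} in Section \ref{1:s:oq}), and the paper only establishes the weaker Proposition \ref{1:cl1s}, in which the conclusion is $D_{\mathfrak{I}}^{T}P=\sum_{k,j}F_{k,j}Q_{k,j}$, i.e.\ the ideal membership holds only after multiplying $P$ by the auxiliary factor $D_{\mathfrak{I}}^{T}=\prod_{k}(G_{k,I_k,i_{k,1},i_{k,2}}\Delta_{k,I_k})^{T}$. Your first two paragraphs essentially rederive that partial result (the induction on $k$, expanding the remainder in powers of $n_k$ and feeding the coefficients back into Propositions \ref{1:cl1} and \ref{1:vcl1} is exactly the mechanism of the proof of Proposition \ref{1:cl1s}), so up to that point you are on the paper's track. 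The entire content of the conjecture is the third step — removing the factor — which you do not carry out, and which you yourself flag as requiring "a new idea beyond what is developed in this paper". So the proposal is a plan with an acknowledged hole at precisely the point where the paper stops; it is not a proof.

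Moreover, the specific route you propose for that step is flawed as stated. The B\'ezout identity $\sum_{(I,i_1,i_2)}U_{I,i_1,i_2}\cdot(G_{k,I,i_1,i_2}\Delta_{k,I})^{s+2}\equiv 1\pmod{I(\Omega_{k-1})}$ cannot exist in general, because the products $G_{k,I,i_1,i_2}\Delta_{k,I}$ do have common zeros on $\Omega_{k-1}$: all the minors $\Delta_{k,I}(\n')$ (for $|I|=r_k-1\geq 1$, resp.\ $|I|=r_k\geq 1$) vanish simultaneously whenever the vectors $L_{k,j}(\n')A$ are linearly dependent — for instance at points of $\Omega_{k-1}$ whose coordinates $n_1,\dots,n_{k-1}$ are linearly dependent — and such points exist in $\Omega_{k-1}$ in general. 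Lemma \ref{1:noog} and Lemma \ref{1:cl2} only say that this degenerate locus has small density, not that it is empty; the paper's Step-3 "parametrization trick" (perturbing along a well-chosen $3$-dimensional subspace) handles these points at the level of vanishing of values, but it is not clear how to upgrade it to ideal membership with bounded-degree coefficients, which is exactly why the conjecture remains open. Separately, even off the degenerate locus, an appeal to Koll\'ar-type effective Nullstellensatz bounds does not automatically give degrees $O_{k,r,s}(1)$ uniform in $d$ and $p$, since the ambient number of variables is $d(k-1)$ and the ideal $I(\Omega_{k-1})$ is only known to contain (not equal) the ideal generated by $\{F_{i,j}:i<k\}$, the equality being the inductive statement you are trying to prove.
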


 Unfortunately
	we do not know the answer to Conjecture \ref{1:ccmmj}. However, we have a partial result which is sufficient for the purpose of this paper.
 We need to introduce some notations before stating the result. Let the notations be the same as Sections \ref{1:s:94}, \ref{1:s84} and \ref{1:s841}.
Assume that $\Omega$ is of type $(c_{1},\dots,c_{K})$.
 For all $1\leq k\leq K$, if $c_{k}=1$, pick some $I_{k}\subseteq \{1,\dots,d\}$ with $\vert I_{k}\vert=r_{k}-1$ and some $i_{k,1},i_{k,2}\in\{1,\dots,d\}\backslash I_{k}, i_{k,1}\leq i_{k,2}$;  if $c_{k}=0$, pick some $I_{k}\subseteq \{1,\dots,d\}$ with $\vert I_{k}\vert=r_{k}$ and set $i_{k,1}=i_{k,2}=0$. We say that $\mathfrak{I}:=(I_{k},i_{k,1},i_{k,2})_{k=1}^{K}$ is an \emph{admissible parametrization} of $\Omega$. Define  
	$$D_{\mathfrak{I}}(n_{1},\dots,n_{K}):=\prod_{k=1}^{K}G_{k,I_{k},i_{k,1},i_{k,2}}(n_{1},\dots,n_{k-1})\Delta_{k,I_{k}}(n_{1},\dots,n_{k-1}),$$
	where we set $\Delta_{k,I_{k}}\equiv 1$ if $I_{k}=\emptyset$ and set $G_{k,I_{k},i_{k,1},i_{k,2}}\equiv 1$ if either $I_{k}=\emptyset$ or $i_{k,1}=i_{k,2}=0$.	
We have the following partial answer to Conjecture \ref{1:ccmmj}:	
 
\begin{prop}\label{1:cl1s} 
Let the notations be the same as Sections \ref{1:s:94}, \ref{1:s84} and \ref{1:s841}.
Let $s\in\N$ and $\mathfrak{I}$ be an admissible parametrization of $\Omega$. 
Let $P\in\poly((\V)^{k}\to\F_{p})$ be a polynomial of degree at most $s$ with $\Omega\subseteq V(P)$. Then
	\begin{equation}\label{1:subsubfk8}
\begin{split}
	D_{\mathfrak{I}}^{T}P
	 =\sum_{k=1}^{K}\sum_{j=1}^{r_{k}}F_{k,j}Q_{k,j}
\end{split}
	\end{equation}
	for some $T=O_{K,s}(1)\in\N$ and some polynomial $Q_{k,j}$ of degree at most $O_{K,s}(1)$. 
	\end{prop}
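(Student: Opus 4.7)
The plan is to proceed by induction on $K$, using Propositions \ref{1:cl1} and \ref{1:vcl1} as black boxes. For the base case $K=1$, we have $\Omega = \Omega_1 \subseteq \V$, and $\Omega \subseteq V(P)$ forces $\vert V(P) \cap \Omega_1\vert = \vert\Omega_1\vert \geq \tfrac{1}{2} p^{d - r_1}$ by Theorem \ref{1:ct} (for $p$ large). So either Proposition \ref{1:cl1} (when $c_1 = 1$) or Proposition \ref{1:vcl1} (when $c_1 = 0$) applies with $\delta = 1/2$. Since $\Omega_0$ is a single point and the $F_{1,j}$ do not depend on $\n'$, the conclusion
$$(G_{1,I_1,i_{1,1},i_{1,2}} \Delta_{1,I_1})^{s+2} P = \sum_{j=1}^{r_1} F_{1,j} Q_{1,j}$$
holds as a genuine polynomial identity, giving $D_{\mathfrak{I}}^{s+2} P$ in the required form.

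For the inductive step, suppose the statement holds for all admissible parametrizations of length $K-1$. Writing $\n' := (n_1, \ldots, n_{K-1})$, we apply Proposition \ref{1:cl1} or \ref{1:vcl1} at level $k = K$ (the density hypothesis again follows from $\Omega \subseteq V(P)$ via Theorem \ref{1:ct}) to obtain polynomials $Q_{K,j}$ of degree $O_{K,s}(1)$ such that the remainder
$$R(\n', n_K) := (G_{K,I_K,i_{K,1},i_{K,2}} \Delta_{K,I_K})^{s+2} P(\n', n_K) - \sum_{j=1}^{r_K} F_{K,j}(\n', n_K)\, Q_{K,j}(\n', n_K)$$
vanishes for all $\n' \in \Omega_{K-1}$ and $n_K \in \V$. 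Expanding $R(\n', n_K) = \sum_\alpha n_K^\alpha C_\alpha(\n')$, each coefficient $C_\alpha$ has degree $O_{K,s}(1)$, and the vanishing of $R$ on $\Omega_{K-1} \times \V$ combined with polynomial identification in the variable $n_K$ forces $\Omega_{K-1} \subseteq V(C_\alpha)$. Applying the inductive hypothesis with $\mathfrak{I}' := (I_k, i_{k,1}, i_{k,2})_{k=1}^{K-1}$ to each $C_\alpha$ (and taking the maximum exponent over the finitely many $\alpha$) yields a common $T' = O_{K,s}(1)$ and polynomials $\widetilde{Q}_{k,j,\alpha}$ of degree $O_{K,s}(1)$ such that $D_{\mathfrak{I}'}^{T'} C_\alpha = \sum_{k=1}^{K-1} \sum_{j=1}^{r_k} F_{k,j} \widetilde{Q}_{k,j,\alpha}$. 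Multiplying by $n_K^\alpha$ and summing over $\alpha$ gives $D_{\mathfrak{I}'}^{T'} R$ as a combination $\sum_{k=1}^{K-1} \sum_{j=1}^{r_k} F_{k,j} Q''_{k,j}$; substituting the definition of $R$ and multiplying both sides by $D_{\mathfrak{I}'}^{T-T'} (G_{K,I_K,i_{K,1},i_{K,2}} \Delta_{K,I_K})^{T-s-2}$ with $T := \max(T', s+2)$ equalizes all exponents, producing $D_{\mathfrak{I}}^T P = \sum_{k=1}^K \sum_{j=1}^{r_k} F_{k,j} \widehat{Q}_{k,j}$ with each $\widehat{Q}_{k,j}$ of degree $O_{K,s}(1)$.

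The main obstacle is the mismatch in scope: Propositions \ref{1:cl1} and \ref{1:vcl1} provide the factorization only pointwise for $\n' \in \Omega_{k-1}$ rather than as a polynomial identity over $(\V)^K$. The remedy is precisely the expansion of the remainder $R$ in powers of $n_K$, which converts the single pointwise relation on $\Omega_{K-1} \times \V$ into finitely many coefficient polynomials vanishing on $\Omega_{K-1}$, to which the induction applies. A secondary issue is degree inflation: $C_\alpha$ has degree $O_{K,s}(1)$ rather than $s$, so invoking the inductive hypothesis at a larger degree inflates $T'$ and the degrees of the output polynomials, but since each step multiplies these quantities by a factor depending only on $K$ and $s$, a single bound $T, \deg(\widehat{Q}_{k,j}) \leq O_{K,s}(1)$ suffices. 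The corner case $r_K = 0$ (so $\Omega_K = \Omega_{K-1} \times \V$ and the $K$-th factor of $D_{\mathfrak{I}}$ is trivial) is handled identically, with the Proposition \ref{1:cl1}/\ref{1:vcl1} step replaced by taking $R := P$ and with the sum $\sum_{j=1}^{r_K}$ empty.
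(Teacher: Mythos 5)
Your proposal is correct and is essentially the paper's own argument: the paper runs a downward induction on a "Property $t$" statement, at each stage applying Proposition \ref{1:cl1} or \ref{1:vcl1} at one level, expanding the resulting remainder in the trailing variables so that its coefficients vanish on $\Omega_{t}$, and then equalizing the powers of the $G\Delta$ factors. Your upward induction on $K$ is the same peel-one-block-at-a-time scheme merely reindexed, with the same handling of the density input via Theorem \ref{1:ct}, the degree inflation, and the $r_K=0$ case.
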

\begin{proof}
Assume that $\mathfrak{I}:=(I_{k},i_{k,1},i_{k,2})_{k=1}^{K}$.
For $1\leq t\leq K$, we say that \emph{Property $t$} holds if
\begin{equation}\nonumber
\begin{split}
	&\quad\Bigl(\prod_{k=t}^{K}G_{k,I_{k},i_{k,1},i_{k,2}}(n_{1},\dots,n_{k-1})\Delta_{k,I_{k}}(n_{1},\dots,n_{k-1})\Bigr)^{T}P(n_{1},\dots,n_{K})
	\\&=\sum_{k=t}^{K}\sum_{j=1}^{r_{k}}F_{k,j}(n_{1},\dots,n_{k})Q_{k,j}(n_{1},\dots,n_{K})
\end{split}
	\end{equation}
for some $T=O_{K,s}(1)\in\N$ and some polynomial $Q_{k,j}$ of degree at most $O_{K,s}(1)$ for all $(n_{1},\dots,n_{t-1})\in\Omega_{t-1}$ and $n_{t},\dots,n_{K}\in\V$.

By Theorem \ref{1:ct}, Propositions \ref{1:cl1} and \ref{1:vcl1}, we have that Property $K$ holds. Suppose now that we have shown Property $(t+1)$ for some $1\leq t\leq K-1$, we prove that 
 Property $t$ holds. Suppose that 
\begin{equation}\label{1:subsubfk2}
\begin{split}
	&\quad\Bigl(\prod_{k=t+1}^{K}G_{k,I_{k},i_{k,1},i_{k,2}}(n_{1},\dots,n_{k-1})\Delta_{k,I_{k}}(n_{1},\dots,n_{k-1})\Bigr)^{T}P(n_{1},\dots,n_{K})
	\\&-\sum_{k=t+1}^{K}\sum_{j=1}^{r_{k}}F_{k,j}(n_{1},\dots,n_{k})Q_{k,j}(n_{1},\dots,n_{K})=0
\end{split}
	\end{equation}
for some $T=O_{K,s}(1)\in\N$ and some polynomial $Q_{k,j}$ of degree at most $O_{k,s}(1)$ for all $(n_{1},\dots,n_{t})\in\Omega_{t}$ and $n_{t+1},\dots,n_{K}\in\V$.
Let $Q$ be the polynomial on the left hand side of (\ref{1:subsubfk2}). Then $Q(n_{1},\dots,n_{K})=0$ for all $(n_{1},\dots,n_{t})\in\Omega_{t}$ and $n_{t+1},\dots,n_{K}\in\V$.
Writing 
\begin{equation}\label{1:subsubfk3}
\begin{split}
	Q(n_{1},\dots,n_{K})=\sum_{i_{t+1},\dots,i_{K}\in\N^{d},\vert i_{t+1}+\dots+i_{K}\vert\leq O_{K,s}(1)}C_{i_{t+1},\dots,i_{K}}(n_{1},\dots,n_{t})n_{t+1}^{i_{t+1}}\dots n_{K}^{i_{K}}
\end{split}
	\end{equation}
for some polynomials $C_{i_{t+1},\dots,i_{K}}$ of degrees at most $O_{k,s}(1)$, this implies that  $C_{i_{t+1},\dots,i_{K}}(n_{1},\dots,$ $n_{t})=0$ for all $i_{t+1},\dots,i_{K}\in\N^{d},\vert i_{t+1}+\dots+i_{K}\vert\leq O_{K,s}(1)$ and $(n_{1},\dots,n_{t})\in\Omega_{t}$. 
Applying Theorem \ref{1:ct},  Propositions \ref{1:cl1} and \ref{1:vcl1} to the set $\Omega_{t}$, we have that 
\begin{equation}\label{1:subsubfk4}
\begin{split}
	& \quad (G_{t,I_{t},i_{t,1},i_{t,2}}(n_{1},\dots,n_{t-1})\Delta_{t,I_{t}}(n_{1},\dots,n_{t-1}))^{T'}C_{i_{t+1},\dots,i_{K}}(n_{1},\dots,n_{t})
	\\&=\sum_{j=1}^{r_{t}}F_{t,j}(n_{1},\dots,n_{t})Q_{j;i_{t+1},\dots,i_{K}}(n_{1},\dots,n_{t})
\end{split}
	\end{equation}
	for some $T'=O_{K,s}(1)\in\N$ and some polynomial $Q_{j; i_{t+1},\dots,i_{K}}$ of degree at most $O_{K,s}(1)$ for all $(n_{1},\dots,n_{t-1})\in\Omega_{t-1}$ and $n_{t}\in\V$ for all $i_{t+1},\dots,i_{K}\in\N^{d},\vert i_{t+1}+\dots+i_{K}\vert\leq O_{K,s}(1)$. Property $t$ follows by combining (\ref{1:subsubfk2}), (\ref{1:subsubfk3}) and (\ref{1:subsubfk4}).

So by induction, we have that Property 1 holds and we are done.
\end{proof}

	Let $\tilde{D}_{\mathfrak{I}}, \tilde{F}_{k,j}\colon \Z^{d}\to\Z/p$ be regular liftings   of $D_{\mathfrak{I}}$ and $F_{k,j}$, respectively. By Lemmas \ref{1:ivie} and \ref{1:lifting}, we may lift Proposition \ref{1:cl1s} to the $\Z/p$ setting.

		\begin{coro}\label{1:cl3s} 
			Let the notations be the same as Sections \ref{1:s:94}, \ref{1:s84} and \ref{1:s841}.
Let $s\in\N$ and $\mathfrak{I}$ be an admissible parametrization of $\Omega$. 
			Let $P\in\poly_{p}((\Z^{d})^{k}\to \Z/p\vert \Z)$ be a polynomial of degree at most $s$ with $\iota^{-1}(\Omega)\subseteq V_{p}(P)$.	
			Then 
			there exist $T=O_{K,s}(1)\in\N$ and  $Q_{0},Q_{k,j}\in\poly((\Z^{d})^{K}\to \Z), 1\leq k\leq K, 0\leq j\leq r_{k}$ of degree at most $O_{K,s}(1)$ with $Q_{k,j}$ having integer coefficients  such that
\begin{equation}\nonumber
\begin{split}
	  (p\tilde{D}_{\mathfrak{I}})^{T}P
	 =Q_{0}+\sum_{k=1}^{K}\sum_{j=1}^{r_{k}}\tilde{F}_{k,j}Q_{k,j}.
\end{split}
	\end{equation}
		\end{coro}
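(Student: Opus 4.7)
The plan is to apply Proposition \ref{1:cl1s} to the $\F_p$-polynomial induced by $P$ and then transfer the resulting factorization back to the $\Z/p$-setting via the basic lifting operations of Lemma \ref{1:lifting}; all the hard algebraic work has already been done in Proposition \ref{1:cl1s}, so what remains is essentially bookkeeping.

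First, set $P' := \iota \circ pP \circ \tau$. By Lemma \ref{1:lifting}(i) this is a polynomial in $\poly((\V)^K \to \F_p)$ of degree at most $s$, and $P$ is by construction a lifting of $P'$. The hypothesis $\tau(\Omega) \subseteq V_p(P)$ forces $P(n) \in \Z$ for every $n \in \tau(\Omega)$, hence $P'(n) = \iota(pP(n)) = 0$ on $\Omega$, i.e.\ $\Omega \subseteq V(P')$. Proposition \ref{1:cl1s} then furnishes $T = O_{K,s}(1)$ and polynomials $Q'_{k,j} \in \poly((\V)^K \to \F_p)$ of degree $O_{K,s}(1)$ such that $D_\mathfrak{I}^T P' = \sum_{k=1}^{K}\sum_{j=1}^{r_k} F_{k,j} Q'_{k,j}$ in $\F_p$.

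Next, choose regular liftings $\hat Q_{k,j} \in \poly((\Z^d)^K \to \Z/p)$ of each $Q'_{k,j}$ via Lemma \ref{1:lifting}(ii). Since every polynomial in play has degree $O_{K,s}(1)$, which is well below $(p-1)/2$ thanks to the standing hypothesis $p \gg_{d,s} \delta^{-O_{d,s}(1)}$, the degree restrictions in Lemma \ref{1:lifting}(iv) are satisfied throughout the following iteration. Applying (iv) repeatedly shows first that $p^{T-1}\tilde D_\mathfrak{I}^T$ is a lifting of $D_\mathfrak{I}^T$, and then that $(p\tilde D_\mathfrak{I})^T P = p \cdot p^{T-1}\tilde D_\mathfrak{I}^T \cdot P$ is a lifting of $D_\mathfrak{I}^T P'$. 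Similarly each $p\tilde F_{k,j} \hat Q_{k,j}$ is a lifting of $F_{k,j} Q'_{k,j}$, so part (iv) gives that the sum $\sum_{k,j} p\tilde F_{k,j} \hat Q_{k,j}$ is a lifting of $\sum_{k,j} F_{k,j} Q'_{k,j} = D_\mathfrak{I}^T P'$.

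Since $(p\tilde D_\mathfrak{I})^T P$ and $\sum_{k,j} p\tilde F_{k,j} \hat Q_{k,j}$ are two liftings of the same $\F_p$-polynomial, Lemma \ref{1:lifting}(iii) guarantees that their difference
$$Q_0 := (p\tilde D_\mathfrak{I})^T P - \sum_{k=1}^K\sum_{j=1}^{r_k} p\tilde F_{k,j} \hat Q_{k,j}$$
is an integer-valued polynomial of degree $O_{K,s}(1)$. Setting $Q_{k,j} := p\hat Q_{k,j}$, which has integer coefficients because $\hat Q_{k,j}$ is a regular lifting with coefficients in $\{0,\tfrac{1}{p},\dots,\tfrac{p-1}{p}\}$, we obtain
$$(p\tilde D_\mathfrak{I})^T P = Q_0 + \sum_{k=1}^{K}\sum_{j=1}^{r_k} \tilde F_{k,j} Q_{k,j},$$
which is exactly the required identity. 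The only potential obstacle in this argument is verifying that the iterative use of Lemma \ref{1:lifting}(iv) never violates the $(p-1)/2$-degree hypothesis, but this is automatic once $p$ is taken large enough in terms of $K, s$.
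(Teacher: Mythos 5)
Your proof is correct and follows essentially the same route as the paper: induce $P'$ over $\F_p$, apply Proposition \ref{1:cl1s}, take regular liftings of the $Q'_{k,j}$, and use Lemma \ref{1:lifting}(iii)--(iv) to identify the difference as an integer-valued $Q_0$. The only (harmless) divergence is at the last step, where you get integer coefficients for $Q_{k,j}=p\hat Q_{k,j}$ directly from the $\{0,\tfrac1p,\dots,\tfrac{p-1}{p}\}$-coefficients of a regular lifting, whereas the paper invokes Lemma \ref{1:ivie} to adjust $Q_{k,j}$ and $Q_0$; both are valid.
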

	\begin{proof}
		Let $P'\in\poly((\V)^{k}\to\F_{p})$ be the polynomials induced by $P$. Then $\Omega\subseteq V(P')$. By 
		Proposition \ref{1:cl1s}, we have 
			\begin{equation}\label{1:subsub3}
			\begin{split}
			D_{\mathfrak{I}}^{T}P'
	 =\sum_{k=1}^{K}\sum_{j=1}^{r_{k}}F_{k,j}Q'_{k,j}
			\end{split}
			\end{equation}
		for some $T=O_{K,s}(1)\in\N$ and some 	$Q'_{k,j}$ of degree at most $O_{K,s}(1)$.  
		Let $\tilde{Q}_{k,j}$ be a regular lifting of $Q'_{k,j}$. 
By Lemma \ref{1:lifting} (iv), we have that $(p\tilde{D}_{\mathfrak{I}})^{T}P$
is a lifting of
$D_{\mathfrak{I}}^{T}P',$
and that $\sum_{k=1}^{K}\sum_{j=1}^{r_{k}}\tilde{F}_{k,j}p\tilde{Q}_{k,j}$
is a lifting of 
$\sum_{k=1}^{K}\sum_{j=1}^{r_{k}}F_{k,j}Q'_{k,j}.$
		So by Lemma \ref{1:lifting} (iii)  and (\ref{1:subsub3}),  the difference 
between	$(p\tilde{D}_{\mathfrak{I}})^{T}P$ and  $\sum_{k=1}^{K}\sum_{j=1}^{r_{k}}\tilde{F}_{k,j}p\tilde{Q}_{k,j}$		
		equals to an integer valued polynomial $Q_{0}$
of degree at most $O_{K,s}(1)$.
		Finally, since $\tilde{Q}_{k,j}$ takes values in $\Z/p$, $Q_{k,j}:=p\tilde{Q}_{k,j}$ takes values in $\Z$. Applying Lemma \ref{1:ivie} to modify $Q_{k,j}$ and $Q_{0}$ accordingly, we may require that all of $Q_{k,j}$ have integer coefficients, while $Q_{0}$ still takes integer values. 
		We are done.
	\end{proof}

	\subsection{Proof of parts (ii) and (iii) of Theorem \ref{1:irrrg}}
	We use the same notations as in Sections \ref{1:s:94} and \ref{1:s84}, except that now $\n,\n',n_{i},n_{i,j}$  denote elements in $\Z^{t}$ instead of $\F_{p}^{t}$ for some appropriate $t\in\N_{+}$, and that we use $r$ to denote $r_{M}(\Omega)$.
	Part (ii) follows from Part (i) and Proposition \ref{1:f2z}. So it remains to prove Part (iii).
	Our strategy is to use the $p$-expansion trick.
	Let $g\in\poly((\Z^{d})^{K}\to\Z/p^{s})$ be a polynomial of degree at most $s$ with $\vert V_{p}(g)\cap \iota^{-1}(\Omega)\cap ([p]^{d})^{K}\vert\geq \d p^{dK-r}$. Our goal is to show that $\iota^{-1}(\Omega)\subseteq V_{p}(g)$.

	Since $g$  takes values in $\Z/p^{s}$,
	by the multivariate polynomial interpolation, it is not hard to see that 	there exists $Q\in\N$, $p\nmid Q$ such that
	$$Qg=\sum_{i=0}^{s}\frac{g'_{i}}{p^{i}}$$ 
	for some 	integer valued polynomials $g'_{i}\colon(\Z^{d})^{K}\to\Z$ of degree at most $s$.

	It is convenient to reindex the polynomials $F_{k,j}, 1\leq k\leq K, 1\leq j\leq r_{k}$ as $f_{1},\dots,f_{r}$ in an arbitrary order. Let $\tilde{f}_{j}$ be a regular lifting of $f_{j}$.
		We say that a polynomial $F\colon(\Z^{d})^{K}\to\R$ is \emph{$t$-good} if 
		$$F=\sum_{i:=(i_{1},\dots,i_{r})\in\N^{r}, \vert i\vert\leq s/2}F_{i}\prod_{j=1}^{r}\tilde{f}_{j}^{i_{j}}$$
		for some  integer valued  
		polynomials $F_{i}\in\poly((\Z^{d})^{K}\to\Z)$ of degree at most $t-2\vert i\vert, 0\leq \vert i\vert\leq s/2$.  
		For convenience denote $a_{s}=s$ and let $a_{i}=O_{a_{i+1},K,s}(1)$ to be chosen later for all $0\leq i\leq s-1$. Then $a_{0}=O_{K,s}(1)<p$.

Let $\mathfrak{I}=(I_{k},i_{k,1},i_{k,2})_{k=1}^{K}$ be an admissible parametrization of $\Omega$.
	Then $p\tilde{D}_{\mathfrak{I}}$ is an integer valued polynomial of degree at most $O_{K,s}(1)$. Let $T=O_{K,s}(1)\in\N$  to be chosen later.
	Let $k'\in\N$ be the smallest integer such that 
	\begin{equation}\label{1:gg}
	(p\tilde{D}_{\mathfrak{I}})^{T(s-k')}Qg=\sum_{i=0}^{k'}\frac{g_{i}}{p^{i}}
	\end{equation}
	for some $Q\in\N$, $p\nmid Q$ and 
	$a_{k'}$-good polynomials $g_{i}$. 
	Obviously such $k'$ exists and is at most $s$. We first show that $k'=0$.	
	
	Suppose that 
	$k'>0$.
	Let $U:=V_{p}(g)\cap \tau(\Omega)$.
	 For all $\n\in U$  and $\m\in(\Z^{d})^{K}$, since $g(\n+p\m)\in\mathbb{Z}$, we have that
	$g_{k'}(\n+p\m)\in p\Z.$
	Suppose that 
		\begin{equation}\label{1:tempeeq1}
	g_{k'}=\sum_{i:=(i_{1},\dots,i_{r})\in\N^{r}, \vert i\vert\leq s/2}R_{i}\prod_{j=1}^{r}\tilde{f}_{j}^{i_{j}}
		\end{equation}
	for some integer valued polynomials $R_{i}\in\poly((\Z^{d})^{K}\to\mathbb{Z})$ of degree at most $a_{k'}-2\vert i\vert$. 
	
	\textbf{Claim.} There exists a subset $U'$ of $U$ of cardinality  at least $\d p^{dK-r}/2$ such that for all $\n\in U'$ and $i\in\N^{r}, \vert i\vert\leq s/2$, we have that $\frac{1}{p}R_{i}(\n)\in \mathbb{Z}$.
	
	By (\ref{1:e:fi1}) and (\ref{1:e:fij}), for all $1\leq j\leq r$, we may write 
	$$f_{j}(n_{1},\dots,n_{K})=\sum_{1\leq i\leq K}\frac{1}{2}c_{j,i,i}(n_{i}A)\cdot n_{i}+\sum_{1\leq i< i'\leq K}c_{j,i,i'}(n_{i}A)\cdot n_{i'}+c'_{j}$$
	for some $c_{j,i,i'},c'_{j}\in \F_{p}$ such that the vectors $v_{j}:=(c_{j,i,i'})_{1\leq i\leq i'\leq K}\in \F_{p}^{\binom{K+1}{2}}, 1\leq j\leq r$ are linearly independent. For convenience denote $c_{j,i,i'}:=c_{j,i',i}$ for $i'<i$. It is not hard to see that we may write
	$$\tilde{f}_{j}(\n+p\m)\equiv \tilde{f}_{j}(\n)+T_{j}(\n,\m)\mod p\Z$$
  for all $\m,\n\in(\Z^{d})^{K}$, where  $$T_{j}(\n,\m)=\sum_{i=1}^{K}(m_{i}\tau(A))\cdot\Bigl(\sum_{i'=1}^{K}\tau(c_{j,i,i'})n_{i'}\Bigr).$$
 
	Fix any $\n\in U$. Then 
	\begin{equation}\label{1:fnp}
	0\equiv g_{k'}(\n+p\m)\equiv\sum_{i:=(i_{1},\dots,i_{r})\in\N^{r}, \vert i\vert\leq s/2}R_{i}(\n)\prod_{j=1}^{r}(\tilde{f}_{j}(\n)+T_{j}(\n,\m))^{i_{j}}  \mod p\Z
	\end{equation}
	for all $m\in\Z^{d}$.
	Let $U''$ denote the set of $\n=(n_{1},\dots,n_{K})\in(\V)^{K}$ such that
	there exist $t_{1},\dots,t_{r}\in \F_{p}$ not all equal to 0 with
	\begin{equation}\nonumber
	\sum_{j=1}^{r}t_{j}\Bigl(\sum_{i'=1}^{K}c_{j,i,i'} n_{i'}\Bigr)=\bold{0} \text{ for all $1\leq i\leq K$.}
	\end{equation}
	 Then for all $\n\in U''$, there exist $t_{1},\dots,t_{r}\in \F_{p}$ such that
	 \begin{equation}\label{1:fnp2}
	\sum_{i'=1}^{K}u_{i,i'}n_{i'}=\bold{0} \text{ for all $1\leq i\leq K$,}
	\end{equation}	 
	where $u_{i,i'}:=\sum_{j=1}^{r}t_{j}c_{j,i,i'}$.
	Since $v_{1},\dots,v_{r}$ are linearly independent, 	at least one of $u_{i,i'}, 1\leq i,i'\leq K$ is non-zero. So at least one of the equations in (\ref{1:fnp2}) is nontrivial and thus one of $n_{1},\dots,n_{K}$
	  is uniquely determined by the rest. So 
	$$\vert U''\vert\leq Kp^{r}\cdot p^{d(K-1)}= Kp^{d(K-1)+r}.$$
	Let $U':=U\backslash \tau(U'')$.
	Since $\vert U\vert\geq \d p^{dK-r}$,  		
	and  $d\geq 2r+1$, if $p\gg_{d,s} \d^{-O_{d,s}(1)}$, then  $\vert U'\vert\geq \d p^{dK-r}/2$.

	On the other hand, for all $\n\in U'$, the vectors $(\sum_{i'=1}^{K}c_{j,i,i'}n_{i'})_{i=1}^{K}, 1\leq j\leq r$ (as elements in $\F_{p}^{dK}$) are linearly independent. Since $A$ is invertible,   we have that  $$\m \mod p(\Z^{d})^{K}\mapsto (\tilde{f}_{j}(\n)+T_{j}(\n,\m) \mod p\Z)_{1\leq j\leq r}$$ is a surjection from $(\Z/p\Z)^{dK}$ to $(\Z/p\Z)^{r}$. So it follows from (\ref{1:fnp}) that
	$$\sum_{i:=(i_{1},\dots,i_{r})\in\N^{r}, \vert i\vert\leq s/2}R_{i}(\n)\prod_{j=1}^{r}x_{j}^{i_{j}}\equiv 0 \mod p\Z$$
	for all $x_{1},\dots,x_{r}\in\Z$. By interpolation, this implies that $QR_{i}(\n)\in p\mathbb{Z}$ for all $i\in\N^{r}, \vert i\vert\leq s/2$  for some $Q\in\N, p\nmid Q$. So $R_{i}$ takes values in $\Z\cap \frac{p}{Q}\Z=p\Z$.
This completes the proof of the claim.
		
	\

	Let $U'$ be given by the claim. Since $\frac{1}{p}R_{i}$ belongs to $\poly((\Z^{d})^{K}\to \Z/p)$ and $\iota^{-1}(\Omega)$ is weakly $(\d,p)$-irreducible for all $\d>0$ as long as $p\gg_{d,s} \d^{-O_{d,s}(1)}$ by Part (ii), it follows from Theorem \ref{1:ct} that $\tau(\Omega)\subseteq V_{p}(R_{i})$.  By Corollary \ref{1:cl3s}, if $T=O_{a_{k'},K,s}(1)=O_{K,s}(1)$ is sufficiently large, then
	there exist  $Q_{i,j}\in\poly((\Z^{d})^{K}\to \Z), 0\leq j\leq r$ of degree at most $O_{K,s}(1)$ such that
\begin{equation}\label{1:tempeeq2}
\begin{split}
	\frac{1}{p}(p\tilde{D}_{\mathfrak{I}})^{T}R_{i}
	=Q_{i,0}+\sum_{j=1}^{r}\tilde{f}_{j}Q_{i,j}.
\end{split}
	\end{equation}
By (\ref{1:gg}), (\ref{1:tempeeq1}) and  (\ref{1:tempeeq2}),
we have that
$$(p\tilde{D}_{\mathfrak{I}})^{T(s-(k'-1))}Qg=\sum_{i=0}^{k'-2}\frac{(p\tilde{D}_{\mathfrak{I}})^{T}g_{i}}{p^{i}}+\frac{1}{p^{k'-1}}(p\tilde{D}_{\mathfrak{I}})^{T}(g_{k'-1}+\frac{g_{k'}}{p})$$
with $(p\tilde{D}_{\mathfrak{I}})^{T}(g_{k'-1}+\frac{g_{k'}}{p})$ being an $O_{K,s}(1)$-good polynomial. We arrive at a contradiction to the minimality of $k'$ provided that $a_{k'-1}\gg_{a_{k'},K,s} 1$.

	In conclusion, we have that $k'=0$ and thus 
		$$(p\tilde{D}_{\mathfrak{I}})^{Ts}Qg=g'$$
		for some $Q\in\N$, $p\nmid Q$ and some
		$a_{0}$-good polynomial $g'$. 
Then for every $\mathfrak{I}$ and every $\n\in\tau(\Omega)+p(\Z^{d})^{K},$ we have $g'(\n)\in\Z$ and thus
\begin{equation}\label{1:gg3}
(p\tilde{D}_{\mathfrak{I}}(\n))^{Ts}Qg(\n)\in\Z.
\end{equation}
	
	Our final step is to use a parametrization trick to get rid of the $(p\tilde{D}_{\mathfrak{I}}(\n))^{Ts}$ term in (\ref{1:gg3}). 
	The method is very similar to step 3 of the proof of Part (i) of Theorem \ref{1:irrrg}.

	We first pick $\n=(\n',n_{K})\in \iota^{-1}(\Omega)$ such that $n_{1},\dots,n_{K-1}$ are $p$-linearly independent. Fix $1\leq k\leq K$. If $c_{k}=1$, then
	recall that $r_{k}\leq k$ and $L_{k,j}, 2\leq j\leq r_{k}$ are linearly independent. Since $A$ is invertible, by the knowledge of linear algebra, there exists a subset $I_{k}$ of $\{1,\dots,d\}$ of cardinality $r_{k}-1$ such that $\Delta_{k,I_{k}}(\iota(\n'))\neq 0$. By Lemma \ref{1:noog},
	 there exist $i_{k,1},i_{k,2}\in I^{c}, i_{k,1}\leq i_{k,2}$ such that  $G_{k,I_{k},i_{k,1},i_{k,2}}(\iota(\n'))\neq 0$. 
	If $c_{k}=0$, then
	recall that $r_{k}\leq k-1$ and $L_{k,j}, 1\leq j\leq r_{k}$ are linearly independent. Since $A$ is invertible, by the knowledge of linear algebra, there exists a subset $I_{k}$ of $\{1,\dots,d\}$ of cardinality $r_{k}$ such that $\Delta_{k,I_{k}}(\iota(\n'))\neq 0$. Denote $i_{k,1}=i_{k,2}=0$. 

	Then $\mathfrak{I}=(I_{k},i_{k,1},i_{k,2})_{k=1}^{K}$ is an admissible parametrization of $\Omega$. Moreover, since $D_{\mathfrak{I}}(\iota(n))\neq 0$, we have that $p\tilde{D}_{\mathfrak{I}}(\n)\in \Z\backslash p\Z$. By (\ref{1:gg3}), we have $g(\n)\in\Z/Q'$ for some $Q'\in\N, p\nmid Q'$. On the other hand, since $g$ takes values in $\Z/p^{s}$, we have that $g(\n)\in\Z$.
	
	We now consider the case when $n_{1},\dots,n_{K-1}$ are not necessarily $p$-linearly independent. 
	For $1\leq t\leq K-1$, we say that \emph{Property $t$} holds if for all $\n=(\n',n_{K})\in \iota^{-1}(\Omega)$ with 
	$n_{1},\dots,n_{t}$ being $p$-linearly independent, we have that $g(\n)\in\Z$.  By the discussion above, Property $K-1$ holds.
	Assume now that we have proven Property $t+1$ holds for some $0\leq t\leq K-2$. We show that Property $t$ holds.
	Pick any $\n=(\n',n_{K})\in  \iota^{-1}(\Omega)$ such that $n_{1},\dots,n_{t}$  are $p$-linearly independent. If $n_{1},\dots,n_{t+1}$ are $p$-linearly independent, then $g(\n)\in\Z$ by induction hypothesis and we are done. So we assume that $n_{1},\dots,n_{t+1}$ are $p$-linearly dependent.

	By Lemma \ref{1:fvv3}, there exists a subspace  $V$ of $\V$ dimension $3$ such that 
	\begin{itemize}
		\item for all $v\in V\backslash\{\bold{0}\}$, $\iota(n_{1}),\dots,\iota(n_{t}),v$ are linearly independent;
		\item for all $v\in V$ and $1\leq j\leq K$, we have that $(vA)\cdot \iota(n_{j})=0$;
		\item  we have that $V\cap V^{\pp}=\{\bold{0}\}$.
	\end{itemize}	
	Let $\phi\colon \F_{p}^{3}\to V$ be any bijective linear transformation. 
	Let $W$ be the set of $m\in\F_{p}^{3}$ such that $(\iota(n_{1}),\dots,\iota(n_{t}),\iota(n_{t+1})+\phi(m),\iota(n_{t+2}),\dots,\iota(n_{K}))\in\Omega$. Note that $\bold{0}\in W$.
	Since $(\phi(m)A)\cdot \iota(n_{j})=0$ for all $1\leq j\leq K$, by (\ref{1:e:fi1}) and (\ref{1:e:fij}),
	for all $1\leq i'\leq K$ and $1\leq j\leq r_{i'}$ with either $(i',j)\neq (t+1,1)$ or $(i',j)=(t+1,1)$ and $c_{t+1}=0$, 	 it is not hard to see that
	$$F_{i',j}(\iota(n_{1}),\dots,\iota(n_{t}),\iota(n_{t+1})+\phi(m),\iota(n_{t+2}),\dots,\iota(n_{K}))=F_{i',j}(\iota(n_{1}),\dots,\iota(n_{K})).$$
	Moreover, if $(i',j)=(t+1,1)$ and $c_{t+1}=1$, then by (\ref{1:e:fi1}),
	we have that
	$$M'(m):=F_{t+1,1}(\iota(n_{1}),\dots,\iota(n_{t}),\iota(n_{t+1})+\phi(m),\iota(n_{t+2}),\dots,\iota(n_{K}))$$
	is a quadratic form from $\F_{p}^{3}$ to $\F_{p}$. 	It is then not hard to see that 
	$W=V(M')$ if $c_{t+1}=1$ and $W=\F_{p}^{3}$ if $c_{t+1}=0$.  Since	$V\cap V^{\pp}=\{\bold{0}\}$, by Proposition \ref{1:iissoo} (ii), $\rank(M')=3$.  
	So by  Propositions \ref{1:iri00} and \ref{1:iri0}, $\iota^{-1}(W)$ is strongly $(1/2,p)$-irreducible.
	
	On the other hand, for all $m\in\F_{p}^{3}\backslash\{\bold{0}\}$,
	since $A$  $\iota(n_{1}),\dots,\iota(n_{t}),\phi(m)$ are linearly independent  and since $\iota(n_{1}),\dots,\iota(n_{t+1})$ are linearly dependent, we have that $\iota(n_{1}),\dots,$ $\iota(n_{t}),\iota(n_{t+1})+\phi(m)$ are linearly independent.
	So $n_{1},\dots,n_{t},n_{t+1}+\tau\circ\phi(m)+pu$ are $p$-linearly independent for all $u\in\Z^{d}$.
	By the induction hypothesis that Property $t+1$ holds, we have that
	\begin{equation}\label{1:gg2}
	g(n_{1},\dots,n_{t},n_{t+1}+\tau\circ\phi(m)+pu,n_{t+2},\dots,n_{K})\in\Z
	\end{equation}
	for all $m\in W\backslash\{\bold{0}\}$ and $u\in\Z^{d}$.
	
		Let $\phi'\colon \Z^{3}\to\Z^{d}$ be the linear transformation induced by the restrictions $\phi'(e_{j}):=\tau(\phi(e_{j}))$ for $j=1,2,3$, where we slightly abuse the notation to let $e_{j}$ denote the $j$-th standard vector of both $\Z^{3}$ and $\F_{p}^{3}$. 
it is not hard to see that $\tau\circ \phi\equiv\phi'\circ\tau \mod p\Z^{d}$ and  $\phi'(x+py)-\phi'(x)\in p\Z^{d}$ for all $x,y\in\Z^{d}$. 
Let $g'\colon \Z^{3}\to \R$ be given by
	$$g'(x):=g(n_{1},\dots,n_{t},n_{t+1}+\phi'(x),n_{t+2},\dots,n_{K})$$
	for all $x_{1},\dots,x_{K-1}\in\Z^{3}$. 	Since $g\in\poly((\Z^{d})^{K}\to \Z/p^{s})$, we have that $g'\in\poly(\Z^{3}\to \Z/p^{s})$.

	On the other hand, for any $m\in W\backslash\{\bold{0}\}$ and $z\in\Z^{3}$,   
	by (\ref{1:gg2}), there exist $u\in\Z^{d}$ such that 
		\begin{equation}\nonumber
		\begin{split}
		&\quad g'(\tau(m)+pz)
		=g(n_{1},\dots,n_{t},n_{t+1}+\phi'(\tau(m)+pz),n_{t+2},\dots,n_{K})
		\\&=g(n_{1},\dots,n_{t},n_{t+1}+\tau\circ\phi'(m)+pu,n_{t+2},\dots,n_{K})\in\Z.
		\end{split}
		\end{equation}
This implies that
  $\tau(W)\backslash\{\bold{0}\}\subseteq V_{p}(g')$. So $\vert V_{p}(g')\cap \tau(W)\vert\geq \vert W\vert-1\geq\vert W\vert/2$.
	Since $\iota^{-1}(W)$ is strongly $(1/2,p)$-irreducible,   we have that $\iota^{-1}(W)\subseteq V_{p}(g')$. In particular, since $\bold{0}\in \tau(W)$, we have that 
	$g(\n)=g'(\bold{0})\in\Z$.
	
	In conclusion, we have that Property $t$ holds.  By induction, we deduce that Property 0 holds, meaning that  $\iota^{-1}(\Omega)\subseteq V_{p}(g)$. So $\iota^{-1}(\Omega)$ is strongly $(\d,p)$-irreducible up to degree $s$. This completes the proof of Part (iii) of Theorem \ref{1:irrrg}.

\section{Leibman dichotomies for nice and consistent $M$-sets}\label{1:s:AppD}
 
We present the proof of Theorem \ref{1:veryr} in this appendix. We restate this theorem below for convenience.

 	\begin{thm}[Nice and consistent  $M$-sets admit  Leibman dichotomies]\label{1:veryrr}
	Let $d,k\in\N_{+},s,r\in\N$ with $d\geq \max\{4r+1,4k+3,2k+s+11\}$, $C>0$ and $p$ be a prime. 
	There exists  $K:=O_{C,d}(1)$ such that for any  non-degenerate quadratic form $M\colon\V\to \F_{p}$, any nice and consistent $M$-set $\Omega\subseteq (\V)^{k}$ of total co-dimension $r$, and any $0<\d<1/2$,  if $p\gg_{C,d} \d^{-O_{C,d}(1)}$, 
	then  $\Omega$ admits a partially periodic $(\d,K\d^{-K})$-Leibman dichotomy up to step $s$ and complexity $C$.
\end{thm}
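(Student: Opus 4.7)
The plan is to prove the Leibman dichotomy by a double induction, first on the number of variables $k$, and within each $k$ on the nonlinear degree $m_{\ast}$ of the nilmanifold $G/\Gamma$, following the scheme developed by Green--Tao \cite{GT12b,GT14} adapted to the spherical setting of Section \ref{1:s:pp9}. The base case $k=1$ is essentially already done: a nice and consistent $M$-set in $\V$ of total co-dimension $r$ is, after a bijective $d$-integral linear change of variables and translation, either $\V$ itself, an affine subspace, or the intersection of $V(M')$ with an affine subspace for some pure non-degenerate quadratic form $M'$. Under the dimension assumption $d\geq 2k+s+11$, the relevant rank is at least $s+13$, so Theorems \ref{1:Lei0} and \ref{1:rLei} give the required dichotomy, and Lemma \ref{1:rrgg} transports it back to $\Omega$.

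For the inductive step with $k\geq 2$, suppose $g\in\poly_{p}(\Omega\to G_{\N}\vert\Gamma)$ and $(g(n)\Gamma)_{n\in\Omega}$ is not $\d$-equidistributed. By Lemma \ref{1:3.7} there is a function with nontrivial vertical character $\xi$ of controlled complexity witnessing non-equidistribution. If $\xi=\bold{0}$ we reduce to a lower-step quotient and apply the induction on $m_{\ast}$. Otherwise, I would square the corresponding average, apply the Fubini theorem (Theorem \ref{1:ct}) to write it as an average over $h$ of averages on appropriate fibers, and for many $h$ derive a bound on a shifted sequence evaluated on a fiber which is itself a nice and consistent $M$-set in fewer variables (or of smaller total co-dimension). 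More precisely, after fixing the last variable $n_{k}$ and differencing, the fibers are consistent $M'$-sets on $(\V)^{k-1}$ for an induced non-degenerate form $M'$, still satisfying the dimension hypothesis since $d\geq 4k+3$. Applying the induction hypothesis on these fibers produces, for each of many choices, a nontrivial type-I horizontal character $\overline{\eta}_{\bullet}$ of complexity $O_{C,d}(\d^{-O_{C,d}(1)})$ of the joined-up nilmanifold $\overline{G^{\square}}$, and by pigeonholing these characters are a common $\overline{\eta}$ on a dense set.

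The central algebraic step is then to exploit the fact that for many $(h_{1},\dots,h_{s})$ satisfying the fiber constraints, an identity of the form
\begin{equation*}
\Delta_{h_{s-1}}\dots\Delta_{h_{1}}P(n)+\Delta_{h_{s}}\dots\Delta_{h_{1}}Q(n)\in\Z
\end{equation*}
holds for $(n,h_{1},\dots,h_{s})$ ranging over a dense subset of a Gowers-type $M$-set associated to $\Omega$. Here $P$ and $Q$ arise by composing $g$ with the lifted horizontal character, exactly as in Section \ref{1:s:pp9}. This is precisely the setup handled by Proposition \ref{1:att30} (and its preparatory results Proposition \ref{1:packforce0} and Corollary \ref{1:att301}), which gives the $M$-divisibility decomposition of $P$ and $Q$ modulo lower-degree terms. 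Combining this with the structural analysis of the constant $a_{h}$ and the linear/nonlinear decomposition of $g$, one isolates either an integer-valued type-I horizontal character annihilating $g$ on $\Omega$ (finishing the proof), or a constraint that reduces to the situation where $\eta_{2}$ is nontrivial so that a weak factorization $g=g'\gamma$ with $g'$ taking values in a strictly smaller subgroup $G'$ becomes available, as in Proposition \ref{1:specialfactorization}.

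The main obstacle will be the last stage: passing from a dichotomy holding ``on many fibers'' to one holding on all of $\Omega$. This is where the irreducibility theorem (Theorem \ref{1:irrrg}) plays an indispensable role, since a horizontal character whose value mod $\Z$ equals a polynomial of low degree on a dense subset of the nice consistent $M$-set $\Omega$ must, by strong $(\d,p)$-irreducibility, satisfy the same relation on all of $\tau(\Omega)+p(\Z^{d})^{k}$; this requires the dimension hypothesis $d\geq 4r+1$ feeding into Theorem \ref{1:irrrg}. A further delicate point is that partial $p$-periodicity on $\Omega$ (rather than full $p$-periodicity on $(\V)^{k}$) forces us to work throughout with the rational-coefficient versions of the algebraic solvability results, namely Proposition \ref{1:att30} rather than its simpler $\F_{p}$-valued sibling; this is the technical reason the $p$-expansion trick of Section \ref{1:s:52} was developed. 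Once these ingredients are assembled, the induction closes and yields the promised constant $K=O_{C,d}(1)$.
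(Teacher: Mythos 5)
Your base case $k=1$ and your instinct to use Theorem \ref{1:rLei}, Theorem \ref{1:Lei0} and the irreducibility theorem are in the right spirit, but the inductive step has a genuine gap. You propose to re-run the whole Green--Tao scheme of Section \ref{1:s:pp9} (vertical character, van der Corput, the joined nilmanifold $\overline{G^{\square}}$, the polynomial equation, Proposition \ref{1:att30}, Proposition \ref{1:specialfactorization}) directly on a multi-variable nice consistent $M$-set $\Omega\subseteq(\V)^{k}$. The algebraic engine of that scheme, Proposition \ref{1:att30} (and Proposition \ref{1:packforce0}, Corollary \ref{1:att301}), is proved only for the Gowers sets $\Gow_{p,s}(V(M))$ of a \emph{single} quadratic form of large rank; no analogue is available for Gowers-type sets attached to a general $M$-set, and the structural facts the Section \ref{1:s:pp9} argument leans on (the exact parametrization $L_{h},u_{h}$ of $V(M)^{h}$, the rank drop $\rank(M_{h})=\rank(M)-1$, the identification $t_{h}=c_{h}(hA)+\Z^{d}$, Corollary \ref{1:spe2}, Lemma \ref{1:gxregular}/Lemma \ref{1:goodcoordinates2}, Proposition \ref{1:specialfactorization}) are all specific to $V_{p}(\tilde M)$. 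Differencing a general $\Omega$ produces fibers which are $M$-sets of \emph{larger} total co-dimension governed by the whole family $\mathcal{J}$, not sets where a single form loses one rank, so your claim that the fibers are "consistent $M'$-sets on $(\V)^{k-1}$ still satisfying the dimension hypothesis" does not set up a usable induction; making it work would amount to redoing Sections \ref{1:s4}--\ref{1:s:pp9} for arbitrary $M$-sets. The paper avoids this entirely: it slices $\Omega$ by many bijective $d$-integral maps $\phi_{v}(\x,n)=(\x+v\otimes n,n)$, applies Theorem \ref{1:ct} so that each fiber in the last variable is a \emph{one-variable} set already covered by Theorem \ref{1:rLei} or \ref{1:Lei0}, and pigeonholes a common character $\eta$ over $\gg\d^{O(1)}$ of the slices.

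A second, independent gap is the endgame. Even if you obtained, with one common $\eta$, that $\eta\circ g$ mod $\Z$ is constant on each fiber, you never explain how that yields constancy on all of $\Omega$. In the paper this is Steps 2 and 3 of Appendix \ref{1:s:AppD}: first the strong $(\kappa,p)$-irreducibility of the auxiliary sets $\Omega'_{i}$ (Theorem \ref{1:irrr}, using $d\geq\max\{4r+1,4k+3\}$) upgrades the fiber-wise identity to the coordinate-exchange invariance (\ref{1:fff}) on all of $\Omega$; and then a separate combinatorial chain argument (the Claim constructing $(z_{1},\dots,z_{k})$ so that one coordinate can be swapped at a time inside $\Omega$) converts (\ref{1:fff}) into global constancy. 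That last step genuinely uses the niceness of $\Omega$ and is not a formality -- the Remark at the end of Appendix \ref{1:s:AppD} gives an example where (\ref{1:fff}) holds but $F$ mod $\Z$ is not constant on a non-nice $M$-set. Your proposal uses irreducibility only for the "many tuples $\to$ all tuples" upgrade and omits the connectivity argument, so even with the inductive machinery repaired the proof would not close.
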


By Proposition \ref{1:yy3} (iii) and Lemma \ref{1:rrgg}, we may assume without loss of generality that $\Omega=V(\mathcal{J})$ for some nice and consistent $(M,k)$-family $\mathcal{J}$. By Lemma \ref{1:rep0},  $\mathcal{J}$
 admits a nice and standard $M$-representation $(F_{i,j}\colon 1\leq i\leq k, 1\leq j\leq r_{i})$ such that
\begin{equation}\label{1:6677}
F_{i,j}(n_{1},\dots,n_{k})=(n_{i}A)\cdot (a_{i,j,1}n_{1}+\dots+a_{i,j,i}n_{i})+u_{i,j}
\end{equation}
for some $a_{i,j,t}, u_{i,j}\in\F_{p}$  for all $1\leq i\leq k, 1\leq j\leq r_{i}$, such that the matrix $\begin{bmatrix}
a_{i,1,i} & \dots & a_{i,1,1}\\
\dots & \dots & \dots \\
a_{i,r_{i},i} & \dots & a_{i,r_{i},1}
\end{bmatrix}$
is of rank $r_{i}$  and is in the reduced row echelon form, where $0\leq r_{i}\leq i$ and $r_{1}+\dots+r_{k}=r$. In particular, $F_{i,j}$ is independent of $n_{i+1},\dots,n_{k}$ and depends nontrivially on $n_{1},\dots,n_{i}$.

Fix any $\d>0$.
	Throughout the proof, assume that $p\gg_{C,d} \d^{-O_{C,d}(1)}$.
	Let $G/\Gamma$ be an $s$-step $\N$-filtered nilmanifold  of complexity at most $C$ and 
%	$\tilde{g}\in \poly_{p}(\iota^{-1}(\Omega)\to G_{\N}\vert\Gamma)$ be such that $(\tilde{g}(n)\Gamma)_{n\in \iota^{-1}(\Omega)}$ is not $\d$-equidistributed on $G/\Gamma$.
%	Let
	$g\in \poly_{p}(\Omega\to G_{\N}\vert\Gamma)$  be such that $(g(n)\Gamma)_{n\in \Omega}$ is not $\d$-equidistributed on $G/\Gamma$. %(meaning that $(g\circ\iota(n)\Gamma)_{n\in \iota^{-1}(\Omega)}$  is not $\d$-equidistributed on $G/\Gamma$).	
 	
	The idea of the proof is inspired by Section 3 of \cite{GT14} (see also page 37 of \cite{Sun18} and Proposition 4.11 of \cite{Sun19} for similar methods), which we explain briefly. For any bijective $d$-integral linear transformation $\phi\colon (\V)^{k}\to (\V)^{k}$, one can show that for many $\x\in (\V)^{k-1}$, the sequence $(g\circ \phi^{-1}(\x,n)\Gamma)_{n\in\V\colon (\x,n)\in \phi(\Omega)}$ is not $O(\d)$-equidistributed on $G/\Gamma$. On the other hand, the set $\{n\in\V\colon (\x,n)\in \phi(\Omega)\}$ can be viewed as the set of zeros of some quadratic form induced by $M$. We may use Theorem \ref{1:sLei} to conclude that $\eta\circ g\circ \phi^{-1}(\x,\cdot) \mod \Z$ is a constant when restricted to $\Omega$ for some nontrivial  type-I horizontal character $\eta$ for many $\x$. If we take sufficiently many different $\phi$, then we will be able to combine these informations to conclude that $\eta\circ g \mod \Z$ is a constant on $\Omega$.

	We now turn to the rigorous proof of Theorem \ref{1:veryrr}.
	
	\textbf{Step 1: decomposing $\Omega$ using different linear transformations.}
		Let 
		$N:=\lceil p^{\alpha}\rceil$ for any fixed $\frac{k-2}{k-1}<\alpha<1$. 
		Denote $(a_{1},\dots,a_{k-1})\otimes n:=(a_{1}n,\dots,a_{k-1}n)\in(\V)^{k-1}$ for $a_{1},\dots,a_{k-1}\in \F_{p}$ and $n\in\V$.
		 For all $v\in \{0,\dots,N-1\}^{k-1}\subseteq\F_{p}^{k-1}$, let
		$\phi_{v}\colon(\V)^{k}\to (\V)^{k}$ denote the bijective $d$-integral linear transformation given by 
		$$\phi_{v}(\x,n):=(\x+v\otimes n,n) \text{ for all } \x\in (\V)^{k-1}, n\in\V.$$
	By Proposition  \ref{1:yy3} (iii), $\mathcal{J}_{v}:=\{F_{i,j}\circ\phi_{v}\colon 1\leq i\leq k',1\leq j\leq r_{i}\}$ is a nice and independent  $(M,k)$-family. So $\phi_{v}^{-1}(\Omega)$ is a nice and consistent $M$-set with total co-dimension $r$. By Lemma \ref{1:rep0}, $\phi_{v}^{-1}(\Omega)$ admits a nice and standard $M$-representation with some dimension vector $\bold{r}_{v}$ of dimension $r$. By the Pigeonhole Principle, there exist a subset $T$ of $\{0,\dots,N-1\}^{k-1}$ of cardinality $\gg_{k,r}N^{k-1}$ such that $\bold{r}_{v}$ equals to the same vector $\bold{r}'=(r'_{1},\dots,r'_{k})$ with $r'_{1}+\dots+r'_{k}=r$ for all $v\in T$. Since $\phi_{v}^{-1}(\Omega)$ is nice, we have $0\leq r'_{i}\leq i$.

	For convenience denote $D:=d(k-1)-r'_{1}-\dots-r'_{k-1}.$
	Let 
	$(\mathcal{J}'_{v},\mathcal{J}''_{v})$ be an $\{n_{1},\dots,n_{k-1}\}$-decomposition of $\mathcal{J}_{v}$ (recall Appendix \ref{1:s:dec} for the definition).
	Let $\Omega_{v}$ denote the set of $(n_{1},\dots,n_{k-1})\in (\V)^{k-1}$ such that $(n_{1},\dots,n_{k})\in V(\mathcal{J}'_{v})$ for all $n_{k}\in \V$, and for each $(n_{1},\dots,n_{k-1})\in (\V)^{k-1}$, let $\Omega_{v}(n_{1},\dots,n_{k-1})$ denote the set of $n_{k}\in \V$ such that $(n_{1},\dots,n_{k})$ $\in V(\mathcal{J}''_{v})$.
	Since $d\geq 2r+1$, by  Theorem \ref{1:ct} and the Pigeonhole Principle, for any $v\in T$, there exist a subset $U_{v}$ of $\Omega_{v}$ of cardinality $\gg_{d}\d p^{D}$  such that
	for all $\x\in U_{v}$, $\Omega_{v}(\x)$ has a standard $M$-representation with the one dimensional dimension vector $(r'_{k})$, and that	 $(g(n)\Gamma)_{n\in \Omega_{v}(\x)}$ is not $O_{d}(\d)$-equidistributed.
	 Let $J$ denote the set of $(v,\x)\in (\V)^{k-1}\times T$ such that $\x\in U_{v}$. Then $\vert J\vert\gg_{d}N^{k-1}\d p^{D}$.

	 Since $\Omega_{v}(\x)$ has a standard $M$-representation with the dimension vector $(r'_{k})$, we have that either 
	$\Omega_{v}(\x)$ is the intersection of the zeros of an $(M,1)$-integral quadratic function  and an affine subspace of $\V$  of co-dimension $r'_{k}-1$,  or $\Omega_{v}(\x)$ is an  affine subspace of $\V$  of co-dimension $r_{k}$.
	For the former case, since $d-2(r'_{k}-1)\geq d-2(k-1)\geq s+13$, 
	by Proposition \ref{1:iissoo} and  Theorem \ref{1:rLei}, for all $(v,x)\in J$, there exists a nontrivial type-I horizontal character $0<\Vert \eta_{v,\x}\Vert\leq O_{C,d}(1)\d^{-O_{C,d}(1)}$ such that $\eta_{v,\x}\circ g(\phi_{v}(\x,\cdot)) \mod \Z$ is a constant on $\Omega_{v}(\x)$. 
	For the later case, we have the same conclusion by Theorem \ref{1:Lei0}.
	
	By the Pigeonhole Principle, there exists a subset $J_{1}$ of $J$ of cardinality at least $\gg_{C,d}N^{k-1}\d^{O_{C,d}(1)} p^{D}$ such that $\eta_{v,\x}$ equals to the same $\eta$ for all $(v,\x)\in J_{1}$.  Again by the Pigeonhole Principle,  there exists a subset $J'$ of $T$ of cardinality $\gg_{C,d}\d^{O_{C,d}(1)}N^{k-1}$ such that for all $v\in J'$, the set $J'(v):=\{\x\colon (\x,v)\in J_{1}\}$ is of cardinality $\gg_{C,d}\d^{O_{C,d}(1)} p^{D}$.
	Since $N:=\lceil p^{\alpha}\rceil$, we have that $\vert J'\vert>p^{k-2}$.
Similar to the proof of Lemma \ref{1:iiddpp}, there exist  $v_{1},\dots,v_{k}\in J'$ such that $(1,v_{1}),\dots,(1,v_{k})\in\F_{p}^{k}$ are linearly independent. 
	
	Denote $F:=\eta\circ g$. We wish to show that $F \mod \Z$ is a constant on $\Omega$.

	\textbf{Step 2: deriving an invariance property for $F$.}
	It is convenient to transform $\Omega$ by the bijective $d$-integral linear transformation $\psi\colon (\V)^{k}\to(\V)^{k}$ given by
	$$\psi(n_{1},\dots,n_{k}):=(v_{1}\otimes n_{1}+\dots+v_{k}\otimes n_{k},n_{1}+\dots+n_{k}),$$
	and temporarily work with the function $F':=\eta\circ g\circ \psi$.
	Our next step is to combine the information we get for $F$ from the linear transformations $\phi_{v_{1}},\dots,\phi_{v_{k}}$. 
		For all $1\leq i\leq k$, 
		Let $\psi_{i}\colon (\V)^{k}\to(\V)^{k}$ be the bijective $d$-integral linear transformation given by
		$$\psi_{i}(n_{1},\dots,n_{k}):=\Bigl(\sum_{j=1}^{k}(v_{j}-v_{i})\otimes n_{j},n_{1}+\dots+n_{k}\Bigr).$$

		Let  $U_{i}\subseteq  \psi^{-1}_{i}(\Omega)$ be the set of $y\in \psi^{-1}_{i}(\Omega)$ such that $\psi_{i}(y)=(\x,n)$ for some $\x\in J'(v_{i})$ and $n\in \Omega_{v_{i}}(\x)$.
	For such  $y=(y_{1},\dots,y_{k})\in \psi^{-1}_{i}(\Omega)$ , we have  that $n=y_{1}+\dots+y_{k}$ and $\x=\sum_{j=1}^{k}(v_{j}-v_{i})\otimes y_{j}$. So
	\begin{equation}\label{1:sios}
		\begin{split}
		&\quad F'(y_{1},\dots,y_{k})
		=\eta\circ g(v_{1}\otimes y_{1}+\dots+v_{k}\otimes y_{k},y_{1}+\dots+y_{k})
		\\&=\eta\circ g(\x+v_{i}\otimes n,n)
		=\eta\circ g\circ\phi_{v_{i}}(\x,n).
		\end{split}
		\end{equation}	 
		
		For convenience denote $\y:=(y_{1},\dots,y_{i-1},y_{i},y_{i+1},\dots,y_{k})$, $\y':=(y_{1},\dots,y_{i-1},y'_{i},y_{i+1},\dots,$ $y_{k})$ and $\tilde{\y}:=(y_{1},\dots,y_{i-1},y_{i},y'_{i},y_{i+1},\dots,y_{k})$, where $y_{i}\in\V$.
		Let $\Omega'_{i}\subseteq (\V)^{k+1}$ denote the set of $\tilde{\y}\in (\V)^{k+1}$ such that $\y,\y'\in  \psi^{-1}(\Omega),$ and $U'_{i}\subseteq (\V)^{k+1}$ denote the set of $\tilde{\y}\in (\V)^{k+1}$ such that $\y,\y'\in U_{i}.$
		Pick any $\tilde{\y}\in U'_{i}$.  Assume that $\y=(\x,n)$ and $\y'=(\x',n')$ for some $\x,\x' \in J'(v_{i})$, $n\in \Omega_{v_{i}}(\x)$ and $n'\in \Omega_{v_{i}}(\x')$. Then $\x=\x'=\sum_{1\leq j\leq k, j\neq i}(v_{j}-v_{i})\otimes y_{j}$. Since $\eta\circ g(\phi_{v_{i}}(\x,\cdot)) \mod \Z$ is a constant on $\Omega_{v_{i}}(\x)$, we have that $$\eta\circ g\circ\phi_{v_{i}}(\x,n)\equiv\eta\circ g\circ\phi_{v_{i}}(\x,n')\mod \Z.$$
		So it follows from
	 (\ref{1:sios}) that 
		\begin{equation}\label{1:yy4}
		F'(\y)\equiv F'(\y') \mod \Z
		\end{equation}
 for all $\tilde{\y}\in U'_{i}$.
 Since  $\Omega=\cap_{t=1}^{k}\cap_{j=1}^{r_{t}}V(F_{t,j})$ and $F_{t,j}$ is independent of $n_{t+1},\dots,n_{k}$,   we have that 
 $$\psi(\Omega'_{i})=(\cap_{t=1}^{i-1}\cap_{j=1}^{r_{t}}V(F_{t,j,i}))\cap(\cap_{t=i}^{k}\cap_{j=1}^{r_{t}}(V(F_{t,j,i})\cap V(F'_{t,j,i}))),$$ where $F_{t,j,i},F'_{t,j,i}\colon (\V)^{k+1}\to \V$ are the $d$-integral linear transformations given by
 $$\text{$F_{t,j,i}(\tilde{\y}):=F_{t,j}(\y)$ and $F'_{t,j,i}(\tilde{\y}):=F_{t,j}(\y').$}$$
 By Proposition \ref{1:yy33} (iv) and (v),
 $\Omega_{i}'$ is a consistent $M$-set of total co-dimension at most $2r$. On the other hand, since $F_{t,j}$ are nice, so are $F_{t,j,i},F'_{t,j,i}$. So $\psi(\Omega_{i}')$ and thus $\Omega_{i}'$ is nice and consistent.
  Let $\kappa\gg_{C,d}\d^{O_{C,d}(1)}$ to be chosen later. By Theorem \ref{1:irrrg}, if $d\geq \max\{4r+1,4k+3\}$  and $p\gg_{C,d}\d^{-O_{d,C}(1)}\gg_{C,d}\kappa^{-O_{C,d}(1)}$,  then $\iota^{-1}(\Omega'_{i})$ is strongly $(\kappa, p)$-irreducible up to degree $s$.

Since $\Omega_{v_{i}}(\x)$ has an $M$-representation with dimension vector $(r'_{k})$ for all $\x\in U_{v_{i}}$, by Theorem \ref{1:ct},
$$\vert U'_{i}\vert=\sum_{\x\in J'(v_{i})}\vert \Omega_{v_{i}}(\x)\vert^{2}\gg_{C,d}\d^{-O_{C,d}(1)}p^{2(d-r'_{k})+D}$$
and
$$\vert \Omega'_{i}\vert=\sum_{\x\in \Omega_{v_{i}}}\vert \Omega_{v_{i}}(\x)\vert^{2}\gg_{C,d}\d^{-O_{C,d}(1)}p^{2(d-r'_{k})+D}.$$
 
 By Proposition \ref{1:BB}, 
 there exists $g'\in\poly_{p}(\psi^{-1}(\Omega)\to G_{\N}\vert\Gamma)$ such that $g'(n)\Gamma=g\circ\psi(n)\Gamma$ for all $n\in \psi^{-1}(\Omega)$.
 So by  Lemma \ref{1:goodcoordinates}, $F$ takes values in $\Z/p^{s}+C$ for some $C\in\R$. Since  $\iota^{-1}(\Omega'_{i})$ is strongly $(\kappa, p)$-irreducible up to degree $s$,    if $\kappa\gg_{C,d}\d^{O_{C,d}(1)}$ is chosen to be sufficiently small, then it follows from (\ref{1:yy4}) that 
  $F'(\y)\equiv F'(\y') \mod \Z$
  for all $\tilde{\y}\in \Omega'_{i}$. By the definition of $\Omega'_{i}$ and $F'$, we have that  
  \begin{equation}\label{1:fff}
  F(y_{1},\dots,y_{i-1},y_{i},y_{i+1},\dots,y_{k})\equiv F(y_{1},\dots,y_{i-1},y'_{i},y_{i+1},\dots,y_{k}) \mod \Z
  \end{equation}
	for all $1\leq i\leq k$ and $(y_{1},\dots,y_{k}),(y_{1},\dots,y_{i-1},y'_{i},y_{i+1},\dots,y_{k})\in \Omega$.

	\textbf{Step 3: showing that $F$ is a constant on $\Omega$.}
	If $\Omega=(\V)^{k}$, then (\ref{1:fff}) immediately implies that $F\mod \Z$ is a constant and we are done. However, in general this conclusion is not straightforward. In order to show that $F\mod \Z$ is a constant on $\Omega$, we need to change one coordinate at each time carefully.

		Fix any $(m_{1},\dots,m_{k}),(n_{1},\dots,n_{k})\in\Omega$ with $\sp_{\F_{p}}\{m_{1},\dots,m_{k}\}\cap \sp_{\F_{p}}\{n_{1},\dots,n_{k}\}=\{\bold{0}\}$.

		\textbf{Claim.} There exists
		$(z_{1},\dots,z_{k})\in\Omega$  such that $$\text{$(z_{1},\dots,z_{i},m_{i+1},\dots,m_{k}),(z_{1},\dots,z_{i},n_{i+1},\dots,n_{k})\in \Omega$ for all $0\leq i\leq k$.}$$
		
		Suppose we have chosen $z_{1},\dots,z_{i-1}$ for some $1\leq i\leq k$ such that  for all $0\leq i'\leq i-1$, we have that $z_{1},\dots,z_{i'},m_{i'+2},\dots,m_{k},n_{i'+2},\dots,n_{k}$ are linearly independent and that
		\begin{equation}\label{1:66a55}
	(z_{1},\dots,z_{i'},m_{i'+1},\dots,m_{k}),(z_{1},\dots,z_{i'},n_{i'+1},\dots,n_{k})\in \Omega
	\end{equation}
		(there is nothing to prove when $i=1$).
		We show that there exists $z_{i}\in\V$ such that $z_{1},\dots,z_{i},m_{i+2},\dots,m_{k},n_{i+2},\dots,n_{k}$ are linearly independent and that
		$$(z_{1},\dots,z_{i},m_{i+1},\dots,m_{k}),(z_{1},\dots,z_{i},n_{i+1},\dots,n_{k})\in \Omega.$$
		
		Let $W$ denote the span of $z_{1},\dots,z_{i-1},m_{i+2},\dots,m_{k},n_{i+2},\dots,n_{k}$ and $W'$ denote the set of  $z_{i}\in\V$  such that 
	\begin{equation}\nonumber
	F_{t,j}(z_{1},\dots,z_{i},m_{i+1},\dots,m_{k})=F_{t,j}(z_{1},\dots,z_{i},n_{i+1},\dots,n_{k})=0
	\end{equation}
	for all $1\leq t\leq k, 1\leq j\leq r_{t}$.
	It suffices to show that $\vert W'\vert>\vert W\vert$.
	
	By (\ref{1:6677}), the knowledge of linear algebra, and the construction of $F_{t,j}$, it is not hard to see that $\mathcal{J}$
 admits another nice and independent (but not necessarily standard) $M$-representation $(F'_{t,j}\colon 1\leq t\leq k, 1\leq j\leq r_{t})$ such that
 $F'_{t,j}=F_{t,j}$ for all $1\leq t\leq i-1, 1\leq j\leq r_{t}$, and that 
\begin{equation}\label{1:66a77}
F'_{t,j}(n_{1},\dots,n_{k})=(n_{t}A)\cdot (a'_{t,j,1}n_{1}+\dots+a'_{t,j,t}n_{t})+u'_{t,j}
\end{equation}
for some $a'_{t,j,t'}, u'_{t,j}\in\F_{p}$  for all $i\leq t\leq k, 1\leq j\leq r_{t}$, such that the matrix $\begin{bmatrix}
a'_{t,1,t} & \dots & a'_{t,1,1}\\
\dots & \dots & \dots \\
a'_{t,r_{t},t} & \dots & a'_{t,r_{t},1}
\end{bmatrix}$
is of rank $r_{t}$ (but is not necessarily in the row echelon form) and that $a'_{t,j,i}=0$ for all $j\geq 2$.	
Then $W'$ can be described as the set of  $z_{i}\in\V$  such that 
	\begin{equation}\label{1:6655}
	F'_{t,j}(z_{1},\dots,z_{i},m_{i+1},\dots,m_{k})=F'_{t,j}(z_{1},\dots,z_{i},n_{i+1},\dots,n_{k})=0
	\end{equation}
	for all $1\leq t\leq k, 1\leq j\leq r_{t}$. We remark that $r_{t}\leq t$.

	For convenience denote
	$\w:=(z_{1},\dots,z_{i-1},m_{i+1},\dots,m_{k},n_{i+1},\dots,n_{k}).$
	For $1\leq t\leq i-1, 1\leq j\leq r_{t}$, since 
	$F'_{t,j}=F_{t,j}$ is independent of $n_{t+1},\dots,n_{k}$, we have (\ref{1:6655}) for free  because of (\ref{1:66a55}).
	For  	$i+1\leq t\leq k, 1\leq j\leq r_{t}$, suppose that either $j=1, a'_{t,1,i}=0$ or $j\geq 2$ (in which case we still have $a'_{t,j,i}=0$). Then $F'_{t,j}(x_{1},\dots,x_{k})$ is independent of $x_{i}$.
	By (\ref{1:66a55}), we have that 
	$$F'_{t,j}(z_{1},\dots,z_{i},y_{i+1},\dots,y_{k})=F'_{t,j}(z_{1},\dots,z_{i-1},y_{i},\dots,y_{k})=0$$
	for $y=m$ or $n$. So we have (\ref{1:6655}) for free.
	For $i+1\leq t\leq k, j=1$ with $a'_{t,1,i}\neq 0$, by (\ref{1:66a77}), we have that (\ref{1:6655}) holds if and only if
	\begin{equation}\label{1:66eq1}
a'_{t,1,i}(z_{i}A)\cdot m_{t}+P_{t}(\w)=a'_{t,1,i}(z_{i}A)\cdot n_{t}+P'_{t}(\w)=0
\end{equation}	
	for some functions $P_{t},P'_{t}$.	
	For $t=i, 2\leq j\leq r_{t}$, since $a'_{i,j,i}=0$, by (\ref{1:66a77}), we have that (\ref{1:6655}) holds if and only if
	\begin{equation}\label{1:66eq2}
(z_{i}A)\cdot (a'_{i,j,1}z_{1}+\dots+a'_{i,j,i-1}z_{i-1})+u'_{i,j}=0.
\end{equation}	
	Finally, for $t=i, j=1$, by (\ref{1:66a77}), we have that (\ref{1:6655}) holds if and only if
	\begin{equation}\label{1:66eq3}
Q(z_{i}):=(z_{i}A)\cdot (a'_{i,1,1}z_{1}+\dots+a'_{i,1,i}z_{i})+u'_{i,1}=0.
\end{equation}	

	Since 
	$z_{1},\dots,z_{i-1},m_{i+1},\dots,m_{k},n_{i+1},\dots,n_{k}$ are linearly independent and the matrix $$\begin{bmatrix}
a'_{i,1,i} & \dots & a'_{i,1,1}\\
\dots & \dots & \dots \\
a'_{i,r_{i},i} & \dots & a'_{i,r_{i},1}
\end{bmatrix}$$
is of rank $r_{i}$, it is not hard to see that the following vectors are linearly independent:
$$\{a'_{i,j,1}z_{1}+\dots+a'_{i,j,i-1}z_{i-1}\colon 1\leq j\leq r_{i}\}\cup \{m_{t},n_{t}\colon i+1\leq t\leq k\}.$$
Since $A$ is invertible, by (\ref{1:66eq1}), (\ref{1:66eq2}) and  (\ref{1:66eq3}), if $a'_{i,1,i}=0$, then $W'$ is an affine subspace of $\V$ of co-dimension at most $r_{i}+2(k-i)$; if $a'_{i,1,i}\neq 0$, then $Q$ is a non-degenerate quadratic form and $W'$ is the intersection $V(M)$ and an affine subspace of $\V$ of co-dimension at most $r_{i}+2(k-i)-1\leq 2k-i-1$. Since $d-2(2k-i-1)\geq 3$,   by Corollary \ref{1:counting01} (ii),  in both cases we have that $\vert W'\vert=p^{d-(r_{i}+2(k-i))}(1+O(p^{-1/2}))$.
	 On the other hand, $\vert W\vert\leq p^{2k-i-3}$.  Since $d-(r_{i}+2(k-i))> 2k-i-3$,   there exists $z_{i}\in W'\backslash W$.
 By induction, we complete the proof of the claim. 
	 		
		\
		
		Let $(z_{1},\dots,z_{k})$ be given by the claim. By (\ref{1:fff}),
		\begin{equation}\nonumber
		\begin{split}
		&\quad F(m_{1},\dots,m_{k})\equiv F(z_{1},m_{2},\dots,m_{k})\equiv \dots\equiv F(z_{1},\dots,z_{k})
		\\&\equiv F(z_{1},\dots,z_{k-1},n_{k})\equiv\dots\equiv F(n_{1},\dots,n_{k}) \mod\Z.
		\end{split}
		\end{equation}	
		For general $(m_{1},\dots,m_{k}),(n_{1},\dots,n_{k})\in\Omega$, similar to the proof of Proposition \ref{1:iissoo},  the number of $(z_{1},\dots,z_{k})\in(\V)^{k}$ such that either $\sp_{\F_{p}}\{m_{1},\dots,m_{k}\}\cap \sp_{\F_{p}}\{z_{1},\dots,z_{k}\}$ or $\sp_{\F_{p}}\{n_{1},\dots,n_{k}\}\cap \sp_{\F_{p}}\{z_{1},\dots,z_{k}\}$ is nontrivial is at most $O_{k}(p^{d(k-1)+2k-1})$. On the other hand $\vert\Omega\vert=p^{dk-r}(1+O_{k}(p^{-1/2}))$ by Theorem \ref{1:ct}. Since $dk-r>d(k-1)+2k-1$,  there exists $(z_{1},\dots,z_{k})\in\Omega$ such that $\sp_{\F_{p}}\{m_{1},\dots,m_{k}\}\cap \sp_{\F_{p}}\{z_{1},\dots,z_{k}\}=\sp_{\F_{p}}\{n_{1},\dots,n_{k}\}\cap \sp_{\F_{p}}\{z_{1},\dots,z_{k}\}=\{\bold{0}\}$. So by the previous case,
		$$F(m_{1},\dots,m_{k})\equiv F(z_{1},\dots,z_{k})\equiv F(n_{1},\dots,n_{k}) \mod\Z.$$
		Therefore $\eta\circ g \mod \Z$ is a constant on $\Omega$. So $\Omega$ admits a  partially periodic $(\d,K\d^{-K})$-Leibman dichotomy up to step $s$ and complexity $C$ for some function $K$ depending only on $C,d$. This completes the proof of Theorem \ref{1:veryrr}.

\begin{rem}
Motivated by Theorem \ref{1:veryrr}, it is natural to ask if any consistent $M$-set admits a partially periodic Leibman dichotomy.  Unfortunately the approach used in Theorem \ref{1:veryrr} can not be applied directly to general consistent $M$-sets. This is because the equation (\ref{1:fff})  is insufficient to conclude that $F \mod \Z$ is a constant if $\Omega$ is not nice. For example, let $\Omega$ be the set of $(m_{1},\dots,m_{d},n_{1},\dots,n_{d})\in(\V)^{2}$ such that $m_{1}=n_{1}$, and let $F(m_{1},\dots,m_{d},n_{1},\dots,n_{d}):=\frac{1}{p}\tau(m_{1})$. Then  (\ref{1:fff})  holds since $F(m_{1},\dots,m_{d},n_{1},\dots,n_{d})=F(m_{1},\dots,m_{d},n'_{1},\dots,n'_{d})$ for all $(m_{1},\dots,m_{d},n_{1},\dots,n_{d}),(m_{1},\dots,m_{d},n'_{1},\dots,n'_{d})\in\V$. However, $F \mod \Z$ is not a constant on $\Omega$.

One possible way to avoid this issue is the do a change of variable under a linear transformation $L\colon (\V)^{k}\to \F_{p}^{t}$ for some $t\in \N$ to eliminate all the polynomials of degree 1 in the standard $M$-representation of $\Omega$ in the initial step. However, $L$ in general is not $d$-integral, and thus $L(\Omega)$ is no longer an $M$-set. Therefore, one needs to generalize the definition of $M$-sets (as well as all the results relevant to $M$-sets) to a larger class of sets in order to overcome this difficulty. Since Theorem \ref{1:veryrr} is good enough for our purposes, we do not pursuit this improvement.
\end{rem}

\bibliographystyle{plain}
\bibliography{swb}
\end{document}